\theoremstyle{plain}
\newtheorem{introthm}{Theorem}
\newtheorem{theo}{Theorem}
\newtheorem{lemma}{Lemma}[section]
\newtheorem{prop}[lemma]{Proposition}
\newtheorem{cor}[lemma]{Corollary}
\theoremstyle{definition}
\newtheorem{definition}[lemma]{Definition}
\theoremstyle{remark}
\newtheorem{rem}{Remark}[section]
\numberwithin{equation}{section}
\newcommand{\C}{\mathbb{C}}
\newcommand{\R}{\mathbb{R}}
\newcommand{\Z}{\mathbb{Z}}
\newcommand{\TT}{\mathbb{T}}
\newcommand{\D}{\mathcal{D}}
\newcommand{\N}{\mathbb{N}}
\newcommand{\OO}{\mathcal{O}}
\newcommand{\eps}{\varepsilon}
\newcommand{\mc}{\mathcal}
\newcommand{\norm}[1]{\left\Vert#1\right\Vert}
\newcommand{\la}{\lambda}
\newcommand{\pl}{\partial}
\newcommand{\x}{\times}
\newcommand{\til}{\widetilde}
\newcommand{\bbar}{\overline}
\newcommand{\cjd}{\rangle}
\newcommand{\cjg}{\langle}
\let\Im=\Imag
\let\Re=\Real
\DeclareMathOperator{\sgn}{sgn}
\DeclareMathOperator{\SL}{SL}
\DeclareMathOperator{\supp}{supp}
\def\Ddots{\mathinner{\mkern1mu\raise\p@
\vbox{\kern7\p@\hbox{.}}\mkern2mu
\raise4\p@\hbox{.}\mkern2mu\raise7\p@\hbox{.}\mkern1mu}}
\def\P{\mathcal{P}}
\def\T{\mathcal{T}}
\def\H{\mathbb{H}}
\renewcommand{\d}{{\rm d}}
\newcommand{\E}{{\mathcal E}}
\newcommand{\M}{{\mc{M}}}
\renewcommand{\O}{{\mathcal O}}
\newcommand{\Xbf}{{\mathbf X}}
\newcommand{\0}{{\rm 0}}
\renewcommand{\epsilon}{\vararepsilon}
\newcommand{\bq}{\begin{equation}}
\newcommand{\eq}{\end{equation}}
\newcommand{\bqn}{\begin{equation*}}
\newcommand{\eqn}{\end{equation*}}
\newcommand{\Cinft}{{C^{\infty}}}
\newcommand{\CT}{{C^{\infty}_c}}
\newcommand{\GL}{\mathrm{GL}}
\newcommand{\SO}{\mathrm{SO}}
\newcommand{\AdS}{{\rm AdS}}
\renewcommand{\det}{\mathrm{det}\,}
\renewcommand{\Im}{\mathrm{Im}\,}
\renewcommand{\Re}{\mathrm{Re}\,}
\newcommand{\gam}{\Gamma\backslash}
\newcommand{\map}[4]{\left\lbrace \begin{array}{ccc} #1 & \to & #2 \\ #3 & \mapsto & #4 \end{array} \right.}
\newcommand{\set}[2]{\left\lbrace #1 \,\middle|\, #2 \right\rbrace}
\author[B.~Delarue]{Benjamin Delarue}
\email{bdelarue@math.upb.de}
\address{Universität Paderborn, Warburger Str.~100, 33098 Paderborn, Germany}
\author[C. Guillarmou]{Colin Guillarmou}
\email{colin.guillarmou@universite-paris-saclay.fr}
\address{Université Paris-Saclay, CNRS, Laboratoire de mathématiques d’Orsay, 91405, Orsay, France}
\author[D.~Monclair]{Daniel Monclair}
\email{daniel.monclair@universite-paris-saclay.fr}
\address{Université Paris-Saclay, CNRS, Laboratoire de mathématiques d’Orsay, 91405, Orsay, France}
\title[Spectra of Lorentzian quasi-Fuchsian manifolds]{Spectra of Lorentzian quasi-Fuchsian manifolds}
\begin{document}

\begin{abstract} A three-dimensional quasi-Fuchsian Lorentzian manifold $M$ is a globally hyperbolic spacetime diffeomorphic to $\Sigma\times (-1,1)$ for a closed orientable surface $\Sigma$ of genus $\geq 2$. It is the quotient $M=\gam \Omega_\Gamma$ of an open set $\Omega_\Gamma\subset \AdS_3$ by a discrete group $\Gamma$ of isometries of $\AdS_3$ which is a particular example of an Anosov representation of $\pi_1(\Sigma)$. We first show that the spacelike geodesic flow of $M$ is Axiom A, has a discrete Ruelle resonance spectrum with associated (co-)resonant states, and that the Poincar\'e series for $\Gamma$ extend meromorphically to $\C$. This is then used to prove that there is a natural notion of resolvent of the pseudo-Riemannian Laplacian $\Box$ of $M$, which is meromorphic on $\C$ with poles of finite rank, defining a notion of quantum resonances and quantum resonant states related to the Ruelle resonances and (co-)resonant states by a quantum-classical correspondence. This initiates the spectral study of convex co-compact pseudo-Riemannian locally symmetric spaces.
\end{abstract}

\maketitle

\section{Introduction}

Three-dimensional Lorentzian quasi-Fuchsian manifolds can be viewed as Lorentzian analogues of quasi-Fuchsian hyperbolic $3$-manifolds.  The latter are complete infinite volume Riemannian manifolds, obtained as a quotient $\Gamma\backslash \H^3$ of the hyperbolic $3$-space $\H^3$ by a Kleinian group  $\Gamma\subset {\rm SL}(2,\C)$ which is a quasiconformal deformation of a co-compact Fuchsian subgroup  $\Gamma^0\subset \SL(2,\R)$ inside $\SL(2,\C)$. Moreover, $\Gamma\backslash \H^3$ is  diffeomorphic to a cylinder $(\Gamma^0\backslash \H^2) \times (-1,1)$. 
 In the Lorentzian case, the hyperbolic space $\H^3$ is replaced by the Anti-de Sitter space $\AdS_3:=\{x\in\R^4\, |\,  q(x)=-1\}$ where $q(x):=-x_1^2-x_2^2+x_3^2+x_4^2$, equipped with the Lorentzian metric induced by $q$ restricted to the tangent space $T\AdS_3$.
Lorentzian quasi-Fuchsian manifolds are constructed by considering quasi-Fuchsian subgroups (see Definition \ref{def - AdS quasi fuchsian group})
$\Gamma\subset {\rm SL}(2,\R)\times {\rm SL}(2,\R)$ of isometries of $\AdS_3$ acting properly discontinuously on a  domain $\Omega_\Gamma\subset \AdS_3$: the quotient $\Gamma\backslash \Omega_\Gamma$ inherits a metric from the Lorentzian metric of $\AdS_3$ that makes it a globally hyperbolic Lorentzian manifold diffeomorphic to a cylinder $(\Gamma^0\backslash \H^2) \times (-1,1)$ for some $\Gamma^0\subset {\rm SL}(2,\R)$, but the  manifold $\Gamma\backslash \Omega_\Gamma$ is not complete. 
\color{black}Quasi-Fuchsian manifolds are fundamental objects of study not only for their own sake but also as toy models for more general Lorentzian manifolds. In spite of this, their theory lacks the abundance of results that are available in the analogous Riemannian case (see e.g.\ \cite{Patterson,Sullivan,mazzeo-melrose87,Guillope-Zworski_Annals,BuOl,JoSB,PaPe,GrZw,GuillarmouAJM,Borthwick, Vasy})\color{black}, partly because Lorentzian geometry is in many respects more complicated than Riemannian geometry. In this paper we define a first notion of discrete spectra for three-dimensional Lorentzian quasi-Fuchsian manifolds, both at the level of the geodesic flow and of the pseudo-Riemannian Laplacian. 

Before discussing our situation, let us briefly recall known results on the spectral geometry of 
rank one Riemannian locally symmetric spaces. Such hyperbolic manifolds  have been intensively studied, not only from a geometric perspective but also because they possess a very rich chaotic dynamical system given by the geodesic flow, which is closely related to the spectrum of the Laplacian and its eigenfunctions.  The most famous example is Selberg's trace formula, which states that for compact or finite volume hyperbolic manifolds, 
the spectrum of the Laplacian is intimately related to the lengths of periodic geodesics \cite{Hejhal, Bu,BuOl_Book}. 
This correspondence can be understood via the representation theory of the co-compact or finite co-volume subgroups of isometries of the hyperbolic spaces. 

For hyperbolic manifolds of infinite volume, the situation is much more complicated as the Laplacian typically has essential spectrum with infinite multiplicity. A natural class of infinite volume  hyperbolic manifolds that have been the subject of extensive research since the 1980's is the class of convex co-compact hyperbolic manifolds \cite{Patterson,Sullivan,mazzeo-melrose87,Guillope-Zworski_Annals,Borthwick,BuOl,PaPe,GuillarmouAJM}. Such Riemannian manifolds $(M,g)$ can be compactified, their geodesic flow satisfies the Axiom A of Smale and the spectral/scattering theory of the Laplacian is now well-understood \cite{mazzeo-melrose87,JoSB,GrZw, Vasy}. In that case, one can define a notion of discrete spectrum for the Laplacian $\Delta_g$, called \emph{quantum resonance spectrum}, as the poles of the meromorphic extension of the resolvent $(\Delta_g-c(\la))^{-1}$ for some appropriate polynomial $c(\la)$ of degree $2$ \cite{mazzeo-melrose87}. The Selberg zeta function admits a meromorphic extension to the complex plane and its zeros/poles are given by the quantum resonances, plus some topological poles at $\la\in -\N$  \cite{BuOl,PaPe}. 

More recent developments using harmonic and microlocal analysis gave 
a novel perspective on these questions by defining a notion of discrete spectrum for the geodesic vector field $X$, viewed as a differential operator acting on functions or tensors. Namely, it has been shown (\cite{BL07,FS11,DZ16} in the Anosov case and \cite{DG16} in the Axiom A case) \color{black}  
that the resolvent $(X+\la)^{-1}$ of the operator $X$ acting on functions on the unit tangent bundle $T^1M$  admits a meromorphic extension to $\la\in \C$, the poles are called \emph{Ruelle resonances} and the associated (generalized) eigenfunctions are called \emph{Ruelle resonant states}. 
The discrete spectrum captures the fine dynamical properties, for example it encodes the SRB measure and the speed of decay of correlations. 

These resonances are also \color{black} poles of a dynamical zeta function defined similarly as Selberg's by a product over periodic geodesics, but involving different coefficients \cite{DZ16}. 
In the case of hyperbolic manifolds, there is a correspondence between Ruelle resonances and poles of the resolvent of Laplacians acting on tensors, as well as between the eigenfunctions \cite{dfg,GHW18, GHW18a,Had18}. This is interpreted as a classical-quantum correspondence.  

In the case of a Lorentzian manifold $(M,g)$, the pseudo-Riemannian Laplacian $\Box_g$ of the Lorentzian metric $g$ is not elliptic and typically has essential spectrum, thus there is no clear notion of a good spectral theory to define a discrete spectrum of $\Box_g$. \color{black} Concerning the geodesic flow, the part of the flow acting on the spacelike tangent bundle $T^1M=\{(x,v)\in TM\,|\, g(v)=1 \}$ is ``hyperbolic'' in some sense, but it is not Axiom A if $M$ is compact because its non-wandering set is non-compact. However, the universal cover, which  is a symmetric space, behaves \color{black} nicely:  for example, by inspecting the case of the anti-de Sitter space $\AdS_3$ equipped with its natural Lorentzian metric $g$, one can see that the resolvent $(\Box_g+\la(\la+2))^{-1}$ of $\Box_g$ admits a meromorphic extension to $\la\in \C$  by explicitly  writing down its integral kernel (see Section \ref{sec:Laplacian}). 

It is thus natural to wonder what the situation looks like in the intermediate case, namely the  Lorentzian convex co-compact locally symmetric spaces, both at the level of the spectral theory of the spacelike geodesic flow and of the pseudo-Riemannian Laplacian $\Box_g$.  The simplest non-elementary examples of such spaces are the quasi-Fuchsian Lorentzian manifolds. 

This paper establishes several spectral properties for quasi-Fuchsian Lorentzian manifolds that are analogous to those known for Riemannian convex co-compact hyperbolic manifolds.
More precisely, we prove that for a quasi-Fuchsian Lorentzian manifold $M=\Gamma\backslash \Omega_\Gamma$ equipped with its natural Lorentzian metric $g$ induced from $\AdS_3$:
\color{black}

\begin{enumerate}[leftmargin=*]
\item[(I)] The incomplete geodesic flow on the space-like unit tangent bundle $T^1M$ extends to a complete flow on a smooth manifold $\M\supset T^1M$ and we prove, using \cite{DG16}, that the resolvent 
\[R_X(\lambda)=(X+\la)^{-1}=\int_0^\infty e^{-t(X+\la)}dt : \CT(\M)\to \D'(\M)\] 
of the flow generator $X$ continues meromorphically from ${\rm Re}(\la)>0$ to $\lambda\in \C$, with poles forming a discrete spectrum of \emph{Ruelle resonances}. The elements in the range of the residue at a resonance $\lambda_0\in \C$ form a  finite-dimensional space and those in $\ker (X+\la_0) $ are called the \emph{Ruelle resonant states}.

\item [(II)] The Poincar\'e series $$\mc{D}_\la(x,x')=\sum_{\gamma\in \Gamma_{x,x'}^c}e^{-\la d(x,\gamma x')}$$ for $x,x'\in \Omega_\Gamma$ is holomorphic in ${\rm Re}(\la)>1$  and extends meromorphically to $\C$. Here $\Gamma_{x,x'}^c$ is the set of $\gamma\in 
\Gamma$ such that $x$ and $\gamma x'$ are joined by a (necessarily unique) spacelike geodesic and the distance $d(x,\gamma x')$ is the length of this geodesic.

\item[(III)] The pseudo-Riemannian Laplacian $\Box_g$ on $M$ admits a holomorphic resolvent in ${\rm Re}(\la)>-1$
\[R_{\Box_g}(\lambda) :\CT(M)\to \D'(M)\]
inverting $(\Box_g+\la(\la+2))$,  which continues meromorphically to $\lambda\in \C$. Its poles form a discrete spectrum of \emph{quantum resonances}.  The elements in the range of the residue at a resonance $\lambda_0\in \C$ form a  finite-dimensional space and those in $\ker (\Box_g+\la_0(\la_0+2)) $ are called the \emph{quantum resonant states}.
Moreover, we prove a version of quantum-classical correspondence between the two resolvents of $\Box_g$ and of $X$ and analyse the first poles. 
\end{enumerate}

While we take inspiration from similar results in Riemannian cases \cite{dfg,GHW18a}, we emphasise that the approach we take here is different, 
since we do not have a good spectral theory of $\Box_g$ at hand. Our philosophy is to use the microlocal method of \cite{DG16} to analyse the resolvent of the geodesic flow and to use it to first construct the Poincar\'e series, and finally the resolvent of $\Box_g$ using the Poincar\'e series.

Let us now explain our main results in more detail.

\subsection{Main results} 
The three-dimensional anti-de Sitter space $$\AdS_3=\{x\in \R^4\,|\, q(x)=-1\},\qquad q(x):=-x_1^2-x_2^2+x_3^2+x_4^2 $$ equipped with the Lorentzian 
metric given by $q|_{\AdS_3}$ is isometric to the Lie group $G=\SL(2,\R)$ equipped with its Killing Lorentzian metric (up to a normalisation constant).
The group $G\times G$ acts on $G$ as isometries by 
\[ (h_1,h_2)\cdot h=h_1hh_2^{-1}\]
 with isotropy group given by the diagonal subgroup $G_d:=\{ (h,h)\in G\times G\}$. 
The Riemannian analogue of $\AdS_3$ is the hyperbolic 3-space $\H^3$, which has $\SL(2,\C)$ as isometry group and ${\rm SU}(2)$ as isotropy group.
 Since  ${\rm SU}(2)$ is compact, every discrete subgroup of $\SL(2,\C)$ acts properly discontinuously from the left on $\H^3=\SL(2,\C)/{\rm SU}(2)$, in particular the convex co-compact subgroups. 
 In contrast, the diagonal subgroup $G_d$ of $G\times G$ is not compact and it is a very strong condition on a discrete subgroup $\Gamma\subset G\times G$ to act  properly discontinuously on $\mathrm{AdS}_3=(G\times G)/G_d$ \cite{BenoistProperness,KobayashiProperness}.

Instead, a quasi-Fuchsian group $\Gamma$ (defined in Definition \ref{def - AdS quasi fuchsian group}) has a non-empty $\Gamma$-invariant proper open subset $\Omega_\Gamma\subset \mathrm{AdS}_3$\footnote{In the main text we write $\O_+(\Lambda_\Gamma)$ instead of $\Omega_\Gamma$, which is a simplified notation for this introduction.} on which $\Gamma$ acts properly discontinuously and freely, defining the quasi-Fuchsian manifold $M:=\gam \Omega_\Gamma$. Here $\Omega_\Gamma$ is defined as the invisible domain (see Definition \ref{def - invisible domain})  of the limit set $\Lambda_\Gamma$ of $\Gamma$, which is a $\Gamma$-invariant acausal circle in the conformal boundary of $\AdS_3$ defined purely in terms of $\Gamma$, thus $M$ depends only on $\Gamma$. We refer to Section \ref{QFsubgroups} for precise definitions and properties.

These \emph{quasi-Fuchsian} groups, studied initially by Mess \cite{mess2007}\color{black},  are precisely the groups $\Gamma\subset G\times G$ of the form $\Gamma=\set{(\rho_1(\gamma),\rho_2(\gamma))}{\gamma\in \pi_1(\Sigma) }$, where $\rho_1,\rho_2:\pi_1(\Sigma)\to G= \SL(2,\R)$ are the holonomy representations of the fundamental group of a closed orientable surface $\Sigma$ of genus $\geq 2$ equipped with two hyperbolic metrics. Such marked metrics form the Fricke (or Teichmüller) space of $\Sigma$, and quasi-Fuchsian subgroups play an important role as an entry to the world of higher Teichmüller theory \cite{AnnaICM}. However, our focus does not lie on this aspect, as we shall consider just one quasi-Fuchsian group at a time rather than families of such groups. 
A quasi-Fuchsian group $\Gamma$ is called \emph{Fuchsian} if the representations $\rho_1,\rho_2$ from Mess' description are conjugate. In this case, $\Gamma$ is conjugate to the embedding of a co-compact torsion-free discrete subgroup of $\SL(2,\R)$ via the diagonal map $\SL(2,\R)\to\SL(2,\R)\times \SL(2,\R)$. 

The \emph{space-like unit tangent bundle} $T^1M\subset TM$ of a quasi-Fuchsian manifold $M=\gam \Omega_\Gamma$ is formed by all tangent vectors of length $1$ with respect to the Lorentzian metric $g$ on $M$. It is invariant under the geodesic flow. Analogously, on anti-de Sitter space we define $T^1\mathrm{AdS}_3\subset T\mathrm{AdS}_3$; notice that each fiber $T^1_x\AdS_3$ is a non-compact surface diffeomorphic to $\R\times \mathbb{S}^1$\color{black}. The space-like geodesics in $T^1\mathrm{AdS}_3$ are precisely those geodesics connecting two points in the conformal boundary $\partial\mathrm{AdS}_3=\mathbb{S}^1\times \mathbb{S}^1$. This corresponds to the Riemannian situation, where all geodesics in the unit tangent bundle $T^1\H^3$ connect two points in  $\partial\H^3=\mathbb{S}^2$. From a dynamical systems point of view, there is no essential difference between the space-like geodesic flow on $T^1\mathrm{AdS}_3$ and the Riemannian geodesic flow on $T^1\H^3$,  see Section \ref{sec:spacelike_flow}.

\color{black}Remarkably, when passing  to convex co-compact quotients of $\H^3$ and quasi-Fuchsian quotients of $\Omega_\Gamma\subset\mathrm{AdS}_3$, respectively, the geodesic and space-like geodesic flows on the quotients again turn out to have analogous dynamical properties: Both are so-called Axiom A systems (after \cite{smale67}), which means that the non-wandering set is compact and agrees with the closure of the union of all periodic geodesics, and the flow is uniformly hyperbolic on this set. For convex co-compact hyperbolic (Riemannian) manifolds,  this is a classical fact (see e.g. \cite{DG16} in a more general setting) while surprisingly in the Lorentzian quasi-Fuchsian case it seems to be a very recent observation: Apart from the general works \cite{Delarue_Monclair_Sanders, Delarue_Monclair_SandersII}, we are not aware of any  references.

Fix a  three-dimensional quasi-Fuchsian Lorentzian manifold $M:=\gam \Omega_\Gamma$. As a first result preparing the main results, we mention that Proposition \ref{prop - spacelike geodesic flow axiom A} provides a self-contained proof of the Axiom A property of the space-like geodesic flow $\varphi_t$ on $T^1M=T^1(\gam \Omega_\Gamma)$. One issue here is that the flow on  $T^1M$ is not complete: The flowout 
\[\widetilde \M:=\bigcup_{t\in \R}\varphi_t(T^1\Omega_\Gamma)\subset T^1\mathrm{AdS}_3\] 
is strictly larger than $T^1\Omega_\Gamma$, and it is not immediately clear whether $\Gamma$ acts properly discontinuously and freely on $\widetilde \M$. This is however true, as shown in Proposition \ref{prop - discontinuity domain unit spacelike tangent bundle}. Thus, 
\[\M:=\gam \widetilde \M\]
is a smooth manifold to which the complete flow $\varphi_t$ descends, extending the incomplete space-like geodesic flow on $T^1M\subset \M$. The actual statement of Proposition \ref{prop - spacelike geodesic flow axiom A} is now that $(\M,\varphi_t)$ is an Axiom A system. We call it the \emph{extended space-like geodesic flow}. The non-wandering set $\mathcal K$ of this flow, on which the essential dynamics takes place, is contained in $T^1M$.  

From a dynamical systems perspective, knowing that $\varphi_t$ is Axiom A makes it just as ``good'' as the geodesic flow of a Riemannian convex co-compact hyperbolic manifold (or any other Axiom A flow). Perhaps surprisingly, this insensitivity to the metric signature has the effect that the spectral properties of the generator $X$ of $\varphi_t$ are more easily accessible for us  than those of the Laplacian $\Box_g$ on $M$. We will therefore begin with the results on $X$. 

\subsubsection{Spectral theory of the generator of the extended space-like geodesic flow}

We use a simplified presentation; details being found in Section \ref{sec:resonancesstandalone}. 
First, observe that for $\lambda\in \C$ with $\Re \lambda >0$, the operator $\int_0^\infty e^{-t(X+\la)}\d t$ is a natural right inverse of $(X+\la)$: more precisely, it is defined by
\bqn
R_{X}(\lambda): L^\infty(\M)\to L^\infty(\M),\quad 
(R_{X}(\lambda)f)(x):= \int_0^\infty e^{-t\lambda} f(\varphi_{-t}(x))\d t
\eqn
and is continuous as an operator\color{black}
\bqn
R_{X}(\lambda): \CT(\mc{M})\to \mc{D}'(\M),
\eqn
where $\mc{D}'(\M)$ denotes the space of distributions on $\M$. This defines a holomorphic family of operators on $\{\Re \lambda>0\}\subset \C$ which are resolvents for the operator $-X$ in the sense that
\bqn
(X + \lambda)(R_{X}(\lambda)f)=R_{X}(\lambda)((X + \lambda)f)=f\qquad \forall\,f\in\CT(\mc{M}),
\eqn
$(X + \lambda)$ mapping $\mc{D}'(\mc{M})\to \mc{D}'(\mc{M})$ (resp.\ $\CT(\mc{M})\to\CT(\mc{M})$) on the left (resp.\ on the right). The resolvent $R_X(\la)$ propagates supports forward along the flow.

\begin{introthm}\label{introthm:resolvent}
The resolvent $R_{X}(\lambda):C_c^\infty(\mc{M})\to \mc{D}'(\mc{M})$ admits a meromorphic continuation from ${\rm Re}(\la)>0$ to  $\la\in \C$ with poles of finite rank. It has a simple pole at $\la_0:=h_{\rm top}-2$, where $h_{\rm top}$ is the topological entropy of the flow $\varphi_t$ on the non-wandering set $\mc{K}$ and the critical exponent of $\Gamma$, and no other 
pole in ${\rm Re}(\la)>\la_0-\eps$ for some $\eps>0$. The residue at $\lambda_0$ is a rank $1$ operator of the form
\[ \Pi_{\la_0}:={\rm Res}_{\la_0} R_{X}(\lambda)= u \otimes v\]
where $u$ and $v$ are respectively measures supported on the backward ($\mc{K}_-$) and forward ($\mc{K}_+$)  trapped sets defined by\color{black} 
\[{\mathcal K}_\pm:=\{x\in \M\,|\,\overline{\{\varphi_{\pm t}(x)\,|\,t\in [0,\infty)\}}\subset \M\text{ \emph{is compact}}\}.\] 
The measures $u,v$ can be expressed in terms of the Patterson-Sullivan measure on the limit set of $\Gamma$ (see Theorem \ref{theo:Poincare_series}).\color{black}
\end{introthm}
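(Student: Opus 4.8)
The plan is to obtain the meromorphic continuation from the theory of Pollicott--Ruelle resonances for open hyperbolic systems of \cite{DG16}, and then to locate and compute the leading resonance by combining the thermodynamic formalism of the restricted flow with Patterson--Sullivan theory for $\Gamma$. The dynamical input is already at hand: by Proposition \ref{prop - discontinuity domain unit spacelike tangent bundle} the flowout $\til\M=\bigcup_{t\in\R}\varphi_t(T^1\Omega_\Gamma)\subset T^1\AdS_3$ carries a free and properly discontinuous $\Gamma$-action, so $\M=\gam\til\M$ is a smooth manifold with a complete flow $\varphi_t$ generated by $X$; by Proposition \ref{prop - spacelike geodesic flow axiom A} the system $(\M,\varphi_t)$ is Axiom A with compact, uniformly hyperbolic non-wandering set $\mc K\subset T^1M$, and $\mc K_+\cap\mc K_-=\mc K$, where $\mc K_\pm$ are the unstable/stable sets of $\mc K$. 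The structural point still to be checked for \cite{DG16} is that $\M$ is ``non-trapping at infinity'': one needs a relatively compact open neighbourhood $\mc U$ of $\mc K$ such that $\mc K$ is the maximal $\varphi_t$-invariant subset of $\overline{\mc U}$ and such that orbits leaving $\mc U$ never return, which then allows one to build an escape function. Here the explicit picture of $\til\M$ inside $T^1\AdS_3$ is used: a space-like geodesic of $\AdS_3$ lies over $\mc K$ exactly when both of its endpoints in $\partial\AdS_3=\sph^1\times\sph^1$ lie on the limit curve $\Lambda_\Gamma$, and the orbit of a point of $\M$ escapes every compact set as soon as one of the two endpoints leaves $\Lambda_\Gamma$; together with convex co-compactness of $\rho_1,\rho_2$ this produces $\mc U$ and the escape function just as in the Riemannian convex co-compact case treated in \cite{DG16,GHW18a,Had18}.

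Granting this, one runs the argument of \cite{DG16}: build anisotropic Sobolev spaces adapted to $\varphi_t$ near $\mc K$, prove radial estimates at the hyperbolic trapped set and propagation estimates along the flow (the fact, recorded above, that $R_X(\la)$ propagates supports forward is what makes $X+\la$ Fredholm of index $0$ on these spaces for $\Re\la>-C$, with $C$ growing with the Sobolev order), and deduce that $R_X(\la):\CT(\M)\to\D'(\M)$ extends meromorphically to $\la\in\C$ with poles of finite rank. Moreover \cite{DG16} gives that each residue is a finite-rank operator whose range consists of distributions supported in $\mc K_+$ with wavefront set in $E_u^*$, while the associated co-resonant states are supported in $\mc K_-$ with wavefront set in $E_s^*$; once the pole at $\la_0$ is known to be simple, this forces $\mathrm{Res}_{\la_0}R_X(\la)=u\otimes v$ with $u$ a measure on $\mc K_+$ and $v$ a measure on $\mc K_-$.

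It remains to identify the first pole. By the localisation in \cite{DG16}, the resonances with $\Re\la>\la_0-\eps$ coincide with the ones attached to the transfer operator of the restricted flow $\varphi_t|_{\mc K}$ on the compact hyperbolic set $\mc K$, weighted by the unstable Jacobian. Since $\AdS_3$ is a Lorentzian space form, the Jacobi equation along a unit space-like geodesic reads $J''-J=0$ on the $2$-dimensional normal bundle, so $E_u$ is $2$-dimensional and $\det d\varphi_t|_{E_u}=e^{2t}$ exactly, as for the geodesic flow of $\H^3$; hence the relevant potential is the constant $-2$ and the leading resonance equals the topological pressure $P_{\mc K}(-2)=P_{\mc K}(0)-2=h_{\rm top}(\varphi_t|_{\mc K})-2=:\la_0$. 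By the thermodynamic formalism for hyperbolic flows this pressure is attained, $\la_0$ is real, the corresponding resonant state is unique up to a scalar and can be taken positive, and there is a spectral gap, i.e.\ no other resonance has real part $\ge\la_0-\eps$ for some $\eps>0$. For the Patterson--Sullivan description of $\Pi_{\la_0}$ I would then construct explicitly the measure $u$ on $\M$ by transporting the Patterson--Sullivan measure of $\Gamma$ on $\Lambda_\Gamma$ to $\mc K_+$ along the backward-endpoint map, multiplied by a smooth Busemann-weighted density in the flow and transverse directions, and symmetrically the measure $v$ on $\mc K_-$; a direct computation shows $(X+\la_0)u=0$ with $\WF(u)\subset E_u^*$ (and dually for $v$), and the simplicity of the pole identifies $\mathrm{Res}_{\la_0}R_X(\la)$ with $u\otimes v$ up to normalisation, also fixing the link with the Patterson--Sullivan measure that reappears in Theorem \ref{theo:Poincare_series}.

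The main obstacle, exactly as in the Riemannian convex co-compact case, is checking that $(\M,\varphi_t)$ genuinely fits the open-hyperbolic-system framework of \cite{DG16} \emph{at the ends of $\M$} --- the construction of the relatively compact $\mc U$ and of the escape function --- because $\M=\gam\til\M$ is a somewhat exotic non-compact manifold, so that one needs firm control of the geometry of space-like geodesics of $\AdS_3$ approaching $\partial\AdS_3$, leaning on the full strength of Propositions \ref{prop - spacelike geodesic flow axiom A} and \ref{prop - discontinuity domain unit spacelike tangent bundle}. A secondary difficulty is the precise determination of $\la_0$ and of the residue, which requires Patterson--Sullivan theory adapted to this Lorentzian setting, where the role of the ``distance'' is played by the length of the (generically unique) space-like geodesic joining two points of $\Omega_\Gamma$ --- a quantity without triangle inequality, so that the usual shadow-lemma arguments must be revisited.
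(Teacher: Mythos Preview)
Your overall strategy matches the paper's: verify that $(\M,\varphi_t)$ fits the open-hyperbolic-system framework of \cite{DG16} to get meromorphic continuation with finite-rank poles, then use the constancy of the unstable Jacobian ($\det d\varphi_t|_{E_u}=e^{2t}$, so $J^u=-2$) to locate the leading pole at $h_{\rm top}-2$, and finally identify the residue with Patterson--Sullivan data via the maps $B_\pm$ and $\Phi_\pm$. The paper's construction of the strictly convex neighbourhood $\mc U$ is not ad hoc, though: it comes from a specific result of Danciger--Gu\'eritaud--Kassel \cite[Lem.~6.4]{dgk18} giving a $\Gamma$-invariant strictly convex smooth neighbourhood of $C(\Lambda_\Gamma)$ in $\mathcal O_+(\Lambda_\Gamma)$, and the paper then checks a list of abstract assumptions (compactness of $\mc K$, a ``non-return'' condition, strict convexity of $\partial\mc U$, hyperbolicity) that allow \cite{DG16} to be run on non-compact $\M$.

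There is one genuine gap. You assert that ``by the thermodynamic formalism for hyperbolic flows \dots there is a spectral gap, i.e.\ no other resonance has real part $\ge\lambda_0-\eps$.'' Thermodynamic formalism gives uniqueness and simplicity of the equilibrium state, but it does \emph{not} give a spectral gap for the flow resolvent: Axiom A flows need not be exponentially mixing (constant-time suspensions of Anosov diffeomorphisms have Ruelle resonances accumulating on the line $\Re\lambda=\lambda_0$). The paper obtains both simplicity and the gap in one stroke by a zeta-function argument (Lemma \ref{uniqueres}): using $|\det(1-P_\gamma^m)|=e^{2m\ell_\gamma}(1+O(e^{-\nu m\ell_\gamma}))$ together with the prime orbit theorem \emph{with exponential error term} from \cite[Thm.~E]{Delarue_Monclair_Sanders}, one shows that $Z_X'/Z_X$ is meromorphic in a strip $\Re\lambda>h_{\rm top}-2-\eps$ with a single simple pole of residue $1$ at $h_{\rm top}-2$, and then invokes \cite[Thm.~4]{DG16} equating the poles of $R_X(\lambda)$ with those of $Z_X'/Z_X$. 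If you want to bypass the zeta function, you must instead feed in exponential mixing for the relevant equilibrium measure, which is available here from \cite[Thm.~C]{Delarue_Monclair_Sanders} or \cite{ChowSarkar2}, but is a substantial external input, not a consequence of thermodynamic formalism alone.
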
 
More generally, we consider a $\C$-vector bundle $\mathcal E$ over $\M$ and a lift of $X$ to a first order differential operator $\mathbf{X}$ acting on smooth sections of $\E$, which contains the case of the Lie derivative $\mc{L}_X$ of differential forms. The poles of the meromorphic resolvent are called \emph{Ruelle resonances} of $\mathbf{X}$. They form a discrete spectrum in the complex plane intrinsic to $\mathbf{X}$, and each Ruelle resonance comes with finite-dimensional spaces of (generalized) \emph{resonant states}. For simplicity, let us discuss them in this introduction only in the case $\mathbf{X}=X$, with more precise and detailed statements found in Theorem \ref{thm:resmain1}. In particular, the latter implies that, given a Ruelle resonance $\lambda_0\in \C$ of $X$, the resolvent $R_{X}(\lambda)$ is of the form
\bqn
R_{X}(\lambda)
 = R_H(\lambda) + \sum_{j=1}^{J(\lambda_0)} \frac{(-1)^{j-1}(X + \lambda_0)^{j-1} \Pi_{\lambda_0}}{(\lambda-\lambda_0)^j},
\eqn
where $J(\lambda_0)\in \N$, $R_H(\lambda):\CT(\mc{M})\to \D'(\mc{M})$ is holomorphic near $\lambda_0$, and $\Pi_{\lambda_0}: \CT(\mc{M})\to \D'(\mc{M})$
is a finite rank operator whose image is given by the space $\mathrm{Res}_{X}^{J(\lambda_0)}(\lambda_0)$ where
\[
\forall n\in \N, \,\, \mathrm{Res}_{X}^{n}(\lambda_0):=\{u\in \D'(\mc{M})\,|\, \supp u\subset {\mathcal K}_-,\mathrm{WF}(u)\subset E_u^\ast, (X + \lambda_0)^{n} u=0 \}
\]
are spaces of \emph{generalized resonant states} of $X$ at $\lambda_0$ (the resonant states are those in 
$\mathrm{Res}_{X}^{1}(\lambda_0)$). \color{black}
Here   
 $\mathrm{WF}(u)$ denotes the wavefront set of the distribution $u$, and $E_u^\ast\subset T^\ast \M|_{{\mathcal K}_-}$ is the annihilator of the unstable and the neutral subbundles provided by the uniform hyperbolicity of the flow $\varphi_t$, see Lemma \ref{lem:Gammaextension}.

\subsubsection{Poincaré series}\label{introsec:poincare} Let us now summarize the main results of Section \ref{sec:poincare_series}. Given two points $x,x'\in \Omega_\Gamma$,  the geodesic distance $d(x,\gamma x')$ is well-defined for all $\gamma\in \Gamma_{x,x'}^c$ in the complement $\Gamma_{x,x'}^c$ of the finite set $ 
\Gamma_{x,x'}:=\{ \gamma\in \Gamma \, |\, -q(x,\gamma x')>1 \}$, 
 and the \emph{Poincaré series} at $(x,x')$ is defined by
\begin{equation}\label{defin_of_mcD_la}
\mc{D}_\la(x,x'):= \sum_{\gamma\in \Gamma_{x,x'}^c} e^{-\la d(x,\gamma x')},
\end{equation}
where $\lambda\in \C$  satisfies ${\rm Re}(\la)>\delta_\Gamma$ with $\delta_\Gamma$ the critical exponent of $\Gamma$, see \eqref{delta_Gamma}. 
The function $\mc{D}_\la(x,x')$ satisfies $\mc{D}_\la(x,\gamma_0x')=\mc{D}_\la(x,x')=\mc{D}_\la(\gamma_0 x,x')$ (see \eqref{D_invariance_by_Gamma}), thus $\Omega_\Gamma\times \Omega_\Gamma\owns (x,x')\to \mc{D}_\la(x,x')\in \C$ descends to a function $\mc{D}_\la:M\times M\to \C$. Notice that due to the dependence of $\Gamma_{x,x'}$ on $x,x'$, the continuity of  $\mc{D}_\la(x,x')$ in $x,x'$ is not clear from the definition \eqref{defin_of_mcD_la}. \color{black} 
Using the resolvent of the flow and microlocal arguments inspired by \cite{Dang-Riviere} (see also \cite{Chaubet}), we prove the following result in Proposition \ref{prop:Poincare_series} and  Theorem \ref{theo:Poincare_series}:
\begin{introthm}\label{introthm:poincare}
The Poincar\'e series $\mc{D}_\la$ extends meromorphically to $\lambda \in \C$ as a map in $C^0(K_1\times K_2)$ for any 
compact sets $K_1\times K_2\subset M\times M$, with the principal part \color{black} 
at each pole $\la_0$ given by the Schwartz kernel of a finite-rank operator. 
The following explicit relation between the Poincaré series and the meromorphic resolvent $R_X(\lambda)$ from Theorem \ref{introthm:resolvent} holds: 
\[ \mc{D}_\la= \frac{1}{2}(\pi_*\otimes \pi_*)R_X(\la+2)+  \frac{1}{2}(\pi_*\otimes \pi_*)R_X(\la-2)- (\pi_*\otimes \pi_*)R_X(\la).
\]
Here we identify $R_X(\la)$ with its distributional kernel in $\D'(T^1M\times T^1M)$ and $\pi_\ast:\D'(T^1M)\to \D'(M)$ is the pushforward of distributions along the bundle projection $\pi:T^1M\to M$, and $\pi_*\otimes \pi_*:\D'(T^1M\times T^1M)\to \D'(M\times M)$ the pushforward 
on both variables. 

One has $h_{\rm top}=\delta_\Gamma$ and $\mc{D}_\la$ has a simple pole at $\la=\delta_{\Gamma}$ with residue 
\[ {\rm Res}_{\delta_\Gamma} \mc{D}_\la(x,x')=c_{\delta_\Gamma} f_{\delta_\Gamma}(x)f_{\delta_\Gamma}(x')\]
and no other pole in ${\rm Re}(\la)>\delta_\Gamma-\eps$ for some $\eps>0$, with $c_{\delta_\Gamma}>0$ some constant and $f_{\delta_\Gamma}$ a smooth positive function given on $\Omega_\Gamma$ by 
\[f_{\delta_\Gamma}(x)=\int_{\Lambda_\Gamma} |q(x,\nu)|^{-\delta_\Gamma}\omega_{\delta_\Gamma}(\nu)\]
with $\omega_{\delta_\Gamma}$ the Patterson-Sullivan measure and $\Lambda_\Gamma\subset \mathbb{S}^1\times \mathbb{S}^1\simeq \pl \AdS_3$ the limit set of $\Gamma$ (recall Definition \ref{def - AdS quasi fuchsian group}) 
\end{introthm}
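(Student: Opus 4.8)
The plan is to derive Theorem~\ref{introthm:poincare} as a consequence of the spectral results on the extended space-like geodesic flow in Theorem~\ref{introthm:resolvent} together with an explicit integral-geometric identity relating the Poincaré series to the pushforward of the flow resolvent. First I would establish the \emph{algebraic identity}
\[
\mc{D}_\la= \tfrac{1}{2}(\pi_*\otimes \pi_*)R_X(\la+2)+ \tfrac{1}{2}(\pi_*\otimes \pi_*)R_X(\la-2)- (\pi_*\otimes \pi_*)R_X(\la)
\]
in the region ${\rm Re}(\la)>\delta_\Gamma$ (suitably shifted), where both sides are honest functions. The right-hand side is computed by writing $R_X(\mu)$ on the universal cover $\widetilde{\M}\subset T^1\AdS_3$ as the integral over the flow, summing over $\Gamma$, and pushing forward along the fibers $T^1_x\AdS_3$. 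The key point is that, for a fixed pair $(x,x')$ and $\gamma\in\Gamma_{x,x'}^c$, there is a unique spacelike geodesic segment from $x$ to $\gamma x'$ of length $d(x,\gamma x')=:\ell$; parametrizing by unit speed produces one point in each fiber along the way, and the $t$-integral over $[0,\ell]$ against $e^{-\mu t}$, after the fiber pushforward, yields a factor built from $\int_0^\ell e^{-\mu t}\,dt$. The combination of weights $\tfrac12 e^{2t}+\tfrac12 e^{-2t}-1=\sinh^2 t$ (equivalently $\cosh(2t)-1$ up to normalisation) is designed so that the three-term sum collapses to $e^{-\la\ell}$ after integrating the Jacobian of the exponential map in $\AdS_3$ along a spacelike geodesic, which is exactly $\sinh^2$ in curvature $-1$. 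This reduces the identity to a one-dimensional computation of the volume form on $T^1\AdS_3$ in Fermi-type coordinates adapted to a spacelike geodesic; I would carry this out on $\AdS_3=\SL(2,\R)$ using the symmetric-space structure. Convergence of the $\Gamma$-sum for ${\rm Re}(\la)$ large is equivalent to convergence of the Poincaré series, hence controlled by $\delta_\Gamma$.

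Second, I would promote the identity to a statement about \emph{continuity}: the right-hand side, restricted to $K_1\times K_2\Subset M\times M$, lands in $C^0$. This is where the wavefront-set information in Theorem~\ref{introthm:resolvent} (more precisely Theorem~\ref{thm:resmain1}) enters. The distributional kernel of $R_X(\mu)$ has wavefront set contained in the union of the conormal to the diagonal, the flow-forward of that conormal, and $E_+^*\times E_-^*$ over $\mc{K}_+\times\mc{K}_-$. Applying the fiber pushforward $\pi_*\otimes\pi_*$: the pushforward along a submersion is continuous on distributions whose wavefront set does not meet the conormal of the fibers, and the resulting wavefront set is the pushforward of the remaining directions. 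One checks that $E_+^*$ (annihilator of the unstable and neutral bundles) is transverse to the fibers of $\pi:T^1M\to M$ — geometrically, the unstable/neutral directions project onto a full subspace complementary to the fiber directions — so the pushforward of $E_+^*\times E_-^*$ is the zero section, i.e. the pushed-forward kernel is \emph{smooth} away from the diagonal contributions; and near the diagonal the conormal directions also project out under the fiber integration in the two separate variables. Hence $(\pi_*\otimes\pi_*)R_X(\mu)$ is continuous (indeed smooth) on $M\times M$ away from a lower-dimensional set, and the explicit combination above, being a genuine function for ${\rm Re}(\la)$ large, extends to an element of $C^0(K_1\times K_2)$ by the meromorphic continuation. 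Meromorphy in $\la$ and the finite-rank structure of the polar parts are then inherited directly from Theorem~\ref{introthm:resolvent}: a pole of $R_X$ at $\mu_0$ contributes poles of $\mc{D}_\la$ at $\la_0\in\{\mu_0-2,\mu_0,\mu_0+2\}$, each with polar part the Schwartz kernel of a finite-rank operator obtained by pushing forward the corresponding $\Pi_{\mu_0}$.

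Third, for the first-pole analysis I would combine the above with the explicit form of the residue in Theorem~\ref{introthm:resolvent}: $\Pi_{\mu_0}=u\otimes v$ at $\mu_0=h_{\rm top}-2$, with $u,v$ measures on $\mc{K}_\pm$ expressible via the Patterson-Sullivan measure $\omega_{\delta_\Gamma}$ on $\Lambda_\Gamma$. The absence of poles of $R_X$ in ${\rm Re}(\mu)>\mu_0-\eps$ other than the simple one at $\mu_0$ gives, via the three-term relation, that the leading pole of $\mc{D}_\la$ is at $\la=h_{\rm top}$ — coming from the $R_X(\la-2)$ term with $\la-2=\mu_0$ — and I would argue the $R_X(\la)$ and $R_X(\la+2)$ terms are holomorphic there since their arguments have real part $>\mu_0$. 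Pushing forward $u\otimes v$ along $\pi\otimes\pi$ turns the Patterson-Sullivan measures into the function $f_{\delta_\Gamma}(x)=\int_{\Lambda_\Gamma}|q(x,\nu)|^{-\delta_\Gamma}\omega_{\delta_\Gamma}(\nu)$: this is the standard computation identifying the fiber integral of the conditional measures of the Bowen--Margulis measure with the Patterson--Sullivan density, using that the Busemann/horospherical coordinate along $T^1_x\AdS_3$ relative to a boundary point $\nu$ is governed by the quadratic form $q(x,\nu)$. The positivity and smoothness of $f_{\delta_\Gamma}$ follow because $|q(x,\nu)|>0$ for $x\in\Omega_\Gamma$ and $\nu\in\Lambda_\Gamma$ (this is essentially the defining property of $\Omega_\Gamma=\O_+(\Lambda_\Gamma)$), and $\omega_{\delta_\Gamma}$ has full support on the limit set; the equality $h_{\rm top}=\delta_\Gamma$ is the standard variational/Bowen identity for Axiom~A flows, here applied on $\mc{K}$, and I would cite it from the thermodynamic formalism once Axiom~A is known from Proposition~\ref{prop - spacelike geodesic flow axiom A}. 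The constant $c_{\delta_\Gamma}>0$ collects the normalisation of the Patterson-Sullivan measure and the Jacobian factor from the exponential-map computation in the first step; its positivity is what needs care but follows since it is a ratio of positive integrals.

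The main obstacle I anticipate is the second step: controlling the fiber pushforward $(\pi_*\otimes\pi_*)R_X(\mu)$ as a \emph{continuous function} (not merely a distribution) on compact subsets, uniformly enough that meromorphy in $\la$ transfers to the $C^0$-topology. This requires a careful wavefront-set transversality check for the fibers of $T^1M\to M$ against $E_+^*$ along $\mc{K}_+$, together with a quantitative version of the pushforward theorem that is compatible with the meromorphic family (e.g. via the $b$-calculus or anisotropic Sobolev space framework of \cite{DG16}) so that the estimates are locally uniform in $\mu$. The algebraic identity of step one is, by contrast, a finite computation on a three-dimensional symmetric space, and the residue identification in step three is standard once the measures from Theorem~\ref{introthm:resolvent} are in hand.
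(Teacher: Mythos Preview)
Your overall strategy is sound but \emph{genuinely different} from the paper's, and your description of step~1 contains a confusion worth flagging.

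The paper does \emph{not} compute $(\pi_*\otimes\pi_*)R_X(\mu)$ by a direct Fermi--Jacobian computation. Instead it first invokes the Dang--Rivi\`ere/Chaubet formalism to write $\mc{D}_\la(x,y)$ as a pairing of currents of integration $[T_x^1M]$, $[T_y^1M]$ with the resolvent $R_{{\bf X}_2}(\la)$ of the Lie derivative acting on \emph{$2$-forms}, after checking the transversality conditions $N^*(T_z^1M)\cap E_\pm^*=\{0\}$ that make the wedge products legal. Only then does it pass to scalar resolvents: the bundle $\Lambda^2_0 T^*\M$ splits along $\R\omega_u\oplus\R\omega_s\oplus(E_u^*\wedge E_s^*)$, and the shifts $\la\pm 2,\la$ arise because $\mc{L}_X\omega_u=-2\omega_u$, $\mc{L}_X\omega_s=+2\omega_s$. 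The coefficients $\tfrac12,\tfrac12,-1$ come from computing how $\iota_X[T_x^1M]$ decomposes in this basis and how $\omega_u,\omega_s,\beta_-^R\wedge\beta_+^L,\dots$ restrict to the vertical fibres. Your interpretation of these shifts via the exponential-map Jacobian $\sinh^2 t$ and the identity $\tfrac12 e^{2t}+\tfrac12 e^{-2t}-1=2\sinh^2 t$ is morally the same phenomenon in different clothing, but the paper's route through forms has the advantage of packaging the wavefront/transversality analysis and the meromorphic continuation in one step (it is exactly the Axiom~A framework of Theorem~\ref{thm:resmain1} applied to ${\bf X}_2$), whereas in your approach the microlocal control of the individual pushforwards has to be argued separately --- as you correctly identify as the main obstacle.

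Your sentence ``the $t$-integral over $[0,\ell]$ against $e^{-\mu t}$, after the fiber pushforward, yields a factor built from $\int_0^\ell e^{-\mu t}\,dt$'' is not right: the resolvent integrates $e^{-\mu t}$ over $t\in[0,\infty)$, and pairing against the fibre currents \emph{selects} the discrete values $t=\ell_\gamma$ (one per geodesic from $x$ to $\gamma y$), contributing $e^{-\mu\ell_\gamma}$ times the inverse Jacobian of the map $(v,t)\mapsto \pi\varphi_t(x,v)$ at that point. That Jacobian is indeed $\sinh^2\ell_\gamma$ (up to normalisation), and the three-term combination then cancels it. So your conclusion survives, but the mechanism is a stationary-phase/coarea selection, not an integral over $[0,\ell]$. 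Finally, for $h_{\rm top}=\delta_\Gamma$ the paper does not cite a Bowen identity but reads it off directly: $\delta_\Gamma$ is the abscissa of convergence of $\mc{D}_\la$, and by the pushforward formula this must equal the first pole of $R_X(\la-2)$, namely $h_{\rm top}$.
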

The relation between the Poincaré series and the resolvent of the flow on $2$-forms is proved in \cite{Dang-Riviere,Chaubet} but 
 the link with the resolvent of the flow acting on functions is rather surprising. A corresponding formula in the Riemannian case is not known, but can be studied using the same method. 
 \color{black}

\subsubsection{Spectrum of the pseudo-Riemannian Laplacian}

 The pseudo-Riemannian Laplacian $\Box_g$ on the quasi-Fuchsian Lorentzian manifold $M=\Gamma\backslash \Omega_\Gamma$ is formally self-adjoint when acting on $C_c^\infty(M)$, but it is not clear whether it is essentially self-adjoint on $L^2(M)$. On $\AdS_3$ however, $\Box_g$ has a self-adjoint extension with spectrum $ \sigma(\Box_g)=[1,\infty) \cup \{1-n^2\,|\,n\in \N^*\}$ (see \cite[Section 5]{Rossmann}) \color{black} and we show in Section \ref{sec:Laplacian} that it has a resolvent 
 \[ (\Box_g+\la(\la+2))^{-1}: L^2(\AdS_3)\to L^2(\AdS_3)\]
inverting $\Box_g+\la(\la+2)$ which is meromorphic in ${\rm Re}(\la)>-1$ with simple poles of infinite multiplicity at $\N$ (corresponding to discrete series of $\SL(2,\R)$), and which extends meromorphically to $\C$ as a map $C_c^\infty(\AdS_3)\to \mc{D}'(\AdS_3)$  with simple poles at $\Z$ and infinite multiplicity. Its integral kernel has the form $F_\la(-q(x,x'))$ for some explicit function $F_\la$, see Proposition \ref{prop:resolventAdS}.
The integral kernel $F_\la(-q(x,x'))$ can be slightly modified to remove the poles at $\la\in \Z$ into another integral kernel $F_\la^h(-q(x,x'))$ solving 
\[(\Box_g+\la(\la+2))F_\la^h(-q(x,x'))=\delta_{x=x'}\] 
in the region $\{(x,x')\in \AdS_3^2 \,|\, -q(x,x')>-1\}$, with $F_\la^h$ holomorphic in $\la\in \C$ and $F_\la^h(\zeta)=C(\la)F_\la(\zeta)$ in the region $\zeta>1$ for some explicit holomorphic function $C(\la)$. The function $F_\la^h$ \color{black} is given by (see Proposition \ref{prop:resolventAdShol})
\begin{equation}\label{def:F^h}
\begin{split}
& F^h _\lambda(\zeta):=\left \{\begin{array}{ll}
\dfrac{i}{4\pi\sqrt{\zeta^2-1}}(\zeta+\sqrt{\zeta^2-1})^{-(\la+1)},& \zeta>1, \\
\dfrac{1}{4\pi\sqrt{1-\zeta^2}}(\zeta+i\sqrt{1-\zeta^2})^{-(\la+1)} ,& \zeta \in (-1,1).
\end{array}\right.
\end{split}
\end{equation} 
For  $x,x'\in \Omega_\Gamma$ one has $-q(x,x')>-1$, thus the operator $R^h_{\Box_g}(\la)$ with integral kernel $F_\la^h(-q(x,x'))$ inverts $(\Box_g+\la(\la+2))$ on $\Omega_\Gamma$.

Motivated by the Riemannian case, we propose as a definition for the resolvent  of $\Box_g$ on $M$ the operator $R_{\Box_g}:C_c^\infty(M)\to \mc{D}'(M)$ with the following integral kernel: For $x,x'\in \Omega_\Gamma$ and ${\rm Re}(\la)>-1$
\begin{equation}\label{defRBox_g}
R_{\Box_g}(\la;x,x'):= \sum_{\gamma\in \Gamma} F_\la^h(-q(x,\gamma x'))
\end{equation}
which descends as an $L^1_{\rm loc}(M\times M)$ integral kernel that solves $(\Box_g+\la(\la+2))R_{\Box_g}(\la)f=f$ for all $f\in C_c^\infty(M)$.
In \eqref{defRBox_g}, we could also replace $F_\la^h(-q(x,\gamma x'))$ by the resolvent kernel $F_\la(-q(x,\gamma x'))$ on the cover $\AdS_3$, the result below would then hold modulo poles of possibly infinite rank at integers. We have not described this case in detail as it seems less natural to us to keep the infinite dimensional integral poles coming from the discrete series of ${\rm SL}(2,\R)$.

The following theorem is the main result concerning the spectral theory of the pseudo-Riemannian Laplacian $\Box_g$ and an associated quantum-classical correspondence.

\begin{introthm}\label{introthm:ext_resolvent}
The operator $R_{\Box_g}(\la)$ defined by \eqref{defRBox_g} admits a meromorphic extension to $\C$ as a continuous operator 
\[R_{\Box_g}(\la): C_c^\infty(M)\to \mc{D}'(M)\]
satisfying $(\Box_g+\la(\la+2))R_{\Box_g}(\la)f=f$ for all $f\in C_c^\infty(M)$, with poles of finite rank, and for all $f_1,f_2\in C_c^\infty(M)$ one has
\begin{equation}\label{relationbetweenresolvents}
\cjg R_{\Box_g}(\la)f_1,f_2\cjd= \frac{1}{2}\cjg R_X(\la)\pi^*f_1,\pi^*f_2\cjd-\frac{1}{2}\cjg R_X(\la+2)\pi^*f_1,\pi^*f_2\cjd+ \cjg H(\la)f_1,f_2\cjd,
\end{equation}
where $R_X(\la)$ is the meromorphic resolvent from Theorem \ref{introthm:resolvent} and $H(\la)$ is a holomorphic family of continuous 
operators $C_c^\infty(M)\to \mc{D}'(M)$.  
In particular, if $\Pi^X_{\la=0}$ and $\Pi_{\la_0}^{\Box_g}$ denote the residues of the resolvents of $X$ and $\Box_g$, for ${\rm Re}(\la_0)>\delta_{\Gamma}-4$, one has $\Pi^{\Box_g}_{\la_0}=\frac{i}{4\pi}\pi_*\Pi^{X}_{\la_0}\pi^*$ and 
\[ \pi_* : {\rm Ran}(\Pi^{X}_{\la_0}) \to  {\rm Ran}(\Pi^{\Box_g}_{\la_0})\]
is surjective. \color{black}

There is $\eps>0$ such that the resolvent $R_{\Box_g}(\la)$ has only the pole $\la=\delta_\Gamma-2$ 
in ${\rm Re}(\la)>\delta_\Gamma-2-\eps>0$, it is a simple pole with residue the rank $1$-operator
\[ \Pi^{\Box_g}_{\delta_\Gamma-2}=\frac{i}{2\pi}c_{\delta_\Gamma}f_{\delta_\Gamma}\otimes f_{\delta_\Gamma}\]
with $c_{\delta_\Gamma}$ and $f_{\delta_\Gamma}$ defined in Theorem \ref{introthm:poincare}.
\end{introthm}

We note that in the Riemannian case, a relation corresponding to \eqref{relationbetweenresolvents} is not known, as far as we know, and it could be possible to use the same type of proofs to investigate the precise relation in that setting too. \color{black}
This result also implies that the quantum resonances are contained in translates of the Ruelle resonance for the spacelike geodesic flow, and the quantum resonant states are necessarily pushforwards of Ruelle resonant states, in a way comparable to the Riemannian case 
\cite{dfg,GHW18a}. We also study the link between the quantum resonant states and the Ruelle resonant states invariant by the unstable derivatives. 
The pushforward $f=\pi_*u$ on  $M=\Gamma\backslash \Omega_\Gamma$ of a Ruelle resonant state (with resonance $\la$) $u\in \mc{D}'(T^1M)$ killed by the unstable derivatives of the geodesic flow satisfies $(\Box_g+\la(\la+2))f=0$ 
and we show in Corollary  \ref{c:resonant_states_correspondence} that there exists $\eps_0>0$ depending on the Lyapunov exponents of the flow such that the resonant states $f$ associated to quantum resonances in ${\rm Re}(\la)>\delta_\Gamma-\eps_0$ are necessarily pushforwards of Ruelle resonant states $u$ killed by unstable derivatives. We are not fully able to prove the converse: we do show that a nontrivial Ruelle resonant state (with resonance $\la$) killed by unstable derivatives provides  a nontrivial $\Gamma$-invariant solution $u\in \mc{D}'(\AdS_3)$ of $(\Box_g+\la(\la+2))u=0$ on $\AdS_3$, but we do not know if the restriction of $u$ to the open set $\Omega_\Gamma$ is identically $0$ or not. Finally, in Section \ref{s:Fuchsian}, we describe completely the quantum resonances and resonant states when the group $\Gamma$ is Fuchsian, i.e., the Lorentzian $3$-manifold becomes 
\begin{equation}\label{intro:Fuchsian} 
M= (-\pi/2,\pi/2) \times \Sigma, \quad g=-d\theta^2+\cos(\theta)^2g_{\Sigma}, 
\end{equation}
with $(\Sigma,g_\Sigma)=\Gamma\backslash \H^2$ a closed oriented hyperbolic surface and $\Gamma\subset \SL(2,\R)$ a co-compact discrete group.

\subsection{Further previous results, comments and open questions}

For the case of pseudo-Riemannian hyperbolic spaces over $\R,\C,\mathbb{H},\mathbb{O}$ which include $\Z_2$-quotients of anti-de Sitter spaces,  \cite{FrahmSpilioti,Roby} defined quantum resonances for the pseudo-Riemannian Laplacian by meromorphic extension of a distributional resolvent. 
In the case of $\AdS_3$, on the globally symmetric level there are many similarities between Riemannian and pseudo-Riemannian Laplacians: the Lorentzian Laplacian on  $\AdS_3=(\SL(2,\R)\times \SL(2,\R))/\mathrm{diag}\simeq \SL(2,\R)$ coincides with the Casimir operator of $\SL(2,\R)$, and as such it has a very well understood and explicitly described spectral theory thanks to the Fourier inversion formula, see Appendix \ref{app:resolvent} and \cite{Andersen,Rossmann,Strichartz89,StrichartzCorrigendum}. In particular it is self-adjoint on $L^2$ and its spectral decomposition reduces to the representation theory of ${\rm SL}(2,\R)$. The spectral theory of pseudo-Riemannian locally homogeneous spaces is initiated in \cite{Kassel_Kobayashi} where it is proved that $\Box_g$ is self-adjoint under certain conditions, and the discrete spectrum is studied; see \cite[Proposition 1.13]{Kassel_Kobayashi} for quotients  $\Gamma\backslash \AdS_3$ by discrete subgroups $\Gamma$ acting properly discontinuously and freely. For quasi-Fuchsian groups $\Gamma$, the action is not properly discontinuous on the whole $\AdS_3$ but only on an open subset $\Omega_\Gamma$. The quotient is a globally hyperbolic non-complete spacetime, and it is not clear how to construct a self-adjoint extension of $\Box_g$, where at least some boundary condition should appear. We are not aware of any previous results on the spectral properties of $\Box_g$, not even for the special case of Fuchsian groups. The problem here is that the ``boundary'' of $M=\Omega_\Gamma/\Gamma$ has a quite singular structure:  the boundary $\partial\Omega_\Gamma$ is made of lightlike geodesics with one endpoint in the limit set $\Lambda_\Gamma$ and the action of $\Gamma$ arbitrarily close to this set is losing more and more its proper discontinuity property. In the Fuchsian case it can be considered as two conical singularities at $\theta=\pm \pi/2$ in the model \eqref{intro:Fuchsian}. 
We have constructed a resolvent for $\Box_g$ by $\Gamma$-periodizing the resolvent of $\Box_g$ on $\AdS_3$ (or rather some subset of $\AdS_3$) and its properties show that it seems to be a reasonable family of operators inverting the Klein-Gordon operator. It would be interesting to understand its finer spectral properties and in particular if it comes from a good self-adjoint problem for $\Box_g$ on $M$.
By classical work of Hadamard \cite{Hadamard}, there are two natural inverses of $(\Box_g+\la(\la+2))$: the advanced and retarded inverses $R^\pm_{\Box_g}(\la)$, which are uniquely defined as maps $C_c^\infty(M)\to C^\infty(M)$ by the property that 
\[ \supp (R^\pm_{\Box_g}(\la)f)\subset J^\pm(\supp f)\]
with $J^\pm(\supp f)$ denoting the causal future/past of $\supp(f)$. These resolvents are holomorphic in $\lambda$ and not related to a spectral problem for $\Box_g$. On the other hand, it is known for example from \cite{Duistermaat_Hormander} that there are parametrices for $(\Box_g+\la(\la+2))$ on $M$, inverting this operator up to smoothing operators, and with particular properties of the wave-front set of these parametrices: these are called the Feynman/anti-Feynman parametrices. Exact Feynman/anti-Feynman inverses, called Feynman propagators, are also obtained in the globally hyperbolic case, see for example \cite{Radzikowski,Islam_Strohmaier,Vasy2} and \cite{Gerard-Wrochna} in the massive case, with the goal of constructing so-called Hadamard states for quantum field theory. It is proved in certain globally hyperbolic settings (static cases or asymptotically Minkowski) in \cite{Derezinski,Vasy0,Taira} that the Feynman propagator can be constructed as a resolvent of the Klein-Gordon operator, which is self-adjoint in these cases. Although we do not discuss it later, it turns out that our inverse $R_{\Box_g}(\la)$ is a Feynman propagator for $\Box_g+\la(\la+2)$ when $\la\in \R$. It is plausible that a Selberg trace formula holds for $\Box_g$ in this setting, relating poles of $R_{\Box_g}(\la)$ and zeros of a Selberg zeta function, this will be investigated in future work. In this direction, a Selberg zeta function is defined in \cite{Policott_Sharp} in the setting of projective Anosov representations in ${\rm SL}(n,\R)$ and is proved to have a meromorphic extension to $\C$. 
In a related direction, a  trace formula is proved in \cite{Strohmaier-Zelditch} for stationary spacetimes, but our case is not stationary and the trace formula obtained in \cite{Strohmaier-Zelditch} is for the timelike Killing vector field rather than $\Box_g$. 

We note that, in the context of the wave equation $\Box_gu=0$, there are notions of resonances (or quasi normal modes) in Lorentzian settings motivated by physics, such as those obtained in  \cite{SaBarretoZworski,Bachelotetal,Dyatlov_QNM}, but these are rather related to resonances of an elliptic operator associated to a spacelike slice or poles of the inverse of a family of elliptic operators coming from the wave equation for $\Box_g$. The resonances we define are rather poles of the resolvent of $\Box_g$ and associated to particular solutions of the Klein-Gordon equations. 

As for the geodesic flow, it is known that the extended space-like geodesic flow $\varphi_t$ on $\M$ is exponentially mixing on its non-wandering set: if $\Gamma$ is irreducible, this is proved in \cite[Thm.\ C]{Delarue_Monclair_Sanders} for Gibbs measures with respect to arbitrary Hölder potentials, and without the irreducibility assumption  in \cite{ChowSarkar2} for the equilibrium measure. 
 
\subsection{Structure of the paper}

In Section \ref{sec:geometry} we introduce all necessary tools from the geometry of the three-dimensional anti-de Sitter space and related concepts such as its isometry group, the conformal boundary and splittings of the tangent bundle of the space-like unit tangent bundle. In Section \ref{sec:Laplacian} we study explicit resolvents of the pseudo-Riemannian Laplacian on the three-dimensional anti-de Sitter space as well as the Poisson transform, for which we prove an injectivity result in Proposition \ref{injectivite_Poisson}. In Section \ref{sec:quasifuchsian} we introduce quasi-Fuchsian groups and study various properties of the obtained quotients. In particular, in Subsection \ref{sec:discdomT1}, we extend the spacelike geodesic flow to a complete flow and study its dynamics in Subsection \ref{sec:nonwanderingset}, the main result being Proposition \ref{prop - spacelike geodesic flow axiom A} establishing the Axiom A property. Utilizing the latter, we establish in Section \ref{sec:resonancesstandalone} the meromorphic continuation of the flow resolvent and define Ruelle resonances and (co-)resonant states. The arguments are formulated in a standalone fashion, only relying on certain abstract properties that are supposed to facilitate independent applications beyond the present paper. In Subsection \ref{sec:applicationofsection} they are shown to be satisfied in the case of the (extended) space-like geodesic flow.  Section \ref{Sec:Poincare} is devoted to the meromorphic continuation of Poincaré series, which are expressed in terms of flow resolvents in Subsection \ref{sec:seriesresolvent}. This is then applied in Section \ref{sec:resolventBox} to define and meromorphically continue resolvents for the pseudo-Riemannian Laplacian on the quasi-Fuchsian manifold $M$.  In Subsection \ref{s:Fuchsian} we study the Fuchsian case as a notable special case in which we obtain quite precise results. Finally, Appendix \ref{app:resolvent} contains an explicit computation of the Schwartz kernel of the resolvent of the pseudo-Riemannian Laplacian on $\AdS_3$ using the Fourier inversion formula for $\SL(2,\R)$.

\subsection*{Acknowledgements} B.~D.\ has received funding from the Deutsche Forschungsgemeinschaft (German Research Foundation, DFG) through the Priority Program (SPP) 2026 ``Geometry at Infinity'' as well as Project-ID 491392403 – TRR 358. 
We thank Alex Strohmaier and Michal Wrochna for providing useful references and Tristan Humbert for useful comments and for remarking 
typos in a preliminary version of the manuscript.  Finally, we thank the anonymous referee for numerous helpful remarks. \color{black}

\subsection*{Statements and Declarations}

The authors have no competing interests to declare. The authors confirm that all data relevant to this work is included in this article.

\section{Geometry of \texorpdfstring{$\AdS_3$}{anti-de Sitter space} and its unit spacelike tangent bundle}\label{sec:geometry}

We begin by introducing the three-dimensional anti-de Sitter space, its isometry group, conformal boundary and space-like unit tangent bundle, and several useful ``models'' (i.e., coordinates/parametrizations).  Each model allows for different simplifications:
\begin{itemize}
\item We first present $\AdS_3$ as a quadric in $\R^4$ in Section \ref{subsec:AdS3andboundary}. It is used to interpret the isometry group as the matrix group $\SO(2,2)$, so its action is understood through linear algebra. It is also a convenient model for the study of geodesics and the geodesic flow.
\item In Section \ref{sec:torus_coord}, we introduce an explicit diffeomorphism from $\AdS_3$ to the solid torus $\mathbb S^1\times\mathbb D$. It gives an explicit relation between the Lorentzian metric on $\AdS_3$ and the hyperbolic metric on the Poincaré disk $\mathbb D$, in a way that naturally extends to the conformal boundaries, making it a good candidate for global considerations.
\item A more Lie-theoretic language is developed in Section \ref{sec:SL2Coord}, where $\AdS_3$ is seen as the Lie group $\SL(2,\R)$ with a bi-invariant Lorentzian metric (induced by the Killing form on the Lie algebra), and the pseudo-Riemannian Laplacian is interpreted as a Casimir operator. The point stabilizers of the various homogeneous spaces associated to $\AdS_3$ have simple descriptions in this model.
\item Finally, we introduce the upper half space coordinates in Section \ref{sec:upperhalfplane}. This is mostly interesting for computations that happen near the conformal boundary of $\AdS_3$, and is useful to analyze the Poisson transform acting on distributions in Section \ref{s:Poisson_transform}, in particular for its injectivity in Proposition \ref{injectivite_Poisson}.
\end{itemize}
\color{black}
 Then we focus on various splittings of the tangent bundle of the space-like unit tangent bundle and pushforwards of distributions. For background and more details, see e.g.\ \cite{mess2007, dgk18,glorieux-monclair} and the references therein. \color{black} 

\subsection{Pseudo-Riemannian geometry of \texorpdfstring{$\AdS_3$}{anti-de Sitter space} and its conformal boundary} \label{subsec:AdS3andboundary}
On $\R^4$ we consider the pseudo-Riemannian metric defined by the non-degenerate quadratic form
\[ q(x):=q(x,x):=-x_1^2-x_2^2+x_3^2+x_4^2\] 
and write 
\begin{equation}\label{def:AdS3}
\AdS_3:=\{x\in\R^4\, |\,  q(x)=-1\}.
\end{equation}

The quadratic form $q$ restricts to the tangent bundle as a Lorentzian metric $g$
\[ \forall x\in \AdS_3, \, v\in T_x\AdS_3=\{ v\in \R^4\,|\, q(x,v)=0\}, \quad 
g_{x}(v,v):= q(v) .\]
Note that $\AdS_3$ inherits an orientation from the standard orientation of $\R^4$ thanks to the decomposition $\R^4=T_x\AdS_3\oplus \R\cdot x$ for any $x\in \AdS_3$. 
 As in any Lorentzian manifold, tangent vectors and geodesics fall into three types: let $x\in \AdS_3$ and $v\in T_x\AdS_3$ (i.e.\ $q(x,v)=0$).
\begin{enumerate}
\item If $v$ is \emph{timelike}, i.e.\ $g_x(v,v)=q(v)<0$, then it is tangent to a closed timelike geodesic on $ \AdS_3$ parametrised by
\[ t\mapsto \cos\left(\sqrt{-q(v)}t\right) x + \sin\left(\sqrt{-q(v)} t\right) \frac{v}{\sqrt{-q(v)}}.\]
\item If $v$ is \emph{lightlike}, i.e.\ $g_x(v,v)=q(v)=0$, the associated geodesic is parametrised by
\[ t\mapsto x+tv.\]
\item If $v$ is \emph{spacelike}, i.e.\ $g_x(v,v)=q(v)>0$, the associated geodesic is parametrised by
\begin{equation}\label{formule_geod_spacelike}
t\mapsto \cosh\left(\sqrt{q(v)}t\right) x + \sinh\left(\sqrt{q(v)} t\right) \frac{v}{\sqrt{q(v)}}.
\end{equation}
\end{enumerate}
The Lorentzian manifold $\AdS_3$ is also \emph{time oriented}, i.e.\ there is a global continuous choice of a connected component of the light cone $\{v\in T\AdS_3\setminus \{0\}\,|\, q(v)\leq 0 \}$, declaring a timelike or lightlike vector $v\in T_x\AdS_3$ to be \emph{future oriented} if $q(v,{\bf T}(x))<0$, where ${\bf T}$ is the vector field on $\AdS_3$ defined by
\[ {\bf T}(x_1,x_2,x_3,x_4)=(x_2,-x_1,x_4,-x_3).\] 
\color{black}
Note that points in $\AdS_3$ are not always connected by geodesics. Given $x,y\in\AdS_3$, there are four possibilities:
\begin{enumerate}
\item If $q(x,y)<-1$, then $x$ and $y$ are joined by a spacelike geodesic.
\item If $q(x,y)=-1$, then $x$ and $y$ are joined by a lightlike geodesic.
\item If $q(x,y)\in (-1,1)$, then $x$ and $y$ are joined by a timelike geodesic.
\item If $q(x,y)\geq 1$, then $x$ and $y$ are not joined by a geodesic.
\end{enumerate}
Spacelike and lightlike geodesics are homeomorphic to $\R$, so when two points $x,y\in\AdS_3$ are joined by a spacelike or lightlike geodesic, we can talk about the \emph{geodesic segment} between $x$ and $y$. On the other hand, timelike geodesics are topological circles, so there is an ambiguity when defining the geodesic segment between two points $x$ and $y$ that are joined by a timelike geodesic, just as in a Riemannian sphere. \color{black}
\begin{lemma}\label{Lem:q_and_distance}
Let $x,y\in \AdS_3$ and assume that $-q(x,y)>1$. Then there is a unique spacelike geodesic segment $\alpha_{x,y}$ with endpoints $x,y$ and 
\[-q(x,y)=\cosh(d(x,y))\]
where $d(x,y)$ denotes the geodesic distance between $x,y$, i.e.\ the length of $\alpha_{x,y}$ with respect to the anti-de Sitter metric $g$.
\end{lemma}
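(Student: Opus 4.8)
The plan is to verify the formula $-q(x,y)=\cosh(d(x,y))$ by using the explicit parametrization of spacelike geodesics given in \eqref{formule_geod_spacelike}, and to handle uniqueness by a group-theoretic normalization argument.

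First I would establish existence. Assuming $-q(x,y)>1$, I want to produce a spacelike geodesic joining $x$ and $y$. A natural approach is to look for a unit spacelike vector $v\in T_x\AdS_3$ and a time $T>0$ such that the geodesic \eqref{formule_geod_spacelike} with $q(v)=1$ satisfies $\cosh(T)x+\sinh(T)v=y$. Pairing this identity with $x$ via $q$ and using $q(x,x)=-1$, $q(x,v)=0$ gives $\cosh(T)(-1)=q(x,y)$, i.e. $\cosh(T)=-q(x,y)$, which has a unique solution $T=d(x,y)>0$ precisely because $-q(x,y)>1$. Then $v:=(y-\cosh(T)x)/\sinh(T)$ is the only candidate, and one checks directly that $q(x,v)=0$ (from the previous relation) and $q(v,v)=(q(y,y)-2\cosh(T)q(x,y)+\cosh^2(T)q(x,x))/\sinh^2(T)=(-1+2\cosh^2(T)-\cosh^2(T))/\sinh^2(T)=1$, so $v$ is indeed a unit spacelike vector and the curve \eqref{formule_geod_spacelike} is a spacelike geodesic from $x$ (at $t=0$) to $y$ (at $t=T$). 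Its length with respect to $g$ is $\int_0^T\sqrt{q(v,v)}\,dt=T$, so $d(x,y)\le T$; combined with the fact that any spacelike geodesic segment joining the two points is locally length-minimizing among spacelike competitors (or simply by \emph{defining} $d$ as the length of the unique such geodesic, as the lemma statement suggests), we get $d(x,y)=T$, hence $-q(x,y)=\cosh(d(x,y))$.

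For uniqueness of the spacelike geodesic, the cleanest route is to exploit the transitivity of the isometry group $G\times G\cong\SO(2,2)^\circ$ acting on $\AdS_3$: one can move $x$ to a fixed basepoint $x_0$, and then, using the stabilizer of $x_0$ (which acts on $T^1_{x_0}\AdS_3$), normalize $v$ as well, reducing to an explicit model where the spacelike geodesics through $x_0$ are listed directly and one sees that $y$ determines $(v,T)$ uniquely. Alternatively, and perhaps more simply, uniqueness follows from the computation above itself: any spacelike geodesic from $x$ to $y$ has, after normalizing its speed to $1$, the form \eqref{formule_geod_spacelike}, and reaching $y$ at some time $t=T'>0$ forces $\cosh(T')=-q(x,y)$ (hence $T'=T$) and then $v=(y-\cosh(T)x)/\sinh(T)$, which is exactly the vector found above; so the geodesic is unique.

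The main obstacle I anticipate is not the computation — that is a short, routine pairing argument — but rather being careful about what "the geodesic distance $d(x,y)$" means in a Lorentzian setting, where there is no genuine distance function and $g$-length of spacelike curves is not minimized in the usual sense. The safest reading, consistent with how the lemma will be used (in defining the Poincaré series $\mc{D}_\la$), is that $d(x,y)$ is \emph{defined} to be the $g$-length of the unique spacelike geodesic $\alpha_{x,y}$; then the content of the lemma is precisely the existence-and-uniqueness statement together with the identity $\cosh(d(x,y))=-q(x,y)$, all of which fall out of the explicit parametrization as above. If instead one wants $d$ characterized variationally, one must invoke the standard fact that within the spacelike directions the anti-de Sitter metric behaves like a Riemannian space form of curvature $-1$ along that geodesic, which again reduces to the same hyperbolic-trigonometry identity.
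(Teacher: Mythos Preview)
Your proof is correct and takes essentially the same approach as the paper: solve $\cosh(T)x+\sinh(T)v=y$ by pairing with $x$ to determine $T$, then solve for $v$ and verify it is a unit spacelike tangent vector. The only cosmetic difference is that the paper first uses transitivity of the isometry group to reduce to $x=e=(1,0,0,0)$ before doing the identical computation, whereas you work directly at a general point $x$.
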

\begin{proof}  The isometry group $G$ of $\AdS_3$ acts transitively on $\AdS_3$ (see Section \ref{sec:isometries}), it thus suffices to consider the case $x={\rm e}=(1,0,0,0)$.
Now, we want to find $v\in T_{\rm e}\AdS_3$  and $t$ such that $q(v)=1$ and $\gamma_t(v)=y$ where $\gamma_t(v)$ is the spacelike geodesic passing through $x={\rm e}$ and tangent to $v$. By \eqref{formule_geod_spacelike}, these must solve ${\rm e}\cosh(t)+v\sinh(t)=y$. Taking the $q$ product of this expression with ${\rm e}$, we get $q({\rm e},y)=-\cosh(t)$ and there is a unique such $t\geq 0$ since $-q({\rm e},y)>1$ by assumption. Now we set 
$v=(\sinh(t))^{-1}(y-{\rm e}\cosh(t))$ and we check that 
\[q(v)=(\sinh(t))^{-2}(-1-\cosh(t)^2-2q({\rm e},y)\cosh(t))=(\sinh(t))^{-2}(-1+\cosh(t)^2)=1\qedhere.\]
\end{proof}

\color{black} We can use the map 
\begin{align*}
 f: \R^+ \times \AdS_3 &\to \R^4\\
  (s,x)&\mapsto sx
\end{align*}
to pull back the flat pseudo-Riemannian metric $q(x,x)$ to $f^*q=-\d s^2+s^2g$. The associated Laplacian $\Box_q=\d^*\d$ acting on functions (with ${\rm d}$ the exterior derivative and ${\rm d}^*$ its formal $L^2$-adjoint) \color{black} has the form 
\bq 
\Box_q=\pl_{x_1}^2+\pl_{x_2}^2-\pl_{x_3}^2-\pl_{x_4}^2=\pl_s^2 +3s^{-1}\pl_s+s^{-2}\Box_{g}.\label{eq:Laplacians}
\eq
We can also view $\AdS_3$ as the quotient 
\[ \AdS_3= \{x\in \R^4 \, |\, q(x)<0 \}/\R^+\]
where $\R^+$ acts by scaling $\lambda\cdot x=\lambda x$, $\lambda\in \R^+$. In this case, a natural compactification is given by 
\[ \bbar{\AdS}_3= \{x\in \R^4\setminus\{0\} \, |\, q(x)\leq 0 \}/\R^+\]
with boundary
\[ \pl \AdS_3= \{x\in \R^4\setminus\{0\} \, |\, q(x)=0 \}/\R^+.\]
We shall denote elements in the quotient spaces above by $[x]$. 
The boundary $\partial\AdS_3$ is naturally endowed with a conformal Lorentzian structure. Indeed, consider the radial projection $\pi:\R^4\setminus\{0\}\to  (\R^4\setminus\{0\})/\R^+\simeq \mathbb S^3$ and let $\Sigma\subset \R^4$ be a surface such that $\pi$ restricts to a diffeomorphism $\pi\vert_\Sigma:\Sigma\to \partial \AdS_3$.  Such surfaces exist, and the reference example throughout this paper will be 
\[\Sigma=\mathbb T^2:=\{x\in\R^4 \,|\, x_1^2+x_2^2=x_3^2+x_4^2=1\}=\mathbb{S}^1\times \mathbb{S}^1\]
\color{black}For any such surface $\Sigma$, the tangent spaces $T_x\Sigma\subset\R^4$ must be of signature $(1,1)$ for any $x\in\Sigma$, so the restriction of $q$ to $T\Sigma$ can be pushed forward by $\pi\vert_\Sigma$ to a Lorentzian metric $g_\Sigma:=\left(\pi\vert_\Sigma\right)_*q\vert_{T\Sigma}$ on $\partial\AdS_3$. Given another such surface $\Sigma'\subset \R^4$, the diffeomorphism $\psi:(\pi\vert_{\Sigma'})^{-1}\circ\pi\vert_\Sigma :\Sigma\to\Sigma'$ can be written as $\psi(\xi)=\lambda(\xi)\xi$ for all $\xi\in\Sigma$, where $\lambda:\Sigma\to \R^+$ is a smooth function. As $\d\psi=\lambda \d\xi+\xi \d\lambda$ and $q(\xi)=q(\xi,\d\xi)=0$ on $\Sigma$, we find that $g_{\Sigma'}=(\pi_*\lambda)^2 g_\Sigma$, so $g_{\Sigma'}$ is conformal to $g_{\Sigma}$, \color{black} and we have defined a Lorentzian conformal class on $\partial \AdS_3$. It is important to note, however, that there is no canonical choice of such a surface, i.e. there is no canonical representative of the conformal class in the sense of invariance under the isometry group of $\AdS_3$. Note moreover that for any smooth Lorentzian metric $g_\pl$ in the conformal class, there is a unique surface $\Sigma\subset\R^4$ such that $g_\pl=g_\Sigma$.  Indeed, write $g_\pl=\lambda^2 g_{\mathbb T^2}$ for some smooth positive function $\lambda:\mathbb T^2\to\R^+$, and set $\Sigma_\lambda=\set{\lambda(\xi)\xi}{\xi\in\mathbb T^2}$, so that $g_{\Sigma_\lambda}=g_\pl$ by the previous computation. \color{black}

We now introduce a useful decomposition of $\AdS_3\times \pl \AdS_3$ according to the causal relation between points and boundary directions:
\begin{equation}\begin{split}\label{defmcO} 
\mc{O}_\pm &:=\set{ (x,[\nu])\in \AdS_3\times \pl \AdS_3 }{\pm q(x,\nu)<0},\\
\mathcal O_0&:=\{ (x,[\nu]) \in \AdS_3\times \pl \AdS_3\,|\, q(x,\nu)=0 \},\\
\quad \mc{O}&:=\mc{O}_+\cup \mc{O}_-,\end{split}
\end{equation}
Note that $\mc{O}_\pm$ are open sets in $\AdS_3\times \pl \AdS_3$ and $\mc{O}_+$ consists of points $(x,[\nu])$ in $\AdS_3\times \pl \AdS_3$ for which  there is a spacelike geodesic through $x$ with endpoint $\nu$\color{black}. Similarly, the points in $\mathcal O_0$ are the initial and terminal points of lightlike geodesics. The closure of $\mc{O}_+$ in $\AdS_3\times \pl \AdS_3$ is 
\[ \bbar{\mc{O}}_+=\mc{O}_+\cup \mathcal O_0=\set{(x,[\nu])\in \AdS_3\times \pl \AdS_3}{q(x,\nu)\leq 0}.\]

There is finally a natural involution $I$ on $\AdS_3$ given by 
\[  I: \AdS_3 \to \AdS_3 , \quad I(x)=-x\]
and acting freely. We denote by $\AdS_3^+=\AdS_3/\{I,-I\}$ the quotient, so that $\AdS_3$ is a double cover of $\AdS_3^+$. The same also holds for the conformal boundary $\pl \AdS_3$ and the conformal boundary 
of $\AdS_3^+$ is $(\pl\AdS_3)/\{I,-I\}$. 
Since $q(x,y)$ satisfies $q(I(x),y)=-q(x,y)$, $q$ does not descend to $\AdS_3^+$, but $|q|$ does.

\subsection{Isometries of \texorpdfstring{$\AdS_3$}{anti-de Sitter space}}\label{sec:isometries}

For background on the material presented here, we refer the reader to \cite{mess2007, dgk18,glorieux-monclair} and the references therein. \color{black} The matrix group 
\[ {\rm O}(2,2)=\set{h\in \GL_4(\R)}{h \begin{pmatrix} 1&&&\\&1&&\\&&-1&\\&&&-1\end{pmatrix} h^T=\begin{pmatrix} 1&&&\\&1&&\\&&-1&\\&&&-1\end{pmatrix} }\]
 acts isometrically and transitively on $\AdS_3$ with stabilizer ${\rm O}(1,2)$ of the point ${\rm e}:=(1,0,0,0)\in \AdS_3$ and therefore $\AdS_3={\rm O}(2,2)/{\rm O}(1,2)$. Its identity component ${\rm SO}(2,2)_\circ$ coincides with the identity component of the isometry group of $\AdS_3$. \color{black}
It is also the group of orientation and time orientation preserving isometries. It  acts transitively on the boundary $\partial\AdS_3$, and this action preserves the conformal structure. Indeed, given a surface   $\Sigma\subset \R^4$  such that the radial projection $\pi$ restricts to a diffeomorphism $\pi\vert_\Sigma:\Sigma\to \partial \AdS_3$ and $h\in {\rm O}(2,2)$, we find that the surface $h^{-1}(\Sigma)$ has the same property and $h^*g_\Sigma=g_{h^{-1}(\Sigma)}$, thus $h^*g_\Sigma$ is conformal to $g_\Sigma$.

It will often be useful to pick a reference metric in this conformal class. In this paper, we choose $g_{\mathbb T^2}$ via the identification $\pl\AdS_3\simeq \mathbb{T}^2$. \color{black}  It will however be important to remember that the metric $g_{\mathbb T^2}$ is not invariant under the isometry group. For $h\in {\rm O}(2,2)$, we will denote by $N_h\in C^\infty(\TT^2)$ the conformal factor, i.e.\ the function such that
\begin{equation}
h^*g_{\mathbb T^2}=N_h^{-2} g_{\mathbb T^2}. \label{eqn conformal factor}
\end{equation}
The formula for $N_h$ is given by 
\[ N_h(\nu)=\sqrt{(h\nu)_1^2+(h\nu)_2^2}.\]
Consider the subgroup 

\[ K=\set{ \begin{pmatrix} \cos\theta_1 & -\sin\theta_1 &&\\ \sin\theta_1&\cos\theta_1&&\\&&\cos\theta_2&-\sin\theta_2\\&&\sin\theta_2&\cos\theta_2\end{pmatrix}}{\theta_1,\theta_2\in\R}\subset {\rm SO}(2,2)_\circ. \]
It is a maximal compact subgroup of ${\rm SO}(2,2)_\circ$  isomorphic to ${\rm SO}(2)\times {\rm SO}(2)$. The action $K\curvearrowright \partial\AdS_3$ is free and transitive, and it preserves the flat Lorentzian metric $g_{\mathbb T^2}$, i.e.\ $N_h\equiv 1$ when $h\in K$.

The actions ${\rm SO}(2,2)_\circ\curvearrowright \AdS_3$ and ${\rm SO}(2,2)_\circ\curvearrowright \partial\AdS_3$ are both transitive, however we will often encounter the diagonal action ${\rm SO}(2,2)_\circ\curvearrowright \AdS_3\times\partial\AdS_3$ which is not. However the actions on the open orbit $\mathcal O_+\subset \AdS_3\times\partial\AdS_3$ and the closed orbit $\mathcal O_0\subset \AdS_3\times\partial\AdS_3$  defined in \eqref{defmcO} are transitive. These facts are proved in Lemma \ref{l:transitive}.

\subsection{The torus coordinates}\label{sec:torus_coord}
There is another model for $\AdS_3$ that is geometrically also appealing and that relates to the hyperbolic space 
$\H^2$. This model has the advantage to exhibit explicitly the conformal compactification of $\AdS_3$ (and its smooth structure) 
and to represent the conformal boundary as $\mathbb{T}^2$, similarly to the ball model for the hyperbolic space $\mathbb{H}^n$. \color{black}
Let $\mathbb D\subset \mathbb R^2$ be the open unit disk, and consider the diffeomorphism\color{black}
\begin{equation}\label{tildepsi} 
\widetilde{\psi}: \AdS_3\to \mathbb{S}^1 \times \mathbb{D} , \quad \widetilde{\psi}(x)=\Big( \frac{x_1}{R},\frac{x_2}{R},\frac{x_3}{1+R},\frac{x_4}{1+R}\Big) 
\end{equation}
where $R:=|(x_1,x_2)|=\sqrt{x_1^2+x_2^2}$.  The metric $g$ becomes 
\begin{equation}\label{metricmodeleTorus}
\widetilde{\psi}_*g= -\frac{(1+r^2)^2}{(1-r^2)^2}{\rm d}\theta^2+\frac{4({\rm d}r^2+r^2{\rm d}\varphi^2)}{(1-r^2)^2}= -\frac{(1+r^2)^2}{(1-r^2)^2}{\rm d}\theta^2+g_{\H^2}
\end{equation}
where we use the coordinates $e^{i\theta} \in \mathbb{S}^1, z=re^{i\varphi}\in \mathbb{D}$ and $g_{\H^2}=4\frac{{\rm d}r^2+r^2{\rm d}\varphi^2}{(1-r^2)^2}$ denotes the hyperbolic metric on $\mathbb{D}$. In these coordinates, the pseudo-Riemannian Laplacian is given by
\begin{equation}\label{Laplacian1}
\Box_{g}= R^{-2}\pl_\theta^2-\frac{1}{4}(1-r^2)^2\pl_r^2-\frac{1}{4}(1-r^2)^2\frac{\pl_r(rR)}{rR}\pl_r-\frac{1}{4}(1-r^2)^2\pl_\varphi^2
\end{equation}
where $R=\frac{1+r^2}{1-r^2}$.
The inverse map of $\til{\psi}$ is (by identifying $\R^4$ with $\C\times \C$)
\[ 
\til{\psi}^{-1}(e^{i\theta},re^{i\varphi})=\Big( e^{i\theta}\frac{1+r^2}{1-r^2}, \frac{2re^{i\varphi}}{1-r^2}\Big)\in \C\times \C.
\]
The map $\widetilde{\psi}$ extends to a homeomorphism $\widetilde\psi:\overline{\AdS_3}\to \mathbb S^1\times\overline{\mathbb D}$ thanks to the formula \eqref{tildepsi}. \color{black} It maps $\partial\AdS_3$ to $\mathbb{S}^1\times \mathbb{S}^1=\mathbb{T}^2$, with a simple formula:
\begin{equation} \label{eqn tildepsi boundary}
\widetilde\psi([\nu])=\left( \frac{1}{\sqrt{\nu_1^2+\nu_2^2}}(\nu_1,\nu_2),\frac{1}{\sqrt{\nu_3^2+\nu_4^2}}(\nu_3,\nu_4) \right). 
\end{equation}
Note that by definition a lift $\nu\in \R^4$ of a point in $\partial\AdS_3$ must satisfy $\nu_1^2+\nu_2^2=\nu_3^2+\nu_4^2$, i.e.\ the two normalising factors in \eqref{eqn tildepsi boundary} are equal. The map $\widetilde{\psi}$ induces a smooth structure on the compactification of $\AdS_3$ by using the smooth structure of $\mathbb{S}^1\times \bbar{\mathbb{D}}$.
 The $\AdS_3$ metric  of \eqref{metricmodeleTorus} is conformal to a non-degenerate Lorentzian metric on $\mathbb{S}^1\times \bbar{\mathbb{D}}$, namely to the metric  
 \[ -(1+r^2)^2\d\theta^2+4\d r^2+4r^2\d\varphi^2.\]
 The  conformal class of the Lorentzian metric 
 \begin{equation}\label{conformal_metric_infty} 
-\d\theta^2+\d\varphi^2
 \end{equation}
on $\mathbb{T}^2$  is pulled back by $\widetilde\psi$ to the aforementioned conformal class on $\partial\AdS_3$. Indeed, \eqref{conformal_metric_infty} is simply the restriction of $g$ to the tangent space $T\mathbb{T}^2$ of $\mathbb{T}^2\subset\R^4$.

It is convenient for calculations of the Laplacian to introduce a new radial coordinate $t$ 
 on the solid torus by setting 
\[ r= \tanh(t/2), \textrm{ with }t\in (0,\infty).\]
In particular, we compute that $R=\cosh(t)$, the metric and volume measure become
\[ g=dt^2+\sinh(t)^2\d\varphi^2-\cosh(t)^2\d\theta^2, \quad {\rm dv}_g=\frac{1}{2}\sinh(2t)\,\d t \d\theta \d\varphi,\] 
and the pseudo-Riemannian Laplacian becomes 
\begin{equation}\label{Boxg_t_coordinate}
\Box_{g}=-\frac{\partial^2}{\partial t^2}-2 \coth(2t)\frac{\partial}{\partial t}+\frac{1}{\cosh(t)^2}\frac{\partial^2}{\partial \theta^2}-\frac{1}{\sinh(t)^2}\frac{\partial^2}{\partial \varphi^2}.
\end{equation}

\subsection{The \texorpdfstring{$\SL(2,\R)$}{SL(2,R)} coordinates}\label{sec:SL2Coord}

The determinant  $\det:M_2(\R)\to\R$ is a quadratic form on the space $M_2(\R)$ of real $2\times 2$ matrices, which allows us to define useful algebraic coordinates on $\AdS_3$.  Consider
 the isomorphism 
\[ \psi:\map{\R^4}{M_2(\R)}{x=(x_1,x_2,x_3,x_4)}{\left(\begin{array}{cc}
x_1+x_3 & x_2+x_4 \\
x_4-x_2 & x_1- x_3
\end{array}\right)}.
\]
Its inverse is 
\begin{equation}\label{identification} 
\psi^{-1}\begin{pmatrix} a&b\\c&d \end{pmatrix} = \left(\frac{a+d}{2},\frac{b-c}{2},\frac{a-d}{2},\frac{b+c}{2} \right). \end{equation}
Note that for all $x,y\in \R^4$ with $q(y)\not=0$  (recall Lemma \ref{Lem:q_and_distance})\color{black}
\begin{equation}\label{detvsq}
\det(\psi(x))= -q(x), \quad \frac{q(y)}{2}{\rm Tr}(\psi(x)\psi(y)^{-1})=-q(x,y).
\end{equation}
This means that $\psi$ restricts to a diffeomorphism 
\[\psi:\AdS_3\to G:= \SL(2,\R)\] 
and the map descends to a map 
$\AdS_3^+\to {\rm PSL}_2(\R)$. Through this correspondence, the ${\rm Id}\in \SL(2,\R)$ element is mapped to the point ${\rm e}=(1,0,0,0)\in \AdS_3$. 
The differential $\d\psi({\rm e})$ at ${\rm e}$ maps 
$T_{\rm e}\AdS_3$ to $\mathfrak{sl}(2,\R)={\rm Lie}(G)$. \color{black} For $v\in T_{\rm e}\AdS_3$ (i.e.\ $v_1=0$), we have
 \[ \d\psi({\rm e})(v)=\left(\begin{array}{cc}
v_3 & v_2+v_4 \\
v_4-v_2 & - v_3
\end{array}\right).\]
The standard basis of $\mathfrak{sl}(2,\R)$  
\begin{equation}\label{basissl2}
X=\left( \begin{array}{cc}
 1/2 & 0\\
 0 & -1/2
\end{array}\right), \quad V=\left( \begin{array}{cc}
 0 & -1/2\\
 1/2 & 0
\end{array}\right), \quad X_\perp=[X,V]=\left( \begin{array}{cc}
 0 & -1/2\\
 -1/2 & 0
\end{array}\right)
\end{equation} 
satisfies $$X=d\psi({\rm e})(0,0,1/2,0),\qquad V=d\psi({\rm e})(0,-1/2,0,0),\qquad X_\perp=d\psi({\rm e})(0,0,0,-1/2).$$

The group $G=\SL(2,\R)$ acts on the left and on the right by multiplication on $M_2(\R)$ (thus on $\SL(2,\R)$\color{black}), which induces an action of $G\times G$ on $\AdS_3$ by
\begin{equation}\label{action_left_right}
 (h_1,h_2)\cdot x = \psi^{-1}\left( h_1\psi(x)h_2^{-1}\right),\quad x\in \AdS_3, (h_1,h_2)\in G\times G.
 \end{equation}
This action is isometric, and the induced morphism $G\times G\to {\rm Isom}(\AdS_3)$ is a local isomorphism onto the connected component ${\rm Isom}(\AdS_3)_\circ$, showing that $G\times G$ is a double cover of ${\rm SO}(2,2)_0$. \color{black}
The isotropy subgroup of the point ${\rm e}\in \AdS_3$ is the diagonal subgroup
$G_d:=\{(h,h)\in G\times G\}$. The $G\times G$-action extends to $\pl \AdS_3$ by the expression
\[ (h_1,h_2)\cdot [\nu] = [\psi^{-1}(h_1\psi(\nu)h_2^{-1})],\]
with the stabilizer of the point $\nu_0:=[1,0,-1,0]\in \pl \AdS_3$ given by 
\bq
(G\times G)_{\nu_0}=\Big\{\left(\begin{pmatrix}
a & 0\\ x & a^{-1}
\end{pmatrix},\begin{pmatrix}
b & y\\ 0 & b^{-1}
\end{pmatrix}\right)\,|\, a,b,x,y\in \R,ab>0\Big\}.\label{eq:parabolic1}
\eq
This gives us a  description of $\pl \AdS_3$ as a homogeneous space: $\pl \AdS_3=(G\times G)/(G\times G)_{\nu_0}$.  \color{black}
 
Let us now give a transitivity result that will be useful for the analysis of the Poisson transform.
\begin{lemma}\label{l:transitive}
1) The diagonal actions of $G\times G$ on the set $\mc{O}_+$ and on the set $\mc{O}_0$ defined in \eqref{defmcO} are transitive\color{black}.\\ 
2) For each $([\nu],[\nu'])\in 
\pl \AdS_3\times \pl \AdS_3$ such that $\nu\not=-\nu'$, there is $(h_1,h_2)\in G\times G$ such that both $(h_1,h_2)\cdot [\nu]$ and $(h_1,h_2)\cdot [\nu']$ lie in the open set $\{[x] \in \pl\AdS_3\,|\, x_2+x_4>0 \}$.
\end{lemma}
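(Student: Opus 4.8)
I would work entirely in the $\SL(2,\R)$-model of Section~\ref{sec:SL2Coord}: through $\psi$ a point of $\AdS_3$ is a matrix $A\in G=\SL(2,\R)$, a boundary point $[\nu]$ is the rank-one matrix $\psi(\nu)$ taken up to positive scaling (note $\det\psi(\nu)=-q(\nu)=0$ and $\psi(\nu)\neq 0$), and in both cases $(h_1,h_2)$ acts by $N\mapsto h_1Nh_2^{-1}$. Writing a rank-one matrix as $N=uw^T$ one has $h_1Nh_2^{-1}=(h_1u)(h_2^{-T}w)^T$, so the left factor moves the column line $\R u$ and the right factor the row line $\R w$. Finally, from the definition of $\psi$ one reads off $\Tr\psi(\nu)=2\nu_1$, hence (since $q(\mathrm{e},\nu)=-\nu_1$) the fiber of $\mc{O}_+$ (resp.\ $\mc{O}_0$) over $\mathrm{e}=(1,0,0,0)$ corresponds to rank-one matrices with positive (resp.\ zero) trace, modulo positive scaling.

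\textbf{Part 1.} As $G\times G$ acts transitively on $\AdS_3$ with isotropy of $\mathrm{e}$ equal to the diagonal $G_d=\{(h,h)\}$, and as $\mc{O}_+,\mc{O}_0$ are $G\times G$-invariant (being cut out by the invariant condition on $q(x,\nu)$), it suffices to show that $G_d$, acting by conjugation $N\mapsto hNh^{-1}$, is transitive on the fibers over $\mathrm{e}$. For $\mc{O}_+$: rescaling a rank-one $N$ with $\Tr N>0$ by the positive scalar $1/\Tr N$ we may take $\Tr N=1$, so $N$ is a rank-one idempotent, i.e.\ the projection with prescribed image line and kernel line $(\mathrm{im}\,N,\ker N)$, an arbitrary ordered pair of distinct lines in $\R^2$; since $\SL(2,\R)$ acts transitively on such pairs (send a basis to a basis and rescale one vector to normalise the determinant), and conjugation by $h$ carries $(\mathrm{im}\,N,\ker N)$ to $(h\,\mathrm{im}\,N,h\ker N)$, the transitivity follows. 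For $\mc{O}_0$: a rank-one trace-zero matrix is nilpotent, of the form $v(Jv)^T$ with $J=\bigl(\begin{smallmatrix}0&-1\\1&0\end{smallmatrix}\bigr)$ for a line $\R v$, well defined up to positive scaling; using $Jh=h^{-T}J$ for $h\in\SL(2,\R)$ one gets $h\,v(Jv)^Th^{-1}=(hv)(J(hv))^T$, so conjugation is transitive on each of the two families $\{\,[v(Jv)^T]\,\}$ and $\{\,[-v(Jv)^T]\,\}$ — these are exactly the two connected components of the fiber of $\mc{O}_0$ over $\mathrm{e}$, swapped by $N\mapsto -N$, giving transitivity of $G\times G$ on each component of $\mc{O}_0$.

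\textbf{Part 2.} Since $x_2+x_4=-q(x,\zeta_0)$ with $\zeta_0:=(0,1,0,-1)$ null, the target set is $U=\{[x]\in\partial\AdS_3:q(x,\zeta_0)<0\}$. Using $q((h_1,h_2)x,(h_1,h_2)y)=q(x,y)$ together with the transitivity of $G\times G$ on $\partial\AdS_3$ (immediate in the column/row-line picture), the assertion ``$(h_1,h_2)\cdot[\nu]\in U$ and $(h_1,h_2)\cdot[\nu']\in U$ for some $(h_1,h_2)$'' is equivalent to ``$q(\nu,\eta)<0$ and $q(\nu',\eta)<0$ for some null $\eta$''. To produce such an $\eta$: normalise $\nu$ to the standard null vector $(1,0,1,0)$, so the first condition reads $\eta_3<\eta_1$; then, using the residual (parabolic) stabiliser of $[\nu]$ to bring $[\nu']$ to one of the finitely many normal positions determined by how its column and row lines meet those of $\nu$, one writes down an explicit null $\eta$ in each case — and the only configuration where no $\eta$ exists is $\nu'\in\R_{<0}\nu$, i.e.\ $\nu=-\nu'$, which is excluded. (Conceptually: $W:=\{q(\nu,\cdot)<0\}\cap\{q(\nu',\cdot)<0\}$ is a nonempty open cone precisely when $\nu,\nu'$ are not negatively proportional, and it is invariant under translation by the $2$-plane $(\mathrm{span}(\nu,\nu'))^\perp$, which forces it to meet the null cone of the split form $q$.)

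\textbf{Main obstacle.} The one genuinely non-formal point is the sharpness of the hypothesis $\nu\neq-\nu'$ in Part~2: the two ``half-tori'' $\{q(\nu,\cdot)<0\}\cap\partial\AdS_3$ and $\{q(\nu',\cdot)<0\}\cap\partial\AdS_3$ inside $\partial\AdS_3\cong\mathbb{T}^2$ fail to intersect exactly in the antipodal case, and establishing this requires the geometry of the split null cone (e.g.\ via the explicit normalisation above) rather than a soft connectedness argument. Part~1 is routine once transferred to $\SL(2,\R)$ — it is the classification of rank-one matrices by their trace — the only subtlety being that $\mc{O}_0$ is a union of two $G\times G$-orbits.
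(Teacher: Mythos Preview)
Part~1 is the paper's argument --- reduce to the $G_d$-conjugation action on rank-one matrices over the base point $\mathrm{e}$ --- but you are more careful than the paper on $\mc{O}_0$: the paper invokes ``Jordan normal form'' for the nilpotent case, which tacitly uses $\GL(2,\R)$-conjugation, whereas you correctly note that under $\SL(2,\R)$-conjugation and positive scaling the sign of $t$ in $N=t\,v(Jv)^T$ is an invariant, so $\mc{O}_0$ is actually two $G\times G$-orbits, not one. The lemma's claim for $\mc{O}_0$ is therefore slightly off as stated; for its only application (Proposition~\ref{injectivite_Poisson}) this is harmless, since one can treat each orbit separately or observe that the two are exchanged by the involution $I$.

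For Part~2 the paper fixes $[\nu]=[(0,1,0,1)]$, writes its stabiliser as explicit pairs of upper-triangular matrices with parameters $a,b,b'$, computes the $x_2+x_4$-coordinate of $(h_1,h_2)\cdot\nu'$, and chooses $a,b,b'$ by hand according to which coefficients of $\nu'$ vanish. Your reformulation $x_2+x_4=-q(\cdot,\zeta_0)$ with $\zeta_0$ null recasts the question as: find a null $\eta$ with $q(\nu,\eta)<0$ and $q(\nu',\eta)<0$; your case-by-case route (normalise $\nu$, then $\nu'$ by the residual parabolic, write down $\eta$) is then the paper's computation in dual form. Your parenthetical conceptual argument --- the open cone $W$ is invariant under translation by $\mathrm{span}(\nu,\nu')^{\perp}$, hence meets the null cone of the split form --- is a genuinely different and cleaner route that explains \emph{why} $\nu'\in\R_{<0}\nu$ is the unique obstruction; the one point to watch is the degenerate case $q(\nu,\nu')=0$, where the perp is the totally isotropic plane $\mathrm{span}(\nu,\nu')$ itself and $q$ is only affine on its translates, but since $q(\eta_0,\nu)<0$ that affine function is nonconstant and still hits zero.
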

\begin{proof}
1) The transitivity of the $G\times G$ action on $\mc{O}_+$ is direct to see: one can bring any $h_0\in G$ to ${\rm Id}$ by left translation by $h_0^{-1}$, and then bring any $[\nu]$ satisfying $-\nu_1=q({\rm e},\nu)<0$ 
to $[(\nu_1,0,\nu_1,0)]=[(1,0,1,0)]  \in \pl \AdS_3$ using the isotropy subgroup $G_d$. This is achieved by diagonalisation of the matrix $\psi(\nu)$ (which has disjoint eigenvalues as ${\rm Tr}(\psi(\nu))=-2q({\rm e},\nu)>0$ and $\det \psi(\nu)=0$). We also bring any  $[\nu]$ satisfying $-\nu_1=q({\rm e},\nu)=0$ to $[(0,1,0,1)] \in \pl \AdS_3$ by the Jordan normal form  of $\psi(\nu)$.\\ 
2) Using the transitivity of the  $G$-action on $\pl \AdS_3$, it suffices to consider the case $[\nu]=[(0,1,0,1)]$.
The transformations in $G\times G$ fixing $\psi(0,1,0,1)$ have the form 
\[h_1=\left(\begin{array}{cc}
a & b \\
0 & a^{-1}
\end{array}\right), \quad h_2=\left(\begin{array}{cc}
a^{-1} & b' \\
0 & a
\end{array}\right), \quad a>0, \,\, b,b'\in \R.\]
A computation then gives that $(h_1,h_2)\cdot \nu'$ has $x_2+x_4$ component (the upper right coefficient of 
$\psi((h_1,h_2)\cdot \nu')$ given by 
\begin{equation}\label{x_2+x_4} 
-ab'(\nu'_1+\nu_3')+\nu'_2+\nu_4'-bb'(\nu'_4-\nu'_2)+ba^{-1}(\nu_1'-\nu_3').
\end{equation}
Taking $b=\nu_1'-\nu_3'$, $b'=-(\nu_1'+\nu_3')$ and $a\gg 1$ large if $\nu_1'+\nu_3'\not=0$ (resp. 
$a^{-1}\gg 1$ large if $\nu_1'-\nu_3'\not=0$), one obtains that \eqref{x_2+x_4} is positive. It remains to consider the case $\nu_1'=\nu_3'=0$. Then, since $\nu'\neq 0$ and $0=q(\nu')=-{\nu'_2}^2+{\nu'_4}^2$, we either have $\nu'_2=-\nu'_4\neq 0$, in which case it suffices to take $b=-1$ and $b'=\mathrm{sign}\, \nu_4'$, or $\nu'_2=\nu'_4>0$ (by our assumption that $\nu\not=-\nu'$), in which case $[\nu']=[\nu]$ and 
 \eqref{x_2+x_4} is positive.
\end{proof}

The Killing form of $G$ is $B(Y,Y'):=4{\rm Tr}(YY')$ if $Y,Y'\in \mathfrak{sl}(2,\R)$. 
If we view $B$ as a left-invariant Lorentzian metric $b$ on $G=\SL(2,\R)$, then for each $h\in G$ and $Y\in T_gG$ 
we have ${\rm Tr}(h^{-1}Y)=0$ and 
\[ b_h(Y,Y)=4{\rm Tr}(h^{-1}Yh^{-1}Y)=-8\det(h^{-1}Y)=-8\det(Y).\] 
We deduce that the isomorphism $\psi:\AdS_3\to {\rm SL}(2,\R)$ \color{black} is an isometry up to a constant scaling factor: $\psi^*b=8g$. The vectors $X,V,X_\perp$ satisfy  $B(X,X)=2$, $B(X_\perp,X_\perp)=2$ and $B(V,V)=-2$, i.e.\ $X,X_\perp$ are spacelike and $V$ timelike. We also have $[V,X_\perp]=X$ and $[X,X_\perp]=V$.
The pseudo-Riemannian Laplacian is equal to minus the Casimir of $\SL(2,\R)$, given by 
\[ \Box_{g}=4(V^2-X^2-X_\perp^2).\]

\subsection{The upper half-space coordinates}\label{sec:upperhalfplane} Let us consider the Lorentzian manifold 
\[  \H^{2,1}:=\set{ (y_1,y_2,s)\in \R^3}{s>0} ,\quad g_{\H^{2,1}}=\frac{\d s^2-\d y_1^2+\d y_2^2}{s^2}\]
Following \cite{Seppi}, we have an isometric embedding $\iota:\H^{2,1}\to\AdS_3$ given by 
\begin{equation}\label{iota_def}
 \iota (y_1,y_2,s)=\Big(\frac{y_1}{s}, \frac{1+m(y)+s^2}{2s},\frac{y_2}{s},\frac{1-m(y)-s^2}{2s}\Big)\in \AdS_3
\end{equation}
 where $m(y)=-y_1^2+y_2^2$ is \color{black} the Minkowski metric on $\R^{2}$. Its image 
is the open set $\{x\in \AdS_3\,|\, x_2+x_4>0\}$ and its closure forms a fundamental domain for the involution $I$. 
This embedding extends to the boundary by
\bq
 \iota : \map{\R^2}{\partial\AdS_3}{(y_1,y_2)}{\Big[y_1,\frac{1+m(y)}{2},y_2,\frac{1-m(y)}{2}\Big].} \label{eq:iotaext1}
 \eq
The upper half-space model will be useful to analyze the Poisson transform.

\subsection{Spacelike geodesic flow and geodesic distance on \texorpdfstring{$\AdS_3$}{anti-de Sitter space}}\label{sec:spacelike_flow}

The tangent bundle $T\AdS_3$ is the submanifold of $\R^8$ given by 
\[ T\AdS_3=\set{(x,v)\in \R^4\times \R^4}{q(x)=-1, q(x,v)=0}. \]
The tangent space at a point $(x,v)\in T\AdS_3$ is given by
\[ T_{(x,v)}T\AdS_3=\set{(\xi_x,\xi_v)\in\R^4\times\R^4}{q(x,\xi_x)=q(x,\xi_v)+q(\xi_x,v)=0}.\]
We shall work with the \emph{unit spacelike tangent bundle}
\begin{equation}\label{T1ads3} 
T^1\AdS_3=\set{(x,v)\in \R^4\times \R^4}{q(x)=-1, q(x,v)=0, q(v)=1}, 
\end{equation}
its tangent space at $(x,v)\in T^1\AdS_3$ is
\[ T_{(x,v)}T^1\AdS_3=\set{(\xi_x,\xi_v)\in\R^4\times\R^4}{q(x,\xi_x)=q(x,\xi_v)+q(\xi_x,v)=q(v,\xi_v)=0}.\]
The action of ${\rm Isom}(\AdS_3)$ on $T^1\AdS_3$ is $h\cdot(x,v)=(hx,hv)$, and it induces an action of $G\times G$ on $T^1\AdS_3$ given explicitly for $h_1,h_2\in G=\SL(2,\R)$ by
\begin{equation}\label{actionGxG}
(h_1,h_2)\cdot(x,v):=((h_1,h_2)\cdot x,(h_1,h_2).v)= (\psi^{-1}( h_1\psi(x)h_2^{-1}),\psi^{-1}( h_1\psi(v)h_2^{-1})).
\end{equation}
The fiber $T^1_x\AdS_3$ inherits the Lorentzian metric $g_x=q|_{T^1_x\AdS_3}$, and if $\pi:T^1\AdS_3\to \AdS_3$ is the projection onto the base, an element $\xi\in \ker d\pi_{(x,v)}$ has the form $\xi=(0,\xi_v)$ with $q(\xi_v,x)=q(\xi_v,v)=0$ and 
\[
g_x(\xi_v,\xi_v)=q(\xi_v,\xi_v).
\]
The \emph{spacelike geodesic flow} on $T^1\AdS_3$ is given by
\[\varphi_t(x,v)=(x\cosh(t)+v\sinh(t),x\sinh(t)+v\cosh(t)),\]
it is generated by the vector field $X$ given\footnote{Not to be confused with the Lie algebra element $X\in {\rm sl}(2,\R)$ defined in \eqref{basissl2} (or its left-invariant vector field); we will see in  \eqref{eq:XXrel} how they are related. } by
\[ X(x,v)=(v,x)\in T_{(x,v)}T^1\AdS_3,\quad (x,v)\in T^1\AdS_3.\]
Analogously to the Riemannian case, this flow can be seen as the restriction to an energy level of a Hamiltonian flow. Indeed, the Liouville symplectic form on $T^*\AdS_3$ can be transferred to $T\AdS_3$ using the metric $g$. It is then given by 
\[ \omega_{(x,v)}(\xi,\xi')= q(\xi_x,\xi'_v)-q(\xi_v,\xi'_x) ,\quad \xi=(\xi_x,\xi_v),\xi'=(\xi'_x,\xi'_v)\in T_{(x,v)}(T\AdS_3).\]
The geodesic vector field $X$ on $T\AdS_3$ is then defined as the Hamilton vector field of $H_q:(x,v)\mapsto \frac{1}{2}q(v)$, thus  
\[ X(x,v)=(v,q(v)x).\]
The geodesic flow $\varphi_t: T\AdS_3\to T\AdS_3$ is the flow of $X$, and a direct computation gives (recall \eqref{formule_geod_spacelike}) \color{black}
\begin{equation}\label{geodflow} 
\left\{\begin{array}{ll}
\varphi_t(x,v)=(x\cosh(t)+v\sinh(t),x\sinh(t)+v\cosh(t)) , & \textrm{if } q(v)=1 \\
 \varphi_t(x,v)=(x\cos(t)+v\sin(t),-x\sin(t)+v\cos(t)) , & \textrm{if } q(v)=-1\\
  \varphi_t(x,v)=(x+tv,v) , & \textrm{if } q(v)=0.
\end{array}\right.
\end{equation}
Thus, the spacelike unit tangent bundle $T^1\AdS_3$ can be identified with the level set $H_q=\frac{1}{2}$ and the geodesic flow $\varphi_t\curvearrowright T^1\AdS_3$ is the restriction of the Hamiltonian flow to $H_q$. As in the Riemannian case, the spacelike unit tangent bundle is a contact manifold when equipped with the tautological one-form $\alpha\in C^\infty(T^1\AdS_3; T^*(T^1\AdS_3))$ given by
\begin{equation}\label{def_alpha} 
\alpha_{(x,v)}(\xi)=q(\xi_x,v),\quad (x,v)\in T^1\AdS_3,~\xi=(\xi_x,\xi_v)\in T_{(x,v)}(T^1\AdS_3).
\end{equation}
It satisfies $\alpha(X)=1$ and the Liouville symplectic form (on $\ker \alpha$) is given by $\omega=d\alpha$. Hence $X$ is the Reeb vector field of $\alpha$. 

\subsection{Splittings of the tangent bundle of \texorpdfstring{$T^1\AdS_3$}{the unit spacelike tangent bundle}}\label{splittings_T}

The tangent bundle $T\left(T^1\AdS_3\right)$ has several subbundles invariant under both the action of ${\rm SO}(2,2)_\circ$ and the differential of the geodesic flow $\varphi_t$. The expression for the latter at a point $(x,v)\in T^1\AdS_3$, applied to $(\xi_x,\xi_v)\in T_{(x,v)}(T^1\AdS_3)$, is
\begin{equation}\label{dvarphi_t}
 {\rm d}\varphi_t(x,v)(\xi_x,\xi_v)=(\cosh(t) \xi_x + \sinh(t) \xi_v, \sinh(t)\xi_x + \cosh(t) \xi_v).
 \end{equation}
The first invariant subbundle is the line bundle $E_0\subset T(T^1\AdS_3)$ spanned by the generator $X$ of the geodesic flow:
\begin{equation} E_0(x,v)=\R\cdot (v,x),\quad (x,v)\in T^1\AdS_3. \label{def:E0}
\end{equation}
Let us define for $(x,v)\in T^1\AdS_3$
\begin{align}
& E_s(x,v):=\set{ (w,-w)\in T_{(x,v)}(T^1\AdS_3)}{ q(x,w)=0=q(v,w)}, \label{def:Es}\\ 
& E_u(x,v):=\set{ (w,w)\in T_{(x,v)}(T^1\AdS_3)}{q(x,w)=0=q(v,w)}.\label{def:Eu}
\end{align}
From \eqref{def_alpha} we get that 
\[\ker \alpha=E_s\oplus E_u, \quad \mc{L}_X\alpha=0.\]
The two distributions $E_s$ and $E_u$ inherit orientations from the identifications between the fibers $E_s(x,v)$, $E_u(x,v)$ with $x^\perp\cap v^\perp$ through the projection maps $(w,\pm w)\mapsto w$. They are tangent to foliations $W_s,W_u$ of $T^1\AdS_3$ whose leaves through $(x,v)\in T^1\AdS_3$ are described by
\begin{align}
W_s(x,v)&=\set{ (x',v')\in T^1\AdS_3}{x'+v'=x+v}, \label{eqn:Ws} \\
W_u(x,v)&=\set{ (x',v')\in T^1\AdS_3}{x'-v'=x-v}. \label{eqn:Wu}
\end{align}
A straightforward computation using \eqref{dvarphi_t} shows that 
\begin{equation}\label{dvarphit} \begin{split}
{\rm d}\varphi_t(x,v)(w,-w)&=e^{-t}(w,-w) ,\\
{\rm d}\varphi_t(x,v)(w,w)&=e^t(w,w).\end{split}
\end{equation}
Thus we have a splitting on $T^1\AdS_3$
\begin{equation}\label{decomposition_Anosov}
 T(T^1\AdS_3)=E_0\oplus E_s\oplus E_u
 \end{equation}
which is invariant under both the differential of the geodesic flow and the isometry group of $\AdS_3$. 
We define the dual decomposition  $T^*(T^1\AdS_3)=E_0^*\oplus E_s^*\oplus E^*_u$ such that
\[ E_0^*(E_s\oplus E_u)=0, \quad E_u^*(E_u\oplus E_0)=0,  \quad E_s^*(E_s\oplus E_0)=0.\]
The choice of notation for $E_s^*$ is motivated by the fact that $(\d\varphi_t^{-1})^\top|_{E_s^*}$ is exponentially contracting in $t\to +\infty$. 
The decomposition \eqref{decomposition_Anosov} can be refined, as the rank $2$ subbundles $E_s$ and $E_u$ each split as the sum of invariant line bundles. Indeed, note that the  differential $\d\pi(x,v)$ at $(x,v)\in T^1\AdS_3$ \color{black} of the projection $\pi:T^1\AdS_3\to \AdS_3$ induces isomorphisms from $E_s(x,v)$ and $E_u(x,v)$ to $v^\perp\cap T_x\AdS_3=v^\perp\cap x^\perp\subset\R^4$ (respectively mapping $(w,-w)$ and $(w,w)$ to $w\in v^\perp\cap x^\perp$). As $v^\perp\cap x^\perp\subset \R^4$ has signature $(1,1)$,  its light cone
\[ \mathcal C_{(x,v)}=\set{w\in v^\perp\cap x^\perp}{q(w)=0}\]
is the union of two lines. In order to label these two lines in an ${\rm SO}(2,2)_\circ$-invariant way, we can use the standard orientation of $\R^4$: write 
\[ \mathcal C_{(x,v)}=\mathcal L_{(x,v)}^R\cup \mathcal L_{(x,v)}^L\]
so that any pair $(w^R,w^L)\in \mathcal L_{(x,v)}^R\times \mathcal L_{(x,v)}^L$ such that $q(w^R,w^L)>0$ satisfies \[\det(x,w^L,w^R,v)>0\,. \]
We can then define line subbundles of $T(T^1\AdS_3)$ by
\begin{align*}
E_s^{L}= E_s \cap \d\pi^{-1}(\mathcal L^L), & \quad E_s^{R}= E_s \cap \d\pi^{-1}(\mathcal L^R),\\
E_u^{L}= E_u \cap \d\pi^{-1}(\mathcal L^L), & \quad E_u^{R}= E_u \cap \d\pi^{-1}(\mathcal L^R),
\end{align*}
yielding an ${\rm SO}(2,2)_\circ$-equivariant decomposition
\begin{equation} T\left(T^1\AdS_3\right) = E_0\oplus E_s^L\oplus E_s^{R}\oplus E_u^{L}\oplus E_u^{R}. \label{eq:decomposition into 5 line bundles}
\end{equation}
For $(x,v)\in T^1\AdS_3$ and $t\in \R$, write $(x_t,v_t)=\varphi_t(x,v)$, and note that the compositions
\[\begin{array}{ccccccc} v^\perp\cap x^\perp & \overset{\left( \d\pi(x,v)\vert_{E_s}\right)^{-1}}{\xrightarrow{\hspace{1.8cm}}} &  E_s(x,v) & \overset{\d\varphi_t(x,v)}{\xrightarrow{\hspace{1cm}}} & E_s(x_t,v_t)  &\overset{\d\pi(x_t,v_t)\vert_{E_s}}{\xrightarrow{\hspace{1.5cm}}} & v_t^\perp\cap x_t^\perp \\
w & \xmapsto{\hspace{1.8cm}} &  (w,-w) & \xmapsto{\hspace{1cm}} & e^{-t}(w,-w)  &\xmapsto{\hspace{1.5cm}} & e^{-t}w
\end{array}
\] 
(note that $v_t^\perp\cap x^\perp_t=v^\perp\cap x^\perp$) \color{black} and
\[\begin{array}{ccccccc} v^\perp\cap x^\perp & \overset{\left( \d\pi(x,v)\vert_{E_u}\right)^{-1}}{\xrightarrow{\hspace{1.8cm}}} &  E_u(x,v) & \overset{\d\varphi_t(x,v)}{\xrightarrow{\hspace{1cm}}} & E_u(x_t,v_t)  &\overset{\d\pi(x_t,v_t)\vert_{E_u}}{\xrightarrow{\hspace{1.5cm}}} & v_t^\perp\cap x_t^\perp \\
w & \xmapsto{\hspace{1.8cm}} &  (w,w) & \xmapsto{\hspace{1cm}} & e^{t}(w,w)  &\xmapsto{\hspace{1.5cm}} & e^{t}w
\end{array}
\]
must both send $\mathcal L^R_{(x,v)}$ (resp. $\mathcal L^L_{(x,v)}$) to $\mathcal L^R_{(x_t,v_t)}$ (resp. $\mathcal L^L_{(x_t,v_t)}$), thus showing that the decomposition \eqref{eq:decomposition into 5 line bundles} is also invariant under $\d\varphi_t$. This invariance can be interpreted in the following way: if a vector field $U$ on $T^1\AdS_3$ is a section of one of the four line bundles $E_s^{L},E_s^{R},E_u^{L},E_u^{R}$, then there is a function $\lambda:T^1\AdS_3\to \R$ such that $[X,U]=\lambda U$.

\begin{lemma}\label{Upm^RL}
There exist non-vanishing sections $U_+^{L}$, $U_+^{R}$, $U_-^{L}$, $U_-^{R}$ of $E_s^{L}$, 
$E_s^{R}$, $E_u^{L}$, $E_u^{R}$, respectively, such that 
\[[X,U_+^{L}]=U_+^{L},\quad  [X,U_+^{R}]=U_+^{R}, \quad [X,U_-^{L}]=-U_-^{L},\quad [X,U_-^{R}]=-U_-^{R}.\]
\end{lemma}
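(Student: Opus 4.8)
The strategy is to construct the four sections explicitly in a reference chart where the geometry linearizes, and then propagate by the geodesic flow. First I would fix the base point $\mathrm{e}=(1,0,0,0)\in\AdS_3$ and a reference spacelike unit vector, say $v_0\in T^1_{\mathrm e}\AdS_3$. At $(\mathrm e, v_0)$ the space $v_0^\perp\cap \mathrm e^\perp\subset\R^4$ is a two-dimensional plane of signature $(1,1)$; pick null vectors $w^L_0\in\mathcal L^L_{(\mathrm e,v_0)}$ and $w^R_0\in\mathcal L^R_{(\mathrm e,v_0)}$, normalized once and for all (the normalization is irrelevant since we only want non-vanishing sections, but fixing the orientation convention $\det(\mathrm e,w^L_0,w^R_0,v_0)>0$ matters). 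Then $(w^L_0,-w^L_0),(w^R_0,-w^R_0)\in E_s(\mathrm e,v_0)$ and $(w^L_0,w^L_0),(w^R_0,w^R_0)\in E_u(\mathrm e,v_0)$ give vectors in the four line bundles at this single point.

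The next step is to spread these out over all of $T^1\AdS_3$ using the transitive action of $G\times G$: since $\mathrm{SO}(2,2)_\circ$ acts transitively on $T^1\AdS_3$ (the fiber action together with transitivity on the base), every $(x,v)$ is $h\cdot(\mathrm e,v_0)$ for some isometry $h$, and the bundles $E_s^L,E_s^R,E_u^L,E_u^R$ are $\mathrm{SO}(2,2)_\circ$-equivariant by construction. So one could try to define, e.g., $U_+^L(h\cdot(\mathrm e,v_0)):=dh\cdot(w_0^L,-w_0^L)$. The subtlety is well-definedness: this requires the value to be independent of the choice of $h$, i.e. invariance under the isotropy group of $(\mathrm e,v_0)$ in $\mathrm{SO}(2,2)_\circ$. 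That isotropy group is typically nontrivial (a one-parameter group), and it will generally rescale $(w_0^L,-w_0^L)$, so the naive prescription only gives a line field, not a global section — exactly the point at which one needs the flow. Rather than fight this, the cleaner route is: work on a single geodesic orbit. Fix $v_0$, set $(x_t,v_t)=\varphi_t(\mathrm e,v_0)$, and transport $w_0^L$ along the orbit by the parallel-looking recipe $w^L_t := w_0^L$ viewed inside $v_t^\perp\cap x_t^\perp$. Wait — that does not literally make sense since the plane $v_t^\perp\cap x_t^\perp$ moves; instead use the composition displayed just before the lemma, which shows $d\varphi_t$ sends $(w,-w)\in E_s(x,v)$ to $e^{-t}(w,-w)\in E_s(x_t,v_t)$ and intertwines the $\mathcal L^{L/R}$ labels. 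This already tells us that \emph{any} non-vanishing section $S$ of $E_s^L$ satisfies $d\varphi_t(S(x,v)) = \mu(t,x,v)\, S(\varphi_t(x,v))$ for a positive scalar cocycle $\mu$, hence $\mathcal L_X S = \lambda_S\, S$ for some function $\lambda_S$; and if $S$ is $\mathrm{SO}(2,2)_\circ$-invariant (homogeneous), then $\lambda_S$ is constant along the flow and, by transitivity of the isometry group commuting with $\varphi_t$, constant on all of $T^1\AdS_3$.

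So the real content is computing that constant and showing it equals $+1$ for $E_s^L$ and $E_s^R$ and $-1$ for $E_u^L,E_u^R$. This I would do by taking \emph{any} convenient local non-vanishing section of $E_s^L$, for instance using the $\SL(2,\R)$ coordinates: in the identification $\psi$, with $X,V,X_\perp$ the basis of $\mathfrak{sl}_2(\R)$ from \eqref{basissl2}, the null directions in $v^\perp\cap x^\perp$ at $(\mathrm e, \text{(image of }X))$ are spanned by $X_\perp\pm V$ (since $[X, X_\perp\pm V] = \pm(X_\perp\pm V)$ up to sign, from $[X,X_\perp]=V$, $[X,V]=X_\perp$ — one checks the brackets directly), and these are precisely the $\pm 1$ eigendirections of $\ad X$. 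Translating this into vector fields $U_\pm$ on $T^1\AdS_3$ via the relation between the Lie algebra element $X$ and the geodesic generator $X$ (promised in \eqref{eq:XXrel}), the bracket $[X,U]=\lambda U$ with $\lambda\in\{+1,-1\}$ falls out, and the sign pattern is dictated by membership in $E_s$ (contracting, eigenvalue $-1$ under $d\varphi_t$, which forces $[X,\cdot]=+1$ on the generator in the convention of \eqref{dvarphit}) versus $E_u$. Finally one checks the two sections within $E_s$ (resp. $E_u$) are genuinely the $\mathcal L^L$ and $\mathcal L^R$ ones using the orientation/determinant condition, and that they are globally non-vanishing (immediate, since eigenvectors of $\ad X$ are nonzero and the construction is equivariant).

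\textbf{Main obstacle.} The one real subtlety, as flagged above, is passing from ``line bundle invariant under $d\varphi_t$'' to ``\emph{section} $U$ with $[X,U]=\pm U$ \emph{on the nose}, globally.'' A priori the cocycle $\mu(t,\cdot)$ could be $e^{-t+\beta(t,\cdot)}$ with $\beta$ a nontrivial (non-additive in the wrong way) correction, giving $\lambda = -1 + (\text{non-constant})$; one must use that the bundles are homogeneous under the full isometry group, that the isometry group commutes with $\varphi_t$, and that the flow on $T^1\AdS_3$ has at least one dense-ish or fiber-spanning orbit structure so that a flow-invariant function is constant — or, more simply, just exhibit the explicit $\SL(2,\R)$ model sections and compute $[X,U]$ once, which sidesteps the issue entirely. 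I expect the write-up to do the latter.
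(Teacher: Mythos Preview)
Your plan contains a concrete error that sends you down an unnecessarily complicated path. You write that setting $w^L_t := w_0^L$ ``does not literally make sense since the plane $v_t^\perp\cap x_t^\perp$ moves'' --- but it does not move. Since $(x_t,v_t)=(x\cosh t+v\sinh t,\,x\sinh t+v\cosh t)$, the span of $x_t,v_t$ equals the span of $x,v$, hence $x_t^\perp\cap v_t^\perp=x^\perp\cap v^\perp$ for all $t$. This is exactly the observation the paper exploits: choose $w^L_{(x,v)}\in\mathcal L^L_{(x,v)}$ by a normalization that depends only on the vector $w$ itself inside $\R^4$ (the paper uses the Euclidean norm together with the sign condition $q(w,\mathbf T(x))<0$ to pick a half-line). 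Since the line $\mathcal L^L_{(x,v)}$ is literally the same for all points on the orbit, this gives $w^L_{(x_t,v_t)}=w^L_{(x,v)}$, and then the identity $d\pi\circ d\varphi_t|_{E_s}=e^{-t}\,d\pi|_{E_s}$ from \eqref{dvarphit} immediately yields $\varphi_t^*U_+^L=e^tU_+^L$, i.e.\ $[X,U_+^L]=U_+^L$. No cocycle analysis, no transitivity argument, is needed.

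Your fallback via $\mathrm{SO}(2,2)_\circ$-invariant sections cannot work as stated: there are no nonzero $\mathrm{SO}(2,2)_\circ$-invariant sections of $E_s^L$, because the isotropy $A_d$ acts on the fiber by a nontrivial character --- the paper itself remarks just after the lemma that ``such vector fields cannot be invariant under $\mathrm{SO}(2,2)_\circ$.'' Your $\SL(2,\R)$-model suggestion is closer in spirit, and indeed the paper does identify the sections with the values of left-invariant vector fields $U_\pm^{(i)}$ at the identity coset \emph{after} proving the lemma. But left-invariant vector fields on $G\times G$ do not descend to $(G\times G)/A_d$ as vector fields (again the nontrivial $A_d$-action), so ``exhibit the model sections and compute $[X,U]$ once'' hides exactly the normalization problem you were trying to avoid. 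The Euclidean-norm trick bypasses all of this in two lines.
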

\begin{proof} We treat the construction of $U_+^{L}$; the other cases are similar. Consider the Euclidean norm $\Vert\cdot\Vert$ on $\R^4$. For $(x,v)\in T^1\AdS_3$, there is a unique  vector $w^L_{(x,v)}\in \mc L^L_{(x,v)}$ such that
\begin{enumerate}
\item  $\Vert w^L_{(x,v)}\Vert^2 =1/2$,
\item   $q(w^L_{(x,v)},{\bf T}(x)) <0$, where ${\bf T}(x)=(x_2,-x_1,x_4,-x_3)$.
\end{enumerate}
Let $U_+^{L}$ be the section of $E_s^{L}$  such that $\d\pi  U_+^{L}=w^L$. Let us now fix $(x,v)\in T^1\AdS_3$ and $t\in \R$, and write $\varphi_t(x,v)=(x_t,v_t)$. Note that $v_t^\perp\cap x_t^\perp=v^\perp\cap x^\perp$, thus $\mc L^L_{(x_t,v_t)}=\mc L^L_{(x,v)}$ and $w^L_{(x_t,v_t)}=\varepsilon_tw^L_{(x,v)}$ for some $\varepsilon_t\in\{\pm 1\}$. Continuity yields $w^L_{(x_t,v_t)}=w^L_{(x,v)}$. \color{black} Viewing $\pi:T^1\AdS_3\to \AdS_3\subset \R^4$ as an $\R^4$-valued map, \eqref{dvarphit} can be rephrased as
\[ \d\pi\circ \d\varphi_t(x,v) \vert_{E_s(x,v)}= e^{-t}\d\pi(x,v)\vert_{E_s(x,v)}.\]
It follows that
\begin{align*}
\d\pi\left( U_+^{L}(x_t,v_t)\right) = w^L_{(x_t,v_t)}= w^L_{(x,v)}= \d\pi\left( U_+^{L}(x,v)\right) = e^t \d\pi\left( \d\varphi_t(x,v)U_+^{L}(x,v) \right),
\end{align*}
hence $\varphi_t^*U_+^{L}=e^t U_+^{L}$, which differentiates to $[X,U_+^{L}]=U_+^{L}$.
\end{proof}
Such vector fields cannot be invariant under ${\rm SO}(2,2)_\circ$,  since the stabiliser of $(x,v)$, which is isomorphic to ${\rm SO}(1,1)_\circ\simeq\R$, acts by nonzero multiplication on $\mc L^L$ and $\mc L^R$. \color{black} However the wedge of the dual forms produces an invariant two-form.

\begin{lemma}\label{formes_vol_s/u}
There exist two non-vanishing sections $\omega_s\in \Lambda^2 E_s^*$ and  $\omega_u\in \Lambda^2E_u^*$, invariant under ${\rm SO}(2,2)_\circ$, such that $\mathcal L_{X}\omega_u=-2\omega_u$ and $\mathcal L_{X}\omega_s=2\omega_s$.
\end{lemma}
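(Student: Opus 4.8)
The idea is to obtain $\omega_s$ and $\omega_u$ as the fiberwise metric volume forms of the Lorentzian $2$-planes $W_{(x,v)}:=v^{\perp}\cap x^{\perp}\subset\R^4$, transported to $E_s$ and $E_u$ through the projection $\pi\colon T^1\AdS_3\to\AdS_3$. The point to keep in mind throughout is the annihilator convention for the dual splitting: $E_s^{*}$ is the subbundle of $T^{*}(T^1\AdS_3)$ annihilating $E_s\oplus E_0$ and $E_u^{*}$ the one annihilating $E_u\oplus E_0$. Consequently a section of $\Lambda^2 E_s^{*}$ is exactly a $2$-form on $T^1\AdS_3$ killed by $E_s\oplus E_0$, hence entirely determined by its restriction to $E_u$; symmetrically, a section of $\Lambda^2 E_u^{*}$ is a $2$-form killed by $E_u\oplus E_0$, determined by its restriction to $E_s$.

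First I would record that each $W_{(x,v)}$ carries a canonical oriented Lorentzian structure. Since $q(x)=-1$ and $q(v)=1$, the $q$-orthogonal decomposition $\R^4=\R x\oplus\R v\oplus W_{(x,v)}$ forces $q|_{W_{(x,v)}}$ to have signature $(1,1)$, so it is non-degenerate; and the splitting of its null cone $\mathcal C_{(x,v)}=\mathcal L^L_{(x,v)}\cup\mathcal L^R_{(x,v)}$ from just before \eqref{eq:decomposition into 5 line bundles} orients $W_{(x,v)}$ by declaring a pair $(w^L,w^R)\in\mathcal L^L_{(x,v)}\times\mathcal L^R_{(x,v)}$ with $q(w^L,w^R)>0$ positively oriented. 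Let $\mathrm{vol}^{W}_{(x,v)}\in\Lambda^2 W_{(x,v)}^{*}$ be the corresponding metric volume form; it is nowhere zero and depends smoothly on $(x,v)$. Since $d\pi$ restricts to linear isomorphisms $E_u(x,v)\ni(w,w)\mapsto w\in W_{(x,v)}$ and $E_s(x,v)\ni(w,-w)\mapsto w\in W_{(x,v)}$, I then define $\omega_s$ to be the unique $2$-form on $T^1\AdS_3$ killed by $E_s\oplus E_0$ with $\omega_s|_{E_u}=(d\pi|_{E_u})^{*}\mathrm{vol}^{W}$, and $\omega_u$ the unique $2$-form killed by $E_u\oplus E_0$ with $\omega_u|_{E_s}=(d\pi|_{E_s})^{*}\mathrm{vol}^{W}$; by the previous paragraph these are nowhere-vanishing sections of $\Lambda^2 E_s^{*}$ and $\Lambda^2 E_u^{*}$. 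For the ${\rm SO}(2,2)_\circ$-invariance, if $h\in{\rm SO}(2,2)_\circ$ then $h\cdot(x,v)=(hx,hv)$, so $W_{h\cdot(x,v)}=hW_{(x,v)}$ and $h\colon W_{(x,v)}\to W_{h\cdot(x,v)}$ preserves $q$ (as $h\in{\rm O}(2,2)$) and the orientation (as $h$ preserves the orientation of $\R^4$, hence the labels $\mathcal L^L,\mathcal L^R$), so it carries $\mathrm{vol}^{W}_{h\cdot(x,v)}$ to $\mathrm{vol}^{W}_{(x,v)}$; together with the equivariance $d\pi\circ dh=h\circ d\pi$ and the $dh$-invariance of $E_0\oplus E_s\oplus E_u$, this gives $h^{*}\omega_s=\omega_s$ and $h^{*}\omega_u=\omega_u$. (One may equivalently phrase $\omega_s,\omega_u$ as suitable functions times the wedges of the coframes dual to the vector fields of Lemma \ref{Upm^RL}, but the metric description makes invariance manifest.)

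It remains to compute the Lie derivatives. Writing $\varphi_t(x,v)=(x_t,v_t)$, the matrix $\bigl(\begin{smallmatrix}\cosh t & \sinh t\\ \sinh t & \cosh t\end{smallmatrix}\bigr)$ is invertible, so $\mathrm{span}(x_t,v_t)=\mathrm{span}(x,v)$ and hence $W_{(x_t,v_t)}=v_t^{\perp}\cap x_t^{\perp}=v^{\perp}\cap x^{\perp}=W_{(x,v)}$ with $\mathrm{vol}^{W}$ constant along flow lines. Since $E_0\oplus E_s$ and $E_0\oplus E_u$ are $d\varphi_t$-invariant, $\varphi_t^{*}\omega_s$ is again killed by $E_s\oplus E_0$ and $\varphi_t^{*}\omega_u$ by $E_u\oplus E_0$; evaluating on $E_u$, resp.\ $E_s$, and using $d\pi\circ d\varphi_t|_{E_u}=e^{t}\,d\pi|_{E_u}$ and $d\pi\circ d\varphi_t|_{E_s}=e^{-t}\,d\pi|_{E_s}$ from \eqref{dvarphit} together with the flow-invariance of $\mathrm{vol}^{W}$, one gets $\varphi_t^{*}\omega_s=e^{2t}\omega_s$ and $\varphi_t^{*}\omega_u=e^{-2t}\omega_u$, whence $\mathcal{L}_{X}\omega_u=-2\omega_u$ and $\mathcal{L}_{X}\omega_s=2\omega_s$. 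The argument is essentially bookkeeping; the one point that truly needs attention is the interplay with the annihilator convention — a section of $\Lambda^2 E_s^{*}$ is a form living on $E_u$, which is precisely why the expansion rate $e^{t}$ of $E_u$, and not the contraction rate $e^{-t}$ of $E_s$, governs $\mathcal{L}_{X}\omega_s$ — together with the minor check that the null-cone orientation of $W$ is smooth and ${\rm SO}(2,2)_\circ$-invariant, which is inherited from the construction of $\mathcal L^L,\mathcal L^R$ before \eqref{eq:decomposition into 5 line bundles}.
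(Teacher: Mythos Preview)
Your proof is correct and follows essentially the same route as the paper: the paper defines $\omega_u$ (and analogously $\omega_s$) by the formula $\omega_u((w_1,-w_1),(w_2,-w_2))^2=q(w_1,w_2)^2-q(w_1)q(w_2)$ together with positivity on oriented bases of $E_s$, which is exactly your $(d\pi|_{E_s})^{*}\mathrm{vol}^{W}$, and then deduces $\varphi_t^{*}\omega_u=e^{-2t}\omega_u$ from \eqref{dvarphit} just as you do. Your write-up is a bit more explicit about the annihilator convention and the flow-invariance of $W_{(x,v)}$, but the construction and the argument are the same.
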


\begin{proof}
Thanks to the orientation of $E_s$, there is a unique two-form $\omega_u \in \Lambda^2 E_u^*$ 
that is positive on oriented bases and that satisfies
\[ (\omega_u(x,v)((w_1,-w_1),(w_2,-w_2)))^2=q(w_1,w_2)^2-q(w_1)q(w_2)\] 
for all $(x,v)\in T^1\AdS_3$ and $w_1,w_2\in x^\perp\cap v^\perp$. The uniqueness guarantees the ${\rm SO}(2,2)_\circ$-invariance, and \eqref{dvarphit} shows that $\varphi_t^*\omega_u=e^{-2t}\omega_u$. We argue analogously for the construction of a section of $\Lambda^2E_s^*$.
\end{proof}

Let us describe these splittings using the $\SL(2,\R)$ picture. Introduce 
\[A:=\left\{\left(\begin{array}{cc}
e^{t} & 0 \\
0 & e^{-t} 
\end{array}\right)\,\Big|\, t\in \R \right\}\] 
and define $A_d:=G_d\cap (A\times A)$, the diagonal in $A\times A$. Let $v_{\rm e}=(0,0,1,0)\in T_{\rm e}\AdS_3$ and consider the map  
\bq\begin{split}
\Psi: (G\times G)/A_d &\to T^1\AdS_3, \\
  \Psi: [h_1,h_2]&\mapsto  (\psi^{-1}(h_1h_2^{-1}), \psi^{-1}(h_1\psi(v_e)h_2^{-1})).\label{eq:Psi}\end{split}
\eq
\begin{lemma}\label{transitivityT1}
The action of $G\times G$ on $T^1\AdS_3$ in \eqref{actionGxG} is transitive and the isotropy group of $v_{\rm e}$ is $A_d$. The map $\Psi$ is a diffeomorphism.
\end{lemma}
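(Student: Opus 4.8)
The plan is to verify the three claims — transitivity of the $G\times G$-action on $T^1\AdS_3$, identification of the isotropy group of $v_{\rm e}$ with $A_d$, and the diffeomorphism property of $\Psi$ — essentially by unwinding the definitions and using the corresponding facts for the action on $\AdS_3$ itself (where $G\times G$ acts transitively with isotropy $G_d$ at ${\rm e}$).

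First I would address transitivity. Given $(x,v)\in T^1\AdS_3$, since $G\times G$ acts transitively on $\AdS_3$ (Section~\ref{sec:SL2Coord}), I may assume $x={\rm e}$, i.e.\ $\psi(x)={\rm Id}$. Then the stabiliser of ${\rm e}$ is the diagonal $G_d$, which acts on $T^1_{\rm e}\AdS_3$ by $h\cdot v = \psi^{-1}(h\psi(v)h^{-1})$, i.e.\ by conjugation on the matrix $\psi(v)$. Here $\psi(v)\in \mathfrak{sl}(2,\R)$ with $\det(\psi(v))=-q(v)=-1<0$, so $\psi(v)$ has eigenvalues $\pm 1$ and is conjugate (over $\R$, since it is diagonalisable with real distinct eigenvalues) to $\psi(v_{\rm e}) = {\rm diag}(1/2,-1/2)$'s analogue — more precisely, one checks $\psi(v_{\rm e}) = \psi((0,0,1,0))$ has $\det = -1$ and is $G_d$-conjugate to any other traceless matrix of determinant $-1$. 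Hence some $(h,h)\in G_d$ sends $v$ to $v_{\rm e}$, which proves transitivity. One small point I would check carefully is that the conjugating matrix can be taken in $\SL(2,\R)$ rather than only $\GL(2,\R)$; this is automatic since scaling a conjugator does not change the conjugation action, so one can always normalise the determinant to $1$.

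Next, the isotropy group of $v_{\rm e}$ within $G\times G$: an element $(h_1,h_2)$ fixing $v_{\rm e}$ must in particular fix ${\rm e}=\pi(\text{basepoint})$, hence $(h_1,h_2)\in G_d$, say $h_1=h_2=h$; then fixing $v_{\rm e}$ means $h\psi(v_{\rm e})h^{-1}=\psi(v_{\rm e})$, i.e.\ $h$ commutes with ${\rm diag}(1,-1)$ (up to the factor $1/2$), whose centraliser in $\SL(2,\R)$ is exactly the diagonal torus $A$ (note $-{\rm Id}\in A$). Therefore the isotropy group is $\{(h,h): h\in A\} = A_d$, as claimed.

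Finally, for $\Psi$: by construction $\Psi$ intertwines the left $G\times G$-action on $(G\times G)/A_d$ with the action on $T^1\AdS_3$, and $\Psi([{\rm Id},{\rm Id}])=({\rm e},v_{\rm e})$; transitivity of the target action together with the fact that the stabiliser of $({\rm e},v_{\rm e})$ is precisely $A_d$ shows $\Psi$ is a well-defined $(G\times G)$-equivariant bijection. Since the orbit map $G\times G\to T^1\AdS_3$, $(h_1,h_2)\mapsto (h_1,h_2)\cdot v_{\rm e}$, is a smooth submersion onto the orbit (its differential at the identity is surjective onto $T_{({\rm e},v_{\rm e})}T^1\AdS_3$, which can be read off from the explicit formulas $d\psi({\rm e})$ and \eqref{actionGxG}), the induced map on the quotient manifold $(G\times G)/A_d$ is a smooth bijective immersion, and equivariance plus transitivity upgrade it to a diffeomorphism. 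The main obstacle, modest as it is, is the bookkeeping in the surjectivity-of-differential computation: one needs to see that the tangent directions coming from left-multiplication by $\mathfrak g\times\mathfrak g$ exhaust the $4$-dimensional space $T_{({\rm e},v_{\rm e})}T^1\AdS_3$ modulo the $1$-dimensional $\mathfrak a_d$, which is a direct linear-algebra check using the basis $X,V,X_\perp$ of \eqref{basissl2} but should be done explicitly to be safe.
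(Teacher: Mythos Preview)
Your argument is correct and follows essentially the same route as the paper for transitivity and the isotropy computation: reduce to the fiber over ${\rm e}$, identify $T^1_{\rm e}\AdS_3$ with traceless matrices of determinant $-1$ via $\psi$, and observe that the $G_d$-action is conjugation, which is transitive with stabiliser $A$ at $\psi(v_{\rm e})={\rm diag}(1,-1)$ (note: not ${\rm diag}(1/2,-1/2)$ --- that is the Lie algebra element $X$, a harmless slip you immediately correct). Your remark that the conjugator can be taken in $\SL(2,\R)$ is right, though ``scaling'' is not quite the mechanism: rather, the centraliser of ${\rm diag}(1,-1)$ in $\GL(2,\R)$ contains diagonal matrices of every nonzero determinant, so one can adjust.

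The only genuine difference is in the diffeomorphism step. You invoke the general orbit--stabiliser principle (the orbit map $G\times G\to T^1\AdS_3$ has constant rank, so the induced map from the quotient is a bijective immersion between equidimensional manifolds, hence a diffeomorphism), which is perfectly valid and avoids any computation. The paper instead writes down an explicit smooth inverse: given $(x,v)$, both $\psi(v)\psi(x)^{-1}$ and $\psi(x)^{-1}\psi(v)$ are traceless with determinant $-1$, so each is smoothly conjugate to $h_{\rm e}=\psi(v_{\rm e})$ by some $h_1,h_2\in G$, and $[(h_1,h_2)]\in (G\times G)/A_d$ is the inverse image. Your abstract argument is cleaner; the paper's explicit inverse has the advantage of being reusable in later computations of $d\Psi$.
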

\begin{proof}
Since the $G$-action on the base manifold $\AdS_3$ is transitive, it suffices to show that each $v\in T^1_{\rm e}\AdS_3$ can be represented as $v=(h,h)\cdot v_{\rm e}$ where  $h\in G$. 
Let $\mc{H}:=\{ H\in M_2(\R)\,|\, {\rm Tr}(H)=0, \det(H)=-1\}$ and notice that 
$v\in T_{\rm e}\AdS_3\mapsto \psi(v)\in \mc{H}$ is a diffeomorphism. We have $(h,h)\cdot ({\rm e},v)=({\rm e},\psi^{-1}(h\psi(v)h^{-1}))$, thus it suffices to 
prove that the action of $G$ on $\mc{H}$ by conjugation is transitive and that its isotropy group is $A$. But $\mc{H}$ corresponds to diagonalisable matrices with two distinct eigenvalues $\{-1,1\}$, i.e., matrices conjugate (in $\GL(2,\R)$, hence also in $\SL(2,\R)$) to $h_{\rm e}:=\psi(v_{\rm e})=\mathrm{diag}(1,-1)$. The isotropy group corresponds to the matrices commuting with $h_{\rm e}$, which is precisely $A$. 

Finally, $\Psi$ is a diffeomorphism since for $(x,v)\in T^1\AdS_3$, and  $(\psi(x),\psi(v))$ is such that 
$\psi(v)\psi(x)^{-1}=h_1h_{\rm e}h_1^{-1}$ and $\psi(x)^{-1}\psi(v)=h_2h_{\rm e}h_2^{-1}$ for some $h_1,h_2\in \SL(2,\R)$, using that for $\psi(v)\in T_{\psi(x)}G$ we have ${\rm Tr}(\psi(x)^{-1}\psi(v))={\rm Tr}(\psi(v)\psi(x)^{-1})=0$ and $\det(\psi(x)^{-1}\psi(v))=-1$. Then 
$[(h_1,h_2)]\in G\times G/A_d$ is well-defined and depends smoothly on $(x,v)$.
\end{proof}
Let  $U_\pm:=-(X_\perp\pm V)$ where $X_\perp,V$ are the Lie algebra elements in \eqref{basissl2}, then $[X,U_\pm]=\pm U_\pm$, where $X$ was also introduced in \eqref{basissl2};   $\{X,U_+,U_-\}$ is a basis of $\mathfrak{sl}(2,\R)$. We shall denote $\{X^{(1)},U_+^{(1)},U_-^{(1)},X^{(2)},U_+^{(2)},U_-^{(2)}\}$ the corresponding basis of $\mathfrak{sl}(2,\R)\oplus \mathfrak{sl}(2,\R)$ formed by the basis elements in the first and second copies of $\mathfrak{sl}(2,\R)$, respectively. 
The tangent space of $(G\times G)/A_d$ at the base point $[{\rm Id},{\rm Id}]$ can be identified with the quotient space $(\mathfrak{sl}(2,\R)\oplus \mathfrak{sl}(2,\R))/ \R(X^{(1)}+X^{(2)})$. \color{black} We then observe that 
 \bq
 {\rm d}\Psi_{{\rm Id}}(X^{(1)},-X^{(2)})=X({\rm e},v_{\rm e}).\label{eq:XXrel}
 \eq Similarly, 
\[\begin{gathered}
{\rm d}\Psi_{{\rm Id}}(U_+^{(1)})= \frac{1}{2}(\pl_{x_2}+\pl_{x_4}-\pl_{v_2}-\pl_{v_4}), \quad {\rm d}\Psi_{{\rm Id}}(U_-^{(2)})= \frac{1}{2}(\pl_{x_2}- \pl_{x_4}-\pl_{v_2}+\pl_{v_4}),\\
{\rm d}\Psi_{{\rm Id}}(U_-^{(1)})=\frac{1}{2}( -\pl_{x_2}+\pl_{x_4}-\pl_{v_2}+\pl_{v_4}), \quad {\rm d}\Psi_{{\rm Id}}(U_+^{(2)})= \frac{1}{2}(-\pl_{x_2}-\pl_{x_4}-\pl_{v_2}-\pl_{v_4}),
\end{gathered}\]
so that ${\rm d}\Psi_{{\rm Id}}(\R U_+^{(1)}\oplus \R U_-^{(2)})=E_s({\rm e},v_{\rm e})$, ${\rm d}\Psi_{{\rm Id}}(\R U^{(1)}_-\oplus \R U_+^{(2)})=E_u({\rm e},v_{\rm e})$. By   $G\times G$ \color{black}-equivariance, the same holds at any other point $(x,v)=\Psi(h_1,h_2)$ if we consider the vectors $X^{(i)},U_\pm^{(i)}$ as left-invariant vector fields on   $T((G\times G)/A_d)$\color{black}. 
Note that $q((0,1,0,1),(0,-1,0,1))>0$ and, if $(e_i)_i$ is the canonical basis of $\R^4$, we have  
\[\det(e_1,e_3,-e_2+e_4,e_2+e_4)=2\]
\color{black}so we deduce from the definition of $\mc{L}^{L/R}$ and $U_\pm^{R/L}$ in Lemma \ref{Upm^RL} that
\begin{equation}\label{UpmviaG}
\begin{gathered}
U_+^L = {\rm d}\Psi_{{\rm Id}}(U_+^{(1)}), \quad U_+^R = {\rm d}\Psi_{{\rm Id}}(-U_-^{(2)}),\\
U_-^L= {\rm d}\Psi_{{\rm Id}}(-U_+^{(2)}),\quad U_-^R = {\rm d}\Psi_{{\rm Id}}(-U_-^{(1)}).
\end{gathered}\end{equation}
We continue with a useful remark for later (to deal with Poincar\'e series in Section \ref{Sec:Poincare} after descending the equations proved here to the quotient by a quasi-Fuchsian group). The Liouville $1$-form $\alpha$ on $T^1\AdS_3$ becomes $(\alpha^{(1)}-\alpha^{(2)})/2$ in the $(\mathfrak{sl}(2,\R)\oplus \mathfrak{sl}(2,\R))/\R (X^{(1)}+X^{(2)})$ model, with $\alpha^{(i)}$ the form dual to $X^{(i)}$ in each copy of $\mathfrak{sl}(2,\R)$, then 
\begin{equation}\label{dalphaU}
\begin{gathered}
{\rm d}\alpha(U_+^L,U_-^R)=-\alpha([U_+^L,U_-^R])= \frac{1}{2}\alpha^{(1)}([U_+^{(1)},U_-^{(1)}])=1,\\
{\rm d}\alpha(U_+^R,U_-^L)=-\alpha([U_+^R,U_-^L])=\frac{1}{2}\alpha^{(2)}([U_+^{(1)},U_-^{(1)}])=1,
\end{gathered}
\end{equation}
and ${\rm d}\alpha(U_+^L,U_-^L)={\rm d}\alpha(U_+^R,U_-^R)=0$. We notice in particular that 
\begin{equation}\label{dalpha^2}
{\rm d}\alpha\wedge {\rm d}\alpha(U_+^L,U_-^R,U_+^R,U_-^L)=2.
\end{equation}
Similarly, we compute that  
\[ \omega_u(U_+^R,U_+^L)=\frac{1}{4}, \quad \omega_s(U_-^R,U_-^L)=\frac{1}{4}, \]
which implies that on $\ker \alpha$, we have 
\begin{equation}\label{dalpha^2omegas}
{\rm d}\alpha\wedge {\rm d}\alpha=32 \,\omega_u\wedge \omega_s.
\end{equation}
The projection $\pi : T^1\AdS_3\to \AdS_3$ conjugates, via the maps $\Psi,\psi$, into the map 
$[(h_1,h_2)]\in (G\times G)/A_d\mapsto h_1h_2^{-1}\in G$. The vertical space $\ker d\pi$ is then the tangent space to $G_d/A_d$:  over a point $\Psi([(h,h)])$ projecting to ${\rm e}$, this space is spanned by the two vectors ${\rm d}\Psi_{\rm Id}(U_+^{(1)}+U_+^{(2)})$ and ${\rm d}\Psi_{\rm Id}(U_-^{(1)}+U_-^{(2)})$, and using \eqref{UpmviaG} and $G\times G$\color{black}-equivariance, we see that
\begin{equation}\label{kerdpi} 
\ker {\rm d}\pi = \R (U_+^R+U_-^R) \oplus \R (U_+^L-U_-^L) .
\end{equation}
Moreover, over a point $(x,v)\in T^1\AdS_3$, one easily checks that the volume density $S_{x}$ on $T^1_x\AdS_3$ induced by $g_x$ satisfies  
\begin{equation}\label{normalisationSx} 
|S_x(U_+^R+U_-^R,U_+^L-U_-^L)|=2.
\end{equation}
using the normalisation of Lemma \ref{Upm^RL} for $U_\pm^{R/L}$. This implies that, as densities on $T_x^1\AdS_3$,
\begin{equation}\label{normalisationSxwu} 
|S_x|=8|\omega_u|=8|\omega_s|.
\end{equation}

\subsection{Projections onto the conformal boundary}

There are two  ${\rm SO}(2,2)_\circ$-equivariant projections
\[ B_\pm:\map{T^1\AdS_3}{\partial\AdS_3}{(x,v)}{[x\pm v].}\]
The points $B_\pm(x,v)$ correspond to the two endpoints at $\infty$ (the conformal boundary) of the spacelike 
geodesic passing through $(x,v)$.
\begin{lemma} \label{B_pm}
 The maps $B_\pm: T^1\AdS_3\to \partial\AdS_3$ are smooth trivialisable fibrations. 
 \end{lemma}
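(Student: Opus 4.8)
The plan is to show that each $B_\pm$ is a fiber bundle by exhibiting an explicit global trivialization, using the $\SL(2,\R)$-coordinates from Subsection \ref{sec:SL2Coord}. First I would identify what the fibers of $B_+$ and $B_-$ are: a point $(x,v)\in T^1\AdS_3$ has $B_+(x,v)=[x+v]$ fixed precisely when it lies on the stable leaf $W_s(x,v)$ by \eqref{eqn:Ws}, and $B_-(x,v)=[x-v]$ is fixed precisely on the unstable leaf $W_u(x,v)$ by \eqref{eqn:Wu}. So $B_\pm$ are the quotient maps for the stable/unstable foliations, and the claim is that these quotients are globally trivial. Since both statements are symmetric (conjugating by the involution $(x,v)\mapsto(x,-v)$ swaps $E_s\leftrightarrow E_u$ and $B_+\leftrightarrow B_-$, or one simply repeats the argument), it suffices to treat $B_+$.

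The key step is to build the trivialization from the group picture. Using Lemma \ref{transitivityT1}, $T^1\AdS_3\cong (G\times G)/A_d$ via $\Psi$, and under $\psi$ the boundary $\partial\AdS_3$ is $G\times G$-equivariantly the space of lines $[\nu]$; the map $B_+$ corresponds to $[h_1,h_2]\mapsto (h_1,h_2)\cdot[\nu_+]$ for the appropriate base point $\nu_+$ with $[\nu_+]=B_+({\rm e},v_{\rm e})$. The stabilizer in $G\times G$ of $[\nu_+]$ is a parabolic-type subgroup $P_+\supset A_d$ (of the form computed in the proof of Lemma \ref{l:transitive}), so $\partial\AdS_3 \cong (G\times G)/P_+$ and $B_+$ becomes the natural projection $(G\times G)/A_d \to (G\times G)/P_+$, a fiber bundle with fiber $P_+/A_d$. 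To see this bundle is trivial, I would write $P_+ = A_d \ltimes N_+$ where $N_+$ is the unipotent radical (the $b,b'$ part in the proof of Lemma \ref{l:transitive}), so $P_+/A_d \cong N_+ \cong \R^2$ is contractible and moreover we have a global section: the point is that $\partial\AdS_3 = \mathbb{S}^1\times\mathbb{S}^1$ and over it we want a smooth choice of $(x,v)$ with $x+v$ in the given line. Concretely, fixing the reference surface $\mathbb{T}^2$ and using the splitting \eqref{eq:decomposition into 5 line bundles}, the leaf $W_s(x,v)$ meets each fiber $T^1_x\AdS_3$ and I would produce the section by flowing/translating from a reference point. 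Alternatively — and this is cleanest — one observes directly from the formula $B_+(x,v)=[x+v]$ that $B_+^{-1}([\nu])$ is an affine space: if $(x,v),(x',v')\in B_+^{-1}([\nu])$ then $x+v$ and $x'+v'$ are proportional lightlike vectors, and the fiber is modeled on $\nu^\perp/\R\nu \cong \R^2$; a global smooth section is then obtained by choosing, over each $[\nu]\in\mathbb{S}^1\times\mathbb{S}^1$, the unique $(x,v)$ with $x+v = 2\nu$ (for the canonical lift $\nu$ to $\mathbb{T}^2$) and $x-v$ equal to the unique lightlike vector in the line $\mathcal C$ dual to $[\nu]$ normalized by $q(x+v,x-v)=2$, which is possible since $q(x)=-1,q(v)=1,q(x,v)=0$ translates into $q(x+v,x-v)=-2$ — solvable smoothly.

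I would then assemble: given a local (in fact global, via the above section) trivialization $s:\partial\AdS_3\to T^1\AdS_3$ of $B_+$, the map $(x,v)\mapsto (B_+(x,v), \text{(fiber coordinate)})$ where the fiber coordinate records the displacement of $(x,v)$ from $s(B_+(x,v))$ within the affine fiber gives a diffeomorphism $T^1\AdS_3 \xrightarrow{\sim} \partial\AdS_3 \times \R^2$ intertwining $B_+$ with the first projection. Smoothness is clear from the explicit formulas.

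\textbf{Main obstacle.} The main point requiring care is the \emph{global} existence of the section, i.e.\ making the fiber coordinate well-defined and smooth on all of $T^1\AdS_3$ rather than just locally: one must check that the recipe "write $x+v=2\nu$ for the distinguished lift $\nu\in\mathbb{T}^2$" produces a genuinely smooth bundle chart and that the remaining freedom (the $\R^2$ worth of choices of $(x,v)$ with $x+v$ in a fixed line, constrained by $q(x)=-1$, $q(v)=1$, $q(x,v)=0$) is parametrized smoothly and surjectively — this is where one uses that $\nu^\perp\cap x^\perp$ has signature $(1,1)$ and the nondegeneracy of $q$ restricted there, exactly as in the discussion preceding \eqref{eq:decomposition into 5 line bundles}. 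Everything else is routine linear algebra with the quadratic form $q$.
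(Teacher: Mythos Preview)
Your overall strategy---recognizing $B_+$ as a homogeneous fibration and seeking a global section---is reasonable, but the concrete execution has genuine gaps. First, the fiber dimension is wrong: since $\ker dB_+=E_0\oplus E_s$ (the flow direction together with the stable bundle), each fiber $B_+^{-1}([\nu])$ is \emph{three}-dimensional, not two. Correspondingly, in the group picture the stabilizer $P_+\subset G\times G$ of a point $[\nu_+]\in\partial\AdS_3$ is the full product of Borels (four-dimensional), and $P_+/A_d$ is three-dimensional, not $\R^2$; the parametrization you borrow from Lemma~\ref{l:transitive} is the stabilizer of the \emph{vector} $\psi(\nu_+)$, not of the line. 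Second, your section recipe is not well-defined: you invoke ``the unique lightlike vector in the line $\mathcal C$ dual to $[\nu]$'' without ever defining such a dual line, and there is a sign inconsistency ($q(x+v,x-v)=-2$, not $2$). One \emph{can} write down an explicit section (e.g.\ $[\nu]\mapsto(\nu+\tfrac14\bar\nu,\nu-\tfrac14\bar\nu)$ with $\bar\nu=(\nu_1,\nu_2,-\nu_3,-\nu_4)$ for $\nu\in\mathbb T^2$), but this is not what you wrote, and even then you still owe the argument that a section suffices to trivialize the bundle (true here because the fiber is an affine space, but not automatic).

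The paper's proof sidesteps all of this by a different and much cleaner mechanism: it uses the maximal compact subgroup $K\cong\SO(2)\times\SO(2)$, which acts \emph{freely and transitively} on $\partial\AdS_3$. The orbit map $\theta:K\to\partial\AdS_3$, $h\mapsto h\cdot\nu_0$, is then a diffeomorphism, and $K$-equivariance of $B_\pm$ gives the trivialization $(x,v)\mapsto\big(B_\pm(x,v),\,\theta^{-1}(B_\pm(x,v))^{-1}\cdot(x,v)\big)\in\partial\AdS_3\times B_\pm^{-1}(\nu_0)$ in one line. No explicit section, no linear algebra with $q$, no dimension bookkeeping---the compactness of $K$ does all the work.
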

 
 \begin{proof}
Note that $B_\pm$ is a smooth surjective submersion, by equivariance under the  action of ${\rm Isom}(\AdS_3)$, whose action is transitive on $\pl \AdS_3$\color{black} . Consider the maximal compact subgroup
 \[ K=\set{ \begin{pmatrix} h_1&\\&h_2\end{pmatrix}}{h_1,h_2\in{\rm SO}(2)}\subset {\rm SO}(2,2)_\circ. \]
Fix a point $\nu_0\in\partial\AdS_3$, and note that the orbit map 
 \[ \theta:\map{K}{\partial\AdS_3}{h}{h\cdot\nu_0} \]
is a diffeomorphism. It follows from the $K$-equivariance of $B_\pm$  that the map
 \[ F_\pm:\map{\partial\AdS_3\times B_\pm^{-1}(\{\nu_0\})}{T^1\AdS_3}{(\nu,(x,v))}{\theta^{-1}(\nu)\cdot (x,v)}\]
 is also a diffeomorphism, whose inverse
 \[F_\pm^{-1}:\map{T^1\AdS_3}{\partial\AdS_3\times B_\pm^{-1}(\{\nu_0\})}{(x,v)}{\left(B_\pm(x,v),\theta^{-1}(B_\pm(x,v))^{-1}\cdot (x,v)\right)}\]
 is a trivialisation of $B_\pm:T^1\AdS_3\to\partial\AdS_3$.
 \end{proof}
 
 These projections are related to the Anosov decomposition of the tangent bundle of $T^1\AdS_3$ by the following relations:
 \[ \ker {\rm d}B_+=E_0\oplus E_s,\quad \ker  {\rm d}B_-=E_0\oplus E_u.\]
Notice that if $(x,v)\in T^1\AdS_3$, we have $(x_1\pm v_1,x_2\pm v_2)\not=0$ (since otherwise $x\pm v=0$) and 
we can define the smooth functions
\begin{equation}\label{defPhi} 
\Phi_\pm :\map{T^1\AdS_3}{(0,\infty)}{(x,v)}{\sqrt{(x_1\pm v_1)^2+(x_2\pm v_2)^2}}.
\end{equation}
These functions are not invariant under the action of the isometry group; instead we find
\begin{equation}\label{equivariance_Phi_pm}
 \Phi_\pm\circ h = (N_h\circ\widetilde{\psi}\color{black}\circ B_\pm)\Phi_\pm,\quad h\in {\rm SO}(2,2)_\circ 
 \end{equation}
where $N_h^{-2}\in C^\infty(\mathbb T^2\color{black})$ is the conformal factor for the Lorentzian metric $g_{\mathbb T^2}$ introduced in \eqref{eqn conformal factor} and  $\widetilde \psi:\pl\AdS_3\to \TT^2$ the diffeomorphism from \eqref{eqn tildepsi boundary}\color{black}. Consider the function
\begin{equation}\label{tilB}
\til{B}_\pm:=\widetilde\psi\circ B_\pm:T^1\AdS_3\to \TT^2=\mathbb S^1\times \mathbb S^1.
\end{equation}
We write 
\bq
 x\pm v=\Phi_\pm(x,v) \til{B}_\pm(x,v).\label{eq:defBpm}
\eq
and we see that 
\[ \Phi_\pm\circ\varphi_t=e^{\pm t}\Phi_\pm, \quad B_\pm\circ\varphi_t=B_\pm,\]
thus, recalling that $X$ is the vector field of $\varphi_t$,
\begin{equation}\label{actionXsurPhi}
 X\Phi_\pm=\pm \Phi_\pm, \quad {\rm d}B_\pm(X)=0.
 \end{equation} 
The maps $\Phi_\pm$ and $B_\pm$ are analogous to those used in the Riemannian case in \cite[Section 3D]{dfg}.
For every $(x,v)\in T^1\AdS_3$, one has
\begin{align}
&q(x,\til{B}_\pm(x,v))=\frac{q(x,x\pm v)}{\Phi_\pm(x,v)}=-\frac{1}{\Phi_\pm(x,v)},\\
& q(\til{B}_+(x,v),\til{B}_-(x,v))=\frac{q(x+v,x- v)}{\Phi_+(x,v)\Phi_-(x,v)}=-\frac{2}{\Phi_+(x,v)\Phi_-(x,v)}.\label{eq:qxpsiBpm}
\end{align}
We note that the functions $\Phi_+$ and $B_+$ (resp. $\Phi_-$ and $B_-$) are constant on leaves of the foliation $W_s$ (resp. $W_u$). It follows that
\[ {\rm d}\Phi_+\vert_{E_s}=0,  \quad {\rm d}\Phi_-\vert_{E_u}=0.\color{black} \]
The leaves through $(x,v)\in T^1\AdS_3$ can also be described as 
\begin{align*} 
W_s(x,v)=\{ y\in T^1\AdS_3 \,|\, B_+(y)=B_+(x,v), \Phi_+(y)=\Phi_+(x,v) \}, \\
W_u(x,v)=\{ y\in T^1\AdS_3 \,|\, B_-(y)=B_-(x,v), \Phi_-(y)=\Phi_-(x,v) \}.
 \end{align*}

\subsection{Fiber-integration in the spacelike unit tangent bundle}\label{sec:fiberint}

Let $x\in \AdS_3$. The spacelike unit tangent space $T^1_x\AdS_3=\{ v\in \R^4 \,|\, q(x,v)=0, q(v,v)=1\}$ is a Lorentzian surface in $(\R^4,q)$, its metric $g_x$ induces an area measure $S_x$. The restriction $B_\pm\vert_{T^1_x\AdS_3}$ is injective, and it induces a diffeomorphism
\begin{equation}\label{Bpm_diffeo}
B_\pm(x,\cdot): T^1_x\AdS_3\to U_x
\end{equation}
where $U_x\subset \partial\AdS_3$ is the open set (recall \eqref{defmcO})
\[
 U_x=\{[\nu]\in\partial\AdS_3\,|\, q(x,\nu)<0\}= \{[\nu]\in\partial\AdS_3\,|\, (x,[\nu])\in\mathcal O_+\}.
\]\color{black}
This map has the inverse
\[[\nu] \mapsto \mp \Big(x+\frac{1}{q(x,\nu)}\nu\Big)\color{black}.\]
Consider the maps 
\[ f_x^\pm:\map{T_x^1\AdS_3}{\mathbb T^2}{v}{\til{B}_\pm(x,v)}.\]

\begin{lemma}\label{S_x}
With  $S_{x}$ the volume density on $T^1_x\AdS_3$ induced by the metric $g_x$, 
the following identity holds: $S_x=\Phi_\pm(x,\cdot)^2(f^\pm_x)^\ast {\rm dv}_{\mathbb T^2}$ where ${\rm dv}_{\mathbb{T}^2}={\rm d}\theta {\rm d}\varphi$ is the measure induced by the flat Lorentzian metric $g_{\mathbb{T}^2}$ on $\mathbb{T}^2$.
\end{lemma}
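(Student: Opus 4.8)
The plan is to verify the identity $S_x = \Phi_\pm(x,\cdot)^2 (f_x^\pm)^\ast {\rm dv}_{\mathbb T^2}$ by a direct computation in a convenient pair of coordinates on $T_x^1\AdS_3$, exploiting the $K$-equivariance to reduce to a single base point. Both sides are $\SO(2,2)_\circ$-covariant in the appropriate sense (using \eqref{equivariance_Phi_pm} for $\Phi_\pm$, the equivariance of $B_\pm$, and the conformal covariance $h^*g_{\mathbb T^2}=N_h^{-2}g_{\mathbb T^2}$ of the reference metric, together with the isometric action on $S_x$), so it suffices to check the identity at the base point $x={\rm e}=(1,0,0,0)$, and in fact along a single fiber it then follows everywhere. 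Concretely, the conformal weight $N_h^{-2}$ picked up by ${\rm dv}_{\mathbb T^2}$ under pullback along $f_x^\pm$ is exactly cancelled by the factor $\Phi_\pm^2$ in view of \eqref{equivariance_Phi_pm}, so the right-hand side transforms as a genuine isometry-invariant area measure, matching $S_x$.

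First I would parametrize $T^1_{\rm e}\AdS_3 = \{(0,v_2,v_3,v_4) \,|\, -v_2^2+v_3^2+v_4^2 = 1\}$. A natural choice is $v = (0, \sinh\rho, \cosh\rho\cos\beta, \cosh\rho\sin\beta)$ with $\rho\in\R$, $\beta\in \R/2\pi\Z$; in these coordinates the induced Lorentzian metric is $g_{\rm e} = -d\rho^2 + \cosh^2\rho\, d\beta^2$ (a straightforward computation from $q$), hence the area measure is $S_{\rm e} = \cosh\rho\, d\rho\, d\beta$. Next I would compute the map $f^\pm_{\rm e}$: from \eqref{eq:defBpm} and \eqref{tilpsi}-type formulas (i.e. \eqref{eqn tildepsi boundary}), ${\rm e}\pm v = (1, \pm\sinh\rho, \pm\cosh\rho\cos\beta, \pm\cosh\rho\sin\beta)$, so $\Phi_\pm({\rm e},v) = \sqrt{1+\sinh^2\rho} = \cosh\rho$, and $\til B_\pm({\rm e},v) = \frac{1}{\cosh\rho}(1,\pm\sinh\rho;\, \pm\cos\beta, \pm\sin\beta)$ (after normalizing both $\mathbb S^1$ factors, which have equal norm $\cosh\rho$). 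Writing the two circle coordinates on $\mathbb T^2$ as $(\theta,\varphi)$ with $e^{i\theta}, e^{i\varphi}$, we read off $\theta = \pm\arctan(\tanh\rho)$ up to sign conventions and $\varphi = \beta$ (or $\beta+\pi$ in the $-$ case). Then $d\theta = \pm\frac{{\rm sech}^2\rho}{1+\tanh^2\rho}\,d\rho$; a short simplification gives $d\theta\,d\varphi = \frac{1}{\cosh\rho \cdot \cosh\rho}\cdot(\text{something})$—more precisely one finds $(f_{\rm e}^\pm)^\ast(d\theta\,d\varphi) = \frac{1}{\cosh^2\rho}\cosh\rho\, d\rho\, d\beta\cdot(\cosh\rho)^{?}$, and matching against $S_{\rm e}/\Phi_\pm^2 = \cosh\rho\, d\rho\, d\beta / \cosh^2\rho = {\rm sech}\rho\, d\rho\, d\beta$ pins down the Jacobian. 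I would carry out this elementary change-of-variables carefully to confirm $(f_{\rm e}^\pm)^\ast {\rm dv}_{\mathbb T^2} = \Phi_\pm({\rm e},\cdot)^{-2} S_{\rm e}$.

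The main obstacle I anticipate is purely bookkeeping: getting the normalization of the two $\mathbb S^1$ factors in $\til B_\pm$ consistent with the definition of $g_{\mathbb T^2} = -d\theta^2 + d\varphi^2$ (equation \eqref{conformal_metric_infty}), and tracking the $\pm$ sign and the $\pi$-shift in the second circle coordinate for $B_-$ versus $B_+$—none of which affect the measure. Once the base-point computation is done, I would invoke the covariance discussed above: for arbitrary $x = h\cdot{\rm e}$ with $h\in \SO(2,2)_\circ$, the action sends $T^1_{\rm e}\AdS_3$ isometrically to $T^1_x\AdS_3$ (so $h_\ast S_{\rm e} = S_x$), one has $B_\pm\circ h = h\circ B_\pm$ on the boundary, $\Phi_\pm\circ h = (N_h\circ B_\pm)\Phi_\pm$, and $h^\ast {\rm dv}_{\mathbb T^2} = N_h^{-2}\,{\rm dv}_{\mathbb T^2}$ (since $h^\ast g_{\mathbb T^2} = N_h^{-2}g_{\mathbb T^2}$ and $\dim\mathbb T^2 = 2$). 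Chaining these, the weights $N_h^{\pm 2}$ cancel and the identity at $x$ follows from the identity at ${\rm e}$, completing the proof. Alternatively, and perhaps more cleanly, one can avoid the base-point reduction entirely and note that both $S_x$ and $\Phi_\pm(x,\cdot)^2(f^\pm_x)^\ast{\rm dv}_{\mathbb T^2}$ are smooth positive densities on the surface $T^1_x\AdS_3$ whose ratio is an isometry-invariant function, hence constant, and the constant is computed to be $1$ at any single convenient point.
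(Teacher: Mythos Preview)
Your approach is correct and would work, but it differs substantially from the paper's argument. One small slip: from $\cos\theta=1/\cosh\rho$ and $\sin\theta=\tanh\rho$ you should get $\theta=\arcsin(\tanh\rho)$ (equivalently $\arctan(\sinh\rho)$), not $\arctan(\tanh\rho)$; with the correct formula the Jacobian is $d\theta=\mathrm{sech}\,\rho\,d\rho$, and then $\Phi_+^2(f_{\rm e}^+)^\ast{\rm dv}_{\mathbb T^2}=\cosh^2\rho\cdot\mathrm{sech}\,\rho\,d\rho\,d\beta=\cosh\rho\,d\rho\,d\beta=S_{\rm e}$ checks out cleanly. Your equivariance reduction is sound: the $N_h^{-2}$ picked up by ${\rm dv}_{\mathbb T^2}$ cancels the $N_h^2$ from $\Phi_\pm^2$ via \eqref{equivariance_Phi_pm}.

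The paper bypasses both the base-point coordinate computation and the equivariance argument entirely. It observes that $f_x^\pm(v)=\Phi_\pm(x,v)^{-1}(x\pm v)$, so
\[
df_x^\pm=\Phi_\pm^{-1}\,dv+(x\pm v)\,d(\Phi_\pm^{-1}),
\]
and since $q(x\pm v)=0$ the second term is null and orthogonal to itself in the ambient form $q$. Hence $(f_x^\pm)^\ast g_{\mathbb T^2}=\Phi_\pm^{-2}\,q|_{T_v(T^1_x\AdS_3)}=\Phi_\pm^{-2}g_x$ directly as Lorentzian metrics, at every $x$ simultaneously; the density identity follows. This is a two-line conformal computation rather than a coordinate change plus symmetry reduction. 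Your route has the virtue of being completely explicit and hands-on, and makes the covariance mechanism transparent; the paper's route is shorter, coordinate-free, and actually proves the stronger statement that $f_x^\pm$ is a conformal map with conformal factor $\Phi_\pm^{-2}$, not just a measure-scaling map.
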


\begin{proof}
Since $f^\pm_x(v)=\frac{1}{\Phi_\pm(x,v)}(x\pm v)$, its differential is
\[ {\rm d}f^\pm_x = \frac{{\rm d}v}{\Phi_\pm(x,v)} + {\rm d}\left(\frac{1}{\Phi_\pm(x,\cdot)}\right) (x\pm v). \]
As $q(x\pm v)=0$, we find
\[ \left(f^\pm_x\right)^*g_{\mathbb T^2}=\frac{1}{\Phi_\pm(x,\cdot)^2} g_x.\]
The same relation holds for the associated area measures.
\end{proof}

\subsection{Pullback of distributions along \texorpdfstring{$B_\pm$}{B}}\label{sec:Bpm}

From Lemma \ref{B_pm} we know that the two smooth maps $\til{B}_\pm: T^1\AdS_3\to \mathbb{T}^2$ defined in \eqref{tilB} are surjective submersions. Therefore, the pullbacks of distributions
\[
\til{B}_\pm^\ast:  \mc{D}'(\mathbb{T}^2)\to  \mc{D}'(T^1\AdS_3)
\]
are well-defined (see \cite[Thm.\ 6.1.2]{hoermanderI}) and injective. 
\begin{lemma}\label{lem:1stband}The following pullbacks are well-defined and isomorphisms:
\bq \label{eq:spaceR0}
\begin{gathered} 
\til{B}_+^\ast: \mc{D}'(\mathbb{T}^2)\to \{u\in \mc{D}'(T^1\AdS_3)\,|\, X u=0,\; U u=0\;\forall\, U \in \Cinft(T^1\AdS_3; E_s)\},\\
\til{B}_-^\ast: \mc{D}'(\mathbb{T}^2)\to  \{u\in \mc{D}'(T^1\AdS_3)\,|\, X u=0,\; U u=0\;\forall\, U \in \Cinft(T^1\AdS_3;E_u)\}
\end{gathered}
\eq
Here we identify smooth sections of (subbundles of) $T(T^1\AdS_3)$ with differential operators of order $1$. 
\end{lemma}
\begin{proof}
This follows from Lemma \ref{B_pm} and the general fact that given two connected manifolds $M,N$, the  pullback $\pi^*:\mc{D}'(M)\to \mc{D}'(M\times N)$ by the projection $\pi:M\times N\to M$ is an isomorphism onto the subspace
\[ \set{u\in \mc{D}'(M\times N)}{Yu=0\,, \forall\, Y\in C^\infty(M\times N, \{0\}\oplus TN) }. \]
A standard reference implying this fact is \cite[Theorem 3.1.4']{hoermanderI}.\color{black}
\end{proof}
\begin{cor}\label{cor:Qlambda}
For every $\lambda\in \C$, the maps $\omega\mapsto\mathcal Q^\pm_\lambda(\omega):= \Phi_\pm^\lambda \til{B}_\pm^\ast \omega$ define isomorphisms  
\bq\label{eq:spaceR0lambda}
\begin{gathered}
\mathcal Q^+_\lambda: \mc{D}'(\T^2)\to \{u\in \mc{D}'(T^1\AdS_3)\,|\,(X- \lambda) u=0,\; U u=0, \, \forall\, U \in \Cinft(T^1\AdS_3;E_s)\}, \\
\mathcal Q^-_\lambda: \mc{D}'(\T^2)\to \{u\in \mc{D}'(T^1\AdS_3)\,|\,(X+ \lambda) u=0,\; U u=0, \, \forall\, U \in \Cinft(T^1\AdS_3;E_u)\}.
\end{gathered}
\eq
Moreover, for $h\in {\rm SO}(2,2)_\circ$ and 
$u=\mathcal Q^\pm_\lambda(\omega)$, one has $h^*u=u$ if and only if $N_h^{\la}h^*\omega=\omega$.\footnote{Note that in the expressions $N_h^{\la}$ and $\Phi_\pm^\lambda$ the number $\lambda$ is an exponent and not just an upper index.}
\end{cor}
\begin{proof}
The proof follows from Lemma \ref{lem:1stband} and \eqref{actionXsurPhi}. The equivariance with respect to ${\rm SO}(2,2)_\circ$ is a consequence of \eqref{equivariance_Phi_pm}.
\end{proof}
\color{black}
\begin{rem}Similarly to the Riemannian case, where the spaces of distributions analogous to those in Corollary \ref{cor:Qlambda} are identified with (distributional) principal series representations (see e.g.\ \cite{dfg,kuester-weich18}), the representation $\rho^\pm_\lambda$ of $G\times G$ on $\mc{D}'(\mathbb{T}^2)$ obtained by carrying over the pullback $G\times G$-action on the space \eqref{eq:spaceR0lambda} to $\mc{D}'(\mathbb{T}^2)$ using the isomorphism $\Phi_\pm^\lambda \til{B}_\pm^\ast$ is a (distributional) \emph{induced representation}. More precisely, we have $\rho^\pm_\lambda=\mathrm{Ind}_{P^\pm}^G(\chi^\pm_\lambda)$, where $P^-=(G\times G)_{\nu_0}$ as defined in \eqref{eq:parabolic1}, $P^+=\{(g,h)\,|\,(h,g)\in P^-\}$, and $\chi_\lambda^\pm:P^\pm\to \C$ is a character of $P^\pm$. Here the groups $P^\pm\subset G\times G$ are non-minimal parabolic subgroups, whereas in the Riemannian case the principal series representations are induced from \emph{minimal} parabolic subgroups.  \end{rem}
\color{black}

\section{Pseudo-Riemannian Laplacian and Poisson transform}\label{sec:Laplacian}

In this section, we describe the spectral theory and resolvent of the pseudo-Riemannian Laplacian $\Box_g$ on $\AdS_3$, and the properties of the Poisson transform, which associates to a distribution on $\TT^2\simeq \pl \AdS_3$ an eigenfunction of 
$\Box_g$.

\subsection{The Laplacian}
In this section, we consider the pseudo-Riemannian Laplacian $\Box_g$ on the $\AdS_3$ space. As explained in Section \ref{sec:SL2Coord}, this operator is nothing more than the Casimir operator on $\SL(2,\R)$. Its spectral theory has been studied by means of representation theory. It is known (see \cite{Rossmann}) that $\Box_g$ 
is essentially self-adjoint, its $L^2$-spectrum consists of
\[ \sigma(\Box_g)=[1,\infty) \cup \{1-n^2\,|\,n\in \N^*\}, \]
where the half-line $[1,\infty)$ forms the continuous spectrum and each $1-n^2$ with $n\in\N^*$ is an $L^2$-eigenvalue with finite multiplicity. 
In the parametrization $z=-\lambda(\lambda+2)$, we find $z\in \sigma(\Box_g)$ if and only if $\lambda\in \{-1+ir+m\,|\,r\in \R,\,m\in \Z\}$, see Figure \ref{fig:L2spectrum}. 
\color{black}\begin{figure}[h!]
\begin{tikzpicture}
\draw[->] (-3.5,0)--(3.5,0) node[right]{$\Re \lambda$};
\draw[->] (0,-2.5)--(0,2.5) node[right]{$\Im \lambda$};
\draw[-,blue,very thick] (-1,-2.5)--(-1,2.5);
\draw (-0.7,-0.3) node {$-1$};
\draw (0.3,-0.3) node {$0$};
\draw[blue,fill=blue] (-3,0) circle (2pt);
\draw[blue,fill=blue] (-2,0) circle (2pt);
\draw[blue,fill=blue] (0,0) circle (2pt);
\draw[blue,fill=blue] (1,0) circle (2pt);
\draw[blue,fill=blue] (2,0) circle (2pt);
\draw[blue,fill=blue] (3,0) circle (2pt);
\end{tikzpicture}
\caption{Location of the complex numbers $\lambda\in \C$ such that $-\lambda(\lambda+2)\in \sigma(\Box_g)$.\label{fig:L2spectrum}}
\end{figure}
\color{black}
The discrete spectrum is related to the so-called discrete series. A spectral resolution (Plancherel formula) is also known.  We first give a result that may be already known but, as we have not found a reference, we provide a proof  in  Appendix \ref{app:resolvent}.
\begin{prop}\label{prop:resolventAdS}
The resolvent of the pseudo-Riemannian Laplacian 
\[R_{\Box_g}(\la)=(\Box_g+\la(\la+2))^{-1}: L^2(\AdS_3)\to  L^2(\AdS_3),\] 
defined for ${\rm Re}(\la)>-1$ and $\la\notin \N$, can be written in the form 
\[ (R_{\Box_g}(\la)f)(x)= \int_{\AdS_3}R_{\Box_g}(\la;x,y)f(y){\rm dv}_g(y) \]
where the integral kernel $R_{\Box_g}(\la;\cdot,\cdot)$ belongs to $L^1_{\rm loc}(\AdS_3\times \AdS_3)$ and is a convolution kernel by an $L^1_{\rm loc}(\AdS_3)$-function given by $R_{\Box_g}(\la;x,y)= F_\la (-q(x,y))$ with
\begin{align}\label{formuleFlambda}
& F _\lambda(\zeta):=\left \{\begin{array}{ll}
\dfrac{C_0^+(\la)}{\sqrt{\zeta^2-1}}(\zeta+\sqrt{\zeta^2-1})^{-(\la+1)},& \zeta>1, \\
\dfrac{C_1}{i\sqrt{1-\zeta^2}}(\zeta+i\sqrt{1-\zeta^2})^{-(\la+1)} +C_2(\la)G_\la(\zeta),& \zeta \in (-1,1),\\
\dfrac{C_0^-(\la)}{\sqrt{\zeta^2-1}}(-\zeta+\sqrt{\zeta^2-1})^{-(\la+1)},& \zeta<-1, 
\end{array}\right.
\end{align}
with 
\begin{equation}\label{def_Glambda} 
G_\la=\Big(\dfrac{1}{i\sqrt{1-\zeta^2}}(\zeta+i\sqrt{1-\zeta^2})^{-(\la+1)}+\dfrac{1}{-i\sqrt{1-\zeta^2}}(\zeta-i\sqrt{1-\zeta^2})^{-(\la+1)}\Big)
\end{equation}
and the constants $C_0^\pm(\la),C_1,C_2(\la)$ are given by 
\begin{equation}\label{C1C2}
\begin{gathered}
C_1=\frac{i}{4\pi}, \quad C_2(\la)=\frac{e^{-i\pi(\la+1)}}{8\pi \sin(\pi(\la+1))}\\
C_0^-(\la)=-\frac{1}{4\pi \sin(\pi(\la+1))},\quad  C_0^+(\la)=\frac{1}{4\pi \tan(\pi(\la+1))}.\end{gathered}
\end{equation}
The resolvent $R_{\Box_g}(\la):C_c^\infty(\AdS_3)\to \mc{D}'(\AdS_3)$ thus admits a meromorphic extension to $\la\in \C$ with simple poles at $\la\in \Z$. The principal part at the poles has infinite rank.
\end{prop}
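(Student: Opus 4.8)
The plan is to exploit the identification $\psi\colon\AdS_3\to G=\SL(2,\R)$ of Section~\ref{sec:SL2Coord} (a scaled isometry, $\psi^*b=8g$), under which $\Box_g$ becomes minus the Casimir of $G$ acting on $L^2(G)$, normalised so that $\sigma(\Box_g)=[1,\infty)\cup\{1-n^2:n\in\N^*\}$. Since the Casimir is central, $R_{\Box_g}(\la)$ commutes with both the left and right regular representations, hence acts by convolution with a conjugation-invariant kernel; and since $-q(x,y)=-\tfrac12\Tr(\psi(x)\psi(y)^{-1})$ by \eqref{detvsq} is a function of the conjugacy class of $\psi(x)\psi(y)^{-1}$ (note $\Tr z=\Tr z^{-1}$ in $\SL(2,\R)$), this already forces $R_{\Box_g}(\la;x,y)=F_\la(-q(x,y))$ for a single function $F_\la$ of one variable; the task is then to evaluate $F_\la$ and read off its dependence on $\la$. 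I would compute $F_\la$ by the Plancherel (Fourier inversion) formula for $\SL(2,\R)$.

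Concretely: decompose $L^2(G)$ into the unitary principal series $\pi_t$ ($t\in\R$), with Plancherel density a multiple of $t\tanh(\pi t)$, resp.\ $t\coth(\pi t)$, on the two parities, together with the discrete series $D_n^{\pm}$ ($n\ge1$) entering with weight $\propto n$; on $\pi_t$ the operator $\Box_g+\la(\la+2)$ acts by the scalar $(\la+1)^2+t^2$ and on $D_n^\pm$ by $(\la+1)^2-n^2$. By Fourier inversion the convolution kernel of $R_{\Box_g}(\la)$ is
\[
k_\la(g)=\int_\R\frac{\Theta_{\pi_t}(g)}{(\la+1)^2+t^2}\,d\mu_{\rm pr}(t)+\sum_{n\ge1}\frac{c_n\bigl(\Theta_{D_n^+}+\Theta_{D_n^-}\bigr)(g)}{(\la+1)^2-n^2},
\]
with $\Theta_\pi$ the Harish-Chandra characters, locally integrable and explicit on each Cartan subgroup. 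Now match conjugacy classes to the three regimes of $\zeta=-q(x,y)$ (Lemma~\ref{Lem:q_and_distance} and the case analysis of Section~\ref{sec:geometry}): $\psi(x)\psi(y)^{-1}$ is elliptic exactly when $\zeta\in(-1,1)$ (eigenvalues $e^{\pm i\tau}$, $\cos\tau=\zeta$), hyperbolic with negative trace when $\zeta>1$ (eigenvalues $-e^{\pm r}$, $\cosh r=\zeta$), and hyperbolic with positive trace when $\zeta<-1$. On the hyperbolic Cartan $\Theta_{\pi_t}$ is a multiple of $e^{irt}$ over a Weyl denominator and $\Theta_{D_n^\pm}$ a signed multiple of $e^{-n\lvert r\rvert}$ over the same denominator, and on the elliptic Cartan $\Theta_{\pi_t}\propto\cos(t\tau)/\lvert 2\sin(\tau/2)\rvert$, $\Theta_{D_n^\pm}\propto e^{\mp in\tau}/\lvert 2\sin(\tau/2)\rvert$. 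Substituting these, the $t$-integral is evaluated by shifting the contour and collecting the simple pole at $t=i(\la+1)$, which produces $(\zeta+\sqrt{\zeta^2-1})^{-(\la+1)}$ in the hyperbolic regimes and $(\zeta+i\sqrt{1-\zeta^2})^{-(\la+1)}$ in the elliptic one; the sum over $n$ is a geometric series in $e^{\mp i\tau}$ that in the elliptic regime sums to exactly the term $G_\la$ of \eqref{def_Glambda}. The Weyl denominators together with the Weyl-integration Jacobian supply the prefactors $(\zeta^2-1)^{-1/2}$ and $(1-\zeta^2)^{-1/2}$, and the constants $C_0^\pm(\la),C_1(\la),C_2(\la)$ of \eqref{C1C2} emerge as the corresponding residues and geometric-series sums (whence the factors $1/\tan(\pi(\la+1))$, $1/\sin(\pi(\la+1))$, $e^{-i\pi(\la+1)}$). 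This yields \eqref{formuleFlambda}.

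It remains to extract the three conclusions. Local integrability of $(x,y)\mapsto F_\la(-q(x,y))$ follows from $F_\la(\zeta)=O(\lvert\zeta\mp1\rvert^{-1/2})$ near $\zeta=\pm1$, which is ${\rm dv}_g$-integrable since in geodesic normal coordinates $\zeta-1\sim\pm\tfrac12\rho^2$ and ${\rm dv}_g\sim\rho^2\,d\rho$ near the diagonal (and similarly near the null cones $\zeta=\pm1$ away from it), together with $F_\la(\zeta)=O(\lvert\zeta\rvert^{-\Re\la-1})$ at infinity. For $\Re\la$ large this gives an $L^2$-bounded operator, which by the spectral theorem coincides with $(\Box_g+\la(\la+2))^{-1}$ because the Plancherel measure charges no point of $\{(\la+1)^2+t^2=0\}$ when $\Re\la>-1$; alternatively, one may check directly that $F_\la$ solves the homogeneous radial ODE away from $\zeta\in\{\pm1\}$ (namely $F''+2\coth r\,F'-\la(\la+2)F=0$ for $\zeta=\cosh r>1$ and $F''+2\cot\tau\,F'+\la(\la+2)F=0$ for $\zeta=\cos\tau\in(-1,1)$), and that the chosen normalisation produces the Dirac mass at the diagonal. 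Finally, \eqref{formuleFlambda} is manifestly meromorphic in $\la$ with poles only where $\sin(\pi(\la+1))=0$, i.e.\ at $\la\in\Z$, and these poles are simple; at $\la=\la_0\in\Z$ the residue of $R_{\Box_g}(\la)$ is, on the spectral side, the residue at $s=\pm(\la_0+1)$ of $((\la+1)^2-s^2)^{-1}$, that is a nonzero multiple of the orthogonal projection onto the $D_{\lvert\la_0+1\rvert}^{\pm}$-isotypic subspace of $L^2(G)\cong L^2(\AdS_3)$, which is infinite-dimensional; hence the polar parts have infinite rank.

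The main obstacle is the second step: faithfully tracking all normalisations — the scaling $\psi^*b=8g$, the normalisations of Haar measure and of the Plancherel density, the two parities of the principal series, the signs in the discrete-series characters — and carrying out the contour integral and geometric summation so as to land exactly on the constants \eqref{C1C2} and the correction $G_\la$. Closely tied to this is the question of how $k_\la$ is continued across the characteristic (null) hypersurfaces $\zeta=\pm1$: the Fourier-inversion formula singles out one particular distributional inverse of $\Box_g+\la(\la+2)$ (a Feynman-type inverse) among the many admissible ones, and matching the resulting $i\epsilon$-prescription to the branches $\zeta+i\sqrt{1-\zeta^2}$ appearing in \eqref{formuleFlambda} requires care.
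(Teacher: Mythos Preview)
Your overall strategy---identify $\Box_g$ with minus the Casimir on $\SL(2,\R)$ and compute the resolvent kernel via the Plancherel/Fourier inversion formula using the explicit Harish--Chandra characters---is exactly the approach the paper takes in Appendix~\ref{app:resolvent} (Lemmas~\ref{lem:rz1} and~\ref{f^pmformule}).  However, there is a concrete error in your treatment of the elliptic regime $|\zeta|<1$: the principal-series characters $\Theta_{\pi_t}$ \emph{vanish} on elliptic conjugacy classes (see \eqref{eq:TrP}), so your claimed contribution $(\zeta+i\sqrt{1-\zeta^2})^{-(\la+1)}$ from the contour integral does not arise that way.  The entire elliptic kernel comes from the discrete-series sum, which with $\zeta=\cos\tau$ reads $-\frac{1}{\sin\tau}\sum_{n\ge1}\frac{n\sin(n\tau)}{(\la+1)^2-n^2}$; this is not a geometric series.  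It \emph{can} be summed in closed form via the classical Fourier identity $\sum_{n\ge1}\frac{n\sin(n\tau)}{n^2-a^2}=\tfrac{\pi}{2}\,\frac{\sin(a(\pi-\tau))}{\sin(\pi a)}$ for $0<\tau<2\pi$, and expanding $\sin((\la+1)(\pi-\tau))$ then produces both the $C_1$-term and the $C_2 G_\la$-term of \eqref{formuleFlambda} simultaneously; this would repair your argument.

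The paper does not evaluate the elliptic sum directly.  Instead (Lemma~\ref{Lemma_sol_fond}) it writes down an ansatz $F_\la$ with free constants $C_0^\pm,C_1,C_2$ solving the radial hypergeometric ODE in each of the three regions, and then performs a careful Green's-formula computation of the boundary terms across the null hypersurfaces $\zeta=\pm1$ to enforce the distributional identity $(\Box_g+\la(\la+2))F_\la(-q(\cdot,{\rm e}))=\delta_{\rm e}$; this fixes all four constants \emph{and} the overall normalisation $C_0$ at once, bypassing the need to track Haar-measure and Plancherel normalisations.  The paper also carries out a separate $K$-type decomposition (P\"oschl--Teller operators on each Fourier mode) to locate the poles and show infinite rank; your spectral argument for infinite rank via the discrete-series isotypic projection is more direct and arguably cleaner.
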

Notice that when $-q(x,y)>1$, the resolvent $R_{\Box_g}(\la;x,y)$ can be rewritten as 
\begin{equation}\label{Rla_in_terms_ofd} 
R_{\Box_g}(\la;x,y)=C_0^+(\la)\frac{e^{-(\la+1)d(x,y)}}{\sinh(d(x,y))},
\end{equation}
cf. Lemma \ref{Lem:q_and_distance}. \color{black} Finally, we observe that $F_\la(\zeta)$ has an asymptotic expansion of the form 
\[ F_\la (\zeta)=\zeta^{-\la-2}\sum_{k=0}^\infty a_{2k}(\la)\zeta^{-2k}, \quad a_0(\la)=2^{-\la-1}C_0^+(\la)\]
and using that $F_\la (\zeta)$ is a solution of the ODE \eqref{ODEhyper}, one computes the coefficients $a_{2k}$ and obtains (see the proof of \cite[Lemma 2.1]{Guillope-Zworski})
\[ a_{2k}(\la)=2^{-2k}\frac{\Gamma(-\la+2k)}{\Gamma(-\la+k)\Gamma(k+1)}a_0(\la),\]
which gives in the region $\{\zeta>1\}$
\begin{equation}\label{expansionF_la} 
F_\la (\zeta)=C_0^+(\la) \zeta^{-\la-2}\sum_{k=0}^\infty \frac{2^{-\la-1-2k}\Gamma(-\la+2k)}{\Gamma(-\la+k)\Gamma(k+1)}\zeta^{-2k}.
\end{equation}
We also introduce a holomorphic resolvent in the following proposition.
\begin{prop}\label{prop:resolventAdShol}
Consider the holomorphic family of operators $R^h(\la):C_c^\infty(\AdS_3)\to \mc{D}'(\AdS_3)$ defined for $\la\in \C$ by the integral kernel $R^h(\la;x,y)=F_\la^h(-q(x,y))$ where $F_\la^h: \R \to \C$  is the function 
\begin{align}\label{formuleFlambdabis}
& F^h _\lambda(\zeta):=\left \{\begin{array}{ll}
\dfrac{i}{4\pi\sqrt{\zeta^2-1}}(\zeta+\sqrt{\zeta^2-1})^{-(\la+1)},& \zeta>1, \\
\dfrac{1}{4\pi\sqrt{1-\zeta^2}}(\zeta+i\sqrt{1-\zeta^2})^{-(\la+1)} ,& \zeta \in (-1,1),\\
0,& \zeta<-1.
\end{array}\right.
\end{align}
Then for each $f\in C_c^\infty(\AdS_3)$, the following holds true 
\[ (\Box_{g}+\la(\la+2))R^h(\la)f= f\]
in $\mc{D}'(\Omega_f)$ where $\Omega_f$ is the region 
\[\Omega_{f}:=\{x\in \AdS_3\,|\, \forall y\in {\rm supp}(f), \, -q(x,y)> -1\}.\]
\end{prop}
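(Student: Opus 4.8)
The plan is to reduce the statement to a pointwise identity in the source variable, then to a local problem near the light cone of a point, which is resolved by recognizing $F^h_\la(-q(x,y))$ (on the region $\{-q>-1\}$) as an exact Hadamard/Feynman fundamental solution of $\Box_g+\la(\la+2)$. First, fix $f\in C_c^\infty(\AdS_3)$; then $\Omega_f=\{x: \min_{y\in\supp f}(-q(x,y))>-1\}$ is open. Testing against $\phi\in C_c^\infty(\Omega_f)$, the set $\supp\phi\times\supp f$ is a compact subset of $\{(x,y):-q(x,y)>-1\}$, so $-q(x,y)\ge -1+\eps_0$ there; on this set the kernel $F^h_\la(-q(x,y))$ involves only the two branches $\zeta\in(-1,1)$ and $\zeta>1$ of \eqref{formuleFlambdabis} and has just the integrable $|{-q}-1|^{-1/2}$ singularity along $\{-q=1\}$ (exactly as in Proposition \ref{prop:resolventAdS}), hence is integrable. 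Therefore $\langle(\Box_g+\la(\la+2))R^h(\la)f,\phi\rangle=\langle R^h(\la)f,(\Box_g+\la(\la+2))\phi\rangle$ equals, by Fubini, $\int_{\supp f}f(y)\,\langle(\Box_g+\la(\la+2))F^h_\la(-q(\cdot,y)),\phi\rangle\,{\rm dv}_g(y)$, the inner pairing taken in $\mc{D}'(V_y)$ with $V_y:=\{x:-q(x,y)>-1\}\supset\supp\phi$. So it suffices to prove $(\Box_g+\la(\la+2))F^h_\la(-q(\cdot,y))=\delta_y$ in $\mc{D}'(V_y)$, and by transitivity of ${\rm Isom}(\AdS_3)_\circ$ (which preserves $q$) we may take $y={\rm e}=(1,0,0,0)$, so $-q(\cdot,{\rm e})=x_1$ and we must show $(\Box_g+\la(\la+2))F^h_\la(x_1)=\delta_{\rm e}$ on $\{x_1>-1\}$.

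Second, away from the light cone $\{x_1=1\}$ (which contains ${\rm e}$) this is an ODE statement: for any $F$, the function $x\mapsto F(x_1)$ is a class function on $\SL(2,\R)\cong\AdS_3$, so $(\Box_g+\la(\la+2))F(x_1)=(\mathcal{L}_\la F)(x_1)$ where $\mathcal{L}_\la$ is the hypergeometric-type radial operator \eqref{ODEhyper} (this can also be read off \eqref{Boxg_t_coordinate} after writing $x_1=\cosh t$). On $\{x_1>1\}$ we have $F^h_\la=C(\la)F_\la$ with $C(\la)=i\tan(\pi(\la+1))$ by \eqref{formuleFlambda}, \eqref{formuleFlambdabis}, \eqref{C1C2}, and $\mathcal{L}_\la F_\la=0$ there (Proposition \ref{prop:resolventAdS} and its proof), hence $\mathcal{L}_\la F^h_\la=0$; on $\{-1<x_1<1\}$ the function $F^h_\la$ is exactly the first summand of $F_\la$ in \eqref{formuleFlambda}, i.e. $F_\la-C_2(\la)G_\la$, a combination of solutions of $\mathcal{L}_\la\,\cdot=0$ (using \eqref{def_Glambda}), hence again $\mathcal{L}_\la F^h_\la=0$. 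Therefore $(\Box_g+\la(\la+2))F^h_\la(x_1)$ is a distribution supported on $\{x_1=1\}\cap\{x_1>-1\}$.

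Third, one must identify that distribution; this is the heart of the matter. Comparing with $F_\la$: from \eqref{formuleFlambda}, \eqref{formuleFlambdabis}, \eqref{C1C2} and the algebraic identity $\tfrac{i}{4\pi}-C_0^+(\la)=-2C_2(\la)$, the difference $F^h_\la(x_1)-F_\la(x_1)$ equals $-C_2(\la)$ times the explicit function that is $2(x_1+\sqrt{x_1^2-1})^{-(\la+1)}(x_1^2-1)^{-1/2}$ on $\{x_1>1\}$ and $G_\la(x_1)$ on $\{-1<x_1<1\}$; its singularity along $\{x_1=1\}$ sits only on the spacelike side $\{x_1>1\}$ and is of half-order conormal type $(x_1-1)_+^{-1/2}\bigl(1+O((x_1-1)^{1/2})\bigr)$ (the timelike-side term $G_\la$ being smooth across $x_1=1$). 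The key local claim is that $(\Box_g+\la(\la+2))$ annihilates $F^h_\la(-q(\cdot,{\rm e}))-F_\la(-q(\cdot,{\rm e}))$ on $\{x_1>-1\}$; granting this, the Proposition follows from Proposition \ref{prop:resolventAdS}, which gives $(\Box_g+\la(\la+2))F_\la(-q(\cdot,{\rm e}))=\delta_{\rm e}$ on all of $\AdS_3$ (on $\{x_1>-1\}$ only the branches $\zeta\in(-1,1)$ and $\zeta>1$ of $F_\la$ occur). Equivalently and more conceptually: by Lemma \ref{Lem:q_and_distance}, $x_1-1=-q(\cdot,{\rm e})-1$ agrees with the squared geodesic half-separation (Synge's world function $\sigma$) up to a smooth positive factor, and \eqref{formuleFlambdabis} is designed precisely so that the leading conormal part of $F^h_\la(-q(\cdot,{\rm e}))$ along the light cone is the normalized Feynman leading term $\tfrac{i}{4\pi}(\sigma+i0)^{-1/2}$ of a fundamental solution of $\Box_g+\la(\la+2)$; the $+i0$ prescription is exactly what makes the two branches of \eqref{formuleFlambdabis} match as a distributional solution of $(\Box_g+\la(\la+2))u=0$ across $\{-q=1\}\setminus\{{\rm e}\}$, so that $(\Box_g+\la(\la+2))F^h_\la(-q(\cdot,{\rm e}))$ is supported at ${\rm e}$ and equals $\delta_{\rm e}$ by the transport-equation (Hadamard) normalization. \textbf{The main obstacle} is this local computation near the light cone — matching the two branches of \eqref{formuleFlambdabis} across $\{-q=1\}$ and verifying that the vertex contribution is exactly $\delta_{\rm e}$, i.e. that the explicit constants in \eqref{formuleFlambdabis} are the correct Hadamard normalization. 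It can be carried out in the half-space coordinates \eqref{iota_def}, where $\Box_g$ is conformal to a constant-coefficient three-dimensional wave operator and Hadamard's classical construction \cite{Hadamard, Duistermaat_Hormander} applies, or directly in the coordinates \eqref{Boxg_t_coordinate}.
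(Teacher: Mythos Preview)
Your reduction to a single source at ${\rm e}$ and to the radial ODE away from $\{\zeta=1\}$ is correct, and so is the algebraic identity $\tfrac{i}{4\pi}-C_0^+(\la)=-2C_2(\la)$ you use to organize the difference $F^h_\la-F_\la$. The approach is essentially the same as the paper's: both reduce everything to the boundary computation at the light cone $\{\zeta=1\}$.

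The paper's proof is precisely the ``direct computation in the coordinates \eqref{Boxg_t_coordinate}'' that you list as one way to resolve your main obstacle, but it does not redo it: it simply observes that the Green's formula computation already carried out in the proof of Lemma~\ref{Lemma_sol_fond} works verbatim if one replaces the constants by $C_1(\la)=C_0^+(\la)=i/(4\pi)$, $C_2(\la)=0$, $C_0^-(\la)=0$. With these values the two conditions that emerged there at $\zeta=1$ are satisfied --- the light-cone integral term has coefficient $C_0^+-(C_1+2C_2)=0$, and the vertex term has coefficient $4\pi C_1/i=1$ --- while the boundary terms at $\zeta=-1$ are irrelevant on $\{\zeta>-1\}$. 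So the ``main obstacle'' you identify is already done in the paper, just with a different choice of constants; your comparison route via $F^h_\la-F_\la$ is valid but does not avoid that computation, since the difference has constants $(C_0^+,C_1,C_2)=(-2C_2(\la),0,-C_2(\la))$, which satisfy the same two conditions and hence give zero by the very same argument.

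Your Feynman observation --- that $F^h_\la(\zeta)=\tfrac{i}{4\pi}(\zeta^2-1+i0)^{-1/2}\bigl(\zeta+(\zeta^2-1+i0)^{1/2}\bigr)^{-(\la+1)}$ on $\{\zeta>-1\}$ is the boundary value of a function holomorphic in the upper half-plane that solves the radial ODE there --- is a nice conceptual point the paper does not state. It immediately shows $\mathcal{L}_\la F^h_\la=0$ distributionally in $\zeta$ across $\zeta=1$. But this by itself does not give the $\delta_{\rm e}$ on $\AdS_3$: the pullback $x\mapsto -q(x,{\rm e})$ is singular at the vertex, and extracting the correct coefficient of $\delta_{\rm e}$ is again the vertex part of the Lemma~\ref{Lemma_sol_fond} computation (or an equivalent Hadamard normalization). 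So this route is genuinely different in packaging but not in the work required.
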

The proof of Proposition \ref{prop:resolventAdShol} is essentially a byproduct of the proof of Proposition \ref{prop:resolventAdS}, see the end of Appendix \ref{app:resolvent}.  Here, the choice of the function $F^h(\la)$ removes the
poles of the resolvent. The drawback is that $R^h(\la)$ satisfies $(\Box_{g}+\la(\la+2))R^h(\la;x,y)=\delta_{x=y}$ (the Schwartz kernel of the identity) only in the subset $\{(x,y)\in \AdS_3\times \AdS_3\,|\,  -q(x,y)> -1\}$ of $\AdS_3\times \AdS_3$. However, only this region is relevant
when quotienting by a quasi-Fuchsian subgroup $\Gamma$, so this resolvent can be made $\Gamma$-periodic to yield an inverse on the quotient.
\color{black}

\subsection{Poisson transform}\label{s:Poisson_transform} 
There are two types of Poisson transforms in the case of $\AdS_3$ which were studied in \cite{Rossmann} and \cite{Andersen} for so-called $K$-finite functions. In this section we briefly review their definitions and prove 
new results concerning the action of the Poisson transforms on distributions on $\TT^2$ (where $\TT^2$ represents the boundary $\pl \AdS_3$). Recall that, with respect to the identification of $\TT^2$ with $\pl \AdS_3$ via the map $\til{\psi}$, the $G$-invariant dense open set $\OO\subset \AdS_3\times\pl \AdS_3$ defined in \eqref{defmcO} becomes
\[
\OO=\{(x,\nu)\in\AdS_3\times \TT^2\,|\, q(x,\nu)\neq 0 \}.
\]
\begin{definition}\label{def:Poissonkernel}
The \emph{Poisson kernel} is the map
\[ P: \map{\OO}{ (0,\infty)}{(x,\nu)}{|q(x,\nu)|^{-1}.}\]
\end{definition}
Note that for $(x_0,v)\in T^1\AdS_3$ with $(x_0,\til{B}_+(x_0,v))\in \OO$, any point $(x,\nu)\in \OO$, and any $h\in {\rm SO}(2,2)_\circ$, we find
\begin{equation}\label{poisson_h} 
P(x_0,\til{B}_+(x_0,v))=\Phi_+(x,v), \quad P(h(x),h(\nu))=N_h(\nu) P(x,\nu). 
\end{equation}
Note that if $\lambda\in \C$ satisfies $\Re \lambda <0$, then the complex power 
$$
(x,\nu)\mapsto P(x,\nu)^\lambda
$$ 
extends by zero from the dense set $\OO\subset \AdS_3\times \TT^2$ to a continuous map on all of $\AdS_3\times \TT^2$, and this extended map has analogous properties as in \eqref{poisson_h}. It will become clear from the context whether we work with powers of the Poisson kernel on $\AdS_3\times \TT^2$ or only on $\OO$. 
\begin{definition}\label{def:Poisson_transform}
Let $\la\in \C$ with ${\rm Re}(\la)<-2$ and $\sigma\in \{0,1\}$.
For $\omega\in C^\infty(\mathbb{T}^2)$, the Poisson transform $\mc{P}^\sigma_\la \omega$ is the continuous
function on $\AdS_3$ defined by  
\bq\label{eq:Poissontransform}
 (\mc{P}^\sigma_\la \omega)(x):=\int_{\mathbb{T}^2} P(x,\nu)^{2+\lambda}({\rm sign}(q(x,\nu)))^{\sigma}\omega(\nu){\rm dv}_{\mathbb{T}^2}(\nu).
 \eq
\end{definition}  
If $I:x\mapsto -x$ denotes the antipodal involution on $\AdS_3$ and on $\TT^2$, 
we notice that 
\begin{equation}\label{P_lambda_invol}
I^*\mc{P}_\la^0\omega=\mc{P}_\la^0\omega \quad I^*\mc{P}_\la^1\omega=-\mc{P}_\la^1\omega.
\end{equation}

Consider, for a subset $S\subset \mathbb{T}^2$,  the set
\bq
\mc{O}_\pm(S):=\{ x\in \AdS_3\, |\, \pm q(x,\nu)<0 , \forall \nu \in S\}=\{ x\in \AdS_3\, |\, \{x\}\times S\subset \mc{O}_\pm\}\label{eq:OpmS}
\eq
and $\mc{O}(S):=\mc{O}_+(S)\cup \mc{O}_-(S)$.
Note that if $S$ is closed in $\mathbb{T}^2$ then $\mc{O}_\pm(S)$ is open in $\AdS_3$. 
\begin{lemma}\label{Klein-Gordon}
Let $\la\in \C$ with ${\rm Re}(\la)<-4$ and $\sigma\in \{0,1\}$. For each $\omega\in C^\infty(\TT^2)$, one has 
\[ (\Box_{g}+\la(\la+2))\mc{P}_\la^\sigma\omega=0\] 
as $C^0$-functions on $\AdS_3$. The same statement holds for all $\la \in \C$ on the open set $\mc{O}({\rm supp}(\omega))\subset \AdS_3$. If  
$h\in {\rm SO}(2,2)_\circ$, the following holds true:
\[  h^\ast(\mc{P}^\sigma_\la \omega)=\mc{P}^\sigma_\la (N_h^\la h^*\omega).\]
\end{lemma}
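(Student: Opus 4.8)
The plan is to reduce the Klein--Gordon identity to a pointwise computation on the level sets of the boundary variable and then to differentiate under the integral sign in \eqref{eq:Poissontransform}. First I would fix $\nu\in\TT^2$, viewed as a null vector of $\R^4$ (so $q(\nu)=0$), and observe that for any $\mu\in\C$ the function $x\mapsto q(x,\nu)^\mu$ is annihilated by the flat wave operator $\Box_q$ on the half-space $\{q(\cdot,\nu)>0\}$: since $q(\cdot,\nu)$ is affine, $\partial_{x_j}^2 q(x,\nu)^\mu=\mu(\mu-1)q(x,\nu)^{\mu-2}q(e_j,\nu)^2=\mu(\mu-1)q(x,\nu)^{\mu-2}\nu_j^2$, and $\Box_q=\partial_{x_1}^2+\partial_{x_2}^2-\partial_{x_3}^2-\partial_{x_4}^2$ then returns a multiple of $\nu_1^2+\nu_2^2-\nu_3^2-\nu_4^2=-q(\nu)=0$ (the same on $\{q(\cdot,\nu)<0\}$ with $-q(\cdot,\nu)$). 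Feeding this into the cone decomposition $\Box_q=\partial_s^2+3s^{-1}\partial_s+s^{-2}\Box_g$ coming from $f^*q=-ds^2+s^2g$, together with the homogeneity $|q(s\omega,\nu)|^\mu=s^\mu|q(\omega,\nu)|^\mu$, I get $0=\Box_q(s^\mu|q(\omega,\nu)|^\mu)=s^{\mu-2}\bigl(\mu(\mu+2)|q(\omega,\nu)|^\mu+\Box_g|q(\omega,\nu)|^\mu\bigr)$, hence $\Box_g|q(\cdot,\nu)|^\mu=-\mu(\mu+2)|q(\cdot,\nu)|^\mu$ on $\AdS_3\cap\{q(\cdot,\nu)\neq0\}$; multiplying by the locally constant factor ${\rm sign}(q(\cdot,\nu))^\sigma$ changes nothing. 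Taking $\mu=-(2+\lambda)$ gives $-\mu(\mu+2)=-\lambda(\lambda+2)$, so the Poisson kernel $K^\sigma_\lambda(x,\nu):=P(x,\nu)^{2+\lambda}{\rm sign}(q(x,\nu))^\sigma$ solves $(\Box_g+\lambda(\lambda+2))K^\sigma_\lambda(\cdot,\nu)=0$ off the characteristic set $\{q(\cdot,\nu)=0\}$.

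Next I would promote this to the integral. On $\mc{O}(\supp(\omega))$ one has $q(x,\nu)\neq0$ for all $\nu\in\supp(\omega)$, so $(x,\nu)\mapsto K^\sigma_\lambda(x,\nu)$ is jointly smooth near $\{x\}\times\supp(\omega)$ for every $\lambda\in\C$; differentiating \eqref{eq:Poissontransform} under the integral is then immediate, and Step~1 gives $(\Box_g+\lambda(\lambda+2))\mc{P}^\sigma_\lambda\omega=0$ on $\mc{O}(\supp(\omega))$. For the statement on all of $\AdS_3$ (where ${\rm Re}(\lambda)<-4$, so in particular $\mc{P}^\sigma_\lambda\omega$ is defined by Definition \ref{def:Poisson_transform}) one must handle points $x$ with $q(x,\nu_0)=0$ for some $\nu_0\in\supp(\omega)$. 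The key observation is that ${\rm Re}(\lambda)<-4$, equivalently ${\rm Re}(-(2+\lambda))>2$, makes the one-variable function $t\mapsto {\rm sign}(t)^\sigma|t|^{-(2+\lambda)}$ of class $C^2$ at $t=0$ with vanishing $2$-jet; since $q(\cdot,\nu)$ is affine, the extension of $K^\sigma_\lambda(\cdot,\nu)$ by $0$ across $\{q=0\}$ is then $C^2$ in $x$, with all $x$-derivatives of order $\leq2$ jointly continuous in $(x,\nu)$ and locally uniformly bounded for $\nu$ in the compact set $\supp(\omega)$. Hence $\mc{P}^\sigma_\lambda\omega\in C^2(\AdS_3)$ and $\Box_g$ commutes with $\int_{\TT^2}(\cdot)\,{\rm dv}_{\TT^2}(\nu)$; the resulting integrand $x\mapsto\bigl((\Box_g+\lambda(\lambda+2))K^\sigma_\lambda(\cdot,\nu)\bigr)(x)$ is continuous and vanishes on the dense open set $\{q(\cdot,\nu)\neq0\}$, so it vanishes identically, giving $(\Box_g+\lambda(\lambda+2))\mc{P}^\sigma_\lambda\omega=0$ in $C^0(\AdS_3)$.

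For the equivariance under $h\in{\rm SO}(2,2)_\circ$ I would substitute $\nu=h\mu$ in \eqref{eq:Poissontransform} evaluated at $hx$. By \eqref{poisson_h} one has $P(hx,h\mu)=N_h(\mu)P(x,\mu)$; since $q$ is ${\rm SO}(2,2)_\circ$-invariant, ${\rm sign}(q(hx,h\mu))={\rm sign}(q(x,\mu))$; and by \eqref{eqn conformal factor} the map $h$ pulls the density ${\rm dv}_{\TT^2}$ back with factor $N_h^{-2}$. Collecting the powers of $N_h$, namely $(2+\lambda)-2=\lambda$, yields $h^*(\mc{P}^\sigma_\lambda\omega)=\mc{P}^\sigma_\lambda(N_h^\lambda h^*\omega)$.

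The only genuinely delicate point is the regularity bookkeeping in the second paragraph: checking that ${\rm Re}(\lambda)<-4$ is exactly the threshold making $K^\sigma_\lambda$ of class $C^2$ in $x$ across the characteristic set $\{q(\cdot,\nu)=0\}$ with control that is locally uniform in $\nu$, so that $\Box_g$ may legitimately be moved inside the integral. The pointwise identity of Step~1 is a one-line consequence of $q(\nu)=0$ plus the separation of variables, and the equivariance is a change of variables.
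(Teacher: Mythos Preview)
Your proof is correct and follows essentially the same route as the paper: reduce to the pointwise identity $(\Box_g+\la(\la+2))P(\cdot,\nu)^{2+\la}=0$ via the cone decomposition $\Box_q=\partial_s^2+3s^{-1}\partial_s+s^{-2}\Box_g$ and the fact that $\Box_q$ annihilates powers of the linear form $q(\cdot,\nu)$ since $q(\nu)=0$, then justify differentiation under the integral using the $C^2$ regularity at the threshold ${\rm Re}(\la)<-4$, and obtain the equivariance by the change of variables $\nu\mapsto h\mu$ together with $h^*{\rm dv}_{\TT^2}=N_h^{-2}{\rm dv}_{\TT^2}$. Your regularity discussion across $\{q(\cdot,\nu)=0\}$ is in fact more explicit than the paper's.
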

\begin{proof} Since for each $\nu\in \TT^2$, $P(\cdot,\nu)^{2+\lambda}({\rm sign}(q(\cdot,\nu)))^{\sigma}$ is $C^2$ when ${\rm Re}(\la)<-4$, uniformly in $\nu$, it suffices to check that $(\Box_g+\la(\la+2))P(x,\nu)^\lambda=0$ in $\mc{O}(\{\nu\})$.
We can use the map 
\[ f: \R^+ \times \AdS_3 \to \R^4 ,\quad (s,x)\mapsto sx\]
which pulls back the metric $q(x,x)$ to $f^*q=-ds^2+s^2g$. The associated Laplacian is  (see \eqref{eq:Laplacians})\color{black}
\[ \Box_q=\pl_{x_1}^2+\pl_{x_2}^2-\pl_{x_3}^2-\pl_{x_4}^2=\pl_s^2 +3s^{-1}\pl_s+s^{-2}\Box_{g}.\]
Let $\widetilde{P}(x,\nu)=|q(x,\nu)|^{-1}$ for $x\in \R^4$ in a neighborhood of $\AdS_3$, so that  $(f^*\widetilde{P}(\cdot,\nu))(s,x)=s^{-1}P(x,\nu)$
and one checks that 
\[ ((\la(\la+2)+\Box_q) \widetilde{P}(x,\nu)^\lambda)|_{\AdS_3}=0=(\la(\la+2)+\Box_g)P(x,\nu)^\lambda. 
\]
For the last statement, by \eqref{eqn conformal factor} we have $h^*{\rm dv}_{\TT^2}=N_h^{-2}{\rm dv}_{\TT^2}$ for each $h\in {\rm SO}(2,2)_\circ$, and combining with \eqref{equivariance_Phi_pm} we deduce that 
\[ \begin{split} 
(\mc{P}^\sigma_\la \omega)(hx)=& \int_{\mathbb{T}^2}P(hx,\nu)^{\la+2}({\rm sign}(q(hx,\nu)))^{\sigma} \omega(\nu) {\rm dv}_{\mathbb{T}^2}(\nu)\\
=& \int_{\mathbb{T}^2}P(x,\nu)^{\la+2} ({\rm sign}(q(x,\nu)))^{\sigma} N_h^{\la} \omega(h\nu) {\rm dv}_{\mathbb{T}^2}(\nu)\qedhere.
\end{split}\]
\end{proof}

We extend the Poisson transform $\mc{P}^\sigma_\la \omega$ meromorphically to $\la\in \C$ and to the case where $\omega\in \mc{D}'(\TT^2)$ is a distribution, and prove its injectivity.

We let $H^\alpha(\mathbb{T}^2)$ be the Sobolev space of order $\alpha\in \R$ on the torus and we denote $\bbar{\H}^{2,1}=\R^2_y\times [0,\infty)_s$ and $\pl \H^{2,1}=\{(y,0)\in \bbar{\H}^{2,1}\}$ the boundary identified with $\R^2$ (recall Section \ref{sec:upperhalfplane})\color{black}.
\begin{prop}\label{injectivite_Poisson}
Let $N\in \N$, $\omega\in H^{-2N}(\mathbb{T}^2)$, and $\sigma\in \{0,1\}$. The following hold:\\ 
1) The pairing
\[\mc{P}^\sigma_\la \omega (x):= \cjg \omega, P(x,\cdot)^{\la+2}({\rm sign}(q(x,\cdot)))^{\sigma} \cjd_{\mathbb{T}^2}\]
is well-defined for ${\rm Re}(\la)<-2N-5$ as a $C^2$-function on $\AdS_3$, 
and extends meromorphically to $\la\in \C$ as a distribution on $\AdS_3$, with simple poles at $-2+\N$ and satisfying 
\[ (\Box_g+\la(\la+2))\mc{P}_\la^\sigma \omega=0.\] 
2) Moreover, for each point $\nu^0\in \pl \AdS_3$, there is a neighborhood $U_{0}\subset \bbar{\AdS}_3$ and a diffeomorphism $h_0:U_{0}\to V_{0}\subset \bbar{\H}^{2,1}$ with $h_0(\nu^0)=0$, 
such that the distribution ${h_0}_*\mc{P}^\sigma_\la \omega\in \mc{D'}( V^\circ_0)$ satisfies the following: 
its restriction to the level sets $s=s_0$ is well-defined for all $s_0\in (0,1)$ and for each 
$\chi\in C_c^\infty(V_0\cap \pl \H^{2,1})$ and $\la\notin -2+\N$, we have for all $L\in \N$ as $s\to 0$
\begin{equation}\label{expansion_Poisson}
\cjg {h_0}_*\mc{P}^\sigma_\la \omega(s,\cdot), \chi \cjd_{\R^2}=s^{-\la}\Big(\sum_{\ell=0}^{L-1} s^{2\ell}C^\sigma_{\ell,\la}(\omega,\chi)\Big)+
s^{\la+2}\Big(\sum_{\ell=0}^{L-1} s^{2\ell}D^\sigma_{\ell,\la}(\omega,\chi)\Big)+\mc{O}(s^{L'})
\end{equation}
with $L':=\min(-{\rm Re}(\la)+2L,2+{\rm Re}(\la)+2L)$
and $C^\sigma_{\ell,\la}(\omega,\chi), D^\sigma_{\ell,\la}(\omega,\chi)$ are complex constants, and there is a positive smooth function 
$L_0$ on $\R^2$ depending only on $h_0$ such that
\begin{equation}\label{Csigma0la}
C^\sigma_{0,\la}(\omega,\chi)=\frac{-2^{\la+2}\pi(\cos(\pi\la)+1-2\sigma)}{(1+\la)\sin(\pi \la)}\cjg {h_0}_*\omega,L_0^{-\la-2} \chi\cjd 
\end{equation}
as meromorphic functions in $\lambda$. \\
3) For $\la\notin \Z$ and $\omega\in \mc{D}'(\TT^2)$,  $\mc{P}^\sigma_\la \omega=0$ if and only if $\omega=0$.\color{black}
\end{prop}
\begin{proof}   1) We start with the meromorphic continuation of the Poisson transform. \color{black} 
First, for $\lambda\in \C$ and $(x,\nu)\in \AdS_3\times \TT^2$, we put $P(x,\nu)^{\la+2}_\pm:=\frac{1}{2}P(x,\nu)^{\la+2}(1\pm{\rm sign}(q(x,\nu)))$ if $q(x,\nu)\neq 0$ and $P(x,\nu)^{\la+2}_\pm:=0$ otherwise. Then the identity
\[
P(x,\nu)^{\la+2}=P(x,\nu)^{\la+2}_++P(x,\nu)^{\la+2}_-
\]
holds whenever the left-hand side is well-defined, so the  study of $\mc{P}_\la^\sigma$ is equivalent to the study of the two pairings
\[ \mc{P}_\la^\pm \omega:=\cjg P(x,\cdot)_\pm^{\la+2},\omega\cjd_{\mathbb{T}^2}.\]
These pairings are well-defined as continuous functions on $\AdS_3$ for ${\rm Re}(\la)<-2N-5$, since
\[ (x,\nu) \mapsto P(x,\nu)^{\la+2}({\rm sign}(q(x,\nu)))^{\sigma} =|q(x,\nu)|^{-\la-2}({\rm sign}(q(x,\nu)))^{\sigma} \in C^{2N+3}(\AdS_3\times \TT^2).\]
Indeed, one can then write 
\[ \cjg P(x,\cdot)_\pm^{\la+2},\omega\cjd_{\mathbb{T}^2}=\cjg(1+\Delta_{\TT^2})^N P(x,\cdot)_\pm^{\la+2},(1+\Delta_{\TT^2})^{-N}\omega \cjd_{\mathbb{T}^2}\]
and $(1+\Delta_{\TT^2})^N P(x,\nu)_\pm^{\la+2}$ is a $C^2$-function of $(x,\nu)$ and $(1+\Delta_{\TT^2})^{-N}\omega\in L^2(\TT^2)$. 

Using Lemma \ref{Klein-Gordon} we see that $(\Box_g+\la(\la+2))\mc{P}^\sigma_\la \omega=0$ for ${\rm Re}(\la)<-2N-5$ and if $\mc{P}^\sigma_\la \omega$ admits a meromorphic extension as a distribution, then by unique continuation the same equation will hold for all $\la$ where it is defined.
Since $\mc{P}_\la^\sigma \omega$ is smooth and well-defined for $\la\in \C$ on the set $\OO(\supp \omega)$ of all points $x\in \AdS_3$ such that 
$q(x,\nu)\not=0$ for all $\nu \in \supp \omega$, it remains to study $\mc{P}_\la^\sigma \omega$ near each $x\in \AdS_3$ such that there exists $\nu\in\supp \omega$ with $q(x,\nu)=0$. Without loss of generality we consider those $x$ in a compact set $K_0\subset \AdS_3$. 
Since  the diagonal action of $G\x G$ on $\mc{O}_0$ is transitive (recall \eqref{defmcO} and Lemma \ref{l:transitive}), for each $(x,\nu)\in K_0\times \supp(\omega)$ 
such that $q(x,\nu)=0$, there is $h=h_{x,\nu}\in G$ such that 
 $h(x)\in \iota(\H^{2,1})$ and $h(\nu)\in \til{\psi}(\iota(\R^2))$ where $\iota:\H^{2,1}\to \AdS_3$ is the map described in \eqref{iota_def}, extended to the boundary $\R^2$ of the half-space $\H^{2,1}$. There is also a neighborhood 
 $U_x\subset \AdS_3$ of $x$ and $U_\nu\subset \TT^2$ of $\nu$ such that $h(U_x)\subset \iota(\H^{2,1})$ and $h(U_\nu)\subset \til{\psi}(\iota(\R^2))$.
By compactness, we can thus extract a neighborhood $W$ of $K_0(\omega):=\{ (x,\nu)\in K_0\times \supp(\omega)\,|\, q(x,\nu)=0\}$ in $\AdS_3\times \TT^2$ covered by a finite collection $(U_{x^j}\times U_{\nu^j})_{j=1}^N$ and a finite set of elements $h_j:=h_{x^j,\nu^j}\in G$
such that $h_j(U_{x^j})\subset  \iota(\H^{2,1})$ and $h_j(U_{\nu^j})\subset \iota(\R^2)$. 
Let $\chi_j\in C_c^\infty(U_{x^j}\times U_{\nu^j})$  such that $\chi=\sum_j \chi_j$ is equal to $1$ on $K_0(\omega)$. We will now show that each of the functions
\[
F^\pm_j(x):= \cjg \chi_j(x,\cdot) P(x,\cdot)_\pm^{\la+2},\omega\cjd_{\mathbb{T}^2}
\]
defined for ${\rm Re}(\la)<-2N-3$ admits a meromorphic extension to $\la\in \C$ as a distribution on $U_{x_j}$. 
We use the half-space model $\H^{2,1}$ of $\AdS_3$ described in \eqref{iota_def} to cover the region $\{x\in \AdS_3\,|\, x_2+x_4>0\}$ which contains both $U_{x^j}$ and $U_{\nu^j}$.
The embedding $\iota$ extends to the conformal boundary by
\[ \iota : \R^2\to \TT^2 , \quad \iota(y_1,y_2)=\frac{1}{\sqrt{y_1^2+(1+m(y))^2/4}}\Big(y_1,\frac{1+m(y)}{2},y_2,\frac{1-m(y)}{2}\Big), \]
where $m=-dy_1^2+dy_2^2$ is the Minkowski metric on $\R^{2}$. 
Here we are committing a mild abuse of notation because this is actually the composition of the map  $\iota:\R^2\to \pl \AdS_3$ considered in \eqref{eq:iotaext1} with the inverse of $\til{\psi}:\TT^2\to \pl \AdS_3$, which we omit in the notation.

One checks that $(\iota(y,s),\iota(y'))\in \OO$ iff $s^2+m(y-y')\neq 0$, and then
\begin{equation}\label{PdansH21}
 P(\iota(y,s),\iota(y'))=\sqrt{{y'_1}^2+\frac{(1+m(y'))^2}{4}}\frac{2s}{|s^2+m(y-y')|}.
 \end{equation}
The factor $\sqrt{(y'_1)^2+\frac{(1+m(y'))^2}{4}}>0$ is smooth and will not play an essential role, we will study the Poisson kernel
\[ P_{\H^{2,1}}(y,s,y'):=\frac{2s}{|s^2+m(y-y')|},\qquad (y,s,y')\in \H^{2,1}\times\R^2,\quad s^2+m(y-y')\neq 0.\]
Similarly to the original Poisson kernel, any complex power $P_{\H^{2,1}}(y,s,y')^\mu$ with $\Re \mu<0$ extends by zero to a continuous function on all of $\H^{2,1}\times\R^2$. Moreover, writing for $\lambda \in \C$
\[
P_{\H^{2,1}}(y,s,y')^{\la+2}_\pm:=\frac{1}{2} P_{\H^{2,1}}(y,s,y')^{\la+2}(1\pm \mathrm{sign}(s^2+m(y-y')))
\]
if $s^2+m(y-y')\neq 0$ and $P_{\H^{2,1}}(y,s,y')^{\la+2}_\pm:=0$ otherwise, we have
\bq
P_{\H^{2,1}}(y,s,y')^{\la+2}=P_{\H^{2,1}}(y,s,y')^{\la+2}_+-P_{\H^{2,1}}(y,s,y')^{\la+2}_-\label{eq:sumPs}
\eq
whenever the left-hand side is defined and it suffices to study pairings with  $P_{\H^{2,1}}(y,s,y')^{\la+2}_\pm$. 

Let $V_{j}:=\iota(U_{\nu^j})\subset \R^2$ and $W_j:=\iota(U_{x^j})\subset \H^{2,1}$.
To prove that $F_j$ extends meromorphically as a distribution, it suffices to show that for any $\omega'\in H^{-2N}(V_j)$ with compact support in $V_j$ and any $f\in C_c^\infty(W_j)$ the two pairings
\[   \cjg f\otimes \omega', P_{\H^{2,1}}(\cdot,\cdot)_\pm^{\la+2}\cjd\]
extend meromorphically from ${\rm Re}(\la)<-2N-3$ to $\la\in \C$.
It is convenient to consider the $1$-parameter family of distributions  on $\R^2$ for $s\geq 0$,
\[ G^\pm_{s,\la}(Y):=(s^2+m(Y))_\pm^{-\la},\]
where $(s^2+m(Y))_\pm:=\frac{1}{2} (1\pm \mathrm{sign}(s^2+m(Y)))$. 
By \cite[Chapter III, 2.9]{GelfandShilov}, this is a holomorphic family in $\la\in \C$ of distributions 
on $\R^2$ for $s>0$, while for $s=0$ it is meromorphic with first order poles at $\la=k\in\N$ with residues 
given by derivatives of order $2(k-1)$ of $\delta_0$. We can consider the convolution $G^\pm_{s,\la}*\omega'$ with a compactly supported distribution $\omega'\in \R^2$, this produces a family depending continuously on $s\in \R^+$ of distributions in $\mc{D}'(\R^2)$, down to $s=0$ if $\la\notin \N$, and thus also a distribution in the variable $(y,s)$ (in $s>0$). 
In particular  one has (with the convolution in the $Y\in \R^2$ variable)
\[ \cjg f\otimes \omega', P_{\H^{2,1}}(\cdot,\cdot)_\pm^{\la+2}\cjd_{\H^{2,1}}=  \cjg G^\pm_{\cdot,\la+2}*\omega',2s^{\la+2}f\cjd_{\H^{2,1}}\]
which is meromorphic with possible first order poles at $\la\in -2+\N$, proving the claim that the $F_j$ extend meromorphically to $\C$ with values in $\mc{D}'(\AdS_3)$.

 2) We next prove the asymptotic behaviour of $\mc{P}_\la^\pm\omega$ near a given point $\nu^0\in \pl \AdS_3$. \color{black}
For convenience of notation, we identify $\pl \AdS_3$ with $\TT^2$ by the map \eqref{eqn tildepsi boundary}. First, we claim that if the asymptotic expansion \eqref{expansion_Poisson} holds under the assumption that $\omega=0$ near $-\nu^0$, then it also holds for all $\omega\in \mc{D}'(\TT^2)$. Indeed, if $\omega$ is supported near the point $-\nu^0$ we have 
\[ \cjg {h_0}_*\mc{P}^\sigma_\la \omega(s,\cdot), \chi \cjd_{\R^2}=(-1)^\sigma  \cjg {h_0}_*\mc{P}^\sigma_\la I^*\omega(s,\cdot), \chi \cjd_{\R^2}\]
by using  $I^*\mc{P}^\sigma_\la \omega=(-1)^\sigma \mc{P}^\sigma_\la \omega=\mc{P}^\sigma_\la I^*\omega$. 
Since $I^*\omega$ is supported near $\nu^0$, we can then use the asymptotic expansion \eqref{expansion_Poisson} for distributions supported near $\nu^0$. 
Let us therefore assume now that there is an open neighborhood  $W_-$ of $-\nu^0$ such that 
$\supp(\omega)\cap \bbar{W}_-=\emptyset$.
By using Lemma \ref{l:transitive}, the map $\iota$ of \eqref{iota_def} and the compactness of $\TT^2$, there is a small neighborhood $U_{\nu^0}$ of $\nu^0$ in $\bbar{\AdS}_3$ and a finite covering $\cup_{j=0}^J W_j$ of $\TT^2\setminus \bbar{W}_-$ by open sets such that for each $j$, there is a diffeomorphism $h_j: U_j\subset \AdS_3\to \H^{2,1}$ which extends smoothly to $\pl \AdS_3\cap \bbar{U}_j$, such that $U_{\nu^0}\subset \bbar{U}_j$ and $W_j\subset \bbar{U}_j\cap \pl\AdS_3$, and $h_j(\nu^0)=0\in \R^2=\pl \H^{2,1}$. Without loss of generality, we also assume that $\nu^0\in W_0$ and $\nu^0\notin W_j$ for $j>0$.
Moreover, $h_j$ is a composition of an isometry of $\AdS_3$ with the inverse of the map $\iota$ of \eqref{iota_def}, it is thus an isometry and by \eqref{poisson_h} and  \eqref{PdansH21} 
\[ P(h_j^{-1}(y,s),h_{j}^{-1}(y'))=L_j(y')P_{\H^{2,1}}(y,s,y') \]
for some positive smooth function $L_j$. Let us decompose $\omega=
\sum_{j=0}^J\omega_j$ with ${\rm supp}(\omega_j)\subset W_j$ and $\omega=\omega_0$ near $\nu^0$. We first study $\mc{P}_\la^\pm \omega_j$  in $U_{\nu^0}$ for $j\geq 0$.
For $\chi \in C_c^\infty(\R^2)$ and a diffeomorphism $\tau:[0,1]\times \R^2\to [0,1]\times \R^2$ of the form $\tau(s,y)=(s a(s,y),b(s,y))$, consider
\[ H^\pm_{j,\tau}(s,\la+2):=\cjg \mc{P}^\pm_\la \omega_j(h_j^{-1}\circ \tau(s,\cdot)), \chi\cjd_{\R^2}.\]
We start with the case where $\omega_j\in C_c^\infty(W_j)$ and will see that the analysis extends to $\omega_j$ a distribution. Let us write $b^{-1}_s$ for the inverse of $y\mapsto b(s,y)$ viewed as a diffeomorphism on $\R^2$ and 
$a_s(y):=a(s,b_s^{-1}(y))$.
We write for $s>0$ small
\[  \begin{split}
H^\pm_{j,\tau}(s,\la)=& 2s^{\la}\int_{\R^2 }\int_{\R^2}\frac{\omega_j(h_j^{-1}(y')) L_j^{-\la}(y')\rho_j(y')}{ (s^2a(s,y)^2+m(b(s,y)-y'))_\pm^{\la}}dy'a(s,y)^{\la}\chi(y)dy\\ =& 2s^{\la}\cjg G^\pm_{sa_s(\cdot),\la}* ((\omega_j\circ h_j^{-1})\rho_jL_j^{-\la}),(\chi\circ b_s^{-1})a_s^\la |\det db_s^{-1}| \cjd_{\R^2}
\end{split}\]
where $\rho_j>0$ is a smooth function defined by ${h_j}_*{\rm dv}_{\TT^2}=\rho_j(y)dy$ and we used a change of variable $y\mapsto b_s^{-1}(y)$ in the second line. The Fourier transforms in $Y$ of $G^\pm_{s,\la}(Y)$ are given by \cite[Chapter III, 2.9]{GelfandShilov}
\[\begin{split} 
\hat{G}^+_{s,\la}(\xi)=& -\Gamma(1-\la)(2s)^{1-\la}\Big( 
e^{i\pi \la}\frac{K_{1-\la}(sm(\xi-i0)^{1/2})}{m(\xi-i0)^{\frac{1}{2}(1-\la)}}+e^{-i\pi \la}\frac{K_{1-\la}(sm(\xi+i0)^{1/2})}{m(\xi+i0)^{\frac{1}{2}(1-\la)}} \Big)\\
\hat{G}^-_{s,\la}(\xi)=& \Gamma(1-\la)(2s)^{1-\la}\Big( 
\frac{K_{1-\la}(sm(\xi-i0)^{1/2})}{m(\xi-i0)^{\frac{1}{2}(1-\la)}}+\frac{K_{1-\la}(sm(\xi+i0)^{1/2})}{m(\xi+i0)^{\frac{1}{2}(1-\la)}} \Big)
\end{split}\]
where $K_\nu(z)$ is the modified Bessel function. We can use the asymptotic expansion of $K_\nu(z)$ at $z=0$ \cite{abramovitz} to deduce that for $f\in C_c^\infty(\R^2)$ the following asymptotic holds as $s\to 0^+$:
\[ \begin{split} 
& (2s)^{\la}\cjg \hat{G}^+_{s,\la},\hat{f}\cjd  \\
&=   \frac{\pi\Gamma(1-\la)}{\sin(\la\pi)}
\sum_{\ell=0}^N \frac{s^{\la+2\ell}2^{1-\la-2\ell}}{\ell!\Gamma(\la+\ell)} \cjg e^{i\pi\la}m(\cdot-i0)^{\la-1+\ell}+e^{-i\pi\la}m(\cdot+i0)^{\la-1+\ell},\hat{f}\cjd\\
&\quad  +\frac{2\pi \Gamma(1-\la)}{\tan(\la\pi)}\sum_{\ell=0}^N \frac{s^{2-\la+2\ell}2^{\la-1-2\ell}}{\ell!\Gamma(2-\la+\ell)} \cjg m(\cdot)^{\ell},\hat{f}\cjd+ \mc{O}(s^{{\rm Re}(\la)+2N+2})+\mc{O}(s^{-{\rm Re}(\la)+2N+4})
\end{split}\]
and 
\[ \begin{split} 
& (2s)^{\la}\cjg \hat{G}^-_{s,\la},\hat{f}\cjd  \\
&=   -\frac{\pi\Gamma(1-\la)}{\sin(\la\pi)}
\sum_{\ell=0}^N \frac{s^{\la+2\ell}2^{1-\la-2\ell}}{\ell!\Gamma(\la+\ell)} \cjg m(\cdot-i0)^{\la-1+\ell}+m(\cdot+i0)^{\la-1+\ell},\hat{f}\cjd\\
&\quad  +\frac{2\pi \Gamma(1-\la)}{\sin(\la\pi)}\sum_{\ell=0}^N \frac{s^{2-\la+2\ell}2^{\la-1-2\ell}}{\ell!\Gamma(2-\la+\ell)} \cjg m(\cdot)^{\ell},\hat{f}\cjd + \mc{O}(s^{{\rm Re}(\la)+2N+2})+\mc{O}(s^{-{\rm Re}(\la)+2N+4})
\end{split}\]
Let us write $\omega'_j:=(\omega_j\circ h_j^{-1})\rho_jL_j^{-\la}$ and $\chi'_s=(\chi\circ b_s^{-1})a_s^\la |\det db_s^{-1}|$, then we deduce that for $s>0$ small (with $\Box_{m}=\pl_{x_1}^2-\pl_{x_2}^2$)
\[\begin{split}
&H^+_{j,\tau}(s,\la)=\sum_{\ell=0}^N s^{2-\la+2\ell}c^+_{\ell,\la}\cjg \Box_{m}^\ell \omega_j',a_s^{2-\la+2\ell}\chi'_s \cjd_{\R^2}\\
&\quad  +  \sum_{\ell=0}^N s^{\la+2\ell}d^+_{\ell,\la}\cjg e^{-i\pi(\frac{1}{2}-\la)} \mc{F}(m(\cdot-i0)^{\la-1+\ell}-e^{i\pi(\frac{1}{2}-\la)}\mc{F}(m(\cdot+i0)^{\la-1+\ell} )*\omega_j',a_s^{2-\la+2\ell}\chi'_s \cjd_{\R^2}\\
& \quad +\mc{O}(s^{{\rm Re}(\la)+2N+2})+\mc{O}(s^{-{\rm Re}(\la)+2N+4})
\end{split}
\]
for some explicit holomorphic coefficients $c^+_{\ell,\la},d^+_{\ell,\la}$ and $c^+_{0,\la}=\frac{2^{\la}\pi}{(1-\la)\tan(\pi \la)}$. We can then  Taylor expand the smooth functions  $a_s$ and $\chi'_s$ at $s=0$ to obtain the asymptotic expansion of $H^+_{j,\tau}(s,\la)$ as $s\to 0$: only powers $s^{2-\la+\ell}$ and $s^{\la+\ell}$ with $\ell\in \N$ appear.
Notice that this expression extends continuously to the case where $\omega_j$ is a compactly supported distribution. 
Now for $\chi\in C_c^\infty(W_0)$ such that $\supp(\chi)\cap \supp(\omega_j)=\emptyset$ for $j\not=0$, 
\[ \cjg \mc{P}^+_\la \omega_j\circ h^{-1}_0(s,\cdot), \chi \cjd_{\R^2}= H^+_{j,\tau}(s,\la+2)\]
where $\tau=h_j\circ h_0^{-1}$. We observe now that the pairing $\cjg \Box_{m}^\ell \omega_j',a_s^{-\la+2\ell}\chi'_s \cjd_{\R^2}=0$ for small enough $s>0$, since $(\chi\circ b_s^{-1})(\omega_j \circ b_0^{-1})=0$ for small enough $s>0$. This means that $H^+_{j,\tau}(s,\la+2)$ has an expansion with only the powers $s^{\la+2+\ell}$ for $\ell\in \N$. For $j=0$ we apply the same reasoning, but now $\supp(\chi)\cap \supp(\omega_j)\not=\emptyset$ and $\tau(s,y)=(s,y)$: we obtain an asymptotic expansion as claimed in \eqref{expansion_Poisson} and the coefficient of $s^{-\la}$ is given by (recall ${h_0}_*{\rm dv}_{\TT^2}=\rho_0(y)dy$)
\[ c^+_{0,\la+2} \cjg (\omega_0\circ h_0^{-1})\rho_0L_0^{-\la-2},\chi\cjd s^{-\la}=c^+_{0,\la+2} \cjg  {h_0}_*\omega_0, L_0^{-\la-2}\chi\cjd s^{-\la}.\]
We have thus proved the asymptotic expansion. 
The same argument applies to $\cjg \mc{P}^-_\la \omega_j\circ h^{-1}_0(s,\cdot), \chi \cjd_{\R^2}$ and the coefficient of $s^{-\la}$ is as before but with $c^+_{0,\la+2}$ replaced by 
$c^-_{0,\la+2}=-\frac{2^{\la+2}\pi}{(\la+1)\sin(\pi \la)}$. Finally, with \eqref{eq:sumPs} we obtain the desired result for $\mc{P}_\la^\sigma\omega$.

3) If $\mc{P}_\la^\sigma\omega=0$ and $\la\notin \Z$, for each $\nu^0$ and small neighborhood $U_0\subset \bbar{\AdS}_3$ of $\nu^0$,
 one has by \eqref{expansion_Poisson} that $C^\sigma_{0,\la}(\omega,\chi)=0$ for each small $\chi\in  C_c^\infty(V_0\cap \pl \H^{2,1})$. By \eqref{Csigma0la}, this implies that $\omega=0$ near $\nu^0$. Thus $\omega=0$.
\color{black}
\end{proof}

Let us describe the Poisson transform $\mc{P}^\sigma_\la \omega$ in $\mc{O}_+(\supp\omega)$ in terms of the pushforward of a distribution on $T^1\AdS_3$. Notice that $\mc{P}^0_\la \omega=(-1)^\sigma\mc{P}^\sigma_\la \omega$ in  $\mc{O}_+(\supp\omega)$. 
We begin with the relation to the map $\mathcal Q^+_\lambda$ from Corollary \ref{cor:Qlambda}. For this purpose, let us introduce the following terminology and notation: For an open set $U\subset \AdS_3$ we denote by $\pi_\ast:\mc{D}'_\mathrm{f.c.}(T^1U)\to \mc{D}'(U)$ the pushforward of distributions with fiberwise compact support along the restriction of the bundle projection $\pi:T^1\AdS_3\to \AdS_3$ to $T^1U \subset T^1\AdS_3$. Here we say that  $u\in \mc{D}'(T^1U)$ has fiberwise compact support if ${\rm supp}(u)\cap T_x^1\AdS_3$ is compact for each $x\in U$. This ensures that $\pi$ is proper on ${\rm supp}(u)$, so that the pushforward $\pi_\ast u$ is well-defined. Recall that in Section \ref{sec:fiberint} we have chosen a family of measures $S_x$ on the fibers of $T^1\AdS_3$, which gives us an embedding of the space $C^{\infty}_\mathrm{f.c.}(T^1U)$ of smooth functions with fiberwise compact support into $\mc{D}'_\mathrm{f.c.}(T^1U)$  as a dense subspace. On this subspace, $\pi_\ast$ acts by integration over the fibers with respect to the measures $S_x$. 
\begin{lemma}\label{lem:pushfoward}
Let $\omega\in \mc{D}'(\mathbb{T}^2)$. Then the distribution $\mathcal Q^\pm_\lambda \omega|_{T^1\mc{O}_+({\rm supp}(\omega))}\in \D'(T^1\mc{O}_+({\rm supp}(\omega)))$ has 
fiberwise compact support and one has\color{black}
\bq
(\mc{P}^\sigma_\la \omega)|_{\mc{O}_+({\rm supp}(\omega))} =(-1)^\sigma \pi_\ast (\mathcal Q^\pm_\lambda \omega|_{T^1\mc{O}_+({\rm supp}(\omega))}).\label{eq:poissonpushfwd}
\eq
In particular, the distribution $\pi_\ast (\mathcal Q^+_\lambda \omega|_{T^1\mc{O}_+({\rm supp}(\omega))})$ on $\mc{O}_+({\rm supp}(\omega))$ is a smooth function.
\end{lemma}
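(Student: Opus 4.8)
The plan is to establish the identity first for a smooth density $\omega$, where both sides are genuine functions and the computation reduces to the fiber-integration formula of Lemma \ref{S_x}, and then to recover the general statement by mollification on $\TT^2$. Throughout write $\Omega:=\mc{O}_+(\supp\omega)$; there is nothing to prove if $\Omega=\emptyset$, and for $x\in\Omega$ one has $\supp\omega\subset U_x$, so that $(\til B_\pm^\ast\omega)|_{T^1_x\AdS_3}=(f^\pm_x)^\ast(\omega|_{U_x})$ is supported in the compact set $(f^\pm_x)^{-1}(\supp\omega)$. This is exactly the fiberwise compact support hypothesis in the statement, and it makes $\pi_\ast(\mc{Q}^\pm_\lambda\omega|_{T^1\Omega})\in\mc{D}'(\Omega)$ well-defined, because $\pi$ is then proper on $\supp(\mc{Q}^\pm_\lambda\omega)\cap T^1\Omega$.

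First I would take $\omega\in C^\infty(\TT^2)$. Then $\mc{Q}^\pm_\lambda\omega$ is the smooth function $(x,v)\mapsto\Phi_\pm(x,v)^\lambda\,\omega(\til B_\pm(x,v))$, which on $T^1\Omega$ has fiberwise compact support (its intersection with the fiber over $x$ is $(f^\pm_x)^{-1}(\supp\omega)$), so $\pi_\ast$ acts by integration over the fibers against $S_x$. Inserting Lemma \ref{S_x} in the form $S_x=\Phi_\pm(x,\cdot)^2(f^\pm_x)^\ast{\rm dv}_{\TT^2}$ turns the fiber integral into $\int_{T^1_x\AdS_3}\Phi_\pm(x,v)^{\lambda+2}\,\omega(f^\pm_x(v))\,(f^\pm_x)^\ast{\rm dv}_{\TT^2}(v)$; I would then change variables using the diffeomorphism $f^\pm_x:T^1_x\AdS_3\to U_x$ together with $\Phi_\pm(x,(f^\pm_x)^{-1}(\nu))=P(x,\nu)$ (cf.\ \eqref{poisson_h}) to reach $\int_{U_x}P(x,\nu)^{\lambda+2}\omega(\nu)\,{\rm dv}_{\TT^2}(\nu)=\int_{\TT^2}P(x,\nu)^{\lambda+2}\omega(\nu)\,{\rm dv}_{\TT^2}(\nu)$, the last equality because $\supp\omega\subset U_x$. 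This is $\mc{P}^0_\lambda\omega(x)$, which agrees with $\mc{P}^\sigma_\lambda\omega(x)$ on $\Omega$ by the remark preceding the statement, proving \eqref{eq:poissonpushfwd} for smooth $\omega$ and simultaneously exhibiting the pushforward as a smooth function there.

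To pass to arbitrary $\omega\in\mc{D}'(\TT^2)$ I would mollify on the torus, $\omega_n:=\omega\ast\chi_n\to\omega$ in $\mc{D}'(\TT^2)$ with $\supp\omega_n$ contained in compact neighbourhoods $S_n\downarrow\supp\omega$. The one point that needs care is that the open set on which \eqref{eq:poissonpushfwd} is claimed depends on $\omega$: the mollified identities only live on the smaller sets $\mc{O}_+(S_n)$, so I would check that $\bigcup_n\mc{O}_+(S_n)=\Omega$ (true because $q(x,\cdot)<0$ on the compact set $\supp\omega$ iff it is negative on a neighbourhood) and that all relevant supports stay in fixed compacta. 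Concretely, fixing $\chi\in C_c^\infty(\Omega)$ and $N$ with $\supp\chi\subset\mc{O}_+(S_N)$, for $n\geq N$ the support of $\mc{Q}^\pm_\lambda\omega_n$ over $\supp\chi$ lies in the fixed compact set $\{(x,v):x\in\supp\chi,\ f^\pm_x(v)\in S_N\}$; inserting a cutoff $\beta\in C_c^\infty(T^1\AdS_3)$ equal to $1$ there, I can write $\cjg\pi_\ast(\mc{Q}^\pm_\lambda\omega_n),\chi\cjd=\cjg\mc{Q}^\pm_\lambda\omega_n,\beta\,\pi^\ast\chi\cjd$ and pass to the limit using continuity of $\omega\mapsto\mc{Q}^\pm_\lambda\omega=\Phi_\pm^\lambda\til B_\pm^\ast\omega$ from $\mc{D}'(\TT^2)$ to $\mc{D}'(T^1\AdS_3)$ (pullback along the submersion $\til B_\pm$, then multiplication by a smooth function). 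On the other side, $\mc{P}^\sigma_\lambda\omega_n\to\mc{P}^\sigma_\lambda\omega$ on $\mc{O}_+(S_N)$, since there the kernel $\nu\mapsto P(x,\nu)^{\lambda+2}({\rm sign}\,q(x,\nu))^\sigma$ is, after multiplication by a fixed cutoff near $\supp\omega$, a genuine smooth function of $\nu$ depending continuously on $x$ in the $C^\infty$-topology. Taking limits in the smooth-case identity gives \eqref{eq:poissonpushfwd}, and the ``in particular'' follows at once: the right-hand side is identified with $\mc{P}^\sigma_\lambda\omega|_\Omega$, which is smooth on $\Omega$ by the mapping properties of the Poisson kernel established in the proof of Proposition \ref{injectivite_Poisson}.

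I expect the genuinely delicate part to be exactly this support bookkeeping in the last step — exhausting the $\omega$-dependent set $\mc{O}_+(\supp\omega)$ by the sets $\mc{O}_+(S_n)$ and keeping the supports of all mollified objects inside fixed compacta, so that both the pushforward pairing and the Poisson pairing converge. The smooth-case computation, by contrast, is just Lemma \ref{S_x}, a change of variables, and the identity $\Phi_\pm(x,v)=P(x,\til B_\pm(x,v))$, and should go through routinely.
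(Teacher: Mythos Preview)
Your proposal is correct and follows essentially the same route as the paper: reduce to smooth $\omega$ by density, then compute the fiber integral using Lemma~\ref{S_x} and the identity $\Phi_\pm(x,v)=P(x,\til B_\pm(x,v))$. The paper dispatches the density step in one line (``it suffices to prove \eqref{eq:poissonpushfwd} in the case where $\omega\in C^\infty(\mathbb{T}^2)$''), whereas you spell out the support bookkeeping with mollifiers and the exhaustion $\bigcup_n\mc{O}_+(S_n)=\Omega$; this extra care is justified but not a different idea.
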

\begin{proof} We only deal with $\mc{Q}_\la^+(\omega)$, the other case works exactly the same way.
Since $\mathcal Q^+_\lambda \omega=\Phi_+^\lambda \til{B}_+^\ast \omega$ differs from $\til{B}_+^\ast\omega$ only by multiplication with  $\Phi_+^\lambda$ (see Section \ref{sec:Bpm}), it suffices for the first claim to show that $\supp(\til{B}_+^\ast\omega)\cap T^1_x\AdS_3$ is compact for all $x\in \mc{O}_+({\rm supp}(\omega))$. For such $x$, we know from \eqref{Bpm_diffeo} that $\til{B}_+(x,\cdot):T^1_x\AdS_3\to \mathbb{T}^2$ is a diffeomorphism onto an open set containing $\supp (\omega)$ (recall \eqref{eq:OpmS}), and therefore $\supp(\til{B}_+^\ast\omega)\cap T^1_x\AdS_3 \subset B_+(x,\cdot)^{-1}(\supp (\omega))$ is compact. This shows that the pushforward on the right-hand side of \eqref{eq:poissonpushfwd} is well-defined.  \color{black} Using  the density of $\Cinft(\mathbb{T}^2)$ in $\mc{D}'(\mathbb{T}^2)$, it suffices to prove \eqref{eq:poissonpushfwd} in the case where $\omega\in \Cinft(\mathbb{T}^2)$. Then $\mathcal Q^+_\lambda \omega|_{T^1\mc{O}_+({\rm supp}(\omega))}$ is a smooth function on $T^1\mc{O}_+({\rm supp}(\omega))$ with fiberwise compact support, and $\pi_\ast$ acts on this function by integration over the fibers of $T^1\mc{O}_+({\rm supp}(\omega))$ with respect to the measures  $S_x$ from Section \ref{sec:fiberint}. Recalling the Definitions \ref{def:Poissonkernel} and \eqref{eq:Poissontransform} of the Poisson kernel and the Poisson transform, and using the transformation formula \eqref{S_x} for $S_x$, we then compute for $x\in 
\mc{O}_+({\rm supp}(\omega))$:
\begin{align*}
\pi_\ast (\mathcal Q^+_\lambda \omega|_{T^1\mc{O}_+({\rm supp}(\omega))})&=\int_{T^1_x\AdS_3}(\mathcal Q^+_\lambda \omega)(x,v)\d S_x(v)\\
&=\int_{T^1_x\AdS_3}\Phi_+^\lambda(x,v) (\til{B}_+(x,\cdot)^\ast\omega)(x,v)\d S_x(v)\\
&=\int_{\mathbb T^2}\underbrace{\Phi_+^{\lambda+2}(x,v^+_{x,\nu})}_{=P(x,\nu)^{\lambda+2}} \omega(\nu)\d S(\nu)=\mc{P}^0_\la \omega(x).
\end{align*}
where we wrote $v^+_{x,\nu}$ for the vector such that $\til{B}_+(x,v^+_{x,\nu})=\nu$. The fact that 
$\mc{P}^0_\la \omega(x)$ is smooth is a direct consequence of the fact that $P(x,\nu)>0$ is smooth in $x\in  \mc{O}_+({\rm supp}(\omega))$ for each $\nu\in {\rm supp}(\omega)$. 
This completes the proof.
\end{proof}

\section{Quasifuchsian spacetimes}\label{sec:quasifuchsian}

We now turn to spacetimes that are locally modeled on $\AdS_3$, more precisely obtained as quotients of open subsets $U\subset\AdS_3$ by discrete subgroups $\Gamma\subset G\times G$, where $G=\SL(2,\R)$. Here and below, we identify $G\times G$ with ${\rm Isom}_\circ(\AdS_3)$.
Such quotients $$M=\Gamma\backslash U$$ inherit a Lorentzian metric, denoted by $g$, and we can still consider their spacelike unit tangent bundle $$T^1M=\Gamma\backslash T^1U.$$
 We will see that for the examples that we are interested in, called \emph{quasi-Fuchsian}, the geodesic flow on $T^1M$ is not complete 
 but there exists a larger manifold $\mc M$ containing $T^1M$ on which the geodesic vector field $X$ has a complete flow.

\subsection{Acausal circles in \texorpdfstring{$\partial \AdS_3$}{the boundary of AdS} and their invisible domains}

Consider the isometry $\widetilde\psi:\AdS_3\to \mathbb S^1\times \mathbb D$ from \eqref{tildepsi} and denote again by $\widetilde \psi$ its extension to the boundary $\partial \AdS_3\to \TT^2= \mathbb{S}^1\times \mathbb{S}^1$ from \eqref{eqn tildepsi boundary}. As before, we will occasionally use this map to identify $\partial \AdS_3= \TT^2$ in proofs to simplify the notation. 

A map  $F:\mathbb{S}^1\to \mathbb{S}^1$ will be called \emph{distance-decreasing} if  $\vert F(z)-F(z')\vert < \vert z-z'\vert$ for any distinct $z,z'\in \mathbb{S}^1$, where $|\cdot|$ is the usual absolute value on $\mathbb{S}^1\subset \C$. 

\begin{definition}\label{def - acausal circle}~
A subset $\Lambda\subset \partial \AdS_3$ is called:
\begin{itemize} 
\item An \emph{acausal circle} if $\Lambda=\{\widetilde\psi^{-1}(F(z),z)\,|\, z\in\mathbb{S}^1\}$ for some distance-decreasing $F:\mathbb S^1\to\mathbb{S}^1$.
\item  A \emph{round circle} if there is an isometry $h\in {\rm Isom}(\AdS_3)$ and some $w_0\in \mathbb S^1$ such that 
\[\Lambda=h\cdot\{ \widetilde\psi^{-1}(w_0,z)\,|\, z\in\mathbb{S}^1\}.\]
\end{itemize}
\end{definition}

Alternatively, round circles are boundaries of totally geodesic copies of $\mathbb H^2$ in $\AdS_3$. Note that a round circle is an acausal one (by taking as $F$ the constant map with value $w_0$), and \color{black} two distinct points $[\nu_1],[\nu_2]$ on an acausal circle $\Lambda$ must satisfy $q(\nu_1,\nu_2)< 0$. Indeed, considering $(w,z),(w',z')\in\mathbb S^1\times \mathbb S^1$, we can use the fact that $|w-w'|^2=2(1-w\cdot w')$ and $|z-z'|^2=2(1-z\cdot z')$ (where $w\cdot w'$ and $z\cdot z'$ represent the usual dot product in $\R^2$) and \eqref{eqn tildepsi boundary} to obtain
\begin{align*}
q\left( \widetilde\psi^{-1}(w,z),\widetilde\psi^{-1}(w',z')\right)& =  -w\cdot w' + z\cdot z'\\
&= \frac{1}{2}\Big( |w-w'|^2-|z-z'|^2 \Big)\,.
\end{align*}

\color{black}

\begin{definition} \label{def - convex hull subset boundary}
Let $\Lambda=\{\widetilde\psi^{-1}(F(z),z)\,|\,z\in\mathbb{S}^1\}$ be an acausal circle in $\partial \AdS_3$, and consider $\widetilde \Lambda=\set{ (F(x,y),x,y)}{ x^2+y^2=1}\subset \R^4$. The \emph{convex hull} $C(\Lambda)\subset \AdS_3$ is  the smallest geodesically convex subset of $\AdS_3$ (i.e.\ given $x,y\in C(\Lambda)$ there is exactly one geodesic segment joining $x$ to $y$ lying in $C(\Lambda)$) containing  all spacelike geodesics with endpoints in $\Lambda$. 

Alternatively, we can describe $C(\Lambda)$ as
\[ C(\Lambda) = \left\lbrace \frac{x}{\sqrt{|q(x)|}} \,\Big|\, x\in {\rm Conv}(\til{\Lambda} ), q(x)<0 \right\rbrace, \]
where ${\rm Conv}(\til{\Lambda})\subset \R^4$ is the usual convex hull.  \color{black} 
\end{definition}

If $\Lambda$ is a round circle, then $C(\Lambda)$ is a totally geodesic copy of $\H^2$, otherwise $C(\Lambda)$ has non-empty interior in $\AdS_3$ \cite[Lemma 3.13]{barbot-merigot}.

\begin{definition} \label{def - invisible domain}
The \emph{invisible domain} of a subset $\Lambda\subset \partial \AdS_3$ is the set $\mathcal O_+(\Lambda)\subset \AdS_3$ defined by \eqref{eq:OpmS}, i.e., $\mathcal O_+(\Lambda)=\{ x\in \AdS_3 \,|\,  {q}(x,\widetilde\psi(\nu))<0, \forall\, \nu \in \Lambda \}$.
\end{definition}

Note that the set $\mathcal O_+(\Lambda)$ is convex, and in view of \eqref{eq:qxpsiBpm} it contains all spacelike geodesics with endpoints in $\Lambda$ so it contains the convex hull $C(\Lambda)$. It also means that two points $x,y\in\mathcal O_+(\Lambda)$ are always joined by a geodesic, hence ${q}(x,y)<1$.

\begin{lemma} \label{lem - q<-1 locally uniformly on the discontinuity  domain}
Let $\Lambda\subset \partial\AdS_3$ be an acausal circle and $K\subset \mathcal O_+(\Lambda)$ be a compact subset. There is $\delta<1$ such that ${q}(x,y)\leq \delta$ for all  $x\in K$ and $y\in \mathcal O_+(\Lambda)$.
\end{lemma}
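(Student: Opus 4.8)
The plan is to prove the statement by a compactness argument, reducing it to the case where $y$ itself ranges over a compact set and then invoking the continuity of $q$ together with the previously observed fact that $q(x,y)<1$ whenever $x,y\in\mathcal O_+(\Lambda)$. The key observation is that, although $\mathcal O_+(\Lambda)$ is not compact, we may exploit the scaling structure of $\AdS_3$ viewed as $\{q<0\}/\R^+$: every point $y\in\mathcal O_+(\Lambda)$ has a representative lift $\til y\in\R^4$ with $q(\til y)<0$, and after normalising we may instead look at the \emph{closure} of (the relevant lifts of) $\mathcal O_+(\Lambda)$ inside the compact space $\{\til y\in\R^4\setminus\{0\}\,|\,q(\til y)\le 0\}/\R^+\simeq\overline{\AdS}_3$.

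First I would fix the compact set $K\subset\mathcal O_+(\Lambda)$ and consider the function $(x,[\nu])\mapsto q(x,\widetilde\psi(\nu))$ on $K\times\Lambda$; since $\Lambda$ is closed in the compact manifold $\partial\AdS_3=\mathbb T^2$ it is itself compact, so $K\times\Lambda$ is compact, and by definition of $\mathcal O_+(\Lambda)$ this continuous function is strictly negative everywhere on $K\times\Lambda$. Hence there is $c>0$ with $q(x,\widetilde\psi(\nu))\le -c<0$ for all $x\in K$, $\nu\in\Lambda$. Now I would pass to the closure: I claim $K\subset\mathcal O_+(\Lambda)$ together with this uniform bound implies that $K$ stays at positive distance (in $\overline{\AdS}_3$) from the ``boundary at infinity'' of $\mathcal O_+(\Lambda)$, which is where $q(x,\cdot)$ degenerates. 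Concretely, consider the closure $\overline{\mathcal O_+(\Lambda)}$ of $\mathcal O_+(\Lambda)$ inside $\overline{\AdS}_3$; a boundary point $[\til y]\in\partial\AdS_3$ of $\mathcal O_+(\Lambda)$ must satisfy $q(\til y,\nu)\le 0$ for all $\nu\in\Lambda$, and if moreover $[\til y]\notin\Lambda$ then (using that $\Lambda$ is acausal, so $q(\nu,\nu')<0$ for distinct points, and that $\Lambda$ is a topological circle in $\mathbb T^2$) one sees $q(\til y,\nu)<0$ for all $\nu\in\Lambda$ is impossible for a boundary point, forcing $[\til y]\in\Lambda$; so $\overline{\mathcal O_+(\Lambda)}\cap\partial\AdS_3=\Lambda$.

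The main step is then: for $x\in K$ and $y\in\mathcal O_+(\Lambda)$, suppose for contradiction the conclusion fails, i.e.\ there are sequences $x_n\in K$, $y_n\in\mathcal O_+(\Lambda)$ with $q(x_n,y_n)\to 1$ (we already know $q(x_n,y_n)<1$). Passing to subsequences, $x_n\to x_\infty\in K$ and, in the compact space $\overline{\AdS}_3$, (a choice of lift of) $y_n$ converges to some $y_\infty\in\overline{\mathcal O_+(\Lambda)}$. If $y_\infty\in\AdS_3$, then $q(x_\infty,y_\infty)=1$ by continuity, contradicting $q(x,y)<1$ for $x,y\in\mathcal O_+(\Lambda)$ extended by continuity to $K\times\overline{\mathcal O_+(\Lambda)\cap\AdS_3}$ — here one must be slightly careful and instead argue directly: $q(x_\infty,y_\infty)\ge 1$ would mean $x_\infty,y_\infty$ are not joined by a geodesic, but $x_\infty\in\mathcal O_+(\Lambda)$ which is convex and contains $y_\infty$ (as $\mathcal O_+(\Lambda)$ is relatively closed appropriately, or one takes $y_\infty$ in the open set by a small perturbation), contradiction. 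If instead $y_\infty\in\partial\AdS_3$, then by the previous paragraph $y_\infty\in\Lambda$, so $q(x_\infty,\widetilde\psi^{-1}\text{-lift of }y_\infty)\le -c<0$; but $q$ is homogeneous of degree one in the second slot and $q(x_n,y_n)\to 1>0$ while the normalised limit gives a strictly negative value — a contradiction once one tracks the scaling factor, since $q(x_n,y_n)$ can only tend to $1$ if $y_n$ stays in a compact part of $\AdS_3$, not running to the boundary. This last sign/scaling bookkeeping is the one genuinely delicate point: one needs that the homogeneous function $q(x,\cdot)$ cannot approach the positive value $1$ along a sequence whose projective limit lies in the cone $\{q\le 0\}$ where $q(x,\cdot)<0$; I would handle it by writing $y_n=t_n\til y_n$ with $\til y_n\to\til y_\infty$ on a sphere, $q(\til y_\infty)\le 0$, and $q(x_n,\til y_n)\to q(x_\infty,\til y_\infty)<0$ (strict because $\til y_\infty$ projects into $\Lambda\cup\mathcal O_+(\Lambda)$ and $x_\infty\in\mathcal O_+(\Lambda)$), whence $q(x_n,y_n)=t_n q(x_n,\til y_n)$ has a fixed sign for large $n$ and cannot converge to $1$ unless $t_n$ is bounded and bounded away from $0$, i.e.\ $y_\infty\in\AdS_3$, reducing to the first case which we already excluded. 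This yields the uniform $\delta<1$ and completes the proof.
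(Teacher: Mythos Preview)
Your overall approach---compactness plus a dichotomy on whether the limit $y_\infty$ lies in $\AdS_3$ or in $\partial\AdS_3$---is the same as the paper's, and your Case B (the boundary case) is handled correctly: if $y_n\to[\tilde y_\infty]\in\Lambda$ then the homogeneity argument gives $q(x_n,y_n)\to-\infty$, not $1$. The identification $\overline{\mathcal O_+(\Lambda)}\cap\partial\AdS_3=\Lambda$ that you need here is a result from \cite{barbot-merigot} which the paper simply cites; your sketched justification of it is incomplete, but this is a minor issue.

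The genuine gap is in Case A. You argue that $q(x_\infty,y_\infty)=1$ means $x_\infty,y_\infty$ are not joined by a geodesic, contradicting convexity of the closure. But this is false precisely when $y_\infty=-x_\infty$: in that case \emph{every} timelike geodesic through $x_\infty$ reaches $-x_\infty$ at parameter $\pi$, so the pair \emph{is} joined (non-uniquely) and convexity alone yields no contradiction. Your suggested fix of ``perturbing $y_\infty$ into the open set'' does not work either, since after perturbation $q<1$ and there is nothing to contradict. The paper closes this gap with one extra observation: convexity of $\overline{\mathcal O_+(\Lambda)}\cap\AdS_3$ forces $y_\infty=-x_\infty$ (this being the only way two points with $q=1$ can be joined by a geodesic), and then $q(y_\infty,\nu)=q(-x_\infty,\nu)=-q(x_\infty,\nu)>0$ for all $\nu\in\Lambda$, which contradicts $y_\infty\in\overline{\mathcal O_+(\Lambda)}$. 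Adding this antipodal step is what your argument is missing.
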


\begin{proof}
Suppose for contradiction that this is not the case. Then there are $x\in K$ and $y\in \partial\mathcal O_+(\Lambda)$ with $q(x,y)=1$. Since $x\in \mathcal O_+(\Lambda)$, this forces $y\notin \Lambda=\partial\mathcal O_+(\Lambda)\cap \partial \AdS_3$ \cite[Corollary 3.7]{barbot-merigot}, hence $y\in \AdS_3$. But $\overline{\mathcal O_+(\Lambda)}\cap \AdS_3$ is also convex, so $x$ and $y$ are related by a geodesic. The equality ${q}(x,y)=1$ forces this geodesic to be timelike, and $y=-x$. It follows that ${q}(y,\nu)>0$ for all $\nu\in \Lambda$, but as $y\in\partial \mathcal O_+(\Lambda)$ we should also have  ${q}(y,\nu)\leq 0$ for all $\nu\in \Lambda$, which is a contradiction.
\end{proof}

\subsection{Quasi-Fuchsian subgroups}\label{QFsubgroups}

\begin{definition} \label{def - AdS quasi fuchsian group}
A subgroup $\Gamma\subset {\rm Isom}(\AdS_3)_\circ$ is called \emph{quasi-Fuchsian} if it preserves an acausal  circle $\Lambda_\Gamma\subset \partial \AdS_3$ (called the \emph{limit set} of $\Gamma$) and the action on $C(\Lambda_\Gamma)$ is free, properly discontinuous and co-compact.

It is called \emph{Fuchsian} if $\Lambda_\Gamma$ is a round circle.
\end{definition}
If  $\Gamma\subset {\rm Isom}(\AdS_3)_\circ$ is quasi-Fuchsian, then the acausal circle $\Lambda_\Gamma\subset \partial \AdS_3$ preserved by $\Gamma$ is unique \cite[Lemma 2.11]{dgk18}.

\begin{prop}[Mess \cite{mess2007}] \label{thm - Mess}
Consider the isomorphism ${\rm Isom}(\AdS_3)_\circ\simeq G\times G$. A subgroup $\Gamma \subset G\times G$ is quasi-Fuchsian if and only if there is an orientable closed surface $S$ of genus $\geq 2$ equipped with two hyperbolic metrics  with associated holonomy representations \color{black} $\rho_1,\rho_2:\pi_1(S)\to G$ (with respect to the same arbitrary base point) \color{black} such that \[\Gamma=\set{(\rho_1(\gamma),\rho_2(\gamma))}{\gamma\in \pi_1(S) }.\]
Furthermore, $\Gamma$ is Fuchsian if and only if $\rho_1,\rho_2$ are conjugate.
\end{prop}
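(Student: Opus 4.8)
This is Mess's theorem \cite{mess2007}; I only sketch the proof, referring to \cite{mess2007, barbot-merigot, dgk18} for details. The plan is to establish the two implications separately, the central mechanism being the translation between the three-dimensional dynamics of $\Gamma$ on the convex hull $C(\Lambda_\Gamma)$ and the one-dimensional dynamics on the circle $\Lambda_\Gamma$. Throughout one uses that there is an identification $\partial\AdS_3\cong\partial\H^2\times\partial\H^2$ for which the $G\times G$-action is the product of the standard projective actions $G\to\PSL(2,\R)\curvearrowright\partial\H^2\cong\mathbb{S}^1$ on the two factors, that $\partial\AdS_3$ carries two transverse rulings by lightlike circles, and that an embedded circle in $\partial\AdS_3$ is acausal precisely when it is transverse to both rulings, equivalently when it is the graph of a monotone homeomorphism between the two factor circles (or a round circle).

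\emph{Sufficiency.} Given holonomies $\rho_1,\rho_2\colon\pi_1(S)\to G$ of two hyperbolic metrics on a closed orientable surface $S$ of genus $\geq 2$, both images $\rho_i(\pi_1(S))$ are cocompact Fuchsian, so there is a unique $\pi_1(S)$-equivariant homeomorphism $f\colon\partial\H^2\to\partial\H^2$ with $f\circ\rho_1(\gamma)=\rho_2(\gamma)\circ f$ for all $\gamma$, namely the boundary extension of a $(\rho_1,\rho_2)$-equivariant quasi-isometry $\H^2\to\H^2$. Since Fuchsian actions are orientation-preserving and $f$ is quasi-symmetric, the graph of $f$ inside $\partial\H^2\times\partial\H^2\cong\partial\AdS_3$ is transverse to both lightlike rulings, hence an acausal circle $\Lambda_\Gamma$ (a round circle when $\rho_1,\rho_2$ are conjugate), and it is $\Gamma$-invariant by equivariance. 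It remains to check that $\Gamma$ acts on $C(\Lambda_\Gamma)$ properly discontinuously, freely and cocompactly. Proper discontinuity and freeness follow from discreteness and faithfulness of the $\rho_i$; cocompactness is the substantial point and is proved by showing that the two boundary components of $C(\Lambda_\Gamma)$ (or $C(\Lambda_\Gamma)$ itself, a totally geodesic $\H^2$, in the Fuchsian case), equipped with their induced path metrics, are complete hyperbolic surfaces on which $\Gamma$ acts cocompactly via $\rho_1$ and $\rho_2$ respectively; this yields a compact fundamental domain and identifies $C(\Lambda_\Gamma)/\Gamma$ with a compact manifold homeomorphic to $S\times[-1,1]$. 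Hence $\Gamma$ is quasi-Fuchsian.

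\emph{Necessity.} Conversely, let $\Gamma\subset G\times G$ be quasi-Fuchsian with acausal limit circle $\Lambda_\Gamma$. Transversality of $\Lambda_\Gamma$ to the two rulings makes each coordinate projection $p_i\colon\Lambda_\Gamma\to\mathbb{S}^1$ a homeomorphism conjugating the $\Gamma$-action on $\Lambda_\Gamma$ to the action on $\mathbb{S}^1$ of the $i$-th coordinate projection $\rho_i\colon\Gamma\to G$. One then shows that the $\Gamma$-action on $\Lambda_\Gamma\cong\mathbb{S}^1$ is a uniform convergence action — either directly, because $C(\Lambda_\Gamma)$ is a coarsely Gromov-hyperbolic space with Gromov boundary $\Lambda_\Gamma$ on which $\Gamma$ acts cocompactly, or through the dynamics of the pleated boundary surfaces — and invokes the convergence-group theorem (Tukia, Casson--Jungreis, Gabai) to deduce that $\Gamma$ is conjugate in ${\rm Homeo}^+(\mathbb{S}^1)$ to a cocompact lattice of $\PSL(2,\R)$. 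Freeness of the action on $C(\Lambda_\Gamma)$ forces $\Gamma$ to be torsion-free, so $\Gamma\cong\pi_1(S)$ for a closed orientable surface $S$ of genus $\geq 2$; the intertwining homeomorphisms being quasi-symmetric, each $\rho_i$ is, up to conjugacy in $G$, the holonomy of a hyperbolic metric on $S$, and by construction $\Gamma=\{(\rho_1(\gamma),\rho_2(\gamma)):\gamma\in\pi_1(S)\}$.

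The main obstacle, in both directions, is precisely this passage between the dynamics on $C(\Lambda_\Gamma)$ and on $\Lambda_\Gamma$: proving cocompactness of $\Gamma\curvearrowright C(\Lambda_\Gamma)$ in the sufficiency direction — equivalently, that the genuinely three-dimensional convex core is coarsely two-dimensional, with boundary surfaces carrying the hyperbolic structures of $\rho_1$ and $\rho_2$ — and, in the necessity direction, upgrading cocompactness on $C(\Lambda_\Gamma)$ to the convergence property of $\Gamma\curvearrowright\Lambda_\Gamma$. The remaining ingredients (the boundary map, the transversality statements, torsion-freeness, the identification with a surface group) are comparatively routine.
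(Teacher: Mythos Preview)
The paper does not prove this proposition: it is stated as a result of Mess and attributed to \cite{mess2007}. The only commentary is Remark~\ref{rem:defrem}, which observes that the paper's definition of quasi-Fuchsian (via cocompactness on the convex hull of an acausal circle) differs from Mess's original formulation, and that the equivalence ``can be easily shown by using the fact that if the limit set is not a round circle, then the interior of its convex hull is globally hyperbolic.''

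Your sketch is a reasonable outline of a proof and is broadly correct, but your route for the \emph{necessity} direction diverges from what the remark suggests. You invoke the convergence-group theorem (Tukia, Casson--Jungreis, Gabai) to identify $\Gamma$ with a surface group via its action on $\Lambda_\Gamma\cong\mathbb{S}^1$. The approach indicated by the remark, and closer to the treatments in \cite{barbot-merigot,dgk18}, is more direct and purely geometric: when $\Lambda_\Gamma$ is not round, the interior of $C(\Lambda_\Gamma)$ is a globally hyperbolic Lorentzian manifold, hence admits a Cauchy surface $S$; cocompactness of the $\Gamma$-action then forces $\Gamma\backslash C(\Lambda_\Gamma)^\circ\cong S\times\R$ with $S$ closed, so $\Gamma\cong\pi_1(S)$, and the two hyperbolic structures are read off from the pleated boundary components of the convex core (which you do mention in the sufficiency half). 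This avoids the deep convergence-group machinery entirely. Your approach is valid but heavier; the global-hyperbolicity route is what the authors have in mind and what makes the equivalence ``easy'' in their sense.
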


\begin{rem}\label{rem:defrem} The definition of quasi-Fuchsian in terms of the action on the convex hull of an acausal circle is quite different from Mess' approach, and more in the spirit of \cite{barbot-merigot} and \cite{dgk18}. The equivalence can be easily shown by using the fact that if the limit set is not a round circle, then the interior of its convex hull is  globally hyperbolic. 
\end{rem}

Quasi-Fuchsian subgroups  $\Gamma \subset G\times G$ are part of a vastly more general theory of discrete subgroups of higher rank semisimple Lie groups called \emph{Anosov subgroups}, which are studied in the broader field of \emph{Anosov representations} (see e.g.\ \cite{labourie, barbot-merigot,guichardwienhard2012,dgk18,dgk24}). The hierarchy is as follows: Quasi-Fuchsian subgroups belong to the so-called \emph{$\mathbb{H}^{p,q}$-convex co-compact subgroups} (for $p=2$, $q=1$), see \cite{dgk18}. The $\mathbb{H}^{p,q}$-convex co-compact subgroups belong to the \emph{projective Anosov subgroups}, see \cite{dgk24}, which are particular Anosov subgroups of $\mathrm{SL}(n,\R)$ for $n\in \N$. Anosov subgroups are always Gromov-hyperbolic and possess so-called boundary maps from the Gromov boundary $\partial_\infty\Gamma$ \color{black} into appropriate flag manifolds. The images of these boundary maps are the limit sets in this more general context. In our setting of a quasi-Fuchsian subgroup  $\Gamma \subset G\times G$, the abstract limit set  given by the image of the Anosov boundary map into $\mathbb{RP}^3$ coincides with the limit set $\Lambda_\Gamma$ introduced above (see e.g.\ the much more general \cite[Thm.~1.15]{dgk24}).  In particular, the recent dynamical results on projective Anosov subgroups from \cite{Delarue_Monclair_Sanders} can be applied to quasi-Fuchsian groups. Rather than directly applying these still somewhat abstract results to our setting, we will reprove the relevant ones here for a more self-contained and explicit presentation.

\subsection{Discontinuity domains in \texorpdfstring{$\AdS_3$}{AdS}}

\begin{prop}[Mess \cite{mess2007}] \label{prop - properly discontinuous action on invisible domain}\label{prop:Mess}
Let $\Gamma\subset {\rm Isom}(\AdS_3)$ be a quasi-Fuchsian group. The action of $\Gamma$ on the invisible domain $\mathcal O_+(\Lambda_\Gamma)$ is free and properly discontinuous.
\end{prop}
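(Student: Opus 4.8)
The statement to prove is Proposition~\ref{prop - properly discontinuous action on invisible domain}: for a quasi-Fuchsian $\Gamma$, the action on $\mathcal O_+(\Lambda_\Gamma)$ is free and properly discontinuous. The strategy is to exploit the convex core $C(\Lambda_\Gamma)$, on which the action is free, properly discontinuous and cocompact by definition of quasi-Fuchsian, together with the quantitative separation provided by Lemma~\ref{lem - q<-1 locally uniformly on the discontinuity domain}. First I would observe that freeness is inherited: any $\gamma\in\Gamma$ fixing a point of $\mathcal O_+(\Lambda_\Gamma)$ would fix a point of the convex hull as well once one passes to the nearest-point projection onto $C(\Lambda_\Gamma)$, which is $\Gamma$-equivariant and well-defined on all of $\mathcal O_+(\Lambda_\Gamma)$ since this set is convex and every pair of its points is joined by a geodesic; hence $\gamma=\mathrm{id}$. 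Alternatively, freeness follows directly because $\Gamma$ acts freely on the limit set side and a nontrivial isometry with a fixed point in $\mathcal O_+(\Lambda_\Gamma)$ would have to be elliptic, contradicting the fact that every element of $\Gamma$ is loxodromic/hyperbolic as it comes from a surface group holonomy via Mess' description (Proposition~\ref{thm - Mess}).

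For proper discontinuity, the key step is to show that for every compact $K\subset\mathcal O_+(\Lambda_\Gamma)$ the set $\{\gamma\in\Gamma: \gamma K\cap K\neq\emptyset\}$ is finite. Here I would use the function $\zeta(x,y):=-q(x,\widetilde\psi(y))$ or rather the pairing $-q(x,y)$ on $\mathcal O_+(\Lambda_\Gamma)$, which by Lemma~\ref{lem - q<-1 locally uniformly on the discontinuity domain} is bounded above by some $\delta<1$ when $x$ ranges over $K$ and $y$ over all of $\mathcal O_+(\Lambda_\Gamma)$; equivalently $-q(x,y)$ stays bounded on $K\times\mathcal O_+(\Lambda_\Gamma)$ from above, but what I actually need is a \emph{lower} bound controlling escape to infinity. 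The right move is to project: let $p:\mathcal O_+(\Lambda_\Gamma)\to C(\Lambda_\Gamma)$ be the $\Gamma$-equivariant nearest-point (or barycentric) projection, and show it is proper, or at least that $p(K)$ is relatively compact in $C(\Lambda_\Gamma)$ and that $d_{\AdS}(x,p(x))$ is bounded on $K$. Then $\gamma K\cap K\neq\emptyset$ forces $\gamma\, p(K)$ to meet a bounded neighborhood of $p(K)$ inside $C(\Lambda_\Gamma)$, and since $\Gamma$ acts properly discontinuously (indeed cocompactly) on $C(\Lambda_\Gamma)$, only finitely many $\gamma$ qualify.

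The main obstacle is making the projection $p$ and its properness rigorous in the Lorentzian setting, where ``nearest point'' is delicate because timelike directions have negative square-distance. I would sidestep this by working with the auxiliary positive quantity: for $x\in\mathcal O_+(\Lambda_\Gamma)$ set $f(x):=\inf_{\nu\in\Lambda_\Gamma}(-q(x,\widetilde\psi(\nu)))>0$ (finite and positive, with the infimum attained by compactness of $\Lambda_\Gamma$), and observe $f$ is $\Gamma$-invariant up to the conformal factors $N_\gamma$ appearing in \eqref{equivariance_Phi_pm}, or better, use the Patterson--Sullivan-type integral $f_\Gamma(x)=\int_{\Lambda_\Gamma}|q(x,\nu)|^{-\delta_\Gamma}\omega_{\delta_\Gamma}(\nu)$ from Theorem~\ref{introthm:poincare}, which is genuinely $\Gamma$-invariant (by the transformation law of the Patterson--Sullivan measure) and positive and smooth; its sublevel sets modulo $\Gamma$ should be shown to be compact. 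Concretely I would argue: if $\gamma_n\in\Gamma$ are distinct with $\gamma_n x_n=y_n$, $x_n,y_n\in K$, then after passing to a subsequence $\gamma_n$ converges in $\partial(G\times G)$ to a pair of boundary points, and $\gamma_n x_n$ would have to approach $\Lambda_\Gamma$ in $\overline{\AdS}_3$ (standard north-south/contraction dynamics of loxodromic-type elements near the limit set), contradicting $y_n\in K\subset\mathcal O_+(\Lambda_\Gamma)$ which stays at positive distance from $\Lambda_\Gamma$ by Lemma~\ref{lem - q<-1 locally uniformly on the discontinuity domain} again applied with $-q(\cdot,\nu)\le\delta<1$ uniformly. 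This dynamical-escape argument is the crux; the rest is bookkeeping with the convex-hull cocompactness. Finally, I note that since this is Mess' theorem, one may alternatively simply cite \cite{mess2007}, but the self-contained proof via Lemma~\ref{lem - q<-1 locally uniformly on the discontinuity domain} and the contraction dynamics near $\Lambda_\Gamma$ is in the spirit of the present paper.
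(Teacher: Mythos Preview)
The paper does not give its own proof of this proposition---it is stated with attribution to Mess and left as a citation. Immediately afterward, the paper remarks that proper discontinuity is a consequence of the stronger \cite[Lem.~2.11]{dgk18} (accumulation points of $\Gamma$-orbits of compacta lie in $\Lambda_\Gamma$), and then proves Lemma~\ref{lem - q<-1 up to moving by an element of the group}, whose proof is precisely the convergence-dynamics argument you identify as the ``crux.'' So your proper-discontinuity sketch is sound and matches the paper's treatment of the surrounding material; indeed Lemma~\ref{lem - q<-1 up to moving by an element of the group} with any $c>1$ gives proper discontinuity directly, since $\gamma K\cap K\neq\emptyset$ would force $q(\gamma x,\gamma x)=-1$ for some $x\in K$, contradicting $q(\gamma x,y)<-c$ for all $y\in K$.

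There is, however, a genuine gap in your freeness argument. The claim that a nontrivial isometry fixing a point of $\mathcal O_+(\Lambda_\Gamma)$ ``would have to be elliptic'' is false: in the Fuchsian case $\gamma=(h,h)$ acts on $\AdS_3\simeq\SL(2,\R)$ by conjugation, and the centralizer of the hyperbolic element $h$ is an entire spacelike geodesic of fixed points sitting inside $\AdS_3$. The correct argument is that any fixed point $x\in\AdS_3$ of $\gamma\neq\mathrm{id}$ must lie in the eigenspace for eigenvalue $1$ (because $q(\gamma x)=q(x)$ forces the eigenvalue to square to $1$, and all eigenvalues of $\gamma$ are positive by the discussion following Proposition~\ref{thm - Mess}); since eigenvectors for eigenvalues $\mu,\mu'$ with $\mu\mu'\neq 1$ are $q$-orthogonal, this eigenspace is orthogonal to the eigendirection $\gamma^+\in\Lambda_\Gamma$, whence $q(x,\gamma^+)=0$ and $x\notin\mathcal O_+(\Lambda_\Gamma)$. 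Your nearest-point-projection route is, as you note, delicate in this Lorentzian setting, and the route via the function $f_{\delta_\Gamma}$ of Theorem~\ref{theo:Poincare_series} is circular: that result is proved much later in the paper and its ingredients (the Patterson--Sullivan measure, the Poincar\'e series) already presuppose the properly discontinuous action you are trying to establish.
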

We write
\[
M:=\gam \mc{O}_+(\Lambda_\Gamma)
\]
for the quasi-Fuchsian manifold associated to $\Gamma$. 

The proper discontinuity of the $\Gamma$-action on $\mathcal O_+(\Lambda_\Gamma)$  roughly means that orbits of points in $\mc{O}_+(\Lambda_\Gamma)$ cannot accumulate in $\mc{O}_+(\Lambda_\Gamma)$. This is a consequence of the following stronger result.

\begin{prop}[{\cite[Lem.~2.11]{dgk18}}]
Let $\Gamma\subset {\rm Isom}(\AdS_3)$ be a quasi-Fuchsian group, and let $K\subset \mc{O}_+(\Lambda_\Gamma)$ be a non-empty compact subset. The set of accumulation points in $\AdS_3\cup\partial\AdS_3$ of $\Gamma\cdot K$ is equal to $\Lambda_\Gamma$.
\end{prop}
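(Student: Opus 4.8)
The statement to prove is the following: if $\Gamma$ is quasi-Fuchsian and $K\subset \mathcal O_+(\Lambda_\Gamma)$ is non-empty and compact, then the accumulation set of $\Gamma\cdot K$ in $\AdS_3\cup\partial\AdS_3$ is exactly $\Lambda_\Gamma$. There are two inclusions. For ``$\supseteq$'', the plan is to pick a point $x_0$ in the convex core $C(\Lambda_\Gamma)$ (which is contained in $\mathcal O_+(\Lambda_\Gamma)$ by the remark after Definition~\ref{def - invisible domain}), translate it into $K$ if necessary, and use the co-compactness of the $\Gamma$-action on $C(\Lambda_\Gamma)$ together with the density of $\Gamma\cdot x_0$ in the ideal boundary: since $\Gamma$ acts cocompactly on $C(\Lambda_\Gamma)$, the orbit $\Gamma\cdot x_0$ limits onto all of $\partial C(\Lambda_\Gamma)\cap\partial\AdS_3=\Lambda_\Gamma$ (this is the standard fact that the limit set of a cocompact action on a convex set is the full ideal boundary of that set). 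Each such limit point is approached by a subsequence $\gamma_n x_0$, and since $x_0$ and the chosen point of $K$ lie at bounded distance inside $C(\Lambda_\Gamma)$, the same subsequence applied to $K$ accumulates at the same boundary point; hence $\Lambda_\Gamma$ lies in the accumulation set.

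For the reverse inclusion ``$\subseteq$'', the heart of the matter is to show that $\Gamma\cdot K$ has no accumulation point in $\mathcal O_+(\Lambda_\Gamma)$ itself (that is exactly proper discontinuity, Proposition~\ref{prop:Mess}), no accumulation point in $\AdS_3\setminus\mathcal O_+(\Lambda_\Gamma)$, and no accumulation point in $\partial\AdS_3\setminus\Lambda_\Gamma$. Proper discontinuity handles the first. For a boundary point $[\nu]\notin\Lambda_\Gamma$ or an interior point $y\notin\mathcal O_+(\Lambda_\Gamma)$, I would argue by contradiction: suppose $\gamma_n x_n\to z$ with $x_n\in K$ and $z\in (\AdS_3\cup\partial\AdS_3)\setminus(\mathcal O_+(\Lambda_\Gamma)\cup\Lambda_\Gamma)$. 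Passing to a subsequence, $x_n\to x_\infty\in K\subset\mathcal O_+(\Lambda_\Gamma)$. The key geometric input is the characterization of $\mathcal O_+(\Lambda_\Gamma)$ via $q(x,\widetilde\psi(\nu))<0$ for all $\nu\in\Lambda_\Gamma$, together with Lemma~\ref{lem - q<-1 locally uniformly on the discontinuity  domain}, which gives a uniform $\delta<1$ with $q(\gamma_n x_n,\gamma_n y')\le\delta$ for $\gamma_n y'$ any point of $\mathcal O_+(\Lambda_\Gamma)$; taking $y'$ a fixed point of $C(\Lambda_\Gamma)$ and using that $\gamma_n y'$ stays in $C(\Lambda_\Gamma)$, one controls $q(z,\cdot)$ against the limit set. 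More precisely: since $\Gamma$ preserves $\Lambda_\Gamma$, we have $\gamma_n x_n\in\mathcal O_+(\Lambda_\Gamma)$ for all $n$, so $q(\gamma_n x_n,\widetilde\psi(\nu))\le 0$ for every $\nu\in\Lambda_\Gamma$; passing to the limit gives $q(z,\widetilde\psi(\nu))\le 0$ for all $\nu\in\Lambda_\Gamma$ (interpreting $q$ on lifts and using continuity of the radial projection on the relevant chart), so $z\in\overline{\mathcal O_+(\Lambda_\Gamma)}$. It then remains to exclude $z\in\partial\mathcal O_+(\Lambda_\Gamma)\setminus\Lambda_\Gamma$: by \cite[Corollary 3.7]{barbot-merigot} such a $z$, if it is a boundary point of $\AdS_3$, must lie in $\Lambda_\Gamma$, a contradiction; and if $z\in\AdS_3$ is on $\partial\mathcal O_+(\Lambda_\Gamma)$, one runs the timelike-geodesic argument of Lemma~\ref{lem - q<-1 locally uniformly on the discontinuity  domain} (so $z=-x$ for some $x\in\mathcal O_+(\Lambda_\Gamma)$, forcing $q(z,\nu)>0$, contradicting $q(z,\nu)\le 0$), or more simply invokes that $\gamma_n x_n$ and $\gamma_n y'$ lie at bounded distance and $\gamma_n y'$ diverges to $\partial\AdS_3$, contradicting $z\in\AdS_3$.

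The main obstacle I anticipate is the bookkeeping around lifts and the radial projection $\pi:\R^4\setminus\{0\}\to\partial\AdS_3$: the quantity $q(x,\nu)$ is only defined up to positive scaling on the boundary, so every limiting argument involving ``$q(z,\nu)\le 0$'' must be phrased carefully in a local chart (e.g.\ the torus model $\TT^2=\mathbb S^1\times\mathbb S^1\subset\R^4$, where lifts are normalized), and one must ensure the normalized lifts of $\gamma_n x_n$ stay in a compact set so that limits of $q$-values behave continuously. A second, more conceptual subtlety is justifying the ``$\supseteq$'' direction cleanly: one needs that the orbit of an interior point of the convex core accumulates on the \emph{entire} acausal circle $\Lambda_\Gamma$, which follows from cocompactness of the $\Gamma$-action on $C(\Lambda_\Gamma)$ plus the fact that an acausal circle is precisely the ideal boundary of its convex hull; I would cite \cite[Lemma 3.13]{barbot-merigot} or Proposition~\ref{thm - Mess} (identifying $\Gamma$ with a surface group acting cocompactly) rather than reproving it. Once these two points are handled, the remaining steps are routine compactness and continuity arguments.
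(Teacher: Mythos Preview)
The paper does not give its own proof of this proposition; it simply cites \cite[Lem.~2.11]{dgk18}. So there is nothing in the paper to compare against directly, and I assess your sketch on its own merits.

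The overall two-inclusion structure is fine, and the use of proper discontinuity (Proposition~\ref{prop:Mess}) together with $\overline{\mathcal O_+(\Lambda_\Gamma)}\cap\partial\AdS_3=\Lambda_\Gamma$ from \cite{barbot-merigot} handles most of the ``$\subseteq$'' direction. The genuine gap is in ruling out accumulation at a point $z\in\partial\mathcal O_+(\Lambda_\Gamma)\cap\AdS_3$. Neither of your two suggestions works. The ``timelike-geodesic argument of Lemma~\ref{lem - q<-1 locally uniformly on the discontinuity  domain}'' is about the equation $q(x,y)=1$ forcing $y=-x$; here the situation is $q(z,\widetilde\psi(\nu_0))=0$ for some $\nu_0\in\Lambda_\Gamma$, which is unrelated. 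The ``bounded distance'' argument also fails in Lorentzian signature: what is preserved under $\gamma_n$ is the \emph{form} value $q(\gamma_n x_n,\gamma_n y')=q(x_n,y')$, a bounded constant. If $\gamma_n y'\to\lambda\in\Lambda_\Gamma$, write $\gamma_n y'=t_n\eta_n$ with $t_n\to\infty$ and $\eta_n\to\widetilde\psi(\lambda)$; then boundedness of $q(\gamma_n x_n,\gamma_n y')=t_n\,q(\gamma_n x_n,\eta_n)$ only forces $q(z,\widetilde\psi(\lambda))=0$. That is perfectly consistent with $z$ sitting on the lightlike part of $\partial\mathcal O_+(\Lambda_\Gamma)$ emanating from $\lambda$, so no contradiction arises. (The same issue undermines the ``bounded distance'' step in your ``$\supseteq$'' argument: a fixed value of $q$ between two diverging orbits does not, by itself, pin them to the same boundary point.)

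The clean fix is the convergence (north--south) dynamics of the Anosov group on $\mathbb{RP}^3$, which the paper itself invokes in the very next result (proof of Lemma~\ref{lem - q<-1 up to moving by an element of the group}): up to a subsequence there exist $\lambda_\pm\in\Lambda_\Gamma$ such that $\gamma_n\ell\to\lambda_+$ uniformly on compact subsets of $\{\ell\in\mathbb{RP}^3\mid q(\ell,\lambda_-)\neq 0\}$. Since $K\subset\mathcal O_+(\Lambda_\Gamma)\subset\{q(\cdot,\lambda_-)<0\}$, this gives $\gamma_n x_n\to\lambda_+\in\Lambda_\Gamma$ immediately, disposing of the entire ``$\subseteq$'' direction in one step. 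For ``$\supseteq$'', use that the attracting fixed points $\gamma^+$ are dense in $\Lambda_\Gamma$ (as the paper recalls from \cite{Gr}) and that $\gamma^n x_0\to\gamma^+$ for any $x_0\in\mathcal O_+(\Lambda_\Gamma)$.
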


We will need a stronger result, controlling how elements in the orbit of some compact set $K_1\subset\mathcal O_+(\Lambda_\Gamma)$ become spacelike separated from a fixed compact set $K_2\subset \mathcal O_+(\Lambda_\Gamma)$.\color{black}

\begin{lemma} \label{lem - q<-1 up to moving by an element of the group}
Let $\Gamma\subset{\rm Isom}(\AdS_3)$ be a quasi-Fuchsian subgroup and $c>0$. For each pair of compact subsets $K_1,K_2\subset \mathcal O_+(\Lambda_\Gamma)$, there exists a finite subset $\Gamma_{K_1,K_2,c}\subset \Gamma$ such that ${q}(\gamma x,y)<-c$ for all $x\in K_1$, $y\in K_2$ and $\gamma\in \Gamma\setminus \Gamma_{K_1,K_2,c}$.
\end{lemma}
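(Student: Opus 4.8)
The statement asserts a uniform-in-compact-sets version of proper discontinuity, with the extra quantitative feature that outside a \emph{finite} exceptional set one has $q(\gamma x,y)<-c$ rather than merely $q(\gamma x,y)<1$. The plan is to reduce to the already-available qualitative results. First I would recall that, by Lemma~\ref{lem - q<-1 locally uniformly on the discontinuity domain} applied with the compact set $K:=K_1\subset \mathcal O_+(\Lambda_\Gamma)$, there is $\delta<1$ with $q(x,y')\le \delta$ for all $x\in K_1$ and all $y'\in \mathcal O_+(\Lambda_\Gamma)$. Since $\Gamma$ preserves $\mathcal O_+(\Lambda_\Gamma)$ and acts by isometries (so $q(\gamma x,y)=q(x,\gamma^{-1}y)$, and $\gamma^{-1}y\in\mathcal O_+(\Lambda_\Gamma)$ whenever $y\in\mathcal O_+(\Lambda_\Gamma)$), this already gives the \emph{non-uniform} bound $q(\gamma x,y)\le \delta<1$ for \emph{all} $\gamma\in\Gamma$, $x\in K_1$, $y\in K_2$; the content to be extracted is that $q(\gamma x,y)$ is in fact $<-c$ for all but finitely many $\gamma$.

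The key step is a compactness/contradiction argument. Suppose the conclusion fails for some $c>0$: then there is an infinite sequence of distinct elements $\gamma_n\in\Gamma$, together with points $x_n\in K_1$, $y_n\in K_2$, such that $q(\gamma_n x_n,y_n)\ge -c$. Passing to subsequences, $x_n\to x_\infty\in K_1$ and $y_n\to y_\infty\in K_2$, both in $\mathcal O_+(\Lambda_\Gamma)$. Now I invoke the structure of accumulation points: by \cite[Lem.~2.11]{dgk18} (the proposition quoted just above the statement), the set of accumulation points of $\Gamma\cdot K_1$ in $\AdS_3\cup\partial\AdS_3$ equals $\Lambda_\Gamma\subset\partial\AdS_3$. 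Since the $\gamma_n$ are distinct and $K_1$ is compact while the action on $\mathcal O_+(\Lambda_\Gamma)$ is properly discontinuous (Proposition~\ref{prop - properly discontinuous action on invisible domain}), the sequence $\gamma_n x_n$ cannot accumulate inside $\AdS_3$; hence, after a further subsequence, $\gamma_n x_n\to [\nu]\in\Lambda_\Gamma$ in $\overline{\AdS_3}$. Concretely, choosing lifts in $\R^4$, $\gamma_n x_n/\|\gamma_n x_n\|$ converges to a null vector $\nu$ with $[\nu]\in\Lambda_\Gamma$ (the norm may blow up as we approach the boundary, but the direction converges). Then $q(\gamma_n x_n, y_n)/\|\gamma_n x_n\| \to q(\nu,y_\infty)$, and since $y_\infty\in\mathcal O_+(\Lambda_\Gamma)$ and $[\nu]\in\Lambda_\Gamma$, the defining property of the invisible domain (Definition~\ref{def - invisible domain}) gives $q(y_\infty,\til\psi(\nu))<0$, i.e.\ $q(\nu,y_\infty)<0$ strictly. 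Combined with $\|\gamma_n x_n\|\to\infty$ this forces $q(\gamma_n x_n,y_n)\to -\infty$, contradicting $q(\gamma_n x_n,y_n)\ge -c$.

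The one point that needs care — and which I expect to be the main obstacle to writing cleanly — is the behaviour of the normalization $\|\gamma_n x_n\|$: one must argue that $\gamma_n x_n$ really escapes to infinity in $\R^4$ (equivalently, leaves every compact subset of $\AdS_3$), which is exactly proper discontinuity plus the distinctness of the $\gamma_n$, and that the rescaled limit lies in $\widetilde\Lambda_\Gamma$ rather than in some larger set; for this I would lean on the quoted accumulation-point description, which precisely identifies the limit as a point of $\Lambda_\Gamma$. An alternative, perhaps more self-contained route avoiding rescaling: use Lemma~\ref{Lem:q_and_distance}, noting $-q(\gamma x,y)=\cosh d(\gamma x,y)$ whenever this exceeds $1$, so the claim ``$q(\gamma x,y)<-c$'' is equivalent to ``$d(\gamma x,y)>\operatorname{arccosh}(c)$'' (for $c\ge 1$; for $c<1$ the bound $\le\delta<1$ already handles it together with finiteness), and then the statement becomes: only finitely many $\gamma\in\Gamma$ satisfy $d(\gamma x,y)\le R$ for $x\in K_1,y\in K_2$ — a standard consequence of proper discontinuity of an isometric action, once one knows $d$ is a genuine (locally compact) distance on $\mathcal O_+(\Lambda_\Gamma)$ on the relevant range. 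I would present the contradiction argument as the main line and mention the distance reformulation as motivation.
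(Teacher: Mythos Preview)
Your proof is correct and follows essentially the same contradiction--rescaling scheme as the paper: assume an infinite bad sequence, extract limits $x_\infty,y_\infty$, show $\gamma_n x_n$ converges projectively to some $[\nu]\in\Lambda_\Gamma$ with $\|\gamma_n x_n\|\to\infty$, and contradict $q(\gamma_n x_n,y_n)\ge -c$ using $q(\nu,y_\infty)<0$. The only cosmetic difference is that the paper invokes the convergence-group property of the $\Gamma$-action on $\mathbb{RP}^3$ directly to obtain $\gamma_k[x_k]\to\lambda_+\in\Lambda_\Gamma$, whereas you cite the accumulation-point description (itself a consequence of that property) together with proper discontinuity; the subsequent limit computation is identical.
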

\begin{proof}
Arguing by contradiction, consider an infinite sequence $\{\gamma_k\}\subset \Gamma$ and $x_k\in K_1$, $y_k\in K_2$  such that $q(\gamma_kx_k,y_k)\geq -c$ for all $k\in\N$. We use the convergence property of the action of $\Gamma$ on $\mathbb{RP}^3$: up to a subsequence, there are $\lambda_+,\lambda_-\in \Lambda_\Gamma$ such that $\gamma_k\ell\to \lambda_+$ uniformly on compact subsets of $\set{\ell\in \mathbb{RP}^3}{{q}(\ell,\lambda_-)\neq 0}\supset\mathcal O_+(\Lambda_\Gamma)$ \cite[Prop. 6.6]{barbot-merigot}\color{black}.  In particular, $\gamma_k[x_k]\to \lambda_+$ in $\mathbb{RP}^3$. Up to another subsequence, it means that there exists $v\in\R^4\setminus\{0\}$ with $[v]=\lambda_+$, and a sequence $t_k>0$ such that $t_k\gamma_kx_k\to v$. Now $-t_k^2=t_k^2q(\gamma_kx_k)=q(t_k\gamma_kx_k) \to q(v)=0$, so $t_k\to 0$, and  assuming that $y_k$ converges to some $y\in K_2$ we find
\[ q(v,y)=\lim_{k\to+\infty}q(t_k\gamma_kx_k,y_k)=\lim_{k\to+\infty} t_k\underbrace{q(\gamma_kx_k,y_k)}_{\geq -c} \geq -c\lim_{k\to+\infty} t_k=0\,.\]
But $q(v,y)\geq 0$ is a contradiction with $[v]\in \Lambda_\Gamma$ and $y\in \mathcal O_+(\Lambda_\Gamma)$.
\end{proof}

\subsection{Discontinuity domains in \texorpdfstring{$T^1\AdS_3$}{the spacelike unit tangent bundle}}\label{sec:discdomT1}

The proper discontinuity of the action of a  quasi-Fuchsian group $\Gamma\subset{\rm Isom}(\AdS_3)$ on the open domain $\mathcal O_+(\Lambda_\Gamma)\subset \AdS_3$ immediately implies proper  discontinuity for the action on $T^1\mathcal O_+(\Lambda_\Gamma)\subset T^1\AdS_3$. There is, however, a larger discontinuity domain in $T^1\AdS_3$ (as mentioned above, this was already observed in \cite{Delarue_Monclair_Sanders} in the more general context of projective Anosov subgroups, and very recently this observation was even generalized to arbitrary Anosov subgroups in \cite{Delarue_Monclair_SandersII}).

\begin{prop} \label{prop - discontinuity domain unit spacelike tangent bundle}
Let $\Gamma\subset {\rm Isom( AdS_3)}$ be a quasi-Fuchsian group, and consider the open set 
\[ \widetilde{\mc{M}}=\set{ (x,v)\in T^1\AdS_3 }{ \exists t\,\in \R, \pi(\varphi_t(x,v))\in \mathcal{O}_+(\Lambda_\Gamma) } = \bigcup_{t\in\R}\varphi_t\left(T^1\mathcal{O}_+(\Lambda_\Gamma)\right).\]
 The action of $\Gamma$ on $\widetilde{\mc{M}}$ is free and properly discontinuous.
\end{prop}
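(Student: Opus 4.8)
The plan is to derive everything from Mess' theorem (Proposition~\ref{prop:Mess}), which gives that $\Gamma$ acts freely and properly discontinuously on the invisible domain $\mathcal O_+(\Lambda_\Gamma)\subset\AdS_3$, together with the convergence property of the $\Gamma$-action on $\mathbb{RP}^3$ already exploited in the proof of Lemma~\ref{lem - q<-1 up to moving by an element of the group}. Openness of $\widetilde{\mc{M}}$ is clear from the description $\widetilde{\mc{M}}=\bigcup_{t\in\R}\varphi_t(T^1\mathcal O_+(\Lambda_\Gamma))$, since each $\varphi_t$ is a homeomorphism of $T^1\AdS_3$ and $\mathcal O_+(\Lambda_\Gamma)$ is open (as $\Lambda_\Gamma$ is closed). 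Freeness is also quick: if $\gamma\cdot(x,v)=(x,v)$ for some $(x,v)\in\widetilde{\mc{M}}$, then because $\gamma$ commutes with the flow it fixes $\varphi_t(x,v)$ and hence $\pi(\varphi_t(x,v))$ for every $t$; choosing $t$ with $\pi(\varphi_t(x,v))\in\mathcal O_+(\Lambda_\Gamma)$ and invoking freeness there yields $\gamma=\mathrm{id}$.

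The real work is proper discontinuity. Fix a compact $K\subset\widetilde{\mc{M}}$. First I would run a compactness argument producing a compact set $\widehat C\subset T^1\mathcal O_+(\Lambda_\Gamma)$ and a bound $T>0$ with $K\subset\bigcup_{i=1}^{N}\varphi_{-t_i}(\widehat C)$, $|t_i|\le T$: for each $w\in K$ choose $t_w$ with $\pi(\varphi_{t_w}(w))\in\mathcal O_+(\Lambda_\Gamma)$, then (using openness of $\mathcal O_+(\Lambda_\Gamma)$ and local compactness of $T^1\AdS_3$) a relatively compact open neighbourhood $U_w\ni w$ with $\pi(\varphi_{t_w}(\overline{U_w}))\subset\mathcal O_+(\Lambda_\Gamma)$; extract a finite subcover $U_{w_1},\dots,U_{w_N}$ of $K$ and set $\widehat C:=\bigcup_i\varphi_{t_{w_i}}(\overline{U_{w_i}})$, $T:=\max_i|t_{w_i}|$. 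By construction $\pi(\widehat C)\subset\mathcal O_+(\Lambda_\Gamma)$ and every $w\in K$ satisfies $\varphi_{t_{w_i}}(w)\in\widehat C$ for some $i$.

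Next, suppose $\gamma\in\Gamma$ with $w,\gamma w\in K$. Picking $i,l$ with $b:=\varphi_{t_{w_i}}(w)\in\widehat C$ and $\varphi_{t_{w_l}}(\gamma w)\in\widehat C$, and using equivariance $\varphi_{t_{w_l}}(\gamma w)=\gamma\varphi_{s}(b)$ with $s:=t_{w_l}-t_{w_i}$, $|s|\le 2T$, one gets $\gamma b=\varphi_{-s}(c)$ for some $c\in\widehat C$. Applying $\pi$, this says precisely that $\gamma$ carries the point $\pi(b)$ of the compact set $\widehat C_0:=\pi(\widehat C)\subset\mathcal O_+(\Lambda_\Gamma)$ into the compact set $\widehat C_1:=\pi\big(\bigcup_{|s|\le 2T}\varphi_s(\widehat C)\big)\subset\AdS_3$. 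If infinitely many $\gamma=\gamma_k$ did this, I would pick $p_k\in\widehat C_0$ with $\gamma_kp_k\in\widehat C_1$ and pass to a subsequence so that $p_k\to p\in\widehat C_0$ and $\gamma_kp_k\to\bar p\in\widehat C_1$; then in $\mathbb{RP}^3$ one has $[\gamma_kp_k]\to[\bar p]$, which lies in the open region $\{q<0\}$ and is therefore disjoint from $\partial\AdS_3$. On the other hand the convergence property furnishes, along a further subsequence, $\lambda_\pm\in\Lambda_\Gamma$ with $\gamma_k\cdot\ell\to\lambda_+$ uniformly on compact subsets of $\{\ell:q(\ell,\lambda_-)\ne 0\}$, and the compact image of $\widehat C_0$ is such a subset because $\widehat C_0\subset\mathcal O_+(\Lambda_\Gamma)$ forces $q(\cdot,\lambda_-)<0$ there; hence $[\gamma_kp_k]\to\lambda_+\in\partial\AdS_3$, contradicting the previous line. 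Thus $\{\gamma\in\Gamma:\gamma K\cap K\ne\emptyset\}$ is finite.

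The main obstacle — and the reason the naive idea ``flow $K$ into $T^1\mathcal O_+(\Lambda_\Gamma)$ and quote Mess'' fails — is that the fibers of $\pi:T^1\AdS_3\to\AdS_3$ are non-compact surfaces, so $\widehat C$ cannot be enlarged to a compact, flow-invariant subset of $T^1\mathcal O_+(\Lambda_\Gamma)$; the bounded-time flowout of $\widehat C$ genuinely escapes $T^1\mathcal O_+(\Lambda_\Gamma)$, so one cannot apply proper discontinuity on $\mathcal O_+(\Lambda_\Gamma)$ to the flowed compact set. The resolution is to retain only the weaker fact that $\gamma$ moves a point of a compact subset of $\mathcal O_+(\Lambda_\Gamma)$ into a (possibly larger) compact subset of $\AdS_3$, and to close the argument using the convergence dynamics on $\mathbb{RP}^3$ rather than proper discontinuity on $\mathcal O_+(\Lambda_\Gamma)$ directly; convexity of $\mathcal O_+(\Lambda_\Gamma)$ is not needed here.
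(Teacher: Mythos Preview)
Your proof is correct and uses the same underlying ingredients as the paper---commutation of $\Gamma$ with the flow, freeness on $\mathcal O_+(\Lambda_\Gamma)$, and the convergence property of $\Gamma$ on $\mathbb{RP}^3$---but the paper's argument is considerably shorter because it works with the \emph{sequential} criterion for proper discontinuity rather than the compact-set one. Given a convergent sequence $(x_k,v_k)\to(x,v)$ in $\widetilde{\mc M}$ and $\gamma_k\to\infty$ in $\Gamma$, the paper simply picks a \emph{single} time $t$ with $\varphi_t(x,v)\in T^1\mathcal O_+(\Lambda_\Gamma)$, notes that by openness $\varphi_t(x_k,v_k)=(y_k,w_k)\in T^1\mathcal O_+(\Lambda_\Gamma)$ for large $k$, and then invokes the fact that accumulation points of $\Gamma$-orbits of compact subsets of $\mathcal O_+(\Lambda_\Gamma)$ lie in $\Lambda_\Gamma$ (the proposition stated just before this one) to get $\gamma_k y_k\to\nu\in\Lambda_\Gamma$; hence $\gamma_k(y_k,w_k)$ cannot converge in $\widetilde{\mc M}$, and neither can $\gamma_k(x_k,v_k)=\varphi_{-t}(\gamma_k(y_k,w_k))$.

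This bypasses your finite-cover step and the auxiliary compact sets $\widehat C$, $\widehat C_0$, $\widehat C_1$ entirely. The ``obstacle'' you identify---that one cannot flow a whole compact $K$ into $T^1\mathcal O_+(\Lambda_\Gamma)$ by a single time---is real for the compact-set formulation, but it simply evaporates in the sequential formulation because the time $t$ can be chosen for the limit point alone. Your approach is not wrong, just more laborious; the paper's version is the streamlined form of the same idea.
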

\begin{proof}
 
Let $(x_k,v_k)$ be a sequence in $\widetilde{\mc{M}}$ converging to $(x,v)\in \widetilde{\mc{M}}$, and consider a sequence $\gamma_k\to \infty$ in $\Gamma$. Let $t\in\R$  be such that $(y,w):=\varphi_t(x,v)\in T^1\mathcal{O}_+(\Lambda_\Gamma)$. For $k$ large enough, we also have $(y_k,w_k):=\varphi_t (x_k,v_k)\in T^1\mathcal{O}_+(\Lambda_\Gamma)$. The proper discontinuity of the action on $\mathcal{O}_+(\Lambda_\Gamma)$ (Prop.\ \ref{prop:Mess}) guarantees that up to a subsequence,  $\gamma_k y_k\to \nu \in \Lambda_\Gamma$.  
It follows that the sequence $\gamma_k(y_k,w_k)$ cannot converge to a point in $\widetilde{\mc{M}}$ and neither can the sequence $\gamma_k(x_k,v_k)$ because $\varphi_t$ commutes with the $\Gamma$-action. So the action of $\Gamma$ on $\widetilde{\mc{M}}$ is properly discontinuous. The group $\Gamma$ being torsion free, any properly discontinuous action of $\Gamma$ is free. \color{black}
\end{proof}

We denote the quotient manifold 
\[ \mc{M}:=\Gamma\backslash \widetilde{\mc{M}}.\]
Note that the geodesic flow $\varphi_t:\mc{M} \to \mc{M}$ is complete, while its restriction to $\Gamma\backslash T^1\mathcal O_+(\Lambda_\Gamma)$ is never complete: the only spacelike geodesics entirely contained in $\mathcal O_+(\Lambda_\Gamma)$ are those with both endpoints in $\Lambda_\Gamma$.\color{black}

\subsection{The non-wandering set of the geodesic flow}\label{sec:nonwanderingset}
Let $\Gamma\subset {\rm Isom}(\AdS_3)_\circ$ be a quasi-Fuchsian group. Consider its limit set $\Lambda_\Gamma\subset \partial \AdS_3$, and the discontinuity domains $\mathcal{O}_+(\Lambda_\Gamma)\subset \AdS_3$ and $\widetilde{\mc{M}}\subset T^1\AdS_3$.

For any $h\in {\rm Isom}(\AdS_3)_\circ=\mathrm{SO}(2,2)_\circ$, the logarithms of the moduli of the (complex) eigenvalues of $h$ can be ordered as 
\begin{equation}\label{lambda_i(h)}
\lambda_1(h)\geq \lambda_2(h)\geq -\lambda_2(h)\geq -\lambda_1(h)\,.
\end{equation}
According to Mess' Theorem (Proposition \ref{thm - Mess}), a nontrivial element  $\gamma\in \Gamma\subset \SO(2,2)_\circ$ corresponds, via   the identification ${\rm Isom}(\AdS_3)_\circ\simeq G\times G$, to an element $(\rho_1(\gamma),\rho_2(\gamma))\in G\times G$, where $\rho_i(\gamma)\in G=\SL(2,\R)$ is hyperbolic (because it belongs to the holonomy group of a hyperbolic metric on a closed surface). It is therefore conjugate to 
 \[ \left( \begin{pmatrix} e^{\ell_1} & \\ & e^{-\ell_1}\end{pmatrix} ,\begin{pmatrix} e^{\ell_2} & \\ & e^{-\ell_2}\end{pmatrix} \right)\,,\]
 for some $\ell_1,\ell_2>0$. This element corresponds through  the isomorphism $G\times G\simeq {\rm Isom}(\AdS_3)_\circ$ to the matrix
 \[ \begin{pmatrix} e^{\ell_1+\ell_2} &&& \\ & e^{\ell_1-\ell_2} && \\ && e^{\ell_2-\ell_1} & \\ &&& e^{-\ell_1-\ell_2} \end{pmatrix}\,.\]
It follows that the eigenvalues of  $\gamma\in\Gamma$ are real, positive, and $\lambda_1(\gamma)> \lambda_2(\gamma)$. The unique eigendirection in $\R^4$ associated to the eigenvalue $ e^{\lambda_1(\gamma)}$ projects to an attracting fixed point $\gamma^+\in \Lambda_\Gamma$, while the eigendirection for $e^{-\lambda_1(\gamma)}$ yields a  repelling  fixed point $\gamma^-\in\Lambda_\Gamma$. 

The following two lemmas are key technical ingredients in the proof of the Axiom A property of the flow in Proposition \ref{prop - spacelike geodesic flow axiom A}.\color{black}

\begin{lemma} \label{lem - fixed points on the boundary}
Let $\gamma\in\Gamma\setminus\{{\rm id}\}$ and let $\nu\in\partial \AdS_3$ be a fixed point  of $\gamma$ distinct from $\gamma^+$ and $\gamma^-$. Then $q(\nu,\gamma^+)=q(\nu,\gamma^-)=0$.
\end{lemma}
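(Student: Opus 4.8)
The element $\gamma\in\Gamma\setminus\{\mathrm{id}\}$ acts on $\R^4$ as the diagonal matrix $\mathrm{diag}(e^{\ell_1+\ell_2},e^{\ell_1-\ell_2},e^{\ell_2-\ell_1},e^{-\ell_1-\ell_2})$ in a suitable basis $(f_1,f_2,f_3,f_4)$ adapted to $q$, where $\ell_1>\ell_2>0$ (recall $\ell_1=\lambda_1(\rho_1(\gamma))$, $\ell_2=\lambda_1(\rho_2(\gamma))$ after conjugation, so that $\lambda_1(\gamma)=\ell_1+\ell_2$, $\lambda_2(\gamma)=\ell_1-\ell_2$). Here $\gamma^+=[f_1]$ and $\gamma^-=[f_4]$. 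Since the four eigenvalues $e^{\ell_1+\ell_2}>e^{\ell_1-\ell_2}>e^{\ell_2-\ell_1}>e^{-\ell_1-\ell_2}$ are \emph{distinct}, the eigenspaces are one-dimensional, so a fixed point of $\gamma$ in $\RP^3$ is one of $[f_1],[f_2],[f_3],[f_4]$. A fixed point $\nu\in\pl\AdS_3$ distinct from $\gamma^\pm$ is therefore $[f_2]$ or $[f_3]$. I would then compute $q$-pairings using the $q$-orthogonality of eigenvectors for distinct eigenvalues.

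\textbf{Key step.} The crucial observation is that $q$ is $\gamma$-invariant, i.e.\ $q(\gamma u,\gamma v)=q(u,v)$ for all $u,v\in\R^4$. If $u$ is an eigenvector with eigenvalue $\mu$ and $v$ an eigenvector with eigenvalue $\mu'$, then $q(u,v)=q(\gamma u,\gamma v)=\mu\mu' q(u,v)$, so $q(u,v)=0$ whenever $\mu\mu'\neq 1$. Applying this with $u=f_2$ (or $f_3$) and $v=f_1$ gives $q(f_2,f_1)=0$ since $e^{\ell_1-\ell_2}\cdot e^{\ell_1+\ell_2}=e^{2\ell_1}\neq 1$ (as $\ell_1>0$); similarly $q(f_2,f_4)=0$ since $e^{\ell_1-\ell_2}\cdot e^{-\ell_1-\ell_2}=e^{-2\ell_2}\neq 1$. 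The same computation works for $f_3$, since $e^{\ell_2-\ell_1}e^{\ell_1+\ell_2}=e^{2\ell_2}\neq 1$ and $e^{\ell_2-\ell_1}e^{-\ell_1-\ell_2}=e^{-2\ell_1}\neq 1$. Hence $q(\nu,\gamma^+)=q(\nu,\gamma^-)=0$, which is exactly the claim (using that $\nu$, $\gamma^\pm$ lift to the corresponding eigenvectors, and $q$ being homogeneous in each lift, the vanishing is well-defined on projective classes).

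\textbf{Main obstacle and remarks.} The argument is essentially a linear-algebra computation once the diagonalization over $\R$ is in place, and the excerpt already supplies (just before the lemma) the fact that $\gamma$ is conjugate to the above diagonal matrix with $\ell_1,\ell_2>0$ and real positive eigenvalues with $\lambda_1(\gamma)>\lambda_2(\gamma)$. The one genuinely careful point is to justify that the \emph{only} fixed points of $\gamma$ on $\pl\AdS_3\subset\RP^3$ are among the four coordinate directions: this uses that all four eigenvalues are distinct (hence each eigenline is one-dimensional, and a fixed projective point must lie in a single eigenline), which holds precisely because $\ell_1>\ell_2>0$. One should also check that $[f_2],[f_3]$ genuinely lie on $\pl\AdS_3$, i.e.\ that $q(f_2)=q(f_3)=0$; this again follows from $\gamma$-invariance: $q(f_j)=e^{2\mu_j}q(f_j)$ with $\mu_j\notin\{0\}$ the log-eigenvalue, forcing $q(f_j)=0$ for $j=2,3$ (and also $j=1,4$), so in fact all four eigenlines are $q$-isotropic — consistent with $\pl\AdS_3$ being the projectivized null cone. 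So any fixed point of $\gamma$ in $\RP^3$ automatically lies on $\pl\AdS_3$, and the lemma's hypothesis that $\nu\in\pl\AdS_3$ is automatic. The proof is short; there is no serious analytical difficulty.
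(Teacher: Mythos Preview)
Your proof is correct and follows essentially the same argument as the paper: both use the $\gamma$-invariance of $q$ to obtain $q(z,z^\pm)=\ell\ell^\pm q(z,z^\pm)$ for an eigenvector lift $z$ of $\nu$, and then conclude by checking $\ell\ell^\pm\neq 1$. The paper's version is slightly more streamlined in that it does not enumerate the four eigenlines explicitly; it just notes that $\nu\neq\gamma^\pm$ forces $\ell\neq\ell^\pm$, and since $\ell^+\ell^-=1$ this immediately gives $\ell\ell^+\neq 1$ and $\ell\ell^-\neq 1$.
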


\begin{proof}Recall that $\partial \AdS_3=\{q= 0\}/\R_{>0}$ is a space of half-lines. \color{black}
Let $z^\pm$ be vectors representing $\gamma^\pm$, and $\ell^\pm$ be the associated eigenvalues (note that $\ell^+\ell^-=1$). Consider also 
$z$ representing $\nu$ and $\ell$ its eigenvalue. Now $q(z,z^\pm)=q(\gamma z,\gamma z^\pm)= \ell \ell^\pm q(z,z^\pm)$. Since $\nu$ is distinct from $\gamma^+$ and $\gamma^-$, it follows  that $\ell\neq \ell^+$ and $\ell\neq \ell^-$, thus $\ell \ell^+\neq 1$ and $\ell \ell^-\neq 1$, and the result follows.
\end{proof}

\begin{lemma} \label{lem - lambda 1 vs lambda 2}
There is a constant $c>0$ such that for all $\gamma\in \Gamma$, $\lambda_1(\gamma)-\lambda_2(\gamma)\geq c \lambda_1(\gamma)$.
\end{lemma}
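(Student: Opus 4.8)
The plan is to translate the claim into the language of the two Fuchsian factors and use compactness of the surface. Recall from Mess' theorem (Proposition \ref{thm - Mess}) that $\Gamma=\{(\rho_1(\gamma),\rho_2(\gamma))\mid\gamma\in\pi_1(S)\}$ with $\rho_1,\rho_2$ holonomies of hyperbolic structures on the closed surface $S$, and that the eigenvalue data computed just above this lemma shows $\lambda_1(\gamma)=\ell_1(\gamma)+\ell_2(\gamma)$ and $\lambda_2(\gamma)=|\ell_1(\gamma)-\ell_2(\gamma)|$, where $e^{\pm\ell_i(\gamma)}$ are the eigenvalues of $\rho_i(\gamma)\in\SL(2,\R)$, i.e.\ $2\ell_i(\gamma)=\ell_{g_i}(\gamma)$ is the translation length of $\rho_i(\gamma)$ in $\H^2$ for the hyperbolic metric $g_i$ corresponding to $\rho_i$. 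So the inequality $\lambda_1(\gamma)-\lambda_2(\gamma)\geq c\,\lambda_1(\gamma)$ becomes: there is $c>0$ with
\[
(\ell_1(\gamma)+\ell_2(\gamma))-|\ell_1(\gamma)-\ell_2(\gamma)|=2\min(\ell_1(\gamma),\ell_2(\gamma))\geq c\,(\ell_1(\gamma)+\ell_2(\gamma)),
\]
i.e.\ the ratio $\ell_1(\gamma)/\ell_2(\gamma)$ is bounded above and below by positive constants uniformly over $\gamma\in\Gamma\setminus\{\mathrm{id}\}$.

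The key step is therefore a comparison of marked length spectra of the two hyperbolic metrics $g_1,g_2$ on the common surface $S$: I claim there is $C\geq 1$ such that $C^{-1}\ell_{g_2}(\gamma)\leq\ell_{g_1}(\gamma)\leq C\,\ell_{g_2}(\gamma)$ for every $\gamma\in\pi_1(S)$. This is a standard fact, which I would prove by a \v{S}varc–Milnor / quasi-isometry argument: the identity map $(\widetilde S,\widetilde g_1)\to(\widetilde S,\widetilde g_2)$ between the two hyperbolic universal covers is a $\pi_1(S)$-equivariant quasi-isometry, because $S$ is compact so the two metrics are bi-Lipschitz equivalent as Riemannian metrics on $S$, hence their lifts are bi-Lipschitz, hence a quasi-isometry of the universal covers (with multiplicative constant the bi-Lipschitz constant and additive constant $0$). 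Translation length is comparable to the stable translation length on the Cayley graph, which is a quasi-isometry invariant up to multiplicative and additive constants; since $\ell_{g_i}(\gamma)\to\infty$ as $\gamma$ ranges over a sequence leaving every finite set (there are only finitely many $\gamma$ with $\ell_{g_i}(\gamma)$ below any bound, by properness and discreteness), the additive constant can be absorbed into a multiplicative one, giving the two-sided multiplicative comparison. Alternatively, and perhaps cleaner to write: for a hyperbolic element $h\in\SL(2,\R)$ one has $\ell(h)=\inf_{x\in\H^2}d(x,hx)$; fixing a point $x_0\in\H^2$ and noting $\ell_{g_i}(\gamma)\leq d_{g_i}(x_0,\rho_i(\gamma)x_0)$ while also $\ell_{g_i}(\gamma)\geq d_{g_i}(x_0,\rho_i(\gamma)x_0)-2D_i$ where $D_i$ is the diameter of $(S,g_i)$ (since the geodesic axis comes within $D_i$ of $x_0$), and then comparing the displacement functions $d_{g_1}(x_0,\cdot x_0)$ and $d_{g_2}(x_0,\cdot x_0)$ on the orbit using the bi-Lipschitz equivalence of $\widetilde g_1$ and $\widetilde g_2$.

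Once the two-sided comparison $C^{-1}\ell_{g_2}(\gamma)\leq\ell_{g_1}(\gamma)\leq C\ell_{g_2}(\gamma)$ is in hand, set $r:=\ell_1(\gamma)/\ell_2(\gamma)\in[C^{-1},C]$ and compute
\[
\frac{2\min(\ell_1,\ell_2)}{\ell_1+\ell_2}=\frac{2\min(r,1)}{r+1}\geq\frac{2\min(C^{-1},1)}{C+1}=:c>0,
\]
which is exactly the desired inequality $\lambda_1(\gamma)-\lambda_2(\gamma)\geq c\,\lambda_1(\gamma)$. (For $\gamma=\mathrm{id}$ there is nothing to prove, and for $\gamma\neq\mathrm{id}$ both $\ell_i(\gamma)>0$ since $\rho_i(\gamma)$ is hyperbolic, so $r$ is well-defined.) The only genuine point requiring care — the main obstacle — is making the marked-length-spectrum comparison rigorous without hand-waving: the cleanest route is the orbit-displacement estimate above, which reduces everything to (i) the elementary fact that translation length and orbit displacement $d(x_0,hx_0)$ differ by at most twice the distance from $x_0$ to the axis, bounded by $\mathrm{diam}(S,g_i)<\infty$ by compactness, and (ii) the bi-Lipschitz equivalence of $\widetilde g_1,\widetilde g_2$, again immediate from compactness of $S$; the additive error from (i) is then absorbed since $\ell_i(\gamma)$ is bounded below by the systole of $(S,g_i)$, so the multiplicative comparison holds with an adjusted constant.
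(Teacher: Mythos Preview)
Your proof is correct, and in fact can be streamlined: since $S$ is compact, the two hyperbolic metrics satisfy $C^{-1}g_1\le g_2\le Cg_1$ pointwise on $TS$, hence $C^{-1/2}d_{\tilde g_1}\le d_{\tilde g_2}\le C^{1/2}d_{\tilde g_1}$ on $\widetilde S$, and taking the infimum over orbits gives directly $C^{-1/2}\ell_{g_1}(\gamma)\le\ell_{g_2}(\gamma)\le C^{1/2}\ell_{g_1}(\gamma)$ with no additive constant to absorb. The \v{S}varc--Milnor detour and the systole argument are unnecessary.

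Your route is genuinely different from the paper's. The paper argues via the general theory of Anosov subgroups: it bounds $\lambda_1(\gamma)\le\alpha|\gamma|_\infty$ using subadditivity of the top singular value, and bounds $\lambda_1(\gamma)-\lambda_2(\gamma)\ge\beta|\gamma|_\infty$ by invoking the singular-value gap characterisation of Anosov representations from \cite{KLP,BPS}, then takes $c=\beta/\alpha$. This has the advantage of working verbatim for any projective Anosov subgroup, not just quasi-Fuchsian ones. Your argument, by contrast, exploits the specific product structure $\Gamma\subset\SL(2,\R)\times\SL(2,\R)$ given by Mess' theorem and reduces everything to the elementary bi-Lipschitz comparison of two Riemannian metrics on a closed surface; it is self-contained and avoids the Anosov machinery entirely, at the cost of being special to this setting.
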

\begin{proof}
Consider a finite generating set $S\subset \Gamma$, the associated word length $|\cdot|$ on $\Gamma$ and the stable word length $|\gamma|_\infty=\lim_{n\to+\infty}\frac{1}{n}|\gamma^n|$. Consider the singular values  $\mu_i(h)=\frac{1}{2}\log \lambda_i(h^Th)$ for $h\in \mathrm{SO}(2,2)$  (in decreasing order), \color{black} so that 
\[ \lambda_i(h)=\lim_{n\to+\infty}\frac{\mu_i(h^n)}{n}\,,~\forall i=1,2\,.\] 
For $i=1$, this equality comes from the fact that $\mu_1(h)$ is the logarithm of the operator norm of $h$ (see \cite[Proposition 4.4.1]{DSmatrices}). The formula for $i=2$ reduces to that for $i=1$ by considering the representation of $\SO(2,2)\subset\GL(4,\R)$ in $\GL(\Lambda^2\R^4)$. Since $\mu_1(h)$ is the logarithm of the operator norm of $h$, it is subadditive. Setting $\alpha=\max\set{\mu_1(\sigma)}{\sigma\in S}>0$, we find $\mu_1(\gamma)\leq \alpha|\gamma|$ for all $\gamma\in \Gamma$, thus
\[ \lambda_1(\gamma)=\lim_{n\to +\infty}\frac{\mu_1(\gamma^n)}{n} \leq \alpha|\gamma|_\infty\,.\]
Now from the characterisation of Anosov subgroups (of which quasi-Fuchsian groups are special cases, see the paragraph after Remark \ref{rem:defrem})  in terms of singular values \cite{KLP,BPS} there is $\beta>0$ such that $\mu_1(\gamma)-\mu_2(\gamma)\geq \beta |\gamma| - 1/\beta$ for all $\gamma\in\Gamma$, thus
\[ \lambda_1(\gamma)-\lambda_2(\gamma)=\lim_{n\to+\infty}\frac{\mu_1(\gamma^n)-\mu_2(\gamma^n)}{n}\geq \lim_{n\to +\infty}\frac{\beta|\gamma^n|}{n}-\frac{1}{n\beta}=\beta|\gamma|_\infty\,.\]
 The lemma is proved with  $c=\beta/\alpha$.
\end{proof}

Consider the set of points $(x,v)\in T^1\AdS_3$ so that the geodesic passing through $(x,v)$ has  one or  both of its endpoints in the limit set $\Lambda_\Gamma$:
\bq\label{eq:Kpmdef}
\widetilde{\mc{K}}_\pm :=\set{(x,v)\in T^1\AdS_3}{B_\pm(x,v)\in \Lambda_\Gamma}, \quad 
\widetilde{\mc{K}} :=\widetilde{\mc{K}}_+\cap \widetilde{\mc{K}}_-.
\eq
In view of \eqref{eq:qxpsiBpm}, we have 
\bq
\widetilde{\mc{K}}_\pm \subset T^1\mathcal{O}_+(\Lambda_\Gamma),\label{eq:inclKpm}
\eq
while for $\widetilde{\mc{K}}$ we get the chain of inclusions of $\Gamma$-invariant subsets
 \[  
 \widetilde{\mc{K}}\subset T^1C(\Lambda_\Gamma)\subset T^1\mathcal{O}_+(\Lambda_\Gamma) \subset \widetilde{\mc{M}}.
 \] 
 Note that amongst all these sets, only $\widetilde{\mc{K}}$, $\widetilde{\mc{K}}_\pm$  and $\widetilde{\mc{M}}$ are invariant under the spacelike geodesic flow. Define 
\bq
\mc{K}:= \Gamma\backslash \widetilde{\mc{K}}.\label{eq:Kdef}
\eq
The following lemma is a consequence of \cite[Lemma 3.9, Thm.~2]{Delarue_Monclair_Sanders}, but we reprove it here for a more self-contained presentation. It establishes the first part of the Axiom A property, the compactness property being a crucial assumption for our microlocal applications (see Assumption \ref{ass:A0}).\color{black}
\begin{lemma} \label{lem - non-wandering set for the geodesic flow}
The set $\mc{K}$  defined in \eqref{eq:Kdef}, \eqref{eq:Kpmdef}\color{black} is the non-wandering set  of the geodesic flow on $\mc{M}$, and it is equal to the closure of the set of periodic points. Moreover, it is compact.
\end{lemma}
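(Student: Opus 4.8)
The plan is to prove three things: that $\mc{K}$ is the non-wandering set $\Omega(\varphi_t)$ of the geodesic flow on $\mc{M}$, that $\mc{K}$ equals the closure of the periodic points, and that $\mc{K}$ is compact. I would start with compactness, since it is needed to make sense of ``Axiom A'' and is likely the cleanest step. The point is that $\mc K = \Gamma\backslash\widetilde{\mc K}$ where $\widetilde{\mc K}=\{(x,v)\in T^1\AdS_3 : B_+(x,v),B_-(x,v)\in\Lambda_\Gamma\}$, and $\Lambda_\Gamma$ is a (compact) acausal circle. Since $\Lambda_\Gamma$ is acausal, any two distinct points $[\nu_1],[\nu_2]\in\Lambda_\Gamma$ satisfy $q(\nu_1,\nu_2)<0$, so they are joined by a unique spacelike geodesic; hence $\widetilde{\mc K}$ is precisely the set of unit vectors tangent to geodesics connecting two points of $\Lambda_\Gamma$, and the map $\widetilde{\mc K}\to (\Lambda_\Gamma\times\Lambda_\Gamma)\setminus\Delta$ sending $(x,v)$ to $(B_-(x,v),B_+(x,v))$ realizes $\widetilde{\mc K}$ as an $\R$-bundle (the geodesic parameter) over $(\Lambda_\Gamma\times\Lambda_\Gamma)\setminus\Delta$. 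I would first observe $\widetilde{\mc K}\subset T^1 C(\Lambda_\Gamma)$ (the tangent vectors to such geodesics lie in the convex hull), so $\mc K\subset \Gamma\backslash T^1 C(\Lambda_\Gamma)$, and the latter is compact because $\Gamma$ acts cocompactly and properly discontinuously on $C(\Lambda_\Gamma)$ by Definition \ref{def - AdS quasi fuchsian group}. Since $\widetilde{\mc K}$ is closed in $T^1\AdS_3$ (as $\Lambda_\Gamma$ is closed and $B_\pm$ continuous), $\mc K$ is closed in the compact set $\Gamma\backslash T^1C(\Lambda_\Gamma)$, hence compact. (One should note $\mc K$ is well-defined as a subset of $\mc M$ via \eqref{eq:inclKpm}: $\widetilde{\mc K}\subset T^1\mathcal O_+(\Lambda_\Gamma)\subset\widetilde{\mc M}$.)

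Next I would show $\mc K$ is contained in the non-wandering set and equals the closure of periodic orbits. For periodic orbits: each nontrivial $\gamma\in\Gamma$ is, by Mess' Theorem (Proposition \ref{thm - Mess}) and the eigenvalue discussion preceding Lemma \ref{lem - fixed points on the boundary}, conjugate in $\SO(2,2)_\circ$ to a diagonal matrix with eigenvalues $e^{\pm(\ell_1+\ell_2)},e^{\pm(\ell_1-\ell_2)}$, $\ell_1,\ell_2>0$; the eigenlines for $e^{\pm(\ell_1+\ell_2)}$ give the attracting/repelling fixed points $\gamma^\pm\in\Lambda_\Gamma$. The spacelike geodesic joining $\gamma^-$ to $\gamma^+$ is $\gamma$-invariant, so its image in $\mc M$ is a periodic orbit of the geodesic flow lying in $\mc K$; its length is $\lambda_1(\gamma)-\lambda_2(\gamma)$ which is positive by the eigenvalue computation (or Lemma \ref{lem - lambda 1 vs lambda 2}). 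Now density of these periodic orbits in $\mc K$ comes from density of the pairs $(\gamma^-,\gamma^+)$ in $(\Lambda_\Gamma\times\Lambda_\Gamma)\setminus\Delta$: $\Gamma$ acts as a convergence group on $\RP^3$ (used in the proof of Lemma \ref{lem - q<-1 up to moving by an element of the group}), hence is a nonelementary convergence group on its limit set $\Lambda_\Gamma$, and for such groups the set of attracting/repelling fixed-point pairs of loxodromic elements is dense in $\Lambda_\Gamma\times\Lambda_\Gamma$; combining with the fact that along each geodesic in $\widetilde{\mc K}$ the flow moves the basepoint (so every point of $\mc K$ is approximated not just in the $(B_-,B_+)$-coordinates but also along the flow direction by shifting), and using that the flow is complete on $\mc M$, one gets that periodic orbits are dense in $\mc K$, and in particular $\mc K\subset\overline{\{\text{periodic points}\}}\subset\Omega(\varphi_t)$.

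Finally I would prove the reverse inclusion $\Omega(\varphi_t)\subset\mc K$, which is where the real work is. Suppose $[(x,v)]\in\mc M$ is non-wandering; I want $B_\pm(x,v)\in\Lambda_\Gamma$ for a lift $(x,v)\in\widetilde{\mc M}$. Being non-wandering means there are $(x_k,v_k)\to(x,v)$ in $\widetilde{\mc M}$, times $t_k\to+\infty$, and $\gamma_k\in\Gamma$ with $\gamma_k\varphi_{t_k}(x_k,v_k)\to(x,v)$; necessarily $\gamma_k\to\infty$ in $\Gamma$ (if $\gamma_k$ stayed in a finite set the flow on $\widetilde{\mc M}$ would have to be ``recurrent upstairs'', impossible since $\Phi_+\circ\varphi_t=e^t\Phi_+$ blows up). Passing to a subsequence, use the convergence-group property: there are $\lambda_\pm\in\Lambda_\Gamma$ with $\gamma_k\to\lambda_+$ uniformly on compacts of $\RP^3\setminus\{\lambda_-\}$ and $\gamma_k^{-1}\to\lambda_-'$ similarly (for suitable $\lambda_-'\in\Lambda_\Gamma$). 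Writing $\varphi_{t_k}(x_k,v_k)=(x_k\cosh t_k+v_k\sinh t_k,\dots)$, the basepoint of $\varphi_{t_k}(x_k,v_k)$ tends projectively to $[x+v]=B_+(x,v)$ as $t_k\to+\infty$; since $\gamma_k\cdot(\text{that basepoint})$ converges (to $x$, a point of $\mathcal O_+(\Lambda_\Gamma)$, in particular not fixed projectively to something forced into $\Lambda_\Gamma$) one deduces $B_+(x,v)=\lambda_-$, the repelling point of the $\gamma_k$-dynamics — hence $B_+(x,v)\in\Lambda_\Gamma$. A symmetric argument with $t_k\to-\infty$ (or with $\gamma_k^{-1}$ and the other flow direction, using that non-wandering is time-symmetric so there are also return times going to $-\infty$, or alternatively flowing the already-obtained point) gives $B_-(x,v)\in\Lambda_\Gamma$. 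Therefore $(x,v)\in\widetilde{\mc K}$ and $[(x,v)]\in\mc K$. The main obstacle is making this last limiting argument fully rigorous: one must track carefully which point plays the role of attracting versus repelling fixed point for the sequence $\gamma_k$ (using $\gamma_k[\text{basepoint of }\varphi_{t_k}(x_k,v_k)]\to x\notin\Lambda_\Gamma$ to pin down that the basepoint's projective limit $B_+(x,v)$ must be the repelling point $\lambda_-$, via the convergence-group dichotomy), and handle the degenerate possibility that $B_+(x,v)$ equals the attracting point (which would force $\gamma_k\varphi_{t_k}(x_k,v_k)$ to escape to $\lambda_+\in\Lambda_\Gamma\not\ni x$, a contradiction). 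Here I would lean on Lemma \ref{lem - q<-1 up to moving by an element of the group} and the convergence dynamics exactly as in the cited \cite[Lemma 3.9, Thm.~2]{Delarue_Monclair_Sanders}.
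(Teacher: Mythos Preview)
Your overall strategy matches the paper's, but there is a genuine gap in your compactness argument. You assert that $\Gamma\backslash T^1 C(\Lambda_\Gamma)$ is compact because $\Gamma$ acts cocompactly on $C(\Lambda_\Gamma)$. This is false: the fibers $T^1_x\AdS_3$ are \emph{non-compact} (they are one-sheeted hyperboloids, the Lorentzian ``unit spheres''), so cocompactness on the base $C(\Lambda_\Gamma)$ says nothing about cocompactness on $T^1C(\Lambda_\Gamma)$. This is exactly where the Lorentzian setting departs from the Riemannian one. The paper fixes this by also using the constraint on $v$: given a compact $L\subset C(\Lambda_\Gamma)$ with $\Gamma\cdot L=C(\Lambda_\Gamma)$, the set $\widetilde L=\{(x,v)\in\widetilde{\mc K}:x\in L\}$ equals $\Upsilon_+^{-1}(L\times\Lambda_\Gamma)\cap\Upsilon_-^{-1}(L\times\Lambda_\Gamma)$ where $\Upsilon_\pm(x,v)=(x,B_\pm(x,v))$ are diffeomorphisms onto open subsets of $\AdS_3\times\partial\AdS_3$; since $L$ and $\Lambda_\Gamma$ are compact, $\widetilde L$ is compact, and $\Gamma\cdot\widetilde L=\widetilde{\mc K}$. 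You actually set up the ingredients for this (the $\R$-bundle description over $(\Lambda_\Gamma\times\Lambda_\Gamma)\setminus\Delta$) but then did not use them.

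For the remaining two parts your route is essentially the paper's. Density of periodic orbits in $\mc K$ via density of pairs $(\gamma^-,\gamma^+)$ in $\Lambda_\Gamma^{(2)}$ is exactly what the paper does (citing the Gromov-hyperbolic fact). For $\Omega(\varphi_t)\subset\mc K$, the paper's argument is simpler than your convergence-group dichotomy: after arranging $x\in\mathcal O_+(\Lambda_\Gamma)$, one just observes that the basepoints $y_k$ of $\varphi_{t_k}(x_k,v_k)$ converge projectively to $B_+(x,v)$, while $\gamma_k y_k\to x$; properness of the $\Gamma$-action on $\mathcal O_+(\Lambda_\Gamma)$ forces (a subsequence of) $\gamma_k y_k$ to leave every compact of $\mathcal O_+(\Lambda_\Gamma)$ unless $\gamma_k$ is eventually constant, and the only accumulation points of $\Gamma$-orbits of compacta in $\overline{\AdS_3}$ are in $\Lambda_\Gamma$, so $B_+(x,v)\in\Lambda_\Gamma$. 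Your sketch reaches the same conclusion but with more machinery than necessary. (Minor point: the period of the closed orbit associated to $\gamma$ is $\lambda_1(\gamma)$, not $\lambda_1(\gamma)-\lambda_2(\gamma)$.)
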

\begin{proof}
Let $\mathcal{NW}$ be the non-wandering set  of the geodesic flow on $\mc{M}$, and let us start by showing that $\mathcal{NW}\subset \mc{K}$. For this purpose, consider  $p\in \mathcal{NW}$. This means that there exist sequences $p_k\to p$ and $t_k\to +\infty$ such that $\varphi_{t_k}(p_k)\to p$. Consider a lift $(x,v)\in \widetilde{\mc{M}}$ of $p$, and lifts $(x_k,v_k)\in \widetilde{\mc{M}}$ of $p_k$ such that $(x_k,v_k)\to (x,v)$. Since $\varphi_{t_k}(p_k)\to p$, there is a sequence $\gamma_k$ in $\Gamma$ such that $\gamma_k \varphi_{t_k} (x_k,v_k)\to (x,v)$. Note that up to composing with $\varphi_t$ for some given $t$, we can assume that $x\in \mathcal{O}_+(\Lambda_\Gamma)$ (this is fine because $\mc{K}$ is invariant under the geodesic flow).

 Write $(y_k,w_k)=\varphi_{t_k} (x_k,v_k)$. Since $t_k\to +\infty$, we have $y_k\to \nu^+\in \partial \AdS_3\cap \overline{\mathcal{O}_+(\Lambda_\Gamma)}=\Lambda_\Gamma$. 
 By continuity and $\varphi_t$-invariance of $B_+$, it follows that  $B_+(x,v)=\nu^+$. We can apply the same argument with $p'_k=\varphi_{t_k}p_k\to p$ and $\varphi_{-t_k}p'_k\to p$ to deduce that $B_-(x,v)\in \Lambda_\Gamma$, hence $(x,v)\in \widetilde{\mc{K}}$, and $p\in \mc{K}$.  This achieves the proof of $\mathcal{NW}\subset \mc{K}$.

 Let $\P\subset \mc{M}$ be the set of periodic points for the geodesic flow, and $\widetilde{\P}\subset \widetilde{\mc{M}}$ its lift. For any $\gamma\in\Gamma\setminus\{{\rm id}\}$, the spacelike geodesic with endpoints $\gamma^-$ and $\gamma^+$ lies in $\widetilde\P$, and in $\widetilde{\mc{K}}$. Since the set of axes of $\Gamma$, i.e.\ pairs $(\gamma^+,\gamma^-)$ for $\gamma\in \Gamma\setminus\{{\rm id}\}$, is dense in $\Lambda_\Gamma^{(2)}$ (this is a general fact for Gromov hyperbolic groups \cite[Cor. 8.2.G]{Gr}), we find that $\widetilde{\mc{K}}$ is included in the closure of $\widetilde\P$, therefore $\mc{K}\subset \overline{\P}$. Now the inclusion $\overline{\P}\subset \mathcal{NW}$ holds for any continuous flow, so we finally get $\mc{K}=\overline{\P}= \mathcal{NW}$.
 
 The action of $\Gamma$ on $C(\Lambda_\Gamma)$ being co-compact, we can consider a compact set $L\subset C(\Lambda_\Gamma)$ such that $\Gamma\cdot L=C(\Lambda_\Gamma)$. The set $\widetilde L:=\{(x,v)\in \widetilde{\mc{K}}\,|\, x\in L\}$ satisfies $\Gamma\cdot \widetilde L=\widetilde{\mc{K}}$ (because $(x,v)\in \widetilde{\mc{K}}$ implies that $x\in C(\Lambda_\Gamma)$). Consider now the two diffeomorphisms
 \[ \Upsilon_\pm:\map{T^1\AdS_3}{\mathcal O_\pm\subset \AdS_3\times\partial\AdS_3}{(x,v)}{(x,B_\pm(x,v)).}\]
Recalling the definition of $\widetilde{\mc{K}}$, we see that $\widetilde L=\Upsilon_+^{-1}(L\times\Lambda_\Gamma)\cap \Upsilon_-^{-1}(L\times\Lambda_\Gamma)$. Thus $\widetilde L$ is compact as the intersection of two compact sets. Since $\Gamma\cdot \widetilde L=\widetilde{\mc{K}}$, this means that the action of $\Gamma$ on $\widetilde{\mc{K}}$ is co-compact, i.e. that $\mc{K}$ is compact. 
\end{proof}

Further convenient properties of the set $\mathcal K$ are established in \cite[Thm.\ 2]{Delarue_Monclair_Sanders}. The following proposition is a special case of \cite[Thm.\ A/Lemma 3.13]{Delarue_Monclair_Sanders}, again we give a direct proof in our context.  Recall that a fixed-point free smooth flow is said to satisfy Smale's \emph{Axiom A} \cite{smale67} if its non-wandering set is compact, hyperbolic, and equal to the closure of the set of periodic points.

\begin{prop} \label{prop - spacelike geodesic flow axiom A}
Let $\Gamma\subset {\rm Isom}(\AdS_3)$ be a quasi-Fuchsian group. The spacelike geodesic flow of $\mc{M}=\Gamma\backslash \widetilde{\mc{M}}$ satisfies the Axiom A.
\end{prop}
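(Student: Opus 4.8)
The goal is to verify the three defining properties of Axiom~A for the complete flow $\varphi_t$ on $\mathcal{M}=\Gamma\backslash\widetilde{\mathcal M}$: (i) the non-wandering set is compact, (ii) it equals the closure of the periodic orbits, and (iii) $\varphi_t$ is uniformly hyperbolic on it. By Lemma~\ref{lem - non-wandering set for the geodesic flow} we already have (i) and (ii): the non-wandering set is $\mathcal K=\Gamma\backslash\widetilde{\mathcal K}$, it is compact, and it is the closure of the set of periodic points. So the only thing left is (iii), the uniform hyperbolicity of $d\varphi_t$ restricted to $T\mathcal M|_{\mathcal K}$. The natural candidate for the hyperbolic splitting is the $\mathrm{SO}(2,2)_\circ$-invariant, $d\varphi_t$-invariant decomposition $T(T^1\mathrm{AdS}_3)=E_0\oplus E_s\oplus E_u$ from \eqref{decomposition_Anosov}, which descends to $\mathcal M$ since it is isometry-invariant, and restricts to $\mathcal K$.

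\textbf{Key steps.} First I would recall from \eqref{dvarphit} that on the lift $\widetilde{\mathcal K}\subset T^1\mathrm{AdS}_3$ one has the pointwise exact identities $d\varphi_t(w,-w)=e^{-t}(w,-w)$ and $d\varphi_t(w,w)=e^{t}(w,w)$; so in terms of the \emph{Euclidean} norm on $\R^4\times\R^4$ the contraction/expansion rates are literally $e^{\mp t}$ on $E_s$, $E_u$. The subtlety is that on a noncompact manifold ``uniform hyperbolicity'' must be measured with respect to a metric for which one has uniform constants $C,\theta>0$ with $\|d\varphi_t|_{E_s}\|\le Ce^{-\theta t}$ (and dually for $E_u$) for all $t\ge0$; the ambient Euclidean norm on $\R^8$ is a perfectly good reference metric on $T^1\mathrm{AdS}_3$, and the above gives the estimate with $C=1$, $\theta=1$ \emph{before} passing to the quotient. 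The real content is therefore: (a) $\widetilde{\mathcal K}$ is $\Gamma$-invariant, the $\Gamma$-action on it is free and properly discontinuous (it sits inside $\widetilde{\mathcal M}$, so this follows from Proposition~\ref{prop - discontinuity domain unit spacelike tangent bundle}), and the quotient $\mathcal K$ is compact (Lemma~\ref{lem - non-wandering set for the geodesic flow}); (b) since the Euclidean norm on $T^1\mathrm{AdS}_3$ is \emph{not} $\Gamma$-invariant, one must choose a $\Gamma$-invariant Riemannian metric on $\widetilde{\mathcal M}$ (e.g.\ by a partition-of-unity averaging, or just any smooth metric on the quotient $\mathcal M$ pulled back), and check that on the \emph{compact} set $\mathcal K$ this metric and the descended bundles still give exponential contraction/expansion. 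Here one uses a standard fact: on a compact invariant set, a $d\varphi_t$-invariant continuous splitting with rates that are asymptotically $e^{\mp t}$ in \emph{one} continuous norm is uniformly hyperbolic in \emph{any} continuous norm, the constant $C$ absorbing the (bounded, by compactness) comparison factor between the two norms. Concretely: lift $\mathcal K$ to $\widetilde{\mathcal K}$, use the compact fundamental-domain set $\widetilde L$ from the proof of Lemma~\ref{lem - non-wandering set for the geodesic flow} on which the Euclidean and the $\Gamma$-invariant metric are uniformly comparable, and push the rate estimate \eqref{dvarphit} around using $\Gamma$-equivariance of both the flow and the splitting. Finally one notes $\dim E_0=1$ with $\varphi_t$ fixed-point free (indeed $X\neq0$ everywhere on $T^1\mathrm{AdS}_3$), so the neutral direction is exactly $E_0$ and the splitting $E_0\oplus E_s\oplus E_u$ is a genuine hyperbolic splitting over $\mathcal K$.

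\textbf{Main obstacle.} The only non-bookkeeping point is handling the noncompactness carefully: the naive statement ``$\|d\varphi_t|_{E_s}\|=e^{-t}$'' is metric-dependent and fails for a $\Gamma$-invariant metric, so one must be explicit that hyperbolicity is a property of the restriction to the \emph{compact} set $\mathcal K$ and invoke the norm-equivalence-on-compacta argument to transfer the clean Euclidean estimate \eqref{dvarphit} to the quotient. Everything else—invariance and smoothness of $E_0\oplus E_s\oplus E_u$ (Section~\ref{splittings_T}), compactness and the periodic-orbit description of $\mathcal K$ (Lemma~\ref{lem - non-wandering set for the geodesic flow}), properness of the $\Gamma$-action (Proposition~\ref{prop - discontinuity domain unit spacelike tangent bundle})—has already been assembled in the preceding subsections, so the proof is essentially the observation that these facts together are exactly the definition of Axiom~A. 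I would also remark that the finer splitting \eqref{eq:decomposition into 5 line bundles} shows the flow is in fact of a very rigid type (constant Lyapunov spectrum $\{1,1,-1,-1\}$ in the Euclidean gauge), though this is not needed for the Axiom~A statement itself.
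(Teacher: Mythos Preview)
There is a genuine gap in your hyperbolicity argument. The ``standard fact'' you invoke---that a contraction estimate in one continuous norm transfers to any other by a compactness comparison---requires both norms to be defined on the \emph{same} compact set. But the Euclidean norm on $\R^8$ is not $\Gamma$-invariant, so it does \emph{not} descend to a norm on $T\mathcal M|_{\mathcal K}$; it lives only on the non-compact lift $\widetilde{\mathcal K}$, where it is not uniformly comparable to any $\Gamma$-invariant metric. Concretely: for $\rho\in\widetilde L$ and $\widetilde w\in E_s(\rho)$ one has $d\varphi_t\widetilde w=e^{-t}\widetilde w$ as a vector in $\R^8$, but to measure this in a $\Gamma$-invariant norm at $\varphi_t(\rho)$ you must translate back by some $\gamma^{-1}$ to land in $\widetilde L$, and $\gamma^{-1}$ can expand Euclidean lengths by a factor as large as $e^{\lambda_1(\gamma)}$, which is comparable to $e^{t}$. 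Your remark about ``constant Lyapunov spectrum $\{1,1,-1,-1\}$'' is in fact incorrect on the quotient: for a periodic orbit of period $T=\lambda_1(\gamma)$, the return map on $E_s$ has eigenvalues $e^{-\lambda_1(\gamma)\pm\lambda_2(\gamma)}$, so the Lyapunov exponents are $-1\pm\lambda_2(\gamma)/\lambda_1(\gamma)$, which are not $\pm1$ unless $\lambda_2(\gamma)=0$ (the Fuchsian case).

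The missing ingredient is precisely Lemma~\ref{lem - lambda 1 vs lambda 2}: the uniform spectral gap $\lambda_1(\gamma)-\lambda_2(\gamma)\ge c\,\lambda_1(\gamma)$, which is the Anosov-representation property of $\Gamma$. The paper's proof uses this as follows. One works first over periodic points, where the axis of $\gamma$ gives $x\pm v$ as eigenvectors for $e^{\pm\lambda_1(\gamma)}$, so $\gamma$ preserves the $2$-plane $x^\perp\cap v^\perp$ and acts there with eigenvalues $e^{\pm\lambda_2(\gamma)}$ along the lightlike lines $\mathcal L^{L},\mathcal L^{R}$. Choosing the $\Gamma$-invariant metric to make $E_s^L\oplus E_s^R$ orthogonal, one gets $\|\gamma^{-k}\widetilde w\|\le e^{k\lambda_2(\gamma)}\|\widetilde w\|$, and combined with the $e^{-t}$ factor and $t\approx k\lambda_1(\gamma)$ this yields $\|d\varphi_t\widetilde w\|\le Ce^{-ct}\|\widetilde w\|$ with $c$ coming from Lemma~\ref{lem - lambda 1 vs lambda 2}. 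One then passes from periodic points to all of $\mathcal K$ by the density established in Lemma~\ref{lem - non-wandering set for the geodesic flow}. Without the spectral-gap input, the argument does not close.
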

\begin{proof} Thanks to Lemma \ref{lem - non-wandering set for the geodesic flow} it remains to show that the non-wandering set ${\mc{K}}$ is a hyperbolic set for the geodesic flow.

 Consider the splitting $T\left( T^1\AdS_3\right)=\R X\oplus E^s_R\oplus E^s_L\oplus E^u_R\oplus E^u_L$ from \eqref{eq:decomposition into 5 line bundles}. Their ${\rm SO}(2,2)_\circ$-invariance means that they also define a splitting of the tangent bundle of $\Gamma\backslash \widetilde{\mc{M}}$. Now consider a Riemannian metric $g_{\mc{M}}$  on $\Gamma\backslash \widetilde{\mc{M}}$ that makes this decomposition  orthogonal. Its lift to $\widetilde{\mc{M}}$, still denoted $g_{\mc{M}}$, is  $\Gamma$-invariant and also makes the decomposition \eqref{eq:decomposition into 5 line bundles} orthogonal.  We will use the notation $\|\cdot\|_{g_{\mc{M}}}$ for this norm.

Consider a compact set ${\mc{F}}\subset \widetilde{\mc{K}}$ such that $\Gamma\cdot{\mc{F}}=\widetilde{\mc{K}}\supset \widetilde{\mathcal{P}}$ (Lemma \ref{lem - non-wandering set for the geodesic flow}). Let $p\in \P$ be a periodic point of the geodesic flow on $\Gamma\backslash \widetilde{\mc{M}}$, and $w\in E_s(p)$. Lift $p$ to $\rho=(x,v)\in \mc{F}\subset \widetilde{\mc{M}}$ and $w$ to $\widetilde w\in E_s(\rho)$.  
Recall that $T^1\AdS_3$   is  a submanifold of $\R^4\times \R^4$ \eqref{T1ads3}, and $E_s(\rho)\subset T_\rho T^1\AdS_3$  is a vector subspace of $\R^4\times \R^4$ \eqref{def:Es}. We know from \eqref{dvarphit} that  $\d\varphi_t(\rho)\widetilde w=e^{-t}\widetilde{w} \in E_s(\varphi_t(\rho))=E_s(\rho)$, thus  for $t>0$
\begin{equation} \|\d\varphi_t(\rho)\widetilde w\|_{g_{\mc{M}}(\varphi_t(\rho))} =e^{-t} \|\widetilde w\|_{g_{\mc{M}}(\varphi_t(\rho))}.  \label{lemeq hyperbolicity 1}
\end{equation}

Let $T>0$ be the period of the orbit of  $p$, and $\gamma\in\Gamma$ be such that $\varphi_{T}(\rho)=\gamma\cdot\rho$. By Lemma \ref{lem - fixed points on the boundary}, we have  $T=\lambda_1(\gamma)$. 
The compact set $\mc{F}$ contains a lift of the whole orbit $\{\varphi_s(p)\,|\,s\in [0,T]\}$ of $p\in \P$.  Let $k\in \N$ and $t'\in (0,T)$ such that  $\varphi_{t'}(\rho)\in \mc{F}$ and $\varphi_{lT+t'}(\rho)\not\in \mc{F}$ for all integers $l>k$. Denote $t:=kT+t'$ and
 let  $\rho'=\varphi_{t'}(\rho)\in \mc{F}$ so that  $\varphi_t(\rho)=\gamma^k\cdot\rho'$. \color{black} 
 The $\Gamma$-invariance of the norm $\|\cdot\|_{g_{\mc{M}}}$ yields 
\begin{align}
\begin{split}\|\widetilde w\|_{g_{\mc{M}}(\varphi_t(\rho))}&= \|\gamma^{-k}\widetilde w\|_{g_{\mc{M}}(\gamma^{-k}\varphi_t(\rho))}\\
&=\|\gamma^{-k}\widetilde w\|_{g_{\mc{M}}(\rho')}.
\end{split} \label{lemeq hyperbolicity 2}
\end{align}
 Writing $\widetilde w'= \d\varphi_{t'}(\rho)\widetilde w=e^{-t'}\widetilde w\in E_s(\rho')$, \eqref{lemeq hyperbolicity 1} and \eqref{lemeq hyperbolicity 2} yield
\begin{equation}
\|\d\varphi_t(\rho)\widetilde w\|_{g_{\mc{M}}(\varphi_t(\rho))} = e^{t'-t}\|\gamma^{-k}\widetilde w'\|_{g_{\mc{M}}(\rho')}. \label{lemeq hyperbolicity 3}
\end{equation} 
Let us now decompose $\widetilde w'= w_L+ w_R\in E_s^L(\rho')\oplus E_s^R(\rho')$ according to \eqref{eq:decomposition into 5 line bundles}, then write $w_L=(\xi_L,-\xi_L)$ and $w_R=(\xi_R,-\xi_R)$ where $\xi_L,\xi_R\in x^\perp\cap v^\perp\subset \R^4$ \eqref{def:Es}.  Note that $\gamma  w_L\in E_s^L(\gamma\rho')=E_s^L(\varphi_T(\rho'))$, thus $\gamma\xi_L \in {\mc L}^L_{\varphi_T(\rho')}={\mc L}^L_{\rho'}$, so $\xi_L$ is an eigenvector of $\gamma$, and so is $\xi_R$. Since these vectors are orthogonal to each other and to the eigenvectors $x \pm v$ of $\gamma$ for the eigenvalues $e^{\pm \lambda_1(\gamma)}$, the corresponding eigenvalues are $e^{\pm \lambda_2(\gamma)}$, thus
\[ 
\gamma^{-k}\widetilde w' = e^{\pm  k\lambda_2(\gamma)}  w^L + e^{\mp k\lambda_2(\gamma)}  w^R.
\]
Since the norm $\|\cdot\|_{g_{\mc{M}}}$ makes the decomposition $E_s(\rho')=E_s^L(\rho')\oplus E_s^R(\rho')$ orthogonal, we find 
\begin{equation}\label{gamma^-kwbis}
\| \gamma^{-k}\widetilde w'\|_{g_{\mc{M}}(\rho')} \leq e^{k\lambda_2(\gamma)}\|\widetilde w'\|_{g_{\mc{M}}(\rho')}.
\end{equation}
Using \eqref{lemeq hyperbolicity 3}, \eqref{gamma^-kwbis}, and the fact that $t-t'=kT=k\lambda_1(\gamma)$, we find
\begin{align*} 
\|\d\varphi_t(\rho)\widetilde w\|_{g_{\mc{M}}(\varphi_t(\rho))}& \leq e^{t'-t+k\lambda_2(\gamma)}\|\widetilde w'\|_{g_{\mc{M}}(\rho')}\\
&\leq e^{k(\lambda_2(\gamma)-\lambda_1(\gamma))}e^{-t'} \|\widetilde w\|_{g_{\mc{M}}(\rho')}
\end{align*}
By Lemma \ref{lem - lambda 1 vs lambda 2} there is a constant $c>0$ such that $\lambda_1(\gamma)-\lambda_2(\gamma)\geq c \lambda_1(\gamma)$, thus
\begin{align*} \|\d\varphi_t(\rho)\widetilde w\|_{g_{\mc{M}}(\varphi_t(\rho))} &\leq  e^{-kc\lambda_1(\gamma)} e^{-t'}\|\widetilde w\|_{g_{\mc{M}}(\rho)} \\
&\leq  e^{-ct} e^{(c-1)t'}\|\widetilde w\|_{g_{\mc{M}}(\rho)}.
\end{align*}
The geodesic flow acts properly on $T^1\AdS_3$, so there is a real number $A_{\mc{F}}>0$ depending only on the compact set $\mc{F}$ such that $\vert t'\vert \leq A_{\mc{F}}$ (because $\rho\in \mc{F}$ and $\rho'=\varphi_{t'}(\rho)\in \mc{F}$).

 There is also a real number $B_{\mc{F}}>0$ depending only on $\mc{F}$ such that 
 $\|\widetilde w\|_{g_{\mc{M}}(\rho')}\leq B_{\mc{F}} \|\widetilde w\|_{g_{\mc{M}}(\rho)}$. Indeed, take $B_{\mc{F}}$ to be the maximum of the function $\|\pi_{E_s(q')}(z)\|_{g_{\mc{M}}(q')}$ for $q,q'\in \mc{F}$, $z\in E_s(q)$ with $\|z\|_{g_{\mc{M}}(q)}=1$, where $\pi_{E_s(q')}:\R^8\to E_s(q')$ is the orthogonal projection for the pseudo-Riemannian metric. By letting $C=B_{\mc{F}}e^{(c+1)A_{\mc{F}}}$ we find
\[ \|\d\varphi_t(\rho)w\|_{g_{\mc{M}}(\varphi_t(p))} \leq C e^{-ct}\|w\|_{g_{\mc{M}}(p)}. \]
This was shown for $w\in E_s(p)$ and $p\in \P$. It is also true for $p\in \mc{K}$ because of the density of $\P$ (Lemma \ref{lem - non-wandering set for the geodesic flow}). The same techniques show that the flow contracts $E_u$ in the past, so $\mc{K}$ is a hyperbolic set for the geodesic flow. 
\end{proof}

For the following rather technical lemma, we do not give a proof and instead cite the following result:

\begin{lemma}[{\cite[Lemma 3.10]{Delarue_Monclair_Sanders}}]\label{lem:3895021859205}
Consider sequences $p_k\in\M$ and $t_k\to +\infty$ and points $p,q\in \M$ such that $p_k\to p\in \M$ and $\varphi_{t_k}(p_k)\to q$. Let $(x,v),(y,w)\in \widetilde \M$ be lifts of $p,q$, respectively. Then $B_+(x,v)\in \Lambda_\Gamma$ and $B_-(y,w)\in \Lambda_\Gamma$. 
\end{lemma}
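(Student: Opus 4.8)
The plan is to run, at the level of $T^1\AdS_3$, the convergence-group argument already used in the proof of Lemma~\ref{lem - q<-1 up to moving by an element of the group}, after normalising the lifts so that their base points lie in $\mathcal{O}_+(\Lambda_\Gamma)$. It will be enough to prove the first assertion $B_+(x,v)\in\Lambda_\Gamma$: the second one then follows by time reversal, applying the first assertion to $\tilde p_k:=r(\varphi_{t_k}(p_k))\to r(q)$, which satisfies $\varphi_{t_k}(\tilde p_k)=r(p_k)\to r(p)$, where $r(x,v):=(x,-v)$ is the involution of $\widetilde{\M}$; indeed $r$ commutes with $\Gamma$, preserves $\widetilde{\M}$ because $\varphi_{-t}=r\circ\varphi_t\circ r$, satisfies $B_+\circ r=B_-$, and $r(x,v)$ is a lift of $r(p)$ with the same base point $x$.

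First I would use that $\widetilde{\M}$ is $\varphi_t$-invariant and that $B_\pm$ are constant along the flow: replacing $(p_k,p,q,t_k)$ successively by $(\varphi_a(p_k),\varphi_a(p),\varphi_a(q),t_k)$ and by $(p_k,p,\varphi_b(q),t_k+b)$ preserves all the hypotheses and changes neither $B_+$ of a lift of $p$ nor $B_-$ of a lift of $q$, so one may assume $x:=\pi(x,v)\in\mathcal{O}_+(\Lambda_\Gamma)$ and $y:=\pi(y,w)\in\mathcal{O}_+(\Lambda_\Gamma)$. Since $\widetilde{\M}\to\M$ is a covering (Proposition~\ref{prop - discontinuity domain unit spacelike tangent bundle}), lift $p_k\to p$ to $(x_k,v_k)\to(x,v)$ in $\widetilde{\M}$ and choose $\gamma_k\in\Gamma$ with $\gamma_k\varphi_{t_k}(x_k,v_k)\to(y,w)$ in $T^1\AdS_3\subset\R^4\times\R^4$. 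Put $x_k':=\pi(\varphi_{t_k}(x_k,v_k))=x_k\cosh t_k+v_k\sinh t_k$; dividing by $\cosh t_k$ gives $x_k+(\tanh t_k)v_k\to x+v\ne 0$, so $\|x_k'\|\to\infty$ and $[x_k']\to[x+v]=B_+(x,v)$ in $\mathbb{RP}^3$. In particular the $\gamma_k$ eventually leave every finite subset of $\Gamma$ (otherwise $\varphi_{t_k}(x_k,v_k)$ would converge in $T^1\AdS_3$, contradicting $\|x_k'\|\to\infty$), and $[\gamma_k x_k']=\gamma_k\cdot[x_k']\to[y]$.

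Now I would apply the convergence property of $\Gamma$ acting on $\mathbb{RP}^3$ (the one used for Lemma~\ref{lem - q<-1 up to moving by an element of the group}) to the sequence $\gamma_k^{-1}$: after passing to a subsequence there are $\lambda_+,\lambda_-\in\Lambda_\Gamma$ such that $\gamma_k^{-1}\to\lambda_-$ uniformly on compact subsets of $\{\ell\in\mathbb{RP}^3\mid q(\ell,\lambda_+)\ne 0\}$. Since $y\in\mathcal{O}_+(\Lambda_\Gamma)$ one has $q(y,\lambda_+)\ne 0$, so there is a compact neighbourhood $K$ of $[y]$ contained in $\{q(\cdot,\lambda_+)\ne 0\}$. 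For $k$ large $[\gamma_k x_k']\in K$, hence $[x_k']=\gamma_k^{-1}[\gamma_k x_k']\in\gamma_k^{-1}(K)$, and as $\gamma_k^{-1}(K)$ shrinks to the point $\lambda_-$ we obtain $[x_k']\to\lambda_-$. Comparing with $[x_k']\to B_+(x,v)$ yields $B_+(x,v)=\lambda_-\in\Lambda_\Gamma$, which is the first assertion; the second follows by the time-reversal reduction explained in the first paragraph.

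The only genuinely nonformal ingredient — hence the step I would expect to be the main difficulty in a fully self-contained proof — is the precise convergence statement for $\Gamma$ on $\mathbb{RP}^3$, with the excluded locus for $\gamma_k^{-1}$ identified as the $q$-orthogonal hyperplane $\{q(\cdot,\lambda_+)=0\}$ rather than an unspecified projective hyperplane. This is the north--south dynamics of loxodromic elements of $\SO(2,2)$ combined with the Anosov ($\mathbb{H}^{2,1}$-convex-cocompact) property of $\Gamma$; it is already the input of Lemma~\ref{lem - q<-1 up to moving by an element of the group}, and a direct derivation would use the explicit eigenvector structure of elements of $\Gamma$ recalled before Lemma~\ref{lem - fixed points on the boundary}. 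Everything else — the flow-invariance reductions, the lifting to the cover, the divergence of $\gamma_k$, and the passage between convergence in $T^1\AdS_3$ and in $\mathbb{RP}^3$ — is routine.
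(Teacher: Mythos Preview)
The paper does not prove this lemma: immediately before its statement the authors write ``For the following rather technical lemma, we refrain from giving a direct proof and instead cite it directly'', and the result is taken from \cite[Lemma~3.10]{Delarue_Monclair_Sanders}. There is therefore no in-paper proof to compare against.

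Your argument is correct. The reduction via the flip $r(x,v)=(x,-v)$ and the flow-normalisation so that $x,y\in\mathcal{O}_+(\Lambda_\Gamma)$ are clean, and the core step --- applying the convergence-group property of $\Gamma$ on $\mathbb{RP}^3$ to $\gamma_k^{-1}$, using that $[y]$ avoids the excluded hyperplane $\{q(\cdot,\lambda_+)=0\}$ because $y\in\mathcal{O}_+(\Lambda_\Gamma)$, and then reading off $[x_k']\to\lambda_-$ from uniform convergence on a compact neighbourhood of $[y]$ --- is exactly the mechanism the paper already invokes in the proof of Lemma~\ref{lem - q<-1 up to moving by an element of the group}. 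You correctly identify this as the only non-formal ingredient. A sketchier version of the same reasoning appears in the paper's own proof of Lemma~\ref{lem - non-wandering set for the geodesic flow} (the inclusion $\mathcal{NW}\subset\mathcal{K}$, which is the special case $p=q$), where the line ``$y_k\to\nu^+\in\partial\AdS_3\cap\overline{\mathcal{O}_+(\Lambda_\Gamma)}=\Lambda_\Gamma$'' is asserted without detail; your convergence-group computation is precisely what makes that step rigorous.
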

This lemma has the following direct consequence, which will be used later in Section \ref{sec:resonancesstandalone}:
\begin{cor}\label{cor:ass1} 
For every pair of compact sets $\mathcal C_1,\mathcal C_2\subset \mc{M}$  such that $\{t\geq 0\,|\,\varphi_t(\mathcal C_1)\cap \mathcal C_2\neq \emptyset\}$ is an unbounded set, we have $\mathcal C_1\cap {\mathcal K}_+\neq \emptyset$ and $\mathcal C_2\cap {\mathcal K}_-\neq \emptyset$, where ${\mathcal K}_\pm:=\gam\widetilde{\mc{K}}_\pm$.
\color{black}
\end{cor}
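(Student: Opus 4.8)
The plan is to deduce Corollary~\ref{cor:ass1} directly from Lemma~\ref{lem:3895021859205} by extracting suitable sequences and using compactness. Let $\mathcal C_1,\mathcal C_2\subset\mathcal M$ be compact with $\{t\geq 0\,|\,\varphi_t(\mathcal C_1)\cap\mathcal C_2\neq\emptyset\}$ unbounded. First I would pick $t_k\to+\infty$ in this set and points $p_k\in\mathcal C_1$ with $\varphi_{t_k}(p_k)\in\mathcal C_2$. By compactness of $\mathcal C_1$ and $\mathcal C_2$, after passing to a subsequence we may assume $p_k\to p\in\mathcal C_1$ and $\varphi_{t_k}(p_k)\to q\in\mathcal C_2$. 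Now Lemma~\ref{lem:3895021859205} applies verbatim: choosing lifts $(x,v),(y,w)\in\widetilde{\mathcal M}$ of $p,q$ respectively, it gives $B_+(x,v)\in\Lambda_\Gamma$ and $B_-(y,w)\in\Lambda_\Gamma$.

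The remaining point is to translate these boundary conditions into the statements $p\in\mathcal K_-$ and $q\in\mathcal K_+$. Recall $\widetilde{\mathcal K}_\pm=\{(x',v')\in T^1\AdS_3\,|\,B_\pm(x',v')\in\Lambda_\Gamma\}$ by \eqref{eq:Kpmdef}, and that by \eqref{eq:inclKpm} we have $\widetilde{\mathcal K}_\pm\subset T^1\mathcal O_+(\Lambda_\Gamma)\subset\widetilde{\mathcal M}$, so $\mathcal K_\pm=\gam\widetilde{\mathcal K}_\pm$ are well-defined subsets of $\mathcal M$. The condition $B_+(x,v)\in\Lambda_\Gamma$ means precisely $(x,v)\in\widetilde{\mathcal K}_+$; however, the Corollary claims $p\in\mathcal K_-$, not $p\in\mathcal K_+$. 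I would double-check the orientation convention here: the point $p$ lies at the \emph{past} end of the long orbit segments $\varphi_{[0,t_k]}(p_k)$, so it is its \emph{backward} orbit that should be relatively compact, matching the definition ${\mathcal K}_-=\{x\in\mathcal M\,|\,\overline{\{\varphi_{t}(x)\,|\,t\geq 0\}}\subset\mathcal M\text{ compact}\}$ from Theorem~\ref{introthm:resolvent} — wait, that is $\mathcal K_-$ using $\varphi_{\mp t}$ with the upper sign, i.e. forward orbit. The cleanest route is to use the identification $\mathcal K_\pm=\gam\widetilde{\mathcal K}_\pm$ stated in the Corollary itself and simply verify that the boundary condition $B_+(x,v)\in\Lambda_\Gamma$ from the lemma is exactly membership of $(x,v)$ in $\widetilde{\mathcal K}_+$ — but the conclusion wanted is $\mathcal C_1\cap\mathcal K_-\neq\emptyset$. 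So I expect the resolution is that the sign labels in the two lemmas are set up so that ``$B_+$ of the past lift lies in $\Lambda_\Gamma$'' is the defining condition for the past-end set, which the paper has chosen to call $\mathcal K_-$; I would just state this matching carefully, quoting \eqref{eq:Kpmdef} and the definition of $\mathcal K_\pm$ in the Corollary, and note that $p\in\mathcal C_1$ descends from $(x,v)\in\widetilde{\mathcal K}_+$ hence... Actually, to avoid sign confusion I would phrase it as follows.

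By Lemma~\ref{lem:3895021859205}, $(x,v)\in\widetilde{\mathcal K}_+$ and $(y,w)\in\widetilde{\mathcal K}_-$ in the notation of \eqref{eq:Kpmdef}. Since $p_k\to p$ with $p_k=\Gamma\cdot\text{(something)}$ and $p$ lifts to $(x,v)$, we get $p\in\gam\widetilde{\mathcal K}_+$, and likewise $q\in\gam\widetilde{\mathcal K}_-$. The Corollary's notation ${\mathcal K}_\pm:=\gam\widetilde{\mathcal K}_\pm$ then gives $p\in\mathcal C_1\cap\mathcal K_+$ and $q\in\mathcal C_2\cap\mathcal K_-$ — and comparing with the asserted statement I would simply swap the labels if the source's sign convention for $\mathcal K_\pm$ in this corollary is the opposite, since the only content is the pair of nonempty intersections. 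The \textbf{main obstacle}, such as it is, is purely bookkeeping: making sure the two independent sign conventions (the $\pm$ in $B_\pm$ / $\widetilde{\mathcal K}_\pm$ versus the $\mp$ in the definition of $\mathcal K_\pm$ inside Theorem~\ref{introthm:resolvent}) are reconciled so that the final sentence reads $\mathcal C_1\cap\mathcal K_-\neq\emptyset$ and $\mathcal C_2\cap\mathcal K_+\neq\emptyset$ as written. There is no analytic difficulty; the entire proof is a one-paragraph compactness extraction feeding into the cited lemma.
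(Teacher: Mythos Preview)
Your proposal is correct and takes exactly the same approach as the paper: extract convergent subsequences $p_k\to p\in\mathcal C_1$, $\varphi_{t_k}(p_k)\to q\in\mathcal C_2$ by compactness, then apply Lemma~\ref{lem:3895021859205}. The sign confusion you agonize over is not your fault---the paper's own proof concludes ``$p\in\mathcal K_+$ and $q\in\mathcal K_-$'' (matching the lemma and the definition $\widetilde{\mathcal K}_\pm=B_\pm^{-1}(\Lambda_\Gamma)$), which is the opposite of what the statement asserts; this is a harmless typo, since the concrete sets $\mathcal K_\pm=\gam\widetilde{\mathcal K}_\pm$ correspond to the abstract forward/backward trapped sets $\mathcal K_\mp$ of Section~\ref{sec:resonancesstandalone}, so Assumption~\ref{ass:A1} is verified either way.
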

\begin{proof}
If $\{t\geq 0\,|\,\varphi_t(\mathcal C_1)\cap \mathcal C_2\neq \emptyset\}$ is unbounded, there are sequences $t_k\to +\infty$, $p_k\in \mathcal C_1$ such that $q_k:=\varphi_{t_k}(p_k)\in \mathcal C_2$ for all $k$. Up to a subsequence, $p_k\to p\in \mathcal C_1$ and $q_k\to q\in \mathcal C_2$. Then $p\in  {\mathcal K}_+$ and $q\in  {\mathcal K}_-$ by Lemma \ref{lem:3895021859205}.
\end{proof}

Let $\Gamma\subset G\times G$ be a quasi-Fuchsian group, and denote by $X$ the generator of the geodesic flow on $\mc{M}=\Gamma\backslash \widetilde{\mc{M}}$. Recall that we use the notation $E_s,E_u$ and $E_s^{R},E_s^{L},E_u^{R},E_u^{L}$ also for the bundles on $\M$ induced by the bundles denoted by the same symbols on $\widetilde\M$.  In the next three lemmas, we estbalish the existence of smooth sections of the line subbundles $E_s^{R/L}$ and $E_u^{R/L}$ and that they satisfy interesting properties with respect to non-degenerate volume forms $\omega_s$ and $\omega_u$ on the stable and unstable leaves. This will play an important role in the proof of Theorem \ref{theo:Poincare_series} where we relate the Poincar\'e series to the resolvent of the geodesic vector field acting on functions (as opposed to $2$-forms). \color{black}

\begin{lemma}\label{splitting_stable/instable}
There exist smooth vector fields $U^{R}_+,U^{L}_+,U_-^{R},U_-^L$ on $\mc{M}$, which are sections of $E_s^{R},E_s^{L},E_u^{R},E_u^{L}$, respectively, and positive functions $\lambda_+^{R},\lambda_+^{L},\lambda_-^{R},\lambda_-^{L}\in C^\infty(\mc{M})$, such that on $\mc K$ we have
\begin{align*}
& \left[ X,U_+^{R}\right]=\lambda_+^{R} U^{R}_{+},\,\,\, \qquad \,
\left[ X,U^{L}_{+}\right]=\lambda^{L}_+ U^{L}_{+},\\
& \left[ X,U^{R}_{-}\right]=-\lambda^{R}_- U^{R}_{-},\qquad 
\left[X,U^{L}_{-}\right]=-\lambda^{L}_- U^{L}_{-}.
\end{align*}
\end{lemma}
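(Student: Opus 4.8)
The statement is essentially a ``compactification'' of Lemma \ref{Upm^RL}, which already provides (on the universal cover) smooth sections $U_\pm^{R/L}$ of $E_s^{R}, E_s^{L}, E_u^{R}, E_u^{L}$ on $T^1\AdS_3$ with the \emph{exact} commutator relations $[X,U_+^{L/R}]=U_+^{L/R}$, $[X,U_-^{L/R}]=-U_-^{L/R}$. The difficulty is that those sections from Lemma \ref{Upm^RL} are built using the Euclidean norm on $\R^4$ and are therefore \emph{not} invariant under $\SO(2,2)_\circ$, so they do not descend to the quotient $\M=\gam\widetilde\M$. The remedy is to average / correct them to make them $\Gamma$-invariant, at the cost of replacing the constant $1$ by a positive function.

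First I would recall that $E_s^{R},E_s^{L},E_u^{R},E_u^{L}$ are $\SO(2,2)_\circ$-invariant line bundles on $T^1\AdS_3$ (this is \eqref{eq:decomposition into 5 line bundles}), hence in particular $\Gamma$-invariant, so they descend to line bundles on $\M$; call them by the same names. Pick any smooth nowhere-vanishing section $V_+^{L}$ of $E_s^{L}$ over $\M$ (these line bundles are trivial — e.g.\ one can use the pushforward of the Euclidean-normalised section $U_+^{L}$ of Lemma \ref{Upm^RL} locally and patch with a partition of unity, or simply invoke orientability of $E_s^{L}$ established in Section \ref{splittings_T}). Since $V_+^{L}$ is a section of the $d\varphi_t$-invariant line bundle $E_s^{L}$, there is a smooth function $a_+^{L}\in C^\infty(\M)$ with $[X,V_+^{L}]=a_+^{L}\,V_+^{L}$; indeed this is exactly the remark recorded after \eqref{eq:decomposition into 5 line bundles}. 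The claim will then follow once I arrange for the structure function to be \emph{positive}. The key computation is how $a$ transforms under rescaling: if $U_+^{L}=e^{f}V_+^{L}$ for $f\in C^\infty(\M)$, then
\[
[X,U_+^{L}]=[X,e^{f}V_+^{L}]=(Xf)\,e^{f}V_+^{L}+e^{f}[X,V_+^{L}]=\bigl(Xf+a_+^{L}\bigr)U_+^{L},
\]
so I need $Xf+a_+^{L}>0$ on $\mc K$. The natural candidate is $f(p):=-\log\Phi_+(\pi_{\M}p)$... more precisely, I want to exploit that on the cover the \emph{Euclidean-normalised} section of Lemma \ref{Upm^RL} has structure function identically $1$, so I would rather proceed as follows: let $U_+^{L,\mathrm{Euc}}$ be the section of $E_s^L$ over $\widetilde\M$ from Lemma \ref{Upm^RL}, and let $h_\gamma:=\gamma^* U_+^{L,\mathrm{Euc}}/U_+^{L,\mathrm{Euc}}\in C^\infty(\widetilde\M;\R_{>0})$ (positive because $\gamma$ preserves the orientation of $E_s^L$ fixed in Section \ref{splittings_T}). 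This is a multiplicative cocycle over $\Gamma$. A standard averaging over the compact non-wandering set — or rather, use that $\widetilde{\mc K}\subset T^1\mathcal O_+(\Lambda_\Gamma)$ and the action of $\Gamma$ there is cocompact (Lemma \ref{lem - non-wandering set for the geodesic flow}) — lets one find $\rho\in C^\infty(\widetilde\M;\R_{>0})$ with $\gamma^*\rho=h_\gamma^{-1}\rho$ near $\widetilde{\mc K}$, so that $U_+^{L}:=\rho\, U_+^{L,\mathrm{Euc}}$ is $\Gamma$-invariant near $\widetilde{\mc K}$ and extends to a global nonvanishing section of $E_s^L$ on $\M$. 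Then, writing $\lambda_+^{L}:=X(\log\rho)+1$ on $\widetilde{\mc K}$, one has $[X,U_+^{L}]=\lambda_+^{L}U_+^{L}$, and the point is that $\lambda_+^L>0$ on $\mc K$: this follows because the \emph{time-integral} $\int_0^T\lambda_+^L(\varphi_t p)\,dt$ equals $T+\log\rho(\varphi_T p)-\log\rho(p)$, which over a periodic orbit of period $T$ equals $T$ (as $\rho$ is $\Gamma$-invariant on $\widetilde{\mc K}$ and $\varphi_T p=\gamma\cdot p$), forcing the average of $\lambda_+^L$ along every periodic orbit to be $1>0$; together with hyperbolicity of $\mc K$ (Proposition \ref{prop - spacelike geodesic flow axiom A}) and the Anosov closing lemma / Livšic-type positivity argument, one upgrades positivity of orbit-averages to pointwise positivity after possibly a further reparametrisation $\rho\mapsto\rho\cdot e^{b}$ with $b$ chosen so that $Xb$ corrects the sign — this is where I expect to have to be slightly careful.

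The same argument, verbatim, handles $E_s^{R}$ (with structure constant $+1$ on the cover), and $E_u^{R}, E_u^{L}$ with structure constant $-1$, producing $U_-^{R},U_-^{L}$ with $[X,U_-^{R/L}]=-\lambda_-^{R/L}U_-^{R/L}$, $\lambda_-^{R/L}>0$ on $\mc K$. \textbf{Main obstacle.} The genuine subtlety is precisely the last point: the transport/averaging procedure naturally only controls the \emph{integral} of the structure function along orbits (equivalently, its cohomology class as a Livšic cocycle), not its pointwise sign. On the non-wandering set this is enough by a coboundary adjustment (adding $Xb$ changes nothing on periodic orbits and can be used to shift the function into the positive range by compactness of $\mc K$ and density of periodic orbits), but one must check that such a $b\in C^\infty(\mc K)$ — and then a smooth extension to $\M$ — can be chosen without destroying the $\Gamma$-invariance/descent. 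Alternatively, and more cleanly, one can quote the general fact for line bundles invariant under an Axiom A flow that a nowhere-vanishing section can be rescaled so that its structure function along the flow has a prescribed sign on the non-wandering set whenever its periodic-orbit averages have that sign; this is a soft consequence of the Livšic theorem for hyperbolic flows, and since the relevant periodic-orbit averages are all exactly $\pm1$, positivity (resp.\ negativity) on $\mc K$ is immediate. I would phrase the proof around this, reducing the whole statement to Lemma \ref{Upm^RL} plus $\SO(2,2)_\circ$-invariance of the five line bundles plus the Axiom A property of $\varphi_t$.
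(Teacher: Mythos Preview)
Your approach differs substantially from the paper's, and the positivity step contains both a computational error and an unjustified appeal.

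\textbf{The paper's argument} is much more direct and avoids the cover entirely. Since the line bundles $E_s^{R/L},E_u^{R/L}$ are $\SO(2,2)_\circ$-invariant, they descend to $\mc M$. Choose any smooth Riemannian metric $g_{\mc M}$ on $\mc M$ and let $U_\pm^{R/L}(z)$ be the unique future-directed unit vector in the corresponding line at $z$; flow-invariance of the line bundles then gives smooth $\lambda_\pm^{R/L}$ with the stated commutator relations. For positivity on $\mc K$ one takes $g_{\mc M}$ to be an \emph{adapted} (Lyapunov) metric, i.e.\ one for which the hyperbolicity constant $C$ in \eqref{eqn adapted metric stable}--\eqref{eqn adapted metric unstable} equals $1$; then $\lambda_\pm^{R/L}>0$ on $\mc K$ immediately from the monotone contraction/expansion. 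Since adapted metrics are in general only H\"older, one finishes by smoothing and using compactness of $\mc K$ to retain strict positivity. No cocycle manipulation, no Liv\v{s}ic theory.

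\textbf{Your approach} has two issues. First, the claim that the periodic-orbit integral of $\lambda_+^{L}$ equals the period $T$ is false. Your function $\rho$ is \emph{not} $\Gamma$-invariant (you set up $\gamma^*\rho=h_\gamma^{-1}\rho$ precisely because it is not), so $\log\rho(\gamma\tilde p)-\log\rho(\tilde p)=-\log h_\gamma(\tilde p)\neq 0$. In fact, from the proof of Proposition~\ref{prop - spacelike geodesic flow axiom A}, the return map $d\varphi_T$ on $E_s^{L}$ and $E_s^{R}$ has eigenvalues $e^{-\lambda_1(\gamma)\pm\lambda_2(\gamma)}$, so $\int_0^T\lambda_+^{L/R}=\lambda_1(\gamma)\mp\lambda_2(\gamma)$, which is positive by Lemma~\ref{lem - lambda 1 vs lambda 2} but not equal to $T=\lambda_1(\gamma)$.

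Second, and more seriously, even granting uniformly positive periodic-orbit integrals, passing to a pointwise positive \emph{smooth} representative in the Liv\v{s}ic cohomology class is not a ``soft consequence of the Liv\v{s}ic theorem''. Liv\v{s}ic compares two cocycles with \emph{equal} periodic data; here the periodic data vary with $\gamma$, so there is no constant to compare to. What one actually needs is precisely the adapted-metric construction (time-averaging the norm along the flow to force immediate contraction), which is what the paper invokes. Your argument can be repaired along these lines, but at that point it collapses into the paper's proof.
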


\begin{proof} 
Consider a smooth Riemannian metric $g_{\mc{M}}$  on $\mc{M}$. By Proposition \ref{prop - spacelike geodesic flow axiom A}  there are constants $\nu,C>0$ \color{black} with
\begin{align}
&\forall z \in \mc K, \forall w_s\in E_s(z), \forall t\geq 0, \quad \left\Vert \d\varphi_t(z)w_s\right\Vert_{g_{\mc{M}}(\varphi_t(z))} \leq  Ce^{-\nu t} \Vert w_s\Vert_{g_{\mc{M}}(z)} , \label{eqn adapted metric stable}\\
&\forall z\in \mc K, \forall w_u\in E_u(z), \forall t\leq 0, \quad \left\Vert \d\varphi_t(z) w_u\right\Vert_{g_{\mc{M}}(\varphi_t(z))} \leq  Ce^{-\nu |t|} \Vert w_u\Vert_{g_{\mc{M}}(z)}. \label{eqn adapted metric unstable}
\end{align}
For $z\in \Gamma\backslash \widetilde{\mc{M}}$ let $U^{R}_{+}(z)\in E_s^R(z)$ (resp. $U^{L}_{+}(z)\in E_s^{L}(z)$, $U^R_-(z)\in E_u^R(z)$ and $U^L_-(z)\in E_u^L(z)$) be the unique future directed vector of norm $1$ for $g_{\mc{M}}$. The invariance of the bundles $E_s^{R},E_s^{L},E_u^{R},E_u^{L}$ by the geodesic flow implies the existence of the functions $\lambda_+^{R}$, $\lambda_+^{L}$, $\lambda_-^{R}$, $\lambda_-^{L}\in C^\infty(\mc{M})$, where the smoothness is a consequence of the smoothness of $E_s^{R/L}(z)$ and $E_u^{R/L}(z)$ with respect to $z$. To see that these functions can be assumed to be positive, we note that it \color{black} is possible to choose the metric $g_{\mc{M}}$ to be adapted, i.e.\ so that the constant $C$ in \eqref{eqn adapted metric stable} and \eqref{eqn adapted metric unstable} is equal to $1$, but the drawback is that $g_{\mc{M}}$ would only be H\"older continuous. In that case one has $\la_{\pm}^{R/L}>0$ on $\mc K$ but this function is not smooth, so it suffices to take a smooth approximation of an adapted metric and the positivity $\la_{\pm}^{R/L}>0$ remains true while the regularity is $C^\infty$.
\end{proof}

The existence of non-vanishing $2$-forms in $\Lambda^2E_s^*$ and $\Lambda^2E_u^*$ proved in Lemma \ref{formes_vol_s/u} implies:  
\begin{lemma}\label{formes_vol_s/u_Gamma}
There exist non-vanishing $2$-forms $\omega_s\in\Lambda^2
E_s^*$ and $\omega_u\in\Lambda^2E_u^*$  such that $\mathcal L_{X}\omega_s=2\omega_s$ and $\mathcal L_{X}\omega_u=-2\omega_u$, where $E_{s/u}^\ast$ are the dual stable/unstable bundles of $\M$ defined explicitly in the proof of Proposition \ref{prop - spacelike geodesic flow axiom A}\color{black}. 
\end{lemma}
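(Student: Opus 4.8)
The plan is to obtain the desired forms on $\mathcal M$ simply by descending the forms constructed in Lemma \ref{formes_vol_s/u} from $T^1\AdS_3$ to the quotient $\mathcal M=\gam\widetilde{\mc M}$. Recall first that $\widetilde{\mc M}\subset T^1\AdS_3$ is an open submanifold which is invariant under both the geodesic flow $\varphi_t$ and the $\Gamma$-action, and that the rank-two subbundles $E_s,E_u\subset T(T^1\AdS_3)$ of Section \ref{splittings_T} are invariant under ${\rm SO}(2,2)_\circ$ (hence in particular under $\Gamma\subset {\rm SO}(2,2)_\circ$) and under $d\varphi_t$. Consequently, their restrictions to $\widetilde{\mc M}$ descend to subbundles of $T\mc M$, which are exactly the bundles denoted $E_s,E_u$ on $\mc M$, and the generator $X$ descends to the generator (still written $X$) of the complete flow $\varphi_t$ on $\mc M$.

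Next, I would take the two non-vanishing, ${\rm SO}(2,2)_\circ$-invariant sections $\omega_s\in\Lambda^2E_s^*$ and $\omega_u\in\Lambda^2E_u^*$ provided by Lemma \ref{formes_vol_s/u}. In particular $h^*\omega_s=\omega_s$ and $h^*\omega_u=\omega_u$ for every $h\in\Gamma$. Restricting these forms to $\widetilde{\mc M}$ and using that $\Gamma$ acts freely and properly discontinuously on $\widetilde{\mc M}$ (Proposition \ref{prop - discontinuity domain unit spacelike tangent bundle}), their $\Gamma$-invariance means precisely that $\omega_s|_{\widetilde{\mc M}}$ and $\omega_u|_{\widetilde{\mc M}}$ are pullbacks under the covering projection $p\colon\widetilde{\mc M}\to\mc M$ of (unique) non-vanishing $2$-forms on $\mc M$, which I keep denoting $\omega_s\in\Lambda^2E_s^*$ and $\omega_u\in\Lambda^2E_u^*$. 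In particular $\Lambda^2E_s^*$ and $\Lambda^2E_u^*$ are trivial line bundles over $\mc M$.

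Finally, since $p$ intertwines the two flows and $p^*\omega_s=\omega_s|_{\widetilde{\mc M}}$, $p^*\omega_u=\omega_u|_{\widetilde{\mc M}}$, the identities $\mathcal L_X\omega_s=2\omega_s$ and $\mathcal L_X\omega_u=-2\omega_u$ established on $T^1\AdS_3$ in Lemma \ref{formes_vol_s/u} pass to the quotient: indeed $p^*(\mathcal L_X\omega_s)=\mathcal L_X(p^*\omega_s)=\mathcal L_X(\omega_s|_{\widetilde{\mc M}})=2\,\omega_s|_{\widetilde{\mc M}}=p^*(2\omega_s)$, and $p^*$ is injective. There is no substantive obstacle here; the only thing to check is the mutual compatibility of restriction to $\widetilde{\mc M}$, descent along $p$, and the flow, all of which is immediate from the $\Gamma$-equivariance and $\varphi_t$-invariance recalled above.
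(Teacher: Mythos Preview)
Your proof is correct and is exactly the same argument as the paper's, just spelled out in more detail: the paper simply remarks that any ${\rm SO}(2,2)_\circ$-invariant differential form on $T^1\AdS_3$ descends to $\Gamma\backslash\widetilde{\mc M}$, so the result follows from Lemma~\ref{formes_vol_s/u}.
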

\begin{proof}
Any ${\rm SO}(2,2)_\circ$-invariant differential form on $T^1\AdS_3$ descends to a differential form on $\Gamma\backslash \widetilde{\mc{M}}$, so the result follows from Lemma \ref{formes_vol_s/u}.
\end{proof}
\begin{lemma}\label{lem:URLpm}
By relaxing the requirement of positivity of the smooth functions $\la_\pm^{R/L}$ from Lemma \ref{splitting_stable/instable} to positivity up to a coboundary, i.e.\ $\la_\pm^{R/L}-X(u_{\pm}^{R/L})>0$ for some smooth functions $u_\pm^{R/L}$, we can choose the vector fields $U_\pm^{R/L}$ on $\M$ and functions $\la_\pm^{R/L}\in \Cinft(\M)$ such that, in addition to the properties 
\begin{align*}
& \left[ X,U_+^{R}\right]=\lambda_+^{R} U^{R}_{+},\,\,\, \qquad \,
\left[ X,U^{L}_{+}\right]=\lambda^{L}_+ U^{L}_{+},\\
& \left[ X,U^{R}_{-}\right]=-\lambda^{R}_- U^{R}_{-},\qquad 
\left[X,U^{L}_{-}\right]=-\lambda^{L}_- U^{L}_{-}
\end{align*}\color{black}
mentioned in Lemma \ref{splitting_stable/instable}, we have
\begin{equation}\label{sumlapm} 
\la_+^R+\la_+^L=2, \quad  \la_-^R+\la_-^L=2,
\end{equation}
\begin{equation}\label{sumlapm2} 
\la_{+}^L=\la_-^R, \quad \la_{-}^L=\la_+^R,
\end{equation}
and \begin{equation}\label{wuws_on_basis}
\omega_u(U_+^R,U_+^L)=\omega_s(U_-^R,U_-^L)=\frac{1}{4}, \quad d\alpha(U^{L}_+,U_-^R)=d\alpha(U_+^R,U_-^L)=1.
 \end{equation}
\end{lemma}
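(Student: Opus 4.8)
The idea is to use the explicit $\SL(2,\R)\times\SL(2,\R)$-model from Subsection~\ref{splittings_T} as a guide and then transfer it to $\M$. Recall from \eqref{UpmviaG} that on $T^1\AdS_3$ the invariant line fields $E_s^{L},E_s^{R},E_u^{L},E_u^{R}$ are spanned by $d\Psi_{\rm Id}(U_+^{(1)},0)$, $d\Psi_{\rm Id}(0,-U_-^{(2)})$, $d\Psi_{\rm Id}(0,-U_+^{(2)})$, $d\Psi_{\rm Id}(-U_-^{(1)},0)$, and that for these the brackets with $X=d\Psi_{\rm Id}(X^{(1)},-X^{(2)})$ give exactly $[X,\cdot]=\pm(\cdot)$, i.e. the ``model'' functions are $\lambda_+^L=\lambda_+^R=\lambda_-^R=\lambda_-^L=1$, which trivially satisfy \eqref{sumlapm} and \eqref{sumlapm2}, while \eqref{dalphaU} and the computation of $\omega_u,\omega_s$ right after it give \eqref{wuws_on_basis}. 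These left-invariant vector fields on $T^1\AdS_3$ are $\SO(2,2)_\circ$-equivariant only up to scaling, but the \emph{span} of each is invariant, so the line fields descend to $\M$.

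First I would fix arbitrary smooth sections $\widehat U_\pm^{R/L}$ of $E_s^{R/L},E_u^{R/L}$ on $\M$ (e.g. the ones from Lemma~\ref{splitting_stable/instable}) and write the desired vector fields as $U_\pm^{R/L}=e^{f_\pm^{R/L}}\widehat U_\pm^{R/L}$ for unknown smooth functions $f_\pm^{R/L}\in C^\infty(\M)$; then $[X,U_\pm^{R/L}]=(\pm\widehat\lambda_\pm^{R/L}+Xf_\pm^{R/L})U_\pm^{R/L}$, so replacing a section by a positive multiple adds a coboundary $Xf$ to the cocycle $\widehat\lambda_\pm^{R/L}$. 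This is precisely the freedom the statement allows. So the four identities \eqref{sumlapm}--\eqref{sumlapm2} and \eqref{wuws_on_basis} have to be arranged by choosing the four functions $f_\pm^{R/L}$ appropriately. The two normalization equations in \eqref{wuws_on_basis}, namely $\omega_u(U_+^R,U_+^L)=\tfrac14$, $d\alpha(U_+^L,U_-^R)=1$, fix $f_+^R+f_+^L$ and $f_+^L+f_-^R$ (using the transformation rules $\mathcal L_X\omega_u=-2\omega_u$, $\mathcal L_X\omega_s=2\omega_s$, $\mathcal L_X\alpha=0$ from Lemmas~\ref{formes_vol_s/u_Gamma} and the contact property); similarly $\omega_s(U_-^R,U_-^L)=\tfrac14$ and $d\alpha(U_+^R,U_-^L)=1$ fix $f_-^R+f_-^L$ and $f_+^R+f_-^L$. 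One then checks that the compatibility between these prescribed pairwise sums, combined with applying $X$ to them, forces exactly the relations $\lambda_+^R+\lambda_+^L=\mathcal{L}_X(\text{const from }\omega_u)=2$, etc.: concretely, from $\omega_u(U_+^R,U_+^L)\equiv\tfrac14$ and $\mathcal L_X\omega_u=-2\omega_u$ one gets $\lambda_+^R+\lambda_+^L=2$, and from $d\alpha(U_+^L,U_-^R)\equiv1$ with $\mathcal L_X d\alpha=0$ one gets $\lambda_+^L=\lambda_-^R$; the other two relations are obtained the same way. So \eqref{sumlapm}--\eqref{sumlapm2} are automatic consequences of \eqref{wuws_on_basis}, and the only thing to verify is that one can simultaneously solve the four linear equations $f_+^R+f_+^L=a$, $f_+^L+f_-^R=b$, $f_-^R+f_-^L=c$, $f_+^R+f_-^L=d$ for given smooth right-hand sides determined by the normalizations — which is possible precisely when $a-b+c-d=0$, and this last identity holds because of \eqref{dalpha^2omegas} (i.e. $d\alpha\wedge d\alpha=32\,\omega_u\wedge\omega_s$ pairs the two normalizations consistently).

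The remaining point is positivity up to a coboundary of the resulting $\lambda_\pm^{R/L}$. Since the new $\lambda_\pm^{R/L}$ differ from the old (positive on $\mathcal K$) ones of Lemma~\ref{splitting_stable/instable} by the coboundaries $Xf_\pm^{R/L}$, they are positive modulo coboundaries by construction; to get honest smooth functions that are literally positive one can, as in the proof of Lemma~\ref{splitting_stable/instable}, note that on the compact hyperbolic set $\mathcal K$ averaging along the flow (the Livšic-type argument) shows the cohomology class has positive average over every periodic orbit, hence after adding one more coboundary $Xu_\pm^{R/L}$ it becomes pointwise positive on $\mathcal K$; but the statement only asks for positivity up to a coboundary, so this last step is already built in. I expect the main obstacle to be bookkeeping: keeping straight which sign and which copy of $\SL(2,\R)$ each bundle corresponds to, and verifying the closing relation $a-b+c-d=0$ cleanly from \eqref{dalphaU}, \eqref{dalpha^2}, \eqref{dalpha^2omegas} rather than by brute-force computation — everything else is linear algebra over $C^\infty(\M)$.
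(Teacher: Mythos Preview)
Your plan is correct and follows essentially the same approach as the paper: rescale the sections from Lemma~\ref{splitting_stable/instable} by positive functions, impose the normalizations \eqref{wuws_on_basis}, and deduce \eqref{sumlapm}--\eqref{sumlapm2} by applying $X$ to the normalizations using $\mathcal L_X\omega_u=-2\omega_u$, $\mathcal L_X\omega_s=2\omega_s$, $\mathcal L_X d\alpha=0$.

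The only organizational difference is how the rank-$3$ linear system is handled. The paper first solves three of the four normalizations (namely $d\alpha(U_+^L,U_-^R)=d\alpha(U_+^R,U_-^L)=1$ and $\omega_u(U_+^R,U_+^L)=\tfrac14$), then shows separately that $\omega_s(U_-^R,U_-^L)$ is $X$-invariant, hence constant on $\mathcal K$ by topological transitivity of $\varphi_t|_{\mathcal K}$, and finally identifies that constant as $\tfrac14$ via \eqref{dalpha^2omegas} and \eqref{dalpha^2}. Your route---checking the compatibility condition $a-b+c-d=0$ directly from the pointwise identity $d\alpha\wedge d\alpha=32\,\omega_u\wedge\omega_s$ (which descends to $\M$) and then solving all four equations at once---is slightly more direct, since it bypasses the appeal to transitivity. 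Both arguments ultimately rest on \eqref{dalpha^2omegas}; yours just uses it earlier.
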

\begin{proof} 
We start with vector fields $U_\pm^{R/L}$ as in Lemma \ref{splitting_stable/instable}  which we multiply by smooth positive functions $f_\pm^{R/L}$: this defines $\widetilde{U}_\pm^{R/L}=f_\pm^{R/L}U_\pm^{R/L}$.
We get 
\begin{equation}\label{4_equations}
\begin{gathered}
d\alpha(\widetilde{U}^{L}_+,\widetilde{U}_-^R)=f_+^Rf_-^R d\alpha(U^{L}_+,U_-^R), \quad d\alpha(\widetilde{U}_+^R,\widetilde{U}_-^L)=f_+^Lf_-^L d\alpha(U_+^R,U_-^L),\\
\omega_u(\widetilde{U}_+^R,\widetilde{U}_+^L)=f_+^Rf_+^L \omega_u(U_+^R,U_+^L),\quad \omega_s(\widetilde{U}_-^R,\widetilde{U}_-^L)=f_-^Rf_-^L \omega_s(U_-^R,U_-^L).
\end{gathered}
\end{equation}
Using \eqref{dalphaU} and since $U_\pm^{R/L}$ are given in any small open set as positive functions times 
the vector fields in Lemma \ref{Upm^RL}, we know that $d\alpha(U^{L}_+,U_-^R)$, $d\alpha(U_+^R,U_-^L)$,
$\omega_s(U_-^R,U_-^L)$ and $ \omega_u(U_+^R,U_+^L)$ are smooth positive functions on $\mc{M}$. Taking the log of the 4 equations in \eqref{4_equations}, we obtain a linear system in the variables $\log f_\pm^{R/L}$ of rank 3 where the first 3 equations are linearly independent. This means that we can choose $f_\pm^{R/L}$ in a such a way that 
\[d\alpha(\widetilde{U}^{L}_+,\widetilde{U}_-^R)=d\alpha(\widetilde{U}_+^R,\widetilde{U}_-^L)=1, \quad \omega_u(\widetilde{U}_+^R,\widetilde{U}_+^L)=1/4.\]
 This implies, using $\mc{L}_Xd\alpha=0$, that 
\[ \begin{split}
0=X(d\alpha(\widetilde{U}^{L}_+,\widetilde{U}_-^R))=& (\mc{L}_Xd\alpha)(\widetilde{U}_+^R,\widetilde{U}_+^L)+d\alpha(\mc{L}_X\widetilde{U}_+^L,\widetilde{U}_-^R)+d\alpha(\widetilde{U}_+^L,\mc{L}_X\widetilde{U}_-^R)\\
=&\widetilde{\la}_{+}^{L}-\widetilde{\la}_{-}^{R}
\end{split}\]
and similarly $\widetilde{\la}_{+}^{R}-\widetilde{\la}_{-}^{L}=0$.
In the same way, 
this implies that (using Lemma \ref{formes_vol_s/u_Gamma})
\[ \begin{split}
0=X(\omega_u(\widetilde{U}_+^R,\widetilde{U}_+^L))&= (\mc{L}_X\omega_u)(\widetilde{U}_+^R,\widetilde{U}_+^L)+\omega_u(\mc{L}_X\widetilde{U}_+^R,\widetilde{U}_+^L)+\omega_u(\widetilde{U}_+^R,\mc{L}_X\widetilde{U}_+^L)\\
&= -2\omega_u(\widetilde{U}_+^R,\widetilde{U}_+^L)+(\widetilde{\la}_{+}^{R}+\widetilde{\la}_{+}^{L})\omega_u(\widetilde{U}_+^R,\widetilde{U}_+^L)
\end{split}\]
which gives that $\widetilde{\la}_+^R+\widetilde{\la}_+^L=2$, and therefore  $\widetilde{\la}_-^R+\widetilde{\la}_-^L=2$.  
Now, we also have 
\[ \begin{split}
X(\omega_s(\widetilde{U}_-^R,\widetilde{U}_-^L))&= (\mc{L}_X\omega_s)(\widetilde{U}_-^R,\widetilde{U}_-^L)+\omega_s(\mc{L}_X\widetilde{U}_-^R,\widetilde{U}_-^L)+\omega_s(\widetilde{U}_-^R,\mc{L}_X\widetilde{U}_-^L)\\
&= 2\omega_s(\widetilde{U}_-^R,\widetilde{U}_-^L)+(\widetilde{\la}_{-}^{R}+\widetilde{\la}_{-}^{L})\omega_s(\widetilde{U}_-^R,\widetilde{U}_-^L)=0
\end{split}\]
which shows that on $\mathcal{K}$ we have $\omega_s(\widetilde{U}_-^R,\widetilde{U}_-^L)=c$ for some $c\in \R$ since the $\varphi_t$-action on $\mathcal{K}$ is topologically transitive \cite[Thm.~2]{Delarue_Monclair_Sanders}. Since the commutator identities from Lemma \ref{splitting_stable/instable} only have to hold on $\mathcal K$, we are free to arrange that in fact $\omega_s(\widetilde{U}_-^R,\widetilde{U}_-^L)=c$ on all of $\M$. By using \eqref{dalpha^2omegas} and \eqref{dalpha^2}, it follows that $c=1/4$.
Notice finally that on $\mathcal K$ we have by Lemma \ref{splitting_stable/instable}
\[ \begin{split}
\pm \widetilde{\la}_\pm^{R/L}\widetilde{U}_\pm^{R/L}&=[X,\widetilde{U}_\pm^{R/L}]=f^{R/L}_\pm [X,U_\pm^{R/L}]+X(f^{R/L}_\pm)U_\pm^{R/L}\\
& =\pm \la_{\pm}^{R/L}\widetilde{U}_\pm^{R/L}+X(\log f^{R/L}_\pm)\widetilde{U}_\pm^{R/L}
\end{split}\]
thus $\widetilde{\la}_\pm^{R/L}=\la^{R/L}_{\pm}+X(\log f^{R/L}_\pm)$, and again we can make this hold on all of $\M$ since outside of $\mathcal K$ there are no constraints. This proves the positivity of $\widetilde{\la}_\pm^{R/L}$ up to coboundary.
\end{proof}
 We remark that the vector fields $U_{\pm}^{R/L}$ are not uniquely defined, but \color{black}
we now fix them and the functions $\lambda_\pm^{R/L}$ as in Lemma \ref{lem:URLpm}. For a fixed convex neighborhood $\mc{U}$ of $\mc{K}$ and a fixed Riemannian metric $g_{\mc{U}}$ on $\mc{U}$, for all $\eps>0$ there is $C>0$ such that for all $z\in \mc{U}$ and $t\geq 0$ one has
\begin{equation}\label{expansionU} 
\begin{gathered}
\varphi_t(z)\in \mc{U}\implies \|\d\varphi_t(z) U_+^{R/L}(z)\|_{g_{\mc{U}}}
\leq Ce^{-(\nu_+^{R/L}(z)-\eps)t}\|U_+^{R/L}(\varphi_t(z))\|_{g_{\mc{U}}}, \\
\varphi_{-t}(z)\in \mc{U}\implies \|\d\varphi_{-t}(z) U_-^{R/L}(z)\|_{g_{\mc{U}}}\leq Ce^{-(\nu_-^{R/L}(z)-\eps)|t|}\|U_-^{R/L}(\varphi_{-t}(z))\|_{g_{\mc{U}}}
\end{gathered}
\end{equation}
with 
\[ \nu_+^{R/L}(z)=\liminf_{t\to \infty}\frac{1}{t}\int_0^t \la_+^{R/L}(\varphi_s(z))ds>0, \quad  \nu_-^{R/L}(z)=\liminf_{t\to \infty}\frac{1}{t}\int_{-t}^0 \la_-^{R/L}(\varphi_s(z))ds>0,\]
which satisfy $\nu_{+}^L=\nu_-^R$, $\nu_{-}^L=\nu_+^R$ and $\nu_+^R+\nu_+^L=\nu_{-}^R+\nu_-^L=2$ by \eqref{sumlapm} and \eqref{sumlapm2}. Here the positivity of $\nu_\pm^{R/L}$ holds thanks to the positivity of $\la_\pm^{R/L}$ up to coboundary.

\subsection{Convex domains with smooth boundary} 
Dynamical notions of convexity will play an important role in the analysis of the spacelike geodesic flow.

\begin{definition}\label{def dynamical convexity} ~\begin{itemize}
\item A subset $\mc C\subset \mc M$ is called \emph{dynamically convex} if for any $z\in \mc C$,  $\{t\in\R\,|\, \varphi_t(z)\in\mc C\}$ is an interval.
\item An open set $\mc U\subset\mc M$  whose closure $\overline{\mc U}$ is a smooth manifold with boundary is said to have \emph{dynamically strictly convex boundary} if, locally, there is a boundary defining function $\rho:\overline{\mc U}\to\R$  (i.e., $\rho\in \Cinft(\overline{\mathcal U})$ with $\rho>0$ on $\mathcal U$, $\rho=0$ on $\partial \mathcal U$, and $d\rho\neq 0$ on $\partial \mathcal U$) such that $X^2 \rho(z) <0$ for all $z\in \partial \mathcal U$ such that $d\rho(z)(X(z))=0$.
\end{itemize}
\end{definition}

Recall that the manifold $\mc M=\Gamma\backslash \widetilde{\mc{M}}$ contains $T^1M$ where $M=\Gamma\backslash \mc O_+(\Lambda_\Gamma)$.

\begin{definition}\label{def geodesic convexity}~
\begin{itemize}
\item A subset $C\subset \AdS_3$ is called \emph{convex} (resp.\ \emph{strictly convex}) if there is a convex (resp.\ strictly convex) set $C'\subset\{x\in\R^4\,|\,q(x)<0\}$ that has a one-to-one projection onto $C$.
\item  A subset $C\subset M$ is called \emph{convex} (resp.\ \emph{strictly convex}) if $C=\Gamma\backslash C'$ where $C'\subset\mc O(\Lambda_\Gamma)$ is $\Gamma$-invariant and convex (resp.\ strictly convex).
\end{itemize}
\end{definition}

\begin{prop}[{\cite[Lem. 6.4]{dgk18}}] There is a strictly convex compact subset $C_\mathrm{DGK}\subset M$  with smooth boundary such that $\Gamma\backslash C(\Lambda_\Gamma)\subset C_\mathrm{DGK}$
\end{prop}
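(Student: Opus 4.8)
The plan is to produce $C_\mathrm{DGK}$ in three moves: thicken the convex hull $C(\Lambda_\Gamma)$ to a $\Gamma$-invariant convex subset of $\mathcal O_+(\Lambda_\Gamma)$ whose quotient is a compact subset of $M$, then round off its boundary in a $\Gamma$-equivariant way so that it becomes smooth and strictly convex, and finally descend to $M=\Gamma\backslash\mathcal O_+(\Lambda_\Gamma)$. For the thickening: by Proposition \ref{prop:Mess} the group $\Gamma$ acts freely and properly discontinuously on $\mathcal O_+(\Lambda_\Gamma)$, so this domain carries a $\Gamma$-invariant complete Riemannian metric, with distance $d_0$, which descends to $M$. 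Since $\Gamma$ acts cocompactly on $C(\Lambda_\Gamma)$ (Definition \ref{def - AdS quasi fuchsian group}), the convex core $\Gamma\backslash C(\Lambda_\Gamma)$ is a compact subset of $M$; hence for small $\varepsilon>0$ the $d_0$-neighbourhood $N_\varepsilon:=\{x\in\mathcal O_+(\Lambda_\Gamma):d_0(x,C(\Lambda_\Gamma))\le\varepsilon\}$ is a $\Gamma$-invariant subset of $\mathcal O_+(\Lambda_\Gamma)$, relatively closed in it, with $C(\Lambda_\Gamma)$ in its interior and $\Gamma\backslash N_\varepsilon$ compact. I would then let $\widetilde C_1$ be the intersection with $\mathcal O_+(\Lambda_\Gamma)$ of the smallest closed subset of $\AdS_3$ that is convex in the sense of Definition \ref{def geodesic convexity} and contains $N_\varepsilon$ (equivalently, the radial image of the closed convex hull in $\R^4$ of the cone over $N_\varepsilon$). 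It is $\Gamma$-invariant, and since $\mathcal O_+(\Lambda_\Gamma)$ is itself convex (as noted right after Definition \ref{def - invisible domain}) and contains $N_\varepsilon$, we get $\widetilde C_1\subset\mathcal O_+(\Lambda_\Gamma)$, with $C(\Lambda_\Gamma)\subset\mathrm{int}\,\widetilde C_1$.

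Next I would verify that $\Gamma\backslash\widetilde C_1$ is a compact subset of $M$. The closure of $\widetilde C_1$ in $\overline{\AdS_3}$ meets $\partial\AdS_3$ in a set squeezed between $\overline{C(\Lambda_\Gamma)}\cap\partial\AdS_3=\Lambda_\Gamma$ and $\overline{\mathcal O_+(\Lambda_\Gamma)}\cap\partial\AdS_3=\Lambda_\Gamma$ (these equalities being part of \cite[Cor.~3.7]{barbot-merigot}), so it equals $\Lambda_\Gamma$; in particular $\widetilde C_1$ touches the boundary only at $\Lambda_\Gamma$, which is harmless since $\Lambda_\Gamma$ is not a part of $M$. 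Running the same argument as in the proof of Lemma \ref{lem - q<-1 up to moving by an element of the group} — orbits of points of $\mathcal O_+(\Lambda_\Gamma)$ cannot accumulate inside $\mathcal O_+(\Lambda_\Gamma)$, only along $\Lambda_\Gamma$ — then yields that $\Gamma\backslash\widetilde C_1$ is compact and that the ``finite'' part of its boundary, namely $\Gamma\backslash(\partial\widetilde C_1\cap\mathcal O_+(\Lambda_\Gamma))$, is a compact set in $M$. (One may also invoke \cite[Lem.~2.11]{dgk18} at this point.)

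It remains to smooth this convex set and make it strictly convex, keeping $\Gamma$-invariance, the inclusions $C(\Lambda_\Gamma)\subset\mathrm{int}(\cdot)\subset\mathcal O_+(\Lambda_\Gamma)$, and compactness of the quotient. Over the compact set $\Gamma\backslash(\partial\widetilde C_1\cap\mathcal O_+(\Lambda_\Gamma))$ I would pass to finitely many affine charts of $\mathbb{RP}^3$ in which the boundary is a graph of a convex function, mollify each such function, add a small term of the form $\eta\cdot(\text{positive definite quadratic})$ with $\eta>0$, patch the results with a partition of unity downstairs, lift, and take the convex hull in $\R^4$ once more; for $\eta$ small enough (uniformly, by compactness) this produces a $\Gamma$-invariant convex set $\widetilde C_2$ with smooth boundary of positive definite second fundamental form, still containing $C(\Lambda_\Gamma)$ in its interior and contained in $\mathcal O_+(\Lambda_\Gamma)$ (again by convexity of the latter) and with compact quotient; I would then set $C_\mathrm{DGK}:=\Gamma\backslash\widetilde C_2$. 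The step I expect to be the main obstacle is exactly this equivariant smoothing-and-strict-convexification, together with checking that all four constraints survive the patching-and-hull operation and that the quotient remains compact near $\Lambda_\Gamma$; this is precisely the content of \cite[Lem.~6.4]{dgk18}, which one may simply cite. The Fuchsian case, where $C(\Lambda_\Gamma)$ is a totally geodesic copy of $\H^2$ and $\Gamma\backslash C(\Lambda_\Gamma)$ is the central fibre $\{\theta=0\}\times\Sigma$ in the model \eqref{intro:Fuchsian}, is handled by the same thickening-and-rounding procedure, or by exhibiting an explicit convex set.
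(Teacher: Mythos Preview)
The paper does not prove this proposition; it simply cites \cite[Lem.~6.4]{dgk18} and adds a remark explaining why the boundary regularity is in fact $C^\infty$ (the $C^1$ statement in \cite{dgk18} concerns a larger domain in $\overline{\AdS_3}$ including $\Lambda_\Gamma$, while away from $\Lambda_\Gamma$ the construction is smooth). So there is no proof in the paper to compare your proposal against.

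Your sketch is a reasonable outline of the strategy behind \cite[Lem.~6.4]{dgk18}: thicken, take the convex hull, check the quotient is compact, then equivariantly smooth and strictly convexify. You yourself identify the last step as the real content and suggest citing \cite[Lem.~6.4]{dgk18} for it, which is exactly what the paper does. One point to be careful about in your compactness argument: you claim $\overline{C(\Lambda_\Gamma)}\cap\partial\AdS_3=\Lambda_\Gamma$ and $\overline{\mathcal O_+(\Lambda_\Gamma)}\cap\partial\AdS_3=\Lambda_\Gamma$ both follow from \cite[Cor.~3.7]{barbot-merigot}; the second equality is indeed there, but the first requires a separate (easy) argument. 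More substantively, your mollify-add-quadratic-patch-take-hull recipe for the smoothing step is not obviously well-defined: mollifying and adding a quadratic in affine charts does not commute with chart changes, and taking the convex hull afterwards can destroy smoothness. The actual argument in \cite{dgk18} is more careful about this, and since you end up citing it anyway, it would be cleaner to do as the paper does and simply invoke the reference.
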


\begin{rem} The statement of \cite[Lem. 6.4]{dgk18} only mentions a boundary with  $C^1$ regularity, but it concerns (with the notation of that paper) the domain $\Omega\subset\mc O_+(\Lambda_\Gamma)\cup\Lambda_\Gamma\subset  \AdS_3\cup \partial\AdS_3$ projected onto $C_\mathrm{DGK}\subset M$ when quotienting by $\Gamma$. Their construction is only $C^1$ because it includes $\Lambda_\Gamma$, but $\partial \Omega\setminus\Lambda_\Gamma$, hence $\partial C_\mathrm{DGK}$, is smooth.
\end{rem}

\begin{cor}\label{coro dynamically convex from DGK}
The subset $\overline{\mc U}=T^1C_\mathrm{DGK}\subset\mc M$ is a smooth manifold with dynamically strictly convex boundary such that both $\mc U$ and $\overline{\mc U}$ are dynamically convex. Moreover, $\mc K\subset\mc U$.
\end{cor}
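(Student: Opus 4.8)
The plan is to transfer the strict convexity of $C_{\mathrm{DGK}}\subset M$ (in the affine/projective sense of Definition \ref{def geodesic convexity}) to the dynamical strict convexity of $\overline{\mc U}=T^1C_{\mathrm{DGK}}$ (in the sense of Definition \ref{def dynamical convexity}), using the explicit formula \eqref{formule_geod_spacelike} for spacelike geodesics. First I would work upstairs: lift $C_{\mathrm{DGK}}$ to a $\Gamma$-invariant strictly convex set $C'\subset \mc O_+(\Lambda_\Gamma)$ with smooth boundary, realized (locally) as the image under the radial projection of a strictly convex domain $\widehat C\subset\{q<0\}\subset\R^4$. A local boundary defining function $\widehat\rho$ for $\widehat C$ pulls back to a boundary defining function $\rho$ for $C'$, and then $\pi^*\rho$ (with $\pi:T^1\AdS_3\to\AdS_3$ the projection) is a boundary defining function for $\overline{\mc U}=T^1C'$ downstairs. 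The point is that a spacelike geodesic $t\mapsto \pi(\varphi_t(x,v))=x\cosh t+v\sinh t$ is, after multiplying by the positive scalar needed to land in $\widehat C$'s cone, the projectivization of an affine line $t\mapsto x\cosh t+v\sinh t$ in $\R^4$; strict convexity of $\widehat C$ says precisely that the restriction of $\widehat\rho$ to such an affine line has negative second derivative at any point where the first derivative vanishes. I would then compute $X^2(\pi^*\rho)$ at a point $(x,v)\in\partial\overline{\mc U}$ with $X(\pi^*\rho)(x,v)=0$ and check it is negative, carefully tracking the conformal rescaling factor $s(t)$ with $s(t)\cdot(x\cosh t+v\sinh t)\in\partial\widehat C$ — since this factor is positive and smooth, and since the condition $d\rho(X)=0$ is invariant under such rescaling, the sign of $X^2\rho$ at the critical point agrees with the sign of the second derivative of $\widehat\rho$ along the affine line, which is strictly negative by strict convexity of $\widehat C$.

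Next, for the two dynamical convexity statements, the argument is that $\{t\in\R\,|\,\varphi_t(z)\in\overline{\mc U}\}$ (resp.\ the open version) is an interval: if $z=(x,v)$ then $\varphi_t(z)\in\overline{\mc U}$ iff $\pi(\varphi_t(z))=x\cosh t+v\sinh t\in C_{\mathrm{DGK}}$, i.e.\ iff the corresponding affine line parameter lies in $\{t\,|\,(x\cosh t+v\sinh t)\in\widehat C\}$ (up to the positive rescaling). Convexity of $\widehat C$ makes the intersection of a line with $\widehat C$ an interval, and the change of variable $t\mapsto$ affine-line-parameter induced by the hyperbolic parametrization is a homeomorphism of $\R$ onto its image (a single affine segment or ray or line), so connectedness is preserved; hence we get an interval. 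The open case is identical using $\mc U$ and the open convex set. The inclusion $\mc K\subset\mc U$ follows because $\mc K=\gam\widetilde{\mc K}\subset\gam(T^1 C(\Lambda_\Gamma))=T^1(\Gamma\backslash C(\Lambda_\Gamma))$ and $\Gamma\backslash C(\Lambda_\Gamma)\subset C_{\mathrm{DGK}}$ is contained in the \emph{interior}: indeed by \cite[Lem.~6.4]{dgk18} the strictly convex $C_{\mathrm{DGK}}$ was produced with $\Gamma\backslash C(\Lambda_\Gamma)$ in its interior (one can always shrink a strictly convex smooth neighborhood so that a fixed compact set stays in the open interior), so $\mc K\subset T^1(\mathrm{int}\,C_{\mathrm{DGK}})=\mathrm{int}(T^1 C_{\mathrm{DGK}})=\mc U$.

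The main obstacle I expect is bookkeeping the conformal rescaling $s(t)$ cleanly in the computation of $X^2(\pi^*\rho)$: one must verify that evaluating at a \emph{critical} point of $X(\pi^*\rho)$ kills exactly the terms involving $s'(t)$ and $s''(t)$, leaving only $s(t)^2$ times the (negative) second derivative of $\widehat\rho$ along the affine direction. This is a short but slightly delicate chain-rule argument; an alternative that sidesteps it is to choose the boundary defining function $\widehat\rho$ of $\widehat C$ to be $0$-homogeneous of degree $0$ near $\partial\widehat C$ — or rather to work directly with a boundary defining function of $C'\subset\AdS_3$ obtained by restricting a homogeneous degree-$0$ function, so that $\pi^*\rho$ is literally constant along the scaling rays and the rescaling drops out entirely. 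With that normalization the computation reduces to: strict convexity of the affine domain $\Longleftrightarrow$ negative definiteness of the Hessian of $\widehat\rho$ restricted to the tangent space of $\partial\widehat C$ along the line direction, which is exactly what Definition \ref{def geodesic convexity} (strict convexity) provides, and one is done.
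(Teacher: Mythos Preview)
Your proposal is correct and follows the same approach as the paper: pull back a boundary defining function $f$ of $C_{\mathrm{DGK}}$ via $\pi$ and verify the required properties. The paper's proof is a two-sentence sketch that simply asserts $\rho(x,v):=f(x)$ has the desired properties, whereas you have supplied the details --- in particular the link between affine strict convexity of the lift in $\R^4$ and the condition $X^2\rho<0$ at points where $X\rho=0$, via the explicit geodesic formula \eqref{formule_geod_spacelike}. Your concern about the rescaling factor is legitimate but slightly overcautious: since a spacelike geodesic $t\mapsto x\cosh t+v\sinh t$ lies in the $2$-plane $\mathrm{span}(x,v)$ and is the radial projection of the affine segment $s\mapsto x+sv$ (with $s=\tanh t$), the set of $t$ for which the ray $\R_+(x+ v\tanh t)$ meets the convex set $\widehat C\cap\mathrm{span}(x,v)$ is an interval by elementary planar convexity, giving dynamical convexity directly; and your proposed normalization (choosing $\widehat\rho$ homogeneous of degree $0$, i.e.\ working with a boundary defining function of $C'$ on $\AdS_3$ itself) cleanly handles the second-derivative computation. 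Your remark that one may need to ensure $\Gamma\backslash C(\Lambda_\Gamma)$ lies in the \emph{interior} of $C_{\mathrm{DGK}}$ is a valid refinement the paper leaves implicit.
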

\begin{proof}
Consider a boundary defining function $f: C_{\mathrm{DGK}}\to\R$.   Then  $\rho(x,v):=f(x)$ is a boundary defining function for $\mc U$ with the desired properties: if $z\in \partial \mathcal U$, then for $(x,v)\in T^1M$ near $z=(x_z,v_z)$ we can decompose $X(x,v)=X_\mathrm{hor}(x,v)+X_\mathrm{vert}(x,v)$ where $X_\mathrm{hor}$ and $X_\mathrm{vert}$ denote the horizontal and vertical components of $X$. In particular   if $\d\rho(z)(X(z))=0$, then $d\pi X_\mathrm{hor}(x_z,v_z)\in T_{x_z}\partial C_{\mathrm{DGK}}\setminus\{0\}$ ($\pi$ the projection onto the base) and consequently $X^2\rho(z)=X_\mathrm{hor}^2(\pi^*f)(x_z,v_z)<0$ by the strict convexity of $C_{\mathrm{DGK}}$.  \color{black} The inclusion $\mc K\subset\mc U$ comes from the inclusion of the convex core $\Gamma\backslash C(\Lambda_\Gamma)$ in $C_\mathrm{DGK}$.
\end{proof}

\subsection{Pullback of distributions \texorpdfstring{on $\widetilde{\mc{M}}$}{}}

Recall from  Corollary \ref{cor:Qlambda} that we have for every $\lambda\in \C$ a pair of isomorphisms 
\bqn
\mathcal Q^\pm_\lambda: \mc{D}'(\mathbb{T}^2)\to \{u\in \mc{D}'(T^1\AdS_3)\,|\,(X\mp \lambda) u=0,\; U_\pm u=0\;, \forall  U_\pm \in \Cinft(T^1\AdS_3;E_{\varepsilon_\pm})\},
\eqn
where $\varepsilon_+:=s$, $\varepsilon_-:=u$ and $\mathcal Q^\pm_\lambda(\omega)=\Phi_\pm^\lambda \til{B}_\pm^\ast(\omega)$ for all $\omega\in \mc{D}'(\mathbb{T}^2)$.  Furthermore, recall that the sets $\widetilde {\mathcal K}_\pm\subset T^1\AdS_3$ contain by definition exactly those $(x,v)\in T^1\AdS_3$ such that $B_\pm(x,v)\in \Lambda_\Gamma$, and by \eqref{eq:inclKpm} the sets $\widetilde {\mathcal K}_\pm$ are contained in the open set $\widetilde{\mc{M}}\subset T^1\AdS_3$. Define the open subsets
\[
\widetilde{\mc{M}}^{\pm\infty}:=B_\pm(\widetilde{\mc{M}})\subset \partial \AdS_3
\]
which contain $\Lambda_\Gamma$. Then, because distributions on  $\widetilde{\mc{M}}^{\pm\infty}$ that are supported in $\Lambda_\Gamma$ extend by zero to $B_\pm(T^1\AdS_3)=\partial \AdS_3$ and distributions on 
$\widetilde{\mc{M}}$ that are  supported in $\widetilde {\mathcal K}_\pm$ also extend by zero to $T^1\AdS_3$, it follows that the isomorphism $\mathcal Q^\pm_\lambda$ restricts to an isomorphism
\begin{multline}
\mathcal Q^\pm_\lambda:\{\omega\in\mc{D}'(\TT^2)\,|\, \supp \omega \subset \Lambda_\Gamma\}\longrightarrow\\
\{u\in \mc{D}'(T^1\AdS_3)\,| \,\supp u\subset \widetilde {\mathcal K}_\pm,\;(X\mp \lambda) u=0,\; U_\pm u=0,\; \forall U_\pm \in C^\infty(T^1\AdS_3;E_{\varepsilon_\pm})\}. \label{eq:IsoU_Gamma}
\end{multline}
We deduce from this the following:
\begin{prop}\label{resonance-Poisson}
Let $\Gamma\subset{\rm SO}(2,2)_\circ$ be a quasi-Fuchsian group, and denote by $X$ the generator of the geodesic flow on $\mc{M}=\Gamma\backslash \widetilde{\mc{M}}$. Let $M=\Gamma\backslash \mc{O}_+(\Lambda_\Gamma)$ be the quasi-Fuchsian Lorentzian manifold associated to $\Gamma$, and $\pi: T^1M\to M$ the projection onto the base.
For each $u\in \mc{D}'(\mc{M})$ nontrivial with $\supp(u)\subset \mc{K}_\pm$ and satisfying 
\[(X\mp \lambda) u=0, \quad U^{R}_\pm u=U^{L}_\pm u=0,\]
the distribution $f:=\pi_*(u|_{T^1M}) \in C^\infty(M)$ is a solution of the Klein-Gordon equation 
\[ (\Box_g+\la(\la+2))f=0 \textrm{ on }M\]
such that its lift $\tilde{f}$ to $\mc{O}_+(\Lambda_\Gamma)$ admits a nontrivial extension $\tilde{f}\in \mc{D}'(\AdS_3)$  such that 
\[\forall \gamma\in \Gamma, \,  \gamma^* \tilde{f}=\tilde{f} \textrm{ on }\AdS_3, \quad (\Box_g+\la(\la+2))\tilde{f}=0.\]
\end{prop}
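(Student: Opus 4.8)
\emph{Proof plan.} The plan is to transfer the problem to the conformal boundary $\TT^2\simeq\partial\AdS_3$ via Corollary \ref{cor:Qlambda}, and then to recognise both $f$ and its claimed extension as restrictions of one and the same Poisson transform $\mc{P}^0_\lambda\omega$ — to $\mc{O}_+(\Lambda_\Gamma)$ in the first case, and to all of $\AdS_3$ in the second. First I would lift $u$ to the $\Gamma$-invariant distribution $\widetilde u\in\mc{D}'(\widetilde{\mc{M}})$ with $\supp\widetilde u\subset\widetilde{\mc{K}}_\pm$. Since $\widetilde{\mc{K}}_\pm=B_\pm^{-1}(\Lambda_\Gamma)$ is closed in all of $T^1\AdS_3$ (the map $B_\pm$ is continuous and $\Lambda_\Gamma$ compact) and, by \eqref{eq:inclKpm}, is contained in the open set $\widetilde{\mc{M}}$, its extension by zero — still denoted $\widetilde u$ — is a $\Gamma$-invariant element of $\mc{D}'(T^1\AdS_3)$ supported in $\widetilde{\mc{K}}_\pm$ and annihilated by $X\mp\lambda$. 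Moreover $\widetilde u$ is killed by the vector fields $U^{R}_\pm,U^{L}_\pm$ of Lemma \ref{lem:URLpm} (this holds on $\widetilde{\mc{M}}$ by hypothesis and trivially on $T^1\AdS_3\setminus\widetilde{\mc{K}}_\pm$, and these two open sets cover $T^1\AdS_3$); as each of these is a nonvanishing section of one of the line bundles spanning $E_{\varepsilon_\pm}$ (with $\varepsilon_+:=s$, $\varepsilon_-:=u$), the distribution $\widetilde u$ is annihilated by every $U\in C^\infty(T^1\AdS_3;E_{\varepsilon_\pm})$. By the restricted isomorphism \eqref{eq:IsoU_Gamma} there is thus a unique \emph{nonzero} $\omega\in\mc{D}'(\TT^2)$ with $\supp\omega\subset\Lambda_\Gamma$ and $\widetilde u=\mc{Q}^\pm_\lambda\omega=\Phi_\pm^\lambda\til B_\pm^\ast\omega$, and the $\Gamma$-invariance of $\widetilde u$ together with the last assertion of Corollary \ref{cor:Qlambda} gives $N_\gamma^\lambda\gamma^\ast\omega=\omega$ for all $\gamma\in\Gamma$.

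Next I would identify $f$. For $x\in\mc{O}_+(\Lambda_\Gamma)$ the diffeomorphism $B_\pm(x,\cdot)\colon T^1_x\AdS_3\to U_x$ of \eqref{Bpm_diffeo} maps $\widetilde{\mc{K}}_\pm\cap T^1_x\AdS_3$ onto $\Lambda_\Gamma$, which is compact and contained in $U_x$ precisely because $x\in\mc{O}_+(\Lambda_\Gamma)$. Hence $\til B_\pm^\ast\omega$, and therefore $\widetilde u=\mc{Q}^\pm_\lambda\omega$, has fibrewise compact support over $\mc{O}_+(\Lambda_\Gamma)$, so Lemma \ref{lem:pushfoward} applies and yields $\mc{P}^0_\lambda\omega|_{\mc{O}_+(\Lambda_\Gamma)}=\pi_\ast\bigl(\widetilde u|_{T^1\mc{O}_+(\Lambda_\Gamma)}\bigr)$, a smooth function. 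Since $\pi$ is $\Gamma$-equivariant and the fibre measures $S_x$ are $\mathrm{SO}(2,2)_\circ$-invariant, this pushforward is $\Gamma$-invariant and descends exactly to $f=\pi_\ast(u|_{T^1M})$, which is therefore smooth and whose lift to $\mc{O}_+(\Lambda_\Gamma)$ equals $\mc{P}^0_\lambda\omega|_{\mc{O}_+(\Lambda_\Gamma)}$. As $\mc{O}_+(\Lambda_\Gamma)\subset\mc{O}(\Lambda_\Gamma)=\mc{O}(\supp\omega)$, the distributional form of Lemma \ref{Klein-Gordon} contained in Proposition \ref{injectivite_Poisson} gives $(\Box_g+\lambda(\lambda+2))\mc{P}^0_\lambda\omega=0$ there for every $\lambda$, and this descends to $(\Box_g+\lambda(\lambda+2))f=0$ on $M$.

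Finally, for the extension to $\AdS_3$ I would simply take $\widetilde f:=\mc{P}^0_\lambda\omega\in\mc{D}'(\AdS_3)$, the meromorphic continuation provided by Proposition \ref{injectivite_Poisson}, which is well defined for $\lambda$ not a pole of the Poisson transform (i.e.\ $\lambda\notin-2+\N$). By the previous step it restricts on $\mc{O}_+(\Lambda_\Gamma)$ to the lift of $f$; by Proposition \ref{injectivite_Poisson} it solves $(\Box_g+\lambda(\lambda+2))\widetilde f=0$ on all of $\AdS_3$; and the equivariance $h^\ast\mc{P}^0_\lambda\omega=\mc{P}^0_\lambda(N_h^\lambda h^\ast\omega)$ of Lemma \ref{Klein-Gordon} persists along the meromorphic continuation (both sides are meromorphic in $\lambda$ and coincide for $\Re\lambda\ll0$), so $\gamma^\ast\widetilde f=\mc{P}^0_\lambda(N_\gamma^\lambda\gamma^\ast\omega)=\mc{P}^0_\lambda\omega=\widetilde f$, giving $\Gamma$-invariance. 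Non-triviality of $\widetilde f$ then follows, for $\lambda\notin\Z$, from the injectivity of the Poisson transform in Proposition \ref{injectivite_Poisson}: $\omega\neq0$ forces $\mc{P}^0_\lambda\omega\neq0$.

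The main obstacle is not any single hard estimate — all the analytic heavy lifting (the isomorphism of Corollary \ref{cor:Qlambda}, the push-forward identity of Lemma \ref{lem:pushfoward}, and especially the meromorphic continuation and injectivity of $\mc{P}^\sigma_\lambda$ in Proposition \ref{injectivite_Poisson}) is already in place — but rather the careful bookkeeping of the two incarnations of the boundary datum $\omega$: as $\pi_\ast\widetilde u$ on the open invisible domain $\mc{O}_+(\Lambda_\Gamma)$, where fibrewise compactness makes the pushforward legitimate and smooth, versus as the globally continued Poisson transform $\mc{P}^0_\lambda\omega$ on $\AdS_3$, which genuinely feels the singular locus $\{(x,\nu)\colon q(x,\nu)=0,\ \nu\in\Lambda_\Gamma\}$ where $\pi|_{\widetilde{\mc K}_\pm}$ fails to be proper. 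One must verify that these agree on $\mc{O}_+(\Lambda_\Gamma)$ so that the second really extends the lift of the first, and keep track of the (harmless, sign-only) ambiguity between $\mc{P}^0_\lambda$ and $\mc{P}^1_\lambda$ on that domain; the exceptional values $\lambda\in\Z$ (where injectivity, hence non-triviality, is not asserted) have to be excluded or handled separately.
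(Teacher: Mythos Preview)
Your proposal is correct and follows essentially the same route as the paper's proof: lift $u$, apply the isomorphism \eqref{eq:IsoU_Gamma} to obtain the boundary datum $\omega$ supported on $\Lambda_\Gamma$, identify $f$ with $\mc{P}^0_\lambda\omega|_{\mc{O}_+(\Lambda_\Gamma)}$ via Lemma \ref{lem:pushfoward}, and take $\widetilde f=\mc{P}^0_\lambda\omega$ on all of $\AdS_3$ using Proposition \ref{injectivite_Poisson} for the Klein--Gordon equation, the $\Gamma$-invariance, and the non-triviality. Your write-up is in fact more careful than the paper's on several points (extension by zero to $T^1\AdS_3$, verification of fibrewise compact support, persistence of the equivariance along the meromorphic continuation, and the exclusion of $\lambda\in\Z$ needed for injectivity), all of which the paper leaves implicit.
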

\begin{proof} We first lift $u$ to $\widetilde{\mc{M}}$ as a $\Gamma$-invariant distribution $\tilde{u}$. Then by \eqref{eq:IsoU_Gamma}, there is a nontrivial  $\omega\in \mc{D}'(\TT^2)$ with $\supp(\omega)\subset \Lambda_\Gamma$ such that $\tilde{u}=\mc{Q}_\la^\pm \omega$. By Corollary \ref{cor:Qlambda} we also see that $\omega$ is $\Gamma$-equivariant in the sense that $\gamma^*\omega=N_\gamma^{-\la}\omega$ for all $\gamma\in \Gamma$.
Consider $\tilde{f}=\mc{P}_\la^\sigma \omega\in \mc{D}'(\AdS_3)$: by Proposition \ref{injectivite_Poisson} this is a $\Gamma$-invariant distribution on $\AdS_3$ that solves $(\Box_g+\la(\la+2))\tilde{f}=0$ on $\AdS_3$ and which is nontrivial since $\omega\not\equiv 0$. 
By Lemma \ref{lem:pushfoward},  $\tilde{f}=\pi_*(\tilde{u}|_{T^1\mc{O}_+(\Lambda_\Gamma)})$ on $\mc{O}_+(\Lambda_\Gamma)$ and it is a smooth function that descends to $M$ and solves $(\Box_g+\la(\la+2))f=0$ on $M$.
\end{proof}

\section{Flow resolvent and Ruelle resonances}\label{sec:resonancesstandalone}

 In this  standalone section, we prove that under reasonable conditions, the resolvent of the generator $X$ of an Axiom A flow on a non-compact manifold admits a meromorphic extension to the complex plane, and that more generally this holds for first order differential operators on bundles with scalar principal symbol given by the principal symbol of $X$. This includes the action of the Lie derivative $\mc{L}_X$ on symmetric tensors or differential forms for example, and 
the setting considered here includes the spacelike geodesic flow of quasi-Fuchsian anti-de Sitter spaces. The main result of this section is a consequence of the work \cite{DG16}, the main difference is that now we now work in a general non-compact setting  rather than in the asymptotically hyperbolic Riemannian manifold setting (or another type of non-compact settings) considered in \cite[Section 5]{DG16}. That is, we generalize some results from \cite{DG16} to a setting into which our space-like geodesic flow fits. \color{black} The connection with the concrete setting of the rest of this paper is made at the end in Section \ref{sec:applicationofsection}.

We begin by introducing a setup similar to that of \cite{DG16}. Let $\mc{M}$ be a smooth manifold without boundary and $X$ a nowhere-vanishing smooth vector field on $\mc{M}$ generating a complete flow \begin{align*}
\varphi:\R\times \mc{M}&\to \mc{M}\\
(t,x)&\mapsto \varphi(t,x)=:\varphi_t(x).
\end{align*}
As is common, we will usually abuse notation and write $\varphi_t$ instead of $\varphi$.

Let $\E\to \mc{M}$ be a smooth complex vector bundle and $\Xbf:\Cinft(\mc{M};\E)\to \Cinft(\mc{M};\E)$ a first order differential operator lifting $X$ in the sense that
\bq
\Xbf(fs)=(Xf) s + f \Xbf s\quad \forall\, s\in \Cinft(\mc{M};\E),f\in \Cinft(\mc{M}).\label{eq:liftingX}
\eq
Notice that the case ${\bf X}=X+V$ for some smooth potential $V$ and $\mc{E}=\mc{M}\times \R$ the trivial bundle fits into this setup.
Fix a smooth measure $\mu$ on $\mc{M}$ corresponding in each chart to a positive function times the Lebesgue measure \color{black} and a Hermitian metric $h$ on $\E$, neither of which is assumed to be invariant under $\varphi_t$. This identifies the dual bundle $\E^\ast$ with $\E$ and defines the Banach spaces $L^p(\mc{M};\mathcal E)$ for $p\in [1,\infty]$ as well as the formal $L^2$-adjoint $\mathbf X^\ast:\Cinft(\mc{M};\E)\to \Cinft(\mc{M};\E)$ of $\Xbf$, which also satisfies \eqref{eq:liftingX}, with $X$ replaced by $-X$. Furthermore, these choices allow us to identify the space $\mc{D}'(\mc{M};\E)$ of $\E$-valued distributions with the space of $\E$-valued distributional densities and fix an embedding $\CT(\mc{M};\E)\hookrightarrow  \mc{D}'(\mc{M};\E)$ by the $L^2$-inner product. The family of transfer operators
\bq
e^{-t\mathbf X}:\CT(\mc{M};\E)\to \CT(\mc{M};\E),\qquad t\in \R\label{eq:transferop}
\eq
extends to operators $\mc{D}'(\mc{M};\E)\to \mc{D}'(\mc{M};\E)$ and satisfies
\begin{align}
e^{-t'\mathbf X}\circ e^{-t\mathbf X}&=e^{-(t+t')\mathbf X}&&\forall\; t,t'\in \R,\label{eq:compprop}\\
\mathbf X u &= -\frac{d}{dt}\Big|_{t=0}e^{-t\mathbf X}u&&\forall\; u\in \CT(\mc{M};\E),\label{eq:charactetizationTO}\\
e^{-t\mathbf X}(fu)&=(f\circ \varphi_{-t})e^{-t\mathbf X}u&&\forall\, u\in \CT(\mc{M};\E), f\in \CT(\mc{M}).\label{eq:suppproperty}
\end{align}
There is a positive function $C_h\in \Cinft(\R\times\M)$ depending on the Hermitian metric $h$ such that for all $u\in C_c^\infty(\M;\mc{E})$ we have
\bq
|e^{-t{\bf X}}u(x)|_h \leq C_h(t,\varphi_{-t}(x)) |u(\varphi_{-t}(x))|_h\qquad \forall\; (t,x)\in \R\times\M. \label{eq:Ch}
\eq
Similarly, for the measure $\mu$ we get a positive function $C_\mu\in \Cinft(\R\times\M)$  such that
\bq\label{eq:Cmu}
(\varphi_t)_\ast \mu = C_\mu(t,\cdot)\mu\qquad \forall\;t\in \R. 
\eq
In order to obtain global meromorphic extensions of resolvents on all of $\M$, it is helpful to introduce the following two boundedness assumptions:
 \begin{enumerate}
 [label=\textbf{(B\arabic*)}]\setcounter{enumi}{0}
\item \emph{For all $t\in [0,1]$ one has $C_h(t,\cdot)\in L^\infty(\M)$.}\label{ass:B1}
\item \emph{For some $p\in [1,\infty)$ one has $C_h(t,\cdot)^pC_\mu(t,\cdot)\in L^\infty(\M)$ for all $t\in [0,1]$.}\label{ass:B2}
\end{enumerate}
Suppose that Assumption \ref{ass:B1} holds. We then obtain, for all $t\in [0,1]$,  $u\in \CT(\M;\E)$:
\[
\norm{e^{-t{\bf X}}u}_{L^\infty(\M;\E)}\leq  \|C_h(t,\cdot)\|_{L^\infty(\M)} \norm{u}_{L^\infty(\M;\E)},
\]
in particular $e^{-t{\bf X}}$ extends to a bounded operator $L^\infty(\M;\E)\to L^\infty(\M;\E)$.  Similarly, if  Assumption \ref{ass:B2} holds for some $p\in [1,\infty)$, then we find for all $u\in \CT(\M;\E)$:
\[
\sup_{t\in [0,1]}\norm{e^{-t{\bf X}}u}_{L^p(\M;\E)}\leq  \sup_{t\in [0,1]}\|C_h(t,\cdot)^pC_\mu(t,\cdot)\|_{L^\infty(\M)}^{1/p} \norm{u}_{L^p(\M;\E)},
\]
showing in particular that $e^{-t{\bf X}}$ extends to a continuous operator $L^p(\M;\E)\to L^p(\M;\E)$. In both cases, let us define for $p\in [1,\infty]$
\bq\begin{split}
c_p=\max\Big(1,\max_{t\in [0,1]}\|e^{-t{\bf X}}\|_{\mc{L}(L^p(\M,\mc{E}))}\Big), \quad C_p:=\log(c_p), \label{eq:Cp}\end{split}
\eq
which are finite constants by continuity in $t$, \color{black} and observe that by the group property \eqref{eq:compprop} $e^{-t{\bf X}}$ extends to a continuous operator on $L^p(\M;\E)$ for all $t\in \R$ and satisfies for $t\geq 0$
\[ \|e^{-t{\bf X}}\|_{\mc{L}(L^p(\M,\mc{E}))} \leq c_p^{t+1}=c_pe^{t C_p}.\] This implies that for each $\lambda\in \C$ with $\Re \lambda>C_p$, we can define a continuous operator
\bq\label{eq:Rlambdaintegral}
R_{{\bf X}}(\lambda): L^p(\M;\E)\to L^p(\M;\E),\quad 
R_{{\bf X}}(\lambda)
u:=\int_0^\infty e^{-t(\mathbf X+\lambda)} u\d t
\eq
with norm
\[ \|R_{{\bf X}}(\lambda)
\|_{\mc{L}(L^p(\M;\E))}\leq \frac{c_p}{{\rm Re}(\la)-C_p}.\]
If the assumption  \ref{ass:B1} or  \ref{ass:B2} holds, the operator $R_{{\bf X}}(\lambda)$ is thus continuous as a map
\bq
R_{{\bf X}}(\lambda)
: \CT(\mc{M};\E)\to \mc{D}'(\M;\E)\label{eq:globalresolvent}
\eq
and defines a holomorphic operator family $R_{{\bf X}}(\lambda)
$ on $\{\Re \lambda>\min_{p\in [1,\infty]} C_p\}\subset \C$. 

If one does not assume that  \ref{ass:B1} or  \ref{ass:B2} hold, we can still restrict to any compact set: For all relatively compact open sets $\mc{C}\subset \M$,  the operator 
$$R_{\bf X}^{\mc{C}}(\la):L^\infty(\mc{C};\mc{E})\to L^\infty(\mc{C};\mc{E})$$
defined by $R_{\bf X}^{\mc{C}}(\la)f=\int_0^\infty (e^{-t(\mathbf X+\lambda)} f)|_{\mc{C}}\d t$ is bounded for ${\rm Re}(\la)>\sup_{t\in [0,1]}
\|C_h(t,\cdot)\|_{L^\infty(\mc{C})}$, so the operator $R^{\mc{C}}_{{\bf X}}(\lambda)$ is also continuous as a map
\bq
R^{\mc{C}}_{{\bf X}}(\lambda)
: \CT(\mc{C};\E)\to \mc{D}'(\mc{C};\E)\label{eq:globalresolventC}
\eq
and produces a holomorphic operator family $R^{\mc{C}}_{{\bf X}}(\lambda)
$ on ${\rm Re}(\la)>\sup_{t\in [0,1]}
\|C_h(t,\cdot)\|_{L^\infty(\mc{C})}$.

Note that $R_{{\bf X}}(\lambda)
$ is a resolvent of $\Xbf$ in the sense that
\bq
(\Xbf + \lambda)(R_{{\bf X}}(\lambda)
u)=R_{{\bf X}}(\lambda)
((\Xbf + \lambda)u)=u\qquad \forall\,u\in\CT(\mc{M};\E),\label{eq:resolventprop}
\eq
where  $(\Xbf + \lambda)$ is considered as an operator $\mc{D}'(\mc{M};\E)\to \mc{D}'(\mc{M};\E)$ and $\CT(\mc{M};\E)\to\CT(\mc{M};\E)$ on the left and on the right, respectively. It has the property that if $u\in C_c^\infty(\mc{M};\E)$, then  $\supp (R_{\bf X}(\la)u)$ is contained in the forward flowout $\cup_{t\geq 0}\varphi_t(\supp(u))$ of the support of $u$.

The restricted resolvents $R^{\mc{C}}_{{\bf X}}(\lambda)$ have analogous properties inside the relatively compact open set $\mc{C}$, except that typically the transported support $\varphi_t(\supp(u))$ of a section $u\in \CT(\mathcal C;\E)$ will no longer be contained in $\mc{C}$ when $t$ is large.

Our goal is to continue the operator families  $R_{{\bf X}}(\lambda)
$ and  $R^{\mc{C}}_{{\bf X}}(\lambda)
$  meromorphically to $\C$. To this end, we will need some technical preparations and some assumptions.

\subsection{Dynamical preliminaries and assumptions}

For any subset $S\subset \mc{M}$ we define the \emph{backward/forward trapped sets of $\varphi$ in $S$} by 
\[
{\mathcal K}_\pm(S):= \{x\in S\,|\,\varphi_{\pm t}(x)\in S\;\forall\,t\in[0,\infty)\},
\]
as well as the \emph{trapped set of $\varphi$ in $S$} 
\[
\mathcal K(S):={\mathcal K}_+(S)\cap {\mathcal K}_-(S).
\]
The \emph{global backward/forward trapped sets} and the \emph{global trapped set} of $\varphi$ are
\bq
{\mathcal K}_\pm:=\bigcup_{\substack{\mathcal C\subset \mc{M} \\\text{compact}}}{\mathcal K}_\pm(\mathcal C),\qquad \mathcal K:={\mathcal K}_+\cap {\mathcal K}_-.\label{eq:defKpmK}
\eq
The \emph{non-wandering set} of $\varphi$ is defined as
\[
\mathcal{NW}:=\{x\in \mc{M}\,|\,\forall \,T>0,\, \mathcal{V}\subset \mc{M}\text{ open},\,x\in \mathcal{V},\exists \,t\geq T,\; \varphi_t(\mathcal{V})\cap \mathcal{V}\neq \emptyset\}.
\]
It is easy to see that if $S$ is closed in $\mc{M}$, then the sets ${\mathcal K}_\pm(S)$ are closed in $\M$, too. Thus each of the sets ${\mathcal K}_\pm(\mathcal C)$ featured in \eqref{eq:defKpmK} is compact. The global forward/backward trapped sets ${\mathcal K}_\pm$ are $\varphi_t$-invariant, which need not be the case for sets of the form ${\mathcal K}_\pm(S)$. When $\mc{M}$ is non-compact, the relation between the sets $\mathcal K$ and $\mathcal{NW}$ may be subtle. $\mathcal{NW}$ is always closed  in $\M$ but it is possible that ${\mathcal K}_+$, ${\mathcal K}_-$, and $\mathcal K$ are all non-closed and $\mathcal K\subsetneq \mathcal{NW}$ holds. 
On the other hand, when $\M$ is compact, one trivially has $\mathcal{NW}\subset \mathcal K=\M$ and usually $\mathcal{NW}\subsetneq \mc{M}$ holds, for example in the Axiom A case, where $\mathcal{NW}$ is the closure of the union of all periodic orbits.

The following two Assumptions ensure that the global (forward/backward) trapped sets ${\mathcal K}_+$, ${\mathcal K}_-$, and $\mathcal K$ have good properties:
\begin{enumerate}
[label=\textbf{(A\arabic*})]\setcounter{enumi}{-1}
\item \emph{$\mathcal K$ is compact.}
 \label{ass:A0}
\item \emph{For every pair of compact sets $\mathcal C_1,\mathcal C_2\subset \mc{M}$  such that $\{t\geq 0\,|\,\varphi_t(\mathcal C_1)\cap \mathcal C_2\neq \emptyset\}$ is an unbounded set, we have $\mathcal C_1\cap {\mathcal K}_+\neq \emptyset$ and $\mathcal C_2\cap {\mathcal K}_-\neq \emptyset$.}
\label{ass:A1}
\end{enumerate}
Assumptions \ref{ass:A0} and \ref{ass:A1} together imply that the sets ${\mathcal K}_\pm$ are closed in $\M$ and that $\mathcal{NW}\subset \mathcal K$ (so that $\mathcal{NW}$ is compact, too), see Lemma \ref{lem:KNW}.  

For several of our results we will make the following convenient additional assumptions, which make the results of \cite{DG16} immediately applicable: 
\begin{enumerate}
[label=\textbf{(A\arabic*})]\setcounter{enumi}{1}
\item\emph{There is a compact smooth submanifold with boundary $\overline{\mathcal U}\subset \mc{M}$ of the same dimension as $\M$ such that
\begin{enumerate}
\item $\mathcal K\subset\overline{\mathcal U}$;
\item Both $\overline{\mathcal{U}}$ and its manifold interior $\mathcal U$ are dynamically convex in the sense  of Definition \ref{def dynamical convexity}, that is:\color{black}
\bq
\begin{split}
\forall\, T\geq 0, x\in \mathcal U:\varphi_T(x)\in \mathcal U&\implies \varphi_t(x)\in \mathcal U,\;\forall\, t\in [0,T],\\
\forall\, T\geq 0, x\in \overline{\mathcal U}:\varphi_T(x)\in \overline{\mathcal U}&\implies \varphi_t(x)\in \overline{\mathcal U},\;\forall\, t\in [0,T];\label{eq:Uconvex}
\end{split}
\eq 
\item The boundary $\partial \mathcal U$ is strictly convex (Definition \ref{def:strictlyconvex}).
\end{enumerate}  }\label{ass:A2}
\item \emph{The flow $\varphi_t$ is uniformly hyperbolic on $\mathcal K$, i.e., there is a splitting
\bq
T\mc{M}|_{\mathcal K}=E_0\oplus E_s\oplus E_u\label{eq:splittingTN}
\eq
 into continuous  $\d\varphi_t$-invariant  subbundles such that $E_0=\R X$ and for some $C,c>0$ one has for all $x\in \mathcal K$}
\[
\norm{\d\varphi_t(x)v}_{\varphi_t(x)}\leq Ce^{-c|t|}\norm{v}_x\quad \text{if}\;\begin{cases} t\geq 0,\; v\in E_s|_x\qquad \text{or}\\
t\leq 0,\; v\in E_u|_x,\end{cases}
\]
where $\norm{\cdot}$ is a continuous bundle norm on $T\mc{M}|_{\mathcal K}$, for example coming from some Riemannian metric on $\mc{M}$ (unrelated to the Hermitian metric $h$ on $\E$).\label{ass:A3}
\end{enumerate} 
In order to give other useful characterizations of the Ruelle resonances, we need to prepare some more notation. Let us denote by 
\begin{equation}\label{def_Phi_t} 
\Phi_t(x,\xi)=(\d\varphi_t^{-1}(x))^\top \xi
\end{equation}
the symplectic lift of the flow $\varphi_t$ to $T^*\mc{M}$.
Given a splitting as in \eqref{eq:splittingTN}, we define the $\Phi_t$-invariant dual bundles $E^\ast_s,E^\ast_u\subset T^\ast \mc{M}|_{\mathcal K}$ by
\bq\label{eq:Epmstar}
E_s^\ast(E_0\oplus E_s)=0,\qquad E_u^\ast(E_0\oplus E_u)=0.
\eq
These bundles extend to ${\mathcal K}_+$ and ${\mathcal K}_-$, respectively.
 
\begin{lemma}[{C.f.~\cite[Lemma~1.10]{DG16}}]\label{lem:Gammaextension}Suppose that Assumptions \emph{\ref{ass:A0}--\ref{ass:A3}} hold. Then there exist unique continuous $\d\varphi_t$-invariant vector subbundles $E_{s}^\ast\subset T^\ast \mc{M}|_{{\mathcal K}_+}$ and $E_{u}^\ast\subset T^\ast \mc{M}|_{{\mathcal K}_-}$ extending respectively the bundles $E_s^\ast$ and  $E_u^\ast$ defined initially on $\mc{K}$, such that
\begin{enumerate}
\item $\xi(X|_x)=0$ for all  $x\in {\mathcal K}_+$ and $\xi\in E_{s}^\ast|_x$, and for all $x\in {\mathcal K}_-$ and $\xi\in E_{u}^\ast|_x$;
\item For each compact subset $\mathcal C\subset \mc{M}$ there exist $C,c>0$ such that for all $x\in {\mathcal K}_\pm(\mathcal C)$ one has
\begin{align}
&\forall x\in \mc{K}_+(\mc{C}), \xi\in E_{s}^\ast|_x, \,  \norm{ \Phi_t(x,\xi)}_{\varphi_t(x)}\leq Ce^{-ct}\norm{\xi}_x\quad \text{if } t\geq 0,\; \\
&\forall x\in \mc{K}_-(\mc{C}), \xi\in E_{u}^\ast|_x, \,  \norm{ \Phi_t(x,\xi)}_{\varphi_t(x)}\leq Ce^{-c|t|}\norm{\xi}_x\quad \text{if } t\leq 0.
\end{align}
\end{enumerate}
\end{lemma}
\begin{proof}
In the proof of \cite[Lemma~1.10]{DG16} the compactness of the ambient manifold $\mc{M}$ considered there is not needed; it is only the compactness of $\mathcal K$ that is required (in order to be able to introduce a Riemannian metric on a neighborhood of $\mathcal K$ such that the injectivity radius of this metric is uniformly bounded from below). Since also the attraction property of the trapped set \cite[Lemma~1.3]{DG16} generalizes in our setting to Corollary \ref{cor:attract} below\footnote{Corollary \ref{cor:attract} neither uses any of the statements of Lemma \ref{lem:Gammaextension} nor any results relying on Lemma \ref{lem:Gammaextension}, so there is no  circular argument.}, the arguments of the proof of \cite[Lemma~1.10]{DG16} remain valid and establish the existence of continuous $\d\varphi_t$-invariant vector subbundles $E_s^\ast$ and $E_u^*$  satisfying (1)--(2). Finally, the uniqueness, which is not discussed in \cite[Lemma~1.10]{DG16}, follows from (1) and (2) since for each $x\in {\mathcal K}_\pm$ a dual vector $\xi\in T^\ast_x \mc{M}$ cannot be both exponentially contracted and expanded under the action of $\Phi_t$ by $\d\varphi_t$ as $t\to +\infty$, and similarly for $t\to -\infty$.
\end{proof}
\begin{rem}If $\varphi_t$ happens to be globally uniformly hyperbolic, i.e., there is a splitting
\bq
T\mc{M}=E_0\oplus E_s\oplus E_u\label{eq:splittingTNglobal}
\eq
 into continuous  $\d\varphi_t$-invariant  subbundles such that $E_0=\R X$ and for some $C,c>0$ one has for all $x\in \mc{M}$
\[
\norm{\d\varphi_t(x)v}_{\varphi_t(x)}\leq Ce^{-c|t|}\norm{v}_x\quad \text{if}\;\begin{cases} t\geq 0,\; v\in E_s|_x\\
t\leq 0,\; v\in E_u|_x\end{cases}
\]
with respect to some Riemannian norm $\norm{\cdot}$ on $\mc{M}$, then Assumption \ref{ass:A3} is satisfied by restricting these bundles on $\mc{K}$. In particular the splitting $E_0^*\oplus E_s^*\oplus E_u^*$ of $T^*\mc{M}$  dual to \eqref{eq:splittingTNglobal} (with $E_s^*(E_s\oplus E_0)=0$, etc) are such that $E_s^*$ and $E_u^*$ restrict respectively on $\mc{K}_+$ and $\mc{K}_-$ to the vector bundles obtained in Lemma \ref{lem:Gammaextension}, as follows from the uniqueness statement made there.
\end{rem}

\subsection{Main results}

We can now formulate the first main result of this section.

\begin{definition}\label{def:ResXj}
Assume \ref{ass:A0}--\ref{ass:A3} and 
consider the subbundles $E_{\pm}^\ast\subset T^\ast \mc{M}|_{{\mathcal K}_\pm}$ in Lemma \ref{lem:Gammaextension}. Then, for each $j\in \N$ and $\lambda_0\in \C$, the space of \emph{generalized Ruelle resonant states} of $\mathbf X$ of order $j$ at $\lambda_0$ is
\bq
\mathrm{Res}_{\mathbf X}^{j}(\lambda_0):=\{u\in \D'(\mc{M};\E)\,|\, \supp u\subset {\mathcal K}_-,\mathrm{WF}(u)\subset E_u^\ast, (\mathbf X + \lambda_0)^j u=0 \}\label{eq:defResXj}
\eq
where ${\rm WF}(u)$ denotes the wave-front set of $u$.
We call $\mathrm{Res}_{\mathbf X}^{1}(\lambda_0)$
the space  of \emph{Ruelle resonant states} of $\mathbf X$ at $\lambda_0$.

If $\mc{C}\subset \M$ is a relatively compact open set, then for each $j\in \N$ and $\lambda_0\in \C$, the space of \emph{generalized Ruelle resonant states} of $\mathbf X$ \emph{on $\mc{C}$} of order $j$ at $\lambda_0$ is
\bq
\mathrm{Res}_{\mathbf X,\mc{C}}^{j}(\lambda_0):=\{u\in \D'(\mc{C};\E)\,|\, \supp u\subset {\mathcal K}_-(\mc{C}),\mathrm{WF}(u)\subset E_u^\ast|_{\mc{C}}, (\mathbf X + \lambda_0)^j u=0 \}\label{eq:defResXjC}
\eq
Again, we call $\mathrm{Res}_{\mathbf X,\mc{C}}^{1}(\lambda_0)$
the space  of \emph{Ruelle resonant states} of $\mathbf X$ \emph{on $\mc{C}$} at $\lambda_0$.
\end{definition}

Let us denote by $\mc{D}_c'(\mc{M};\mc{E}
)$  the space of $\mc{E}$-valued distributions with compact support in $\mc{M}$.
\begin{theo}\label{thm:resmain1}
Suppose that Assumptions \emph{\ref{ass:A0}--\ref{ass:A3}} hold and that \ref{ass:B1} or \ref{ass:B2} hold. Then the resolvent $R_{{\bf X}}(\lambda)
:C_c^\infty(\mc{M};\E)\to  \mc{D}'(\mc{M};\E)$ from \eqref{eq:globalresolvent} continues meromorphically to $\lambda \in \C$.
For all $\la_0\in \C$, the following properties hold true: 
\begin{enumerate}
\item For each compactly supported distribution $u\in \mc{D}'_c(\mc{M};\E)$ with ${\rm WF}(u)\cap E_s^*=\emptyset$,   $R_{{\bf X}}(\lambda)u$ is a well-defined meromorphic family of distributions in $\mc{D}'(\mc{M})$ with wave-front set
\begin{equation}\label{WFsetRu} 
{\rm WF}(R_{{\bf X}}(\lambda)u)\subset E_u^*\cup \{ \Phi_t (x,\xi)\,|\, (x,\xi)\in {\rm WF}(u), t\geq 0 \}.
\end{equation} 
\item There is an integer $J(\lambda_0)\in \N_0$ such that the spaces $\mathrm{Res}_{\mathbf X}^{j}(\lambda_0)$ are non-zero and finite-dimensional for all $j\leq J(\lambda_0)$ and $\mathrm{Res}_{\mathbf X}^{j}(\lambda_0)=\mathrm{Res}_{\mathbf X}^{J(\lambda_0)}(\lambda_0)$ \color{black} for all $j> J(\lambda_0)$. 
\item The resolvent \eqref{eq:globalresolvent} can be written as
\bq\label{eq:claimThm2}
R_{{\bf X}}(\lambda)
 = R_H(\lambda) + \sum_{j=1}^{J(\lambda_0)} \frac{(-1)^{j-1}(\mathbf X + \lambda_0)^{j-1} \Pi_{\lambda_0}}{(\lambda-\lambda_0)^j},
\eq
where $R_H(\lambda):\CT(\mc{M};\E)\to \D'(\mc{M};\E)$ is holomorphic near $\lambda_0$ and 
\[\Pi_{\lambda_0}: \CT(\mc{M};\E)\to \D'(\mc{M};\E)\] 
is a finite rank operator whose image is given by $\mathrm{Res}_{\mathbf X}^{J(\lambda_0)}(\lambda_0)$. In particular, the range of the residue of  $R_{{\bf X}}(\lambda)
$ at $\lambda_0$ is equal to $\mathrm{Res}_{\mathbf X}^{J(\lambda_0)}(\lambda_0)$.
\item Let $K_{\Pi_{\lambda_0}}\in \D'(\mc{M}\times \mc{M})$ be the Schwartz kernel of the operator $\Pi_{\lambda_0}$.
 Then 
\bq\label{eq:wavefrontpropX}
\supp K_{\Pi_{\lambda_0}}\subset {\mathcal K}_-\times {\mathcal K}_+,\qquad \mathrm{WF}'(\Pi_{\lambda_0})\subset E_u^\ast\times E_s^\ast. 
\eq
where $\mathrm{WF}'(\Pi_{\lambda_0}):=\{(x,\xi,y,-\eta)\,|\, (x,\xi,y,\eta)\in \mathrm{WF}(K_{\Pi_{\lambda_0}})\}\subset T^\ast(\mc{M}\times \mc{M})$.
\item The operator $\Pi_{\lambda_0}$ commutes with $\mathbf X$ in the sense that $\mathbf X\circ\Pi_{\lambda_0}=\Pi_{\lambda_0}\circ\mathbf X$, where $\mathbf X$ acts on $\mc{D}'(\mc{M};\E)$  and on $\CT(\mc{M};\E)$ on the left- and right-hand side, respectively\color{black}.
\item The operator $\Pi_{\lambda_0}$ is a projection in the following sense: the support and wavefront set properties \eqref{eq:wavefrontpropX} allow one to extend the operator $\Pi_{\lambda_0}$ from $\CT(\mc{M};\E)$ to $\mathrm{Ran}(\Pi_{\lambda_0})=\mathrm{Res}_{\mathbf X}^{J(\lambda_0)}(\lambda_0)$, so that it can be composed with itself, and then it satisfies
\bq
\Pi_{\lambda_0}\circ \Pi_{\lambda_0} = \Pi_{\lambda_0}.\label{eq:projectionX}
\eq 
\end{enumerate}
If Assumptions \ref{ass:B1} and \ref{ass:B2} are removed, the same results hold for the resolvents $R_{{\bf X}}^{\mc{C}}(\lambda)
$ for any relatively compact open set $\mc{C}\subset \mc{M}$ such that $\mc{U}\subset \mc{C}$, where $\mc{U}$ is the set in Assumption  \ref{ass:A2}, by replacing $\mc{M}$ by $\mc{C}$.
\end{theo}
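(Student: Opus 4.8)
The strategy is to reduce the meromorphic continuation to the corresponding statement in the compact setting of \cite{DG16}, combined with a precise analysis of how the flow resolvent propagates singularities and supports. The key observation is that although $\M$ is non-compact, all the relevant dynamics (the trapped set $\mc{K}$, the non-wandering set, and the hyperbolic structure) is concentrated inside the relatively compact set $\mc{U}$ from Assumption \ref{ass:A2}, so on a suitable time-$1$ neighborhood of $\mc{K}$ the operator $e^{-t\bf X}$ can be analyzed by \cite{DG16}.

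First I would recall from Assumptions \ref{ass:B1}/\ref{ass:B2} and \eqref{eq:Cp} that $R_{\bf X}(\lambda)$ is a bounded operator on $L^p(\M;\mc{E})$ for $\Re \lambda > C_p$, and that it propagates supports forward: $\supp(R_{\bf X}(\lambda)u)\subset \bigcup_{t\geq 0}\varphi_t(\supp u)$ for $u\in C_c^\infty$. Next, using the dynamical convexity of $\overline{\mc U}$ (Assumption \ref{ass:A2}) and the attraction property of the trapped set (the analogue of \cite[Lemma 1.3]{DG16}, valid here since $\mc{K}$ is compact), I would construct a cutoff $\chi\in C_c^\infty(\mc{C})$ equal to $1$ near $\mc{K}$ and supported in $\mc{U}$, and split $R_{\bf X}(\lambda) = \chi R_{\bf X}(\lambda)\chi + (1-\chi)R_{\bf X}(\lambda) + \chi R_{\bf X}(\lambda)(1-\chi)$. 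The terms involving $(1-\chi)$ on the right only see finitely many pieces of the flow before trajectories escape every compact set (here Assumption \ref{ass:A1} is crucial: if $\{t\geq 0\,:\,\varphi_t(\mathcal C_1)\cap\mathcal C_2\neq\emptyset\}$ were unbounded one would force intersection with ${\mathcal K}_\pm$), so these terms are finite-time integrals of transfer operators and hence holomorphic in $\lambda$, with the stated support and wavefront set estimates following from the standard propagation of singularities for $\mathbf X + \lambda$ (the symplectic lift $\Phi_t$ governing \eqref{WFsetRu}). The main term $\chi R_{\bf X}(\lambda)\chi$ is, modulo a holomorphic remainder, an operator living on the compact manifold-with-boundary $\overline{\mc U}$ to which \cite[Thm.~1.7 and Thm.~1.8]{DG16} apply verbatim: one uses the Grušin/microlocal parametrix construction there, which requires exactly the strict convexity of $\partial\mc U$ and the uniform hyperbolicity on $\mc{K}$ (Assumptions \ref{ass:A2}(c), \ref{ass:A3}), together with the extension of the stable/unstable dual bundles $E_\pm^\ast$ to ${\mathcal K}_\pm$ from Lemma \ref{lem:Gammaextension}. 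This yields the meromorphic continuation of $\chi R_{\bf X}(\lambda)\chi$ with poles of finite rank, the Laurent expansion \eqref{eq:claimThm2}, the identification of the range of the residue with $\mathrm{Res}_{\mathbf X}^{J(\lambda_0)}(\lambda_0)$, and the support/wavefront bounds \eqref{eq:wavefrontpropX} on $\Pi_{\lambda_0}$. The resolvent identity \eqref{eq:resolventprop} combined with the holomorphy of the remaining pieces then propagates all of these structural statements (commutation with $\mathbf X$, the projection property \eqref{eq:projectionX}, finite-dimensionality and the characterization of the resonant states) from the compact model to $R_{\bf X}(\lambda)$ on all of $\M$.

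For the final statement — dropping \ref{ass:B1} and \ref{ass:B2} and working instead with $R_{\bf X}^{\mc{C}}(\lambda)$ for a relatively compact open $\mc{C}\supset\mc{U}$ — essentially the same argument applies with $\M$ replaced by $\mc{C}$ throughout. The only point to check carefully is that the global trapped set of $\varphi_t$ restricted to $\mc{C}$, namely the sets $\mathcal K_\pm(\mathcal C)$, still contain a neighborhood of $\mc{K}$ on which the hyperbolicity holds, which follows from $\mc{K}\subset\mc{U}\subset\mc{C}$ together with the dynamical convexity of $\overline{\mc U}$; and that the cutoff $\chi$ can still be taken supported in $\mc{U}$ so that $\chi R_{\bf X}^{\mc{C}}(\lambda)\chi$ coincides with the compact-model operator. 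The support statement changes only in that for large $t$ the transported support $\varphi_t(\supp u)$ may leave $\mc{C}$, but this is harmless since we only ever integrate the transfer operators against sections supported in $\mc{C}$ and the resonant states $\mathrm{Res}_{\mathbf X,\mc{C}}^{j}(\lambda_0)$ are by definition supported in $\mathcal K_+(\mc{C})\subset\overline{\mc U}$.

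\textbf{Main obstacle.} The delicate point is not any single estimate but the bookkeeping of the decomposition $R_{\bf X} = \chi R_{\bf X}\chi + \ldots$: one must verify that the ``off-diagonal'' and ``escaping'' pieces really are holomorphic and not merely meromorphic, which hinges on showing that for $u$ supported away from $\mc{K}$ (or paired with test functions away from $\mc{K}$) the orbit spends only bounded time in any fixed compact set — this is exactly where Assumption \ref{ass:A1} enters and where the non-compactness of $\M$ must be handled with care. A secondary technical issue is ensuring the wavefront-set statement \eqref{WFsetRu} is uniform in $\lambda$ on compact subsets avoiding the poles, which requires tracking the $\lambda$-dependence through the parametrix of \cite{DG16}; this is routine but must be stated.
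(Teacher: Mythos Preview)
Your overall architecture is right, but the decomposition you propose has a genuine gap: with a single cutoff $\chi\in C_c^\infty(\mc{U})$ equal to $1$ near $\mc{K}$, the terms $(1-\chi)R_{\bf X}(\lambda)$ and $\chi R_{\bf X}(\lambda)(1-\chi)$ are \emph{not} holomorphic. Take $u\in C_c^\infty$ with $\supp u\cap\mc{K}_-\neq\emptyset$ and a test section $v$ supported near a point of $\mc{K}_+\setminus\{\chi=1\}$ (such points exist because $\mc{K}_+$ is typically non-compact while $\{\chi=1\}$ is compact). Then Assumption \ref{ass:A1} gives no bound on $\{t\geq 0:\varphi_t(\supp u)\cap\supp((1-\chi)v)\neq\emptyset\}$, and indeed the backward orbit of any $x\in\mc{K}_+$ converges to $\mc{K}\subset\supp u$ (Corollary \ref{cor:attract}), so the time integral runs over $[0,\infty)$. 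Your heuristic ``for $u$ supported away from $\mc{K}$ the orbit spends only bounded time in any fixed compact set'' is false for $u$ supported on $\mc{K}_-\setminus\mc{K}$.

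The paper fixes this by using \emph{asymmetric} cutoffs $\chi_\pm\in C^\infty(\mc{M})$ (not compactly supported) equal to $1$ near $\mc{K}_\pm$ respectively, with $\supp\chi_\pm\cap\mc{K}_\mp\subset\mc{U}$, and the splitting
\[
R_{\bf X}(\lambda)=\chi_+R_{\bf X}(\lambda)\chi_-+(1-\chi_+)R_{\bf X}(\lambda)+\chi_+R_{\bf X}(\lambda)(1-\chi_-).
\]
Now $(1-\chi_+)$ kills $\mc{K}_+$ on the left and $(1-\chi_-)$ kills $\mc{K}_-$ on the right, so \ref{ass:A1} applies and both remainder terms are holomorphic. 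The main term $\chi_+R_{\bf X}(\lambda)\chi_-$ still does not live on $\overline{\mc{U}}$ (since $\chi_\pm$ are not compactly supported), and the paper needs a second, more delicate layer of cutoffs $\chi_{\mc{U}},\tilde\chi_{\mc{U}}\in C_c^\infty(\mc{U})$ together with the identity $({\bf X}+\lambda)\chi_1 R_{\bf X}(\lambda)=(X\chi_1)R_{\bf X}(\lambda)$ and a finite-time propagation trick $e^{T{\bf X}}R_{\bf X}(\lambda)e^{-T{\bf X}}=R_{\bf X}(\lambda)$ (using Corollary \ref{cor:attract} to push supports into $\mc{U}_0$) to reduce each piece to a composition of a holomorphic factor with the DG16 resolvent $R_{\bf X}^{\,\mc{U}}(\lambda)$. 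Once that reduction is done, items (2)--(6) follow by showing that restriction $u\mapsto u|_{\mc{U}}$ is an isomorphism from $\mathrm{Res}_{\bf X}^{J(\lambda_0)}(\lambda_0)$ onto $\mathrm{Ran}(\Pi_{\lambda_0}^{\mc{U}})$, which again uses the propagation $e^{-T{\bf X}}$ to transport information from $\mc{K}_+\cap\mc{U}^c$ back into $\mc{U}$.
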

The proof of Theorem \ref{thm:resmain1} is given in Section \ref{sec:proof1}. 
\begin{definition}The poles of the meromorphic operator family $R_{{\bf X}}(\lambda)
$ (resp.\ $R_{{\bf X}}^{\mc{C}}(\lambda)
$) of Theorem \ref{thm:resmain1} are called the \emph{Ruelle resonances} of $\mathbf X$ (resp.\ of $\mathbf{X}$ on $\mc{C}$).
\end{definition}

\subsection{Open systems embedded in $\mc{M}$}\label{sec:analysisopensys}

Here we recall the setting and the main results of \cite{DG16}, which will be used in the proof of Theorem \ref{thm:resmain1}\color{black}. Let $\overline{\mathcal U}\subset \mc{M}$ be a compact smooth manifold with  boundary of the same dimension as $\M$. Denote by $\mathcal U$ its interior and by $\partial \mathcal U$ its boundary. The following definition of strict convexity is a restatement of Definition \ref{def dynamical convexity} in the present more abstract setting.
\begin{definition}\label{def:strictlyconvex}
The boundary $\partial \mathcal U$ is \emph{strictly convex} if there is a boundary defining function $\rho:\overline{\mathcal U}\to \R$ (i.e., $\rho\in \Cinft(\overline{\mathcal U})$ with $\rho>0$ on $\mathcal U$, $\rho=0$ on $\partial \mathcal U$, and $d\rho\neq 0$ on $\partial \mathcal U$) such that $X^2 \rho(x) <0$ for all $x\in \partial \mathcal U$ with $X \rho(x) =0$. 
\end{definition}
If $\overline{\mathcal U}$ contains $\mathcal K$ and $\partial \mathcal U$ is strictly convex (which is in particular the case if Assumption \ref{ass:A2} holds)\color{black}, then $\mathcal K\subset \mathcal U$ (see  \cite[Lemma 1.2]{DG16}) and it immediately follows that
\bq
\mathcal K(\mathcal U)=\mathcal K(\overline{\mathcal U})=\mathcal K.\label{eq:KofU}
\eq
Moreover, if Assumptions \ref{ass:A0} and \ref{ass:A1} hold, then the convexity properties \eqref{eq:Uconvex} and Corollary \ref{cor:attract} imply
\bq
\mathcal K_\pm(\mathcal U)=\mathcal K_\pm \cap \mathcal U,\qquad \mathcal K_\pm(\overline{\mathcal U})=\mathcal K_\pm \cap \overline{\mathcal U}.\label{eq:KKpmofU}
\eq
From now on, suppose that $\overline{\mathcal U}$ has all the properties from Assumption \ref{ass:A2} and that also Assumptions  \ref{ass:A0}, \ref{ass:A1} and \ref{ass:A3} hold. Let $ 1_{\mathcal U}:\mc{M}\to \R$ be the characteristic function of $\mathcal U$.  The restricted resolvent
\[
R_{\bf X}^{\, \mathcal U}(\lambda)=1_{\mathcal U}R_{{\bf X}}(\lambda)
1_{\mathcal U}:\CT(\mathcal U;\E)\to \mc{D}'(\mathcal U;\E)
\]
does not depend on the dynamics of $\varphi_t$ outside $\mathcal U$  thanks to \eqref{eq:Uconvex}. Let us now summarize the relevant results on $R_{\bf X}^{\, \mathcal U}(\lambda)$ from \cite{DG16}, where for \eqref{eq:RanPi} we implicitly use \eqref{eq:KKpmofU}:
\begin{prop}[{\cite[Thm.~1, Thm.~2, Lemma 1.10]{DG16}}]\label{prop:extendedDGres}The operator family $R_{\bf X}^{\, \mathcal U}(\lambda)$ continues meromorphically to $\C$. Moreover:
\begin{enumerate}
\item Each pole $\lambda_0\in \C$ of $R_{\bf X}^{\, \mathcal U}(\lambda)$ is of finite order $J(\lambda_0)\in \N$ with finite rank residue, in the following sense: we can write
\[
R_{\bf X}^{\, \mathcal U}(\lambda) = R^{\,\mathcal U}_H(\lambda) + \sum_{j=1}^{J(\lambda_0)} \frac{(-1)^{j-1}(\mathbf X + \lambda_0)^{j-1} \Pi^{\mathcal U}_{\lambda_0}}{(\lambda-\lambda_0)^j},
\] 
where the operator family $R^{\, \mathcal U}_H(\lambda):\CT(\mathcal U;\E)\to \D'(\mathcal U;\E)$ is holomorphic near $\lambda_0$ and $\Pi^{\mathcal U}_{\lambda_0}:\CT(\mathcal U;\E)\to \D'(\mathcal U;\E)$ is a finite rank operator. Moreover, if $u\in \mc{D}_c'(\mc{U})$ satisfies ${\rm WF}(u)\cap E_s^*=\emptyset$, then 
$R_{\bf X}^{\, \mathcal U}(\lambda)u$ is a well-defined meromorphic family of distributions on $\mc{U}$ and 
\begin{equation}\label{WFRU}
{\rm WF}(R_{\bf X}^{\, \mathcal U}(\lambda)u)\subset E_u^*\cup \{\Phi_t(x,\xi)\,|\, (x,\xi)\in {\rm WF}(u), t\geq 0\}.
\end{equation}
\item The image of the finite rank operator $\Pi^{\mathcal U}_{\lambda_0}$ is given by 
\bq
\mathrm{Ran}(\Pi^{\mathcal U}_{\lambda_0})=\mathrm{Res}_{\mathbf X,\mathcal U}^{J(\lambda_0)}(\lambda_0),\label{eq:RanPi}
\eq 
where the space on the right-hand side was defined in \eqref{eq:defResXjC}.
\item Let $K_{\Pi^{\mathcal U}_{\lambda_0}}\in \D'(\mathcal U\times \mathcal U)$ be the Schwartz kernel of  $\Pi^{\mathcal U}_{\lambda_0}$ and let $$\mathrm{WF}'(\Pi^{\mathcal U}_{\lambda_0}):=\{(x,\xi,y,-\eta)\,|\, (x,\xi,y,\eta)\in \mathrm{WF}(K_{\Pi^{\mathcal U}_{\lambda_0}})\}\subset T^\ast(\mathcal U\times \mathcal U)=T^\ast\mathcal U\oplus T^\ast\mathcal U.$$ Then one has 
\bq
\supp K_{\Pi^{\mathcal U}_{\lambda_0}}\subset {\mathcal K}_-(\overline{\mathcal U})\times {\mathcal K}_+(\overline{\mathcal U}),\qquad \mathrm{WF}'(\Pi^{\mathcal U}_{\lambda_0})\subset E_u^\ast\times E_s^\ast. \label{eq:wavefrontpropY}
\eq
\item The operator $\Pi^{\mathcal U}_{\lambda_0}$ commutes with $\mathbf X$ in the sense that $\mathbf X\circ\Pi^{\mathcal U}_{\lambda_0}=\Pi^{\mathcal U}_{\lambda_0}\circ\mathbf X$, where $\mathbf X$ acts on $\mc{D}'(\mathcal U;\E)$ in the former case and on $\CT(\mathcal U;\E)$ in the latter case.
\item The operator $\Pi^{\mathcal U}_{\lambda_0}$ is a projection in the following sense: the support and wavefront set properties \eqref{eq:wavefrontpropY} allow for extending the operator $\Pi^{\mathcal U}_{\lambda_0}$ from $\CT(\mathcal U;\E)$ to $\mathrm{Ran}(\Pi^{\mathcal U}_{\lambda_0})$, so that it can be composed with itself, and then it satisfies
\bq \label{eq:projection}
\Pi^{\mathcal U}_{\lambda_0}\circ \Pi^{\mathcal U}_{\lambda_0} = \Pi^{\mathcal U}_{\lambda_0}.
\eq 
\end{enumerate} 
\end{prop}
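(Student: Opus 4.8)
The statement is a transcription of \cite[Thm.~1, Thm.~2, Lemma~1.10]{DG16} into the present notation and hypotheses, so the plan is to verify that the setup of \cite{DG16} applies and to recall the structure of the microlocal argument. First I would reduce to a closed ambient manifold: since $\overline{\mathcal U}$ is a compact smooth manifold with boundary (Assumption \ref{ass:A2}), it embeds together with a collar of $\partial\mathcal U$ into a closed manifold $\mathcal M_0$, and $X$ extends to a nowhere-vanishing $X_0$ and $\mathbf X$ to an operator $\mathbf X_0$ on a bundle $\mathcal E_0\to\mathcal M_0$ restricting to $\mathcal E$ over $\overline{\mathcal U}$. By the dynamical convexity \eqref{eq:Uconvex}, the restricted resolvent $R_{\mathbf X}^{\,\mathcal U}(\lambda)=1_{\mathcal U}R_{\mathbf X_0}(\lambda)1_{\mathcal U}$ depends only on trajectory segments staying in $\mathcal U$, so the dynamics of $X_0$ outside $\overline{\mathcal U}$ is irrelevant; in particular one may attach a complex absorbing potential $-iQ$, with $Q\in\Psi^2(\mathcal M_0)$ supported in $\mathcal M_0\setminus\overline{\mathcal U}$ and elliptic outside a slightly larger compact set, and work with $P(\lambda):=\mathbf X_0+\lambda-iQ$.

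Next I would run the standard construction. One builds an order function $m\in C^\infty(T^*\mathcal M_0\setminus 0)$, homogeneous of degree $0$, with $m$ close to $+1$ near $E_s^\ast$ (the radial sink of the symplectic lift of the flow of $X_0$) and close to $-1$ near $E_u^\ast$ (the radial source), and nonincreasing along the Hamilton flow of the real principal symbol $p$ of $\tfrac{1}{i}\mathbf X_0$ on $\{p=0\}$. The existence of such $m$ is the technical heart inherited from \cite{DG16}: near $\mathcal K$ it uses the hyperbolic splitting \eqref{eq:splittingTN}, the dual bundles \eqref{eq:Epmstar} and their extensions from Lemma \ref{lem:Gammaextension}; away from $\mathcal K$ it uses the strict convexity of $\partial\mathcal U$ (Definition \ref{def:strictlyconvex}) and the attraction property of the trapped set (Corollary \ref{cor:attract}) to interpolate and glue to the elliptic region of $Q$. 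With $m$ fixed one introduces the variable-order anisotropic space $\mathcal H^{sm}$ and, for $P(\lambda)$, the domain $\mathcal X^{sm}=\{u\in\mathcal H^{sm}:P(\lambda)u\in\mathcal H^{sm}\}$. An energy estimate (the $L^p$ bounds recalled before \eqref{eq:globalresolvent}, or direct integration by parts) gives invertibility of $P(\lambda):\mathcal X^{sm}\to\mathcal H^{sm}$ for $\Re\lambda$ large and identifies $1_{\mathcal U}P(\lambda)^{-1}1_{\mathcal U}$ with the integral formula for $R_{\mathbf X}^{\,\mathcal U}(\lambda)$. For general $\lambda$, elliptic regularity off $\{p=0\}$, real-principal-type propagation between the radial sets, and radial-point estimates of Dyatlov--Zworski type at $E_s^\ast$ and $E_u^\ast$ show $P(\lambda)$ is Fredholm of index $0$ on any half-plane $\Re\lambda>-N$, taking $s$ and the depth of $m$ large enough that the threshold inequalities hold. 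Analytic Fredholm theory then continues $P(\lambda)^{-1}$, hence $R_{\mathbf X}^{\,\mathcal U}(\lambda)$, meromorphically to $\C$ with finite-rank poles, and the Laurent expansion at a pole $\lambda_0$ produces $R_H^{\,\mathcal U}$ and the finite-rank $\Pi^{\mathcal U}_{\lambda_0}$ of rank $J(\lambda_0)$.

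For the remaining items: the wavefront bound \eqref{WFRU} and the identification \eqref{eq:RanPi} of $\mathrm{Ran}(\Pi^{\mathcal U}_{\lambda_0})$ with $\mathrm{Res}_{\mathbf X,\mathcal U}^{J(\lambda_0)}(\lambda_0)$ follow from the microlocal structure of $\mathcal H^{sm}$ (its elements are microlocally smooth away from a conic neighbourhood of $E_s^\ast$) together with the radial estimate at $E_s^\ast$, which forces $\mathrm{WF}(u)\subset E_+^\ast$ for $u\in\ker(\mathbf X+\lambda_0)^j$; the support statement $\supp u\subset\mathcal K_+(\overline{\mathcal U})$ comes from the forward-support-propagation of $R_{\mathbf X}^{\,\mathcal U}$ (noted after \eqref{eq:resolventprop}) and \eqref{eq:KKpmofU}, since a generalized resonant state cannot exit the forward-trapped set. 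The bounds \eqref{eq:wavefrontpropY} on $K_{\Pi^{\mathcal U}_{\lambda_0}}$ follow by pairing: the left leg uses the above, the right leg the same analysis for the formal adjoint $\mathbf X^\ast$ (of the same type with $X$ replaced by $-X$ and $E_s^\ast,E_u^\ast$ interchanged), whose resonant states are supported in $\mathcal K_-(\overline{\mathcal U})$ with wavefront in $E_-^\ast$. The commutation $\mathbf X\circ\Pi^{\mathcal U}_{\lambda_0}=\Pi^{\mathcal U}_{\lambda_0}\circ\mathbf X$ is read off the Laurent coefficients of the identity $(\mathbf X+\lambda)R_{\mathbf X}^{\,\mathcal U}(\lambda)=\mathrm{Id}$ on $\mathcal U$, legitimate by \eqref{eq:Uconvex}. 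Finally \eqref{eq:projection} is the usual contour argument: $\Pi^{\mathcal U}_{\lambda_0}=\frac{1}{2\pi i}\oint R_{\mathbf X}^{\,\mathcal U}(\lambda)\,d\lambda$, and the resolvent identity gives $\Pi^{\mathcal U}_{\lambda_0}\circ\Pi^{\mathcal U}_{\lambda_0}=\Pi^{\mathcal U}_{\lambda_0}$, the composition being legitimate because $\mathrm{WF}'(\Pi^{\mathcal U}_{\lambda_0})\subset E_+^\ast\times E_-^\ast$ with $E_u^\ast\cap E_s^\ast=0$, so H\"ormander's composition criterion applies.

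The main obstacle is the construction of the order function $m$: it must interpolate between the purely hyperbolic model valid near $\mathcal K$ and the ``convex exit'' region near $\partial\mathcal U$ while staying monotone along the Hamilton flow and a genuine degree-$0$ symbol on all of $T^*\mathcal M_0\setminus 0$. This is carried out in \cite{DG16}; what has to be checked in our setting is only that its inputs --- compactness of $\mathcal K$ (Assumption \ref{ass:A0}), uniform hyperbolicity on $\mathcal K$ with the extended dual bundles (Assumption \ref{ass:A3}, Lemma \ref{lem:Gammaextension}), dynamical and strict convexity of $\overline{\mathcal U}$ (Assumption \ref{ass:A2}), and the attraction property of the trapped set --- are available, which is exactly the content of Assumptions \ref{ass:A0}--\ref{ass:A3} and the surrounding lemmas. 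A secondary point, emphasised in the Remark preceding Lemma \ref{lem:Gammaextension}, is that the ambient manifold $\mathcal M$ here is non-compact; the reduction to the closed $\mathcal M_0$ above shows this is harmless, since only $\overline{\mathcal U}$ and the dynamics inside it ever enter.
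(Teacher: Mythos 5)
The paper offers no independent proof of this proposition: it is quoted verbatim from \cite[Thm.~1, Thm.~2, Lemma 1.10]{DG16}, the only work done in the paper being the verification (via Assumptions \ref{ass:A0}--\ref{ass:A3}, Corollary \ref{coro dynamically convex from DGK} and Lemma \ref{lem:Gammaextension}) that the hypotheses of that result hold in the present setting. Your sketch is essentially a faithful reconstruction of the Dyatlov--Guillarmou argument itself (extension to a closed manifold with complex absorption, anisotropic spaces built from an order function adapted to the radial sets $E_s^\ast,E_u^\ast$, Fredholm/analytic continuation, and the support and wavefront analysis of the Laurent coefficients, with the projection property obtained from the continued resolvent identity), so it follows the same route as the cited source rather than a genuinely different one.
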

Proposition \ref{prop:extendedDGres} implies that a complex number $\lambda_0\in \C$ is a Ruelle resonance of $\mathbf X$ on $\mathcal U$ iff $\mathrm{Res}_{\mathbf X,\mathcal U}^{1}(\lambda_0)\neq \{0\}$.

\subsection{Global dynamical preliminaries}\label{sec:globaldyn}

Here we collect a few elementary observations concerning the global dynamics of the flow $\varphi_t$ on $\mc{M}$. 
\begin{lemma}[{C.f.~\cite[Lemma~1.3]{DG16}, \cite[Eq.~(2.2)]{Guillarmou-Mazzucchelli-Tzou}}]\label{lem:attract}Suppose that Assumption  \emph{\ref{ass:A0}} holds. Then,  for every compact subset $\mathcal C\subset \mc{M}$ and every neighborhood $\mathcal V\subset \mc{M}$ of $\mathcal K$, there is a $T\geq 0$ such that
\bqn
\varphi_{\pm t}({\mathcal K}_\pm(\mathcal C))\subset \mathcal V\qquad \forall \,t\in[T,\infty). 
\eqn
\end{lemma}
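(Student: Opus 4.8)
The plan is to mimic the proof of \cite[Lemma~1.3]{DG16}, which handles the compact-ambient-manifold case; the only point requiring care is that here $\mathcal M$ is non-compact, so we must exploit the compactness of $\mathcal K$ (Assumption \ref{ass:A0}) and restrict attention to the compact set $\mathcal C$. The statement is a purely topological/dynamical fact: every forward orbit segment that stays in the compact set $\mathcal C$ (for the relevant half-line of times) must eventually be driven into any prescribed neighborhood of the trapped set $\mathcal K$. I would argue by contradiction.

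First I would fix the compact set $\mathcal C$ and the open neighborhood $\mathcal V\supset\mathcal K$, and suppose the conclusion fails. Then for every $n\in\N$ there is $x_n\in\mathcal K_+(\mathcal C)$ and $t_n\geq n$ with $\varphi_{-t_n}(x_n)\notin\mathcal V$. Since $\mathcal K_+(\mathcal C)$ is a closed subset of the compact set $\mathcal C$ (the sets $\mathcal K_\pm(S)$ are closed when $S$ is closed, as noted in the excerpt), it is itself compact, so after passing to a subsequence $x_n\to x\in\mathcal K_+(\mathcal C)$. By definition of $\mathcal K_+(\mathcal C)$ we have $\varphi_{-s}(x_n)\in\mathcal C$ for all $s\in[0,\infty)$ and all $n$; in particular the points $y_n:=\varphi_{-t_n}(x_n)$ lie in the compact set $\mathcal C$. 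Passing to a further subsequence, $y_n\to y\in\mathcal C$, and since $\mathcal M\setminus\mathcal V$ is closed, $y\notin\mathcal V$, so in particular $y\notin\mathcal K$.

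Next I would show $y\in\mathcal K$, yielding the contradiction. For any fixed $T>0$ and $n$ large enough that $t_n\geq T$, we have $\varphi_{\tau}(y_n)=\varphi_{\tau-t_n}(x_n)\in\mathcal C$ for all $\tau\in[0,t_n]$, hence for all $\tau\in[0,T]$; letting $n\to\infty$ and using continuity of the flow gives $\varphi_\tau(y)\in\mathcal C$ for all $\tau\in[0,T]$, and since $T$ was arbitrary, $\varphi_\tau(y)\in\mathcal C$ for all $\tau\geq 0$, i.e.\ $y\in\mathcal K_+(\mathcal C)\subset\mathcal K_+$. It remains to see $y\in\mathcal K_-$. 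Here I would use Assumption \ref{ass:A1}: consider the compact sets $\mathcal C_1=\mathcal C$ and $\mathcal C_2$ a small compact neighborhood of $y$ inside $\mathcal C$ (or simply $\mathcal C_2=\{y\}$, noting the argument localizes). Since $x_n=\varphi_{t_n}(y_n)$ with $y_n\in\mathcal C_1$, $x_n\in\mathcal C$, $t_n\to\infty$, and $x_n\to x$, $y_n\to y$, a standard diagonal/limiting argument shows the hypothesis of \ref{ass:A1} is met for appropriate compact neighborhoods of $y$ and $x$; that assumption then forces a limit point of the $y_n$ orbit segments backward — more precisely, it gives $y\in\mathcal K_-$ directly (this is exactly the mechanism Corollary \ref{cor:ass1} extracts from Lemma \ref{lem:3895021859205} in the concrete setting). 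Combining, $y\in\mathcal K_+\cap\mathcal K_-=\mathcal K\subset\mathcal V$, contradicting $y\notin\mathcal V$.

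The main obstacle is the last step, establishing $y\in\mathcal K_-$: one must carefully package the sequences $x_n\to x$, $y_n\to y$, $t_n\to\infty$ into the precise form of Assumption \ref{ass:A1} (which is stated for pairs of \emph{compact sets}, not points), so the cleanest route is to take $\mathcal C_1,\mathcal C_2$ to be closed balls around $y,x$ respectively and observe that $\{t\geq 0:\varphi_t(\mathcal C_1)\cap\mathcal C_2\neq\emptyset\}$ contains all $t_n$ hence is unbounded, giving $\mathcal C_1\cap\mathcal K_-\neq\emptyset$; then shrink the balls and use compactness of $\mathcal K_-$ (which follows from \ref{ass:A0} and \ref{ass:A1} via Lemma \ref{lem:KNW}) to conclude $y\in\mathcal K_-$. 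The symmetric statement for $\mathcal K_-(\mathcal C)$ and negative-to-positive time follows by applying the above to the reversed flow $\varphi_{-t}$.
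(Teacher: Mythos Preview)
Your contradiction argument is the right idea and matches the paper's approach (which simply invokes the verbatim \cite[Lemma~1.3]{DG16} argument after enlarging $\mathcal C$ to $\mathcal C\cup\mathcal K$). However, there is a genuine gap in your execution.

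The lemma assumes only \ref{ass:A0}, yet your ``remaining'' step $y\in\mathcal K_-$ invokes \ref{ass:A1} and then Lemma~\ref{lem:KNW}. This is both illegitimate (the hypothesis is not available) and circular: Lemma~\ref{lem:KNW} relies on Lemma~\ref{lem:Gammaintext}, which in turn relies on the present lemma. The issue stems from a sign slip. With the paper's convention $\mathcal K_\pm(S)=\{x\in S:\varphi_{\mp t}(x)\in S\ \forall\,t\geq 0\}$, your argument ``$\varphi_\tau(y)\in\mathcal C$ for all $\tau\geq 0$'' shows $y\in\mathcal K_-(\mathcal C)$, not $\mathcal K_+(\mathcal C)$. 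And the ``missing'' inclusion $y\in\mathcal K_+(\mathcal C)$ is in fact free: since $x_n\in\mathcal K_+(\mathcal C)$, for every $s\geq 0$ one has $\varphi_{-s}(y_n)=\varphi_{-(s+t_n)}(x_n)\in\mathcal C$, so $y_n\in\mathcal K_+(\mathcal C)$; as $\mathcal K_+(\mathcal C)$ is closed, $y\in\mathcal K_+(\mathcal C)$. Hence $y\in\mathcal K_+(\mathcal C)\cap\mathcal K_-(\mathcal C)=\mathcal K(\mathcal C)\subset\mathcal K_+\cap\mathcal K_-=\mathcal K$, the desired contradiction, and no appeal to \ref{ass:A1} is needed.
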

\begin{proof} First, let us assume that $\mathcal K\subset \mathcal C$. Then $\mathcal K =\mathcal K(\mathcal C)$ and the proof of \cite[Lemma 1.3]{DG16} works verbatim, with $\mathcal C$ playing the role of $\overline{\mathcal U}$ in their notation. Now, if $\mathcal K\not\subset \mathcal C$, for each $\mc{V}$ we know   that $\varphi_{\pm t}({\mathcal K}_\pm(\mathcal C\cup \mathcal K))\subset \mc{V}$ if $t>T$ for some $T\geq 0$,  and since ${\mathcal K}_\pm(\mathcal C)\subset{\mathcal K}_\pm(\mathcal C\cup \mathcal K)$ we get the result.  
\end{proof}
\begin{lemma}\label{lem:Gammaintext}Suppose that Assumptions  \emph{\ref{ass:A0}}  and  \emph{\ref{ass:A1}} hold and let $\mathcal C_0\subset \mc{M}$ be a compact set. Then there is a compact set $\mathcal C\subset \mc{M}$ such that
\[
\mathcal C_0\cap {\mathcal K}_\pm\subset {\mathcal K}_\pm(\mathcal C).
\] 
\end{lemma}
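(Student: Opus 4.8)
The statement says: given Assumptions \ref{ass:A0} and \ref{ass:A1} and a compact set $\mathcal C_0\subset \mathcal M$, there is a compact set $\mathcal C\subset \mathcal M$ with $\mathcal C_0\cap {\mathcal K}_\pm\subset {\mathcal K}_\pm(\mathcal C)$. I only treat the $+$ case, the $-$ case being symmetric (replace $\varphi_t$ by $\varphi_{-t}$). Recall ${\mathcal K}_+(\mathcal C)=\{x\in\mathcal C\,|\,\varphi_{-t}(x)\in\mathcal C\ \forall t\ge 0\}$, so what must be produced is a compact set $\mathcal C$ that contains $\mathcal C_0$ and is \emph{backward-invariant for the part of the orbit that stays close to the dynamics}, in the sense that the entire backward orbit of every point of $\mathcal C_0\cap{\mathcal K}_+$ remains in $\mathcal C$.

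The key point is that if $x\in\mathcal C_0\cap{\mathcal K}_+$, then by definition $x\in{\mathcal K}_+$, so the backward orbit $\{\varphi_{-t}(x)\,|\,t\ge 0\}$ is contained in some compact set; the problem is that this compact set a priori depends on $x$, and one needs a \emph{single} compact set working for all $x\in\mathcal C_0\cap{\mathcal K}_+$ simultaneously. Here is where Assumption \ref{ass:A1} enters. First I would fix, using Assumption \ref{ass:A0}, a relatively compact open neighborhood $\mathcal V$ of the compact trapped set $\mathcal K$, and apply Lemma \ref{lem:attract} (with $\mathcal C_0$ in the role of the compact set and $\mathcal V$ the neighborhood of $\mathcal K$): there is $T\ge 0$ with $\varphi_{-t}({\mathcal K}_+(\mathcal C_0))\subset\mathcal V$ for all $t\ge T$. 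But I want this for all of $\mathcal C_0\cap{\mathcal K}_+$, not just ${\mathcal K}_+(\mathcal C_0)$, so I would instead apply Lemma \ref{lem:attract} with the compact set $\mathcal C_0\cup\overline{\mathcal V}$ (or $\mathcal C_0\cup\mathcal K$): one gets $T\ge 0$ with $\varphi_{-t}({\mathcal K}_+(\mathcal C_0\cup\overline{\mathcal V}))\subset\mathcal V$ for $t\ge T$, and one checks that $\mathcal C_0\cap{\mathcal K}_+\subset{\mathcal K}_+(\mathcal C_0\cup\overline{\mathcal V})$: indeed, if $x\in\mathcal C_0\cap{\mathcal K}_+$ then $x\in\mathcal C_0$ and, using Assumption \ref{ass:A1} applied to the compact sets $\{x\}$ (or rather $\mathcal C_0$) and a suitable second set, together with the attraction property, the backward orbit of $x$ is forced into $\overline{\mathcal V}$ after a finite time; alternatively, and more directly, for any $x\in{\mathcal K}_+$ the set $\overline{\{\varphi_{-t}(x):t\ge 0\}}$ is compact, so $\{\varphi_{-t}(x):t\ge 0\}$ is bounded and by Lemma \ref{lem:attract} applied to that compact set, the tail of the backward orbit lies in $\mathcal V$; hence the backward orbit lies in $\mathcal C_0\cup$ (a compact piece up to time $T$) $\cup\,\overline{\mathcal V}$. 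The genuinely subtle part is controlling the \emph{finite} initial segment $\{\varphi_{-t}(x)\,|\,0\le t\le T\}$ uniformly in $x\in\mathcal C_0\cap{\mathcal K}_+$: this is where Assumption \ref{ass:A1} is essential, because it guarantees precisely that orbit segments of bounded length starting in the compact set $\mathcal C_0$ cannot escape to infinity — more precisely, one argues by contradiction that $\{t\ge 0\,|\,\varphi_{-t}(\mathcal C_0)\cap(\mathcal M\setminus L)\ne\emptyset, t\le T\}$ stays inside a compact $L$; since $T$ is now \emph{fixed}, $\bigcup_{0\le t\le T}\varphi_{-t}(\mathcal C_0)$ is the continuous image of the compact set $[0,T]\times\mathcal C_0$ and is therefore automatically compact. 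So actually the finite-time segment is handled by continuity of the flow alone, once $T$ is fixed.

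Putting this together, I would set
\[
\mathcal C \;:=\; \mathcal C_0\;\cup\;\overline{\mathcal V}\;\cup\;\bigcup_{0\le t\le T}\varphi_{-t}(\mathcal C_0\cup\overline{\mathcal V}),
\]
which is compact because it is a finite union of compact sets (the last being the image of the compact $[0,T]\times(\mathcal C_0\cup\overline{\mathcal V})$ under the continuous map $\varphi$). Then for $x\in\mathcal C_0\cap{\mathcal K}_+$: for $0\le t\le T$, $\varphi_{-t}(x)\in\bigcup_{0\le s\le T}\varphi_{-s}(\mathcal C_0)\subset\mathcal C$; and for $t\ge T$, since $x\in{\mathcal K}_+\subset{\mathcal K}_+(\mathcal C_0\cup\overline{\mathcal V})$ (using that $x$ lies in $\mathcal C_0\cup\overline{\mathcal V}$ and its whole backward orbit stays in $\mathcal C_0\cup\overline{\mathcal V}$ after possibly enlarging — this last step needing a moment's care, replacing $\mathcal V$ by a larger relatively compact neighborhood if necessary so that the backward orbit of $x$ never leaves it, which is possible since $\varphi_{-t}(x)$ lands in $\mathcal V$ for $t\ge T$ and the segment $0\le t\le T$ is bounded), Lemma \ref{lem:attract} gives $\varphi_{-t}(x)\in\mathcal V\subset\mathcal C$. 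Hence the full backward orbit of $x$ lies in $\mathcal C$, so $x\in{\mathcal K}_+(\mathcal C)$, proving $\mathcal C_0\cap{\mathcal K}_+\subset{\mathcal K}_+(\mathcal C)$.

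\textbf{Main obstacle.} The delicate point, and the one I would be most careful with, is the claim that $\mathcal C_0\cap{\mathcal K}_+$ is contained in some ${\mathcal K}_+(\mathcal C')$ for a \emph{fixed} compact $\mathcal C'$ — i.e., promoting the pointwise statement "each $x\in{\mathcal K}_+$ has relatively compact backward orbit" to the uniform statement over $x\in\mathcal C_0\cap{\mathcal K}_+$. The resolution is exactly Assumption \ref{ass:A1}: if no such uniform $\mathcal C'$ existed, one could extract $x_k\in\mathcal C_0\cap{\mathcal K}_+$ and times $t_k\ge 0$ with $\varphi_{-t_k}(x_k)$ escaping every compact set; passing to the set where the backward orbits are forced by Lemma \ref{lem:attract} into $\mathcal V$ for large times, the escape must happen at bounded times $t_k$, contradicting continuity of $\varphi$ on $[0,T_0]\times\mathcal C_0$ for a uniform bound $T_0$. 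I expect that making this extraction argument clean — in particular pinning down why the escape times are bounded, which is the content of \ref{ass:A1} — is the one place the proof genuinely uses both hypotheses and requires the most attention; everything else is bookkeeping with continuity and compactness of finite-time flowouts.
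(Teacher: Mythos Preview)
You correctly isolate the crux: making the time $T$ after which backward orbits of points in $\mathcal C_0\cap\mathcal K_+$ lie in a fixed neighborhood $\mathcal V$ of $\mathcal K$ \emph{uniform} in $x$. But your proposed resolution of this obstacle does not work, and in fact your heuristic is backwards.

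You suggest arguing by contradiction: take $x_k\in\mathcal C_0\cap\mathcal K_+$ and $t_k\ge 0$ with $\varphi_{-t_k}(x_k)\to\infty$, and claim that ``the escape must happen at bounded times $t_k$'', contradicting continuity of the flow on $[0,T_0]\times\mathcal C_0$. This is wrong: nothing prevents $t_k\to\infty$. Lemma~\ref{lem:attract} only gives, for each $x_k$, a time $T_{x_k}$ (depending on the compact set that contains the backward orbit of $x_k$) after which $\varphi_{-t}(x_k)\in\mathcal V$; these $T_{x_k}$ can go to infinity with $k$, so the escape times $t_k\le T_{x_k}$ need not be bounded. The version of Lemma~\ref{lem:attract} you would need to get a uniform $T$ --- namely one applying to $\mathcal C_0\cap\mathcal K_+$ rather than to $\mathcal K_+(\mathcal C)$ --- is precisely Corollary~\ref{cor:attract}, which is \emph{derived from} the present lemma, so invoking it here is circular.

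The paper's argument supplies the missing idea. Fix a precompact open $U\supset\mathcal C_0\cup\mathcal K$, so $\partial U$ is compact and disjoint from $\mathcal K$. For each $k$ with $\varphi_{-t_k}(x_k)\notin\overline U$, the backward orbit of $x_k$ must cross $\partial U$ at a first exit time $t_k^{\rm out}<t_k$ and (since $x_k\in\mathcal K_+$ so the backward orbit eventually returns to $\mathcal V\subset U$) at a last re-entry time $t_k^{\rm in}>t_k$; both crossing points lie in $\partial U\cap\mathcal K_+$. If the gaps $t_k^{\rm in}-t_k^{\rm out}$ were bounded by some $T$, then $\varphi_{-t_k}(x_k)$ would lie in the compact set $\bigcup_{0\le s\le T}\varphi_{-s}(\partial U)$, contradicting escape to infinity. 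Hence the gaps are unbounded, so $\{t\ge 0:\varphi_{-t}(\partial U\cap\mathcal K_+)\cap(\partial U\cap\mathcal K_+)\ne\emptyset\}$ is unbounded, and Assumption~\ref{ass:A1} applied with $\mathcal C_1=\mathcal C_2=\partial U\cap\mathcal K_+$ forces $\partial U\cap\mathcal K_+\cap\mathcal K_-=\partial U\cap\mathcal K\ne\emptyset$ --- contradicting $\partial U\cap\mathcal K=\emptyset$. The point is not that escape times are bounded, but that the \emph{excursion length outside $U$} must be unbounded, and \ref{ass:A1} applied to $\partial U$ then gives the contradiction.
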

\begin{proof}If we obtain the statement with different sets $\mathcal C_+,\mathcal C_-$ in the two cases ``$+$'' and ``$-$'', we can simply put $\mathcal C:=\mathcal C_+\cup\mathcal C_-$ to get what is claimed. Without loss of generality, we consider only the ``$+$'' case, the other one is analogous.  Suppose that the statement does not hold.  Then there is  a sequence of points $x_n\in \mathcal C_0\cap {\mathcal K}_+$, and a sequence $t_n\to +\infty$ such that $\varphi_{t_n}(x_n)\to \infty$ in $\mc{M}$ as $n\to \infty$ (where the latter means that $\varphi_{t_n}(x_n)$ leaves any compact set). Since $x_n\in \mc{K}_+$,  there exists a compact set $C_n$ containing $x_n$ such that $x_n\in \mc{K}_+(C_n)$.
Let $U\subset \mc{M}$ be a precompact neighborhood of $\mathcal C_0\cup \mathcal K$. Then $\partial U\subset \mc{M}$ is a compact set disjoint from $\mathcal K$.  For each $n\in \N$ we can apply Lemma \ref{lem:attract} to get a $T_n\geq 0$ such that $\varphi_{t}(x_n)\in U$ for all $t \geq T_n$. Note that, since $\mathcal K_+$ is $\varphi_t$-invariant, we have $\varphi_{t}(x_n)\in \mathcal K_+$ for all $t\in \R$, in particular we get $\varphi_{t}(x_n)\in U\cap \mathcal K_+$ for all $t \geq T_n$.  On the other hand, because $\varphi_{t_n}(x_n)\to \infty$ in $\mc{M}$, there is an $N$ such that $\varphi_{t_n}(x_n)\in \mc{M}\setminus \overline U$ for all $n\geq N$. Thus, for each $n\geq N$ we have $T_n>t_n$ and can define $t^\mathrm{out}_n:=\min\{t\geq 0 \,| \varphi_t(x_n)\in \partial U\cap\mathcal K_+\}\in (0,t_n)$, the parameter at which $\varphi_t(x_n)$ exits $\overline U$ for the first time, and  $t^\mathrm{in}_n:=\max\{t\geq 0 \,| \varphi_t(x_n)\in \partial U\cap\mathcal K_+\}\in (t_n,T_n)$,  the time of the last re-entry into $\overline{U}$. We now consider $t_n':=t^\mathrm{in}_n-t^\mathrm{out}_n>0$, where $n\geq N$. Suppose that $t_n'$ is bounded as $n\to \infty$, say $t_n'\leq T$. Then $\varphi_{t_n}(x_n)\in \cup_{t\in [0,T]}\varphi_t(\overline U)$ for all $n\geq N$, contradicting the fact that  $\varphi_{t_n}(x_n)\to \infty$ in $\mc{M}$.  Thus the sequence $t_n'$, and consequently also the set $\{t\geq 0\,|\,\varphi_t(\partial U\cap \mathcal K_+)\cap (\partial U\cap \mathcal K_+)\not=\emptyset\}$, is unbounded. Assumption \ref{ass:A1} applied with $\mc{C}_1=\mc{C}_2=\partial U\cap \mathcal K_+$ now implies that $\emptyset\neq\partial U\cap \mathcal K_-\cap \mathcal{K}_+=\partial U\cap \mathcal K$, a contradiction. 
\end{proof}
\begin{cor}\label{cor:attract}
Suppose that Assumptions  \emph{\ref{ass:A0}}  and  \emph{\ref{ass:A1}} hold. Then, for every compact subset $\mathcal C_0\subset \mc{M}$ and every neighborhood $\mathcal V\subset \mc{M}$ of $\mathcal K$, there is a $T\geq 0$ such that
\bqn
\varphi_{\pm t}(\mathcal C_0\cap{\mathcal K}_\pm)\subset \mathcal V\qquad \forall \,t\in[T,\infty).
\eqn
\end{cor}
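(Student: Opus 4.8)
The statement is an immediate consequence of combining Lemma \ref{lem:attract} (the local attraction property, which only needs Assumption \ref{ass:A0}) with Lemma \ref{lem:Gammaintext} (which, using Assumptions \ref{ass:A0} and \ref{ass:A1}, shows that the global forward/backward trapped part of a compact set is already the forward/backward trapped set relative to a fixed \emph{larger} compact set). So the proof is a two-line reduction, and I expect no genuine obstacle; the only thing to be slightly careful about is keeping the $\pm$ and $\mp$ bookkeeping consistent.

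\textbf{Step 1.} Fix a compact set $\mathcal C_0\subset\mathcal M$ and a neighborhood $\mathcal V$ of $\mathcal K$. Apply Lemma \ref{lem:Gammaintext} to $\mathcal C_0$: it produces a compact set $\mathcal C\subset\mathcal M$ (if one prefers, one obtains separate compact sets $\mathcal C_+,\mathcal C_-$ for the two signs and then takes $\mathcal C=\mathcal C_+\cup\mathcal C_-$) with
\[
\mathcal C_0\cap{\mathcal K}_\pm\subset{\mathcal K}_\pm(\mathcal C).
\]

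\textbf{Step 2.} Apply Lemma \ref{lem:attract} to this compact set $\mathcal C$ and the neighborhood $\mathcal V$ of $\mathcal K$ (here only Assumption \ref{ass:A0} is used): there is a $T\geq 0$ such that $\varphi_{\mp t}({\mathcal K}_\pm(\mathcal C))\subset\mathcal V$ for all $t\in[T,\infty)$.

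\textbf{Step 3.} Conclude by monotonicity of $\varphi_{\mp t}$ on the inclusion from Step 1: for $t\in[T,\infty)$,
\[
\varphi_{\mp t}(\mathcal C_0\cap{\mathcal K}_\pm)\subset\varphi_{\mp t}({\mathcal K}_\pm(\mathcal C))\subset\mathcal V,
\]
which is exactly the assertion. The main (and really the only) point is that Lemma \ref{lem:Gammaintext} is what upgrades the statement from "trapped relative to some compact set" to "globally trapped", and it is there that Assumption \ref{ass:A1} enters; once that is in hand, Lemma \ref{lem:attract} does the rest.
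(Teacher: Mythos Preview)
Your proof is correct and follows exactly the same approach as the paper: apply Lemma \ref{lem:Gammaintext} to obtain a compact $\mathcal C$ with $\mathcal C_0\cap{\mathcal K}_\pm\subset{\mathcal K}_\pm(\mathcal C)$, then apply Lemma \ref{lem:attract} to $\mathcal C$ and $\mathcal V$.
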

\begin{proof}
We first apply Lemma \ref{lem:Gammaintext} and then Lemma \ref{lem:attract}, this gives the result.
\end{proof}
\begin{cor}\label{cor:smallGammaC} Suppose that Assumptions  \emph{\ref{ass:A0}}  and  \emph{\ref{ass:A1}} hold. Let $\mathcal V_0\subset \mc{M}$ be a neighborhood of $\mathcal K$. Then there is an open set $\mathcal V\subset \mathcal V_0$ containing $\mathcal K$ such that
\[
\mathcal V\cap {\mathcal K}_\pm\subset {\mathcal K}_\pm(\mathcal V_0).
\] 
\end{cor}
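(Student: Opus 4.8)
\textbf{Proof plan for Corollary \ref{cor:smallGammaC}.}

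The plan is to produce $\mathcal{V}$ by shrinking $\mathcal{V}_0$ along the flow, using the attraction statement of Corollary \ref{cor:attract} to control the forward/backward orbits of points near $\mathcal{K}$. First I would fix, using Assumption \ref{ass:A0}, a relatively compact open neighborhood $\mathcal{W}$ of $\mathcal{K}$ with $\overline{\mathcal{W}}\subset \mathcal{V}_0$. Apply Corollary \ref{cor:attract} with $\mathcal{C}_0:=\overline{\mathcal{W}}$ and with the target neighborhood $\mathcal{V}:=\mathcal{W}$ of $\mathcal{K}$: this yields a time $T\geq 0$ such that $\varphi_{\mp t}(\overline{\mathcal{W}}\cap {\mathcal{K}}_\pm)\subset \mathcal{W}$ for all $t\geq T$. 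I expect the candidate for the desired set to be
\[
\mathcal{V}:=\bigcap_{t\in[0,T]}\varphi_{t}(\mathcal{W})\cap \bigcap_{t\in[0,T]}\varphi_{-t}(\mathcal{W}),
\]
which is open (finite-in-spirit intersection of open sets, but really an intersection over a compact parameter interval of a continuous family, hence open since $[0,T]$ is compact and the flow is continuous — I would justify openness by noting $\{x : \varphi_s(x)\in\mathcal{W}\ \forall s\in[0,T]\}$ is open because $[0,T]\times\{x\}$ is compact and $\mathcal{W}$ open), contains $\mathcal{K}$ (because $\mathcal{K}$ is $\varphi_t$-invariant and contained in $\mathcal{W}$), and is contained in $\mathcal{W}\subset\mathcal{V}_0$.

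Next I would verify the containment $\mathcal{V}\cap {\mathcal{K}}_\pm\subset {\mathcal{K}}_\pm(\mathcal{V}_0)$; I treat the ``$-$'' case, the ``$+$'' case being symmetric. Let $x\in \mathcal{V}\cap {\mathcal{K}}_-$. I must show $\varphi_t(x)\in \mathcal{V}_0$ for all $t\geq 0$. For $t\in[0,T]$ this is immediate from the definition of $\mathcal{V}$ (the factor $\bigcap_{s\in[0,T]}\varphi_{-s}(\mathcal{W})$ gives $\varphi_t(x)\in\mathcal{W}\subset\mathcal{V}_0$ for $t\in[0,T]$). For $t\geq T$: since ${\mathcal{K}}_-$ is $\varphi_t$-invariant, $x\in{\mathcal{K}}_-$ and $x\in\overline{\mathcal{W}}$ (as $x\in\mathcal{V}\subset\mathcal{W}$), the attraction estimate gives $\varphi_{t}(x)=\varphi_{(t-T)}(\varphi_T(x))$; more directly, $x\in\overline{\mathcal{W}}\cap{\mathcal{K}}_-$ and $t\geq T$ give $\varphi_{t}(x)\in\mathcal{W}\subset\mathcal{V}_0$ straight from Corollary \ref{cor:attract}. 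Hence $\varphi_t(x)\in\mathcal{V}_0$ for all $t\geq 0$, i.e.\ $x\in{\mathcal{K}}_-(\mathcal{V}_0)$.

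The main subtlety — though it is minor — is making sure the roles of ``$+$'' and ``$-$'' in Corollary \ref{cor:attract} are lined up correctly with the one-sided orbits defining ${\mathcal{K}}_\pm(\mathcal{V}_0)$: a point in ${\mathcal{K}}_-$ has its \emph{forward} orbit staying in a compact set, and membership in ${\mathcal{K}}_-(\mathcal{V}_0)$ requires the forward orbit to stay in $\mathcal{V}_0$, which is exactly what the bound $\varphi_{-(- t)}({\mathcal{K}}_-\cap\overline{\mathcal{W}})\subset\mathcal{W}$ for $t\geq T$ provides (here $\varphi_{\mp t}$ with the lower sign is $\varphi_{+t}$). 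I would also double-check openness of $\mathcal{V}$ carefully since it is an intersection over a whole interval rather than finitely many times; this follows from continuity of $(s,x)\mapsto\varphi_s(x)$ and compactness of $[0,T]$ via a standard tube-lemma argument. No deeper obstruction is expected: the corollary is essentially a packaging of Corollary \ref{cor:attract} into a neighborhood-shrinking statement.
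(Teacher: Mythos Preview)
Your proposal is correct and follows essentially the same approach as the paper: both arguments use Corollary~\ref{cor:attract} as the sole input and produce $\mathcal{V}$ by flowing a precompact neighborhood of $\mathcal{K}$. The only cosmetic difference is that the paper takes the slicker choice $\mathcal{V}:=\varphi_{\mp T}(U)$ (a single time-shift, one sign at a time), whereas you intersect $\varphi_t(\mathcal{W})$ over $t\in[-T,T]$; your version is a bit more careful in that it handles both signs at once and makes $\mathcal{V}\subset\mathcal{V}_0$ explicit.
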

\begin{proof}Let $U\subset \mc{M}$ be an arbitrary precompact open set containing $\mathcal K$. Then Corollary \ref{cor:attract} gives us a $T\geq 0$ such that $\varphi_{\pm t}(\overline{U}\cap {\mathcal K}_\pm)\subset \mathcal V_0$ for all $t\geq T$. Now $\mathcal V:=\varphi_{\pm T}(U)$ works: since ${\mathcal K}_\pm$ is $\varphi_t$-invariant, we have for all $t\geq 0$
\[
\varphi_{\pm t}(\mathcal V\cap {\mathcal K}_\pm)=\varphi_{\pm t}(\varphi_{\pm T}(U)\cap {\mathcal K}_\pm)=\varphi_{\pm (T+t)}(U)\cap {\mathcal K}_\pm=\varphi_{\pm (T+t)}(U\cap {\mathcal K}_\pm)\subset \mathcal V_0.\qedhere
\]
\end{proof}

\begin{lemma}\label{lem:KNW} If Assumptions \emph{\ref{ass:A0}} and \emph{\ref{ass:A1}} hold, then the global forward/backward trapped sets ${\mathcal K}_+$ and ${\mathcal K}_-$ are closed in $\M$ and one has $\mathcal{NW}\subset \mathcal K$. 
\end{lemma}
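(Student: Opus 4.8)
The plan is to establish the two assertions separately: first that $\mathcal K_+$ and $\mathcal K_-$ are closed in $\mathcal M$, and then that $\mathcal{NW}\subset\mathcal K$.

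For the closedness of $\mathcal K_-$ (the case of $\mathcal K_+$ being symmetric, replacing $t$ by $-t$), I would take a sequence $x_n\in\mathcal K_-$ with $x_n\to x\in\mathcal M$ and show $x\in\mathcal K_-$. Fix a precompact open neighborhood $\mathcal C_0$ of $\{x\}$ together with $\mathcal K$, so that $x_n\in\mathcal C_0$ for $n$ large. By Lemma~\ref{lem:Gammaintext} there is a compact set $\mathcal C$ with $\mathcal C_0\cap\mathcal K_-\subset\mathcal K_-(\mathcal C)$, hence $x_n\in\mathcal K_-(\mathcal C)$ for $n$ large, i.e.\ $\varphi_t(x_n)\in\mathcal C$ for all $t\geq 0$. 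Since $\mathcal C$ is closed and $\varphi_t$ is continuous, passing to the limit gives $\varphi_t(x)\in\mathcal C$ for all $t\geq 0$, so $x\in\mathcal K_-(\mathcal C)\subset\mathcal K_-$. Thus $\mathcal K_-$ is closed, and consequently $\mathcal K=\mathcal K_+\cap\mathcal K_-$ is closed as well (it is also compact by Assumption~\ref{ass:A0}).

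For the inclusion $\mathcal{NW}\subset\mathcal K$, I would argue by contradiction. Suppose $x\in\mathcal{NW}\setminus\mathcal K$. Choose a precompact open neighborhood $\mathcal V$ of $x$; since $x\in\mathcal{NW}$, there are $t_n\to\infty$ and points $y_n\in\mathcal V$ with $\varphi_{t_n}(y_n)\in\mathcal V$. Then with $\mathcal C_1=\mathcal C_2=\overline{\mathcal V}$ the set $\{t\geq 0\,|\,\varphi_t(\overline{\mathcal V})\cap\overline{\mathcal V}\neq\emptyset\}$ is unbounded, so Assumption~\ref{ass:A1} yields $\overline{\mathcal V}\cap\mathcal K_-\neq\emptyset$ and $\overline{\mathcal V}\cap\mathcal K_+\neq\emptyset$. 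This alone does not place $x$ itself in $\mathcal K$, so I need to be more careful: I would instead shrink the neighborhood. Pick a decreasing sequence of precompact open neighborhoods $\mathcal V_k$ of $x$ with $\bigcap_k\overline{\mathcal V_k}=\{x\}$; applying the recurrence of $x$ in each $\mathcal V_k$ and Assumption~\ref{ass:A1} with $\mathcal C_1=\mathcal C_2=\overline{\mathcal V_k}$ gives points $p_k\in\overline{\mathcal V_k}\cap\mathcal K_+$ and $q_k\in\overline{\mathcal V_k}\cap\mathcal K_-$. Since $\overline{\mathcal V_k}\to\{x\}$, we get $p_k\to x$ and $q_k\to x$; as $\mathcal K_+$ and $\mathcal K_-$ are closed (just proved), it follows that $x\in\mathcal K_+\cap\mathcal K_-=\mathcal K$, contradicting $x\notin\mathcal K$. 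Hence $\mathcal{NW}\subset\mathcal K$.

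The main obstacle I anticipate is precisely this last point: Assumption~\ref{ass:A1} only gives \emph{some} point of $\mathcal C_1$ in $\mathcal K_-$ and \emph{some} point of $\mathcal C_2$ in $\mathcal K_+$, not that the recurrent point $x$ itself lies in the trapped set, so a naive single application of \ref{ass:A1} is insufficient. The fix is to combine a shrinking-neighborhoods argument with the closedness of $\mathcal K_\pm$ established in the first part; this makes the order of the two claims important (closedness must come first). One should also double-check the minor point that $\overline{\mathcal V_k}$ can indeed be chosen precompact and nested with intersection $\{x\}$, which is automatic on a manifold.
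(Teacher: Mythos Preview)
Your proposal is correct and follows essentially the same approach as the paper: both use Lemma~\ref{lem:Gammaintext} to trap a convergent sequence $x_n\in\mathcal K_-$ inside some $\mathcal K_-(\mathcal C)$ for the closedness claim, and both combine Assumption~\ref{ass:A1} on shrinking compact neighborhoods with the closedness of $\mathcal K_\pm$ to conclude $\mathcal{NW}\subset\mathcal K$. The only cosmetic differences are that you pass to the limit in $\varphi_t(x_n)\in\mathcal C$ directly (the paper instead invokes compactness of $\mathcal K_-(\mathcal C)$), and that including $\mathcal K$ in your neighborhood $\mathcal C_0$ is unnecessary since Lemma~\ref{lem:Gammaintext} applies to any compact set.
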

\begin{proof}
We begin with the first claim.  Without loss of generality, we focus on ${\mathcal K}_+$. Let $x_n\to x$ be a convergent sequence in $\M$ with $x_n\in  {\mathcal K}_+$. Let $U\subset \M$ be a compact neighborhood of $x$. Then by Lemma \ref{lem:Gammaintext} there is a compact set $\mathcal C\subset \M$ such that $U\cap {\mathcal K}_+\subset {\mathcal K}_+(\mathcal C)$. Now, except possibly for finitely many indices, we have $x_n\in U\cap {\mathcal K}_+$, and because ${\mathcal K}_+(\mathcal C)$ is compact, a subsequence $x_{n_k}$ converges to some $x_0\in {\mathcal K}_+(\mathcal C)\subset {\mathcal K}_+$. However, since the whole sequence already converges to $x$ in $\M$, we must have $x_0=x$, proving that $x\in {\mathcal K}_+$. 

For the second claim, let $x\in \mathcal{NW}$ and let $\mathcal C\subset \mc{M}$ be a compact neighborhood of $x$. Then $\{t\geq 0\,|\,\varphi_t(\mathcal C)\cap \mathcal C\neq \emptyset\}$ is an unbounded set, so  Assumption \ref{ass:A1} implies that $\mathcal C\cap {\mathcal K}_+\neq \emptyset$ and $\mathcal C\cap {\mathcal K}_-\neq \emptyset$. Since $\mathcal C$ was an arbitrary compact neighborhood of $x$ and $\mc{M}$ is a locally compact Hausdorff space, this implies that $x$ lies in the closures of both ${\mathcal K}_+$ and ${\mathcal K}_-$ in $\mc{M}$. These closures agree with ${\mathcal K}_+$ and ${\mathcal K}_-$ by the first claim, hence $x\in \mathcal K$.
\end{proof}

\subsection{Proof of Theorem \ref{thm:resmain1}}\label{sec:proof1}We are now ready to prove Theorem \ref{thm:resmain1}. The proof is inspired  by \cite[proof of Prop.~3.4]{Guillarmou-Mazzucchelli-Tzou} and \cite[proof of Lemma 1.9]{DG16}.  The strategy is as follows. 
We choose cutoff functions $\chi_\pm$ supported, and equal to $1$, near $\mc{K}_\pm$. First, we write 
$(\chi_-+(1-\chi_-))R_{\bf X}(\la)(\chi_++(1-\chi_+))$ and prove that, by splitting the sum into $4$ terms, all terms admit holomorphic extensions as operators mapping $C_c^\infty(\mc{M};\mc{E})\to \mc{D}'(\mc{M};\mc{E})$, except for the term $\chi_-R_{{\bf X}}(\lambda)\chi_+$. 
We then analyze this term: if $\chi_{\mc{U}}$ is supported in $\mc{U}$ and equal to $1$ on $\mc{K}$, we may use \cite{DG16} to deduce that 
$\chi_{\mc{U}}\chi_-R_{{\bf X}}(\lambda)\chi_+\chi_{\mc{U}}$ is meromorphic. We are left with  terms of the form $(1-\chi_{\mc{U}})\chi_-R_{{\bf X}}(\lambda)\chi_+$ and $\chi_-R_{{\bf X}}(\lambda)\chi_+(1-\chi_{\mc{U}})$ which can be reduced again to the setting of \cite{DG16} by using 
that small compact sets supported near points in $\mc{K}^\pm$ can be transported to $\mc{U}$ in finite time $T$,
and combining this with the fact  that the propagator $e^{T{\bf X}}$ commutes with $R_{{\bf X}}(\la)$.
\color{black}
\begin{proof}[Proof of Theorem \ref{thm:resmain1}] 
Let $\overline{\mathcal U}\subset \M$ be a strictly convex neighborhood of $\mc{K}$ as in assumption \ref{ass:A2}.
Let  $\chi_\pm \in C^\infty(\mc{M},[0,1])$ be cutoff functions equal to $1$ in a small neighborhood of 
$\mc{K}_\pm$ but with ${\rm supp}(\chi_\pm)\cap \mc{K}_\mp\subset \mc{U}$. First, we claim that for any such $\chi_\pm$
\begin{equation}\label{Rholo} 
(1-\chi_-)R_{{\bf X}}(\lambda)
 \textrm{ and } R_{{\bf X}}(\lambda)
(1-\chi_+) \quad \textrm{ are holomorphic in }\la\in \C
\end{equation}
as continuous maps $\mc{D}_c'(\mc{M};\mc{E})\to \mc{D}'(\mc{M};\mc{E})$. Moreover, for $u\in \mc{D}'_c(\mc{M};\mc{E})$
\begin{equation}\label{WF1Prop}
\begin{gathered} 
{\rm WF}((1-\chi_-)R_{{\bf X}}(\lambda)u)\subset \{ \Phi_t (x,\xi)\in T^*\mc{M}\,|\, (x,\xi)\in {\rm WF}(u), t\geq 0 \}\\
{\rm WF}(R_{{\bf X}}(\lambda)(1-\chi_+)u)\subset \{ \Phi_t (x,\xi)\in T^*\mc{M}\,|\, (x,\xi)\in {\rm WF}(u), t\geq 0 \}.
\end{gathered} 
\end{equation} 
Indeed, for each 
$u_1\in \mc{D}_c'(\mc{M};\mc{E}
)$ and $u_2\in C_c^\infty(\mc{M};\mc{E}
)$, we have by assumption \ref{ass:A1} that there is $T$ depending on $\supp(\chi_-)$, $\supp(u_1)$ and  $\supp(u_2)$ such that for all $t>T$, 
\[ \varphi_t(\supp(u_1))\cap \supp(1-\chi_-)\cap \supp u_2=\emptyset.\]
This implies that 
\[ \cjg (1-\chi_-)R_{{\bf X}}(\lambda)
u_1,u_2\cjd= \int_{\mc{M}} \int_0^T e^{t\la}\cjg e^{-t{\bf X}}(u_1(x)), u_2(x)\cjd_{\mc{E}}(1-\chi_-(x)){\rm dv}_g(x)dt\]
admits a holomorphic extension to $\la\in \C$, and the extension is continuous in $u_1,u_2$.
The same argument works  for $R_{{\bf X}}(\lambda)(1-\chi_+)$. The wavefront set property \eqref{WF1Prop} is a direct consequence of the transformation law of wave-front set under pull-back by diffeomorphisms.
Moreover, since $\mc{K}_-$ is invariant by the flow $\varphi_t$ and $E_u^*$ is invariant by the symplectic lift $(\d\varphi_t^{-1})^\top$ on $T^*\mc{M}$,  it follows that 
\begin{equation}\label{support_prop}
\begin{gathered}
 {\rm supp}(u)\subset \mc{K}_-\Longrightarrow \supp(R_{{\bf X}}(\lambda)
(1-\chi_+)u)\subset \mc{K}_-\\
{\rm WF}(u)\subset E_u^* \Longrightarrow {\rm WF}(R_{{\bf X}}(\lambda)
(1-\chi_+)u)\subset E_u^*.
 \end{gathered}
\end{equation}
As we have shown \eqref{Rholo}, to prove the meromorphic extension of 
\begin{equation}\label{Rwithcutoff}
R_{{\bf X}}(\lambda)
=  \chi_-R_{{\bf X}}(\lambda)
\chi_+ + (1-\chi_-)R_{{\bf X}}(\lambda)
+\chi_-R_{{\bf X}}(\lambda)
(1-\chi_+),
\end{equation} 
it suffices to deal with $\chi_-R_{{\bf X}}(\lambda)
\chi_+$. We will now assume that the cutoffs $\chi_\pm \in C^\infty(\mc{M},[0,1])$ have further properties in addition to being equal to $1$ in a small neighborhood of $\mc{K}_\pm$ and satisfying ${\rm supp}(\chi_\pm)\cap \mc{K}_\mp\subset \mc{U}$. The functions $\chi_\pm$ were already chosen above, so logically we don't choose them here but replace them by better ones. Let $I_\pm \subset \pl \mc{U}$ be a small neighborhood of $\mc{K}_\pm \cap \pl \mc{U}$ 
such that $\bbar{I}_\pm \cap \mc{K}_\mp=\emptyset$; notice that thanks to the convexity \eqref{eq:Uconvex} of $\overline{\mathcal U}$ and the strict convexity of $\partial \mathcal U$ the vector field $X$ is pointing outside $\mc{U}$ in $\bbar{I}_-$ and inside $\mc{U}$ in $\bbar{I}_+$ if $\bbar{I}_\pm$ are chosen close enough to $\mc{K}_\pm$. 
We then choose $\chi_\pm$ such that $X\chi_\pm=0$ in a neighborhood of $\mc{M}\setminus \mc{U}$ and 
\[\supp \chi_\pm\cap (\mc{M}\setminus \mc{U})\subset \cup_{t\geq 0}\varphi_{\mp t}(I_\pm).\]
This can be done by first choosing $\chi_\pm|_{I_\pm}\in C_c^\infty(I_\pm)$ to be equal to $1$ near $\mc{K}_\pm \cap I_{\pm}$ and 
extending it by $\chi_\pm(\varphi_{\mp t}(x))=\chi_\pm(x)$ for all $t\geq 0$ and $x\in I_\pm$, where for $t>0$ we have $\varphi_{\mp t}(x)\in \M\setminus \overline {\mathcal U}$ by the convexity of $\overline{\mathcal U}$ and the fact that $X$ points outwards (resp.\ inwards) on $I_-$ (resp.\ $I_+$).

Let $\tilde{\chi}_{\mc{U}},\chi_{\mc{U}}\in C_c^\infty(\mc{U})$ be equal to $1$ in a neighborhood $\mc{U}_0\subset \mc{U}$ of $\mc{K}$ and 
$\tilde{\chi}_{\mc{U}}=1$ on  $\supp(\chi_{\mc{U}})\cup \supp(X\chi_\pm)$. 
For each $u\in C_c^\infty(\mc{M})$ we can write for ${\rm Re}(\la)>C_p$
\[ \begin{split}
\chi_-R_{{\bf X}}(\lambda)
(\chi_+u)=& \chi_-\tilde{\chi}_{\mc{U}}R_{{\bf X}}(\lambda)
(\chi_{\mc{U}}\chi_+u) +\chi_-(1-\tilde{\chi}_{\mc{U}})R_{{\bf X}}(\lambda)
(\chi_\mc{U}\chi_+u) \\
& +\chi_-R_{{\bf X}}(\lambda)
((1-\chi_{\mc{U}})\chi_+u)
\\
=:& R_1(\la)u+R_2(\la)u+R_3(\la)u.
\end{split}\]
By  Proposition \ref{prop:extendedDGres}, $R_1(\la):C_c^\infty(\mc{M};\mc{E})\to \mc{D}'(\mc{M};\mc{E}
)$ admits a meromorphic extension as 
continuous operator, whose principal part at each pole $\la_0$ is a finite rank operator with Schwartz kernel supported in ${\mathcal K}_-\times {\mathcal K}_+$ and wave-front set (in the sense of \eqref{eq:wavefrontpropX}) contained in $E_u^\ast\times E_s^\ast$.
A distribution $w_1(\la_0)$  in the range of the polar part at $\la_0$ is supported in $\mc{K}_-$ and has wavefront set ${\rm WF}(w_1(\la_0))\subset E_u^*$. Moreover, $R_1(\la)u$ is well defined and extends meromorphically provided that $u\in \mc{D}_c'(\mc{M};\mc{E})$ has wave-front set ${\rm WF}(u)\cap E_s^*=\emptyset$, and by \eqref{WFRU}
\[{\rm WF}(R_1(\la)u)\subset E_u^*\cup \{ \Phi_t (x,\xi)\in T^*\mc{M}\,|\, (x,\xi)\in {\rm WF}(u), t\geq 0 \}.\]
We now analyze $R_2(\la)$. First, observe that  $(1-\tilde{\chi}_{\mc{U}})X(\chi_-)=0$. Second, we also have that $\chi_-X(\tilde{\chi}_{\mc{U}})$ is supported in $\mc{U}$ and disjoint from $\mc{K}_+$. Let $\chi_1:=\chi_-(1-\tilde{\chi}_{\mc{U}})$ and write for ${\rm Re}(\la)>C_p$
\[ ({\bf X}+\la)\chi_1R_{{\bf X}}(\lambda)
\chi_{\mc{U}}=(X\chi_1)R_{{\bf X}}(\lambda)
\chi_{\mc{U}}=-\chi_-X(\tilde{\chi}_{\mc{U}})R_{{\bf X}}(\lambda)\chi_{\mc{U}}\]
thus 
\begin{equation}\label{split_term_in_2}
\begin{split}
\chi_1R_{{\bf X}}(\lambda)\chi_{\mc{U}} =&- R_{{\bf X}}(\lambda)
\chi_-(X\tilde{\chi}_{\mc{U}})1_{\mc{U}}R_{\bf X}^{\, \mc{U}}(\la)\chi_{\mc{U}}.
\end{split}
\end{equation}
Since $\chi_-X(\tilde{\chi}_{\mc{U}})$ has compact support in $\mathcal U$ not intersecting $\mc{K}_+$, $R_{{\bf X}}(\lambda)
\chi_-X\tilde{\chi}_{\mc{U}}$ admits a holomorphic extension to $\C$ as a continuous operator $\mc{D}'_c(\mc{M};\mc{E})\to \mc{D}'(\mc{M};\mc{E})$ by \eqref{Rholo}, thus 
\begin{equation}\label{doubleR}
R_{{\bf X}}(\lambda)
\chi_-(X\tilde{\chi}_{\mc{U}})1_{\mc{U}}R_{\bf X}^{\, \mc{U}}(\la)\chi_{\mc{U}} 
\end{equation} 
admits a meromorphic extension to $\la\in \C$ as a continuous operator $C_c^\infty(\mc{M};\mc{E})\to \mc{D}'(\mc{M};\mc{E})$ whose polar part at each pole is a finite rank operator. It also acts on the set of $u\in \mc{D}'_c(\mc{M};\mc{E})$ with ${\rm WF}(u)\cap E_s^*=\emptyset$ since $1_{\mc{U}}R_{\bf X}^{\, \mc{U}}(\la)\chi_{\mc{U}}$ does, and 
\[ {\rm WF}(R_2(\la)u) \subset E_u^*\cup \{ \Phi_t (x,\xi)\in T^*\mc{M}\,|\, (x,\xi)\in {\rm WF}(u), t\geq 0 \}\]
by \eqref{WFRU} and  \eqref{WF1Prop}.
Moreover, using \eqref{support_prop}, the polar part of \eqref{doubleR} at a pole $\la_0$ has Schwartz kernel with support  in $\mc{K}_-\times \mc{K}_+$ and wavefront set contained in $E_u^*\times E_s^*$. 

Let us now deal with $R_3(\la)$. For $\chi_3\in C_c^\infty(\mc{M})$, it suffices to prove that 
$\chi_-R_{{\bf X}}(\lambda)(1-\chi_{\mc{U}})\chi_+\chi_3$ has a meromorphic extension as continuous operators 
$C_c^\infty(\mc{M};\mc{E})\to \mc{D}'(\mc{M};\mc{E})$.
One has $\supp((1-\chi_{\mc{U}})\chi_+\chi_3)\cap \mc{K}_-=\emptyset$ and by Corollary \ref{cor:attract} 
there is $T>0$ such that
\[\varphi_t(\supp((1-\chi_{\mc{U}})\chi_+\chi_3)\cap \mc{K}_+)\subset \mc{U}_0 \quad  \forall t\geq T.\]
This implies that if $\tilde{\chi}_+\in C_c^\infty(\mc{M})$ is supported close enough to $\supp((1-\chi_{\mc{U}})\chi_+\chi_3)\cap \mc{K}_+$ 
and $\tilde{\chi}_+=\chi_+$ near that set, there is $T'>0$ such that 
\[\varphi_{T'}(\supp((1-\chi_{\mc{U}})\tilde{\chi}_+\chi_3)\cap \mc{K}_+)\subset \mc{U}_0.\]
Since $\tilde{\chi}_+-\chi_+=0$ near $\mc{K}_+$, the argument above shows  that $R_{{\bf X}}(\lambda)
(1-\chi_{\mc{U}})(\chi_+-\tilde{\chi}_+)\chi_3$ is holomorphic in $\la\in \C$ as continuous operator $C_c^\infty(\mc{M};\mc{E})\to \mc{D}'(\mc{M};\mc{E}
)$, thus to prove the extension of $R_3(\la)$, it suffices to replace $\chi_+$ by $\tilde{\chi}_+$ in the definition of $R_3(\la)$, which we do now.
Using that $e^{T{\bf X}}R_{{\bf X}}(\lambda)
e^{-T{\bf X}}=R_{{\bf X}}(\lambda)
$ we get as operators  for ${\rm Re}(\la)>C_p$ \color{black} 
\begin{equation}\label{R2}
\begin{split}
R_3(\la)\chi_3=& \chi_-e^{T{\bf X}}R_{{\bf X}}(\lambda)
e^{-T{\bf X}}(1-\chi_{\mc{U}})\tilde{\chi}_+\chi_3 \\
=&e^{T{\bf X}} (\chi_-\circ \varphi_{-T})\chi_{\mc{U}}R_{\bf X}^{\, \mc{U}}(\la)1_{\mc{U}}(((1-\chi_{\mc{U}})\tilde{\chi}_+\chi_3)\circ \varphi_{-T})
e^{-T{\bf X}}\\
& + e^{T{\bf X}} (\chi_-\circ \varphi_{-T})(1-\chi_{\mc{U}})R_{{\bf X}}(\lambda)
1_{\mc{U}}(((1-\chi_{\mc{U}})\tilde{\chi}_+\chi_3)\circ \varphi_{-T})e^{-T{\bf X}}.
\end{split}\end{equation}
The term involving $R_{\bf X}^{\, \mc{U}}(\la)$ is meromorphic as continuous operator $C_c^\infty(\mc{M};\mc{E})\to \mc{D}'(\mc{M};\mc{E}
)$ (by Proposition \ref{prop:extendedDGres}), the polar part at a pole $\la_0$ is a finite rank operator, whose Schwartz kernel is
supported in $\mc{K}_-\times \mc{K}_+$ and wavefront set contained in $E_u^*\times E_s^*$.
Up to compositions by $e^{\pm T{\bf X}}$ in the right and left hand side, the second term in \eqref{R2} has the same form as $R_2(\la)$ and can be dealt with the same argument as for $R_2(\la)$. Thus $R_3(\la)\chi_3$ is also meromorphic with polar part at a pole $\la_0$ a finite rank operator, whose Schwartz kernel has 
support contained in $\mc{K}_-\times \mc{K}_+$ and wavefront set contained in $E_u^*\times E_s^*$ (since $e^{\pm T{\bf X}}$ preserves distributions with wave front sets in $E_\pm^*$). Moreover, as above, $R_3(\la)\chi_3u$ also makes sense as a distribution if $u\in \mc{D}'_c(\mc{M};\mc{E})$ satisfies 
${\rm WF}(u)\cap E_s^*=\emptyset$, and  by \eqref{WFRU} and  \eqref{WF1Prop}
\[ {\rm WF}(R_3(\la)u) \subset E_u^*\cup \{ \Phi_t (x,\xi)\in T^*\mc{M}\,|\, (x,\xi)\in {\rm WF}(u), t\geq 0 \}.\]
 Recalling \eqref{Rwithcutoff}, this finishes the proof of the meromorphic extension of $R_{{\bf X}}(\lambda)$. \color{black} 

Let $\la_0$ be a pole of $R_{{\bf X}}(\lambda)
$, which is thus also a pole of $R_{\bf X}^{\, \mc{U}}(\la)$, we write 
\[ R_{{\bf X}}(\lambda)
=\sum_{j=1}^{J(\la_0)} \frac{\Pi_{\la_0}^{j}}{(\la-\la_0)^j}+R_H(\la)  
\]
where $\Pi_{\la_0}^{j}:C_c^\infty(\mc{M};\mc{E}
)\to \mc{D}'(\mc{M};\mc{E}
)$ are continuous and $R_H(\la):C_c^\infty(\mc{M};\mc{E}
)\to \mc{D}'(\mc{M};\mc{E}
)$ is holomorphic near $\la_0$. First, notice that the support and wave-front set properties \eqref{eq:wavefrontpropX} of the Schwartz kernel of $\Pi_{\la_0}^{j}$  follow from the discussion above; this shows item (3) in Theorem \ref{thm:resmain1}. 
We have seen that for each $\chi\in C_c^\infty(\mc{M})$,  the operators $\Pi_{\la_0}^{j}\chi$ have finite rank as maps 
$C_c^\infty(\mc{M};\mc{E})\to \mc{D}'(\mc{M};\mc{E})$. We shall now prove that $\Pi^{j}_{\la_0}$ have finite rank as operators mapping continuously $C_c^\infty(\mc{M};\mc{E}
)$ to $\mc{D}'(\mc{M};\mc{E}
)$.
Expanding the identity $({\bf X}+\la)R_{{\bf X}}(\lambda)
={\rm Id}$ at $\la_0$ gives that  $\Pi_{\la_0}^{j}=(-1)^j({\bf X}+\la_0)^{j-1}\Pi_{\la_0}$ with $\Pi_{\la_0}=-\Pi^1_{\la_0}$; this shows \eqref{eq:claimThm2}.
If $u$ belongs to the range of $\Pi_{\la_0}$, then we have seen that $({\bf X}+\la_0)^{J(\la_0)}u=0$, $\supp(u)\subset \mc{K}_-$ and ${\rm WF}(u)\subset E_u^*$, we thus have
\begin{equation}\label{inclusion_range}
{\rm Ran}(\Pi_{\la_0})\subset  {\rm Res}_{\bf X}^{J(\la_0)}(\la_0).
\end{equation} 
We will show below that this inclusion is an equality. By Proposition \ref{prop:extendedDGres} item (2), for $u\in  {\rm Res}_{\bf X}^{J(\la_0)}(\la_0)$,  the restriction $u|_{\mc{U}}\in {\rm Res}_{{\bf X},\mc{U}}^{J(\la_0)}(\la_0)$ is in the range of the residue $\Pi_{\la_0}^{\mc{U}}$ of $R_{\bf X}^{\, \mc{U}}(\la)$ at $\la_0$.
Let us then prove that 
\begin{equation}\label{restriction_iso}
 u\in {\rm Ran}(\Pi_{\la_0})\mapsto u|_{\mc{U}}\in  {\rm Ran}(\Pi^{\mc{U}}_{\la_0})
 \end{equation}
is an isomorphism. First, we show that 
\begin{equation}\label{restriction_iso2} 
u\in {\rm Res}_{\bf X}^{J(\la_0)}(\la_0)\mapsto u|_{\mc{U}}\in {\rm Ran}(\Pi^{\mc{U}}_{\la_0})={\rm Res}_{{\bf X}}^{J(\la_0)}(\la_0)
\end{equation}
is injective, which will in turn show that \eqref{restriction_iso} is injective.
If $u|_{\mc{U}}=0$, then $u$ vanishes in a neighborhood of $\mc{K}$. Since we already know that $u$ vanishes outside $\mc{K}_-$, it suffices to consider $\chi\in C_c^\infty(\mc{M};\mc{E}
)$ supported in a small neighborhood $B(y)$ of a point 
$y\in \mc{K}_-\cap \mc{U}^c$ and by Corollary \ref{cor:attract} there is $T>0$ such that $\varphi_{-T}(B(y))\subset \mc{U}$, which implies that $\supp(e^{T{\bf X}^*}\chi)\subset \mc{U}$. Since $({\bf X}+\la_0)^{J(\la_0)}u=0$,  we have for each $t\in \R$
\[e^{t({\bf X}+\la_0)}u=\sum_{j=0}^{J(\la_0)-1}\frac{t^j({\bf X}+\la_0)^ju}{j!} \]
and thus 
\[ \cjg u,\chi\cjd = \cjg e^{-T{\bf X}}u, e^{T{\bf X}^*}\chi\cjd=e^{T\la_0}\sum_{j=0}^{J(\la_0)-1}\frac{(-T)^j}{j!}\cjg ({\bf X}+\la_0)^ju,e^{T{\bf X}^*}\chi\cjd=0.\]
This proves that \eqref{restriction_iso2} is injective, and hence so is \eqref{restriction_iso}. The map \eqref{restriction_iso} is also surjective since for $u$ in the range of $\Pi_{\la_0}^{\mc{U}}$
there is $\chi\in C_c^\infty(\mc{U};\mc{E}
)$ such that $u=\Pi_{\la_0}^{\mc{U}}\chi$, and by using \eqref{Rwithcutoff} we have 
$(R_{{\bf X}}(\lambda)
\chi)|_{\mc{U}}=R_{\bf X}^{\, \mc{U}}(\la)\chi+ h(\la)$ where $h(\la)$ is holomorphic near $\la_0$: the residue of $(R_{{\bf X}}(\lambda)
\chi)|_{\mc{U}}$ at $\la_0$  is thus equal to $u$. We have thus proven that $\Pi_{\la_0}^{j}:C_c^\infty(\mc{M};\mc{E})\to \mc{D}'(\mc{M};\mc{E})$ are finite rank operators and that the range of $\Pi_{\la_0}$ is isomorphic to the range of $\Pi_{\la_0}^{\mc{U}}$ by restriction to $\mc{U}$.
In view of \eqref{inclusion_range}, to prove that the residue of  $R_{{\bf X}}(\lambda)
$ at $\lambda_0$ is equal to $\mathrm{Res}_{\mathbf X}^{J(\lambda_0)}(\lambda_0)$, it suffices to prove that for $u\in \mc{D}'(\mc{M};\mc{E}
)$ such that $({\bf X}+\la_0)^{J(\la_0)}u=0$ with $\supp(u)\subset \mc{K}_-$ and ${\rm WF}(u)\subset E_u^*$, then $u\in {\rm Ran}(\Pi_{\la_0})$. First, we take the restriction $u|_{\mc{U}}$, which by Proposition \ref{prop:extendedDGres} (item 2)) and the relation $\mathcal K_-(\mathcal U)=\mathcal K_- \cap \mathcal U$ observed in \eqref{eq:KKpmofU} is in the range of $\Pi_{\la_0}^{\mc{U}}$. By the discussion above, it can thus be written as $u|_{\mc{U}}=({\rm Res}_{\la=\la_0}(R_{{\bf X}}(\lambda)
\chi))|_{\mc{U}}=(\Pi_{\la_0}\chi)|_{\mc{U}}$ for some $\chi\in C_c^\infty(\mc{U})$.
Since \eqref{restriction_iso2} is injective, which implies that $u=\Pi_{\la_0}\chi$. We have proved items (1) and (2) of Theorem \ref{thm:resmain1}. 

Item (4) follows directly from the fact that ${\bf X}R_{\bf X}(\la)=R_{\bf X}(\la){\bf X}$.

We finally prove item (5). First, observe that $\Pi_{\la_0}\circ \Pi_{\la_0}$ is well defined since $\mc{K}_+\cap \mc{K}_-$ is compact, and 
$E_u^*\cap E^*_s$ is contained in the zero section of $T^*\mc{M}$ (thus H\"ormander's criterion for multiplication of distributions is satisfied).
Let $u\in C_c^\infty(\mc{M};\mc{E})$ be supported in $\mc{U}$. First,  
since  $\supp(\Pi_{\la_0}u)\subset \mc{K}_-$ and $\Pi_{\la_0}f=0$ for all $f\in C_c^\infty(\mc{M};\mc{E})$ 
such that $\supp(f)\cap \mc{K}_+=\emptyset$, as $\mc{K}_+\cap \mc{K}_-\subset \mc{U}$ 
we deduce that $\Pi_{\la_0} (\Pi_{\la_0}u)=\Pi_{\la_0} (1_{\mc{U}}\Pi_{\la_0}u)=\Pi_{\la_0}(1_{\mc{U}}\Pi_{\la_0}^{\mc{U}}u)$. Thus, using \eqref{eq:projection}, we obtain
\[(\Pi_{\la_0} \Pi_{\la_0}u)|_{\mc{U}}=\Pi_{\la_0}^{\mc{U}}(\Pi_{\la_0}^{\mc{U}}u)=\Pi_{\la_0}^{\mc{U}}u=(\Pi_{\la_0}u)|_{\mc{U}}.\]
Since \eqref{restriction_iso} is an isomorphism, this implies that $\Pi_{\la_0} (\Pi_{\la_0}u)=\Pi_{\la_0}u$. The same argument applies for $u\in C_c^\infty(\mc{M};\mc{E})$ with $\supp(u)\cap \mc{K}_+=\emptyset$. It remains to consider the case where $\supp(u)\cap \mc{K}_+=\supp(u)\cap \mc{K}_+\cap \mc{U}^c\not=\emptyset$. By Corollary \ref{cor:attract}, there is $T>0$ such that $\varphi_{T}(\supp(u)\cap \mc{K}_+)\subset \mc{U}_0$. But since $e^{T{\bf X}}\Pi_{\la_0}e^{-T{\bf X}}=\Pi_{\la_0}$, we have for $u_T:=\chi_{\mc{U}}e^{-T{\bf X}}u$ 
\[\Pi_{\la_0}u=e^{T{\bf X}}\Pi_{\la_0}u_T, \]
\[\Pi_{\la_0}(\Pi_{\la_0}u)=\Pi_{\la_0}(e^{T{\bf X}}\Pi_{\la_0}u_T)=e^{T{\bf X}}\Pi_{\la_0}(\Pi_{\la_0}u_T)=e^{T{\bf X}}(\Pi_{\la_0}u_T)=\Pi_{\la_0}u,\]
where we used that $\supp(u_T)\subset \mc{U}$ and the result just proved above in that case. This shows item (5).

Finally, given a relatively compact open set $\mc{C}\subset \M$ containing $\mc{U}$, the proof can be applied verbatim to $R^{\mc{C}}_{{\bf X}}(\lambda)
$ by replacing $\mc{M}$ with $\mc{C}$ and $\mc{K}_\pm$ with $\mc{K}_\pm(\mc{C})$.
\end{proof}

\subsection{Resolvent of $X$ on Lorentzian quasi-Fuchsian spaces}\label{sec:applicationofsection}

We start with a proof that the global resolvent of the geodesic vector field acting on functions is meromorphic.
\begin{lemma}
Let $\Gamma\subset {\rm Isom}(\AdS_3)$ be a quasi-Fuchsian group acting on $\mc{O}_+(\Lambda_\Gamma)\subset \AdS_3$ and let $\mc{M}=\Gamma\backslash \widetilde{\mc{M}}$ with $\widetilde{\mc{M}}\subset T^1\AdS_3$ the set defined in Proposition \ref{prop - discontinuity domain unit spacelike tangent bundle}. Then the spacelike geodesic flow $\varphi_t$ and vector field $X$ on $\mc{M}$ satisfy the Assumptions  \ref{ass:A0}, \ref{ass:A1}, \ref{ass:A2} and \ref{ass:A3}. For ${\bf X}=X$ acting on smooth functions on $\mc{M}$, the operator  ${\bf X}$ satisfies also the Assumptions  \ref{ass:B1} and \ref{ass:B2}.
Thus the results of Theorem  \ref{thm:resmain1} apply.
\end{lemma}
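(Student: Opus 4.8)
The plan is to verify the four dynamical assumptions \ref{ass:A0}--\ref{ass:A3} and the two boundedness assumptions \ref{ass:B1}, \ref{ass:B2} one at a time, drawing on the results already established in Section \ref{sec:quasifuchsian}. Assumption \ref{ass:A0} asks that the global trapped set $\mathcal K$ be compact; but by Lemma \ref{lem - non-wandering set for the geodesic flow} we already know that $\mathcal K=\gam\widetilde{\mc K}$ is compact, and one checks readily that this $\mathcal K$ coincides with the global trapped set of $\varphi_t$ in the abstract sense of \eqref{eq:defKpmK}: indeed, a point lies in $\gam\widetilde{\mc K}_\pm$ iff its backward/forward orbit stays in a compact set, which is exactly the definition of ${\mathcal K}_\pm$, and this uses \eqref{eq:inclKpm} together with the co-compactness of the $\Gamma$-action on $C(\Lambda_\Gamma)$. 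Assumption \ref{ass:A1} is precisely the content of Corollary \ref{cor:ass1}, which was deduced from Lemma \ref{lem:3895021859205}; so there is nothing to prove there beyond pointing to that corollary and the identification ${\mathcal K}_\pm=\gam\widetilde{\mc K}_\pm$.

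Assumption \ref{ass:A2} requires a compact smooth submanifold with boundary $\overline{\mathcal U}\subset\mathcal M$ containing $\mathcal K$, dynamically convex, with strictly convex boundary. This is exactly furnished by Corollary \ref{coro dynamically convex from DGK}: take $\overline{\mathcal U}=T^1C_\mathrm{DGK}$, where $C_\mathrm{DGK}\subset M$ is the strictly convex compact set with smooth boundary from \cite[Lem.~6.4]{dgk18} containing $\gam C(\Lambda_\Gamma)$. The dynamical convexity of $\mathcal U$ and $\overline{\mathcal U}$ and the dynamical strict convexity of $\partial\mathcal U$ (matching Definition \ref{def:strictlyconvex}) follow from the geometric convexity of $C_\mathrm{DGK}$ via the boundary defining function $\rho(x,v):=f(x)$ with $f$ a boundary defining function for $C_\mathrm{DGK}$, and the inclusion $\mathcal K\subset\mathcal U$ is built into that corollary. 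Assumption \ref{ass:A3} is the uniform hyperbolicity of $\varphi_t$ on $\mathcal K$ with a $d\varphi_t$-invariant splitting $T\mathcal M|_{\mathcal K}=E_0\oplus E_s\oplus E_u$; this is precisely Proposition \ref{prop - spacelike geodesic flow axiom A} (the Axiom A property), the relevant splitting being the ${\rm SO}(2,2)_\circ$-invariant one of \eqref{decomposition_Anosov} descended to $\mathcal M$, with $E_0=\R X$, and the exponential estimates coming from the computation in the proof of that proposition (based on Lemma \ref{lem - lambda 1 vs lambda 2}).

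For the boundedness assumptions in the scalar case ${\bf X}=X$ acting on functions, I would take $\mathcal E=\mathcal M\times\C$ with the trivial Hermitian metric $h\equiv 1$, so that $e^{-tX}u=u\circ\varphi_{-t}$ and hence $|e^{-tX}u(x)|=|u(\varphi_{-t}(x))|$, giving $C_h(t,\cdot)\equiv 1$; this immediately yields \ref{ass:B1}. For \ref{ass:B2} with $p=1$ one needs $C_\mu(t,\cdot)\in L^\infty(\mathcal M)$ for $t\in[0,1]$, where $(\varphi_t)_\ast\mu=C_\mu(t,\cdot)\mu$. Here I would simply take $\mu$ to be the smooth measure on $\mathcal M$ induced by a choice of ${\rm SO}(2,2)_\circ$-invariant (e.g.\ contact/Liouville) volume density on $T^1\AdS_3$, say $|\alpha\wedge d\alpha|$; since $\mathcal L_X\alpha=0$ this measure is $\varphi_t$-invariant on all of $T^1\AdS_3$ and descends to a $\varphi_t$-invariant measure on $\mathcal M$, so $C_\mu(t,\cdot)\equiv 1$ and \ref{ass:B2} holds trivially with $p=1$. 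The main (and really only) subtlety in this whole lemma is the bookkeeping needed to identify the abstract forward/backward trapped sets ${\mathcal K}_\pm$ of Section \ref{sec:resonancesstandalone} with the concrete sets $\gam\widetilde{\mc K}_\pm$ and to confirm that the hypotheses of Corollary \ref{cor:ass1} really do match \ref{ass:A1} verbatim; once that is done, every assumption is a direct citation of a result proved above. With all assumptions verified, Theorem \ref{thm:resmain1} applies and gives the meromorphic continuation of $R_X(\lambda)$ together with all the stated properties of the resonant states and residues.
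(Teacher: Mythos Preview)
Your proof is correct and follows essentially the same route as the paper: each of \ref{ass:A0}--\ref{ass:A3} is verified by citing exactly the same earlier results (Lemma \ref{lem - non-wandering set for the geodesic flow}, Corollary \ref{cor:ass1}, Corollary \ref{coro dynamically convex from DGK}, Proposition \ref{prop - spacelike geodesic flow axiom A}), and \ref{ass:B1}, \ref{ass:B2} are handled via the trivial Hermitian metric together with the $\varphi_t$-invariant Liouville volume. One small slip: $T^1\AdS_3$ is $5$-dimensional, so the Liouville volume is $\alpha\wedge (d\alpha)^2$, not $\alpha\wedge d\alpha$; this does not affect your argument since $\mathcal L_X\alpha=0$ still gives $C_\mu\equiv 1$.
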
 
\begin{proof} Assumption \ref{ass:A0} is proved in Lemma \ref{lem - non-wandering set for the geodesic flow}, Assumption \ref{ass:A1} is proved in Corollary \ref{cor:ass1},  Assumption \ref{ass:A2} is proved in Corollary \ref{coro dynamically convex from DGK} and Assumption \ref{ass:A3} is proved in Proposition \ref{prop - spacelike geodesic flow axiom A}. The $1$-form $\alpha$ of Section \ref{sec:spacelike_flow} is such that $\mu:=\alpha\wedge d\alpha^{2}$ is an invariant volume form on $T^1\AdS_3$ which descends to $\widetilde{\mc{M}}/\Gamma$, thus $C_\mu=1$ in \eqref{eq:Cmu}. This shows that \ref{ass:B1} and \ref{ass:B2} hold by taking the canonical metric on the trivial bundle $\mc{M}\times \C$.
\end{proof}

We now describe the resonant states of the geodesic vector field acting on functions. As in the general setting above, a pole $\la_0$ of $R_X(\la)$ is called a \emph{Ruelle resonance}. The associated \emph{Ruelle resonant states} are the elements $u\in {\rm Ran}(\Pi^X_{\la_0})$ satisfying $(X+\la_0)u=0$, where
\[ \Pi^X_{\la_0}:={\rm Res}_{\la_0}R_X(\la)\]
denotes the associated spectral projection.

\begin{lemma}\label{uniqueres}
Let $h_{\rm top}>0$ be the topological entropy of $\varphi_1$ on $\mc{K}$. There is $\eps>0$ such that the resolvent $R_X(\la)$ has a unique pole  in the region ${\rm Re}(\la)>h_{\rm top}-2-\eps$, namely at $\la_0=h_{\rm top}-2$. It is a simple pole and the residue 
$\Pi^X_{\la_0}$ has rank $1$.
\end{lemma}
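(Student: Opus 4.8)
The plan is to deduce Lemma \ref{uniqueres} from the general meromorphic continuation result in Theorem \ref{thm:resmain1} together with the well-known thermodynamic interpretation of the first Ruelle resonance for an Axiom A flow whose trapped set carries a uniformly hyperbolic structure. First I would recall that, by Theorem \ref{thm:resmain1} applied to the trivial bundle $\mc E=\mc M\times\C$ and the scalar operator ${\bf X}=X$, the resolvent $R_X(\la)$ continues meromorphically to $\C$ with poles of finite rank, and that each pole $\la_0$ carries a finite rank residue $\Pi^X_{\la_0}$ whose range is $\mathrm{Res}^{J(\la_0)}_X(\la_0)$, consisting of distributions supported on $\mc K_+$ with wavefront set in $E_+^*$ annihilated by $(X+\la_0)^{J(\la_0)}$. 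The key reduction is that, because of the dynamical convexity of $\overline{\mc U}=T^1C_{\mathrm{DGK}}$ and the isomorphism \eqref{restriction_iso}, the Ruelle resonances of $X$ on $\mc M$ in any fixed right half-plane coincide, with identical multiplicities and (restricted) resonant states, with those of the compactly-supported resolvent $R^{\,\mc U}_X(\la)$ of \cite{DG16} for the open hyperbolic system $(\mc U,\varphi_t)$ whose maximal invariant set is the compact uniformly hyperbolic set $\mc K$. Thus the statement becomes a statement about the leading Ruelle resonance of an Axiom A / open hyperbolic flow on $\mc K$, which is exactly the content of the results relating the first resonance to the topological pressure: for the constant potential the relevant pressure is $P(0)=h_{\rm top}$, so the first resonance of $-X$ acting on functions, i.e.\ the pole of $(X+\la)^{-1}$ of largest real part, sits at $\la_0=P(0)-2=h_{\rm top}-2$. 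The shift by $-2$ here comes from the normalisation of the flow resolvent $R_X(\la)=\int_0^\infty e^{-t(X+\la)}\,dt$ together with the fact that the invariant volume form $\mu=\alpha\wedge d\alpha^2$ on $\mc M$ transforms with weight $e^{2t}$ under $\varphi_t$ (equivalently $\omega_u\wedge\omega_s$ has weight $e^{-2t}$ by Lemma \ref{formes_vol_s/u_Gamma}), so the ``critical exponent'' for the $L^1$ mapping of the transfer operator is $h_{\rm top}$ but the generalized eigenfunctions live at $\la_0=h_{\rm top}-2$.

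Concretely I would argue as follows. Step one: restrict attention to $\mathrm{Re}(\la)$ close to $h_{\rm top}-2$ and use Proposition \ref{prop:extendedDGres} together with \eqref{restriction_iso} from the proof of Theorem \ref{thm:resmain1} to identify Ruelle resonances of $X$ on $\mc M$ there with poles of $R^{\,\mc U}_X(\la)$, and the resonant states with (zero-extensions of) the corresponding objects on $\mc U$. Step two: invoke the classical dynamical zeta function / transfer operator analysis for Axiom A basic sets (Bowen-Ruelle, and in the resonance language Dyatlov-Zworski, Giulietti-Liverani-Pollicott, or the open-system version in \cite{DG16} combined with the results used in \cite{GHW18a,Had18}) to see that $R^{\,\mc U}_X(\la)$ is holomorphic in $\mathrm{Re}(\la)>h_{\rm top}-2$, has a pole at $\la_0=h_{\rm top}-2$, and that this pole is simple with a rank-one residue. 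The positivity/simplicity at the leading pole is the spectral gap statement for the transfer operator associated to the equilibrium state of the potential $-\,\mathrm{div}$-weight, which for the present normalisation is the measure of maximal entropy; the leading eigenprojector is then $u_0\otimes v_0$ with $u_0,v_0$ positive measures on $\mc K_+,\mc K_-$ respectively (this is made explicit later in Theorem \ref{theo:Poincare_series}, so for the purposes of this lemma it suffices to record rank one and simplicity). Step three: conclude that on $\mc M$ itself $R_X(\la)$ has no pole in $\mathrm{Re}(\la)>h_{\rm top}-2$ (the holomorphic pieces $(1-\chi_+)R_X(\la)$, $R_X(\la)(1-\chi_-)$ from \eqref{Rholo} contribute nothing, and the remaining pieces are controlled by $R^{\,\mc U}_X(\la)$), has a simple pole at $\la_0=h_{\rm top}-2$, and that $\Pi^X_{\la_0}$ has rank one since by \eqref{restriction_iso} it is isomorphic by restriction to the rank-one projector $\Pi^{\,\mc U}_{\la_0}$; the gap $\eps>0$ is precisely the spectral gap of the transfer operator on the anisotropic spaces.

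The main obstacle I anticipate is pinning down the \emph{gap} and the \emph{simplicity/rank one} at the leading pole in a self-contained way, since Theorem \ref{thm:resmain1} only gives finite rank, not simplicity. The honest route is to cite the spectral-gap results for transfer operators of Axiom A (or open uniformly hyperbolic) flows: the leading Ruelle resonance for the constant observable is simple, real, equals the topological pressure of the relevant geometric potential (here giving $h_{\rm top}$ before the $-2$ normalisation shift), and is isolated; the corresponding eigendistributions are positive measures. I expect the paper's actual proof to cite exactly such results (Pollicott-Ruelle, or the microlocal formulations), and perhaps to bundle the identification of $u_0,v_0$ with the Patterson-Sullivan measure into the later Theorem \ref{theo:Poincare_series}. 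A secondary, more bookkeeping-level issue is being careful that the normalisation of the resolvent and of the invariant measure produces the shift $\la_0=h_{\rm top}-2$ rather than $h_{\rm top}$ or $h_{\rm top}-1$; I would double-check this by testing $R_X(\la)$ on a function of the form $\Phi_+^{-\la'}$-type or by directly computing the $L^1(\mu)$-boundedness threshold of $e^{-tX}$ using $\mu=\alpha\wedge d\alpha^2$ and the weight $e^{2t}$, matching $C_p$ in \eqref{eq:Cp} to $h_{\rm top}$.
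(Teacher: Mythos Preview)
Your reduction to the open system on $\mc U$ via the restriction isomorphism \eqref{restriction_iso} is fine, and this is implicitly what the paper does by invoking \cite[Theorem~4]{DG16}. But two things are off.

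First, your explanation of the $-2$ shift is wrong: the Liouville volume $\mu=\alpha\wedge d\alpha^2$ is \emph{invariant} under $\varphi_t$ (since $\mc L_X\alpha=0$, see the proof in Section~\ref{sec:applicationofsection}), so $C_\mu\equiv 1$ and there is no $L^p$ weight; the $L^1$ threshold is $0$, not $h_{\rm top}$. The shift comes from the unstable Jacobian: by Lemma~\ref{formes_vol_s/u_Gamma} one has $|\det(d\varphi_t|_{E_u})|=e^{2t}$, hence $|\det(1-P_\gamma^m)|=e^{2m\ell_\gamma}(1+\mc O(e^{-\nu m\ell_\gamma}))$, and this factor in the denominator of the flat trace is what produces $h_{\rm top}-2$.

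Second, and more importantly, your Step~two is not a proof. You write ``invoke the classical dynamical zeta function / transfer operator analysis'' and list some references, but none of these gives, for a general Axiom~A basic set of a \emph{flow}, both simplicity/rank~one at the leading pole \emph{and} a spectral gap $\eps>0$ below it. The gap is the hard input here (it is essentially an exponential mixing statement), and you yourself flag it as the obstacle without resolving it. The paper's proof supplies this input concretely: it writes the logarithmic derivative of the zeta function
\[
\frac{Z'_X(\la)}{Z_X(\la)}=\sum_{\gamma\in\mc P}\sum_{m\ge 1}\frac{\ell_\gamma e^{-m\la\ell_\gamma}}{|\det(1-P_\gamma^m)|},
\]
uses the Jacobian computation above to reduce this to $\sum_\gamma \ell_\gamma e^{-(\la+2)\ell_\gamma}$ up to lower order, and then plugs in the prime orbit theorem \emph{with exponential error term} from \cite[Theorem~E]{Delarue_Monclair_Sanders}, namely $N_{\rm pr}(T)=\mathrm{Li}(e^{h_{\rm top}T})(1+\mc O(e^{-\eps T}))$. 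This gives directly that $Z'_X/Z_X$ has a unique simple pole at $\la=h_{\rm top}-2$ with residue~$1$ in a strip of width $\eps$. Finally \cite[Theorem~4]{DG16} identifies the poles of $R_X(\la)$ with those of $Z'_X/Z_X$, the residue of the latter at a pole being exactly the rank of $\Pi^X_{\la_0}$; this yields simplicity and rank one simultaneously. Without an input of this strength your argument does not close.
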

\begin{proof}
Let us consider the zeta function $Z_X(\la)$ defined by its logarithmic derivative
\begin{equation}\label{defZ'/Z}
\frac{Z'_{X}(\la)}{Z_{X}(\la)}=\sum_{\gamma \in \mc{P}}\sum_{m=1}^\infty \frac{\ell_\gamma e^{-m\la \ell_\gamma }}{|\det(1-P_{\gamma}^m)|}
\end{equation}
with $\mc{P}$ the set of primitive periodic orbits of the flow $\varphi_t$ of $X$ and $\ell_\gamma$ the period, $P_\gamma$ the linearized flow $\d\varphi_{\ell_\gamma}$ along $\gamma$. The unstable Jacobian 
$J^u(x)=-\pl_t \log \det(\d\varphi_t(x)|_{E_u(x)})$ of the flow is constant and given by $J^u=-2$ by \eqref{sumlapm} (or Lemma \ref{formes_vol_s/u_Gamma}).
We then obtain
\[ \begin{split}
|\det(1-P_{\gamma}^m)|=& |\det(\d\varphi_{m\ell_\gamma}|_{E_u})|\cdot |\det(1-\d\varphi_{m\ell_\gamma}|_{E_s})|\cdot |\det(1-\d\varphi_{m\ell_\gamma}^{-1}|_{E_u})|\\
 =& e^{2m\ell_\gamma}(1+\mc{O}(e^{-\nu m\ell_\gamma}))
\end{split}\] 
for some $\nu>0$. By \cite[Theorem E]{Delarue_Monclair_Sanders}, the counting function $N_{\rm pr}(T):=\#\{ \gamma \in \mc{P}\,|\, \ell_\gamma \leq T\}$ satisfies the asymptotic 
\[N_{\rm pr}(T)=\int_2^{e^{h_{\rm top}T}}\frac{du}{\log(u)}(1+\mc{O}(e^{(h_{\rm top}-\eps)T}))\] 
for some $\eps>0$. This implies that the counting function
$N(T):=\#\{ \gamma \textrm{ periodic orbit } \,|\, \ell_\gamma \leq T\}$ for the lengths of periodic orbits satisfies 
\begin{equation}\label{NTnon-primitive}
N(T)=N_{\rm pr}(T)+\mc{O}(e^{h_{\rm top}T/2}).
\end{equation}
We deduce from \eqref{NTnon-primitive} that there is $\eps>0$ such that for ${\rm Re}(\la)>h_{\rm top}-2$
\[\begin{split}
\frac{Z'_{X}(\la)}{Z_{X}(\la)} =& \int_1^\infty te^{-(\la+2)t}(1+\mc{O}(e^{-\eps t}))dN_{\rm pr}(t) +A(\la) \\
 =& \frac{e^{-(\la+2-h_{\rm top})}}{(\la+2-h_{\rm top})}  + B(\la)
\end{split}\]
where $A(\la), B(\la)$ are real analytic for $\la \in (h_{\rm top}-2-\eps,1)$. In particular, it follows that 
$\frac{Z'_{X}(\la)}{Z_{X}(\la)}$ is meromorphic in  $\la \in (h_{\rm top}-2-\eps,1)$ 
with only a simple pole at $\la=h_{\rm top}-2$ and residue equal to $1$. By \cite[Theorem 4]{DG16}, the poles of $R_X(\la)$ are exactly those of $\frac{Z'_{X}(\la)}{Z_{X}(\la)}$ and the residue at a pole $\la_0$ of this function is the rank of $\Pi^X_{\la_0}$. This completes the proof.
\end{proof}

\begin{definition}
We say that $u\in \mc{D}'(\mc{M})$ is a resonant state in the first band of resonances if 
$\la_0\in \C$ is a pole of $R_X(\la)$, $u\in {\rm Ran}(\Pi^X_{\la_0})$ and $U_-^Ru=U_-^Lu=0$.
\end{definition}
Using the arguments of \cite{dfg,GHW18a}, we can characterize the resonant states in the first band in terms of equivariant distributions supported on $\Lambda_\Gamma$.

\begin{lemma}\label{first_band}
A resonant state $u$ with resonance $\la_0$ belongs to the first band if and only if there exists a distribution $\omega\in \mc{D}'(\TT^2)$ supported in $\Lambda_\Gamma$ and $\Gamma$-equivariant in the sense that $\gamma^*\omega=N_\gamma^{-\la_0}\omega$ for all $\gamma\in \Gamma$, such that the lift $\tilde{u}$ of $u$ to the cover $\widetilde{\mc{M}}$ is given by 
\begin{equation}\label{tilderesonance}
 \tilde{u}=\mc{Q}^-_{\lambda_0}(\omega)=\Phi_-^{\la_0} \widetilde{B}_-^*\omega.
 \end{equation}
\end{lemma}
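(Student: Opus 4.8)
\textbf{Proof plan for Lemma \ref{first_band}.}

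The plan is to combine the microlocal characterization of Ruelle resonant states from Theorem \ref{thm:resmain1} with the already-established isomorphism \eqref{eq:IsoU_Gamma} relating $\Gamma$-equivariant boundary distributions supported on $\Lambda_\Gamma$ to flow-invariant distributions on $T^1\AdS_3$. First I would prove the ``if'' direction, which is the easier one: suppose $\omega\in\mc D'(\TT^2)$ is supported in $\Lambda_\Gamma$ and satisfies $\gamma^\ast\omega=N_\gamma^{-\la_0}\omega$. By Corollary \ref{cor:Qlambda} and its $\Gamma$-equivariant refinement \eqref{eq:IsoU_Gamma}, the distribution $\tilde u:=\mc Q^-_{\la_0}(\omega)=\Phi_-^{\la_0}\widetilde B_-^\ast\omega$ lies in $\mc D'(T^1\AdS_3)$, is supported in $\widetilde{\mc K}_-$, is $\Gamma$-invariant, and satisfies $(X+\la_0)\tilde u=0$ together with $U\tilde u=0$ for all $U\in C^\infty(T^1\AdS_3;E_u)$ — in particular for $U_-^R,U_-^L$. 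Descending to $\M$ gives $u\in\mc D'(\M)$ with $\supp u\subset\mc K_-$, $(X+\la_0)u=0$ and $U_-^Ru=U_-^Lu=0$. It remains to verify that $u$ actually lies in $\mathrm{Ran}(\Pi^X_{\la_0})$, i.e.\ is a genuine resonant state and not merely a flow-invariant distribution. For this one checks that $\widetilde B_-^\ast\omega$, and hence $\tilde u$, has wavefront set contained in $E_+^\ast$: since $\widetilde B_-$ is a submersion with $\ker d\widetilde B_-=E_0\oplus E_u$, the pullback has $\mathrm{WF}(\widetilde B_-^\ast\omega)\subset(\ker d\widetilde B_-)^{\perp}=\mathrm{Ann}(E_0\oplus E_u)=E_s^\ast$, and on the support $\widetilde{\mc K}_-$ this is exactly $E_+^\ast$ in the notation of Lemma \ref{lem:Gammaextension} (recall $E_+^\ast|_{\mc K}=E_u^\ast$ but over $\mc K_-$ the relevant bundle is obtained differently; I must be careful here with the band convention — a resonant state in the first band is supported on $\mc K_-$ with $\mathrm{WF}$ in $E_-^\ast$ because these are resonances of $X$, whose residue kernel is supported in $\mc K_+\times\mc K_-$ and one picks the ``co-resonant'' side, cf.\ the discussion around Theorem \ref{introthm:resolvent}). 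Thus by the characterization $\mathrm{Res}^1_X(\la_0)=\mathrm{Ran}(\Pi^X_{\la_0})$ (item (2)--(3) of Theorem \ref{thm:resmain1}, applied to the dual flow or directly), $u$ is a resonant state.

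For the ``only if'' direction, suppose $u$ is a resonant state with resonance $\la_0$ in the first band, so $(X+\la_0)u=0$, $\supp u\subset\mc K_-$ (resp.\ $\mc K_+$ depending on convention), $\mathrm{WF}(u)\subset E_-^\ast$, and $U_-^Ru=U_-^Lu=0$. Lift $u$ to a $\Gamma$-invariant distribution $\tilde u\in\mc D'(\widetilde{\mc M})$. The two conditions $U_-^R\tilde u=U_-^L\tilde u=0$ say precisely that $\tilde u$ is annihilated by all smooth sections of $E_u=E_u^R\oplus E_u^L$, since $E_u$ is spanned pointwise by $U_-^R,U_-^L$ (and any section is a $C^\infty$-combination of these). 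Together with $(X+\la_0)\tilde u=0$, Corollary \ref{cor:Qlambda} tells us that $\Phi_-^{-\la_0}\tilde u=\widetilde B_-^\ast\omega$ for a unique $\omega\in\mc D'(\TT^2)$, i.e.\ $\tilde u=\mc Q^-_{\la_0}(\omega)$. The $\Gamma$-invariance of $\tilde u$ translates via the last assertion of Corollary \ref{cor:Qlambda} into $N_\gamma^{\la_0}\gamma^\ast\omega=\omega$, i.e.\ $\gamma^\ast\omega=N_\gamma^{-\la_0}\omega$. Finally, $\supp\tilde u\subset\widetilde{\mc K}_-$ forces $\supp\omega\subset B_-(\widetilde{\mc K}_-)\cap\Lambda_\Gamma$-part; more precisely, since $\widetilde{\mc K}_-=B_-^{-1}(\Lambda_\Gamma)$ and $\widetilde B_-^\ast$ is injective with the support relation $\supp(\widetilde B_-^\ast\omega)=\widetilde B_-^{-1}(\supp\omega)$, we get $\supp\omega\subset\Lambda_\Gamma$. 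This produces the required $\omega$ and formula \eqref{tilderesonance}.

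The main obstacle I anticipate is \emph{not} the algebraic manipulation above but rather pinning down the precise band/side conventions so that the wavefront and support conditions match between ``resonant state of $X$ in the first band'' (as used in the quantum-classical correspondence à la \cite{dfg,GHW18a}) and the image of $\mc Q^-_{\la_0}$. In the closed hyperbolic case the first band corresponds to horocyclically-invariant distributions and the correspondence is clean; here, because $R_X(\la)$ propagates supports forward, its residue kernel sits in $\mc K_+\times\mc K_-$, so the resonant states (in $\ker(X+\la_0)$ on the range side) are supported on $\mc K_+$ with $\mathrm{WF}$ in $E_+^\ast$, whereas it is the \emph{co-resonant} states that live on $\mc K_-$. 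One must therefore decide whether ``killed by $U_-^{R/L}$'' pairs with resonant or co-resonant states and use the correspondingly-oriented version of Corollary \ref{cor:Qlambda} (i.e.\ $\mc Q^+_{\la_0}$ with $\widetilde B_+$ versus $\mc Q^-_{\la_0}$ with $\widetilde B_-$). Once this bookkeeping is fixed — and the statement of the lemma already commits to $\mc Q^-_{\la_0}$, $\widetilde B_-$, and $U_-^{R/L}$, so the consistent reading is that these are the objects annihilated by unstable derivatives and the relevant resonant states are supported on $\widetilde{\mc K}_-$ — the rest is a direct application of the cited results. A secondary technical point is to make sure that membership in $\ker(X+\la_0)$ with the stated support/wavefront conditions is genuinely equivalent to membership in $\mathrm{Ran}(\Pi^X_{\la_0})$ when $\la_0$ is a simple pole with rank-one residue (Lemma \ref{uniqueres}) or more generally; this follows from item (3) of Theorem \ref{thm:resmain1} identifying $\mathrm{Ran}(\mathrm{Res}_{\la_0}R_X) = \mathrm{Res}^{J(\la_0)}_X(\la_0)$, combined with the fact that in the first band $J(\la_0)=1$ by a pairing/integration-by-parts argument using $\mc L_X\omega_u=-2\omega_u$ (cf.\ \eqref{sumlapm}) to rule out Jordan blocks, exactly as in \cite{dfg}.
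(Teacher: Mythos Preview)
Your plan is correct and matches the paper's proof: both directions go through the isomorphism \eqref{eq:IsoU_Gamma} (equivalently Corollary~\ref{cor:Qlambda}), reading off the support, $\Gamma$-equivariance, and flow-invariance conditions, and then invoking Theorem~\ref{thm:resmain1} to identify the resulting distribution as a genuine resonant state. Two of your anticipated obstacles are, however, non-issues.

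\emph{Sign convention.} Resonant states live on the abstract backward-trapped set $\mc K_+$ of Definition~\ref{def:ResXj}, and for the spacelike geodesic flow this lifts precisely to $\widetilde B_-^{-1}(\Lambda_\Gamma)$; the paper simply states this in its proof. This matches the image of $\mc Q^-_{\la_0}$ in \eqref{eq:IsoU_Gamma}, so no passage to co-resonant states or to $\mc Q^+$ is needed---you should just commit to $\mc K_+$ and $\widetilde B_-$.

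\emph{Jordan blocks.} Your worry about ruling out Jordan blocks is unnecessary. Theorem~\ref{thm:resmain1}(2)--(3) gives $\mathrm{Ran}(\Pi^X_{\la_0})=\mathrm{Res}_X^{J(\la_0)}(\la_0)\supset \mathrm{Res}_X^{1}(\la_0)$, so any $u$ with $\supp u\subset\mc K_+$, $\mathrm{WF}(u)\subset E_+^*$ and $(X+\la_0)u=0$ is already in the range of the spectral projector; no semisimplicity argument \`a la \cite{dfg} is required here.

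Finally, for the wavefront check in the ``if'' direction the paper uses a cleaner route than your submersion-pullback computation: since the smooth vector fields $X,U_-^R,U_-^L$ span $E_0\oplus E_u$, the system $(X+\la_0)u=U_-^Ru=U_-^Lu=0$ is elliptic outside $(E_0\oplus E_u)^\perp=E_+^*$, so $\mathrm{WF}(u)\subset E_+^*$ follows immediately by elliptic regularity. Your argument is also valid and gives the same conclusion.
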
 
\begin{proof} A resonant state $u$ has lift $\tilde{u}$ that is supported in the lift $\widetilde{\mc{K}}_-$ of the backward trapped set $\mc{K}_-$ to $\widetilde{\mc{M}}$ by the projection $\widetilde{\mc{M}}\to \mc{M}$, and this set is exactly $\widetilde{B}_-^{-1}(\Lambda_\Gamma)$. By \eqref{eq:IsoU_Gamma}, 
it has the form $\tilde{u}=\mc{Q}^-_{\lambda_0}(\omega)$ for some $\omega\in \mc{D}'(\TT^2)$ supported in $\Lambda_\Gamma$ if $U_-^Ru=U_-^Lu=0$. Moreover, $\omega$ is $\Gamma$-equivariant by Corollary \ref{cor:Qlambda}. Conversely, consider $\omega$ a $\Gamma$-equivariant distribution supported in $\Lambda_\Gamma$, then $\tilde{u}=\mc{Q}^-_{\lambda_0}(\omega)$ is supported in $\widetilde{\mc{K}}_-$, $\Gamma$-invariant, thus descends to a distribution $u$ on $\mc{M}$, supported in $\mc{K}_-$, 
and by Corollary \ref{cor:Qlambda} it satisfies 
\[(X+\la_0)u=0, \quad U_-^Ru=U_-^Lu=0.\]
By elliptic regularity and because $X,U_-^R,U_-^L$ are smooth vector fields, we have that  ${\rm WF}(u)\subset E_u^*$, and by Theorem \ref{thm:resmain1} it means that $u$ is a resonant state.
\end{proof}

In what follows, we shall prove that a subset of Ruelle resonances for $X$ are indeed killed by $U_-^R$ and $U_-^L$, i.e. they belong the first band.
For $\la_-^R,\la_-^L$ the two smooth functions defined in Lemma \ref{splitting_stable/instable} and such that $[X,U_-^{R/L}]=-\la_-^{R/L}$, we define for $z\in \mc{K}$
\[ \nu_-^{R}(z)=\liminf_{t\to +\infty}\frac{1}{t}\int_{-t}^0 \la_-^R(\varphi_s(z))ds>0, \quad  \nu_-^{L}(z)=\liminf_{t\to +\infty}\frac{1}{t}\int_{-t}^0 \la_-^{L}(\varphi_s(z))ds>0\]
and let 
\begin{equation}\label{eps_-R/L}
\eps^R_-:=\inf_{z\in \mc{K}} \nu_-^R(z)>0, \quad \eps^L_-:=\inf_{z\in \mc{K}} \nu_-^L(z)>0.
\end{equation} 
Let us now consider the Ruelle spectrum of the operator $X+\la_-^R$.  

\begin{lemma}\label{noresonance}
The resolvent $R_{X+\la_-^R}(\la)$ has no pole in the region ${\rm Re}(\la)>h_{\rm top}-2-\eps_-^R$ 
 and $R_{X+\la_-^L}(\la)$ has no pole in the region ${\rm Re}(\la)>h_{\rm top}-2-\eps_-^L$. 
\end{lemma}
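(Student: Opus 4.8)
The plan is to follow the proof of Lemma \ref{uniqueres} almost verbatim, applied to the twisted operator $\mathbf X=X+\la_-^R$ (acting on functions on $\mc M$) instead of to $X$ itself, and to track how the potential $\la_-^R$ shifts the abscissa of convergence of the associated zeta function to the left. Let $Z_{X+\la_-^R}$ be the dynamical zeta function attached to $\mathbf X=X+\la_-^R$; as in \eqref{defZ'/Z} but now with the holonomy factor produced by the transfer operator $e^{-t(X+\la_-^R)}$ along a closed orbit, its logarithmic derivative is
\begin{equation*}
\frac{Z'_{X+\la_-^R}(\la)}{Z_{X+\la_-^R}(\la)}=\sum_{\gamma\in\mc{P}}\sum_{m=1}^\infty \frac{\ell_\gamma\,e^{-m\la\ell_\gamma}\,e^{-m\int_\gamma\la_-^R}}{|\det(1-P_\gamma^m)|},\qquad \int_\gamma\la_-^R:=\int_0^{\ell_\gamma}\la_-^R(\varphi_s(x_\gamma))\,ds,
\end{equation*}
where $\int_\gamma\la_-^R$ does not depend on the chosen base point $x_\gamma\in\gamma$.

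The key elementary step is the lower bound $\int_\gamma\la_-^R\ge \eps^R_-\,\ell_\gamma$ for every periodic orbit $\gamma$: evaluating the $\liminf$ defining $\nu_-^R(x_\gamma)$ along $t=n\ell_\gamma$, $n\to\infty$, and using periodicity gives $\nu_-^R(x_\gamma)=\tfrac1{\ell_\gamma}\int_\gamma\la_-^R$, so $\tfrac1{\ell_\gamma}\int_\gamma\la_-^R\ge \inf_{\mc K}\nu_-^R=\eps^R_-$ by \eqref{eps_-R/L}. Next, exactly as in Lemma \ref{uniqueres}, the unstable Jacobian of $\varphi_t$ being the constant $-2$ (from \eqref{sumlapm}, cf.\ Lemma \ref{formes_vol_s/u_Gamma}) yields $|\det(1-P_\gamma^m)|=e^{2m\ell_\gamma}(1+\mc{O}(e^{-\nu m\ell_\gamma}))$ uniformly in $\gamma,m$ for some $\nu>0$, hence $|\det(1-P_\gamma^m)|\ge c\,e^{2m\ell_\gamma}$ for some $c>0$. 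Combining these two facts,
\begin{equation*}
\Big|\frac{Z'_{X+\la_-^R}(\la)}{Z_{X+\la_-^R}(\la)}\Big|\le c^{-1}\sum_{\gamma\in\mc{P}}\sum_{m=1}^\infty \ell_\gamma\,e^{-m({\rm Re}(\la)+2+\eps^R_-)\ell_\gamma},
\end{equation*}
and by the orbit counting estimate $N(T)=\mc{O}(e^{h_{\rm top}T})$ (from \eqref{NTnon-primitive} and \cite[Theorem E]{Delarue_Monclair_Sanders}) this double series converges locally uniformly on $\{{\rm Re}(\la)>h_{\rm top}-2-\eps^R_-\}$. Therefore $Z'_{X+\la_-^R}/Z_{X+\la_-^R}$ is holomorphic there, so $Z_{X+\la_-^R}$ is holomorphic and non-vanishing and $Z'_{X+\la_-^R}/Z_{X+\la_-^R}$ has no pole in that half-plane. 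By \cite[Theorem 4]{DG16}, applied to $X+\la_-^R$ on the trivial bundle exactly as in Lemma \ref{uniqueres}, the poles of $R_{X+\la_-^R}(\la)$ are precisely the poles of $Z'_{X+\la_-^R}/Z_{X+\la_-^R}$, hence $R_{X+\la_-^R}(\la)$ has no pole in $\{{\rm Re}(\la)>h_{\rm top}-2-\eps^R_-\}$. The statement for $X+\la_-^L$ follows identically, with $\eps^R_-$ replaced by $\eps^L_-$.

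The one point requiring a little care — the main obstacle — is justifying that the zeta-function/resolvent dictionary of \cite{DG16} applies to the potential-twisted operator $X+\la_-^R$ in the present non-compact Axiom A setting, and not only to $\mathbf X=X$ as invoked in Lemma \ref{uniqueres}. This fits the general framework of Section \ref{sec:resonancesstandalone}: since by Theorem \ref{thm:resmain1} the Ruelle resonances of $X+\la_-^R$ depend only on the dynamics of $\varphi_t$ near $\mc K$, one may, if the boundedness assumptions \ref{ass:B1}--\ref{ass:B2} are not directly available for $X+\la_-^R$ on all of $\mc M$, work instead with the restricted resolvent $R^{\mc C}_{X+\la_-^R}(\la)$ on a large relatively compact neighborhood $\mc C$ of $\mc K$, which has the same poles.
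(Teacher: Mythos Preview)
Your proof is correct and follows essentially the same strategy as the paper: define the twisted zeta function, show its logarithmic derivative is holomorphic in the claimed half-plane, and invoke \cite[Theorem 4]{DG16}. The one cosmetic difference is that where you bound the series directly via the periodic-orbit inequality $\int_\gamma\la_-^R\ge\eps_-^R\ell_\gamma$ (deduced from evaluating $\nu_-^R$ at a periodic point), the paper instead phrases the abscissa of convergence as the topological pressure ${\rm P}(J^u-\la_-^R)=\sup_\mu\big(h_\mu-\int_{\mc K}(2+\la_-^R)\,d\mu\big)$ and bounds this by $h_{\rm top}-2-\eps_-^R$ using $\int\la_-^R\,d\mu\ge\eps_-^R$ for every invariant measure. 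Your approach is slightly more elementary and avoids invoking the variational principle, at the small cost of redoing the orbit-counting/Jacobian estimate already carried out in Lemma \ref{uniqueres}.
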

\begin{proof} To analyse the poles of $R_{X+\la_-^R}(\la)$, we consider the dynamical zeta function $Z_{X+\la_-^R}(\la)$ as in \cite[Theorem 4]{DG16} defined by its logarithmic derivative:  
\begin{equation}\label{logdetZ} 
\frac{Z'_{X+\la_-^R}(\la)}{Z_{X+\la_-^R}(\la)}=\sum_{\gamma \in \mc{P}}\sum_{m=1}^\infty \frac{\ell_\gamma e^{-m(\la \ell_\gamma +\int_\gamma \la_-^R)}}{|\det(1-P_{\gamma}^m)|}
\end{equation}
with the same notations as in \eqref{defZ'/Z}. For a continuous potential $V\in C^0(\mc{K})$,  
denote by ${\rm P}(V)$ the topological pressure of $V$ with respect to the flow $\varphi_1$. 
The topological pressure can be expressed in terms of periodic orbits by the expression
\[ {\rm P}(V)=\lim_{T\to \infty}\frac{1}{T}\log \sum_{\gamma \in \mc{P},\ell_\gamma\leq T}e^{\int_{\gamma}V} \]
thus the series \eqref{logdetZ} converges in  the half-plane
\[ {\rm Re}(\la)>{\rm P}(J^u-\la_-^R)=\sup_{\mu} \Big(h_\mu -\int_{\mc{K}}(2+\la_-^R)d\mu\Big)\]
where $\mu$ are probability measures on $\mc{K}$ invariant by the flow $\varphi_1$. The right hand side is bounded above by $h_{\rm top}-2-\eps_-^R$, thus $Z_{X+\la_-^R}(\la)$ is analytic in ${\rm Re}(\la)>h_{\rm top}-2-\eps_-^R$. Since, by \cite[Theorem 4]{DG16}, the poles of $R_{X+\la_-^R}(\la)$ are given exactly by the poles of \eqref{logdetZ}, we deduce that $R_{X+\la_-^R}(\la)$ has no pole in ${\rm Re}(\la)>h_{\rm top}-2-\eps_-^R$. The same argument works with $R_{X+\la_-^L}(\la)$.
\end{proof}

\begin{cor}\label{noruelleres}
Let $u\in \mc{D}'(\mc{M})$ be a Ruelle resonant state for $X$ with Ruelle resonance $\la_0$, then 
\[ {\rm Re}(\la_0)>h_{\rm top}-2-\eps_-^R \Longrightarrow U_-^Ru=0, \quad  {\rm Re}(\la_0)>h_{\rm top}-2-\eps_-^L \Longrightarrow U_-^Lu=0\]
and such resonant states can be written on the cover $\widetilde{\mc{M}}$ by \eqref{tilderesonance} for some equivariant distributions $\omega\in \mc{D}'(\TT^2)$ supported on $\Lambda_\Gamma$.
In particular, the Ruelle resonant state associated to the leading resonance $\la_0=h_{\rm top}-2$ is killed by $U_-^L$ and $U_-^R$. 
\end{cor}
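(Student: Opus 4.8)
\textbf{Proof plan for Corollary \ref{noruelleres}.}

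The plan is to exploit the commutator relations $[X,U_-^{R}]=-\lambda_-^{R}U_-^R$ and $[X,U_-^{L}]=-\lambda_-^{L}U_-^L$ from Lemma \ref{splitting_stable/instable} in order to show that if $u$ is a Ruelle resonant state for $X$ at $\lambda_0$, then $U_-^{R}u$ is a (generalized) resonant state for the twisted operator $X+\lambda_-^{R}$ at the \emph{same} value $\lambda_0$, and similarly for $U_-^{L}$. Concretely, if $(X+\lambda_0)u=0$, then applying $U_-^R$ and using the commutator gives $(X+\lambda_0+\lambda_-^R)(U_-^R u)=[X,U_-^R]u+U_-^R(X+\lambda_0)u=0$, i.e. $(\mathbf X + \lambda_0)(U_-^R u)=0$ for $\mathbf X=X+\lambda_-^R$. (If one only knows $u$ lies in the range of $\Pi^X_{\lambda_0}$ and hence $(X+\lambda_0)^{J(\lambda_0)}u=0$, the same computation iterated shows $(\mathbf X+\lambda_0)^{J(\lambda_0)}(U_-^Ru)=0$, so $U_-^Ru$ is a generalized resonant state of order $\le J(\lambda_0)$.) Moreover $U_-^R u$ inherits the support condition $\supp(U_-^R u)\subset \mathcal K_+$ since $U_-^R$ is a smooth differential operator, and by elliptic regularity along the jointly non-characteristic smooth vector fields $X,U_-^R,U_-^L$ one has $\mathrm{WF}(u)\subset E_+^*$, hence $\mathrm{WF}(U_-^R u)\subset E_+^*=E_+^{*}$; crucially, $E_+^*$ for $X$ is the same bundle as the dual-unstable bundle entering the definition of resonant states for $X+\lambda_-^R$, because adding a smooth potential does not change the principal symbol or the hyperbolic structure. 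Therefore $U_-^R u\in \mathrm{Res}^{J(\lambda_0)}_{X+\lambda_-^R}(\lambda_0)$ in the sense of Definition \ref{def:ResXj}.

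Next I would invoke Theorem \ref{thm:resmain1}(2)--(3): a value $\lambda_0$ with $\mathrm{Res}^{j}_{\mathbf X}(\lambda_0)\neq\{0\}$ is necessarily a pole of $R_{\mathbf X}(\lambda)$. Applying this to $\mathbf X=X+\lambda_-^R$ and combining with Lemma \ref{noresonance}, which states that $R_{X+\lambda_-^R}(\lambda)$ has no pole in $\mathrm{Re}(\lambda)>h_{\rm top}-2-\eps_-^R$, we conclude that if $\mathrm{Re}(\lambda_0)>h_{\rm top}-2-\eps_-^R$ then $\mathrm{Res}^{J(\lambda_0)}_{X+\lambda_-^R}(\lambda_0)=\{0\}$, forcing $U_-^R u=0$. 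The argument for $U_-^L u=0$ under $\mathrm{Re}(\lambda_0)>h_{\rm top}-2-\eps_-^L$ is identical, using the $\eps_-^L$-statement in Lemma \ref{noresonance}. Once both $U_-^R u=0$ and $U_-^L u=0$ hold, Lemma \ref{first_band} (or directly the isomorphism \eqref{eq:IsoU_Gamma} together with Corollary \ref{cor:Qlambda}) shows that the lift $\tilde u$ to $\widetilde{\mathcal M}$ equals $\mathcal Q^-_{\lambda_0}(\omega)=\Phi_-^{\lambda_0}\widetilde B_-^*\omega$ for some $\omega\in\mathcal D'(\TT^2)$ supported on $\Lambda_\Gamma$, which is $\Gamma$-equivariant with $\gamma^*\omega=N_\gamma^{-\lambda_0}\omega$; this is exactly the asserted first-band description.

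For the final sentence, I would specialize to the leading resonance. By Lemma \ref{uniqueres} the unique pole of $R_X(\lambda)$ in a strip $\mathrm{Re}(\lambda)>h_{\rm top}-2-\eps$ is the simple pole at $\lambda_0=h_{\rm top}-2$, and $h_{\rm top}-2$ trivially satisfies both $\mathrm{Re}(\lambda_0)=h_{\rm top}-2>h_{\rm top}-2-\eps_-^R$ and $>h_{\rm top}-2-\eps_-^L$ since $\eps_-^R,\eps_-^L>0$ by \eqref{eps_-R/L}; hence the first part of the corollary applies and the associated resonant state is killed by $U_-^R$ and $U_-^L$. The main obstacle I anticipate is not the commutator bookkeeping but making sure the hypotheses of Lemma \ref{noresonance} and Theorem \ref{thm:resmain1} are correctly matched for the twisted operator --- in particular that the bundle $E_+^*$ and the trapped set $\mathcal K_+$ are genuinely \emph{unchanged} when one passes from $X$ to $X+\lambda_-^{R/L}$ (true because the two operators have the same symbol and the same underlying flow $\varphi_t$), and that the generalized-resonant-state version of the commutator argument (for $J(\lambda_0)>1$) still lands in the right space; these are routine but should be stated carefully.
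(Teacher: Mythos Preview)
Your proposal is correct and follows essentially the same approach as the paper: use the commutator $[X,U_-^{R}]=-\lambda_-^{R}U_-^{R}$ to turn a resonant state $u$ for $X$ at $\lambda_0$ into a resonant state $U_-^{R}u$ for $X+\lambda_-^{R}$ at the same $\lambda_0$ (with the support and wavefront-set conditions preserved since $U_-^{R}$ is a smooth differential operator and the underlying flow is unchanged), then invoke Lemma~\ref{noresonance} to force $U_-^{R}u=0$, and finally appeal to Lemma~\ref{first_band} for the first-band description. The paper's proof is simply a more condensed version of exactly this argument; your additional remarks on the generalized case $J(\lambda_0)>1$ and on why $E_+^*$ and $\mathcal K_+$ are unchanged under adding a smooth potential are correct but not needed for the statement as phrased (which concerns resonant states, not generalized ones).
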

\begin{proof} Consider $U_-^Ru$ if $(X+\la_0)u=0$ with $u$ a resonant state. Clearly $U_-^Ru$ has the same support and wave-front set properties as $u$ since $U_-^R$ is smooth. Now we have, thanks to $[X,U_-^R]=-\la_-^RU_-^R$ 
\[ 0=U_-^R(X+\la_0)u=(X+\la_0+\la_-^R)U_-^Ru\]
and thus $U_-^Ru$ is a resonant state with resonance $\la_0$ for the operator $X+\la_-^R$. We can then use Lemma \ref{noresonance} to conclude, and the existence of an equivariant distribution $\omega$ supported on the limit set $\Lambda_\Gamma$ follows from Lemma \ref{first_band}.
\end{proof}

\section{Poincar\'e series}\label{Sec:Poincare}
In this section, we introduce the Poincar\'e series for a quasi-Fuchsian group $\Gamma\subset {\rm Isom}(\AdS_3)$ and prove that it extends meromorphically to the whole complex plane by using the resolvent of the spacelike geodesic flow.  

\subsection{Poincar\'e series}\label{sec:poincare_series}
First, for each pair $(x,y)\in \mc{O}_+(\Lambda_\Gamma)\times \mc{O}_+(\Lambda_\Gamma)$ we define 
$\Gamma_{x,y}\subset \Gamma$ by its complement: 
\begin{equation}\label{Gammaxy^c}
\Gamma_{x,y}^c:=\{ \gamma\in \Gamma \, |\, -q(x,\gamma y)>1 \}.
\end{equation} 
By Lemma \ref{lem - q<-1 up to moving by an element of the group}, the set $\Gamma_{x,y}$ is finite.
As in \cite{glorieux-monclair}, we define the exponent of the group $\Gamma$ by 
\begin{equation}\label{delta_Gamma} 
\delta_\Gamma:= \limsup_{T\to \infty}\frac{1}{T}\log\#\{ \gamma \in \Gamma_{{\rm e},{\rm e}}\,|\, d({\rm e},\gamma\cdot{\rm e})\leq T \}.
\end{equation}
It is shown in \cite{glorieux-monclair} that $\delta_\Gamma\leq 1$ with equality if and only if $\Gamma$ is Fuchsian. Moreover, one can replace $\Gamma_{{\rm e},{\rm e}}$ in \eqref{delta_Gamma} by $ \Gamma_{x,y}$ for any pair $(x,y)\in \mc{O}_+(\Lambda_\Gamma)\times \mc{O}_+(\Lambda_\Gamma)$. 

Associated to $\delta_\Gamma$, it is proved in \cite[Section 4]{glorieux-monclair} that there exists a measure $\omega_{\delta_\Gamma}$, called \emph{Patterson-Sullivan measure}, supported on $\Lambda_\Gamma$ and which satisfies the equivariance 
$\gamma^*\omega_{\delta_\Gamma}=N_\gamma^{2-\delta_\Gamma}\omega_{\delta_\Gamma}$ for all $\gamma\in \Gamma$. 
 It is uniquely characterized by these properties.\color{black}

\begin{definition}
Let $\Gamma\subset {\rm Isom}(\AdS_3)$ be a quasi-Fuchsian subgroup.  For each $x,y\in  \mc{O}_+(\Lambda_\Gamma)^2$, the Poincar\'e series of $\Gamma$ is given for ${\rm Re}(\la)>\delta_\Gamma$ by
\begin{equation}\label{defNla} 
\mc{D}_\la(x,y):= \sum_{\gamma\in \Gamma_{x,y}^c} e^{-\la d(x,\gamma y)}.
\end{equation}
\end{definition}
The series converges  for ${\rm Re}(\la)>\delta_\Gamma$ uniformly on compact sets $K_1\times K_2\subset \mc{O}_+(\Lambda_\Gamma)\times \mc{O}_+(\Lambda_\Gamma)$ by Lemma \ref{lem - q<-1 up to moving by an element of the group}. 
Moreover, 
one has for each $\gamma_0\in \Gamma$
\begin{equation}\label{D_invariance_by_Gamma} 
\mc{D}_\la(x,\gamma_0 y)= \sum_{\gamma\in \Gamma, -q(x,\gamma \gamma_0y)>1} e^{-\la d(x,\gamma\gamma_0 y)}= \sum_{\gamma'\in \Gamma_{x,y}^c} e^{-\la d(x,\gamma' y)}
\end{equation}
and similarly $ \mc{D}_\la(\gamma_0x,y)= \mc{D}_\la(x,y)$, which implies that $\mc{D}_\la$ descends to a bounded function on 
$M^2=(\Gamma\backslash \mc{O}_+(\Lambda_\Gamma))^2$, that we still denote by $\mc{D}_\la$.\\

We will write the Poincar\'e series in terms of the resolvent of the geodesic flow on differential forms, following the argument of Dang-Rivi\`ere \cite{Dang-Riviere}.  To this end we need to consider $\mc{L}_{X}: C^\infty(\mc{M};\Lambda T^*\mc{M} )\to C^\infty(\mc{M};\Lambda T^*\mc{M})$, the Lie derivative in the direction $X$, and we set 
\[ {\bf X}_k:= \mc{L}_{X}|_{\Lambda^k T^*\mc{M}}.\]
We will also denote by $[Y]\in \mc{D}'(\mc{M};\Lambda^3 T^*\mc{M} )$  the $2$-dimensional current of integration on $Y$ if $Y\subset \mc{M}$ is an oriented  $2$-dimensional \color{black} submanifold, viewed as a distribution acting on the space $C_c^\infty(\mc{M};\Lambda^2T^*\mc{M})$ of compactly supported $2$-forms on $\mc{M}$. We recall that it is considered as a distributional $3$-form since it can be approximated by smooth $3$-forms $(Y_n)_{n\in \N}$  of the form $Y_n=f_n\eta$, where $\eta\in C^\infty(\M;\Lambda^3T^*\mc{M})$ is such that  $\eta|_Y\wedge \mathrm{vol}_{Y}=\mathrm{vol}_{\M}|_Y$, 
where $\mathrm{vol}_{Y}$ and $\mathrm{vol}_{\mc{M}}$ are the chosen volume forms on $Y$ and $\M$, respectively, and $f_n\to \delta_{Y}$ in $\D'(\M)$ as $n\to \infty$\color{black}: for $u\in C_c^\infty(\mc{M};\Lambda^2T^*\mc{M})$ 
\[ \cjg [Y],u\cjd =\int_{Y}\iota_Y^*u=\lim_{n\to \infty} \int Y_n\wedge u\]
where $\iota_Y:Y\to M$ is the inclusion map. Here we use that $\M$ is oriented. On $T_x^1M$, we have the volume density $S_x$ induced by the Lorentzian metric $g_x$, and we choose the orientation so that $\omega_u>0$ on $T^1_xM$. In particular, by  \eqref{normalisationSxwu}, 
we have for $f\in C_c^\infty(T^1_xM)$
\begin{equation}\label{dSxomega}
 \int_{T^1_xM} f dS_x= 8\int_{T^1_xM}f\omega_u.
 \end{equation}

We first prove the following result on the Poincar\'e series:
\begin{prop}\label{prop:Poincare_series}
The Poincar\'e series $\mc{D}_\la(x,y)$ extends meromorphically to $\lambda \in \C$ as a map in $C^0(K_1\times K_2)$ for any 
compact sets $K_1\times K_2\subset M\times M$, with polar part 
at any pole $\la_0$ being the Schwartz kernel of a finite rank operator. Moreover,  for $x\not=y$ there is $\chi \in C_c^\infty(\mc{M})$ such that
\[ \mc{D}_\la(x,y)= -\cjg 1, \chi [T_y^1M]\wedge R_{{\bf X}_2}(\la) \iota_{X}[T_x^1M]\chi\cjd, \]
while for $x=y$, 
\[\mc{D}_\la(x,x)= -\cjg 1, \chi [\varphi_{-\eps}(T_y^1M)]\wedge R_{{\bf X}_2}(\la) \iota_{X}[\varphi_\eps(T_x^1M)]\chi \cjd \] 
for any small $\eps>0$.
\end{prop}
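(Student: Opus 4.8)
The plan is to follow the geometric idea of Dang--Rivière \cite{Dang-Riviere}: express each term $e^{-\lambda d(x,\gamma y)}$ as a pairing of currents against the resolvent of the Lie derivative $\mathbf{X}_2$ acting on $2$-forms, and then sum over $\gamma \in \Gamma$ to obtain the Poincaré series. First I would fix $x,y \in \mathcal{O}_+(\Lambda_\Gamma)$ (initially with $x \neq y$) and work on the cover; by Lemma~\ref{Lem:q_and_distance} and the fact that $-q(x,\gamma y)>1$ for $\gamma \in \Gamma_{x,y}^c$, there is a unique spacelike geodesic segment from $x$ to $\gamma y$ whose length is $d(x,\gamma y)=\operatorname{arccosh}(-q(x,\gamma y))$. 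The key observation is that the set of $(z,v) \in T^1\mathrm{AdS}_3$ lying on a spacelike geodesic through both $x$ and $\gamma y$ — i.e. the intersection of the fibre sphere $T^1_x\mathrm{AdS}_3$ flowed forward with $T^1_{\gamma y}\mathrm{AdS}_3$ — is a single point, and the flow time between these two fibre spheres is exactly $d(x,\gamma y)$. This is the analogue of the Riemannian picture where the resolvent of the geodesic flow, applied to the current of integration over one unit fibre sphere and paired with another, reproduces $e^{-\lambda d}/(\text{normalization})$ after integrating the transfer operator in $t$.

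Next I would set up the pairing precisely. On $\mathcal{M} = \Gamma\backslash\widetilde{\mathcal{M}}$, the current $[T^1_x M]$ (well-defined once we lift to the cover, or after choosing $\chi \in C_c^\infty(\mathcal{M})$ localizing near the relevant orbit segments) is a distributional $3$-form, and $\iota_X [T^1_x M]$ is a $2$-current. Applying $R_{\mathbf{X}_2}(\lambda) = \int_0^\infty e^{-t(\mathbf{X}_2+\lambda)}\,dt$ to it and wedging with $[T^1_y M]$, one gets $\int_0^\infty e^{-t\lambda} \langle [T^1_y M] \wedge e^{-t\mathbf{X}_2}\iota_X[T^1_x M], 1\rangle\, dt$. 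The inner pairing is, for each $t$, a sum over $\gamma$ of contributions from the intersection points $\varphi_t(T^1_x M) \cap T^1_y M$ (one per $\gamma \in \Gamma_{x,y}^c$ with $d(x,\gamma y)$ near $t$), each contributing a delta in $t$ at $t = d(x,\gamma y)$ with a Jacobian factor. I would compute this Jacobian using the transversality of the fibre spheres under the flow: the relevant determinant involves $d\alpha \wedge d\alpha$ and $\omega_u \wedge \omega_s$, and the normalizations \eqref{dalpha^2}, \eqref{dalpha^2omegas}, \eqref{normalisationSxwu}, together with the contraction identity $\iota_X$ and the eigenvalue relations $\mathcal{L}_X\omega_u = -2\omega_u$, should make the Jacobian reduce to a clean expression like $\sinh(d(x,\gamma y))$ or a constant multiple thereof, so that integrating the delta against $e^{-t\lambda}$ yields precisely $e^{-\lambda d(x,\gamma y)}$ (up to the sign in the statement coming from orientation conventions in the wedge product and the $\iota_X$).

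The meromorphic continuation then follows immediately: by Theorem~\ref{thm:resmain1} applied to $\mathbf{X} = \mathbf{X}_2 = \mathcal{L}_X|_{\Lambda^2 T^*\mathcal{M}}$ (which lifts $X$ and fits the abstract framework of Section~\ref{sec:resonancesstandalone}, the relevant assumptions \ref{ass:A0}--\ref{ass:A3} being verified for the extended spacelike geodesic flow in Section~\ref{sec:applicationofsection}, and \ref{ass:B1}/\ref{ass:B2} holding since the transfer operator on forms has bounded pointwise norm), the resolvent $R_{\mathbf{X}_2}(\lambda)$ continues meromorphically to $\mathbb{C}$ with finite-rank poles. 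Since $[T^1_x M]$, $\iota_X[T^1_x M]$, $[T^1_y M]$ have wavefront sets contained in the conormals to the fibre spheres, which are transverse to $E_-^*$ (because the fibre spheres are transverse to the stable/unstable directions — this is where I would invoke the splitting relations $\ker dB_+ = E_0 \oplus E_s$ etc. and the fact that $d\pi$ is an isomorphism from $E_s, E_u$ onto $v^\perp \cap x^\perp$), the pairing is well-defined and inherits meromorphy in $\lambda$, continuity in $(x,y)$ on compact sets, and the finite-rank polar structure. The coincidence case $x=y$ is handled by the standard trick of flowing the two fibre spheres apart by $\pm\varepsilon$, which is justified because $\mathcal{D}_\lambda(x,y)$ is continuous across the diagonal (the $\gamma = \mathrm{id}$ term is excluded from $\Gamma_{x,x}^c$ since $-q(x,x) = 1 \not> 1$) and the $\varepsilon$-shifted expression is independent of small $\varepsilon$ by $\varphi_t$-invariance of $X$ and the flow-equivariance of $R_{\mathbf{X}_2}$. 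The main obstacle I anticipate is the bookkeeping in the Jacobian computation: getting the transversality determinant of $\varphi_t(T^1_x M)$ and $T^1_y M$ inside $\mathcal{M}$ correct, including all orientation signs and the precise constant, so that the answer is exactly $e^{-\lambda d(x,\gamma y)}$ with coefficient $1$ and overall sign $-1$ as claimed, rather than off by a factor; this requires carefully tracking how $[T^1_x M]$ as a $3$-current, $\iota_X$, the wedge with another $3$-current in a $5$-manifold, and the $\omega_u/\omega_s$ normalizations interact.
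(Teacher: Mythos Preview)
Your approach is essentially the same as the paper's: both follow the Dang--Rivi\`ere scheme of pairing the currents $[T^1_yM]$ and $\iota_X[T^1_xM]$ through the resolvent $R_{\mathbf{X}_2}(\lambda)$, checking the wavefront transversality conditions \eqref{trans1}--\eqref{trans2}, and invoking the meromorphic extension from Theorem~\ref{thm:resmain1}. The paper cites \cite[Prop.~3.4]{Chaubet} and \cite[Lemma~2.1, Prop.~3.8]{Dang-Riviere} directly for the counting identity rather than recomputing it.

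Two points deserve attention. First, the Jacobian you anticipate is a red herring: when two currents of complementary dimension intersect transversally in an oriented manifold, the pairing $[T^1_yM]\wedge e^{-t\mathbf{X}_2}\iota_X[T^1_xM]$ contributes the \emph{signed intersection number} (here $\pm 1$) times a delta in $t$, not a Jacobian like $\sinh(d)$. So the identity $\mathcal{D}_\lambda(x,y)=\sum_{t>0}e^{-\lambda t}$ drops out cleanly once the orientation sign is fixed; the normalizations \eqref{dalpha^2}, \eqref{normalisationSxwu} are not needed for this proposition (they matter only later in Theorem~\ref{theo:Poincare_series} when decomposing $R_{\mathbf{X}_2}$ into scalar pieces).

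Second, and more seriously for the Lorentzian setting, you gloss over why the cutoff $\chi$ is harmless. The fibers $T^1_xM$ are \emph{non-compact}, so the currents $[T^1_xM]$ are not compactly supported and one cannot feed them directly to $R_{\mathbf{X}_2}(\lambda)$. The paper addresses this by proving that the set $V_{x,y}=\{v\in T^1_xM: \exists\, t>0,\ \varphi_t(x,v)\in T^1_yM\}$ is \emph{relatively compact} in $T^1_xM$: its accumulation points lie in $\{v: B_+(x,v)\in\Lambda_\Gamma\}$, which is compact since $B_+(x,\cdot)$ is a diffeomorphism onto an open set containing $\Lambda_\Gamma$. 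This is what guarantees that a single $\chi\in C_c^\infty(\mathcal{M})$ equal to $1$ on a large enough compact set captures all intersections, and the formula is independent of the choice of such $\chi$. You should make this step explicit.
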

\begin{proof}
First, we claim that for ${\rm Re}(\la)>1$, we can rewrite the Poincar\'e series in the form 
\bq
\mc{D}_\la(x,y)=\sum_{t>0, \varphi_t(T^1_xM)\cap T^1_yM\not=\emptyset}e^{-\la t}. \label{eq:claimps1}
\eq
Indeed, for $x,y\in M$, consider  lifts $\tilde{x},\tilde{y}\in \mc{O}_+(\Lambda_\Gamma)$ of these points; the set of spacelike geodesics with endpoints $x,y$ in $M$ is in one-to-one correspondence with the set of spacelike geodesic segments $\alpha_{\tilde{x},\gamma \tilde{y}}$ in $\mc{O}_+(\Lambda_\Gamma)$ with endpoints $\tilde{x},\gamma \tilde{y}$ for $\gamma\in \Gamma$ and the length of such a geodesic segment in $M$ is equal to the distance $d(\tilde{x},\gamma \tilde{y})$ in the cover $\mc{O}_+(\Lambda_\Gamma)$ (recall Lemma \ref{Lem:q_and_distance}). Thus, the claimed formula \eqref{eq:claimps1} holds. Next, consider the set 
\[V_{x,y}:=\{ v\in T^1_xM\,|\, \exists t>0, \varphi_t(x,v)\in T_yM\},\] 
which lifts to the set $V_{\tilde{x},\Gamma \tilde{y}}:=\bigcup_{\gamma\in \Gamma}V_{\tilde{x},\gamma \tilde{y}}$, where $V_{\tilde{x},\gamma \tilde{y}}$ consists of all \color{black} vectors $\tilde{v}_\gamma \in T^1_{\tilde{x}}\mc{O}_+(\Lambda_\Gamma)$  tangent to the geodesics $\alpha_{\tilde{x},\gamma\tilde{y}}$. 
The set $V_{\tilde{x},\Gamma \tilde{y}}$ is relatively compact in 
$T^1_{\tilde{x}}\mc{O}_+(\Lambda_\Gamma)$ since the accumulation points of the orbit $\{\gamma \tilde{y}\,|\, \gamma \in \Gamma\}$ in the closure $\bbar{\AdS}_3$ lie in $\Lambda_\Gamma$: \color{black} This means that the accumulation points of $V_{\tilde{x},\Gamma \tilde{y}}$ in $T^1_{\tilde{x}}\mc{O}_+(\Lambda_\Gamma)$ are contained in 
\[\{\tilde{v}\in T^1_{\tilde{x}}\mc{O}_+(\Lambda_\Gamma)\,|\, B_+(\tilde{x},\tilde{v})\in \Lambda_\Gamma\}\] 
which is a compact set (because $B_+(\tilde{x},\cdot)$ is a diffeomorphism onto an open subset of $\partial \AdS_3$ containing $\Lambda_\Gamma$, recall \eqref{Bpm_diffeo}). Therefore $V_{x,y}$ is a relatively compact set in $T_x^1M$. Similarly $V_{y,x}\subset T_y^1M$ is relatively compact.

We shall now use the method of Dang-Rivi\`ere \cite[Section 4]{Dang-Riviere}, and more particularly the version used by Chaubet \cite[Section 3.2]{Chaubet}. Let $\chi\in C_c^\infty(\mc{M})$ be equal to $1$ on a large enough compact set containing  $V_{x,y}\cup V_{y,x}$.
To prove the desired formula for $\mc{D}_\la(x,y)$ in ${\rm Re}(\la)>1$, it suffices to consider
$[T^1_xM]$ and $[T^1_yM]$ the $2$-dimensional currents of integration over $T^1_xM$ and $T^1_yM$ respectively (viewed as elements in the dual of the space  $C_c^\infty(T^1M;\Lambda^2(T^1M))$  of compactly supported
$2$ forms on $T^1M$), and prove the following property.
Choose  $\eps\geq 0$ small such that $\varphi_{-\eps}(T^1_yM)\cap \varphi_{\eps}(T_x^1M)=\emptyset$ (if $x\not=y$, we can take $\eps=0$), then it suffices to check that for each $z\in M$
\begin{equation}\label{trans1} 
N^*(T_z^1M)\cap E_s^* =\{0\} =N^*(T_z^1M)\cap E_u^*
\end{equation}
(where $N^*(T_z^1M)$ denotes the conormal bundle of $T_z^1M$ in $T^\ast(TM)$) and (recall \eqref{def_Phi_t})
\begin{equation}\label{trans2} 
 \{ \Phi_t(x,v,\xi_x,\xi_v)\in T^*\mc{M}\,|\, t\geq 0, \, \xi=(\xi_x,\xi_v)\in N^*(T_x^1M), \xi(X)=0 \}\cap N^*(T_y^1M)=\{0\}.
 \end{equation}
The condition \eqref{trans1} insures that ${\rm WF}(\iota_X[T_xM]\chi)\cap E_s^*=\emptyset$ so that $R_{{\bf X}_2}(\la)\iota_X[T_xM]\chi$ is well-defined, and \eqref{trans1} with \eqref{trans2} insure that the distributional pairing of $R_{{\bf X}_2}(\la)\iota_X[T_xM]\chi$ with $[T_yM]\chi$ makes sense  by using \eqref{WFsetRu}.

Let us prove \eqref{trans1} and \eqref{trans2}.
We can lift the flow to the cover $T^1\mc{O}_+(\Lambda_\Gamma)$ and it suffices to check the property there; we can thus assume that $x,y\in \mc{O}_+(\Lambda_\Gamma)$. Let us prove the first property in \eqref{trans1}, the second one being similar. It suffices to show that $E_u(x,v)+T_{(x,v)}(T_x^1M)+\R X(x,v)=T_{(x,v)}(T^1\mc{O}_+(\Lambda_\Gamma))$ (here we use that the lift of $E_s^*$ at a point of $\mc{K}_+$ is the annihilator of $E_s\oplus \R X$ in $T^1\mc{O}_+(\Lambda_\Gamma)$ at the lifted point).
This directly follows from (recall \eqref{def:Es})
\begin{align*}
& T_{(x,v)}(T^1_xM)= \{(0,\zeta_v)\in \R^4\times \R^4\,|\, q(\zeta_v,x)=q(\zeta_v,v)=0\}, \\
& E_s(x,v)= \{ (\zeta_v,-\zeta_v) \in \R^4\times \R^4\,|\, q(\zeta_v,x)=q(\zeta_v,v)=0\},\\
& \R X(x,v)=\{(\la v,\la x)\in \R^4\times \R^4\,|\, \la\in\R\}
\end{align*}
which are in direct sums. To prove \eqref{trans2}, it suffices to show that if $\varphi_t(T^1_x\mc{O}_+(\Lambda_\Gamma))\cap T^1_y\mc{O}_+(\Lambda_\Gamma)\not=\emptyset$ for some $t>0$, then the submanifolds $T^1_y\mc{O}_+(\Lambda_\Gamma)$ and $\{ \varphi_s(x,v) \,|\, s\in [0,t], v\in T^1_x\mc{O}_+(\Lambda_\Gamma)\}$ intersect transversely.
We have $\dim T^1_x\mc{O}_+(\Lambda_\Gamma)+\dim T^1_y\mc{O}_+(\Lambda_\Gamma)+1=\dim T^1\mc{O}_+(\Lambda_\Gamma)=5$ and  the geodesic vector field 
$X$ is transversal to $T^1_x\mc{O}_+(\Lambda_\Gamma)$ and to $T^1_y\mc{O}_+(\Lambda_\Gamma)$.  It suffices to check that 
\[ \d\varphi_t(T_{(x,v)}(T^1_x\mc{O}_+(\Lambda_\Gamma)))\cap T_{\varphi_t(x,v)}(T^1_y\mc{O}_+(\Lambda_\Gamma)) =0. \]
If $(0,\xi_v)\in T_{(x,v)}(T^1_x\mc{O}_+(\Lambda_\Gamma))$ we get
\[ \d\varphi_t(x,v)(0,\xi_v)=(\sinh(t)\xi_v, \cosh(t)\xi_v)\]
and it belongs to $T_{\varphi_t(x,v)}(T^1_y\mc{O}_+(\Lambda_\Gamma))$ only if $\sinh(t)\xi_v=0$, which implies that $\xi_v=0$ if $t>0$.

 By applying the same argument as in Proposition 3.4 of \cite{Chaubet} (see also Lemma 2.1 and Proposition 3.8 of \cite{Dang-Riviere})  and using the relative compactness of $V_{x,y}$, we obtain that 
 for ${\rm Re}(\la)>1$ and $\eps>0$ small enough,
\[\begin{split}
\mc{D}_\la(x,y)= &\sum_{t>2\eps, T_yM\cap \varphi_t(T_xM)\not=\emptyset}e^{-\la t}\\
=& -\cjg 1, \chi [\varphi_{-\eps}(T_y^1M)]\wedge R_{{\bf X}_2}(\la)\iota_{X}[\varphi_\eps(T_x^1M)]\chi\cjd 
\end{split}\]
Here the role of the function $\chi$ is simply for $u=\iota_{X}[\varphi_\eps(T_x^1M)]\chi$ to be a distribution with compact support  with ${\rm WF}(u)\cap E_s^*=\emptyset$ so that $R_{{\bf X}_2}(\la)u$ is well-defined as a meromorphic family (in $\la\in \C$) of distributions by Theorem \ref{thm:resmain1}.  
Now, due to the fact that the tangent space of $\varphi_\eps(T_x^1M)$ and of $\varphi_{-\eps}(T_y^1M)$ are uniformly transverse to the unstable and stable directions for $\eps\geq 0$ small and $x,y$ in a small ball, 
the currents $[\varphi_\eps(T_x^1M)]$ and $[\varphi_{-\eps}(T_y^1M)]$ are continuous functions of $x$ and $y$ with values in  spaces of distributions with wave-front sets contained in small conic neighborhoods of $T^*(T^1M)$ where the argument above can be done locally uniformly with respect to $x,y$.
This directly implies that $\mc{D}_\la(x,y)$ is continuous with respect to $x,y$ by the continuity of the operation of multiplication of two distributions with disjoint wave-front sets \cite[Thm.~8.2.13]{hoermanderI}. 
\end{proof}

\subsection{Poincar\'e series in terms of \texorpdfstring{$R_X(\la)$}{the flow resolvent}} \label{sec:seriesresolvent}
We next show that the Poincar\'e series can be expressed purely in terms of the resolvent $R_X(\la)$ of the flow 
$X$ acting on functions. For a distribution $u\in \mc{D}_{\rm fc}'(T^1M\times T^1M)$ whose support is compact in the fibers, we define the pushforward  of $u$  as the distribution on $M\times M$ 
\[ \cjg (\pi_*\otimes \pi_*)u,\chi \cjd:=  \cjg u, (\pi^*\otimes \pi^*)\chi\cjd,\qquad \chi\in \CT(M\times M),\color{black}\]
where $((\pi^*\otimes \pi^*)\chi)(x,v,x',v'):=\chi(x,x')$.
\begin{theo} \label{theo:Poincare_series}
The Poincar\'e series is given by the following expression  for all $\lambda\in \C$\color{black}:
\bq
 \mc{D}_\la= \frac{1}{2}(\pi_*\otimes \pi_*)R_X(\la+2)+ \frac{1}{2}(\pi_*\otimes \pi_*)R_X(\la-2)- (\pi_*\otimes \pi_*)R_X(\la),\label{eq:pushfwdformula}
\eq
where we identify the resolvents on the right-hand side with their distributional kernels. In particular, the sum of distributions on the right-hand side is a continuous function.

Furthermore, we have $h_{\rm top}=\delta_\Gamma$ and there exists $\eps>0$ such that the only pole of the Poincar\'e series in ${\rm Re}(\la)>\delta_\Gamma-\eps$ is the simple pole $\la=\delta_\Gamma$. The unique Ruelle resonant state of $X$, up to normalisation, at $\la=\delta_\Gamma-2$ has the form 
\begin{equation}\label{defudelta} 
u_{\delta_\Gamma}= \Phi_-^{\delta_\Gamma-2}B_-^*\omega_{\delta_\Gamma}
\end{equation}
where $\omega_{\delta_\Gamma}\in \mc{D}'(\TT^2)$ is the Patterson-Sullivan measure.
The residue of $\mc{D}_\la(x,y)$ at $\la=\delta_\Gamma$ has the form 
\[ {\rm Res}_{\la=\delta_\Gamma} \mc{D}_\la(x,y)=c_{\delta_\Gamma}f_{\delta_\Gamma}(x)f_{\delta_\Gamma}(y)\]
where $c_{\delta_\Gamma}>0$ and $f_{\delta_\Gamma}=\pi_*u_{\delta_\Gamma}$ is a smooth function  solving $(\Box_g+\delta_\Gamma(\delta_\Gamma-2))f_{\delta_\Gamma}=0$ in $M$ and whose lift to $\O_+(\Lambda_\Gamma)$ extends as a distribution $\tilde{f}_{\delta_\Gamma}$ to $\AdS_3$ and can be written as a Poisson transform of the form 
\[ \tilde{f}_{\delta_\Gamma}=\mc{P}^0_{\delta_\Gamma-2}(\omega_{\delta_\Gamma}).\]
with $\mc{P}^0_{\delta_\Gamma-2}(\omega_{\delta_\Gamma})>0$ in $\O_+(\Lambda_\Gamma)$, i.e. $f_{\delta_\Gamma}>0$ on $M$.
\end{theo}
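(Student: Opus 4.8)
The plan is to combine the three main ingredients already assembled in the excerpt: the expression of $\mc{D}_\la$ via the flow resolvent acting on currents (Proposition \ref{prop:Poincare_series}), the Poisson‐type description of first‐band resonant states (Lemma \ref{first_band}, Corollary \ref{noruelleres}), and the pushforward identity relating Poisson transforms to fiber integrals of $\mc{Q}^\pm_\la$ (Lemma \ref{lem:pushfoward}). First I would establish the pushforward formula \eqref{eq:pushfwdformula}. Starting from Proposition \ref{prop:Poincare_series}, write $\mc{D}_\la(x,y)=\sum_{t>0,\varphi_t(T^1_xM)\cap T^1_yM\neq\emptyset}e^{-\la t}$ and rewrite the characteristic geometric condition in the universal cover using \eqref{eq:claimps1} and Lemma \ref{Lem:q_and_distance}: a spacelike geodesic of length $d(x,\gamma y)$ from $x$ to $\gamma y$ has $-q(x,\gamma y)=\cosh(d(x,\gamma y))$. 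The key elementary identity is that $\cosh(d)=\tfrac12(e^{d}+e^{-d})$, which after inserting $e^{-\la d}$ and summing produces precisely the combination $\tfrac12 e^{-(\la-2)d}\cdot e^{\pm\cdot}$-type weights once one tracks the volume factor $S_x = 8\,\omega_u$ from \eqref{normalisationSxwu} and the Jacobian $e^{-2t}$ of the flow on $\ker\alpha$ (i.e.\ $\mc{L}_X(\omega_u\wedge\omega_s) = -4\,\omega_u\wedge\omega_s$). Concretely, pushing the current identity from $\Lambda^2 T^*$ down via $\pi_\ast$, the factor $\iota_X[T^1_xM]\wedge(\cdot)$ contributes an extra $e^{\mp 2t}$ on the unstable/stable parts and the cross term, so integrating $e^{-\la t}$ over $t$ against these three weights gives $R_X(\la-2)$, $R_X(\la+2)$, and $-2R_X(\la)$ respectively, up to the combinatorial $\tfrac12$. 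I expect this bookkeeping of Jacobians and orientations to be the most error‐prone but not conceptually hard step.

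Next I would determine the leading resonance. By Lemma \ref{uniqueres}, $R_X(\la)$ has a unique pole $\la=h_{\rm top}-2$ in a right half plane, simple and rank one, with resonant state killed by $U^R_-,U^L_-$ (Corollary \ref{noruelleres}), hence of first‐band form $u_{\delta}=\Phi_-^{\la_0}B_-^*\omega$ for an equivariant $\omega$ supported on $\Lambda_\Gamma$ (Lemma \ref{first_band}). The equivariance $\gamma^*\omega=N_\gamma^{-\la_0}\omega$ must match the defining equivariance of the Patterson–Sullivan measure $\gamma^*\omega_{\delta_\Gamma}=N_\gamma^{2-\delta_\Gamma}\omega_{\delta_\Gamma}$; this forces $\la_0=\delta_\Gamma-2$, and since the leading resonance in the relevant region is simple and the space of equivariant distributions on $\Lambda_\Gamma$ with that exponent is one‐dimensional (uniqueness of the conformal density, from \cite{glorieux-monclair}), we get $\omega=c\,\omega_{\delta_\Gamma}$ and $h_{\rm top}=\delta_\Gamma$. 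From \eqref{eq:pushfwdformula}, the pole of $\mc{D}_\la$ closest to $\delta_\Gamma$ from the right comes from the $R_X(\la-2)$ term (its pole at $\la-2=\delta_\Gamma-2$), while $R_X(\la)$ and $R_X(\la+2)$ are holomorphic there by Lemma \ref{uniqueres}; so $\la=\delta_\Gamma$ is the unique pole in ${\rm Re}(\la)>\delta_\Gamma-\eps$, and it is simple.

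Finally I would identify the residue. Writing $\Pi^X_{\delta_\Gamma-2}= c^{-1}\,u_{\delta_\Gamma}\otimes v$ for the rank‐one spectral projector (the co‐resonant state $v$ being a measure on $\mc K_-$, by Theorem \ref{introthm:resolvent}/Theorem \ref{thm:resmain1}), the residue of $\tfrac12(\pi_*\otimes\pi_*)R_X(\la-2)$ at $\la=\delta_\Gamma$ is $\tfrac12\,(\pi_*u_{\delta_\Gamma})\otimes(\pi_* v)$. By the symmetry of $\mc{D}_\la(x,y)=\mc{D}_\la(y,x)$ (immediate from $-q(x,\gamma y)=-q(\gamma^{-1}x,y)$ and reindexing), the residue is a symmetric kernel, which identifies $\pi_* v$ with a positive multiple of $\pi_* u_{\delta_\Gamma}$, giving ${\rm Res}_{\delta_\Gamma}\mc{D}_\la(x,y)=c_{\delta_\Gamma}f_{\delta_\Gamma}(x)f_{\delta_\Gamma}(y)$ with $f_{\delta_\Gamma}=\pi_* u_{\delta_\Gamma}$. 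Then Proposition \ref{resonance-Poisson} (with $\la=\delta_\Gamma-2$, so $\la(\la+2)=\delta_\Gamma(\delta_\Gamma-2)$) shows $f_{\delta_\Gamma}$ is smooth, solves the Klein–Gordon equation on $M$, and its lift extends to a $\Gamma$‐invariant solution on $\AdS_3$; Lemma \ref{lem:pushfoward} identifies this extension with $\mc{P}^0_{\delta_\Gamma-2}(\omega_{\delta_\Gamma})$ (using $\mc{P}^0_\la=\mc{P}^\sigma_\la$ on $\mc{O}_+(\supp\omega)=\Omega_\Gamma$). Positivity $f_{\delta_\Gamma}>0$ follows because $\omega_{\delta_\Gamma}$ is a positive measure and $P(x,\nu)^{\delta_\Gamma}>0$, so the integral $\int_{\Lambda_\Gamma}|q(x,\nu)|^{-\delta_\Gamma}\,\omega_{\delta_\Gamma}(\nu)$ defining $f_{\delta_\Gamma}$ is strictly positive on $\Omega_\Gamma$; finally $c_{\delta_\Gamma}>0$ is forced since $\mc{D}_\la>0$ for real $\la>\delta_\Gamma$ and it has a simple pole there with positive residue. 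The main obstacle I anticipate is pinning down the precise normalizing constants (the factors of $8$, $\tfrac12$, and the residue of the zeta function) and verifying that $\pi_* v$ really is a \emph{positive} multiple of $\pi_* u_{\delta_\Gamma}$ rather than merely proportional — this positivity should be extracted from the fact that $\mc{D}_\la$ is a sum of positive exponentials, hence its residue at the rightmost pole is a positive‐semidefinite kernel.
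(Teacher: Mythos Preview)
Your overall plan is sound, and the residue/positivity arguments at the end are fine; your use of the symmetry $\mc{D}_\la(x,y)=\mc{D}_\la(y,x)$ to identify $\pi_*v$ with a multiple of $f_{\delta_\Gamma}$ is a legitimate shortcut compared to the paper, which instead identifies the co-resonant state $v$ explicitly as $c_{\delta_\Gamma}\Phi_+^{\delta_\Gamma-2}B_+^*\omega_{\delta_\Gamma}$ by rerunning the first-band analysis for $-X$ (using $X^*=-X$). Two points, however, need repair.

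\textbf{The pushforward formula.} The identity $\cosh(d)=\tfrac12(e^d+e^{-d})$ is a red herring and does not lead to \eqref{eq:pushfwdformula}; note that $\tfrac12 e^{-(\la+2)d}+\tfrac12 e^{-(\la-2)d}-e^{-\la d}=e^{-\la d}(\cosh(2d)-1)\neq e^{-\la d}$. The actual mechanism, carried out in the paper, is a decomposition of the bundle $\Lambda^2_0 T^*\mc{M}=\ker\iota_X$ into $\R\omega_u\oplus\R\omega_s\oplus(E_u^*\wedge E_s^*)$, on which ${\bf X}_2=\mc{L}_X$ acts respectively as $X-2$, $X+2$, and (on the two relevant cross-lines $\beta_-^R\wedge\beta_+^L$, $\beta_-^L\wedge\beta_+^R$) as $X$; the remaining two cross-lines do not contribute because $\iota_X[T^1_xM]$ has vanishing component there. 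One then computes the coefficients of $\iota_X[T^1_xM]$ in this basis (they are $\pm 4\delta_{T^1_xM}$ and $-\delta_{T^1_xM}$) and evaluates the pairing with $[T^1_yM]$ by restricting each basis $2$-form to $\ker d\pi$ via \eqref{kerdpi}, \eqref{wuws_on_basis} and \eqref{dSxomega}. The $\pm 2$ shifts are the $\mc{L}_X$-eigenvalues of $\omega_u,\omega_s$ from Lemma~\ref{formes_vol_s/u_Gamma}, not any manipulation of hyperbolic functions of $d$.

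\textbf{The equality $h_{\rm top}=\delta_\Gamma$.} Your ``equivariance matching'' argument is circular: you assert that the equivariance $\gamma^*\omega=N_\gamma^{-\la_0}\omega$ forces $\la_0=\delta_\Gamma-2$, but uniqueness of the Patterson--Sullivan density is uniqueness \emph{at the critical exponent}, not a claim that no $\Gamma$-equivariant distribution on $\Lambda_\Gamma$ exists at other exponents (indeed many do, one for each Ruelle resonance in the first band). The paper's argument is direct: once \eqref{eq:pushfwdformula} holds, Lemma~\ref{uniqueres} says the rightmost pole of $\mc{D}_\la$ is at $\la=h_{\rm top}$ (coming from $R_X(\la-2)$); but $\delta_\Gamma$ is by definition the abscissa of convergence of $\mc{D}_\la$, hence its rightmost pole; therefore $h_{\rm top}=\delta_\Gamma$. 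Only \emph{after} this does one turn to the resonant state, and the logic runs the opposite way from yours: one \emph{defines} $u_{\delta_\Gamma}:=\Phi_-^{\delta_\Gamma-2}B_-^*\omega_{\delta_\Gamma}$ using the Patterson--Sullivan measure, checks via Lemma~\ref{first_band} that this is a nonzero resonant state at $\delta_\Gamma-2$, and concludes by the rank-one statement of Lemma~\ref{uniqueres} that it spans the resonance space. No uniqueness-of-equivariant-distributions is needed.
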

\begin{proof}
We begin by studying the resolvent $R_{{\bf X}_2}(\la)$ more carefully. Recall from Lemma \ref{splitting_stable/instable} and using the Liouville form \eqref{def_alpha} that we have a global splitting 
\[T^*\mc{M}= \R\alpha \oplus E_u^*\oplus E_s^* , \quad \ker \iota_X=E_u^*\oplus E_s^* \]
and $E_u^*,E_s^*$ admit subsplittings 
\[ E_u^*=(E_u^R)^*\oplus (E_u^L)^*, \quad E_s^*=(E_s^R)^*\oplus (E_s^L)^*\] 
by taking $(E_u^R)^*,(E_u^L)^*$ the smooth line subbundles of $E_u^*$ annihilating respectively $E_s^{L}$ and $E_s^R$, and similarly for $(E_s^{R})^*, (E_s^L)^*$.
The bundle  $\Lambda_0^2 T^*\mc{M}$ of $2$-forms killed by $\iota_X$ splits smoothly into 
\[  \Lambda_0^2 T^*\mc{M}= (\Lambda^2 E_u^*) \oplus( \Lambda^2E_s^*) \oplus  (E_u^*\wedge E_s^*)= \R \omega_u \oplus \R \omega_s\oplus (E_u^*\wedge E_s^*) \]
where we have used Lemma \ref{formes_vol_s/u_Gamma}. The bundle $E_u^*\wedge E_s^*$ has rank $4$ and can be decomposed further using $(E_s^R)^*$, $(E_s^L)^*$, $(E_u^R)^*$, $(E_u^L)^*$. Furthermore, with Lemma \ref{formes_vol_s/u_Gamma}, \color{black} we see that the Lie derivative ${\bf X}_2=\mc{L}_X$ preserves this splitting and acts by 
\[ {\bf X}_2 (f_u\omega_u+f_s\omega_s+w)= ((X-2)f_u) \omega_u +  ((X+2)f_s) \omega_s+ {\bf X}_2w\] 
if $f_u,f_s\in C_c^\infty(\mc{M})$ and $w\in C_c^\infty(\mc{M};E_u^*\wedge E_s^*)$. The resolvent $R_{\bf X}(\la)=({\bf X}+\la)^{-1}$ then also splits in the same way for $\la\in \C$
\[ R_{{\bf X}_2}(\la)(f_u\omega_u+f_s\omega_s+w)=(R_{X}(\la-2)f_u)\omega_u+(R_{X}(\la+2)f_s)\omega_s+R_{{\bf X}_2}(\la)w\]
where $R_X(\la)$ is the resolvent of the geodesic vector field $X$ acting on functions. 
Let us analyse further the resolvent $R_{{\bf X}_2}(\la)$ on $E_u^*\wedge E_s^*$. Let $\beta_+^R,\beta_+^L,\beta_-^R,\beta_-^L$ be smooth $1$-forms belonging respectively to $(E_u^R)^*,(E_u^L)^*, (E_s^R)^*, (E_s^L)^*$
and satisfying  
\[ \beta_-^R(U_+^R)=\beta_-^L(U_+^L)=\beta_+^R(U_-^R)=\beta_+^L(U_-^L)=1\] 
 where $U_\pm^{R/L}$ are the vector fields of Lemma \ref{lem:URLpm}.  In  particular $\omega_u=\frac{1}{4}\beta_-^R\wedge \beta_-^L$ and $\omega_s=\frac{1}{4}\beta_+^R\wedge \beta_+^L$.
 We observe that for each $f\in C_c^\infty(\mc{M})$
 \[\begin{gathered}
  {\bf X}_2(f\beta_-^R\wedge \beta_+^R)= ((X+\la_+^R-\la_-^R)f)\beta_-^R\wedge \beta_+^R,  \quad 
  {\bf X}_2(f\beta_-^L\wedge \beta_+^L)= ((X+\la_+^L-\la_-^L)f)\beta_-^L\wedge \beta_+^L,\\
  {\bf X}_2(f\beta_-^R\wedge \beta_+^L)= (Xf)\beta_-^R\wedge \beta_+^L, \quad 
  {\bf X}_2(f\beta_-^L\wedge \beta_+^R)= (Xf)\beta_-^L\wedge \beta_+^R,
 \end{gathered}\]
 where we have used \eqref{sumlapm2} in the second line. We thus have for 
 $f\in C_c^\infty(\mc{M})$
\[\begin{gathered}
 R_{{\bf X}_2}(\la)(f\beta_-^R\wedge \beta_+^R)= (R_{X+\la_+^R-\la_-^R}(\la)f)\beta_-^R\wedge \beta_+^R, 
 \quad R_{{\bf X}_2}(\la)(f\beta_-^R\wedge \beta_+^L)= (R_{X}(\la)f)\beta_-^R\wedge \beta_+^L,\\
R_{{\bf X}_2}(\la)(f\beta_-^L\wedge \beta_+^L)= (R_{X+\la_+^L-\la_-^L}(\la)f)\beta_-^L\wedge \beta_+^L,\quad 
R_{{\bf X}_2}(\la)(f\beta_-^L\wedge \beta_+^R)= (R_{X}(\la)f)\beta_-^L\wedge \beta_+^R.
 \end{gathered}\]
Now, in Proposition \ref{prop:Poincare_series}, we see that $\mc{D}_\la(x,y)$ is expressed in terms of integration of 
$F_{\la}:=R_{{\bf X}_2}(\la) \iota_{X}[T_x^1M]$ in the fiber $T^1_yM$. Let us decompose as above 
\bq \iota_{X}[T_x^1M]=f_{-} \omega_u+f_{+}\omega_s+f^R_{-+}\beta_-^R\wedge \beta_+^R+f_{-+}^L\beta_-^L\wedge \beta_+^L+f_{-+}^{RL}\beta_-^R\wedge \beta_+^L+f_{-+}^{LR}\beta_-^L\wedge \beta_+^R\label{eq:decomp932023}
\eq
\[ F_{\la}= f_{-}(\la) \omega_u+f_{+}(\la)\omega_s+f^R_{-+}(\la)\beta_-^R\wedge \beta_+^R+f_{-+}^L(\la)\beta_-^L\wedge \beta_+^L+f_{-+}^{RL}(\la)\beta_-^R\wedge \beta_+^L+f_{-+}^{LR}(\la)\beta_-^L\wedge \beta_+^R\]
where $f_\pm, f_{-+}^{R/L}, f_{-+}^{RL},f_{-+}^{LR}$ are distributions and  
\[ \begin{gathered}
f_\pm(\la)=R_X(\la\pm 2)f_\pm, \quad f^{RL}_{-+}(\la)=R_{X}(\la)f^{RL}_{-+}, \quad  f^{LR}_{-+}(\la)=R_{X}(\la)f^{LR}_{-+},\\
f^R_{-+}(\la)=R_{X+\la_-^{L}-\la_{-}^R}(\la)f^R_{-+}, \quad f^L_{-+}(\la)=R_{X+\la_-^{R}-\la_{-}^L}(\la)f^L_{-+}.
\end{gathered}\]
It turns out that two components in this decomposition integrate to $0$ on $T^1_yM$:
by \eqref{kerdpi} and \eqref{wuws_on_basis} we observe that 
\begin{equation}\label{2form_on_vert_fiber}
\begin{gathered}
(\beta_-^R\wedge \beta_+^R)|_{\ker d\pi}=(\beta_+^L\wedge \beta_-^L)|_{\ker d\pi}=0\\
-\omega_s(U_+^R+U_-^R,U_+^L-U_-^L)=\omega_u(U_+^R+U_-^R,U_+^L-U_-^L)=1/4\\
(\beta_-^R\wedge \beta_+^L)(U_+^R+U_-^R,U_+^L-U_-^L)=(\beta_-^L\wedge \beta_+^R)(U_+^R+U_-^R,U_+^L-U_-^L)=-1
\end{gathered}\end{equation}
In particular $\omega_s=-\omega_u$ when restricted to $T^1_xM$. 
For $\chi \in C_c^\infty(\mc{M};\Lambda^3T^*\mc{M})$
\[  \int_{T_xM} \iota_X\chi  =\cjg  \iota_X[T_xM], \chi\cjd. \]
Taking $\chi=\chi ' \alpha\wedge \omega_s$ for some $\chi'\in C_c^\infty(\mc{M})$, we get using \eqref{2form_on_vert_fiber}, \eqref{eq:decomp932023} and \eqref{dSxomega}
\[ \cjg \delta_{T^1_xM},\chi'\cjd:=\int_{T_xM}\chi' dS_x =-8\int_{T_xM}\chi ' \omega_s= -8\cjg f_- \omega_u, \chi'\alpha\wedge \omega_s\cjd=-\frac{1}{4}\cjg f_-,\chi'\cjd \]
by choosing the measure on $\mc{M}$ given by the density $|\alpha\wedge d\alpha^2|=32|\alpha\wedge \omega_s\wedge \omega_u|$ to define the distributional pairing on the right hand side, and with $\alpha\wedge d\alpha^2>0$ with respect to the orientation of $T^1M$. Indeed, since $\omega_s=\frac{1}{4}\beta_+^R\wedge \beta_+^L$ and all terms in  \eqref{eq:decomp932023}  except $f_-\omega_u$ contain either $\beta_+^R$ or $\beta_+^L$, only $f_-\omega_u$ survives when pairing $\omega_s$ with $\iota_{X}[T_x^1M]$. \color{black}

This shows $f_-=-4\delta_{T^1_xM}$. Similarly, one obtains (with \eqref{2form_on_vert_fiber},  \eqref{eq:decomp932023} and \eqref{dSxomega})
\[ f_+=4\delta_{T^1_xM}, \quad f_{-+}^{L}=f_{-+}^R=0, \quad f_{-+}^{LR}=-\delta_{T^1_xM}, \quad  f_{-+}^{RL}=-\delta_{T^1_xM}.\]
As a consequence, we obtain 
\[ \begin{split}
 \cjg 1,[T^1_yM]\wedge (R_{{\bf X}_2}(\la) \iota_{X}[T_x^1M]\cjd=&
-4\int_{T^1_yM}(R_X(\la- 2)\delta_{T^1_xM})\omega_u +4\int_{T^1_yM}(R_X(\la+ 2)\delta_{T^1_xM})\omega_s\\
& -\int_{T^1_yM} (R_X(\la)\delta_{T^1_xM})\beta_-^R\wedge \beta_+^L- \int_{T^1_yM} (R_X(\la)\delta_{T^1_xM})\beta_-^L\wedge \beta_+^R\\
 =& -\frac{1}{2}\cjg \delta_{T_y^1M},R_X(\la- 2)\delta_{T^1_xM}\cjd  -\frac{1}{2}\cjg \delta_{T_y^1M},R_X(\la+2)\delta_{T^1_xM}\cjd\\
&  + \cjg \delta_{T_y^1M},R_X(\la)\delta_{T^1_xM}\cjd.
\end{split}\]
This term can be rewritten in terms of pushforward $\pi_*\otimes \pi_*$ and this ends the proof of \eqref{eq:pushfwdformula}. 

We can then use that $R_X(\la-2)$ has for only pole $\la_0=h_{\rm top}$ in ${\rm Re}(\la)>h_{\rm top}-\eps$ by Lemma \ref{uniqueres}, and since $\delta_\Gamma$ is the exponent of convergence of the series we get $\delta_\Gamma=h_{\rm top}$. The computation of the residue of $\mc{D}_\la(x,y)$ follows from the computation of the residue of $((\pi_*\otimes \pi_*)R_X(\la-2))(x,y)$. Now, by Lemma \ref{first_band}, $u_{\delta_\Gamma}$ defined by \eqref{defudelta} is a resonant state for $X$ in the first band, with resonance $\delta_\Gamma-2$, and by 
Lemma \ref{Klein-Gordon} and Proposition \ref{resonance-Poisson} we obtain that $\tilde{f}_{\delta_\Gamma}:=\mc{P}^0_{\delta_\Gamma-2}(\omega_{\delta_\Gamma})$ is a $\Gamma$-periodic eigenfunction of $\Box_g$, that descends to $M$ when restricting it on $\O_+(\Lambda_\Gamma)$. The residue of $R_X(\la)$ at $\la_0=\delta_\Gamma-2$ can be written as 
\[\Pi^X_{\la_0}=u_{\delta_\Gamma}\otimes v_{\delta_\Gamma}\]
for some co-resonant state $v_{\delta_\Gamma}\in \mc{D}'(\mc{M})$ with support in $\mc{K}_+$ and ${\rm WF}(v_{\delta_\Gamma})\subset E_s^*$. Since $X^*=-X$ for the $L^2$-pairing, for $\la>0$ one has $R_X(\la)^*=R_{-X}(\la)$, and applying the same reasoning as above for $-X$ intead of $X$ (the flow in backward time), one obtains that the Ruelle resonance with the leading eigenvalue for $-X$ is $\la_0=\delta_\Gamma-2$ and $v_{\delta_\Gamma}$ is the associated resonant state, which has the form $v_{\delta_\Gamma}= c_{\delta_\Gamma} \Phi_+^{\delta_\Gamma-2}B_+^*\omega_{\delta_\Gamma}$ for some $c_{\delta_\Gamma}\not=0$. Its pushforward is given by $c_{\delta_\Gamma}\tilde{f}_{\delta_\Gamma}$ which descends to $M$ (after restricting to $\O_+(\Lambda_\Gamma)$). Since $\mc{D}_\la(x,y)>0$ for $\la>\delta_{\Gamma}$, we get that $c_{\delta_\Gamma}>0$. Finally, it suffices to bound below $P(x,\nu)$ by $c_0(x):=\min_{\nu\in \Lambda_\Gamma} P(x,\nu)$ to deduce that $f_{\delta_\Gamma}(x)>c_0(x)\cjg \omega_{\delta_\Gamma},1\cjd >0$.
\end{proof}

\section{Resolvent of the pseudo-Riemannian Laplacian on quasi-Fuchsian anti-de Sitter spacetimes}\label{sec:resolventBox}

In this section, we shall prove that there is a meromorphic resolvent associated to $\Box_g$ on $M=\Gamma\backslash \mc{O}_+(\Lambda_\Gamma)$. This is achieved by meromorphically extending the $\Gamma$-periodized function $F_\la^h(-q(x,x'))$ defining the resolvent kernel of $\Box_g$ on $\mc{O}_+(\Lambda_\Gamma)$.
\begin{theo}\label{th:ext_resolvent}
Let $\Gamma\subset {\rm Isom}(\AdS_3)$ be a quasi-Fuchsian subgroup and $F_\la^h$ the function defined in Proposition \ref{prop:resolventAdShol}. 
Then for ${\rm Re}(\la)>1$, the integral kernel 
\[ R_{\Box_g}(\la;x,x'):= \sum_{\gamma \in \Gamma} F_\la^h(-q(x,\gamma x'))\] 
defined for $x,x'\in \mc{O}_+(\Lambda_\Gamma)$ descends to an $L^1_{\rm loc}(M\times M)$ 
function and extends meromorphically to $\la\in \C$. The meromorphic extension defines a continuous operator 
\[R_{\Box_g}(\la): C_c^\infty(M)\to \mc{D}'(M)\]
satisfying $(\Box_g+\la(\la+2))R_{\Box_g}(\la)f=f$ for all $f\in C_c^\infty(M)$ and has poles of finite rank. Moreover, one has for $f,f'\in C_c^\infty(M)$
\begin{equation}\label{Rbox_RX} 
\cjg R_{\Box_g}(\la)f,f'\cjd=\frac{i}{4\pi}\Big(\cjg R_X(\la)\pi^*f,\pi^*f'\cjd-\cjg R_X(\la+2)\pi^*f,\pi^*f'\cjd\Big)+ \cjg H(\la)f,f'\cjd
\end{equation}
where $H(\la)$ is the holomorphic family of operators with integral kernels 
\[H(\la;x,x')=\sum_{\gamma\in \Gamma_{x,x'}}F_\la^h(-q(x,\gamma x'))\]
featuring the finite set $\Gamma_{x,x'}\subset \Gamma$ whose complement was defined in \eqref{Gammaxy^c}. \color{black}
\end{theo}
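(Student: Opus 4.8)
The plan is to leverage all the machinery already built in the excerpt: the explicit resolvent $R^h_{\Box_g}$ on (an open subset of) $\AdS_3$ from Proposition \ref{prop:resolventAdShol}, the meromorphic continuation of the flow resolvent $R_X(\la)$ from Theorem \ref{thm:resmain1}, and the key identity \eqref{eq:pushfwdformula} of Theorem \ref{theo:Poincare_series} expressing the Poincaré series via $R_X$. First I would verify the elementary analytic facts: for $x,x'\in \mc{O}_+(\Lambda_\Gamma)$ one has $-q(x,\gamma x')>-1$ for all $\gamma$ (this is noted after \eqref{def:F^h}), so every term $F^h_\la(-q(x,\gamma x'))$ is defined; by Lemma \ref{lem - q<-1 up to moving by an element of the group} with $c$ slightly less than $1$, all but finitely many $\gamma$ satisfy $-q(x,\gamma x')>1$, i.e. lie in the region $\zeta>1$ where $F^h_\la(\zeta)=\frac{i}{4\pi\sqrt{\zeta^2-1}}(\zeta+\sqrt{\zeta^2-1})^{-(\la+1)}$, and rewriting in terms of $d(x,\gamma x')$ via Lemma \ref{Lem:q_and_distance} gives $F^h_\la(-q(x,\gamma x'))=\frac{i}{4\pi}\frac{e^{-(\la+1)d(x,\gamma x')}}{\sinh d(x,\gamma x')}\sim \frac{i}{2\pi}e^{-(\la+2)d(x,\gamma x')}$ as $d\to\infty$. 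Hence the ``tail'' $\sum_{\gamma\in\Gamma^c_{x,x'}}F^h_\la(-q(x,\gamma x'))$ converges absolutely and locally uniformly for $\Re\la>\delta_\Gamma$ (in fact for $\Re\la>1$), is continuous in $(x,x')$, and $\Gamma$-biinvariance makes the full sum descend to an $L^1_{\rm loc}(M\times M)$ kernel; the finite ``head'' $H(\la;x,x')=\sum_{\gamma\in\Gamma_{x,x'}}F^h_\la(-q(x,\gamma x'))$ is clearly holomorphic in $\la$ (the region $\Gamma_{x,x'}$ avoids $\zeta=\pm1$, and $F^h_\la$ is entire in $\la$ away from those two values of $\zeta$) and locally $L^1$.

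Next I would establish the distributional identity $(\Box_g+\la(\la+2))R_{\Box_g}(\la)f=f$ on $M$ for $\Re\la>1$. Lifting to $\mc{O}_+(\Lambda_\Gamma)$, $R_{\Box_g}(\la;x,x')=\sum_\gamma F^h_\la(-q(x,\gamma x'))$ is the $\Gamma$-periodization of $F^h_\la(-q(x,\cdot))$; by Proposition \ref{prop:resolventAdShol}, each term solves $(\Box_g+\la(\la+2))F^h_\la(-q(\cdot,\gamma x'))=\delta_{\gamma x'}$ in the region $\Omega_{\{\gamma x'\}}=\{x:-q(x,\gamma x')>-1\}$, which contains $\mc{O}_+(\Lambda_\Gamma)$; summing (justified by local uniform convergence away from the diagonal and the local $L^1$ bound near it) gives $(\Box_g+\la(\la+2))R_{\Box_g}(\la;x,x')=\sum_\gamma\delta_{\gamma x'}$ on $\mc{O}_+(\Lambda_\Gamma)$, which descends to $\delta_{x=x'}$ on $M$.

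The heart of the argument is the identity \eqref{Rbox_RX} and the resulting meromorphic continuation. The plan is to relate the tail sum to the Poincaré series: from the asymptotic expansion \eqref{expansionF_la} one has, in the region $\zeta>1$, $F^h_\la(\zeta)=C(\la)F_\la(\zeta)$ with $F_\la(\zeta)=C_0^+(\la)\zeta^{-\la-2}\sum_k (\ldots)\zeta^{-2k}$, so $\sum_{\gamma\in\Gamma^c_{x,x'}}F^h_\la(-q(x,\gamma x'))$ can be expanded as a sum over $k\ge0$ of multiples of $\sum_{\gamma\in\Gamma^c_{x,x'}}(\cosh d(x,\gamma x'))^{-\la-2-2k}$-type series; each of these is, up to the replacement $\cosh\leftrightarrow e^{d}$ which costs only holomorphic error terms, a shifted Poincaré series $\mc{D}_{\la+2+2k}(x,x')$ (using $\cosh d=\tfrac12(e^d+e^{-d})$ and binomially expanding, picking up further shifted Poincaré series with faster decay). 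The leading $k=0$ term contributes $\tfrac{i}{2\pi}\mc{D}_{\la+2}(x,x')$ plus holomorphic corrections, and all higher $k$ and all the $e^{-d}$-corrections contribute Poincaré series $\mc{D}_\mu$ with $\Re\mu$ larger, hence holomorphic in the relevant half-plane — but more importantly, by Theorem \ref{theo:Poincare_series} \emph{every} $\mc{D}_\mu$ continues meromorphically to $\C$, so the whole rearranged series continues meromorphically, with poles of finite rank coming from those of the $R_X$'s. Plugging \eqref{eq:pushfwdformula} ($\mc{D}_\mu=\tfrac12(\pi_*\otimes\pi_*)R_X(\mu+2)+\tfrac12(\pi_*\otimes\pi_*)R_X(\mu-2)-(\pi_*\otimes\pi_*)R_X(\mu)$) and $\langle(\pi_*\otimes\pi_*)R_X(\mu),f\otimes f'\rangle=\langle R_X(\mu)\pi^*f,\pi^*f'\rangle$, and carefully collecting all the shifted copies of $R_X$, the telescoping structure of the $\cosh$-to-$e^d$ expansion should collapse the infinite sum to exactly $\tfrac{i}{4\pi}(\langle R_X(\la)\pi^*f,\pi^*f'\rangle-\langle R_X(\la+2)\pi^*f,\pi^*f'\rangle)$ plus the holomorphic head $\langle H(\la)f,f'\rangle$, giving \eqref{Rbox_RX}; since the right side is meromorphic in $\la\in\C$ with finite-rank poles (Theorem \ref{thm:resmain1}), so is $R_{\Box_g}(\la)$, and the equation $(\Box_g+\la(\la+2))R_{\Box_g}(\la)f=f$ persists by analytic continuation.

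The main obstacle will be making the rearrangement of $\sum_{\gamma\in\Gamma^c_{x,x'}}F^h_\la(-q(x,\gamma x'))$ into a (convergent, then meromorphically continued) combination of shifted Poincaré series fully rigorous: one must control the double sum over $\gamma$ and over the expansion indices $k$ (and the binomial expansion of $(\tfrac12(e^d+e^{-d}))^{-\la-2-2k}$) uniformly, check that the rearranged series still converges locally uniformly in $(x,x')$ for $\Re\la$ large, identify precisely which shifted $R_X(\la\pm2m)$ survive, and confirm that the infinite telescoping of shifts indeed leaves only the two terms $R_X(\la)$ and $R_X(\la+2)$ as claimed — i.e. that all the ``correction'' contributions either cancel in pairs or get absorbed into the holomorphic remainder. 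A clean way to organize this may be to first prove \eqref{Rbox_RX} as an identity of \emph{holomorphic} functions on a half-plane $\Re\la\gg1$ (where everything converges absolutely, so rearrangement is free), verifying it by a direct computation that matches the $\zeta>1$ asymptotics of $F^h_\la$ against the expansion of $\mc{D}_\mu$ in \eqref{eq:pushfwdformula}, and only then invoke the meromorphic continuation of the right-hand side to extend $R_{\Box_g}(\la)$; this sidesteps delicate convergence issues during the continuation itself. The finite-rank and $(\Box_g+\la(\la+2))$-inversion claims then follow immediately from Theorem \ref{thm:resmain1} and analytic continuation respectively.
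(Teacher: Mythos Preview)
Your overall strategy matches the paper's: split the $\Gamma$-sum into the finite head $H(\la)$ and the tail over $\Gamma_{x,x'}^c$, express the tail via shifted Poincar\'e series, invoke Theorem~\ref{theo:Poincare_series} to write each $\mc{D}_\mu$ in terms of $R_X$, and telescope. The difference is entirely in how you expand the tail. You propose going through the $\zeta^{-1}$-expansion \eqref{expansionF_la} and then binomially converting $(\cosh d)^{-\mu}$ to sums of $e^{-(\mu+2j)d}$, which produces a double sum of shifted Poincar\'e series whose telescoping you rightly flag as the main obstacle. The paper avoids all of this: in the region $\zeta>1$ one has $\zeta=\cosh d$, $\sqrt{\zeta^2-1}=\sinh d$, $\zeta+\sqrt{\zeta^2-1}=e^d$, so
\[
F^h_\la(-q(x,\gamma x'))=\frac{i}{4\pi}\,\frac{e^{-(\la+1)d(x,\gamma x')}}{\sinh d(x,\gamma x')}=\frac{i}{2\pi}\sum_{k\ge1}e^{-(\la+2k)d(x,\gamma x')},
\]
giving immediately $\sum_{\gamma\in\Gamma_{x,x'}^c}F^h_\la(-q(x,\gamma x'))=\frac{i}{2\pi}\sum_{k\ge1}\mc{D}_{\la+2k}(x,x')$. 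Plugging \eqref{eq:pushfwdformula} into this single sum, the second-difference structure $\mc{D}_\mu=\tfrac12 A_{\mu+2}+\tfrac12 A_{\mu-2}-A_\mu$ (with $A_\mu=(\pi_*\otimes\pi_*)R_X(\mu)$) telescopes in one line to $\tfrac12(A_\la-A_{\la+2})$, which is exactly \eqref{Rbox_RX}. Your route would work but is needlessly heavy; the geometric-series expansion of $1/\sinh d$ is the trick that makes the computation clean and the convergence/rearrangement issues trivial.
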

\begin{proof}  We have by Proposition \ref{prop:resolventAdShol} that for ${\rm Re}(\la)>0$
\[\begin{split}
\sum_{\gamma\in \Gamma_{x,x'}^c}F^h_\la (-q(x,\gamma  x')) =&  \frac{i}{4\pi} \sum_{\gamma\in  \Gamma_{x,x'}^c}\frac{e^{-(\la+1)d( x, \gamma x')}}{\sinh(d( x, \gamma x'))}= 
\frac{i}{2\pi} \sum_{k=1}^\infty \sum_{\gamma\in \Gamma_{x,x'}^c}e^{-(\la+2k)d( x, \gamma x')}\\
=& \frac{i}{2\pi}\sum_{k=1}^\infty \mc{D}_{\la+2k}(x,x')
\end{split}\]
where $\Gamma_{x,x'}^c$ is defined by \eqref{Gammaxy^c}. We can thus write 
\[R_{\Box_g}(\la; x, x')= \frac{i}{2\pi}\sum_{k=1}^\infty \mc{D}_{\la+2k}(x,x')+\sum_{\gamma\in \Gamma_{x,x'}}F_\la^h(-q(x,\gamma x'))\]
where the last sum is finite. Here we notice that $\mc{D}_{\la+2k}(x,y)$ converges in ${\rm Re}(\la)>-2k$ and the series in $k$ is uniformly convergent for $\la$ in a compact set not intersecting a pole of $\mc{D}_{\la+2k}$ for some $k$. Indeed, for ${\rm Re}(\la)>-N$, we have for $2k\geq N+3$
\[ |\mc{D}_{\la+2k}(x,y)| \leq \mc{D}_{-N+2k}(x,y)\leq e^{ (-2k+N+3) \min_{\gamma\in \Gamma_{x,y}^c}d(x,\gamma y)}\mc{D}_{3}(x,y)\]
and $\min_{\gamma\in \Gamma_{x,y}^c}d(x,\gamma y)>0$ by definition of $\Gamma_{x,y}^c$ (recall Lemma \ref{Lem:q_and_distance}).

By Lemma \ref{lem - q<-1 up to moving by an element of the group}, for each compact sets $K_1,K_2\subset \mc{O}_+(\Lambda_\Gamma)$, we have that $\cup_{(x,x')\in K_1\times K_2}\Gamma_{x,x'}\subset \Gamma_{K_1,K_2}$ which is a finite set. This implies that 
\[ (x,x')\in K_1\times K_2 \mapsto \sum_{\gamma\in \Gamma_{x,x'}}F_\la^h (-q(x,\gamma x'))  \in L^1(K_1\times K_2)\]
and thus $R_{\Box_g}(\la;\cdot,\cdot)\in L^1_{\rm loc}(\mc{O}_+(\Lambda_{\Gamma})^2)$ by Proposition \ref{prop:Poincare_series}.
Now, $\sum_{\gamma\in  \Gamma}F_\la^h(-q(\gamma x, x'))$ is $\Gamma$ invariant as a function of $(x,x')\in \mc{O}_+(\Lambda_\Gamma)^2$ and since $-q(x,\gamma x')>-1$ for such $(x,x')$ by Lemma \ref{lem - q<-1 locally uniformly on the discontinuity  domain} and $F_\la^h(-q(x,x'))$ satisfies in the distribution sense
\[ (\Box_{g}+\la(2+\la))F_{\la}^h(-q(x,x))=\delta_{x=x'}\]
in $\mc{O}_+(\Lambda_\Gamma)^2$. Therefore it descends to $M\times M$ as an $L^1_{\rm loc}$ function satisfying $(\Box_g+\la(\la+2))R_{\Box_g}(\la;\cdot,x')=\delta_{x'}$ on $M$. When viewed as an operator it thus satisfies $(\Box_g+\la(\la+2))R_{\Box_g}(\la)f=f$ for all $f\in C_c^\infty(M)$. The poles of $R_{\Box_g}(\la)$ are contained in the union of the  poles of $\mc{D}_{\la+2k}$, which is a discrete set since $\mc{D}_{\la+2k}$ is holomorphic in ${\rm Re}(\la)>-2k+2$.
\end{proof}

We shall call \emph{quantum resonance} a pole $\la_0$ of $R_{\Box_g}(\la)$ and \emph{quantum resonant state} associated to $\la_0$ a distribution $u\in \mc{D}'(M)$ in the range of 
\[\Pi_{\la_0}^{\Box_g}:={\rm Res}_{\la=\la_0}R_{\Box_g}(\la)\]
satisfying $(\Box_g+\la_0(2+\la_0))u=0$.

We next relate the quantum resonances and quantum resonant states of $\Box_g$ and 
the Ruelle resonances and resonant states for the spacelike geodesic vector field $X$.
\begin{cor}\label{c:resonant_states_correspondence}
There exists $\eps>0$ such that the resolvent $R_{\Box_g}(\la)$ has for only pole $\la=\delta_\Gamma-2$ 
in ${\rm Re}(\la)>\delta_\Gamma-2-\eps$, it is a simple pole with residue the rank $1$-operator
\[ \Pi^{\Box_g}_{\delta_\Gamma-2}=\frac{i}{2\pi}c_{\delta_\Gamma}f_{\delta_\Gamma}\otimes f_{\delta_\Gamma}.\]
with $c_{\delta_\Gamma}$ and $f_{\delta_\Gamma}$ defined in Theorem \ref{theo:Poincare_series}. 
The poles of $R_{\Box_g}(\la)$ are contained in 
\[ \cup_{k\in \N} ({\rm Res}(X)-2k),\]
and for ${\rm Re}(\la_0)>\delta_{\Gamma}-4$, one has $\Pi^{\Box_g}_{\la_0}=\frac{i}{4\pi}\pi_*\Pi^{X}_{\la_0}\pi^*$ and 
\[ \pi_* : {\rm Ran}(\Pi^{X}_{\la_0}) \to  {\rm Ran}(\Pi^{\Box_g}_{\la_0})\]
is surjective. \color{black}
The quantum resonant states for $\Box_g$ at a quantum resonance $\la_0$ are smooth functions in $M$ of the form $\pi_*u$ for $u$ a Ruelle resonant state. Moreover, if $\la_0$ is a quantum resonance with ${\rm Re}(\la_0)>\delta_{\Gamma}-2-\min(\eps_-^R,\eps_-^L)$, with $\eps_-^R,\eps_-^L$ defined in \eqref{eps_-R/L}, then there is a non zero Ruelle resonant state $u$ with Ruelle resonance $\la_0$ such that $U_-^Ru=U_-^Lu=0$, $\pi_*u$ is a non-zero quantum resonant state with quantum resonance $\la_0$, and the $\Gamma$-invariant lift of $f:=\pi_*u$ to $\mc{O}_+(\Lambda_\Gamma)$ extends to $\AdS_3$ as a distribution $\tilde{f}\in \mc{D}'(\AdS_3)$ of the form 
\[\tilde{f}=\mc{P}^0_{\la_0}\omega\]
for some $\Gamma$-equivariant $\omega\in \mc{D}'(\TT^2)$. 
\end{cor}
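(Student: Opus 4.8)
The strategy is to combine the explicit formula \eqref{Rbox_RX} relating $R_{\Box_g}(\la)$ to the flow resolvents $R_X(\la)$ and $R_X(\la+2)$ with the already-established results on Ruelle resonances (Theorem \ref{thm:resmain1}, Lemma \ref{uniqueres}, Theorem \ref{theo:Poincare_series}) and with the first-band description of resonant states (Lemma \ref{first_band}, Corollary \ref{noruelleres}) together with the Poisson transform machinery of Section \ref{sec:Laplacian}. First I would establish the leading pole statement: by \eqref{Rbox_RX}, since $H(\la)$ is holomorphic, the poles of $R_{\Box_g}(\la)$ in a half-plane ${\rm Re}(\la)>\delta_\Gamma-2-\eps$ come only from the poles of $\la\mapsto R_X(\la)$ and $\la\mapsto R_X(\la+2)$ there. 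By Lemma \ref{uniqueres}, $R_X(\la)$ has its unique pole at $\delta_\Gamma-2$ (using $h_{\rm top}=\delta_\Gamma$ from Theorem \ref{theo:Poincare_series}) in ${\rm Re}(\la)>\delta_\Gamma-2-\eps$, while $R_X(\la+2)$ would contribute a pole only at $\delta_\Gamma-4$, which is outside this region. Hence the only pole of $R_{\Box_g}(\la)$ there is $\delta_\Gamma-2$, and it is simple. Its residue equals $\tfrac{i}{4\pi}$ times the residue of $\cjg R_X(\la)\pi^*f,\pi^*f'\cjd$ at $\delta_\Gamma-2$; but $\mathrm{Res}_{\delta_\Gamma-2}R_X(\la)=\Pi^X_{\delta_\Gamma-2}=u_{\delta_\Gamma}\otimes v_{\delta_\Gamma}$ with $v_{\delta_\Gamma}=c_{\delta_\Gamma}\Phi_+^{\delta_\Gamma-2}B_+^*\omega_{\delta_\Gamma}$, and pushing forward gives $\pi_*u_{\delta_\Gamma}\otimes\pi_*v_{\delta_\Gamma}=c_{\delta_\Gamma}f_{\delta_\Gamma}\otimes f_{\delta_\Gamma}$, so after absorbing the factor $2$ from $\cjg R_X(\la)\pi^*f,\pi^*f'\cjd=\cjg\pi_*\otimes\pi_*R_X(\la)f,f'\cjd$ one matches $\Pi^{\Box_g}_{\delta_\Gamma-2}=\tfrac{i}{2\pi}c_{\delta_\Gamma}f_{\delta_\Gamma}\otimes f_{\delta_\Gamma}$. (The precise numerical bookkeeping of the $4\pi$ versus $2\pi$ and the factor from $d\alpha\wedge d\alpha$ will need to be checked against the normalisations in \eqref{dalpha^2} and Theorem \ref{theo:Poincare_series}.)

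Next I would prove the inclusion of quantum resonances in $\bigcup_{k}({\rm Res}(X)-2k)$. From the proof of Theorem \ref{th:ext_resolvent}, $R_{\Box_g}(\la;x,x')=\tfrac{i}{2\pi}\sum_{k\geq 1}\mc{D}_{\la+2k}(x,x')+H(\la;x,x')$ with $H$ holomorphic, and by \eqref{eq:pushfwdformula} each $\mc{D}_{\la+2k}$ is a combination of $(\pi_*\otimes\pi_*)R_X(\la+2k\pm 2)$ and $(\pi_*\otimes\pi_*)R_X(\la+2k)$, so every pole of $R_{\Box_g}(\la)$ occurs at a point where $\la+2j$ is a Ruelle resonance of $X$ for some $j\in\N$, i.e.\ $\la\in{\rm Res}(X)-2j$. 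For the statement that every quantum resonant state is of the form $\pi_*u$ for a Ruelle resonant state $u$: if $\la_0$ is a quantum resonance, the residue $\Pi^{\Box_g}_{\la_0}$ has image contained in the span of pushforwards of images of residues of the various $R_X(\la_0+2j)$, hence in a space of functions of the form $\pi_*u$ with $u\in\mathrm{Res}^{j'}_X(\la_0+2j)$; applying $(X+\la_0+2j)$ repeatedly and using $\pi_*\circ\mathbf X_2 = \Box_g$-type intertwining relations (as in the proof of Theorem \ref{theo:Poincare_series}, via Lemma \ref{Klein-Gordon}) shows these push forward to solutions of $(\Box_g+\la_0(\la_0+2))f=0$. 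Smoothness in $M$ follows because the wavefront set of a Ruelle resonant state is in $E_+^*$, which is transverse to the fibers $T^1_xM$ (shown in the proof of Proposition \ref{prop:Poincare_series}, eq.\ \eqref{trans1}), so $\pi_*$ of such a distribution is smooth.

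Finally, for the near-leading regime ${\rm Re}(\la_0)>\delta_\Gamma-2-\min(\eps_-^R,\eps_-^L)$: in this half-plane the argument above restricts the contributing Ruelle resonances to those of $X$ itself (the shifts $\la_0+2j$ with $j\geq 1$ land at real part $>\delta_\Gamma-\min(\eps_-^R,\eps_-^L)$, where $R_X$ is holomorphic by Lemma \ref{uniqueres} provided $\min(\eps^R_-,\eps^L_-)$ is small enough; this point requires care and I would state the precise $\eps$). So $\la_0\in{\rm Res}(X)$ with ${\rm Re}(\la_0)>\delta_\Gamma-2-\min(\eps_-^R,\eps_-^L)$, and by Corollary \ref{noruelleres} any associated Ruelle resonant state $u$ satisfies $U_-^Ru=U_-^Lu=0$, i.e.\ lies in the first band; Lemma \ref{first_band} then gives $\tilde u=\mc{Q}_{\la_0}^-(\omega)=\Phi_-^{\la_0}\widetilde B_-^*\omega$ for a $\Gamma$-equivariant $\omega\in\mc{D}'(\TT^2)$ supported on $\Lambda_\Gamma$. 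Lemma \ref{lem:pushfoward} identifies $\pi_*(\tilde u|_{T^1\mc{O}_+(\Lambda_\Gamma)})=\mc{P}^0_{\la_0}\omega=\tilde f$ on $\mc{O}_+(\Lambda_\Gamma)$, which by Proposition \ref{injectivite_Poisson} extends $\Gamma$-invariantly to $\AdS_3$ and solves $(\Box_g+\la_0(\la_0+2))\tilde f=0$; descending gives the quantum resonant state $f=\pi_*u$ on $M$. Its non-triviality follows from the injectivity of the Poisson transform (Proposition \ref{injectivite_Poisson}, valid since $\la_0\notin\Z$, which holds in the relevant region for $\eps$ small) combined with $\omega\not\equiv 0$. \textbf{Main obstacle.} The delicate point is the double bookkeeping: one must verify that the series $\sum_k\mc{D}_{\la+2k}$ does not introduce, in the stated half-plane, any pole other than $\delta_\Gamma-2$, which requires knowing that $R_X$ has \emph{no} resonance in $(\delta_\Gamma-\eps', \delta_\Gamma)$ for the appropriate $\eps'$ tied to $\min(\eps_-^R,\eps_-^L)$ — this forces a careful choice of $\eps$ and an argument that the shifts by $2k$, $k\ge 1$, genuinely escape the pole of $R_X$, rather than merely the pole of $\mc{D}$. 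The second subtlety is verifying that $\pi_*$ of a generalized (higher-order) Ruelle resonant state of $X$ is still annihilated by the right power of $\Box_g+\la_0(\la_0+2)$; this uses the precise relation between $\mathbf X_2=\mc L_X$ on the $\omega_u$-component and $R_X(\la-2)$ established in the proof of Theorem \ref{theo:Poincare_series}, but must be tracked through the sum over $k$.
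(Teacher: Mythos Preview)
Your approach is correct and essentially the same as the paper's, which is very terse: it invokes \eqref{Rbox_RX} and Theorem \ref{theo:Poincare_series} for the first two claims, does a wavefront-set computation for smoothness of $\pi_*u$, and then cites Corollary \ref{noruelleres} and Lemma \ref{first_band} (implicitly via Proposition \ref{resonance-Poisson}) for the first-band statement.

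One simplification dissolves both of your stated obstacles: you do not need to go back to the series $\sum_{k\ge 1}\mc{D}_{\la+2k}$ at all. The identity \eqref{Rbox_RX} is exact and involves only $R_X(\la)$, $R_X(\la+2)$, and a holomorphic remainder $H(\la)$. This already gives the pole inclusion (in ${\rm Res}(X)\cup({\rm Res}(X)-2)\subset\bigcup_k({\rm Res}(X)-2k)$), and in the half-plane ${\rm Re}(\la)>\delta_\Gamma-2-\min(\eps_-^R,\eps_-^L)$ the term $R_X(\la+2)$ is holomorphic: since $\nu_-^R+\nu_-^L=2$ forces $\min(\eps_-^R,\eps_-^L)\le 1<2$, its only possible pole $\la=\delta_\Gamma-4$ lies outside this region. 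Hence any quantum resonance there is a Ruelle resonance of $X$ itself, with no shift-by-$2k$ bookkeeping and no need to track higher-order Jordan blocks through the series. Your smoothness argument via \eqref{trans1} is correct and equivalent to the paper's direct check that $d\pi X$, $d\pi U_\pm^R$, $d\pi U_\pm^L$ span $T_xM$: indeed $d\pi^\top\xi$ always lies in $N^*(T^1_xM)$, so $N^*(T^1_xM)\cap E_+^*=\{0\}$ plus injectivity of $d\pi^\top$ gives ${\rm WF}(\pi_*u)=\emptyset$.
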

\begin{proof} The first two statements are a direct consequence of \eqref{Rbox_RX} and Theorem \ref{theo:Poincare_series}. 

To prove that the pushforward $\pi_*u$ is smooth if $u$ is a Ruelle resonant state, we can use the wave-front set property of $u$, namely that ${\rm WF}(u)\subset E_u^*$, and the properties of pushforward of wave-front sets \cite[Proposition 11.3.3.]{FrJo}: one has 
\[{\rm WF}(\pi_*u)\subset \{(x,\xi)\in T^*M \setminus \{0\}\,|\,  (x,v, d\pi^\top_{x,v} \xi) \in E_u^* \}\]
but $\xi\circ d\pi_{x,v}\in E_u^*$ implies that $0=\xi(d\pi_{x,v}X)=\xi(d\pi_{x,v}U_{+}^{R})=\xi(d\pi_{x,v}U_{+}^{L})$ and this implies that $\xi=0$, as $(v,d\pi_{x,v}U_{+}^{R},d\pi_{x,v}U_{+}^{L})$ is a basis of $T_xM$ by section \ref{splittings_T}. This implies that  $\pi_*u$ has no wave-front set in $M$, and is thus smooth there. 

For the last statement of the Corollary, \eqref{Rbox_RX} implies that a quantum resonance $\la_0$ with ${\rm Re}(\la)>\delta_\Gamma-2-\min(\eps_-^R,\eps_-^L)$ is a Ruelle resonance and that there is a resonant state $u$ with $\pi_*u=f$ being a non-zero quantum resonant state. Then Corollary \ref{noruelleres} says that $U_-^Ru=U_-^Lu=0$ and 
Lemma \ref{first_band} finally shows that its $\Gamma$-invariant lift can be written as a Poisson transform, which extends to $\AdS_3$. 
\end{proof}

\textbf{Remark:} The fact that the lift of $f$ to $\mc{O}_+(\Lambda_\Gamma)$ extends to $\AdS_3$ as a global solution of the Klein-Gordon equation is not a priori an obvious statement: even though it is smooth in $\mc{O}_+(\Lambda_\Gamma)$, its behaviour at $\pl \mc{O}_+(\Lambda_\Gamma)$ looks complicated.

\section{The Fuchsian case}\label{s:Fuchsian}
In this section, we compute the resolvent of $\Box_g$ and its pole for special cases, the \emph{Fuchsian} anti-de Sitter spaces. In that case, it is possible to express explicitely the resonances and resonant stated of $\Box_g$ in terms of eigenvalues of the Laplacian on the underlying hyperbolic surface. Let  
\[\H^2:=\{(x_2,x_3,x_4)\in \R^3\,|\, -x_2^2+x_3^2+x_4^2=-1, x_2>0\}\] 
be the hyperbolic plane, 
that we can also view as the 
submanifold $\H^2=\{x\in \AdS_3\,|\, x_1=0,x_2>0\}$ of $\AdS_3$. The quadratic form $q(x)$ on $\AdS_3$ restricts 
to $T\H^2$ and gives a Riemannian metric of curvature $-1$, which is  the hyperbolic metric denoted by $g_{\H^2}$. The hyperbolic 
plane $\H^2$ is thus equipped with the restriction of the Lorentzian metric $q$ of $\AdS_3$ and it is a totally geodesic submanifold.
Consider the map 
\begin{equation}\label{Theta} 
\Theta: (t,x_2,x_3,x_4)\in (-\frac{\pi}{2},\frac{\pi}{2})\times \H^2 \mapsto (\sin(t),x_2\cos(t),x_3\cos(t),x_4\cos(t))\in \AdS_3
\end{equation}
which is a diffeomorphism with range $Q:=\{(x_1,x_2, x_3,x_4)\in \AdS_3\,|\, |x_1|<1\}$.
The anti-de Sitter metric $\Theta^*g$ on $Q$  is given by
\[ \Theta^*g=-dt^2+ \cos(t)^2 g_{\H^2} ,\quad \textrm{ on } W:=(-\pi/2,\pi/2)\times \H^2.\]
For notational convenience we shall write $g$ instead of $\Theta^*g$.
In these coordinates, the operator $\Box_{g}$ can be written as 
\[ \Box_g=\pl_t^2 +\frac{1}{\cos(t)^2}\pl_t(\cos(t)^2)\pl_t+\frac{1}{\cos(t)^2}\Delta_{g_{\H^2}}\]
where $\Delta_{g_{\H^2}}$ is the Laplacian on $\H^2$. The volume form is 
\begin{equation}\label{dvg} 
{\rm dv}_g=\cos(t)^2dt d{\rm v}_{g_{\H^2}}.
\end{equation}
Consider a co-compact Fuchsian subgroup $\Gamma\subset G=\mathrm{SO}(2,1)\simeq \SL(2,\R)$ and, when viewed as elements in $\SL(2,\R)$, we let it act diagonally on 
$\SL(2,\R)\simeq \AdS_3$ by conjugation $\gamma\cdot g:=\gamma g \gamma^{-1}$. The limit set is the unit circle $\pl \H^2\subset 
\{x_1=0, x_2=1, x_3^2+x_4^2=1\}$ viewed in the conformal boundary $\pl \AdS_3$ of $\AdS_3$.
The domain of discontinuity of $\Gamma$ acting on $\AdS_3$ is $\mc{O}_+(\Lambda_\Gamma)=Q$, which is precisely 
the range of the map $\Theta$. Indeed,  for each $\nu=(0,1,\nu_3,\nu_4)\in \Lambda_\Gamma$ we have 
\[ q(x,\nu)=-x_2+x_3\nu_3+x_4\nu_4\leq -x_2+\sqrt{x_3^2+x_4^2}=-x_2+\sqrt{-1+x_1^2+x_2^2}\]
and the right hand side is attained by taking $(\nu_3,\nu_4)=(x_3,x_4)/\sqrt{x_3^2+x_4^2}$. Thus $q(x,\nu)<0$ for all $\nu\in  \Lambda_\Gamma$ if and only if $\sqrt{-1+x_1^2+x_2^2}<x_2$, which is equivalent to $|x_1|<1$. The action of $\gamma\in \Gamma$ on $\mc{O}_+(\Lambda_\Gamma)$ is by $\gamma \cdot\Theta(t,\bar{x})=\Theta(t,\gamma \cdot\bar{x})$ if $\bar{x}=(x_2,x_3,x_4)\in \H^2$, and the action $\gamma \cdot\bar{x}$ is the natural action of $\gamma$ viewed as a hyperbolic isometry.
We compute for $t,t'\in (-\pi/2,\pi/2)$ and $\bar{x},\bar{x}'\in \H^2$, 
\begin{equation}\label{qAdS-qH} 
-q(\Theta(t,\bar{x}),\Theta(t',\bar{x}'))=\sin(t)\sin(t')-\cos(t)\cos(t')q_{\H^2}(\bar{x},\bar{x}')
\end{equation}
where $-q_{\H^2}(\bar{x},\bar{x}')=-q(0,\bar{x},0,\bar{x}')=\cosh(d_{\H^2}(\bar{x},\bar{x}'))$. 

\begin{definition}We call \emph{modified Poincar\'e series} the series, convergent if ${\rm Re}(\la)>1$,  given for $x,x'\in \mc{O}(\Lambda_\Gamma)$ by 
\[\sum_{\gamma \in \Gamma_{x,x'}^c} |q(x,\gamma x')|^{-\la}.\]
\end{definition}
\begin{lemma}
The modified Poincar\'e series satisfies the following: there is $C>0$ such 
that for all $x=\Theta(t,\bar{x}),x'=\Theta(t',\bar{x}')\in Q= \mc{O}_+(\Lambda_\Gamma)$ with $\bar{x},\bar{x}'$ in a fixed fundamental domain of $\Gamma$ acting on $\H^2$, and for all $\la>1$, 
\begin{equation}\label{bound_poincare_modified}
\sum_{\gamma \in \Gamma_{x,x'}^c} |q(x,\gamma x')|^{-\la}\leq \frac{C(2/3)^{\la}}{(\la-1)\cos(t)\cos(t')}
\end{equation}
\end{lemma}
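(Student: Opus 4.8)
The idea is to reduce the modified Poincaré series on $\mathrm{AdS}_3$ to the classical Poincaré series on the hyperbolic surface $\Gamma\backslash\H^2$, which is well understood. The key is the identity \eqref{qAdS-qH}: for $x=\Theta(t,\bar x)$ and $\gamma x'=\Theta(t',\gamma\bar x')$ we have
\[
-q(x,\gamma x')=\sin(t)\sin(t')+\cos(t)\cos(t')\cosh(d_{\H^2}(\bar x,\gamma\bar x')),
\]
using that $-q_{\H^2}(\bar x,\gamma\bar x')=\cosh(d_{\H^2}(\bar x,\gamma\bar x'))$ and that the $\Gamma$-action on $Q$ only moves the $\H^2$-factor. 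Since $|t|,|t'|<\pi/2$ we have $|\sin(t)\sin(t')|<1\le\cosh(d_{\H^2})$, so the first term is dominated by the second and there is an absolute constant $c_0>0$ with
\[
-q(x,\gamma x')\ge c_0\,\cos(t)\cos(t')\cosh(d_{\H^2}(\bar x,\gamma\bar x'))\ge \tfrac{c_0}{2}\cos(t)\cos(t')\,e^{d_{\H^2}(\bar x,\gamma\bar x')}.
\]
(One should check the sign/positivity: on $\Gamma_{x,x'}^c$ one has $-q(x,\gamma x')>1>0$, and in fact $\cos(t)\cos(t')\cosh(d_{\H^2})-|\sin t\sin t'|\ge \cos(t)\cos(t')-1\cdot$ is not quite enough, so one uses instead $\cosh(d_{\H^2})\ge 1$ together with $\cos(t)\cos(t')\cosh(d_{\H^2})\ge|\sin t\sin t'|$ iff $\cosh(d_{\H^2})\ge\tan t\tan t'$; this can fail, hence the restriction to $\Gamma_{x,x'}^c$ where $-q>1$ already forces $d_{\H^2}$ to be large enough. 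I would make this precise by splitting off the finitely many $\gamma$ with small $d_{\H^2}(\bar x,\gamma\bar x')$ — which are controlled by Lemma~\ref{lem - q<-1 up to moving by an element of the group} uniformly for $\bar x,\bar x'$ in a fixed fundamental domain — and for the remaining $\gamma$ the bound $\cosh d_{\H^2}\ge \max(1,\tan t\tan t')$ holds so $-q(x,\gamma x')\ge\tfrac12\cos(t)\cos(t')e^{d_{\H^2}(\bar x,\gamma\bar x')}$.)

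Granting this lower bound, for $\lambda>1$ one gets
\[
\sum_{\gamma\in\Gamma_{x,x'}^c}|q(x,\gamma x')|^{-\lambda}\le \frac{2^\lambda}{(\cos t\cos t')^\lambda}\sum_{\gamma\in\Gamma}e^{-\lambda d_{\H^2}(\bar x,\gamma\bar x')}.
\]
Since $\cos t,\cos t'\le 1$, the prefactor $(\cos t\cos t')^{-\lambda}$ is $\ge(\cos t\cos t')^{-1}$; to land on the claimed bound with $(\cos t\cos t')^{-1}$ rather than $(\cos t\cos t')^{-\lambda}$ one notices $-q(x,\gamma x')\ge c_0\cos t\cos t'\cdot\cosh d_{\H^2}$ and that for $\gamma$ outside a finite set $\cosh d_{\H^2}$ is large, so that actually $-q(x,\gamma x')\ge c_1\cdot\frac23 e^{d_{\H^2}}\cdot(\cos t\cos t')^{1/\lambda}$ — more carefully, writing $-q(x,\gamma x')\ge(\cos t\cos t')(c_0\cosh d_{\H^2})$ and $-q\ge 1$ on $\Gamma_{x,x'}^c$, one interpolates: $(-q)^{-\lambda}\le (-q)^{-1}(c_0\cos t\cos t'\cosh d_{\H^2})^{-(\lambda-1)}\le (\cos t\cos t')^{-1}\cdot C^{\lambda-1}(\cosh d_{\H^2})^{-(\lambda-1)}$. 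Then the classical fact that the hyperbolic Poincaré series $\sum_{\gamma\in\Gamma}e^{-s\,d_{\H^2}(\bar x,\gamma\bar x')}$ for a \emph{cocompact} $\Gamma$ converges for $\mathrm{Re}(s)>1$ (critical exponent $\delta_\Gamma^{\H^2}=1$) with a bound $\le C/(s-1)$ uniform for $\bar x,\bar x'$ in a compact set (standard lattice-point counting, e.g. $\#\{\gamma: d_{\H^2}(\bar x,\gamma\bar x')\le R\}\le Ce^{R}$) yields $\sum(\cosh d_{\H^2})^{-(\lambda-1)}\le \sum 2^{\lambda-1}e^{-(\lambda-1)d_{\H^2}}\le C 2^{\lambda-1}/(\lambda-2)$, and combining gives a bound of the shape $\frac{C\,a^\lambda}{(\lambda-1)\cos t\cos t'}$ for some $a<1$.

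The main obstacle is bookkeeping the constant $a=2/3$ exactly: one must track carefully how the $|\sin t\sin t'|$ term interacts with $\cos t\cos t'\cosh d_{\H^2}$ and how the finite exceptional set $\Gamma_{x,x'}$ is handled uniformly. The clean way is: (i) use \eqref{qAdS-qH} to write $-q(x,\gamma x')=\cos t\cos t'\big(\cosh d_{\H^2}(\bar x,\gamma\bar x')+\tan t\tan t'\big)$; (ii) on $\Gamma_{x,x'}^c$ this quantity exceeds $1$, and since $|\tan t\tan t'|$ is \emph{bounded} only if $t,t'$ stay away from $\pm\pi/2$ — which they don't — one instead observes $-q\ge\cos t\cos t'(\cosh d_{\H^2}-1)\ge\frac12\cos t\cos t'\,e^{d_{\H^2}}$ is false near the boundary, so one genuinely keeps the $\tan t\tan t'$; (iii) write $(-q)^{-\lambda}=(-q)^{-1}(-q)^{-(\lambda-1)}\le(\cos t\cos t')^{-1}\big(\cosh d_{\H^2}+\tan t\tan t'\big)^{-1}(-q)^{-(\lambda-1)}$ and bound $(-q)^{-(\lambda-1)}\le(\cos t\cos t')^{-(\lambda-1)}(\cosh d_{\H^2}+\tan t\tan t')^{-(\lambda-1)}$ — this reintroduces $(\cos t\cos t')^{-(\lambda-1)}$, which is bad. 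The correct resolution, which I expect the authors use, is to absorb the $t$-dependence differently: note $\cosh d_{\H^2}+\tan t\tan t'\ge c\,e^{d_{\H^2}}$ whenever $d_{\H^2}\ge d_0(t,t')$ with $d_0$ \emph{growing only logarithmically} in $1/\cos t$, hence $e^{-\lambda d_0}$ contributes a factor comparable to $(\cos t\cos t')^{\lambda}$ which cancels most of the prefactor, leaving exactly one power; the residual geometric factor $(1/2)\cdot(4/3)=2/3$ (or similar) comes from splitting $\cosh d_{\H^2}\ge\frac12 e^{d_{\H^2}}$ and $\sum e^{-(\lambda-1)d_{\H^2}}$ over the lattice. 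I would carry out this logarithmic-threshold estimate explicitly, and cite the cocompact hyperbolic lattice-point bound for the final summation.
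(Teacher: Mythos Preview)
You have the right overall strategy --- reduce to the classical hyperbolic Poincar\'e series via the identity \eqref{qAdS-qH} and invoke the lattice-point count $N_{\bar x,\bar x'}(T)=\mathcal{O}(e^T)$ for a cocompact Fuchsian group --- and you correctly identify the main difficulty: a naive lower bound $-q(x,\gamma x')\ge c\cos t\cos t'\,\cosh d_{\H^2}$ leads to a factor $(\cos t\cos t')^{-\lambda}$ rather than the claimed $(\cos t\cos t')^{-1}$. Your various workarounds (interpolation, a logarithmic threshold on $d_{\H^2}$) are not carried through, and the final paragraph remains a sketch of a hoped-for cancellation rather than an argument.

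The missing idea is elementary but decisive: use that on $\Gamma_{x,x'}^c$ the quantity $-q(x,\gamma x')$ is bounded below by a fixed constant $>1$, and exploit this to insert an \emph{additive} positive constant into the lower bound. Concretely, after harmlessly replacing $\Gamma_{x,x'}^c$ by $\{\gamma:-q(x,\gamma x')>4\}$ (a finite change), one has for $\lambda>1$
\[
|q(x,\gamma x')|^{-\lambda}\le\Big(\tfrac{|q(x,\gamma x')|}{2}+2\Big)^{-\lambda},
\]
and since $-q(x,\gamma x')=\sin t\sin t'+\cos t\cos t'\cosh d_{\H^2}$ with $\sin t\sin t'\ge -1$, the right-hand side is at most
\[
\Big(\tfrac{1}{2}\cos t\cos t'\cosh d_{\H^2}+\tfrac{3}{2}\Big)^{-\lambda}.
\]
Now sum over $\gamma$ and convert to a Stieltjes integral against $dN(T)=\mathcal{O}(e^T\,dT)$. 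With $A=\cos t\cos t'$ and the substitution $u=\tfrac{A}{2}e^T$, the integral
\[
\int_0^\infty\Big(\tfrac{A}{2}e^T+\tfrac{3}{2}\Big)^{-\lambda}e^T\,dT
=\frac{2}{A}\int_{A/2}^\infty(u+\tfrac{3}{2})^{-\lambda}\,du
=\frac{2}{A(\lambda-1)}\Big(\tfrac{A}{2}+\tfrac{3}{2}\Big)^{1-\lambda}
\le\frac{2}{A(\lambda-1)}\Big(\tfrac{3}{2}\Big)^{1-\lambda}
\]
produces exactly one factor of $A^{-1}=(\cos t\cos t')^{-1}$ and the $(2/3)^{\lambda}$ decay. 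The additive $\tfrac{3}{2}$ is what regularises the small-$T$ region and kills the extra powers of $1/A$ that were troubling you; without it, the integral near $T=0$ would diverge or force $A^{-\lambda}$. Your interpolation and threshold ideas are unnecessary once this is seen.
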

\begin{proof} For $x=\Theta(t,\bar{x}),x'=\Theta(t',\bar{x}')\in Q$, without loss of generality we can replace $\Gamma_{x,x'}^c$ defined in \eqref{Gammaxy^c} by  $\Gamma_{x,x'}^c:=\{\gamma \in \Gamma\,|\, -q(x,\gamma x')>4\}$. For $\la>1$, we have by \eqref{qAdS-qH}
\[\sum_{\gamma \in \Gamma_{x,x'}^c} |q(x,\gamma x')|^{-\la}\leq 
\sum_{\gamma \in \Gamma_{x,x'}^c} (\frac{|q(x,\gamma x')|}{2}+2)^{-\la}\leq 
\sum_{\gamma \in \Gamma_{x,x'}^c} \Big(|q_{\H^2}(\bar{x},\gamma \bar{x}')|\cos(t)\cos(t')+\frac{3}{2}\Big)^{-\la}. \]
We can use that $|q_{\H^2}(\bar{x},\gamma \bar{x}')|=\cosh(d_{\H^2}(\bar{x},\gamma \bar{x}'))$ and the classical counting estimate for co-compact Fuchsian groups  acting on $\H^2$ (see \cite{Borthwick}): for all $T>1$ large 
\begin{equation}\label{counting}
N_{\bar{x},\bar{x}'}(T):=\#\{\gamma\in \Gamma\,|\, d_{\H^2}(\bar{x},\gamma \bar{x}')\leq T\}=\mc{O}(e^{T})
\end{equation} 
uniformly for $\bar{x},\bar{x}'$ on compact sets on $\H^2$. We then get that for each $\la_0>0$, there is $C>0$ such that for all $\la\in (1,\la_0)$, $t,t'\in (-\pi/2,\pi/2)$ we obtain (for some uniform $C_0>0$)
\[\begin{split}
\sum_{\gamma \in \Gamma_{x,x'}^c} |q(x,\gamma x')|^{-\la}\leq &\int_0^\infty  \Big(\cosh(T)\cos(t)\cos(t')+\frac{3}{2}\Big)^{-\la}dN(T)\\
& \leq C_0 \int_0^\infty  \Big(e^T\cos(t)\cos(t')+\frac{3}{2}\Big)^{-\la}e^{T}dT
\end{split}\]
which implies \eqref{bound_poincare_modified} for $\bar{x},\bar{x}'$ in a compact set of $\H^2$.
\end{proof}

Conjugating by the map $\Psi: u\in L^2(\mc{O}_+(\Lambda_\Gamma),{\rm dv}_g)\mapsto \cos(t)u \in L^2(\mc{O}_+(\Lambda_\Gamma),dtd{\rm v}_{g_{\H^2}})$, we see that $\Box_g$ is unitarily equivalent to the operator $P:=\Psi\circ \Box_g\circ \Psi^{-1}$ which is equal to
\begin{equation}\label{BoxFuchsian} 
P= \pl_t^2+1 + \frac{1}{\cos(t)^2}\Delta_{g_{\H^2}}.
\end{equation}

We recall a quite standard result about the Poincar\'e series of $\Gamma\backslash \H^2$ if $\Gamma\subset \SL(2,\R)$ is a co-compact subgroup, the proof can be essentially found in \cite[Theorem 7.3]{Guillarmou-Mazzeo} in a much more general setting.
\begin{lemma}\label{l:poincare_series_H^2}
Let $\Gamma\subset \SL(2,\R)$ be a co-compact subgroup and $\Sigma=\Gamma\backslash \H^2$ be the associated 
compact hyperbolic surface. The modified Poincar\'e series, defined for $x,y\in \H^2$ and ${\rm Re}(\la)>1$ 
\[ \mc{C}^\Sigma_\la(x,y):=\frac{1}{\Gamma(\la+1/2)}\sum_{\gamma \in \Gamma\setminus \{{\rm Id}\}} (-q_{\H^2}(x,\gamma y))^{-\la}\] 
is convergent and admits a meromorphic extension to $\la\in \C$, where, except at $\la\in 1/2-2\N^*$, 
its poles are first order and located at 
\begin{equation}\label{poles_poincare} 
\bigcup_{k\in \N}\bigcup_{s \in \mc{S}_{\Delta_{\Sigma}}}(-2k+s)) \setminus -\N^* 
\end{equation} 
where $\mc{S}_{\Delta_\Sigma}:=\{ s\in \C\,|\,  s(1-s)\in {\rm Sp}(\Delta_\Sigma)\}$, and with residue 
\[ {\rm Res}_{\la=-2k+s}\mc{C}^\Sigma_\la(x,y)=c_{s,k} \Pi_{s(1-s)}(x,y)\]
where $\Pi_{z}(x,y)$ is the integral kernel of the spectral projector of $\Delta_\Sigma$ on $\ker \Delta_\Sigma-z$ and $c_{s,k}\not=0$ some explicit constant. At $\la=1/2-2k$ with $k\in \N$, there is a 
pole if and only if $1/4\in {\rm Sp}(\Delta_\Sigma)$, it  has order $2$ and polar part of the form, for some constant $c_{1/2,2k}\not=0$
\[ \frac{c_{1/2,2k}\Pi_{1/4}(x,y)}{(\la-1/2+2k)^2}.\]
\end{lemma}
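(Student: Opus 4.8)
\textbf{Proof strategy for Lemma~\ref{l:poincare_series_H^2}.} The plan is to reduce the meromorphic continuation of the modified Poincar\'e series $\mc{C}^\Sigma_\la(x,y)$ to the spectral resolution of the resolvent $(\Delta_\Sigma - s(1-s))^{-1}$ on the compact hyperbolic surface $\Sigma=\gam\H^2$, exactly in the spirit of \cite[Theorem 7.3]{Guillarmou-Mazzeo}. First I would recall that the resolvent kernel of $\Delta_{\H^2}$ on the hyperbolic plane is an explicit function of $-q_{\H^2}(x,y)=\cosh d_{\H^2}(x,y)$: writing $R_{\H^2}(s;x,y)=G_s(-q_{\H^2}(x,y))$, the function $G_s$ is a Legendre/hypergeometric function with the known leading behaviour $G_s(\zeta)\sim c(s)\zeta^{-s}$ as $\zeta\to\infty$, and more precisely $G_s$ admits a convergent expansion $G_s(\zeta)=\sum_{k\geq 0}b_{2k}(s)\zeta^{-s-2k}$ in $\zeta>1$, entirely analogous to \eqref{expansionF_la}, with $b_0(s)$ a nonzero meromorphic factor involving $\Gamma(s)$-type terms (this is why the normalising factor $1/\Gamma(\la+1/2)$ is put in front of the series). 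Periodizing gives the resolvent on $\Sigma$: $R_\Sigma(s;x,y)=\sum_{\gamma\in\Gamma}G_s(-q_{\H^2}(x,\gamma y))$, which converges for $\Re s>1$ and, since $\Sigma$ is compact, extends meromorphically in $s$ to $\C$ with first-order poles exactly at $s\in\mc{S}(\Delta_\Sigma)$ (plus the trivial pole at $s=1$ coming from the constant eigenfunction, which is inside $\mc{S}(\Delta_\Sigma)$), the residue at $s_0$ with $s_0(1-s_0)=z\in\mathrm{Sp}(\Delta_\Sigma)$ being a nonzero multiple of the spectral projector kernel $\Pi_z(x,y)$. This is the standard fact I would quote.

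The next step is to isolate the $k=0$ term. Define $\wt{\mc{C}}_\la(x,y):=\sum_{\gamma\in\Gamma\setminus\{\mathrm{Id}\}}G_{\la}(-q_{\H^2}(x,\gamma y))$; this equals $R_\Sigma(\la;x,y)-G_\la(-q_{\H^2}(x,\cdot))$ near the diagonal after subtracting the identity term, but more usefully, subtracting from $\sum_{\gamma\neq\mathrm{Id}}(-q_{\H^2}(x,\gamma y))^{-\la}$ the full expansion $\sum_{\gamma\neq\mathrm{Id}}G_\la(-q_{\H^2}(x,\gamma y))/b_0(\la)$ removes the $k=0$ part of each summand and leaves a series over $\gamma$ whose summand is $O((-q_{\H^2}(x,\gamma y))^{-\la-2})$. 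By the counting estimate \eqref{counting}, $N_{x,y}(T)=O(e^T)$ uniformly on compacta, this remainder series converges for $\Re\la>-1$, hence is holomorphic there, and by induction on $k$ one peels off successive terms $b_{2k}(\la)/b_0(\la)\cdot\sum_{\gamma\neq\mathrm{Id}}(-q_{\H^2}(x,\gamma y))^{-\la-2k}$, each of which reintroduces a copy of $\wt{\mc{C}}_{\la+2k}(x,y)$. Unwinding this recursion, $\Gamma(\la+1/2)\,\mc{C}^\Sigma_\la(x,y)$ (which is $\sum_{\gamma\neq\mathrm{Id}}(-q_{\H^2})^{-\la}$) equals a finite $\C$-linear combination of shifted periodized resolvents $R_\Sigma(\la+2k;\cdot,\cdot)$ with meromorphic coefficients in $\la$, modulo a holomorphic remainder; alternatively and more cleanly, one shows directly that $b_0(\la)^{-1}\mc{C}^\Sigma_\la = R_\Sigma(\la;\cdot,\cdot)$ minus a finite sum of lower-order Poincar\'e-type series plus holomorphic error, and iterates. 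Either way the poles of $\mc{C}^\Sigma_\la$ at $\la=s-2k$ for $s\in\mc{S}(\Delta_\Sigma)$, $k\in\N$, with residue proportional to $\Pi_{s(1-s)}(x,y)$, drop out, the constant $c_{s,k}$ being the product of the relevant expansion coefficients and the residue constant of $R_\Sigma$; the normalisation by $1/\Gamma(\la+1/2)$ absorbs the poles of $b_0$ so that no spurious poles appear except at $\la\in 1/2-2\N^*$ where a $\Gamma$-factor in $b_0$ contributes, and at $\la\in-\N^*$ the would-be poles cancel (these are removable, as in the Riemannian convex co-compact story). The double pole at $\la=1/2-2k$ arises precisely when $s=1/2$, i.e.\ when $1/4\in\mathrm{Sp}(\Delta_\Sigma)$: then the simple pole of $R_\Sigma$ at $s=1/2$ meets the coincident simple pole coming from the coalescence $s=1-s$, producing the order-two polar part $c_{1/2,2k}\Pi_{1/4}(x,y)/(\la-1/2+2k)^2$.

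The main obstacle I anticipate is bookkeeping rather than conceptual: carefully tracking the meromorphic coefficients $b_{2k}(\la)$ in the expansion of $G_\la$ (these solve the same hypergeometric ODE recursion as in \eqref{expansionF_la}, so they are Gamma-quotients and have known pole/zero loci), verifying that after normalising by $1/\Gamma(\la+1/2)$ the only surviving poles are those listed in \eqref{poles_poincare} — in particular that the apparent poles at negative integers genuinely cancel between the subtracted expansion terms and $R_\Sigma$ — and correctly identifying the two mechanisms producing a pole at $\la=1/2-2k$ so that their confluence gives a genuine double (not triple) pole. The uniformity in $(x,y)$ on compact subsets of $\H^2$ is automatic from the uniformity in \eqref{counting} and the local uniformity of the $G_\la$ expansion away from the diagonal; near the diagonal one uses that $R_\Sigma(s;x,y)$ has the same conormal singularity as $R_{\H^2}$, which is $\la$-independent in its leading part, so the singular structure does not interfere with the meromorphic continuation in $\la$. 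I would also remark that $\mc{C}^\Sigma_\la$ descends to a function on $\Sigma\times\Sigma$ off the diagonal and that the residues, being multiples of $\Pi_{s(1-s)}$, are smooth kernels, which is what makes them usable in the Fuchsian computation of the resonances of $\Box_g$.
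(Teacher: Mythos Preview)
Your proposal is correct and follows essentially the same approach as the paper's proof: both expand the free hyperbolic resolvent kernel $G_\la(\zeta)$ in powers of $\zeta^{-\la-2k}$ (the paper quotes the explicit Guillop\'e--Zworski expansion, yielding the identity $R_{\Delta_\Sigma}(\la;x,y)-R_{\Delta_{\H^2}}(\la;x,y)=\pi^{-1/2}\sum_{j\geq 0}\frac{2^{-\la-2j-1}\Gamma(\la+2j)}{\Gamma(\la+\frac12+j)\Gamma(j+1)}\Gamma(\la+\tfrac12+2j)\,\mc{C}^\Sigma_{\la+2j}(x,y)$), then use that the left side is meromorphic with poles determined by $\mathrm{Sp}(\Delta_\Sigma)$ and $-\N$, and run an induction in $N$ on the half-plane $\Re\la>1-2N$ to peel off the $\mc{C}^\Sigma_{\la+2j}$ terms. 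The paper's argument is a touch more direct because it works from the resolvent identity to the Poincar\'e series rather than the reverse, but the content is the same.
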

\begin{proof}
Let $R_{\Delta_{\H^2}}(\la;x,y)$ be the  integral kernel of the resolvent of the Laplacian on the hyperbolic plane $\H^2$; by 
 \cite[Lemma 2.1]{Guillope-Zworski} it is meromorphic with first order poles at the non-positive integers 
 $-\N$, and the residues are integral kernels of finite rank operators acting on $C_c^\infty(\H^2)$.
Since the Laplacian $\Delta_{\Sigma}$ is self-adjoint on $L^2(\Sigma)$ with discrete spectrum ${\rm Sp}(\Delta_\Sigma)\subset \R^+$, by the spectral theorem 
the resolvent $R_{\Delta_{\Sigma}}(\la)=(\Delta_{\Sigma}-\la(1-\la))^{-1}$ has a Schwartz kernel which 
is meromorphic in $\la\in \C$, with first order poles at those $\la$ such that  $\la(1-\la)\in {\rm Sp}(\Delta_\Sigma)$, except possibly at $\la=1/2$ where the pole is of order $2$ (if $1/4\in {\rm Sp}(\Delta_\Sigma)$), and except at $\la=1/2$, the residues at all these poles are the integral kernels of the spectral projectors  of $\Delta_\Sigma$. 
Moreover, the integral kernel of $R_{\Delta_\Sigma}$ can be lifted to $\H^2$ and is explicitly given for ${\rm Re}(\la)>1$ and $x\not=y$ in a fundamental domain of $\Gamma$ in $\H^2$  by 
(see \cite[Lemma 2.1]{Guillope-Zworski})
\[
\begin{split}
R_{\Delta_\Sigma}(\la;x,y)-R_{\Delta_{\H^2}}(\la;x,y)&= \frac{\pi^{-1/2}}{2^{\la+1}}\sum_{\gamma\in \Gamma\setminus \{{\rm Id}\}}
\sum_{j=0}^\infty \frac{2^{-2j}\Gamma(\la+2j)}{\Gamma(\la+\frac{1}{2}+j)\Gamma(j+1)}\cosh(d_{\H^2}(x,\gamma y))^{-\la-2j}\\
&= \pi^{-1/2}\sum_{j=0}^{\infty}  \frac{2^{-\la-2j-1}\Gamma(\la+2j)}{\Gamma(\la+\frac{1}{2}+j)\Gamma(j+1)}\Gamma(\la+\tfrac{1}{2}+2j) \mc{C}^\Sigma_{\la+2j}(x,y).
\end{split}
\]
The left hand side is meromorphic in $\la\in \C$, with poles at those $\la$ such that  $\la(1-\la)\in {\rm Sp}(\Delta_\Sigma)$ 
and at $\la\in -\N$, the poles are first order with finite rank operators (except at $\la=1/2$ where it is order $2$) 
and the residues are integral kernels of spectral projectors of $\Delta_\Sigma$ if $\la\not\in -\N$. Moreover by elliptic theory,
the left hand side is smooth as a function of $x,y$ (see for example the proof of \cite[Theorem 7.3]{Guillarmou-Mazzeo}). This shows that the right hand side has the same properties. But since $\mc{C}^\Sigma_{\la+2N}(x,y)$ is holomorphic in $\{{\rm Re}(\la)>1-2N\}$ for each $N>0$ (the $\Gamma$-series defining $\mc{C}^\Sigma_{\la+2N}(x,y)$  is convergent), we obtain 
that 
\[\sum_{j=0}^{N-1}  \frac{2^{-\la-2j-1}\Gamma(\la+2j)}{\Gamma(\la+\frac{1}{2}+j)\Gamma(j+1)}\Gamma(\la+\tfrac{1}{2}+2j) \mc{C}^\Sigma_{\la+2j}(x,y)\]
is meromorphic with the same properties for the poles and residues as the integral kernel $R_{\Delta_\Sigma}(\la;x,y)-R_{\Delta_{\H^2}}(\la;x,y)$ in the region $\{{\rm Re}(\la)>1-2N\}$. Using an induction argument, this gives the desired result.
\end{proof}
 
Using this result, we can describe the poles of the resolvent of $\Box_g$ on the Lorentzian manifold 
$M=\Gamma\backslash \mc{O}_+(\Lambda_\Gamma)$.
\begin{cor}
Consider a Fuchsian quotient $M=\Gamma\backslash \mc{O}_+(\Lambda_\Gamma)$ and $\Sigma=\Gamma\backslash \H^2$. 
In the region $\C\setminus -\N$, the poles of the meromorphic extension of $R_{\Box_g}(\la)$ of $\Box_g$ on $M$  are located at 
\[ \la \in \bigcup_{m \in \N}\bigcup_{s(1-s) \in {\rm Sp}(\Delta_{\Sigma})}(-2-m+s).  \]
They are first order poles except when $s(1-s)=1/4$ where they are of order at most $3$, and when $s\not=1/2$, the  residues at $\la=-m-2+s$ have the form
\[\Pi_{s(1-s)}(\bar{x},\bar{x}')\sum_{\substack{(\ell,j)\in \N\times \N^* \\ 2j+\ell=m+2}}
(\cos(t)\cos(t'))^{-s+2(j-1)}P_{\ell,j}(\sin(t)\sin(t'))\] 
with $P_{\ell,j}$ a polynomial of degree $\ell$, and the elements in the range of the residues are linear combinations of functions 
\[\begin{gathered}
f_s^{(n)}(\bar{x}) \cos(t)^{-s}
{\rm F}_1(-\tfrac{m}{2},1+\tfrac{m}{2}-s,\tfrac{3}{2}-s;\cos^2(t)), \quad m \textrm{ even } \\
f_s^{(n)}(\bar{x}) \cos(t)^{-s}
\sin(t){\rm F}_1(-\tfrac{m-1}{2}, 2+\tfrac{m-1}{2}-s,\tfrac{3}{2}-s;\cos^2(t)), \quad m \textrm{ odd }
\end{gathered} \]
where $f_s^{(n)}$ are eigenfunctions of $\Delta_{\Sigma}$ with eigenvalues $s(1-s)$, $n\leq \dim \ker(\Delta_{\Sigma}-(s(1-s))$ and ${\rm F}_1(a,b,c;z)$ is the standard hypergeometric function. The resonant states are thus in $L^2(M)$.
\end{cor}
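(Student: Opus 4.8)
The plan is to exploit the warped-product structure $g=-dt^2+\cos^2(t)g_{\H^2}$ on $\mc{O}_+(\Lambda_\Gamma)=Q\cong(-\pi/2,\pi/2)\times\H^2$, which after the unitary conjugation by $\Psi$ turns $\Box_g$ into the operator $P=\pl_t^2+1+\cos(t)^{-2}\Delta_{g_{\H^2}}$ from \eqref{BoxFuchsian}. Since everything is $\Gamma$-equivariant and $\Gamma$ acts only on the $\H^2$-factor, the resolvent kernel $R_{\Box_g}(\la;x,x')$ obtained by $\Gamma$-periodization in Theorem \ref{th:ext_resolvent} can be computed by separating variables in the $t$-direction and summing the resulting one-dimensional Green's function against the eigenfunction expansion of $\Delta_{\Sigma}$ on the compact surface $\Sigma=\gam\H^2$. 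Concretely, I would write, for $-q(x,x')>-1$,
\[
R_{\Box_g}(\la;x,x')=\sum_{s(1-s)\in\Spec(\Delta_\Sigma)}\Pi_{s(1-s)}(\bar x,\bar x')\,G_\la^{s}(t,t'),
\]
where $\Pi_{s(1-s)}$ is the spectral projector of $\Delta_\Sigma$ and $G_\la^{s}(t,t')$ solves the one-dimensional Klein--Gordon equation $(\pl_t^2+1+\sigma\cos(t)^{-2}+\la(\la+2))G_\la^s=\delta(t-t')$ with $\sigma=-s(1-s)$, subject to the boundary behaviour dictated by the fact that $R_{\Box_g}(\la)$ is built from $F_\la^h(-q(x,x'))$, which is $L^1_{\rm loc}$ near $\pl Q$. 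Comparing this representation with the modified Poincaré-series computation of Lemma \ref{l:poincare_series_H^2} (the series $\mc{C}^\Sigma_{\la+2j}$ appearing in $\sum_k\mc{D}_{\la+2k}$ through \eqref{Rbox_RX} and \eqref{qAdS-qH}) identifies the meromorphic structure in $\la$ with that of the hyperbolic Poincaré series.

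\textbf{Key steps.} First I would solve the radial ODE explicitly: the substitution $z=\cos^2(t)$ brings $(\pl_t^2+1+\sigma z^{-1}+\la(\la+2))u=0$ to a hypergeometric equation, and the solution regular (in the appropriate $L^1_{\rm loc}$ sense) at $t=\pm\pi/2$ is, up to normalization,
\[
u_s^\pm(t)=\cos(t)^{-s}\,{\rm F}_1\!\Big(-\tfrac{m}{2},\,1+\tfrac{m}{2}-s,\,\tfrac32-s;\cos^2(t)\Big)
\]
for $m$ even, and the analogous expression with an extra factor $\sin(t)$ for $m$ odd, where $m$ is tied to $\la$ by $\la=-m-2+s$; these are precisely the functions listed in the statement. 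Second, I would assemble the one-dimensional Green's function $G_\la^s$ from these solutions via the standard Wronskian formula and read off its poles in $\la$: a pole occurs exactly when the connection coefficient between $u_s^+$ and $u_s^-$ vanishes, i.e. when the hypergeometric function terminates (a Jacobi polynomial), which happens for $\la+2+m-s\in\N$, i.e. $\la\in\bigcup_{m\in\N}(-2-m+s)$. Third, I would track the order of the pole: generically first order, but it can be raised when several values of $s$ conspire — this is exactly the $s(1-s)=1/4$ case, where $s=1/2$ is a double root of $s(1-s)=1/4$ and Lemma \ref{l:poincare_series_H^2} already shows the hyperbolic Poincaré series has an order-$2$ pole there; combined with the radial multiplicity one gets order at most $3$. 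Fourth, the residue: at $\la=-m-2+s$ with $s\ne1/2$ the residue of $G_\la^s$ is a finite sum $\sum_{2j+\ell=m+2}(\cos t\cos t')^{-s+2(j-1)}P_{\ell,j}(\sin t\sin t')$ with $\deg P_{\ell,j}=\ell$ (coming from expanding the terminating hypergeometric series and the explicit $(\sin t\sin t')$-dependence in \eqref{qAdS-qH}), and tensoring with $\Pi_{s(1-s)}(\bar x,\bar x')$ gives the claimed residue kernel. Finally, the range of the residue is spanned by $f_s^{(n)}(\bar x)u_s^+(t)$ with $f_s^{(n)}$ an eigenfunction of $\Delta_\Sigma$ for eigenvalue $s(1-s)$; since $\Sigma$ is compact these $f_s^{(n)}$ are in $L^2(\Sigma)$ and the radial factor $u_s^+$ is bounded (a polynomial in $\cos^2 t$ times $\cos(t)^{-s}$, and $\Re s<1$ so $\cos(t)^{-s}\in L^2((-\pi/2,\pi/2),\cos^2 t\,dt)$ when $\Re s<3/2$), hence the resonant states lie in $L^2(M)$ with respect to ${\rm dv}_g=\cos(t)^2\,dt\,{\rm dv}_{g_{\H^2}}$; to be careful one checks the weight from the conjugation $\Psi$.

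\textbf{Main obstacle.} The genuinely delicate point is matching the two descriptions of the pole structure and, in particular, the pole \emph{orders}: one must reconcile the radial-ODE picture (poles where a hypergeometric connection coefficient vanishes, with multiplicity governed by how many times the termination condition is met) with the Poincaré-series picture of Lemma \ref{l:poincare_series_H^2} (where the order jumps come from multiplicities in $\Spec(\Delta_\Sigma)$ and from the exceptional value $1/4$). Getting the bookkeeping of indices $(\ell,j,m,s)$ consistent, and verifying that the residue kernels produced by the two methods literally agree (so that one may legitimately quote the explicit eigenfunction form), will require a careful but essentially mechanical analysis of the hypergeometric contiguity relations and of the constants $c_{s,k}$, $c_{1/2,2k}$ from Lemma \ref{l:poincare_series_H^2}. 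A secondary subtlety is justifying that the separated-variables expansion converges in $\mc{D}'(M\times M)$ and may be differentiated term-by-term against $(\Box_g+\la(\la+2))$, which follows from the $L^1_{\rm loc}$ bound \eqref{bound_poincare_modified} on the modified Poincaré series together with standard elliptic estimates on the compact factor $\Sigma$.
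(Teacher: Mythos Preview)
Your approach diverges from the paper's in a crucial ordering of steps, and this creates a genuine gap.

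The paper does \emph{not} construct $R_{\Box_g}(\la)$ by separation of variables. Its primary tool is the defining series: it writes $R_{\Box_g}(\la;x,x')=\sum_\gamma F_\la^h(-q(x,\gamma x'))$, expands $F_\la^h(\zeta)$ in powers $\zeta^{-\la-2-2k}$ via \eqref{expansionF_la}, and then uses the factorisation \eqref{qAdS-qH} to expand each $|q(x,\gamma x')|^{-\la-2-2k}$ as a binomial series in $\tan(t)\tan(t')/q_{\H^2}(\bar x,\gamma\bar x')$. This reduces everything to the hyperbolic modified Poincar\'e series $\sum_\gamma(-q_{\H^2}(\bar x,\gamma\bar x'))^{-\la-2-\ell}$, whose poles and residues are controlled by Lemma~\ref{l:poincare_series_H^2}. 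The pole locations, orders, and the form of the residue kernel (including the polynomials $P_{\ell,j}$ and the $(\cos t\cos t')^{-s+2(j-1)}$ factors) all fall out of this expansion. Only \emph{after} the residue kernel is known does the paper invoke the ODE $\pl_t^2u+(\la+1)^2u+s(1-s)\cos^{-2}(t)u=0$: it writes down the two independent solutions $u_1,u_2$ and selects the correct one by matching their asymptotics at $t=\pm\pi/2$ to the already-computed asymptotic of the residue kernel (namely, only powers $\cos(t)^{-s+2\ell}$ with $\ell\in\N$).

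Your plan inverts this: you want to build $G_\la^s(t,t')$ as a one-dimensional Green's function from solutions $u_s^\pm$ ``regular at $t=\pm\pi/2$'' and read off poles from the Wronskian. The gap is that ``regular'' (or $L^1_{\rm loc}$, or even $L^2$) does not single out a solution. The indicial exponents at $t=\pm\pi/2$ are $\cos(t)^s$ and $\cos(t)^{1-s}$; after undoing the conjugation by $\Psi$, the corresponding resonant-state behaviours are $\cos(t)^{s-1}$ and $\cos(t)^{-s}$, and for the relevant range $\Re(s)\le 1$ \emph{both} lie in $L^2(M,\cos^2(t)\,dt\,d\mathrm v_{g_{\H^2}})$. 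So there is no ``standard Wronskian formula'' until you know which branch to take at each endpoint, and that information comes precisely from the explicit asymptotic of the residue kernel --- i.e.\ from the Poincar\'e-series computation you were hoping to bypass. The paper's matching step shows that the $\cos(t)^{s-1}$ branch is excluded; without having the residue kernel in hand first, you cannot justify this choice.

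A second issue: even granting the correct boundary condition, your proposed spectral expansion $\sum_s\Pi_{s(1-s)}G_\la^s$ runs over the full spectrum of $\Delta_\Sigma$, and establishing its convergence in $\mc D'(M\times M)$ for the \emph{meromorphically continued} resolvent is not automatic. The paper's series in $(k,\ell)$ and in $\gamma$ is controlled by the concrete bound \eqref{bound_poincare_modified} together with the analyticity of the $\H^2$-series for $\Re(\la)$ large; your spectral sum would need an analogous uniform estimate on $G_\la^s$ as $s(1-s)\to\infty$, which you have not addressed.
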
 
\begin{proof} Let $\mc{F}$ be a fundamental domain for the action of $\Gamma$ on 
$\H^2$. For $x=\Theta(t,\bar{x}),x'=\Theta(t',\bar{x}')\in Q= \mc{O}_+(\Lambda_\Gamma)$ with $\bar{x},\bar{x}'$ in $\mc{F}$ and $\Gamma^c_{x,x'}$ defined in \eqref{Gammaxy^c} (and $\Gamma_{x,x'}$ its complementary set in $\Gamma$), for ${\rm Re}(\la)>1$
\begin{equation}\label{split_Rbox}
\begin{split}
R_{\, \Box_g}(\la;x,x')= &\sum_{\gamma\in \Gamma_{x,x'}}F_\la^h(-q(x,\gamma x'))\\
& +
C_0^+(\la)\sum_{k=0}^\infty \frac{2^{-\la-1-2k}\Gamma(-\la+2k)}{\Gamma(-\la+k)\Gamma(k+1)}\sum_{\gamma\in \Gamma_{x,x'}^c} |q(x,\gamma x')|^{-\la-2-2k}.\end{split}
\end{equation}
Now we consider the set $\Gamma_{x,x'}$: we have $\gamma\in \Gamma_{x,x'}$ iff 
\[ \cosh(d_{\H^2}(\bar{x},\gamma \bar{x}'))=-q_{\H^2}(\bar{x},\gamma\bar{x}')<\frac{1-\sin(t)\sin(t')}{ \cos(t)\cos(t')} \]
thus by \eqref{counting}, we obtain that there is $C>0$ uniform such that for all $x=\Theta(t,\bar{x}),x=\Theta(t',\bar{x}')$ as above
\begin{equation}\label{countingGammaxx'}
|\Gamma_{x,x'}|\leq C\frac{1-\sin(t)\sin(t')}{ \cos(t)\cos(t')}.
\end{equation}
We denote by $\mc{F}_\eps:=\{ \Theta(t,\bar{x})\in Q\,|\, \bar{x}\in \mc{F}, |t|<\pi/2-\eps \}$ if $\eps\geq 0$.
By Proposition \ref{prop:resolventAdS}, the first term in the sum \eqref{split_Rbox} extends as a meromorphic family with value in $L^1(\mc{F}_\eps\times \mc{F}_{\eps'})$ for each $\eps,\eps'>0$, with poles contained in $\Z$,  and using \eqref{countingGammaxx'}, for each $\la$ in a compact set of  $\C\setminus \Z$,
  there is $C$ depending on this set such that for all $\eps>0,\eps'>0$
\[  \int_{\mc{F}_\eps}\int_{\mc{F}_{\eps'}}\sum_{\gamma\in \Gamma_{x,x'}}|F_{\la}^h(-q(x,\gamma x'))| {\rm dv}_g(x){\rm dv}_g(x')\leq C(\eps\eps')^{-1}\]  
In  the second line of \eqref{split_Rbox}, we notice that there is $C>0$ such that for all $k\in \N$ and ${\rm Re}(\la)>1$
\[ \Big|\frac{2^{-\la-1-2k}\Gamma(-\la+2k)}{\Gamma(-\la+k)\Gamma(k+1)}\Big|\leq C\]
thus using \eqref{bound_poincare_modified} we get that for $\la$ in any compact set $K$ of $\{\la\in \C\setminus \N \, | \, {\rm Re}(\la)>1\}$
\[ \|R_{\Box_g}(\la;\cdot,\cdot)\|_{L^1(\mc{F}_\eps\times \mc{F}_{\eps'})}\leq C(\eps\eps')^{-1}\]
for some constant $C>0$ depending only on $K$. 

To describe the meromorphic extension of $R_{\Box_g}(\la;\cdot,\cdot)$, we need to consider the second line of \eqref{split_Rbox}, and we use  \eqref{qAdS-qH}:
\[\begin{split}
 |q(x,\gamma x')|^{-\la-2-2k}&=\frac{|q_{\H^2}(\bar{x},\gamma \bar{x}')|^{-\la-2-2k}}{(\cos(t)\cos(t'))^{\la+2+2k}}\Big(1-\frac{\tan(t)\tan(t')}{q_{\H^2}(\bar{x},\gamma \bar{x}')}\Big)^{-\la-2-2k}\\
&= \frac{|q_{\H^2}(x,\gamma x')|^{-\la-2-2k}}{(\cos(t)\cos(t'))^{\la+2+2k}}\sum_{j=0}^\infty \frac{\Gamma(-\la-1-2k)}{\Gamma(-\la-1-2k-j)j!}
\Big(\frac{\tan(t)\tan(t')}{|q_{\H^2}(x,\gamma x')|}\Big)^j.
 \end{split}\]
We deduce that there are functions $B_\ell(\la,u)$ that are polynomials of degree $\ell$ in $u$ and are holomorphic in $\la\in \C$ such that  
\begin{align*}
&R_{\Box_g}(\la;x,x')\\
& = \sum_{\gamma\in \Gamma_\eps}F_{\la}^h(-q(x,\gamma x'))+C_0^+(\la)\sum_{\ell=0}^\infty \frac{B_\ell(\la,\sin(t)\sin(t'))}{(\cos(t)\cos(t'))^{\la+2+\ell}}\sum_{\gamma \in \Gamma_\eps^c}
(-q_{\H^2}(\bar{x},\gamma \bar{x}))^{-\la-2-\ell}
\end{align*}
By Lemma \ref{l:poincare_series_H^2}, for each $\eps>0,\eps'>0$ the series $\sum_{\gamma \in \Gamma_\eps^c}
(-q_{\H^2}(\bar{x},\gamma \bar{x}))^{-\la-2-\ell}$ is meromorphic in $\la$ as an $L^1(\mc{F}_\eps\times \mc{F}_{\eps'})$ function, 
with first order poles at the points $\la=-\ell-2j+s$ where $j\in \N^*$ and $s(1-s)\in {\rm Sp}(\Delta_\Sigma)\setminus \{1/4\}$
and poles at $\la\in-3/2-\N$ that are of order at most $3$. 
The residue at pole $\la=-2j-\ell+s$ with $s(1-s)\in {\rm Sp}(\Delta_\Sigma)\setminus \{1/4\}$ and $j\in\N^*$ has the form 
\[{\rm Res}_{\la=-2j-\ell+s}\sum_{\gamma \in \Gamma_\eps^c}
(-q_{\H^2}(\bar{x},\gamma \bar{x}))^{-\la-2-\ell} = c_{s,j-1} \Pi_{s(1-s)}(x,y)\]
which implies that, if $\la\notin -\N$ and $m\in \N$
\[ {\rm Res}_{\la=-m-2-s} R_{\, \Box_g}(\la;x,x')= \Pi_{s(1-s)}(\bar{x},\bar{x}')\sum_{\substack{(\ell,j)\in \N \times \N^* \\ 2j+\ell=m+2}}
(\cos(t)\cos(t'))^{-s+2(j-1)}P_{\ell,j}(\sin(t)\sin(t'))\]
with $P_{\ell,j}(Z)$ a polynomial of degree $\ell$. Now, we also see, from the expression \eqref{BoxFuchsian}, that an element in the range of the residue at $\la=-m-2+s$ must be of the form $u(t)f_s(\bar{x})/\cos(t)$ with $(\Delta_{\Sigma}-s(1-s))f_s=0$ and $u(t)$ solves the ODE
\[ \pl_t^2 u(t)+(\la+1)^2u(t)+\frac{s(1-s)}{\cos^2(t)}u(t)=0\]
This equation reduces to a hypergeometric equation, and the two independent odd/even solutions on $t\in (-\pi/2,\pi/2)$ are 
\[ \begin{split}
u_1(t)=& \cos(t)^{s}\sin(t)
{\rm F}_1(\tfrac{s-\la}{2},\tfrac{s+\la+2}{2},\tfrac{3}{2};\sin^2(t)), \\
 u_2(t)=& \cos(t)^{s}{\rm F}_1(\tfrac{-\la+s-1}{2},\tfrac{\la+s+1}{2},\tfrac{1}{2};\sin^2(t))\end{split}\]
where ${F}_1$ is the standard hypergeometric function. They have asymptotics at $t=\pm \pi/2$ given by (see \cite[Chap. 15]{abramovitz}) 
\[\begin{split}
 u_1(t)=& \frac{\Gamma(\tfrac{3}{2})\Gamma(\tfrac{1}{2}-s)}{\Gamma(\tfrac{3-s+\la}{2})\Gamma(\tfrac{1-s-\la}{2})}
 \cos(t)^s\sin(t){\rm F}_1(\tfrac{s-\la}{2},\tfrac{s+\la+2}{2},s+\tfrac{1}{2};\cos^2(t))\\
& +  \frac{\Gamma(\tfrac{3}{2})\Gamma(s-\tfrac{1}{2})}{\Gamma(\tfrac{s-\la}{2})\Gamma(\tfrac{s+\la+2}{2})}\cos(t)^{1-s}\sin(t){\rm F}_1(\tfrac{3-s+\la}{2},\tfrac{1-s-\la}{2},\tfrac{3}{2}-s;\cos^2(t)),
\end{split}\]
\[\begin{split}
 u_2(t)=& \frac{\Gamma(\tfrac{1}{2})\Gamma(\tfrac{1}{2}-s)}{\Gamma(\tfrac{2+\la-s}{2})\Gamma(\tfrac{-\la-s}{2})}
 \cos(t)^{s}{\rm F}_1(\tfrac{-\la+s-1}{2},\tfrac{\la+s+1}{2},s+\tfrac{1}{2};\cos^2(t))\\
& +  \frac{\Gamma(\tfrac{1}{2})\Gamma(s-\tfrac{1}{2})}{\Gamma(\tfrac{-\la+s-1}{2})\Gamma(\tfrac{\la+s+1}{2})}\cos(t)^{1-s}
{\rm F}_1(\tfrac{2+\la-s}{2},\tfrac{-\la-s}{2},\tfrac{3}{2}-s;\cos^2(t))
\end{split}\]

We see that when $\la=s-2m-2$ with $m\in \N$, the function $u_2(t)$ (that is even in $t$) becomes
\[ u_2(t)= \frac{\Gamma(\tfrac{1}{2})\Gamma(s-\tfrac{1}{2})}{\Gamma(m+\tfrac{1}{2})\Gamma(s-m-\tfrac{1}{2})}\cos(t)^{1-s}
{\rm F}_1(-m,1+m-s,\tfrac{3}{2}-s;\cos^2(t)).\]
The asymptotic expansion of $u_2(t)f_s(\bar{x})/\cos(t)$ at $t=\pm \pi/2$ contains only terms of the form $\cos(t)^{-s+2\ell}$ for $\ell\in \N$, which matches the asymptotic expansion of elements in the range of the residue of  
$R_{\, \Box_g}(\la)$ at $\la=-2m-2-s$.  The function $u_1(t)$ has nontrivial terms of the form $\cos(t)^{s}$ and $\cos(t)^{1-s}$ in its asymptotic expansion at $t=\pm \pi/2$, and thus $u_1(t)f_s(\bar{x})/\cos(t)$ does not appear as a resonance state. If now $\la=s-2m-3$, then we see that 
\[u_1(t)=\frac{\Gamma(\tfrac{3}{2})\Gamma(s-\tfrac{1}{2})}{\Gamma(m+\tfrac{3}{2})\Gamma(s-m-\tfrac{1}{2})}\cos(t)^{1-s}\sin(t){\rm F}_1(-m, 2+m-s,\tfrac{3}{2}-s;\cos^2(t))\]
has the correct asymptotic expansion at $t=\pm \pi/2$ to be in the range of the residue of $R_{\Box_g}(\la)$ at $\la=s-2m-3$, while $u_2(t)$ is not.  
 This means that elements in the range of the residue at $\la=-2m-2+s$ (resp.   $\la=-2m-3+s$)
 must be linear combinations of functions of the form 
 $f_s^{(n)}u_2(t)/\cos(t)$ (resp. $f_s^{(n)}u_1(t)/\cos(t)$.
\end{proof}

\appendix

\section{Resolvent of \texorpdfstring{$\Box_{g}$}{the Laplacian} on $\AdS_3$}\label{app:resolvent}
In this Appendix, we prove Propositions \ref{prop:resolventAdS} and \ref{prop:resolventAdShol}.
We start by analysing the resolvent by decomposing the functions on $L^2(\AdS_3)$ into $K$-types, i.e.\ 
in Fourier modes on $\TT^2$ when we view $\AdS_3$ as $\R^+\times \TT^2$. This analysis is mostly contained 
in the works of Rossmann \cite{Rossmann} and Andersen \cite{Andersen}. 
For this, we use the coordinates $(t,\theta,\varphi)\in \R^+\times \mathbb{S}^1\times \mathbb{S}^1$ introduced in Section \ref{sec:torus_coord}.
The Laplacian is then given by the expression \eqref{Boxg_t_coordinate} and acts on the Hilbert space 
\[ L^2(\R^+\times \mathbb{S}^1\times \mathbb{S}^1, \tfrac{1}{2}\sinh(2t)dtd\theta \d\varphi).\]
We see from \eqref{Boxg_t_coordinate} that $\Box_g$ preserves the Fourier modes $e_{k,\ell}(\varphi,\theta):=e^{i\ell \varphi+ik\theta}$ for each $(k,\ell)\in \Z^2$, and the resolvent $(\Box_g+\la(\la+2))^{-1}$ does as well for ${\rm Re}(\la)>-1$ with $\la\notin \N$.  Let us describe the resolvent restricted to functions $f(t)e_{k,\ell}(\varphi,\theta)$.
First, by conjugating $\Box_g$ by $\sqrt{\sinh(2t)}$, we see that $\Box_g$ is unitarily equivalent to 
\[ \sqrt{\sinh(2t)}\Box_g \frac{1}{\sqrt{\sinh(2t)}}= -\pl_t^2-\frac{1}{\sinh(t)^2}\pl_{\varphi}^2+\frac{1}{\cosh(t)^2}\pl_{\theta}^2
+\frac{1}{4\cosh(t)^2}-\frac{1}{4\sinh(t)^2}+1\] 
acting on  $\mc{H}:=L^2(\R^+\times \mathbb{S}^1\times \mathbb{S}^1, dtd\theta \d\varphi)$.
Using Fourier series in $(\theta,\varphi)$, the Hilbert space 
$\mc{H}$ 
decomposes as a direct sum 
\[ \mc{H}=\bigoplus_{k,\ell=\in \Z}\mc{H}_{k,\ell}, \quad \mc{H}_{k,\ell}= \{f(t)e_{k,\ell}\,|\, f\in L^2(\R^+,dt)\}\]
and the conjugated pseudo Riemannian Laplacian is unitary equivalent to the direct sum of operators 
\[ P_{k,\ell }:=-\pl_t^2+\frac{1/4-k^2}{\cosh(t)^2}+\frac{\ell^2-1/4}{\sinh(t)^2}+1\]
acting on $\mc{H}_{k,\ell}$.
These are Schr\"odinger operators with P\"oschl-Teller potentials, which are analyzed in detail in \cite[Appendix]{Guillope-Zworski_JFA}. Using this reference, we deduce that the unique solutions of 
$(P_{k,\ell}+\la(\la+2))f=0$ which satisfy the Dirichlet condition at $u=0$ are necessarily constant multiples of 
\[ E_{\la,k,\ell}(t)=\sinh(t)^{\frac{1}{2}+|\ell|}\cosh(t)^{\frac{1}{2}+|k|}{\rm F}_1(\tfrac{|\ell|+|k|-\la}{2},\tfrac{|\ell|+|k|+\la+2}{2},|\ell|+1,-\sinh(t)^2)\] 
where $_{2}{\rm F}_1(a,b,c;z)$ is the classical hypergeometric function. The solution that decays for ${\rm Re}(\la)>-1$ as $t\to \infty$ is unique up to a multiplicative constant and given by 
\[ G_{\la,k,\ell}(t)= \sinh(t)^{-\frac{3}{2}-\la+|k|}\cosh(t)^{\frac{1}{2}-|k|}{\rm F}_1\Big(-\tfrac{(|\ell|+|k|-\la-2)}{2},\tfrac{|\ell|-|k|+\la+2}{2},\la+2,\frac{-1}{\sinh(t)^2}\Big).\]
The asymptotic expansion at $t\to \infty$ (which corresponds to $\pl \AdS^+_3$) is given by 
\[ \begin{split}
E_{\la,k,\ell}(t)\sim & \frac{\Gamma(|\ell|+1)\Gamma(\la+1)}{\Gamma(\tfrac{|\ell|+|k|+\la+2}{2})\Gamma(\tfrac{|\ell|-|k|+\la+2}{2})}e^{(\la+1)t}(1+\sum_{n\geq 1}c_n(\la) e^{-2nt})+\\
& +\frac{\Gamma(|\ell|+1)\Gamma(-\la-1)}{\Gamma(\tfrac{|\ell|+|k|-\la}{2})\Gamma(\tfrac{|\ell|-|k|-\la}{2})}e^{-(\la+1)t}(1+\sum_{n\geq 1}d_n(\la) e^{-2nt})
\end{split}\]
for some constants $c_n(\la),d_n(\la)$. By Sturm-Liouville theory, the resolvent 
\[R_{k,\ell}(\la):=(P_{k,\ell}+(\la+1)^2)^{-1}\]
of $P_{k,\ell}$ is bounded on $\mc{H}_{k,\ell}=L^2(\R^+,dt)$ for ${\rm Re}(\la)>-1$, and its integral kernel is
\[ R_{k,\ell}(\la; t,t')=W_{k,\ell}(\la)\Big(\textbf{1}_{t>t'}G_{\la,k,\ell}(t)E_{\la,k,\ell}(t')+\textbf{1}_{t<t'}G_{\la,k,\ell}(t')E_{\la,k,\ell}(t)\Big)\]
with the inverse of the Wronskian of $G_{\la,k,\ell}$ and $E_{\la,k,\ell}$  given by 
\[W_{k,\ell}(\la):=\frac{\Gamma(\tfrac{|\ell|+|k|+\la+2}{2})\Gamma(\tfrac{|\ell|-|k|+\la+2}{2})}{2^{\la+2}\Gamma(|\ell|+1)\Gamma(\la+2)}.\]
It is a bounded operator on $L^2(\R^+)$ if ${\rm Re}(\la)>-1$, and by the fact that $\Box_g$ is self-adjoint, we also have in this region: for all $f\in L^2(\R^+\times \TT^2,\sinh(2t)dtd\theta \d\varphi)$ written 
as $f=\sum_{k,\ell}f_{k,\ell}e_{k,\ell}$ with $f_{k,\ell}\in L^2(\R^+,\sinh(2t)dt)$
\begin{equation}\label{R_Ktype}
(R_{\Box_g}(\la)\frac{f}{{\sqrt{\sinh(2\cdot)}}})(t,\varphi,\theta)=\frac{1}{\sqrt{\sinh(2t)}}\sum_{k,\ell}e_{k,\ell}(\varphi,\theta)(R_{k,\ell}(\la) f_{k,\ell})(t)
\end{equation}
and (by spectral theory of self-adjoint operators) the $L^2\to L^2$ norm of $R_{k,\ell}(\la)$ is uniformly bounded with respect to $k,\ell$ for $\la$ in compact sets of $\{ \la\in \C\setminus \N\,|, {\rm Re}(\la)>-1\}$.
For each $k,\ell$, the resolvent $R_{k,\ell}(\la)$ extends meromorphically as map $L^2_{\rm comp}(\R^+)\to L^2_{\rm loc}(\R^+)$ for each $k,\ell\in \Z$. Since the poles of $G_{\la,k,\ell}$ are simple and contained in $-2+\N$, $G_{\la,k,\ell}/\Gamma(\la+2)$ is analytic, and since $E_{\la,k,\ell}$ is also analytic in $\la\in \C$, we deduce that the poles of $R_{k,\ell}(\la)$ are at most of order $2$ and contained in 
\begin{equation}\label{Zkell} 
\mc{Z}_{k,\ell}=-2-|\ell|+|k|-2\N.
\end{equation}
When $\la_0\in \mc{Z}_{k,\ell}$, the Wronskian of $G_{\la_0,k,\ell}$ and $E_{\la_0,k,\ell}$ is $0$ thus $G_{\la_0,k,\ell}(t)=c_{\la_0,k,\ell}E_{\la_0,k,\ell}(t)$ for some constant $c_{\la_0,k,\ell}$.
 When $W_{k,\ell}(\la)$ has a pole of order $2$ at $\la_0\in \mc{Z}_{k,\ell}$, then we also have that 
 $\pl_\la G_{\la_0,k,\ell}(t)=d_{\la_0,k,\ell}\pl_\la E_{\la_0,k,\ell}(t)$ for some constant $d_{\la_0,k,\ell}\in \C$. 
 This implies that the polar part of $R_{k,\ell}(\la)$ is an operator of finite rank with range given by  $\C E_{\la_0,k,\ell}$  if $\la_0$ is a simple pole and by  $\C E_{\la_0,k,\ell}\oplus \C \pl_\la E_{\la_0,k,\ell}$. 
To prove that the sum over $k,\ell$ of $R_{k,\ell}(\la)$ for ${\rm Re}(\la)\leq -1$ makes sense as a continuous 
operator mapping $C_c^\infty(\AdS_3)$ to $\mc{D}'(\AdS_3)$, one would need to estimate some operator 
norms of $R_{k,\ell}(\la)$ as a function of $k,\ell$, which is quite tedious. As we also want a more tractable expression for the resolvent integral kernel, we shall proceed differently and will use the Plancherel formula for $\SL(2,\R)$.

\begin{lemma}\label{lem:rz1}If $\lambda\in \C$ is such that $-\lambda(\lambda+2)$ lies outside the spectrum of $\Box$ and $\Re \lambda>0$, then the resolvent kernel $R(\lambda;\cdot,\cdot)$ satisfies
\[
R(\lambda;x,y)=C_0\cdot\begin{dcases}f_{-\lambda(\lambda+2)}(-q(x,y)),& \text{if } |q(x,y)|<1,\\
f_{-\lambda(\lambda+2)}^+(-q(x,y)), &\text{if }  -q(x,y)>1,\\
f_{-\lambda(\lambda+2)}^-(-q(x,y)),&\text{if }  -q(x,y)<-1,\end{dcases}
\]
for some universal constant $C_0>0$ and the functions $f_z,f^\pm_z$ defined respectively in $|\zeta|<1$ and $\pm \zeta>1$ are as follows:
\begin{align*}
f_z(\zeta)&=\frac{1}{2 i\sqrt{1 - \zeta^2}}\sum_{n=1}^\infty \frac{n\big((\zeta +i \sqrt{1 - \zeta^2})^{-n} -  (\zeta -i \sqrt{1 - \zeta^2})^{-n}\big)}{1-n^2-z},\\
f_z^+(\zeta)&=\frac{1}{\sqrt{\zeta^2-1}}\bigg(\sum_{n=1}^\infty\frac{n\big(|\zeta|+\sqrt{\zeta^2-1}\big)^{-n}}{1-n^2-z} +\int_0^\infty \frac{{s}\cos({s} t)}{({s}^2+1-z)\tanh(\pi {s})}\,d{s}\bigg),\\
f_z^-(\zeta)&=\frac{1}{\sqrt{\zeta^2-1}}\bigg(\sum_{n=1}^\infty\frac{n(-1)^{n+1}\big(|\zeta|+\sqrt{\zeta^2-1}\big)^{-n}}{1-n^2-z} -\int_0^\infty  \frac{{s}\cos({s} t)}{({s}^2+1-z)\sinh(\pi {s})}\,d{s}\bigg),
\end{align*}
where for $\zeta \in \R$ with $|\zeta|>1$ we write $t:=\log(|\zeta|+ \sqrt{\zeta^2-1})$. 
\end{lemma}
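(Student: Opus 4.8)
\textbf{Proof plan for Lemma \ref{lem:rz1}.}

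The plan is to compute the resolvent kernel on the level of $K$-types, using the explicit spectral data of the Pöschl--Teller operators $P_{k,\ell}$ recorded just before the statement, and then to recognize the resulting double sum/integral over $(k,\ell)$ (or rather over the spectral parameters naturally attached to the representation theory of $\SL(2,\mathbb R)$) as a function of the single invariant $q(x,y)$. The starting point is formula \eqref{R_Ktype}, together with the fact that $\Box_g$ is the Casimir of $\SL(2,\mathbb R)$ and hence its resolvent kernel, being bi-invariant, must be a function of $\zeta := -q(x,y) = \tfrac12\operatorname{Tr}(\psi(x)\psi(y)^{-1})$ via \eqref{detvsq}. This reduces the problem to evaluating $R(\lambda;x,y)$ along a convenient one-parameter family of pairs $(x,y)$: one family with $|q(x,y)|<1$ (a timelike separation, parametrized by an angle), one with $-q(x,y)>1$ (spacelike, parametrized by $t=d(x,y)$), and one with $-q(x,y)<-1$. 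For each such family the pair $(x,y)$ can be taken of the form $(e, \gamma_\theta)$ or $(e,\gamma_t)$ with $\gamma$ running through the compact $K$, split, or the "antipodal'' part of $\SL(2,\mathbb R)$, and the kernel evaluated there is exactly a diagonal sum of the $R_{k,\ell}(\lambda)$ against the characters $e_{k,\ell}$, i.e. a special value of the Fourier--hypergeometric series.

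The cleanest route — and the one I would carry out in the Appendix — is via the Plancherel (Fourier inversion) formula for $\SL(2,\mathbb R)$, as the authors announce. Concretely: write $\delta_e$ as a superposition of matrix coefficients of the principal series $\pi_{is}$ ($s>0$, Plancherel density $\tfrac{s}{2}\tanh(\pi s)\,ds$ or $\tfrac{s}{2}\coth(\pi s)\,ds$ according to parity) and the discrete series $\pi_n^\pm$ ($n\geq 1$, Plancherel weight $n$), using the known Harish-Chandra Plancherel theorem for $\SL(2,\mathbb R)$. On each irreducible summand the Casimir acts by a scalar — $1-s^2$ on the principal series $\pi_{is}$ and $1-n^2$ on the discrete series $\pi_n^\pm$ — so applying $(\Box_g+\lambda(\lambda+2))^{-1}$ simply multiplies each piece by $\big(1-s^2+\lambda(\lambda+2)\big)^{-1}=\big((\lambda+1)^2-s^2\big)^{-1}$, resp. $\big(1-n^2+\lambda(\lambda+2)\big)^{-1}$. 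Hence
\[
R(\lambda;x,y) = C_0\Bigg(\sum_{n=1}^\infty \frac{n\,\chi_n(\zeta)}{1-n^2+\lambda(\lambda+2)} + \int_0^\infty \frac{s\,\chi_s(\zeta)}{(\lambda+1)^2-s^2}\,w(s)\,ds\Bigg),
\]
where $\chi_n,\chi_s$ are the (normalized, $K$-bi-invariant) spherical/discrete-series functions on $\SL(2,\mathbb R)\cong \mathrm{AdS}_3$ evaluated at a group element with invariant $\zeta$, and $w(s)$ is the relevant Plancherel density. The remaining task is then purely a matter of inserting the classical closed forms for these spherical functions in the three regimes $|\zeta|<1$, $\zeta>1$, $\zeta<-1$: on $\zeta>1$ the discrete-series coefficient is $\propto(|\zeta|+\sqrt{\zeta^2-1})^{-n}/\sqrt{\zeta^2-1}$ and the principal-series spherical function contributes the Legendre-type integral $\int_0^\infty \frac{s\cos(st)}{(s^2+1-z)\tanh(\pi s)}ds$ with $t=\log(|\zeta|+\sqrt{\zeta^2-1})$; on $\zeta<-1$ one gets the alternating sum (reflecting the involution $I$, cf. \eqref{P_lambda_invol}) and the $\sinh(\pi s)$ in place of $\tanh(\pi s)$; on $|\zeta|<1$ only the discrete series survives in the explicit form $\propto\big((\zeta+i\sqrt{1-\zeta^2})^{-n}-(\zeta-i\sqrt{1-\zeta^2})^{-n}\big)/\sqrt{1-\zeta^2}$, since the principal-series integral then formally cancels/vanishes by contour considerations (this is where $z=-\lambda(\lambda+2)=1-(\lambda+1)^2$ being off the spectrum matters). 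Matching these expressions to the stated $f_z, f_z^\pm$ gives the lemma. To pin down that it is the \emph{same} constant $C_0$ in all three regions one can either invoke continuity of the kernel across $|q|=1$ (minus a conormal singularity which does not affect the overall normalization) or simply track the normalization through the Plancherel formula once and for all.

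The main obstacle is the bookkeeping of normalizations and the justification that the formal manipulations are legitimate: one must verify that for $\Re\lambda>0$ (so that $(\lambda+1)^2$ has positive real part and stays off $[1,\infty)\cup\{1-n^2\}$) the spectral sum and integral converge and represent the genuine $L^2$-resolvent kernel — this is where the uniform bounds on $\|R_{k,\ell}(\lambda)\|$ and the decay of the hypergeometric coefficients enter — and that one may pass from the $K$-type decomposition \eqref{R_Ktype} to the group-Fourier decomposition without losing terms. I expect essentially no conceptual difficulty beyond this: once the spherical functions are identified, evaluating them in the three regimes is standard special-function manipulation (reduction of ${}_2F_1$ to elementary functions at the relevant parameter values, and identification of the principal-series integral with the stated Legendre integral). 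A secondary, purely computational point is checking the parity split: even $K$-types feed the $\tanh(\pi s)$ (resp.\ $\coth$) density and odd ones the complementary density, and this is what produces the different hyperbolic factors in $f_z^+$ versus $f_z^-$; I would organize the computation around the decomposition of $\delta_e$ into its $I$-even and $I$-odd parts to make this transparent.
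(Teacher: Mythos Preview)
Your approach is essentially the same as the paper's: use the Plancherel/Fourier inversion formula for $\SL(2,\R)$, identify the Casimir eigenvalues on each irreducible summand, and read off the kernel from the explicit global characters of the discrete and principal series on the three conjugacy-class types. One small correction: the reason only the discrete series contributes when $|\zeta|<1$ is not a contour argument but simply that the principal series characters $\mathrm{Tr}(\mathscr P^{\pm,is})$ vanish identically on the elliptic conjugacy classes $h\sim k_\theta$; this is a standard fact (it is the formula the paper quotes from Knapp), so there is nothing to ``cancel''. Also, the relevant objects are the conjugation-invariant global characters rather than $K$-bi-invariant spherical functions; once you use the correct character formulas the computation is the same as the paper's.
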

\begin{proof}[Proof of Lemma \ref{lem:rz1}]
Consider the diffeomorphism 
\bq
 \psi:\AdS_3\to \SL(2,\R)=:G,  \quad \psi(x)=\left(\begin{array}{cc}
x_1+x_3 & x_2+x_4 \\
x_4-x_2 & x_1- x_3
\end{array}\right).\label{eq:psidiff}
\eq
By Section \ref{sec:SL2Coord}, we have $\psi^\ast (-\mathcal{C})=\Box_g$, where $\mathcal C:\CT(G)\to \CT(G)$ is the Casimir operator of $G=\SL(2,\R)$. Thus it suffices to study the spectral theory of the latter. The pushforward, $dh:=\psi_\ast{\rm dv}_{g}$ is a Haar measure on $G$, which differs by an overall multiplicative constant $C_0>0$ from the Haar measure used in \cite[Sec.~X.2, Eq.~(10.7)]{knapp2}. We now use the Fourier-inversion formula \cite[II.7, (2.25)]{knapp2}: for all $f\in \CT(G)$,  $g\in G$ one has
\begin{multline}
 C_0f(g)=4\sum_{n=1}^\infty n\,(\mathrm{Tr}(\mathscr D^+_{n+1}+\mathscr D^-_{n+1})\ast f)(g)\\ + \int_{\R} (\mathrm{Tr}(\mathscr{P}^{+,i{s}})\ast f)(g)\, {s} \tanh\Big(\frac{\pi{s}}{2}\Big)\,d{s}+ \int_{\R} (\mathrm{Tr}(\mathscr{P}^{-,i{s}})\ast f)(g)\, {s} \coth\Big(\frac{\pi{s}}{2}\Big)\,d{s},\label{eq:Fourierinv}
\end{multline}
where the convolution $\ast$ on $G$ is defined by 
\bq
({\rm F}_1\ast f_2)(g):= \int_G {\rm F}_1(h)f_2(gh)\d h=\int_G {\rm F}_1(g^{-1}h)f_2(h)\d h,\qquad {\rm F}_1,f_2\in L^1_\mathrm{loc}(G),\label{eq:conv}
\eq
and the functions $\mathrm{Tr}(\mathscr D^+_{n+1}+\mathscr D^-_{n+1}),\mathrm{Tr}(\mathscr{P}^{\pm,i{s}})\in L^1_\mathrm{loc}(G)$ are explicitly known \cite[Sec.~X.2]{knapp2}: For $t,\theta\in \R$, write $$
a_t:=\begin{pmatrix}e^t &\0 \\ 0 & e^{-t}\end{pmatrix},\qquad k_\theta:=\begin{pmatrix}\cos \theta &\sin  \theta \\ -\sin \theta & \cos \theta\end{pmatrix}\quad \in G.
$$
Every $g\in G$ with $|\mathrm{tr}(g)|<2$ (resp.\ $|\mathrm{tr}(g)|>2$)  is conjugate to exactly one element of the form $k_\theta$ with $\theta\in (0,\pi)$ (resp.\ $\sgn(\mathrm{tr}(g))a_t$ with $t\in (0,\infty)$), and the set of elements $g\in G$ with $|\mathrm{tr}(g)|=2$ has Haar measure zero. One then has the explicit formulas
\begin{align}\begin{split}
\mathrm{Tr}(\mathscr D^+_{n+1}+\mathscr D^-_{n+1})(h)&=\begin{dcases}(\pm 1)^{n+1}\frac{e^{-nt}}{|\sinh(t)|},&\text{ if }h\sim \pm a_t,\;t>0,\\
-\frac{\sin(n\theta)}{\sin(\theta)},&\text{ if }h\sim k_\theta
\end{dcases}\label{eq:TrD}\end{split}
\end{align}
by \cite[Sec.~X.2, Cor.~10.13]{knapp2}, while by \cite[Sec.~X.2, Prop.~10.12]{knapp2} we have
\begin{align}\begin{split}
\mathrm{Tr}(\mathscr{P}^{+,i{s}})(h)&=\begin{dcases}\frac{\cos({s} t)}{|\sinh(t)|},&\text{ if }h\sim \pm a_t,\\
0,&\text{ if }h\sim k_\theta,
\end{dcases}\\
\mathrm{Tr}(\mathscr{P}^{-,i{s}})(h)&=\begin{dcases}\pm \frac{\cos({s} t)}{|\sinh(t)|},&\text{ if }h\sim \pm a_t,\\
0,&\text{ if }h\sim k_\theta.
\end{dcases}\label{eq:TrP}\end{split}
\end{align}
Let $f\in \Cinft(\R)$ be in the principal series representation $\mathscr P^{\pm,w}$ with parameter $w\in \C$ in the notation of \cite[p.~38]{knapp2}. That is, an element $g=\begin{pmatrix}
a & b\\
c & d
\end{pmatrix}\in \SL(2,\R)$ acts on $f$ according to
\[
(g\cdot f)(x)=\begin{dcases}|-bx+d|^{-1-w}f\Big(\frac{ax-c}{-bx+d}\Big),& \text{if }f\in \mathscr P^{+,w},\\
\sgn(-bx+d)|-bx+d|^{-1-w}f\Big(\frac{ax-c}{-bx+d}\Big),& \text{if }f\in \mathscr P^{-,w}.
\end{dcases}
\]
Then one can verify by direct computation that the action of the Casimir operator $\mathcal C$ on $f$ is given by
\bq
\mathcal{C} f=(w^2-1)f.\label{eq:Casimireigenvalue1}
\eq
In particular, since $\mathscr D^+_{n+1}\oplus\mathscr D^-_{n+1}\subset \mathscr P^{+,n}$ if $n$ is odd and $\mathscr D^+_{n+1}\oplus\mathscr D^-_{n+1}\subset \mathscr P^{-,n}$ if $n$ is even \cite[II,(2.19)]{knapp2}, \eqref{eq:Casimireigenvalue1} implies that
\bq
\mathcal Cf = (n^2-1)f\qquad \forall\; f\in \mathscr D^+_{n+1}\oplus\mathscr D^-_{n+1},\; n\in \N.\label{eq:CasimirEigenvalueDpm}
\eq
In view of \eqref{eq:Casimireigenvalue1} and \eqref{eq:CasimirEigenvalueDpm}, the Fourier inversion Formula \eqref{eq:Fourierinv} diagonalizes the resolvent
\[
\tilde R(z):=(-\mathcal C-z)^{-1}:\CT(G)\to \mc{D}'(G)
\]
according to
\begin{multline*}
C_0(\tilde R(z)f)(g)=4\sum_{n=1}^\infty \frac{n\,(\mathrm{Tr}(\mathscr D^+_{n+1}+\mathscr D^-_{n+1})\ast f)(g)}{1-n^2-z}\\ + \int_{\R} \frac{(\mathrm{Tr}(\mathscr{P}^{+,i{s}})\ast f)(g)}{{s}^2+1-z}\, {s} \tanh\Big(\frac{\pi{s}}{2}\Big)\,d{s}+ \int_{\R} \frac{(\mathrm{Tr}(\mathscr{P}^{-,i{s}})\ast f)(g)}{{s}^2+1-z}\, {s} \coth\Big(\frac{\pi{s}}{2}\Big)\,d{s}.
\end{multline*}
From this and \eqref{eq:conv} we read off that the Schwartz kernel $\tilde R(z;\cdot,\cdot)\in \D'(G\times G)$ of $\tilde R(z)$ reads (with the embedding $L^1_\mathrm{loc}(G\times G)\to \D'(G\times G)$ given by integration with respect to the measure $dh \times dh)$
\begin{multline}
C_0\tilde R(z;g,h)=4\sum_{n=1}^\infty \frac{n\,\mathrm{Tr}(\mathscr D^+_{n+1}+\mathscr D^-_{n+1})(g^{-1}h)}{1-n^2-z}\\ + \int_{\R} \frac{\mathrm{Tr}(\mathscr{P}^{+,i{s}})(g^{-1}h)}{{s}^2+1-z}\, {s} \tanh\Big(\frac{\pi{s}}{2}\Big)\,d{s}+ \int_{\R} \frac{\mathrm{Tr}(\mathscr{P}^{-,i{s}})(g^{-1}h)}{{s}^2+1-z}\, {s} \coth\Big(\frac{\pi{s}}{2}\Big)\,d{s}.
\label{eq:firsttilderz}
\end{multline}
To get the Schwartz kernel $´R(\lambda;\cdot,\cdot)\in \D'(\AdS_3\times \AdS_3)$, we compose $\tilde R(z;\cdot,\cdot)$ with $\psi\times \psi$ and put $z=-\lambda(\lambda+2)$. For $x,y\in \AdS_3$, write $g_x:=\psi(x),g_y:=\psi(y)\in G$. Then a direct computation reveals that the product matrix $g_x^{-1}g_y\in G$ satisfies
\bq
\mathrm{Tr}(g_x^{-1}g_y)=-2q(x,y)\label{eq:trq}
\eq
and
\begin{align}
g_x^{-1}g_y \sim \pm a_t,\;t>0 &\iff \begin{cases}|q(x,y)|>1,\; \mp q(x,y)>0,\\
t=\log\big(|q(x,y)|+\sqrt{q(x,y)^2-1}\big),\end{cases}\label{eq:gxyt}\\
g_x^{-1}g_y \sim k_\theta &\iff |q(x,y)|< 1,\; \cos\theta=-q(x,y),\; \sin \theta =\sqrt{1-q(x,y)^2}.\label{eq:gxytheta}
\end{align}
Now the infinite sum
\[
\sum_{n=1}^\infty \frac{n\,(\mathrm{Tr}(\mathscr D^+_{n+1}+\mathscr D^-_{n+1})(g^{-1}h)}{1-n^2-z}
\]
featured in \eqref{eq:firsttilderz} can be evaluated: Using \eqref{eq:TrD} and  \eqref{eq:gxytheta}
, one finds in the case $|q(x,y)|< 1$ the expression
\begin{align*}
\sum_{n=1}^\infty& \frac{n\,\mathrm{Tr}(\mathscr D^+_{n+1}+\mathscr D^-_{n+1})(g_x^{-1}g_y)}{1-n^2-z}=-\frac{1}{\sin(\theta)}\sum_{n=1}^\infty \frac{n\sin(n\theta)}{1-n^2-z}.
\end{align*}
in terms of the 
angle $\theta=\arccos(q(x,y))$. Now we use that for $\zeta\in (-1,1)$ one has
\begin{align*}
\frac{\sin(n\arccos(\zeta))}{\sin(\arccos(\zeta))}&=-\frac{1}{2} \frac{(\zeta +i \sqrt{1 - \zeta^2})^{-n} -  (\zeta -i \sqrt{1 - \zeta^2})^{-n}}{i\sqrt{1 - \zeta^2}},
\end{align*}
which is the well-known Chebychev polynomial $U_{n-1}(\zeta)$ of the second kind.

In the case $|q(x,y)|> 1$, $\mp q(x,y)>0$ we compute with \eqref{eq:TrD} and  \eqref{eq:gxyt}:
%
\begin{align*}
\sum_{n=1}^\infty \frac{n\,\mathrm{Tr}(\mathscr D^+_{n+1}+\mathscr D^-_{n+1})(g_x^{-1}g_y)}{1-n^2-z}
&=\frac{1}{\sqrt{q(x,y)^2-1}}\sum_{n=1}^\infty\frac{n(\pm 1)^{n+1}\big(|q(x,y)|+\sqrt{q(x,y)^2-1}\big)^{-n}}{(1-n^2-z)}.
\end{align*}
For the integrals in  \eqref{eq:firsttilderz} we find using \eqref{eq:TrP}: They vanish  if $|q(x,y)|< 1$, and if $|q(x,y)|> 1$, $\mp q(x,y)>0$, then we have
\begin{align*}
\int_{\R} \frac{\mathrm{Tr}(\mathscr{P}^{+,i{s}})(g_x^{-1}g_y)}{{s}^2+1-z}\, {s} \tanh\Big(\frac{\pi{s}}{2}\Big)\,d{s}&=\frac{1}{\sqrt{q(x,y)^2-1}}\int_{\R} \frac{\cos({s} t)}{{s}^2+1-z}\, {s} \tanh\Big(\frac{\pi{s}}{2}\Big)\,d{s},\\
\int_{\R} \frac{\mathrm{Tr}(\mathscr{P}^{-,i{s}})(g_x^{-1}g_y)}{{s}^2+1-z}\, {s} \coth\Big(\frac{\pi{s}}{2}\Big)\,d{s}&=\frac{\pm 1}{\sqrt{q(x,y)^2-1}}\int_{\R} \frac{\cos({s} t)}{{s}^2+1-z}\, {s} \coth\Big(\frac{\pi{s}}{2}\Big)\,d{s},
\end{align*}
where, as before, $t=\log\big(|q(x,y)|+\sqrt{q(x,y)^2-1}\big)$. To calculate the sum of the two integrals, we use the identities
\begin{align}\begin{split}
\tanh\Big(\frac{s}{2}\Big)- \coth\Big(\frac{s}{2}\Big)&=\frac{-2}{\sinh(s)},\\ 
\tanh\Big(\frac{s}{2}\Big)+ \coth\Big(\frac{s}{2}\Big)&=\frac{2}{\tanh(s)},\qquad s\in \R\setminus\{0\}.\label{eq:identtanhcoth}\end{split}
\end{align}
This yields in the case $-q(x,y)<-1$:
\begin{align*}
&\int_{\R} \frac{\mathrm{Tr}(\mathscr{P}^{+,i{s}})(g_x^{-1}g_y)}{{s}^2+1-z}\, {s} \tanh\Big(\frac{\pi{s}}{2}\Big)\,d{s}+\int_{\R} \frac{\mathrm{Tr}(\mathscr{P}^{-,i{s}})(g_x^{-1}g_y)}{{s}^2+1-z}\, {s} \coth\Big(\frac{\pi{s}}{2}\Big)\,d{s}\\
& =\frac{-2}{\sqrt{q(x,y)^2-1}}\int_{\R} \frac{{s}\cos({s} t)}{({s}^2+1-z)\sinh(\pi {s})}\,d{s},
\end{align*}
while in the case $-q(x,y)>1$ we get
\begin{align*}
&\int_{\R} \frac{\mathrm{Tr}(\mathscr{P}^{+,i{s}})(g_x^{-1}g_y)}{{s}^2+1-z}\, {s} \tanh\Big(\frac{\pi{s}}{2}\Big)\,d{s}+\int_{\R} \frac{\mathrm{Tr}(\mathscr{P}^{-,i{s}})(g_x^{-1}g_y)}{{s}^2+1-z}\, {s} \coth\Big(\frac{\pi{s}}{2}\Big)\,d{s}\\
& =\frac{2}{\sqrt{q(x,y)^2-1}}\int_{\R} \frac{{s}\cos({s} t)}{({s}^2+1-z)\tanh(\pi {s})}\,d{s}.
\end{align*}
Plugging these formulas back into \eqref{eq:firsttilderz}, dividing by $C_0$, and noting that for an even integrand we can replace $\frac{1}{2}\int_{\R}$ by $\int_{0}^\infty$ finishes the proof.
\end{proof}
We shall now simplify the expression of the resolvent integral kernel given in Lemma \ref{lem:rz1} and show that it can be expressed as in Proposition \ref{prop:resolventAdS}.  
First, we show that 
\begin{lemma}\label{f^pmformule}
The functions $f^\pm_{-\la(\la+2)}(\zeta)$ are given by 
\begin{align*}
& f^+_{-\la(\la+2)}(\zeta)=\frac{\pi}{2\sqrt{\zeta^2-1}}\frac{(\zeta+\sqrt{\zeta^2-1})^{-\la-1}}{\tan(\pi(\la+1))},\\
&  f^-_{-\la(\la+2)}(\zeta)=-\frac{\pi}{2\sqrt{\zeta^2-1}}\frac{(|\zeta|+\sqrt{\zeta^2-1})^{-\la-1}}{\sin(\pi(\la+1))}.
\end{align*}
\end{lemma}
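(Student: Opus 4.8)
The plan is to evaluate the two series and the two integrals appearing in the definition of $f^\pm_{-\lambda(\lambda+2)}$ in Lemma \ref{lem:rz1} by residue calculus, and to observe that the series contributions cancel exactly against the pole contributions of the integrals at the integers, so that only the pole at $\lambda+1$ survives. Throughout I write $w:=\lambda+1$. After substituting $z=-\lambda(\lambda+2)$ in Lemma \ref{lem:rz1} one has $1-n^2-z=w^2-n^2$ and $s^2+1-z=s^2+w^2$, and for $\zeta>1$ one has $(\zeta+\sqrt{\zeta^2-1})^{-n}=e^{-nt}$ with $t=\log(\zeta+\sqrt{\zeta^2-1})>0$, while $(|\zeta|+\sqrt{\zeta^2-1})^{-n}=e^{-nt}$ for $\zeta<-1$. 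Under the standing hypotheses of Lemma \ref{lem:rz1} we have $w\notin\Z$ and $\Re w>0$, so $iw$ lies in the open upper half-plane and is distinct from all the lattice points $in$, $n\in\Z$.

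First I would treat $f^+$. Using that $s\mapsto\frac{s}{(s^2+w^2)\tanh(\pi s)}$ is even, that $s\mapsto\frac{s\sin(st)}{(s^2+w^2)\tanh(\pi s)}$ is odd, and that $\frac{s}{\tanh(\pi s)}$ is regular at $s=0$ (so there is no principal-value issue), one rewrites
\[
\int_0^\infty \frac{s\cos(st)}{(s^2+w^2)\tanh(\pi s)}\,ds=\frac12\int_{-\infty}^\infty \frac{s\,e^{ist}}{(s^2+w^2)\tanh(\pi s)}\,ds .
\]
Since $t>0$, I would close the contour in the upper half-plane along the semicircles $|s|=N+\tfrac12$, $N\in\N$: on these arcs $\coth(\pi s)$ is uniformly bounded and $\frac{s}{s^2+w^2}=O(N^{-1})$, so the arc contributions vanish by Jordan's lemma. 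The enclosed poles are $s=iw$ (a simple pole of $\tfrac1{s^2+w^2}$) and $s=in$, $n\ge1$ (simple poles of $\coth(\pi s)$ with residue $\tfrac1\pi$), with no pole at $s=0$. Collecting residues, the poles at $s=in$ contribute precisely a multiple of $\sum_{n\ge1}\frac{n\,e^{-nt}}{w^2-n^2}$ that cancels the explicit series in $f^+$, while the pole at $s=iw$ contributes, via $\tanh(i\pi w)=i\tan(\pi w)$, a multiple of $\frac{e^{-wt}}{\tan(\pi w)}$; hence $f^+_{-\lambda(\lambda+2)}(\zeta)=\frac{c}{\sqrt{\zeta^2-1}}\,\frac{(\zeta+\sqrt{\zeta^2-1})^{-(\lambda+1)}}{\tan(\pi(\lambda+1))}$ with an explicit numerical constant $c$. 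The case of $f^-$ is identical, working with $\int_{-\infty}^\infty\frac{s\,e^{ist}}{(s^2+w^2)\sinh(\pi s)}\,ds$ (again no pole at $s=0$): the poles at $s=in$, $n\ge1$, now have residue $\tfrac{(-1)^n}{\pi}$ and reproduce exactly the alternating series $\sum_{n\ge1}\frac{n(-1)^{n+1}e^{-nt}}{w^2-n^2}$ in $f^-$, and the pole at $s=iw$ contributes a multiple of $\frac{e^{-wt}}{\sin(\pi w)}$ via $\sinh(i\pi w)=i\sin(\pi w)$; the minus sign in front of the integral in the definition of $f^-$ is exactly what makes the alternating series cancel and produces the overall minus sign in the stated formula.

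To finish I would substitute back $w=\lambda+1$ and $e^{-wt}=(\zeta+\sqrt{\zeta^2-1})^{-(\lambda+1)}$ (resp.\ $(|\zeta|+\sqrt{\zeta^2-1})^{-(\lambda+1)}$) to read off the two formulas of the lemma. The routine but slightly delicate points, which I expect to be the main obstacle, are the justification of the contour deformation — absolute convergence of the original integrals at $s=\infty$, the uniform bound on $\coth(\pi s)$ and $1/\sinh(\pi s)$ on the circles $|s|=N+\tfrac12$ staying away from the lattice of poles, and Jordan's lemma on the arcs — together with the careful bookkeeping of the numerical prefactors (the factors of $\pi$ and $2$), which must be kept consistent with the normalization of the Haar measure used in Lemma \ref{lem:rz1}; once these are settled, the cancellation of the explicit series against the integer residues is immediate. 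As an alternative to the residue evaluation of the two integrals one could instead quote the classical closed forms for $\int_0^\infty\frac{s\cos(st)}{s^2+w^2}\coth(\pi s)\,ds$ and the corresponding $1/\sinh$-integral, but the contour argument keeps the proof self-contained.
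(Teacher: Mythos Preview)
Your approach is essentially the same as the paper's: rewrite the cosine integral as $\tfrac12\int_\R \frac{s e^{ist}}{(s^2+w^2)\tanh(\pi s)}\,ds$, evaluate by residues in the upper half-plane, and observe that the residues at $s=in$ exactly cancel the explicit series, leaving only the contribution from $s=i(\lambda+1)$. The only cosmetic difference is that the paper shifts the contour to a horizontal line $ia+\R$ and bounds the remainder by $\mc{O}(e^{-at})$ for arbitrarily large $a$, whereas you close along semicircles $|s|=N+\tfrac12$ and invoke Jordan's lemma; both are standard and equivalent here. Your hedge on the numerical constant is appropriate: the paper's own displayed computation has visible typos (a stray factor $4/\pi$ and $\sinh$ in place of $\tanh$), so matching the stated constant $2$ does require going back carefully to the normalization in Lemma~\ref{lem:rz1}.
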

\begin{proof}
Take $z=-\la(\la+2)$ with ${\rm Im}(\la)>0$ and ${\rm Re}(\la)>-1$. 
We have
\[\begin{split}
q_z(t):=\int_0^\infty \frac{{s}\cos({s} t)}{({s}^2+1-z)\tanh(\pi {s})}\,d{s}= & 
\frac{1}{2} \int_0^\infty \frac{{s}(e^{is}+e^{-ist})}{({s}^2+1-z)\tanh(\pi {s})}\,d{s}\\
=&\frac{1}{2} \int_\R  \frac{{s}e^{ist}}{({s}^2+1-z)\tanh(\pi {s})}\,d{s}  \end{split}\]
We can deform the contour of integration to $s\in ia+\R$ for any $a>{\rm Re}(\la)+1$ with $a\not \in \N$, 
use that $s^2+1-z=s^2+(\la+1)^2$. The poles of the meromorphic 
function 
\[F_{z,t}(s)= \frac{{s}e^{ist}}{2({s}^2+1-z)\tanh(\pi {s})}\] 
inside the integral are (those in ${\rm Im}(s)>0$) at $s_n=in$ with $n\in \N$ and at $s=\pm i(\la+1)$. By the residue theorem, we get, for $N$ the largest integer smaller than $a$  
\[\begin{split}
q_z(t)= & 2\pi i( \sum_{n=1}^N {\rm Res}_{s=n}F_{z,t}(s)+{\rm Res}_{s=i(\la+1)} F_{z,t}(s))\\
& +\frac{e^{-at}}{2}\int_{\R}\frac{(s+ia)e^{ist}}{((ia+t)^2+(\la+1)^2)\tanh(\pi(ia+s))}ds\\
=& -\sum_{n=1}^N \frac{ne^{-nt}}{(\la+1)^2-n^2}+\frac{\pi }{2}\frac{e^{-(\la+1)t}}{\tan(\pi(\la+1))}+\mc{O}(e^{-a t}).
\end{split}\]
Note that this can be done for any $a>0$ large. 
Thus we get 
\[\begin{split}
f_z^+(\zeta)=&\frac{1}{\sqrt{\zeta^2-1}}\bigg(\sum_{n=1}^\infty\frac{n\big(\zeta+\sqrt{\zeta^2-1}\big)^{-n}}{1-n^2-z} +\int_0^\infty  \frac{{s}\cos({s} t)}{({s}^2+1-z)\sinh(\pi {s})}\,d{s}\bigg)\\
=& \frac{\pi}{2\sqrt{\zeta^2-1}}\frac{(\zeta+\sqrt{\zeta^2-1})^{-\la-1}}{\tan(\pi(\la+1))}.
 \end{split}\]
 The same argument applies to $f^-_{z}$.
\end{proof}
We see in particular that $f^\pm(\zeta)$ decay as $|\zeta|\to \infty$ if ${\rm Re}(\la)>-1$, which matches the fact that the resolvent must map to $L^2(\AdS_3)$.

Next, we want to express also $f_z(\zeta)$ in a simpler form for $|\zeta|<1$, and to compute the constant $C_0$. The constant turns out to be related to the singularity of $f_z(\zeta)$ as $\zeta\to 1^-$, which is the light cone.
We now solve for a fundamental solution of $(\Box_{g}+\la(\la+2))$, i.e.\ we
want to solve the following equation for ${\rm Re}(\la)>0$ and $\la\notin \N$, in the distribution sense in the model $\mathbb{S}_\theta^1\times \mathbb{D}_z$ of $\AdS_3$, 
\[ (\Box_{g}+\la(\la+2))F(e^{i\theta},z)= \delta_{(1,0)}\]
where $\delta_{(1,0)}$ is the Dirac mass at $(e^{i\theta},z)=(1,0)$, with some decay as $(e^{i\theta},z)$ approaches the conformal boundary $\pl \AdS_3$ as we want $R(\la)$ to map $C_c^\infty(\AdS_3)$ to $L^2(\AdS_3)$ functions if ${\rm Re}(\la)>0$. 
As before, we use the coordinates $z=e^{i\varphi}\frac{\sinh(t)}{1+\cosh(t)}$ with $t\in \R^+$ and let 
\[\zeta(\theta,t)=\cosh(t)\cos(\theta)=-q\Big({\rm e},\tilde{\psi}^{-1}(e^{i\theta},\tfrac{\sinh(t)}{1+\cosh(t)}e^{i\varphi})\Big).\]
By Lemma \ref{lem:rz1}, since the resolvent kernel is a convolution kernel, we are searching for $F$ of the form $F(\theta,t)=f(\zeta(\theta,t))$ for some $f$ to be determined. We see from \eqref{Boxg_t_coordinate} that $f$ must solve the ODE 
\begin{equation}\label{ODEhyper}
 (1-\zeta^2)f''(\zeta) -3\zeta f'(\zeta)+\la(\la+2)f(\zeta)=0 
 \end{equation}
for $\zeta\not=1$ and we need that $f(\zeta)$ decay as $|\zeta|\to \infty$ so as to obtain an $L^2$ function outside $0$.
Making the change of coordinates $u=(1-\zeta)/2$, this reduces to the hypergeometric equation
\[ u(1-u)G''(u) -\frac{3}{2}(2u-1)G'(u)+\la(\la+2)G(u)=0.\] 
The independent solutions in $u\in (0,1)$ are the hypergeometric functions 
\[
  F_{1}(-\lambda,2+\lambda,\frac{3}{2};u), \quad u^{-1/2}F_{1}(-\frac{1}{2}-\lambda,\frac{3}{2}+\lambda, \frac{1}{2};u), \quad  \textrm{ in }u\in (0,1)\]
\[  u^{\la}{\rm F}_1(-\lambda,-\tfrac{1}{2}-\lambda,-1-2\la,\tfrac{1}{u}) ,\quad u^{-2-\la}{\rm F}_1(2+\la,\tfrac{3}{2}+\la,3+2\la,\tfrac{1}{u})
\quad \textrm{ in }u>0\]
 \[(1-u)^{\la}{\rm F}_1(-\lambda,-\tfrac{1}{2}-\lambda,-1-2\la,\tfrac{1}{1-u}) ,\quad (1-u)^{-2-\la}{\rm F}_1(2+\la,\tfrac{3}{2}+\la,3+2\la,\tfrac{1}{1-u}) \quad  \textrm{ in }u<0.\]
Only the solutions with exponents $u^{-2-\la}$ and $(1-u)^{-2-\la}$ are decaying as $|u|\to \infty$ when ${\rm Re}(\la)>1$. This means that in the region $u>1$ and $u<0$,  the resolvent kernel $R(\lambda;x,{\rm e})$ must be of the form 
\[ R_{\Box_g}(\lambda;x,{\rm e})=\left\{\begin{array}{ll}
A_0(\lambda) u(x)^{-2-\la}{\rm F}_1(\tfrac{3}{2}+\la,2+\la,3+2\la,\tfrac{1}{u(x)}), & u(x)>1\\
A_1(\lambda) (1-u(x))^{-2-\la}{\rm F}_1(\tfrac{3}{2}+\la,2+\la,3+2\la,\tfrac{1}{1-u(x)}), & u(x)<0
\end{array}\right.\]
for some constant $A_0(\lambda),A_1(\lambda)$, in order to map $C_c^\infty(\AdS_3)$ to $L^2(\AdS_3)$. 
We check that, for $u=(1-\zeta)/2$, one has 
\begin{align*}
&\forall u<0, \,\, (1-u)^{-2-\la}{\rm F}_1(\tfrac{3}{2}+\la,2+\la,3+2\la,\tfrac{1}{1-u})=2\dfrac{4^{\la+1}}{\sqrt{\zeta^2-1}}(\zeta+\sqrt{\zeta^2-1})^{-(\la+1)},\\
& \forall u>1, \,\, u^{-2-\la}{\rm F}_1(\tfrac{3}{2}+\la,2+\la,3+2\la,\tfrac{1}{u})=2\dfrac{4^{\la+1}}{\sqrt{\zeta^2-1}}(-\zeta+\sqrt{\zeta^2-1})^{-(\la+1)}\\
& \forall u\in (0,1), \,\, u^{-1/2}F_{1}(-\frac{1}{2}-\lambda,\frac{3}{2}+\lambda, \frac{1}{2};u)=\dfrac{2}{\sqrt{1-\zeta^2}}(\zeta+i\sqrt{1-\zeta^2})^{-(\la+1)}.
\end{align*}
This is achieved by checking that these functions solve the hypergeometric equation and by comparing the asymptotics at $u=0,1,\infty$. Notice that this matches (up to constant factor) with Lemma \ref{f^pmformule} for the region $|\zeta|>1$.
The proof of the expression \eqref{formuleFlambda} for the resolvent kernel follows from the following Lemma.
\begin{lemma}\label{Lemma_sol_fond}
The function $f_{-\la(\la+2)}(\zeta)$ is given in $|\zeta|<1$ by 
\begin{equation}\label{formule_for_f}
 f_{-\la(\la+2)}(\zeta)=\frac{\pi}{4}\Big(\dfrac{2}{\sqrt{1-\zeta^2}}(\zeta+i\sqrt{1-\zeta^2})^{-(\la+1)} +\frac{e^{-i\pi(\la+1)}}{\sin(\pi(\la+1))}G_\la(\zeta)\Big)
 \end{equation}
where $G_\la(\zeta)$ is defined by \eqref{def_Glambda}, and the constant $C_0$ is given by $C_0=1/(2\pi^2)$.
\end{lemma}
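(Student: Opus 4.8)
\textbf{Proof proposal for Lemma \ref{Lemma_sol_fond}.}

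The plan is to identify the function $f_{-\la(\la+2)}(\zeta)$ on $|\zeta|<1$ by two independent means and match them. First I would take the explicit series representation
\[
f_z(\zeta)=\frac{1}{2i\sqrt{1-\zeta^2}}\sum_{n=1}^\infty\frac{n\big((\zeta+i\sqrt{1-\zeta^2})^{-n}-(\zeta-i\sqrt{1-\zeta^2})^{-n}\big)}{1-n^2-z}
\]
from Lemma \ref{lem:rz1}, substitute $z=-\la(\la+2)$ so that $1-n^2-z=(\la+1)^2-n^2=(\la+1-n)(\la+1+n)$, and write $e^{i\beta}:=\zeta+i\sqrt{1-\zeta^2}$ with $\beta=\arccos\zeta\in(0,\pi)$, so the sum becomes $\sum_n \frac{n\sin(n\beta)}{(\la+1)^2-n^2}\cdot\frac{1}{\sqrt{1-\zeta^2}}$ up to a constant. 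This is a classical Fourier series: using partial fractions $\frac{n}{(\la+1)^2-n^2}=\frac12\big(\frac{1}{\la+1-n}-\frac{1}{\la+1+n}\big)$ (with a sign bookkeeping) and the known generating identity $\sum_{n\geq 1}\frac{\sin(n\beta)}{n+a}$ (or equivalently directly summing the geometric-type series $\sum_n n w^n/((\la+1)^2-n^2)$ for $|w|<1$), one gets a closed form combining a term with $w^{-(\la+1)}=(\zeta+i\sqrt{1-\zeta^2})^{-(\la+1)}$ against $\cot(\pi(\la+1))$ and $1/\sin(\pi(\la+1))$ type factors. Carefully organizing these into the combination $\frac{2}{\sqrt{1-\zeta^2}}(\zeta+i\sqrt{1-\zeta^2})^{-(\la+1)}+\frac{e^{-i\pi(\la+1)}}{\sin(\pi(\la+1))}G_\la(\zeta)$ should be a purely trigonometric computation matching the asserted formula \eqref{formule_for_f}, since $G_\la(\zeta)$ is by \eqref{def_Glambda} exactly the even combination in $\pm i\sqrt{1-\zeta^2}$ that the residue contributions at the integer poles produce.

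The alternative route, which I would use to cross-check and which also pins down $C_0$, is to invoke the ODE \eqref{ODEhyper}: both $f_{-\la(\la+2)}$ (restricted to $|\zeta|<1$) and the candidate right-hand side of \eqref{formule_for_f} solve $(1-\zeta^2)f''-3\zeta f'+\la(\la+2)f=0$ away from $\zeta=1$. Since that second-order ODE has a two-dimensional solution space, it suffices to match (i) the behaviour as $\zeta\to -1^+$ (where the resolvent kernel must remain locally integrable and match the $|\zeta|>1$ branches already identified, i.e.\ $f^\pm_{-\la(\la+2)}$ of Lemma \ref{f^pmformule}) and (ii) the behaviour as $\zeta\to 1^-$, which corresponds to the light cone through $\mathrm e$ and where the delta-function singularity $(\Box_g+\la(\la+2))F=\delta_{\mathrm e}$ forces a specific leading coefficient. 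Concretely, I would pass to the coordinate $u=(1-\zeta)/2$, in which $\zeta\to 1^-$ is $u\to 0^+$, and extract from $(\Box_g+\la(\la+2))F(\theta,t)=\delta_{(1,0)}$ — using the volume form $\mathrm{dv}_g=\frac12\sinh(2t)\,dt\,d\theta\,d\varphi$ from Section \ref{sec:torus_coord} and the $3$-dimensional Euclidean model near $\mathrm e$ — the normalization of the $1/\operatorname{dist}$-type singularity. Comparing this with the $u^{-1/2}$-behaviour of $f_{-\la(\la+2)}$ near $u=0$ read off from \eqref{formule_for_f}, one solves for the single undetermined multiplicative constant and obtains $C_0=1/(8\pi)$.

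The main obstacle I expect is (ii): carefully computing the coefficient of the fundamental-solution singularity at the light cone. The point $\mathrm e$ is a timelike/spacelike transition point, and although near $x=\mathrm e$ the Lorentzian metric looks like flat $\R^{2,1}$ (so the fundamental solution of $\Box_g$ has the familiar Minkowski form $\sim c/\sqrt{|Q(x)|}$ with $Q$ the quadratic form), one must correctly keep track of the $i0$-prescription implicit in taking ${\rm Im}(\la)>0$, of the factor $\sqrt{\sinh(2t)}$ from the conjugation, and of the branch of $(\zeta+i\sqrt{1-\zeta^2})^{-(\la+1)}$ as $\zeta\to1^-$. A clean way around the delicate distributional computation is to instead directly verify, by a Wronskian argument, that the right-hand side of \eqref{formule_for_f} agrees with $f_{-\la(\la+2)}$ as \emph{functions} on $\zeta\in(-1,1)$ (since we already have a closed form from the series computation of the first paragraph), and then separately compute $C_0$ by testing the global identity for the resolvent kernel against one convenient test function — for example a radial bump at $\mathrm e$ — where the integral against $F_\la(-q(\cdot,\mathrm e))$ can be evaluated using the explicit hypergeometric primitives listed above; equating the result to the value forced by $(\Box_g+\la(\la+2))R_{\Box_g}(\la)=\mathrm{Id}$ gives $C_0=\tfrac{1}{8\pi}$. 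I would present whichever of these two normalization computations turns out shorter, keeping the ODE/Wronskian identification of \eqref{formule_for_f} itself as the backbone of the argument.
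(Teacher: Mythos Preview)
Your first route---summing the series for $f_z$ directly---is correct and is actually cleaner than what the paper does for establishing formula \eqref{formule_for_f}. Writing $e^{i\beta}=\zeta+i\sqrt{1-\zeta^2}$, the series collapses to $-\tfrac{1}{\sin\beta}\sum_{n\ge 1}\tfrac{n\sin(n\beta)}{(\la+1)^2-n^2}$, and the classical identity $\sum_{n\ge 1}\tfrac{n\sin(n\beta)}{n^2-a^2}=\tfrac{\pi}{2}\tfrac{\sin(a(\pi-\beta))}{\sin(\pi a)}$ for $\beta\in(0,\pi)$ indeed gives the closed form $\tfrac{1}{\sin\beta}\big(\cos((\la+1)\beta)-\cot(\pi(\la+1))\sin((\la+1)\beta)\big)$ up to the overall constant, which matches the right-hand side of \eqref{formule_for_f}. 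The paper does \emph{not} take this route: it instead sets up $F_\la$ with undetermined constants $C_0^\pm,C_1,C_2$ and carries out a Green's formula computation over the excised regions $\Omega_\eps^{\pm1}=\{|\zeta\mp1|\ge\eps\}$, carefully evaluating the boundary terms on $\partial_\pm\Omega_\eps^{\pm1}$ as $\eps\to0$ (the conical light-cone singularities at $\pm\mathrm e$). The requirement $\lim_{\eps\to0}I_\eps=\chi(\mathrm e)$ then fixes all four constants simultaneously; comparing the resulting $C_0^+(\la)$ with the already-known $f_z^+$ of Lemma \ref{f^pmformule} yields $C_0=1/(8\pi)$, and the formula for $f_z$ on $|\zeta|<1$ is read off from the determined $C_1,C_2$.

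Your second route is essentially the paper's argument in outline, but be aware that ``testing against a convenient radial bump'' does not shortcut the work: the entire $\delta_{\mathrm e}$ contribution comes from the jump of $F_\la$ and $F_\la'$ across the light cone $\zeta=1$, so any determination of $C_0$ along these lines forces you into exactly the boundary-integral analysis the paper performs (eight boundary terms, four of which carry the $1/\sqrt{2\eps}$ divergences that must cancel, and the surviving finite piece gives $4\pi C_1(\la)/i\cdot\chi(\mathrm e)$). The matching at $\zeta=-1$ you mention is also needed, but to rule out a spurious $\delta_{-\mathrm e}$, not to supply a second condition. If you want a genuinely shorter route to $C_0$ given that your series summation has already fixed $f_z$, the cleanest alternative is to compute $C_0$ directly as the ratio of Knapp's Haar measure on $\SL(2,\R)$ to $\psi_*\mathrm{dv}_g$---a linear-algebra check at a single point---rather than going through the distributional computation.
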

\begin{proof}
Consider the function $F_\la(\zeta)$ given by \eqref{formuleFlambda}, where $C_0(\la),C_1(\la),C_2(\la)$ are three free constants that we will adjust in the end to solve $\Box_{g}F_\la(\zeta)=\delta_{{\rm e}}$ is the Dirac mass at 
${\rm e}$ with respect to the volume measure ${\rm dv}_g$. In the $(t,\varphi,\theta)$ coordinates, ${\rm e}$ is given 
by $(t=0,\theta=0)$ (the parameter $\varphi$ is not important at this point as we use polar coordinates around ${\rm e}$). The volume measure is ${\rm dv}_g=\frac{1}{2}\sinh(2t)dt\wedge \d\varphi \wedge d\theta.$
Let us define for $\eps>0$ the sets 
\[\begin{gathered}
\Omega^{\pm 1}_\eps:=\{ x\in \AdS_3\,|\, |\zeta(x)\mp 1|\geq \eps\}\\
\pl_+\Omega^{\pm 1}_\eps=\{ x\in \AdS_3\,|\, \zeta(x)=\pm 1 +\eps\}, \quad \pl_-\Omega^{\pm 1}_\eps=\{ x\in \AdS_3\,|\, \zeta(x)=\pm 1 -\eps\}.\end{gathered}\]
For $\chi\in C_c^\infty(\AdS_3)$ with support in $\{\pm \zeta>-1\}$ we let 
 \[I_\eps^{\pm 1}:= \int_{\Omega_{\eps}^{\pm 1}} F_\la(\zeta(x)) (\Box_g+\la(\la+2))\chi (x)\,{\rm dv}_g(x).\] 
 By Green's formula and taking into account that $f=F_\lambda$ solves \eqref{ODEhyper}, we get 
 \[ \begin{split}
 I^{\pm 1}_\eps= &F_\la(\pm 1+\eps)\int_{\pl_+\Omega^{\pm 1}_\eps} \iota_{\nabla \chi}\,{\rm dv}_g-
 \int_{\pl_+\Omega^{\pm 1}_\eps} \chi \iota_{\nabla F_\la}\,{\rm dv}_g\\
 & +F_\la(\pm 1-\eps)\int_{\pl_-\Omega^{\pm 1}_\eps}  \iota_{\nabla \chi}\,{\rm dv}_g-
 \int_{\pl_-\Omega^{\pm 1}_\eps} \chi \iota_{\nabla F_\la}\,{\rm dv}_g
\end{split} \] 
 where the gradient $\nabla$ is with respect to $g$; here, our convention is 
 that the boundary terms are oriented in a way that $\iota_{n}{\rm dv}> 0$ if $n$ is the interior normal to $\Omega_\eps^{\pm 1}$. In particular, near $t=\theta=0$, the set $\{\zeta=1-\eps\}$ is oriented by $dt\wedge \d\varphi>0$ and 
 $\{\zeta=1+\eps\}$ is oriented by $\d\varphi\wedge d\theta>0$.
If $\eps\in \R$ is small, 
 the set $\pl_\pm\Omega^1_\eps$ are codimension $1$ submanifolds except when $\eps=0$, where they develop a singularity at $\{t=0,\theta=0\}$ corresponding to the point ${\rm e}\in \AdS_3$. Near ${\rm e}$, we can write 
 \[ \zeta-1= \frac{t^2}{2}-\frac{\theta^2}{2}+\mc{O}(\|(t,\theta)\|^4),\]
 thus we see that for $\eps>0$ small enough, near ${\rm e}$ the level set $\pl_+\Omega^1_\eps$  has one connected component and the  level set $\pl_-\Omega^1_\eps$ has two connected components corresponding to $\theta>0$ and $\theta<0$, while $\{\zeta=1\}\setminus \{ {\rm e}\}$ near ${\rm e}$ is diffeomorphic to two cones corresponding to $\theta>0$ and $\theta<0$, respectively. This is similar for $\pl_\pm \Omega^{-1}_\eps$ near $x=-{\rm e}$ which corresponds to $t=0,\theta=\pi$, and $\zeta+1=-t^2/2+(\theta-\pi)^2/2+\mc{O}(t^4+(\theta-\pi)^4)$.
 We can compute 
 \[ \nabla F_\la|_{\zeta=1\pm \eps}=F_\la '(1\pm \eps) \nabla \zeta, \quad  \nabla F_\la|_{\zeta=-1\pm \eps}=F_\la '(-1\pm \eps) \nabla \zeta\]
as well as the asymptotics for $\zeta\to 1$ (using \eqref{formuleFlambda})
\begin{equation}\label{asympt_Flambda} 
F_\lambda(\zeta)=\left\{\begin{array}{ll}
\dfrac{A_1^+(\la)}{\sqrt{\zeta^2-1}}-C_0^+(\la)(\la+1)+\mc{O}(\sqrt{\zeta^2-1}), & \zeta>1,  \\
\dfrac{A_1^-(\la)}{\sqrt{1-\zeta^2}}-(C_1(\la)+2C_2(\la))(\la+1)+\mc{O}(\sqrt{|\zeta^2-1|}), & \zeta<1,
\end{array}\right.
\end{equation}
with 
\[ A_1^+(\la)=C_0^+(\la), \quad A_1^-(\la)=\frac{C_1(\la)}{i}\]
and as $\zeta\to -1$ 
\[
F_\lambda(\zeta)=\left\{\begin{array}{ll}
\dfrac{A_{-1}^-(\la)}{\sqrt{\zeta^2-1}}-C_0^-(\la)(\la+1)+\mc{O}(\sqrt{\zeta^2-1}), & \zeta<-1,  \\
\dfrac{A_{-1}^+(\la)}{\sqrt{1-\zeta^2}}+(C_1(\la)e^{-i\pi(\la+1)}+2C_2(\la)\cos(i\pi (\la+1)))(\la+1)\\
\qquad+\;\mc{O}(\sqrt{|\zeta^2-1|}), & \zeta>-1,
\end{array}\right.
\]
with 
\[A_{-1}^+(\la)=e^{-i\pi(\la+1)}\frac{C_1(\la)}{i}-2C_2(\la)\sin(\pi(\la+1)), \quad A_{-1}^-(\la)=C_0^-(\la)\]
which gives us the asymptotics for $\eps\to 0^+$ and $s\in \{+,-\}$
\begin{equation}\label{asympt_Flambdaderiv} 
F_\lambda'(\pm 1+s\eps)=-s A_{\pm 1}^s(\la)(2\eps)^{-3/2}+\O(\eps^{-1/2}).
\end{equation}
Let us compute on $\pl_s\Omega_\eps^{\pm 1}$ for $s\in \{+,-\}$
\[ \nabla \zeta=\sinh(t)\cos(\theta)\pl_t+\frac{\sin(\theta)}{\cosh(t)}\pl_{\theta}\]
\[ \begin{split}
\iota_{\nabla \zeta}\,{\rm dv}=&\sinh(t)^2\cosh(t)\cos(\theta)\d\varphi\wedge d\theta-\sinh(t)\sin(\theta)\d\varphi \wedge dt\\
=& (\pm 1+s\eps)\sinh(t)^2 \d\varphi\wedge d\theta-\sinh(t)\sin(\theta)\d\varphi\wedge dt
\end{split}\]
and on the hypersurface $\pl_s\Omega_\eps^{\pm 1}$, we have $dt=\frac{\tan(\theta)}{\tanh(t)}d\theta$
thus
\[ \begin{split}
\iota_{\nabla \zeta}\,{\rm dv}= \frac{(\pm 1+s\eps)}{\cos(\theta)^2}(\eps^2\pm 2s\eps)\d\varphi\wedge d\theta.\end{split}\]
We see that the integral of this form on compact sets is an $\mc{O}(\eps)$. Therefore, by \eqref{asympt_Flambdaderiv} we have for $C_0^+=C_0$ and $C_0^-=C_0/i$ 
\begin{equation}\label{first_asymptotic}
\int_{\pl_s\Omega_\eps^{\pm 1}} \chi \iota_{\nabla F_\la}\,{\rm dv}=- A_{\pm 1}^s(\la)(2\eps)^{-1/2}\int_{\pl_s\Omega_\eps^{\pm 1}} \frac{1}{\cos^2(\theta)}\chi \d\varphi\wedge d\theta +\mc{O}(\eps^{1/2}).
\end{equation}
Here the integral on $\zeta=1$ is constituted of integrals on $4$ conic hypersurfaces (with a singularity at $e$): the limit cone $\mc{C}^+_\pm:=\{\theta>0, \zeta=1^\pm\}$ (the limit from $\zeta=1\pm\eps$ as $\eps\to 0^\pm$) 
is oriented in a way that $\pm \d\varphi \wedge d\theta>0$ when pulled back to $\mc{C}^+_\pm$, and similarly the limit cone $\mc{C}^-_\pm:=\{\theta<0, \zeta=1^\pm\}$ (the limit from $\zeta=1\pm\eps$ as $\eps\to 0^\pm$) is oriented in a way that $\pm \d\varphi\wedge d\theta>0$ when pulled back on $\mc{C}^-_\pm$.

Next we compute on $\pl_s\Omega_{\eps}^{\pm 1}$
\begin{equation}\label{restricted_measure}
\begin{split}
\iota_{\nabla \chi}\,{\rm dv}=& \sinh(t)\cosh(t)\pl_t\chi \d\varphi\wedge d\theta +\frac{\sinh(t)}{\cosh(t)}\pl_{\theta}\chi \d\varphi\wedge dt\\
=& \Big(\frac{\sqrt{\sin(\theta)^2\pm 2s\eps+\eps^2}(\pm 1+s\eps)}{\cos(\theta)^2}\pl_{t}\chi+ \tan(\theta)\pl_{\theta}\chi\Big) \d\varphi\wedge d\theta.
\end{split}
\end{equation}
From \eqref{restricted_measure}, we  observe that
\begin{equation}
F_\la(\pm1+s\eps)\int_{\substack{\pl_s\Omega_\eps^{\pm 1}, \\
|\cos(\theta)\mp 1|<\eps}}  \iota_{\nabla \chi}\,{\rm dv}=\mc{O}(\eps^{1/2}).
\end{equation}
Next we use that on $\pl_s\Omega_\eps^{\pm 1}\cap\{ |\cos(\theta)\mp 1|>\eps\}$ 
\[ \pl_{\theta} (\chi(t(\theta),\theta,\varphi))=(\pl_t \chi)(t,\theta,\varphi)\frac{\tan(\theta)}{\tanh(t)}+(\pl_{\theta}\chi)(t,\theta,\varphi).\]
thus, by \eqref{restricted_measure},
\[\begin{split}
\iota_{\nabla \chi}\,{\rm dv}=& \frac{(\pm 1+s\eps)\sinh(t)^2}{\cosh(t)\sin(\theta)}\pl_{\theta}(\chi(t(\theta),\theta,\varphi))+ (\tan(\theta)-\frac{\sinh(t)^2}{\tan(\theta)})\pl_{\theta}\chi  \d\varphi \wedge d\theta.
\end{split}\]
Now we have on $\pl_s\Omega_\eps^{\pm 1}\cap\{ |\cos(\theta)\mp 1|>\eps\}$
\[\frac{(\pm 1+s\eps)\sinh(t)^2}{\cosh(t)\sin(\theta)}=
\tan(\theta)(1+ \frac{\pm 2s\eps +\eps^2}{\sin(\theta)^2}),\quad (\tan(\theta)-\frac{\sinh(t)^2}{\tan(\theta)})=\frac{\mp 2s\eps -\eps^2}{\cos(\theta)\sin(\theta)}\]
We thus check that 
\[\begin{split}
\int_{\substack{\pl_s\Omega_\eps^{\pm 1}, \\
|\cos(\theta)\mp 1|>\eps}}  \iota_{\nabla \chi}\,{\rm dv}=&\int_{\substack{\pl_s\Omega_\eps^{\pm 1}, \\
|\cos(\theta)\mp 1|>\eps}} \tan(\theta)(1+ \frac{\pm 2s\eps +\eps^2}{\sin(\theta)^2})\pl_{\theta}(\chi(t(\theta),\theta,\varphi))\d\varphi \wedge d\theta\\
& + \mc{O}(\eps\log(\eps))\end{split}\]
and, by integration by parts,
\begin{equation}\label{IPP}
\begin{split}
\int_{\substack{\zeta=\pm(1+ \eps), \\
|\cos(\theta)\mp 1|>\eps}}  \iota_{\nabla \chi}\,{\rm dv}=& \int_{\substack{\zeta=\pm(1+ \eps), \\
|\cos(\theta)\mp 1|>\eps}}(-\frac{1}{\cos(\theta)^2}+ \frac{2\eps}{\sin(\theta)^2})\chi \d\varphi\wedge d\theta\\
&-8\pi \sqrt{2\eps}\chi(\pm {\rm e})+\mc{O}(\eps)
\end{split}\end{equation}
where the last term correspond to the boundary terms at $\cos(\theta)=\pm (1-\eps)$. We also have by similar argument
\begin{equation}\label{term_1-eps}
\begin{split}
\int_{\zeta=\pm(1- \eps)}  \iota_{\nabla \chi}\,{\rm dv}=& \int_{\zeta=\pm(1- \eps)}(-\frac{1}{\cos(\theta)^2}- \frac{2\eps}{\sin(\theta)^2})\chi \d\varphi \wedge d\theta
+\mc{O}(\eps).
\end{split}\end{equation}
We compute, using that $\d\varphi \wedge d\theta\geq 0$ on $\zeta=\pm(1+\eps)$ and $\d\varphi \wedge d\theta\leq 0$ on $\zeta=\pm(1-\eps)$  that
\begin{equation}\label{eps/sin^2}
\int_{\substack{\zeta=\pm(1+ \eps), \\
|\cos(\theta)\mp 1|>\eps}} \frac{2\eps}{\sin(\theta)^2} \chi \d\varphi \wedge d\theta=4\pi \sqrt{2\eps}\chi(\pm e)+\mc{O}(\eps),
\end{equation}
\begin{equation}\label{eps/sin^2again}
\int_{\zeta=\pm(1-\eps)} \frac{2\eps}{\sin(\theta)^2} \chi \d\varphi \wedge d\theta=-4\pi \sqrt{2\eps}\chi(\pm e)+\mc{O}(\eps),
\end{equation}
Combining \eqref{first_asymptotic}, \eqref{asympt_Flambda}, \eqref{IPP} and \eqref{eps/sin^2}, we finally get as $\eps\to 0$
\begin{align*}
& -F_\la(1+\eps)\int_{\substack{\zeta=1+ \eps, \\
|\cos(\theta)-1|>\eps}}  \iota_{\nabla \chi}\,{\rm dv}+\int_{\zeta=1+\eps} \chi \iota_{\nabla F_\la}\,{\rm dv}\\ 
& =\frac{C_0^+(\la)}{\sqrt{2\eps}}\int_{\substack{\zeta=1+ \eps, \\
|\cos(\theta)-1|>\eps}}\frac{1}{\cos(\theta)^2}\chi \d\varphi \wedge d\theta-\frac{C_0^+(\la)}{\sqrt{2\eps}}\int_{\zeta=1+ \eps}\frac{1}{\cos(\theta)^2}\chi \d\varphi \wedge d\theta\\
& \quad +4\pi C_0^+(\la) \chi(e)- C_0^+(\la)(\la+1)\int_{\zeta=1+ \eps}\frac{1}{\cos(\theta)^2}\chi \d\varphi \wedge d\theta+ o(1)\\
&= - C_0^+(\la)(\la+1)\int_{\zeta=1+ \eps}\frac{1}{\cos(\theta)^2}\chi \d\varphi \wedge d\theta+ o(1)
\end{align*}
Similarly, one obtains 
\begin{align*}
& -F_\la(-1-\eps)\int_{\substack{\zeta=-1- \eps, \\
|\cos(\theta)+1|>\eps}}  \iota_{\nabla \chi}\,{\rm dv}+\int_{\zeta=-1-\eps} \chi \iota_{\nabla F_\la}\,{\rm dv}\\ 
&= - C_0^-(\la)(\la+1)\int_{\zeta=-1-\eps}\frac{1}{\cos(\theta)^2}\chi \d\varphi \wedge d\theta+ o(1)
\end{align*}
Then, combining \eqref{first_asymptotic}, \eqref{asympt_Flambda}, \eqref{term_1-eps}, \eqref{eps/sin^2again} , we obtain
\begin{align*}
& -F_\la(1-\eps)\int_{\zeta=1- \eps}  \iota_{\nabla \chi}\,{\rm dv}+\int_{\zeta=1-\eps} \chi \iota_{\nabla F_\la}\,{\rm dv}\\ 
& =-4\pi A_1^-(\la) \chi(e)- (C_1(\la)+2C_2(\la))(\la+1)\int_{\zeta=1- \eps}\frac{1}{\cos(\theta)^2}\chi \d\varphi \wedge d\theta+ o(1)
\end{align*}
Similarly 
\begin{align*}
& -F_\la(-1+\eps)\int_{\zeta=-1+ \eps}  \iota_{\nabla \chi}\,{\rm dv}+\int_{\zeta=-1+\eps} \chi \iota_{\nabla F_\la}\,{\rm dv}\\ 
& =(C_1(\la)e^{-i\pi(\la+1)}+2C_2(\la)\cos(\pi(\la+1))(\la+1)\int_{\zeta=-1+ \eps}\frac{1}{\cos(\theta)^2}\chi \d\varphi \wedge d\theta\\
& \quad  -4\pi A_{-1}^+(\la) \chi(-e)+o(1)
\end{align*}
Summing everything (taking into account the orientation), we obtain 
\[ \begin{split}
\lim_{\eps\to 0}I_\eps=&\frac{4\pi C_1(\la)}{i}\chi(e)-(C_1(\la)+2C_2(\la)-C_0^+(\la))(\la+1)\int_{\zeta=1^+}\frac{1}{\cos(\theta)^2}\chi \d\varphi \wedge d\theta\\
& +4\pi(e^{-i\pi(\la+1)}\frac{C_1(\la)}{i}+2C_2(\la)\sin(\pi(\la+1)))\chi(-e)\\
&+(C_0^-(\la)+C_1(\la)e^{-i\pi(\la+1)}+2C_2(\la)\cos(\pi(\la+1))(\la+1)\int_{\zeta=-1^-}\frac{1}{\cos(\theta)^2}\chi \d\varphi \wedge d\theta. 
\end{split}\]
In order to have that $\lim_{\eps\to 0}I_\eps=\chi({\rm e})$, we then need to choose the constants such that 
\[C_1(\la)=\frac{i}{4\pi}, \quad C_2(\la)=\frac{e^{-i\pi(\la+1)}}{2i\sin(\pi(\la+1))}C_1(\la)=\frac{e^{-i\pi(\la+1)}}{8\pi \sin(\pi(\la+1))}\]
\[ C_0^-(\la)=-C_1(\la)e^{-i\pi(\la+1)}-2C_2(\la)\cos(\pi(\la+1))=-\frac{1}{4\pi \sin(\pi(\la+1))},\]
\[ C_0^+(\la)=C_1(\la)+2C_2(\la)=\frac{i}{4\pi}+\frac{e^{-i\pi(\la+1)}}{4\pi \sin(\pi(\la+1))}=\frac{1}{4\pi \tan(\pi(\la+1))},\]
which is what was claimed. Now we deduce from Lemma \ref{f^pmformule} that $C_0=1/8\pi$ and then that 
$f_{-\la(\la+2)}(\zeta)$ is given by \eqref{formule_for_f}.
\end{proof}
This shows that the resolvent has the shape $$R_{\Box_g}(\la)=F_\la(-q(x,y))$$ with $F_\la$ given by the expression \eqref{formuleFlambda}.
From this expression, we deduce directly that the integral kernel of $R_{\Box_g}(\la)$ is a convolution kernel by an  $L^1_{\rm loc}(\AdS_3)$ function, for all $\la\in \C\setminus \Z$, it is meromorphic in $\la$ with simple poles at $\la\in \Z$, and 
thus $R_{\Box_g}(\la)$ extends meromorphically to $\la\in \C$ as a continuous operator 
mapping $L^2_{\rm comp}(\AdS_3)$ to $L^2_{\rm loc}(\AdS_3)$ with simple poles at $\la\in \Z$.
We also remark that, since \eqref{R_Ktype} holds in ${\rm Re}(\la)>-1$ and since, for $f\in L^2_{\rm comp}(\R^+)$ 
both $R(\la)(fe_{k,\ell})$ and $e_{k,\ell}R_{k,\ell}(\la)f$ extend meromorphically to $\C$, they are equal. Since for each $\la_0\in \mc{Z}_{k,\ell}$ (recall \eqref{Zkell}) there are infinitely many pairs $(k,\ell)\in \N$ 
such that $\la_0=-2-k-\ell$, we see that the polar part of $R_{\Box_g}(\la)$ at $\la_0$ has infinite rank.

The proof of Proposition \ref{prop:resolventAdShol} is a direct consequence of  computation made in the proof of Lemma \ref{Lemma_sol_fond} where the function $F(\la)$ is replaced by a similar one but with constants $C_2(\la)=0$, $C_0^-(\la)=0$, $C_1(\la)=i/(4\pi)$ and $C_0^+(\la)=C_1(\la)=i/(4\pi)$.

\providecommand{\bysame}{\leavevmode\hbox to3em{\hrulefill}\thinspace}
\providecommand{\MR}{\relax\ifhmode\unskip\space\fi MR }
\providecommand{\MRhref}[2]{%
  \href{http://www.ams.org/mathscinet-getitem?mr=#1}{#2}
}
\providecommand{\href}[2]{#2}

\bigskip
\end{document}